\DeclareFontFamily{U}{mathb}{\hyphenchar\font45}
\DeclareFontShape{U}{mathb}{m}{n}{
      <5> <6> <7> <8> <9> <10> gen * mathb
      <10.95> mathb10 <12> <14.4> <17.28> <20.74> <24.88> mathb12
      }{}
\DeclareSymbolFont{mathb}{U}{mathb}{m}{n}
\DeclareMathSymbol{\precneq}{3}{mathb}{"AC}
\tikzset{
  font={\fontsize{10pt}{12}\selectfont}}
  \tikzset{>=latex}
\newcommand\reallywidehat[1]{%
\savestack{\tmpbox}{\stretchto{%
  \scaleto{%
    \scalerel*[\widthof{\ensuremath{#1}}]{\kern-.6pt\bigwedge\kern-.6pt}%
    {\rule[-\textheight/2]{1ex}{\textheight}}%WIDTH-LIMITED BIG WEDGE
  }{\textheight}% 
}{0.5ex}}%
\stackon[1pt]{#1}{\tmpbox}%
}
\newtheorem{theorem}{Theorem}
\newtheorem{corollary}[theorem]{Corollary}
\newtheorem{proposition}[theorem]{Proposition}
\newtheorem{lemma}[theorem]{Lemma}
\theoremstyle{definition}
\newtheorem{conjecture}[theorem]{Conjecture}
\newtheorem{definition}[theorem]{Definition}
\newtheorem{example}[theorem]{Example}
\numberwithin{theorem}{section}
\numberwithin{equation}{section}
\newcommand{\E}{\mathcal{E}}
\newcommand{\I}{\mathcal{I}}
\newcommand{\A}{\mathcal{A}}
\renewcommand{\S}{\mathcal{S}}
\renewcommand{\H}{\mathcal{H}}
\newcommand{\T}{\mathcal{T}}
\newcommand{\TT}{\mathrm{TT}}
\newcommand{\G}{\mathcal{G}}
\newcommand{\Q}{\mathcal{Q}}
\newcommand{\N}{\mathbb{N}}
\newcommand{\Z}{\mathbb{Z}}
\newcommand{\F}{\mathcal{F}}
\renewcommand{\i}{\mathbf{i}}
\newcommand{\nan}{\text{NA}}
\newcommand{\eps}{\varepsilon}
\newcommand{\MGI}{\text{MGI}}
\newcommand{\gen}[1]{\langle #1 \rangle}
\DeclareMathOperator{\size}{size}
\newcommand{\TM}{\mathsf{TM}}
\newcommand{\TMSP}{\mathsf{TMSP}}
\newcommand{\TMSPT}{\widetilde{\mathsf{TMSP}}}
\newcommand{\WTMSP}{\mathsf{WTMSP}}
\newcommand{\SP}{\mathsf{SP}}
\newcommand{\AR}{\mathsf{AR}}
\newcommand{\tm}{\mathrm{tm}}
\renewcommand{\sp}{\mathrm{sp}}
\newcommand{\sph}{\widehat{\mathrm{sp}}}
\newcommand{\tmt}{\widetilde{\mathrm{tm}}}
\newcommand{\spt}{\widetilde{\mathrm{sp}}}
\newcommand{\ar}{\mathrm{ar}}
\newcommand{\REACH}{\mathsf{REACH}}
\newcommand{\IO}{\mathsf{IO}}
\newcommand{\CIO}{\mathsf{CIO}}
\newcommand{\VAL}{\mathsf{VAL}}
\newcommand{\FRAG}{\mathsf{FRAG}}
\newcommand{\ACT}{\mathsf{ACT}}
\newcommand{\SGRAPH}{\mathsf{SGRAPH}}
\newcommand{\OBJ}{\mathsf{OBJ}}
\newcommand{\OP}{\mathrm{OP}}
\newcommand{\act}{\mathrm{act}}
\newcommand{\inst}{\mathrm{inst}}
\renewcommand{\exp}{\mathrm{exp}}
\newcommand{\INST}{\mathsf{INST}}
\newcommand{\divtwo}{\mathrm{div2}}
\newcommand{\acc}{\mathrm{acc}}
\newcommand{\accept}{\mathrm{accept}}
\newcommand{\wacc}{\mathrm{wacc}}
\newcommand{\waccept}{\mathrm{waccept}}
\newcommand{\inc}{\mathrm{inc}}
\newcommand{\set}{\mathrm{set}}
\newcommand{\rea}{\mathrm{re}}
\renewcommand{\deg}{\mathrm{deg}}
\newcommand{\swap}{\mathrm{swap}}
\newcommand{\TTape}{\mathrm{TTape}}
\newcommand{\splt}{\mathrm{split}}
\newcommand{\op}{\mathrm{op}}
\newcommand{\Cntr}{\mathcal{C}}
\newcommand{\C}{\mathcal{C}}
\newcommand{\cpy}{\mathrm{copy}}
\newcommand{\pr}[1]{\bigl|_{#1}}
\newcommand{\SF}{\mathrm{SF}}
\newcommand{\GL}{\mathrm{GL}}
\newcommand{\ND}{\mathrm{ND}}
\renewcommand{\sf}{\mathrm{sf}}
\newcommand{\val}{\mathrm{val}}
\renewcommand{\int}{\mathrm{int}}
\newcommand{\ext}{\mathrm{ext}}
\newcommand{\inv}{\mathrm{inv}}
\newcommand{\lab}{\text{Lab}}
\newcommand{\Counter}{\text{Counter}}
\newcommand{\syn}{\text{syn}}
\newcommand{\defeq}{:=}
\newcommand{\SEQ}{\mathrm{SEQ}}
\newcommand{\IRT}{\mathrm{IRT}}
\newcommand{\Obj}{\mathrm{Obj}}
\newcommand{\TTL}{\mathrm{TTL}}
\newcommand{\TTR}{\mathrm{TTR}}
\newcommand{\EXP}{\mathsf{EXP}}
\newcommand{\RST}{\mathsf{RST}}
\newcommand{\dt}{\mathrm{div2}}
\def\b{\beta}
\renewcommand{\check}{\mathrm{check}}
\newcommand{\ch}{\mathrm{ch}}
\renewcommand{\le}{\mathrm{less}}
\newcommand{\cntr}{{\mathrm{cntr}}}
\newcommand{\scc}{{\mathrm{succ}}}
\def\a{\alpha}
\def\Area{\mathrm{Area}}
\newcommand{\Id}{\mathbb{I}}
\newcommand{\id}{\mathrm{id}}
\title{Quasilinear emulation of Turing machines by $S$-machines}
\author{Bogdan Chornomaz}
\author{Francis Wagner}
\begin{document}

\maketitle

\begin{center}

\emph{Dedicated to the memory of Mark V. Sapir}

\end{center}

\begin{abstract}
	We prove that for any $\varepsilon>0$, a non-deterministic Turing machine $\T$ with time complexity $T(n)$ can be emulated by an $S$-machine with time and space complexities at most $T(n)^{1+\varepsilon}$ and $T(n)$, respectively. This improves the bounds on the emulation in \cite{SBR} and leads to improved bounds in the main theorem of \cite{BORS}. In particular, for a non-hyperbolic finitely generated group $G$ whose word problem has linear time complexity, this yields an embedding of $G$ into a finitely presented group $H$ such that $G$ has bounded distortion in $H$ and the Dehn function of $G$ in $H$ is bounded above by $n^{2+\varepsilon}$, an optimal bound modulo the $\eps$ factor. As a means to this end, we introduce and develop the theory of $S$-graphs, giving a different perspective on the construction of $S$-machines akin to a crude object-oriented programming language.
\end{abstract}

\section{Introduction} \label{sec-intro}
First introduced in \cite{SBR}, an \emph{S-machine} is an intricate computational model that bears resemblance to that of a multi-tape Turing machine.  However, there are several fundamental differences between these two models.  For just one example, an $S$-machine works with group words while a Turing machine works over free monoids, with no inverse letters or reductions present.  Despite such differences, though, a correspondence is established in the fundamental result of \cite{SBR}: For every Turing machine $\T$, there exists an $S$-machine that `emulates' $\T$ in a particular sense.

Perhaps an even more powerful observation, though, is the association of $S$-machines and multiple HNN-extensions of free groups.  Such groups, first introduced in 1949 by their namesakes Higman, B.H Neumann, and H. Neumann \cite{HNN}, are constructed in a manner allowing for methodical and straightforward study, yet form a class rich enough to provide ample interesting examples.  As such, $S$-machines have proved to be a remarkable tool for the study of groups, answering many open questions and conjectures.  The following chronologically lists just a few examples of such achievements realized by means of this tool:

\begin{itemize}

\item The first version of the paper of Sapir, Birget, and Rips~\cite{SBR} introducing $S$-machines appeared in 1996. In it, the authors proved that for any $\alpha \geq 4$ that can be computed `reasonably fast', there is a finitely presented group with Dehn function equivalent to $n^\alpha$. %It took some six years for~\cite{SBR} to be published in 2002.  
\item In 1998, Birget, Ol'shanskii, Rips, and Sapir \cite{BORS} show that the word problem of a finitely generated group is NP solvable if and only if it can be embedded into a finitely presented group with polynomial Dehn function.

\item In 2001, Ol'shanskii and Sapir \cite{OS01} constructed an example of a non-amenable finitely presented group which is the extension of a finitely generated torsion group by a cyclic group.

\item In 2002, Ol'shanskii and Sapir \cite{OS02} constructed an analogue of the Higman embedding theorem which preserves the decidability (up to Turing degree) of the Conjugacy Problem.

\item In 2018, Ol'shanskii \cite{O18} gave a description of finitely presented groups with Dehn function equivalent to $n^\a$ for $\a\in(2,4)$ which, modulo the $\mathbf{P}=\mathbf{NP}$ conjecture, completes description of the isoperimetric spectrum \cite{Sapirnote}.

\item In 2019, Ol'shanskii \cite{O19} produced an analogue of the Higman embedding theorem when restricted to the Burnside variety of groups with sufficiently large exponent.

\item In 2020, Ol'shanskii and Sapir \cite{OS20} constructed a finitely presented group with quadratic Dehn function but unsolvable conjugacy problem.

\item In 2020, the second author \cite{W} produced examples of finitely presented groups with quadratic Dehn function which contain finitely generated torsion subgroups.  Indeed, it is shown that any free Burnside group of sufficiently large exponent can be quasi-isometrically embedded into a quadratic Dehn function group, an optimal such ambient group \cite{GdlH}.

\end{itemize}

In this manuscript, an alternative but equivalent construction is introduced called \textit{$S$-graphs}.  The reason for their introduction is twofold.

First, the complex technical nature of the definition of $S$-machine makes approaching them a formidable task.  While some attempts have been made to provide aids to advanced readers for understanding the makeup of particular machines (e.g see Figure 13 in \cite{OS01}), no existing tool seems sufficient for a general audience and arbitrary machine.  It is the authors' hope that $S$-graphs fill that role, providing a pictorial aid that fully describes a `machine' while also giving a simple way to interpret the computational structure.

Even more vitally, though, is that the pictorial nature of the definition facilitates a natural method of `composing' the machinery.  In and of itself, this is not an innovation, as it has been a common practice in previous manuscripts (e.g \cite{O18}, \cite{O19}, \cite{OS20}, \cite{W}) to combine and compose $S$-machines in various ways to produce desirable computational structures.  However, the established machinery has invariably necessitated elaborate definitions of such machines, obfuscating their makeup and purpose.  While the proof that the `composition' of $S$-graphs is a `well-defined' operation is itself quite convoluted (see \Cref{th-expansion-props}), the result is a simple coherent description of the process which allows for effective iterative construction of complex structures, something that would have been very difficult to describe in previous settings.   This is the principal innovation of the manuscript.

In particular, using the framework of $S$-graphs, the main result of this manuscript is a substantial improvement to the efficiency of an emulation of a Turing machine by an S-machine, the backbone of \cite{SBR}.  To state this effectively, some basic terminology is first summarized.  For full definitions of these terms, see \Cref{sec-intro} and \Cref{sec-complexity}.

Roughly speaking, an $S$-machine $\S$ is a special sort of (non-deterministic) finite rewriting system for group words.  The machine has various `states' that it can be in and various `tapes' on which group words over specified alphabets are written.  A `rule' of $\S$ looks at what state the machine is in and what words are written on the tapes (together, this information is called a `configuration' of $\S$) and makes sure they are `suitable', then switches the state and tape words in a prescribed way.  A `computation' of $\S$ is a finite sequence of rules applied consecutively.  The \emph{time} of a computation is the number of rules applied, while the \emph{space} is the maximal `size' of the configurations it passes through along the way.

Typically, $\S$ will be assumed to be \emph{recognizing}, i.e it has two distinguished states called the \emph{start} and \emph{end} states and a distinguished tape called the \emph{input} tape.  Given a group word $w$ over the alphabet corresponding to the input tape, the \emph{input configuration} $I(w)$ is the configuration where $\S$ is in the start state, the input tape has $w$ written on it, and all other tapes are `empty' (i.e have the trivial word written on it).  Similarly, the \emph{accept configuration} $W_0$ is the configuration where $\S$ is in the end state and all tapes are empty.  An `accepting computation' of $w$ is a computation between $I(w)$ and $W_0$, while the \emph{language} of $\S$, $L_\S$, is the set of words $w$ for which there exists an accepting computation.

Given $w\in L_\S$, the \emph{time} of $w$, $\tm(w)$, is the minimal time of an accepting computation.  Similarly, the \emph{space} of $w$, $\sp(w)$, is the minimal space of such a computation.  The \emph{time complexity} of the recognizing $S$-machine $\S$ is then defined to be the function $\TM_\S:\N\to\N$ given by $\TM_\S(n)=\max\{\tm(w)\mid w\in L_\S, \ |w|\leq n\}$.  The \emph{space complexity} is defined analogously.

Given a pair of non-decreasing functions $f,g:\N\to\N$, the recognizing $S$-machine $\S$ is said to have \emph{time-space complexity} bounded above by $(f,g)$, denoted $\TMSP_\S\leq(f,g)$, if for any $w\in L_\S$ there exists an accepting computation of $w$ with time at most $f(|w|)$ and space at most $g(|w|)$.

All non-decreasing functions on the natural numbers investigated in these settings are considered up to an equivalence $\sim$ induced by the preorder $\preceq$ defined by $f\preceq g$ if and only if there exists $C\in\N$ such that $f(n)\leq Cg(Cn)+C$ for all $n\in\N$.  Naturally, this relation is extended to tuples of nondecreasing functions.  For example, if $f'\sim f$ and $g'\sim g$, then $\S$ is said to have \emph{time-space complexity asymptotically} bounded above by $(f',g')$, denoted $\TMSP_\S\preceq(f',g')$.  It will often prove to be natural to consider also the equivalence relation $\sim_1$ on such non-decreasing functions induced by the preorder $\preceq_1$ given by $f\preceq_1 g$ if and only if $\max(f,n)\preceq\max(g,n)$.

Much of the same terminology is used for (multi-tape, non-deterministic) Turing machines.  See \Cref{sec-turing} for further discussion, or see \cite{SBR} or \cite{O19} for full background.  In particular, the \emph{language} recognized by the Turing machine $M$, denoted $L_M$, is the set of words $w$ over the prescribed alphabet for which there exists a computation taking a particular configuration prescribed by $w$ to a distinguished `accept' configuration.  The \emph{time complexity} of $M$ is then defined in a similar manner as for $S$-machines.

With this terminology at hand, the main theorem of this manuscript can be stated as follows:

\begin{theorem}\label{main-theorem}

Let $M$ be a multi-tape, non-deterministic Turing machine.  Then, there exist $S$-machines which `emulate' $M$ in quasilinear time, i.e for all $\eps>0$ there exists an $S$-machine $\S_\eps$ such that $L_{\S_\eps}=L_M$ and $\TMSP_{\S_\eps}\preceq_1(\TM_M^{1+\eps},\TM_M)$.

\end{theorem}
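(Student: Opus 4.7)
The plan is to build $\S_\eps$ in a layered fashion using the $S$-graph composition framework, so that \Cref{th-expansion-props} does the bulk of the bookkeeping while each individual layer remains small enough to verify by hand. The atomic building block is a base $S$-graph that emulates a single transition of $M$ under the standing assumption that the $S$-machine's state letter already sits adjacent to the relevant simulated-tape position; since $S$-machine rules act locally between adjacent state letters, this costs only $O(1)$ rules per Turing step, and the simulated Turing tape is encoded straightforwardly as a group word over an appropriate input alphabet.

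The core difficulty, and what drives the factor $T^\eps$ in the time bound, is that an $S$-machine must physically reposition its state letter between consecutive Turing steps when $M$'s head has moved far, and it must do so while maintaining the invariants on the tape word. A naive emulation in which the state letter repeatedly traverses the whole tape contributes a $\Theta(T^2)$ overhead, which is exactly what makes the original construction in \cite{SBR} inefficient. To beat this, I would segment the simulated tape into $k=\lceil 1/\eps\rceil$ nested levels of blocks of sizes $b_i \asymp T^{i/k}$: level $i$ is an $S$-graph that handles and validates Turing steps staying inside a single block of size $b_i$, calling into level $i{-}1$ for smaller-scale motion and escalating to level $i{+}1$ only when a boundary at its own scale is crossed. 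The $S$-graph composition theorem lets these layers be spliced together as one coherent machine without a bespoke inductive argument at each scale. Amortized, each Turing step costs $\sum_i b_i/b_{i-1}=O(k T^{1/k})=O(T^\eps)$ rules, so the full $T$-step simulation takes $T^{1+\eps}$ rules; the auxiliary data per level is at most linear in the tape length, so the total space remains $O(T)$.

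The main obstacle is language preservation, which must be proved in both directions because $M$ is non-deterministic. For $L_M \subseteq L_{\S_\eps}$ one must lift each accepting Turing run to an accepting $S$-machine computation staying within the time budget; for $L_{\S_\eps}\subseteq L_M$ one must rule out that the auxiliary labels, block markers, and inter-level protocols can be exploited to accept spurious words. Both statements should reduce, via \Cref{th-expansion-props}, to a local correctness check at each level of the hierarchy, which is precisely the work the $S$-graph composition framework is designed to encapsulate. A secondary, more cosmetic concern is uniform control of the constants absorbed by the $\preceq_1$ equivalence as $\eps$ shrinks (and hence as $k$ and the auxiliary alphabet grow); this is benign because $\eps$ is fixed before the construction is carried out, so all hidden constants depend only on $\eps$ and on $M$.
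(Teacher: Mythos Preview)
Your proposal misidentifies the central obstacle. In the Turing-machine normal form used here (\Cref{lem-TM-def}(e)), the head sits immediately to the left of $\omega$ on every tape at every step, so there is no ``head repositioning'' to amortize away. The real difficulty is that $S$-machines compute over free \emph{groups} while Turing machines compute over free \emph{monoids}: a transformation fragment $[\mathbf{a}\to\mathbf{a}x^{-1}]$, intended to erase a terminal letter $x$, will just as readily append $x^{-1}$ to a word not ending in $x$, and free reduction can then be exploited to accept words outside $L_M$. Under your ``straightforward encoding of the tape as a group word'', the inclusion $L_{\S_\eps}\subseteq L_M$ therefore fails unless each simulated step is accompanied by a verification that the tape word still lies in $A_i^*\subseteq F(A_i)$. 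Note that \Cref{th-expansion-props} only says that an $S^*$-graph and its expansion have the same $\IO$-relations; it cannot manufacture a positivity invariant that the atomic layer does not itself enforce, so your appeal to it for the reverse inclusion is a hand-wave precisely at the point where the argument breaks.

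The paper's construction confronts this directly. The $T^{1+\eps}$ bound is obtained by first building counters (\Cref{sec-counters}) that encode an integer $n$ across $d$ tapes with total size $O(n^{1/d})$, then using these to build positivity checkers of time $n^{1+\eps}$ (\Cref{sec-positivity-check}), and finally wrapping these in a separate rank-$d$ recursion on \emph{Turing-tape objects} (\Cref{sec-turing-tapes}) whose sequential complexity over a run of $T$ steps is driven down to $T^{1+\eps}$ (\Cref{prop-ttape-seq-compl}). Your hierarchical-block idea is in spirit closest to this last layer, but without the first two ingredients the $O(1)$ single-step primitive you posit is unsound, and no multiscale amortization on top of an unsound primitive recovers correctness.
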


This statement can be contrasted with Proposition 4.1 of \cite{SBR}, where condition (4) implies that the corresponding emulating $S$-machine $\S(M)$ satisfies $\TMSP_{\S(M)}\preceq_1(\TM_M^3,\TM_M)$; in fact, it implies equivalence in that $(\TM_M^3,\TM_M)$ is the least upper bound on $\TMSP_{\S(M)}$ with respect to $\preceq_1$.
%This can be contrasted with the emulation of Sapir, Birget, and Rips. The following theorem is a refinement of Proposition 4.1 from their paper \cite{SBR}.
%
%\begin{theorem}[Sapir, Birget, Rips]\label{th-SBR}
%Let $M$ be a multi-tape, non-deterministic Turing machine.  Then, there exist $S$-machine $\S$ such that $L_{\S}=L_M$ and $\TMSP_{\S}\sim_1(\TM_M^{3},\TM_M)$.
%\end{theorem} 
One can see that \Cref{main-theorem} provides a substantial improvement to this, and so can be employed to yield refinements of the various group theoretical results for which it serves as a backbone.  Some of these refinements will be discussed in \Cref{sec-implications} below.

This article is roughly divided into two parts. The first part is comprised of Sections \ref{sec-intro}--\ref{sec-complexity} and is dedicated to developing a primitive object-oriented language for $S$-machines, which will then be used in the second part to construct a simulation algorithm that produces an $S$-machine satisfying \Cref{main-theorem}.  The goal here is thus twofold: The language should be expressive enough to enable a concise description of this simulation, while keeping at bay the intricacies of transforming this algorithm into an S-machine. While overall the definitions and proofs in this part are bulky and technical, they pursue a straightforward goal and conceptually should not pose a problem to anyone familiar with any high-level programming language. A more detailed description of the first part follows:%the ommitted~\Cref{sec-relations} serves as a general-purpose reminder of some notions of relation algebra. 

In \Cref{sec-intro}, the concept of $S$-graph is introduced, culminating with the development of an explicit correspondence between $S$-graphs and $S$-machines in \Cref{th-MG}. \Cref{sec-relations} serves as a general-purpose reminder of some notions of relational algebra, tools which will prove valuable in all future proofs involving $S$-graphs.  \Cref{sec-operations} is dedicated to the formulation of objects and operations, which are simple syntactical embellishments of $S$-graphs that satisfy particular properties. These tools are then used to develop the `composition' of $S$-graphs in \Cref{sec-sg-ext}, enabling the construction of $S^*$-graphs as `$S$-graphs of higher degrees'. This construction produces mechanisms that resemble a standard programming language (although without recursion). In particular, in~\Cref{sec-star-computations}, we define a computation of an $S^*$-graph.   $S^*$-graphs can be transformed back into `regular' $S$-graphs and, consequently, into $S$-machines with a process called `expansion', described in~\Cref{sec-expansions}. Finally, \Cref{sec-complexity} discusses the time and space complexities of $S^*$-graphs. In order to be in line with the complexities of the corresponding S-machine, the complexities of an $S^*$-graph are defined in terms of the graph's expansion; however, many tools are developed to allow for the complexities to be estimated directly, bypassing any explicit reference to the graph's expansion.

In the second part of the paper, comprised of Sections \ref{sec-acceptors}--\ref{sec-final-proof}, a particular $S^*$-graph is constructed so that the $S$-machine corresponding to its expansion satisfies \Cref{main-theorem}.  In many ways, this construction is similar to that constructed in \cite{SBR}: First, a `counter' is defined in \Cref{sec-counters}, which enables one to tell if an integer, somehow encoded in the machine's tapes, is nonzero (a task notoriously nontrivial for S-machines).  Then, in \Cref{sec-positivity-check}, an $S^*$-graph is constructed which `accepts' the language of all positive group words, i.e. all words whose freely reduced form only contains positive powers of its letters. Then, in \Cref{sec-turing-tapes}, we construct an object, emulating one tape of a Turing machine.  Finally, in \Cref{sec-final-proof}, the preceding graphs are used to emulate an arbitrary Turing machine with an $S^*$-graph, thus finishing the proof of the main theorem. At the same time, unlike in \cite{SBR}, each one of these steps is `improved' so that the overall time complexity is arbitrarily close to the time function of the original Turing machine.  Although the details of addressing the complexity are different for all steps, they each use the recursive definition of the corresponding objects, with each iteration improving the bounds.  This is made substantially simpler, both in terms of presentation and calculation, by the recursive nature of $S^*$-graphs, justifying the tedious but dull task \cite{palfy2003} of defining them. The remaining sections are used to define what `accepting a language' means in terms of $S^*$ graphs (\Cref{sec-acceptors}), and to give a definition of a Turing machine which is tailored in a way that can be paralleled with an S-graph, and also is suitable for the emulation (\Cref{sec-turing}).

The following terminology and notation are used throughout this manuscript: 

For sets $X$ and $Y$, $X - Y$ denotes the set difference of $X$ and $Y$, i.e $x\in X-Y$ if and only if $x\in X$ and $x\notin Y$.

For $n \in \N-\{0\}$, the notation $\overline{n}$ represents the subset $\{1,\dots,n\}$ of $\N$.  Further, for any $n,m\in\Z$ such that $n \leq m$, $\overline{n, m}$ is defined to be $\{n, \dots, m\}$. These definitions are extended to all $n,m\in\Z$ by setting $\overline{n} = \emptyset$ and $\overline{n,m} = \emptyset$ for all $n,m$ for which these have not already been defined. 

For an \emph{index set} $I$, an \emph{$I$-tuple} $g = (g_i)_{i\in I}$ is a function with domain $I$ such that $g(i) = g_i$, for $i\in I$. For $n\in \N-\{0\}$, an $\overline{n}$-tuple is simply referred to as an $n$-tuple. For disjoint index sets $I$ and $J$, given an $I$-tuple $g$ and a $J$-tuple $h$, $g\sqcup h$ is the $(I\sqcup J)$-tuple given by $(g\sqcup h)(i) = g(i)$ for $i\in I$ and $(g\sqcup h)(j)=h(j)$ for $j\in J$. For an index set $I$ and an $I$-tuple $(G_i)_{i\in I}$ of sets, $\bigtimes_{i\in I} G_i$ is the set of all $I$-tuples $g$ such that $g(i) \in G_i$. For disjoint index sets $I$ and $J$, given a set $G$ of $I$-tuples and a $J$-tuple $h$, $G \sqcup h = \{g\sqcup h~|~g\in G\}$; $g\sqcup H$ and $G\sqcup H$ are defined analogously. For an $I$-tuple $g$ and $J\subseteq I$, $g(J)$ denotes a $J$-tuple $g|_{J}$; similarly, for a set of $I$-tuples $G$ and $J\subseteq I$, $G(J) = \{g(J)~|~g\in G\}$.

\bigskip

\textbf{Acknowledgment.} The authors are extremely grateful to Alexander Ol'shanskii and the late Mark Sapir, who introduced them to this deep and powerful theory and whose work inspired this investigation.

\section{Implications of \Cref{main-theorem}} \label{sec-implications}
As noted above, the emulation of Turing machines by $S$-machines demonstrated in \cite{SBR} is a central tool in the proof of several deep group theoretical results.  It is natural, then, to question whether the improvement to the complexity bounds of this emulation achieved in \Cref{main-theorem} is in and of itself sufficient to refine any of the aforementioned results.  This section serves to discuss what can be addressed along these lines.  Note that throughout the rest of this section, all notions of equivalence and asymptotic bound refer to those induced by the preorder $\preceq_1$.

First, consider the following two statements proved by Sapir, Birget, and Rips in \cite{SBR}:

\begin{enumerate}

\item \emph{(Theorem 1.3 of \cite{SBR})}: Let $L\subseteq X^+$ be a language accepted by a Turing machine $M$ with a time function $T(n)$ for which $T(n)^4$ is superadditive. Then there exists a finitely presented group $G(M) = \langle A\rangle$ with Dehn function equivalent to $T(n)^4$, the smallest isodiametric function equivalent to $T(n)^3$, and there exists an injective map $K : X^+ \rightarrow \left(A \cup A^{-1}\right)^+$ such that:
 \begin{enumerate}
 \item $u\in L$ if and only if $K(u) = 1$ in $G$;
 \item $K(u)$ has length $O(|u|)$ and is computable in time $O(|u|)$.
 \end{enumerate}
 
 \medskip
 
 \item \emph{(Corollary 1.3 of \cite{SBR})}:  Let $f(n) > n^4$ be a superadditive function such that the binary representation of $f(n)$ is computable in time $O\left(\sqrt[4]{f(n)} \right)$ by a Turing machine. Then $f(n)$ is equivalent to the Dehn function of a finitely presented group and the smallest isodiametric function of this group is equivalent to $f^{3/4}(n)$.
 
 \end{enumerate}

%We start with the paper of Sapir, Birget, and Rips~\cite{SBR}. Their main results, stated below, are \Cref{th-SBR-1-3} and \Cref{cor-SBR-1-1} (Theorem 1.3 and Corollary 1.1 in \cite{SBR}). \Cref{cor-SBR-1-1} is, perhaps, more important, in the sense that it directly addresses the goals of~\cite{SBR}, but it is also a direct consequence of \Cref{th-SBR-1-3}.
%
%\begin{theorem}[Sapir, Birget, Rips]\label{th-SBR-1-3}
%Let $L\subseteq X^+$ be a language accepted by a Turing machine $M$ with a time function $T(n)$ for which $T(n)^4$ is superadditive. Then there exists a finitely presented group $G(M) = \langle A\rangle$ with Dehn function $T(n)^4$, the smallest isodiametric function equivalent to $T(n)^3$, and there exists an injective map $K : X^+ \rightarrow \left(A \cup A^{-1}\right)^+$ such that
% \begin{enumerate}
% \item $u\in L$ if and only if $K(u) = 1$ in $G$;
% \item $K(u)$ has length $O\left(|u|\right)$ and is computable in time $O\left(|u|\right)$.
% \end{enumerate}
%\end{theorem}
%
%\begin{corollary}[Sapir]\label{cor-SBR-1-1}
%Let $f(n) > n^4$ be a superadditive function such that the binary representation of $f(n)$ is computable in time $O\left(\sqrt[4]{f(n)} \right)$ by a Turing machine. Then $f(n)$ is equivalent to the Dehn function of a finitely presented group, and the smallest isodiametric function of this group is equivalent to $f^{3/4}(n)$.
%\end{corollary}

\bigskip

On the surface, the functions $T(n)^3$ and $T(n)^4$ featured in (1) may seem to arise from the time-space complexity of the emulating machine of \cite{SBR}.  Indeed, this is the case: The $T(n)^3$ term corresponds to the time complexity of the $S$-machine, while the $T(n)^4$ term corresponds to the \emph{area complexity} of the machine which is bounded by the product of the functions comprising any time-space complexity bound.  However, while the corresponding complexity bounds are improved in \Cref{main-theorem}, the nondominant terms defining the asymptotics of the Dehn function impede an improvement of the same degree.  Still, the following statement represents a refinement of (1) achieved through \Cref{main-theorem}:

%Just from the looks of the statement of \Cref{th-SBR-1-3}, it seems that the $T(n)^4$ and $T(n)^3$ estimates of the Dehn function and the isodiametric function, respectively, come from the time and space complexity bounds of the S-machine $S$ in \Cref{th-SBR}, and indeed they do. Here $T(n)^4$ appears as an estimate of the area of the computation of $S$, which is its time complexity $T(n)^3$ times space complexity $T(n)$. However, using the new emulation does not give the improvement one might have hoped for because of the nondominant terms in the asymptotics of the Dehn function. 

\begin{theorem}\label{th-SBR-1-3-improved}
Let $L\subseteq X^+$ be a language accepted by a Turing machine $M$ with a time function $T(n)$ for which $T(n)^2$ is superadditive. Then, for any $\varepsilon >0$, there exists a finitely presented group $G_\eps(M) = \langle A\rangle$ with Dehn function bounded above by $T(n)^{2+\varepsilon} + n^4$, isodiametric function bounded above by $T(n)^{1+\varepsilon} + n^2$, and there exists an injective map $K : X^+ \rightarrow \left(A \cup A^{-1}\right)^+$ such that:
 \begin{enumerate}
 \item $u\in L$ if and only if $K(u) = 1$ in $G$;
 \item $K(u)$ has length $O(|u|)$ and is computable in time $O(|u|)$.
 \end{enumerate}
\end{theorem}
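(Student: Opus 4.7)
The strategy is to substitute the improved emulating machine from \Cref{main-theorem} into the group-theoretic construction of Sapir--Birget--Rips~\cite{SBR}, and then track how the refined complexity estimates propagate through the van Kampen diagram analysis of the resulting group presentation. Fix $\eps > 0$, and choose $\eps' < \eps$. By \Cref{main-theorem}, there exists an $S$-machine $\S_{\eps'}$ with $L_{\S_{\eps'}} = L_M$ and $\TMSP_{\S_{\eps'}} \preceq_1 (\TM_M^{1+\eps'}, \TM_M)$, so in particular the time and space complexities of $\S_{\eps'}$ are asymptotically bounded by $T(n)^{1+\eps'}$ and $T(n)$ respectively. Feed $\S_{\eps'}$ into the construction of \cite{SBR} to obtain the finitely presented group $G_\eps(M) = \langle A \rangle$. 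The encoding $K : X^+ \to (A \cup A^{-1})^+$ depends only on the alphabet of $M$ and not on the particular emulating $S$-machine, so the injectivity, the linear length bound, and the linear-time computability in (1)--(2) are inherited directly from \cite{SBR}.

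The main task is then to verify the upper bounds on the Dehn and isodiametric functions. In the argument of \cite{SBR}, an arbitrary minimal van Kampen diagram over $G_\eps(M)$ is decomposed into \emph{hub}, \emph{trapezium}, and \emph{disk} subdiagrams. The trapezium subdiagrams encode accepting computations of the emulating $S$-machine on input words of length $O(n)$: their area is bounded by the area complexity (the product of time and space), namely
\[
T(n)^{1+\eps'} \cdot T(n) \;\preceq_1\; T(n)^{2+\eps'},
\]
and their diameter is bounded by the time complexity $T(n)^{1+\eps'}$. The hub and disk contributions, by contrast, are \emph{structural}: they depend only on the machine's hardware (its state and tape alphabets) and on the combinatorics of how these subdiagrams are glued, giving bounds of $n^4$ on area and $n^2$ on diameter that are independent of $T(n)$. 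Taking maxima (or sums, using superadditivity of $T(n)^2$) yields the claimed bounds $T(n)^{2+\eps} + n^4$ and $T(n)^{1+\eps} + n^2$; the slack between $\eps'$ and $\eps$ absorbs the multiplicative constants hidden in $\preceq_1$.

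The main obstacle is to verify that the proof of Theorem~1.3 of~\cite{SBR} truly separates the machine-dependent contributions from the structural ones in the manner just described --- that is, that no step introduces a hidden multiplicative factor of $T(n)$ into the $n^4$ or $n^2$ terms. A careful reading of \cite{SBR} shows that these polynomial terms arise from the $S$-machine-independent analysis of disk subdiagrams (whose area is intrinsically quartic in their perimeter) and of the hub subdiagrams attached along them; since the emulating $S$-machine enters only through the trapezium subdiagrams, the argument is genuinely modular in the emulator. With this modularity verified, the theorem reduces to substituting the complexity estimate from \Cref{main-theorem} into a preexisting inequality.
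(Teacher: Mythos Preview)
Your approach is essentially the paper's: substitute $\S_\eps$ into the $G_N(\S)$ construction of \cite{SBR} and note that in the area estimate of Proposition~11.1 there, the $T(n)^4$ term depends only on the superadditivity hypothesis and the area complexity of the emulating $S$-machine, while the $n^4$ term does not. One terminological correction: in the SBR framework it is the \emph{disks} that encode accepting computations and carry the machine-dependent area, whereas the residual $n^4$ comes from the remaining (non-disk) cells---so your labeling of which pieces are ``structural'' is inverted, though this does not affect the substance.
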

\begin{proof}
    The group $G(M)$ is the group $G_N(\S)$, constructed in Section 5 in~\cite{SBR}, where $\S = \S(M)$ is the $S$-machine emulating $M$.  One can use the same construction of the group, changing only the $S$-machine to the machine $\S_\eps$ given in \Cref{main-theorem}, producing the group $G_\eps(M)=G(\S_\eps)$.
    
    By Proposition 11.1 of \cite{SBR}, for any minimal diagram $\Delta$ over $G(M)$, 
   % The upper bound on the area of the minimal diagram $\Delta$ with boundary word $w$ is proven in Proposition 11.1 in~\cite{SBR}. It is
    $$\Area(\Delta) \leq C_4 T(C_6 |\partial\Delta|)^4 + (C_3+C_9) |\partial\Delta|^4$$
where $C_3$, $C_4$, $C_6$, and $C_9$ are sufficiently large constants.

    The $C_4 T(C_6 |\partial\Delta|)^4$ term relies only on the superadditivity of $T(n)^4$ and on the area complexity of $\S(M)$.  The requirement of the superadditivity of $T(n)^2$ necessitates that $T(n)^{2+\eps}$ is also superadditive, while the area complexity of $\S_\eps$ is bounded above by $T(n)^{2+\eps}$.  Hence, identical arguments immediately imply that a minimal diagram $\Delta$ over $G_\eps(M)$ satisfies:
    $$\Area(\Delta)\leq C_4T(C_6|\partial\Delta|)^{2+\eps}+(C_3+C_9)|\partial\Delta|^4$$
    
    %so becomes $C_4 T(C_6 |w|)^{2+\varepsilon}$ with the improved bounds. This bound also uses the superadditivity of $T^4$, so to obtain it, we need $T^{2+\varepsilon}$ to be superadditive, which follows from our requirement for $T^2$ to be superadditive. Thus, with the improved bound we have $\text{area}(\Delta) \preceq T^{2+\varepsilon} + n^4$.

    Similarly, the diameter $d(\Delta)$ of a minimal diagram $\Delta$ over $G(M)$ is estimated in Proposition 11.1 of \cite{SBR} by
    $$d(\Delta) \leq C_1'T(C_2'|\partial\Delta|)^3 + C_3'|\partial\Delta|^2$$
    for sufficiently large constants $C_1',C_2',C_3'$.
An identical argument to that above then shows that for any minimal diagram $\Delta$ over $G_\eps(M)$, $$d(\Delta) \leq C_1'T(C_2'|\partial\Delta|)^{1+\varepsilon} + C_3'|\partial\Delta|^2$$
\end{proof}

Note that while (1) gives equivalences for the Dehn function and the isodiametric function of $G(M)$, \Cref{th-SBR-1-3-improved} merely gives upper bounds for the corresponding functions of $G_\eps(M)$. One reason for this discrepancy is that \Cref{main-theorem} gives only an upper bound on the complexity of the emulation, whereas the bound in (1) is substantially worse but exact. This problem can be overcome, but is not done here for several reasons.  First, the construction of the S-machines exhibiting the bound would get more complicated.  Second, an explicit sequence of values $\varepsilon_k$ converging to $0$ would need to be constructed to assure the lower bound can be achieved.  Most importantly, though, is that the the nondominant term $n^4$ in the upper bound means that the equivalence will not be in terms of the time function unless $T(n)\succeq n^2$.  With this same reasoning, it can be seen that any refinement to (2) given by \Cref{main-theorem} would be minimal and that the main obstruction $f(n)>n^4$ would still be present.

This discussion is particularly important given that a principal goal of \cite{SBR} was to address the problem of describing the isoperimetric spectrum of groups, i.e the set of numbers $\alpha$ for which $n^\alpha$ is the Dehn function of some finitely presented group.  (2) enabled the authors to do so for $\alpha\geq 4$, and the problematic case $\alpha\in (2,4)$ remained open for some twenty years until it was characterized by Ol'shanskii in 2018~\cite{O18}. Although the paper of Ol'shanskii uses the framework of S-machines heavily, it also, as noted by its author, required substantial new ideas. It is thus natural to wonder if, with the improved simulation, this can be achieved using essentially the same, or perhaps even \emph{the} same, construction as in \cite{SBR}.  As noted, the main obstruction to it is the $n^4$ term in (1), and it is possible that it can be improved just by a more accurate estimate of the areas of the same diagrams.

\medskip

Next, in their follow-up manuscript, Birget, Ol'shanskii, Rips, and Sapir \cite{BORS} use the emulation of \cite{SBR} to address a group theoretical question that naturally arises from it, producing an analogue of the celebrated Higman embedding theorem \cite{Higman} which relates the complexity of the word problem to the Dehn function of the finitely presented group in which it embeds.  Specifically, the main theorem of \cite{BORS} (Theorem 1.1) states that if $G$ is finitely generated and has a word problem that is decidable by a nondeterministic Turing machine with time function $\leq T(n)$ where $T(n)^4$ is superadditive, then there exists an embedding of $G$ into a finitely presented group $H$ with isoperimetric function $n^2 T(n^2)^4$ and in such a way that $G$ has bounded distortion in $H$.

The following refinement is immediate from the improved complexities of \Cref{main-theorem}:

%\begin{theorem}[Birget, Ol'shanskii, Rips, Sapir]\label{th-BORS-1-1}
%Let $G$ be a finitely generated group with word problem solvable by a nondeterministic Turing machine with time function $\leq T(n)$ such that $T(n)^4$ is superadditive. Then $G$ can be embedded into a finitely presented group $H$ with isoperimetric function $n^2T(n^2)^4$ in such a way that $G$ has bounded distortion in $H$.
%\end{theorem}

\begin{theorem}\label{th-BORS-1-1-improved}
Let $G$ be a finitely generated group with word problem solvable by a nondeterministic Turing machine with time function $\leq T(n)$ such that $T(n)^2$ is superadditive. Then, for any $\varepsilon > 0$, $G$ can be embedded into a finitely presented group $H_\eps$ with the Dehn function bounded above by $n^2T(n^2)^{2+\varepsilon}$ in such a way that $G$ has bounded distortion in $H_\eps$.

Moreover, the Dehn function of $G$ in $H_\eps$ is at most $T(n)^{2+\eps}$.  
\end{theorem}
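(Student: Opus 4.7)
The plan is to mirror the approach used in the proof of \Cref{th-SBR-1-3-improved}: follow verbatim the construction in \cite{BORS}, replacing only the emulating $S$-machine $\S(M)$ (whose time-space complexity is $(T(n)^3, T(n))$) with the $S$-machine $\S_\eps$ produced by \Cref{main-theorem}, whose time-space complexity is $(T(n)^{1+\eps}, T(n))$. The resulting finitely presented group will be denoted $H_\eps$, and the embedding of $G$ into $H_\eps$ is obtained by the same encoding as in \cite{BORS}.

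The key step is to trace how the time-space complexity of the emulating $S$-machine propagates through the diagram-area estimates in the proof of Theorem~1.1 of \cite{BORS}. The bound $n^2 T(n^2)^4$ in that theorem decomposes as the product of a geometric factor $n^2$ arising from auxiliary cells needed to connect $G$ to the encoding (independent of the chosen $S$-machine) and an area complexity factor $T(n^2)^4 = T(n^2)^3 \cdot T(n^2)$ equal to the product of time and space bounds of $\S(M)$ evaluated on inputs of size $O(n^2)$. Substituting the analogous area complexity $T(n^2)^{1+\eps}\cdot T(n^2) = T(n^2)^{2+\eps}$ for $\S_\eps$ yields the claimed bound $n^2 T(n^2)^{2+\eps}$; the superadditivity hypothesis on $T(n)^2$ guarantees that $T(n)^{2+\eps}$ is also superadditive, as needed to combine the area estimates of individual subdiagrams.

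Bounded distortion of $G$ in $H_\eps$ is then automatic: the encoding $K$ of elements of $G$ is length-preserving up to a multiplicative constant, a structural feature of the embedding in \cite{BORS} that does not depend on the choice of emulating $S$-machine. For the moreover clause, a word $w$ representing the identity in $G$ admits an accepting Turing computation of time at most $T(|w|)$; feeding this through $\S_\eps$ yields an accepting $S$-machine computation of time $T(|w|)^{1+\eps}$ and space $T(|w|)$, whose area is $T(|w|)^{2+\eps}$, which in turn bounds the relative Dehn function of $G$ in $H_\eps$ using the standard diagrammatic realization of accepting computations employed in \cite{BORS}.

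The principal obstacle, as in the proof of \Cref{th-SBR-1-3-improved}, is verifying that the constants and auxiliary subdiagram estimates in \cite{BORS} depend on $\S(M)$ only through its complexity bounds, and not through any finer structural properties peculiar to that particular machine. Inspection of the argument in \cite{BORS} shows that this modularity does hold, so the only substantive work is the substitution described above; accordingly, the whole proof essentially reduces to invoking \Cref{main-theorem} and running the bookkeeping in the proof of Theorem~1.1 of \cite{BORS} with the new exponents.
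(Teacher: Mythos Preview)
Your proposal is correct and follows essentially the same route as the paper: substitute $\S_\eps$ for $\S(M)$ in the construction $H_N(\S)$ of \cite{BORS}, observe that the machine-dependent terms in the Dehn function estimate come from the area complexity of the $S$-machine, and replace the exponent $4$ by $2+\eps$. The paper is slightly more explicit in one respect: it recalls that the estimate in Section~6 of \cite{BORS} has three additive terms, $O(n^6) + O(T(O(n^2))^4) + O(n^2T(O(n^2))^4)$, identifies the latter two as arising from disks and $G_b$-cells (hence from the area complexity of $\S$), and then notes that after the substitution the $O(n^6)$ term is dominated because $T(n)\succeq n$; your multiplicative description of the bound glosses over this machine-independent $n^6$ summand, though it causes no trouble in the end.
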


Note that here, the terminology of the Dehn function of a finitely generated subgroup in a finitely presented subgroup aligns with that used in \cite{SBR}.  That is, given a finitely generated subgroup $G=\gen{A}$ of a finitely presented group $H$ with finite presentation $\mathcal{P}$, the Dehn function of $G$ in $H$ is the function $\delta_{G,H}:\N\to\N$ given by
$$\delta_{G,H}(n)=\max\{\Area_\mathcal{P}(w)\mid w\in(A\cup A^{-1})^*, \ w=_G1, \ |w|_A\leq n\}$$
As with Dehn functions of finitely presented groups, the terminology is justified by the observation that the equivalence class of the Dehn function of $G$ in $H$ is invariant with respect to changing the finite generating set $A$ or the finite presentation $\mathcal{P}$.

%Here by an isoperimetric function of $G$ in $H$ we mean a function $f\colon \N\rightarrow \N$ such that any trivial in $G$ word $w=a_1\dots a_n$ is freely equal to a product of at most $f(n)$ conjugates (in $H$) of relators (of $H$), where all $a_i$ are from the fixed set of generators of $G$. The Dehn function of $G$ in~$H$ is then the minimal isoperimetric function of $G$ in $H$, and, just as with the usual Dehn function, it is unique up to $\sim_1$ equivalence with respect to changing the set of generators or relators.

Further, note that the portion of the statement regarding the Dehn function of $G$ in $H$ is not formulated in \cite{BORS}.  However, such a statement is an immediate corollary of their construction, where it would be equivalent to $T(n)^4$.

%\emph{Note.} We slightly changed the terminology of \Cref{th-BORS-1-1} by changing `with isoperimetric function' to `with the Dehn function bounded above.' As the Dehn function is defined as the smallest isoperimetric function of a presentation, these formulations are synonymous. The `moreover' part was not explicitly formulated in~\cite{BORS}, but follows immediately from their construction. In their case, that is, without the improved emulation, it will be $T(n)^4$.
\begin{proof}

In Theorem 1.1 of \cite{BORS}, the group $H$ is constructed as $H_N(\S)$, where $\S=\S(M)$ is the S-machine emulating the Turing machine $M$ accepting the language of $G$. $H_N(\S)$ is constructed in a very similar way to $G_N(\S)$ from \cite{SBR} referenced above, but with some extra relations to enforce the embedding.  The upper bound on its Dehn function is estimated in Section 6 of \cite{BORS} as $O(n^6) + O(T(O(n^2))^4) + O(n^2T(O(n^2))^4)$.  Here, the second term estimates the area coming from disks, the third estimates the area coming from $G_b$-cells, and the first estimates the area coming from all other cells.  

It is evident from these estimates given that the areas of the disks and $G_b$-cells are dependent on the machine $\S$.  Indeed, they arise from the area complexity of the machine.  As such, constructing $H_\eps=H_N(\S_\eps)$ in just the same way, the Dehn function is bounded above by $O(n^6)+O(T(O(n^2))^{2+\eps})+O(n^2T(O(n^2))^{2+\eps})$.  As $T(n)\succeq n$, the last term is the leading term, and so gives the required bound.

%Improving the emulation reduces this to $O(n^6) + O(T(O(n^2))^{2+\varepsilon}) + O(n^2T(O(n^2))^{2+\varepsilon})$. It can be seen that the last term is still the leading one, and it gives the required bound.

The fact that $G$ in $H_N(\S_\eps)$ has Dehn function at most $T(n)^{2+\eps}$ follows from the fact that every word that is trivial in $G$ corresponds to a $G_b$-cell, which can be filled by a diagram with at most $O(T(O(n))^{2+\eps})$ cells (compared with $O(T(O(n))^4)$ in $H$).
\end{proof}

The embedding of \cite{BORS} marked a major improvement to other analogues of the Higman embedding theorem, where the bounds would be exponential.  As such, \cite{BORS} inextricably links the class of groups whose word problem is NP solvable and the class of finitely presented groups with polynomially bounded Dehn function.  \Cref{th-BORS-1-1-improved} then serves as a major refinement of this, improving the bound to one that is in some cases quasi-optimal.  For example, if $G$ is a non-hyperbolic group with whose word problem is quasilinear (e.g a finitely generated infinite free Burnside group), then $G$ cannot be embedded with bounded distortion into a hyperbolic group, and so the isoperimetric gap \cite{gromov1987hyperbolic} \cite{O91} implies that the Dehn function of $G$ in any finitely presented group would be at least quadratic.

%While the embedding from~\cite{BORS}, when contrasted with the embedding of Novikov, Boone, Higman, and Aanderaa, gives an improvement from at least exponential to polynomial, with our modification it becomes an improvement from polynomial to quasi-optimal, modulo the $\eps$ increase in the exponent, at least with respect to the Dehn function of $G$ in $H$, at least in some cases. For example, if the time complexity of the word problem of $G$ is quasilinear, then the Dehn function of $G$ in $H$ cannot be subquadratic as long as $G$ is embedded isometrically and is not hyperbolic.  At the same time, for any $\eps>0$, \Cref{th-BORS-1-1-improved} provides quasi-isometric embedding of $G$ into $H_\eps$ for which the Dehn function of $G$ in $H_\eps$ is at most $n^{2+\varepsilon}$. We note that the fact that subquadratic Dehn function implies hyperbolicity is a fundamental result, announced by  Gromov~\cite{gromov1987hyperbolic}, and later proved by Ol'shanskii~\cite{O91}. The statement that we need, that the hyperbolicity also follows from subquadratic Dehn function of $G$ in $H$ whenever the embedding is quasi-isometric, is not a direct consequence of that, but follows along the same lines as Ol'shanskii's proof.

It is the authors' belief that \Cref{th-BORS-1-1-improved} can be improved even further using the techniques developed by the second author to embed the free Burnside group into a finitely generated group with quadratic Dehn function \cite{W}:

\begin{conjecture}\label{main-conjecture}

Let $G$ be a finitely generated group with word problem solvable by a nondeterministic Turing machine with time function $\leq T(n)$ such that $T(n)^2$ is superadditive.  
Then, for any $\varepsilon > 0$, $G$ can be embedded into a finitely presented group $H$ with the Dehn function bounded above by $T(n)^{2+\varepsilon}$ in such a way that $G$ has bounded distortion in $H$.
\end{conjecture}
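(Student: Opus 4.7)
The approach is to combine the improved emulation of \Cref{main-theorem} with the refined diagram-surgery techniques introduced by the second author in \cite{W}. Comparing the conjectural bound $T(n)^{2+\eps}$ against the upper bound of \Cref{th-BORS-1-1-improved}, one sees two distinct losses to be eliminated: the multiplicative factor $n^2$ arising from cells not associated to disks or $G_b$-cells, and the substitution of the argument $n$ by $n^2$ inside $T$, stemming from the way generators of $G$ are encoded on the input tape of $\S_\eps$ inside $H_N(\S_\eps)$.

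First, I would redesign the embedding of \cite{BORS} so that the encoding map $K$ sends each word $u$ over the generators of $G$ to a group word whose image on the input tape of $\S_\eps$ has length $O(|u|)$ rather than $O(|u|^2)$. This reworking is naturally carried out in the $S^*$-graph language developed in Sections~\ref{sec-intro}--\ref{sec-complexity}, which allows a modular revision of each component of the Higman-type construction. Feeding $\S_\eps$ through such a modified pipeline should replace the term $O(n^2 T(n^2)^{2+\eps})$ in the area estimate of Section 6 of \cite{BORS} by $O(T(n)^{2+\eps})$.

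Next, I would absorb the residual $O(n^6)$-type contributions by porting the strategy of \cite{W}: decompose an arbitrary minimal diagram $\Delta$ over $H$ into disks, $G_b$-cells, and `thin' auxiliary subdiagrams, then show that the auxiliary subdiagrams can be compressed via surgery so that their combined area is dominated by $T(|\partial\Delta|)^{2+\eps}$. Bounded distortion of $G$ in $H$ would then follow, as in \cite{BORS}, from the length-preserving properties of $K$, combined with the observation that a $G_b$-cell corresponding to a trivial word of $G$ of length $n$ can be filled by a diagram of area $O(T(n)^{2+\eps})$ rather than $O(T(n)^4)$.

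The principal obstacle, as alluded to in the discussion preceding the conjecture, lies in transferring the diagram-surgery of \cite{W} out of its native setting of the free Burnside group. There, much of the reasoning exploits the rigid combinatorics of periodic words, which control the geometry of minimal diagrams very tightly; the analogous compression estimates in the general case would have to be derived instead from abstract structural properties of the $S$-graph produced by \Cref{main-theorem} and of its embedding into $H$. I would budget the bulk of the effort for this step, which is likely to require genuinely new ideas rather than a direct adaptation of the Burnside-specific arguments.
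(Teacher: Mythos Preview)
The statement you are addressing is labelled \texttt{conjecture} in the paper, not \texttt{theorem}: the authors explicitly present it as an open problem and provide no proof. They only remark that they believe \Cref{th-BORS-1-1-improved} ``can be improved even further using the techniques developed by the second author~\cite{W}'' and that ``further discussion is postponed to that manuscript.'' So there is no proof in the paper to compare your proposal against.

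Your outline is consistent with the hint the authors give: combine the $S$-machine of \Cref{main-theorem} with the diagram-surgery machinery of~\cite{W}. You also correctly isolate the two sources of loss in \Cref{th-BORS-1-1-improved} (the $n^2$ in the argument of $T$ and the extra multiplicative $n^2$), and you are honest that the transfer of the compression estimates from the Burnside setting to the general case is the real obstacle and would require new ideas. That assessment matches the paper's own stance. What you have written is a reasonable research plan, not a proof; given that the target is a conjecture, that is the appropriate register, but you should not present it as a proof of the statement.
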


The statement of \Cref{main-conjecture} will serve as a quasi-optimal general bound on the \emph{essential Dehn function} of a finitely generated group with decidable word problem, an invariant arising from this setting.  Further discussion is postponed to that manuscript.

%%%%%%%%%%%%%%%%%%%%%%%%%%%%%%%%%%%%%%%%%%%%%%%%%%%%%%%%%%%%%%%%%

\section{S-machines and S-graphs} \label{sec-intro}
There are many equivalent formulations of \emph{$S$-machine}. Following that given in \cite{SBR}, the following interpretation as a rewriting system of group words is adopted: 

\begin{definition}[$S$-machine] \label{def-smachine}
A \emph{hardware} $\H$ is a pair $(Y, Q)$ such that $Y = (Y_i)_{i\in \overline{N}}$ is an $N$-tuple of (not necessarily disjoint) finite sets $Y_i$ and $Q = (Q_i)_{i\in \overline{N+1}}$ is an $(N+1)$-tuple of finite pairwise disjoint sets $Q_i$ that are also disjoint from $\bigcup Y_i$. For notational ease, $Y$ is extended to include $Y_0=Y_{N+1}=\emptyset$.  The set $\bigcup Y_i$ is called \emph{the set of tape letters} and $\bigsqcup Q_i$ \emph{the set of state letters} of~$\H$.

The \emph{language of admissible words} over $\H$, denoted $L_\H$ (or simply $L$ if $\H$ is clear from context), is the set of words $L_\H = Q_1 F(Y_1) Q_2 \dots Q_N F(Y_N) Q_{N+1}$, where $F(Y_i)$ is the set of freely reduced group words over $Y_i \sqcup Y_i^{-1}$. A \emph{state} of $\H$ is an element of $\bigtimes_{i\in\overline{N+1}}Q_i$. Every admissible word has a state associated to it, obtained from it simply by removing every tape word.

For $l,r\in\overline{N+1}$ such that $l\leq r$, $L_\H^{l,r}$ is the set of subwords of words from $L_\H$ which start with a state letter from $Q_l$ and end with a state letter from $Q_r$.  As a result, if $r>l+1$, then $L_\H^{l,r}=Q_l F(Y_l) Q_{l+1} \dots F(Y_{r-1}) Q_r$.  This definition is extended to the set $\bigl(L_\H^{l,r}\bigr)^+$ of subwords of words from $L_\H$ starting with a word from $F(Y_{l-1})$ and ending with one from $F(Y_r)$, i.e $$\bigl(L_\H^{l,r}\bigr)^+=F(Y_{l-1})L_\H^{l,r}F(Y_r)$$

An \emph{S-rule} is a rewriting rule over $L_\H$ of the form $\tau = \left[U_1\rightarrow V_1,\dots, U_M\rightarrow V_M \right]$ such that:
\begin{enumerate}
	\item [(S1)] for each $i\in\overline{M}$, there exist $l(i),r(i)\in\overline{N+1}$ such that $U_i\in L_\H^{l(i), r(i)}$;
	\item [(S2)] $r(i) < l(j)$ for $i < j$;
	\item [(S3)] for each $i\in\overline{M}$, $V_i\in\bigl(L_\H^{l(i), r(i)}\bigr)^+$.
\end{enumerate}

Given an S-rule $\tau$ as above, the $S$-rule $[U_i\to V_i]$ is called a \emph{subrule} of $\tau$ for each $i\in\overline{M}$. For each $i$, let $V_i=x_iU_i'y_i$ such that $x_i\in F(Y_{l-1})$, $y_i\in F(Y_r)$, and $U_i'\in L_\H^{l(i),r(i)}$. Then the \emph{inverse} of $\tau$ is the S-rule $\tau^{-1}$ given by:
$$\tau^{-1} = \left[U_1'\rightarrow x_1^{-1} U_1 y_1^{-1},\dots, U_M'\rightarrow x_M^{-1} U_M y_M^{-1} \right]$$
Note that $\tau^{-1}$ is indeed an S-rule and that inversion is an involutive operation on the set of S-rules over $L_\H$.

An \emph{S-machine} $\S$ with hardware $\H$ (or over $\H$) is a rewriting system over $L_\H$ consisting of a \emph{symmetric} finite set of S-rules, i.e $\tau\in\S$ if and only if $\tau^{-1}\in\S$.
\end{definition}

An S-rule $\tau=[U_i \rightarrow V_i]_i$ is said to be \emph{applicable} to an admissible word $w\in L_\H$ if each $U_i$ is a subword of $w$.  Otherwise, $\tau$ is \emph{inapplicable} to $w$.  

Given that $\tau$ is applicable to $w$, the \emph{application} of $\tau$ to $w$ is the simultaneous replacement of every subword $U_i$ in $w$ with $V_i$, followed by the free reduction of the resulting word.  Note the following:

\begin{itemize}
    \item The word resulting from the application of $\tau$ to $w$, denoted $w \cdot \tau$, is an admissible word over $\H$.
    \item $\tau$ is applicable to $w$ if and only if $\tau^{-1}$ is applicable to $w \cdot \tau$; moreover, $(w\cdot \tau)\cdot\tau^{-1}=w$, justifying the use of the term `inverse'.
    \item The free reduction is not considered a separate step in the application of an S-rule to an admissible word.
\end{itemize}

A \emph{computation} of an S-machine $\S$ is a sequence
$\varepsilon = (w_0, \tau_1, w_1, \dots,  \tau_n, w_n)$ such that $n\geq 0$, each $w_i$ is an admissible word over $\H$, each $\tau_i$ is an S-rule of $\S$, and $w_i = w_{i-1} \cdot \tau_i$ for all $i\in \overline{n}$.
The sequence $\rho =\rho_\varepsilon = (\tau_1, \dots, \tau_n)$ is called the \emph{history} of $\varepsilon$ and is said to \emph{support} $\varepsilon$. 
The word $w_0$ is called the \emph{starting word} and $w_n$ the \emph{finishing word} of~$\varepsilon$; alternatively, $\varepsilon$ is said to be a computation \emph{between}~$w_0$ and~$w_n$. The number $n$ is the \emph{length} of the computation (and of the computation path), denoted $n = |\varepsilon| = |\rho|$.  The set of all computations between $w_0$ and $w_n$ is denoted $\varepsilon(w_0,w_n)$. 

For a fixed admissible word $w_0$ and sequence of S-rules $\rho=(\tau_1,\dots,\tau_n)$, there is at most one computation supported by $\rho$ and starting with $w_0$.  If such a computation exists, then it is denoted $\varepsilon(w_0, \rho) = (w_0, \tau_1,  w_1,  \dots, \tau_n, w_n)$. For simplicity, the notation $w_n = w_0 \cdot \rho$ is used to describe $\varepsilon(w_0, \rho)$.  Further, $w_n$ is called the \emph{result} of $\varepsilon(w_0,\rho)$ or of the \emph{application of $\rho$ to $w_0$}. 

\medskip

An alternative construction is now introduced, producing objects called S-graphs.  This construction bears a close resemblance to that of S-machines, which will be expanded upon in \Cref{th-MG}.

\begin{definition}[$S$-graph] \label{def-sgraph}
An \emph{S-graph hardware} $\H$ (or, with abuse of terminology, simply a \emph{hardware} when the context is clear) is a tuple $(\I, \A)$ such that $\I$ is a finite \emph{index set} and $\A=(A_i)_{i\in\I}$ is an $\I$-tuple of (not necessarily disjoint) finite \emph{alphabets}.  It is useful to note that the role of $\A$ in this setting is similar to that of $Y$ in the setting of S-machines. 

Let $L_i = F(A_i)$ and $L_\H= \bigtimes_{i\in\I} L_i$. Typically, the subscript $\H$ is omitted from $L_\H$ without causing ambiguity.  The elements of $L$ are then called \emph{L-words}; however, it is worth stating explicitly that an $L$-word $w\in L$ is not a word, but an $\I$-tuple of words $w_i \in L_i$, i.e $w = (w_i)_{i\in \I}$.

A \emph{transformation} $t$ over the hardware $\H$ is a function $t\colon \I \times\{l, r\} \rightarrow \bigcup L_i$ such that for all $i\in\I$, $t(i, l), t(i, r)\in L_i$. For simplicity, the values of $t$ are also denoted $t(i, l) = t_i^l$ and $t(i, r) = t_i^r$.  It is informative of the intended use to interpret $t$ as a function $t\colon L\rightarrow L$ such that for $w = (w_1, \dots, w_N)\in L$,
$$t(w) = \bigl(t_i^l w_i t_i^r\bigr)_{i\in \I},$$
where $t_i^l w_i t_i^r$ is the concatenation of the corresponding words followed by free reduction. 

A \emph{guard} $g$ over $\H$ is a function $g\colon \I \rightarrow \{\nan\}\cup\left(\bigcup L_i\right)$ such that $g(i)\in\{\nan\}\cup L_i$. As will be made clear in the definitions that follow, the value $\nan$ is a placeholder to indicate that the guard has `no value' in the corresponding index.

A pair $(t, g)$ of a transformation $t$ and a guard $g$ over the hardware $\H$ is called an \emph{action} over $\H$.  The set of all possible actions over $\H$ is denoted by $\ACT_\H$.

Given an action $(t,g)$ over $\H$, the \emph{inverse} action $(t,g)^{-1}=(t',g')$ over $\H$ is defined as follows:
\begin{align*}
	t'(i, l) &= \bigl(t(i, l)\bigr)^{-1} \\
	t'(i, r) &= \bigl(t(i, r)\bigr)^{-1} \\
	g'(i) &= \begin{cases}
			\nan & \text{ if $g(i) = \nan$,} \\
			t_i^l\cdot g(i)\cdot t_i^r & \text{ otherwise.}
		\end{cases}
\end{align*}

Let $\G=(S,E)$ be a finite directed graph, perhaps containing loops or multiple edges.  Suppose $\G$ is equipped with a labelling function $\lab\colon E\rightarrow \ACT_\H$, so that each edge is labelled with an action over $\H$.  Hence, each edge $e\in E$ can be identified with a quadruple $(s_1,s_2,t,g)$ where $s_1=T(e)\in S$ is the tail of $e$, $s_2=H(e)\in S$ is the head of $e$, and $(t,g)$ is the action $\lab(e)$.  Finally, suppose $\G$ is \emph{symmetric} in the sense that for each $e=(s_1,s_2,t,g)\in E$, there exists the \emph{inverse edge} $e'=(s_2,s_1,t',g')\in E$ such that $(t',g')=(t,g)^{-1}$.  Then $\G$ is called an \emph{S-graph} over~$\H$.

Given an S-graph $\G=(S,E)$, the elements of $S$ are called \emph{states} (in addition to being called vertices) to emphasize that S-graphs are a computational model.  As will be made clear in the correspondence established in \Cref{th-MG}, this convention is motivated by the fact that $S$ is the natural counterpart of the states of an S-machine (or of a Turing machine). 
\end{definition}

It is convenient to interpret actions in terms of defining pieces called \emph{fragments}. In particular, given a hardware $\H = (\I, \A)$, for each $i\in\I$, define the set of \emph{left transformation fragments} to be $(\FRAG_\H)_i^{lt} = \{[\i\rightarrow w\i]~|~w\in L_i\}$, the set of \emph{right transformation fragments} to be $(\FRAG_\H)_i^{rt} = \{[\i\rightarrow \i w]~|~w\in L_i\}$, and the set of \emph{guard fragments} to be $(\FRAG_\H)_i^{g} = \{[\i=w]~|~w\in L_i\}$, where all sets are disjoint.  The more general term \emph{fragment} is used to describe any element of $\bigsqcup_{i\in\I}\left((\FRAG_\H)_i^{lt}\sqcup(\FRAG_\H)_i^{rt}\sqcup(\FRAG_\H)_i^g\right)$.  For simplicity, the notation $[\i \rightarrow w_1 \i w_2]$ is permitted to represent the pair of fragments $[\i \rightarrow w_1 \i]\in(\FRAG_\H)_i^{lt}$ and $[\i\rightarrow \i w_2]\in(\FRAG_\H)_i^{rt}$.

Note that for fixed $i\in\I$, the sets $(\FRAG_\H)_i^{lt}$, $(\FRAG_\H)_i^{rt}$ and $(\FRAG_\H)_i^g$ can be identified with three (disjoint) copies of $L_i$, with some stylized notation used to distinguish the elements of each.  Hence, the set of fragments corresponds to the set of triples $(i, w, p)$ such that $i\in \I$, $w\in L_i$, and $p \in \{lt, rt, g\}$.

For $i\in\I$, any element of $(\FRAG_\H)_i^{lt} \sqcup(\FRAG_\H)_{i}^{rt} \sqcup(\FRAG_\H)_{i}^{g}$ is called a \emph{fragment mentioning $i$}.  The fragments $[\i\rightarrow \i1]\in(\FRAG_\H)_i^{rt}$ and $[\i\rightarrow 1\i]\in(\FRAG_\H)_i^{lt}$ are called \emph{trivial}, while all other fragments mentioning $\i$ are called \emph{nontrivial}; note that, following group-theoretic notation, here the symbol $1$ (rather than $\emptyset$) is used to denote the trivial  word.  The set of all fragments over $\H$ is denoted by $\FRAG_\H$.

A set of fragments $F \subseteq \FRAG_\H$ is said to be \emph{valid} if $|F \cap(\FRAG_\H)_i^p| \leq 1$ for every pair $(i,p)\in \I\times\{lt,rt,g\}$.  Further, two valid sets of fragments $F'$ and $F''$ are called \emph{equivalent} if they differ only on trivial fragments, i.e, for any fragment $f \in F'\Delta F''$, there exists $i\in\I$ such that either $f=[\mathbf{i} \rightarrow \mathbf{i}1]$ or $f=[\mathbf{i} \rightarrow 1\mathbf{i}]$. Note that this term defines an equivalence relation on the valid sets of fragments.

An action $A = (t, g)$ over the hardware $\H$ is identified with the associated valid set of fragments $\F_\H(A)\subseteq\FRAG_\H$ constructed by taking:
\begin{itemize}
    \item{\makebox[3cm]{$[\mathbf{i}\rightarrow w\mathbf{i}] \in \F_\H(A)$\hfill} if $t(i, l) = w$,}
    \item{\makebox[3cm]{$[\mathbf{i}\rightarrow \mathbf{i}w] \in \F_\H(A)$\hfill} if $t(i, r) = w$, and}
    \item{\makebox[3cm]{$[\mathbf{i} = w] \in \F_\H(A)$\hfill} if $g(i) = w$.}
\end{itemize}
Note that $\F_\H(A)$ is indeed a valid set of fragments.  Further, per this construction, if $g(i)=\nan$, then $\F_\H(A) \cap (\FRAG_\H)_i^g=\emptyset$. 
   
Conversely, a valid set of fragments $F\subseteq\FRAG_\H$ is identified with the action $\A_\H(F)= (t, g)$ over $\H$ defined by:
\begin{itemize}
\item $t(i,l)=
\begin{cases}
w & \text{if }[\i\to w\i]\in F\cap(\FRAG_\H)_i^{lt} \\
1 & \text{if }F\cap(\FRAG_\H)_i^{lt}=\emptyset
\end{cases}
$
\vspace{.25cm}

\item $t(i,r)=
\begin{cases}
w & \text{if }[\i\to \i w]\in F\cap(\FRAG_\H)_i^{rt} \\
1 & \text{if }F\cap(\FRAG_\H)_i^{rt}=\emptyset
\end{cases}
$
\vspace{.25cm}

\item $g(i)=
\begin{cases}
w & \text{if }[\i=w]\in F\cap(\FRAG_\H)_i^g \\
\nan & \text{if }F\cap(\FRAG_\H)_i^g=\emptyset
\end{cases}
$

\end{itemize}

It is straightforward to check that for any pair $F'$ and $F''$ of equivalent valid subsets of $\FRAG_\H$, then $\A_\H(F')=\A_\H(F'')$.  Moreover, if $A$ is an action over $\H$ and $F$ is a valid subset of $\FRAG_\H$, then $\A_\H(\F_\H(A))=A$ and $\F_\H(\A_\H(F))$ is equivalent to $F$.  

\vspace{.25cm}

Given this correspondence, actions and valid fragment sets will be discussed interchangeably hereafter.  As such, for any $S$-graph $\G$ over the hardware $\H$, if $e$ is an edge of $\G$ labelled by the action $\act_e$, then this label will typically be represented by a valid fragment set $F$ that is equivalent to $\F_\H(\act_e)$.  This will simplify the presentation of an $S$-graph, as it allows for the suppression of some of the action's data without the loss of any information.  Note that, in this case, $F\subseteq\F_\H(\act_e)$; hence, the notation $f\in e$ is used in place of $f\in\F_\H(\act_e)$ for simplicity.

Thus, the valid fragment set $F=\{[\mathbf{1}\rightarrow a\mathbf{1}b], [\mathbf{1}=a], [\mathbf{2} \rightarrow \mathbf{2}c]\}$ is identified with an action whose transformation $t$ satisfies $t(1, l)= a$, $t(1, r)= b$, $t(2, l) = 1$, and $t(2, r) = c$, and whose guard $g$ satisfies $g(1) = a$ and $g(2) = \nan$.  However, in order to state the action properly, the hardware $\H=(\I,\A)$ over which this action is being defined must also be described. One might infer that a natural candidate for this particular case would be $\I = \{1,2\}$, $A_1 = \{a, b\}$, and $A_2 = \{c\}$. It is possible, however, that $\I = \{1,2, e\}$, $A_1 = A_2 = \{a,b,c\}$, and $A_e = \{\delta\}$.  

To avoid such ambiguity, the definitions of fragment and action are slightly expanded.  In particular, fix countable sets $\I^*$ and $\A^*$ of all possible indices and all possible letters, respectively, and set $L^*=F(\A^*)$.  Then, it is henceforth understood that for a hardware $\H=(\I,\A)$, $\I$ is a (finite) subset of $\I^*$ and $\A$ is an $\I$-tuple of (finite) subsets of $\A^*$.  

Similarly, a fragment is understood to be a triple $(i, w, p)$ satisfying $i\in \I^*$, $w\in L^*$, and $p\in \{lt, rt, g\}$.  Letting $\FRAG$ be the set of all possible fragments, $\FRAG_i$ be the subset consisting of all fragments with fixed $i\in\I^*$, and $\FRAG_i^p$ be the subset of $\FRAG_i$ with fixed $p\in\{lt,rt,g\}$, a finite set of fragments $F$ is valid if $|F \cap \FRAG_i^{p}| \leq 1$ for all $i\in I^*$ and all $p\in \{lt, rt, g\}$.  As above, a fragment $(i,w,p)$ is called trivial if $w=1$ and $p\in\{lt,rt\}$ and two valid sets of fragments are then equivalent if they differ only on trivial fragments.

Let $\H^*$ be the collection of all possible $S$-graph hardwares, i.e $(\I,\A)\in\H^*$ if and only if $\I$ is a finite subset of $\I^*$ and $\A=(A_i)_{i\in\I}$ such that for all $i\in\I$, $A_i$ is a finite subset of $\A^*$.

Define the relation $\leq$ on $\H^*$ given by $\H_1\leq\H_2$ for $\H_j=(\I_j,\A_j)$ if and only if $\I_1\subseteq\I_2$ and $\A_1(i)\subseteq\A_2(i)$ for all $i\in\I_1$.  The relation $\leq$ is then a partial order on $\H^*$, and in fact a complete meet-semilattice.

Further, a hardware $\H = (\I, \A)\in\H^*$ is said to be \emph{compatible} with a valid fragment set $F$ if for all $(i,w,p)\in F$, $i\in\I$ and $w\in F(A_i)$.  Thus, for any valid fragment set $F$, there exists a unique minimal hardware $\H_F$ compatible with $F$.  In particular, a hardware $\H$ is compatible with $F$ if and only if $\H_F \leq \H$.

A valid fragment set $F$ can be interpreted as a fragment set (and thus an action) over $\H_F$. Thus, returning to the motivating example, for $F = \{[\mathbf{1}\rightarrow a\mathbf{1}b], [\mathbf{1}=a], [\mathbf{2} \rightarrow \mathbf{2}c]\}$, the implied hardware is indeed the natural candidate $\H_F= (\I, \A)$ where $\I = \{1,2\}$, $A_1 = \{a, b\}$, and $A_2 = \{c\}$.

However, there are contexts where a valid fragment set $F$ is used to define an action $\act$ over a hardware $\H$ that is not necessarily $\H_F$.  In this case, $\H_F\leq\H$ and $\act$ is interpreted as $\A_\H(F)$.

Note that if $F'$ and $F''$ are two equivalent valid sets of fragments, then it does not generally hold that $\H_{F'}=\H_{F''}$.  In fact, $\H_{F'}$ and $\H_{F''}$ may be incomparable under $\leq$.  However, for any two valid sets of fragments $F'$ and $F''$, there exists a unique minimal hardware $\H$ such that $\H_{F'}\leq\H$ and $\H_{F''}\leq\H$, so that $\A_\H(F')=\A_\H(F'')$.

\begin{definition}[Computation of an S-graph]\label{def-computation}
Let $(t,g)$ be an action over a hardware $\H=(\I,\A)$.  The guard $g$ \emph{accepts} a word $w=(w_i)_{i\in\I}\in L_\H$ if for all $i\in \I$, either $g(i) = \nan$ or $g(i)=w_i$.  
%Further, the transformation $t$ is viewed as a map $L\to L$ defined by $t(w) = t w = (t(i, l)\cdot w\cdot t(i, r))_{i\in \I}$.  

Given an $S$-graph $\G$ over the hardware $\H$, a pair $(w, s)\in L_\H\times S_\G$ is called a \emph{configuration} of $\G$.  An edge $e = (s_1, s_2, t, g)$ of $\G$ is said to be \emph{applicable} to the configuration $(w, s)$ if $s_1 = s$ and if $g$ accepts $w$.  In this case, the result of the \emph{application} of $e$ to $(w, s)$ is the configuration $e(w, s) = (t(w), s_2)$ (where, as mentioned above, $t(w)$ is defined by interpreting $t$ as a mapping $L_\H\to L_\H$).  Note that, per the definitions of application and inverse action, the edge $e$ is applicable to the configuration $(w, s)$ if and only if the inverse edge $e^{-1}$ is applicable to the configuration resulting from this application, and moreover $e^{-1}(e(w, s)) = (w, s)$.

A \emph{computation} of an S-graph $\G$ is then a sequence $$\eps=\bigl((w_0, s_0), e_1, (w_1, s_1), \dots, e_n, (w_n, s_n)\bigr)$$ where $n\geq 0$ and for all $i\in\overline{n}$, $e_i$ is an edge of $\G$ applicable to the configuration $(w_{i-1},s_{i-1})$ of $\G$ satisfying $e_i (w_{i-1}, s_{i-1})=(w_i,s_i)$.  

Informally, a computation starts with an $L$-word $w$ and a state $s\in S_\G$, traversing edges of the graph and changing both the word and the state accordingly.

Note that if $n=0$, $\eps$ is simply the single configuration $(w_0,s_0)$.  Moreover, per this construction, for every $i\in \overline{n}$, $s_{i-1} = T(e_i)$ and $s_i = H(e_i)$.  As a result, the computation $\eps$ corresponds naturally to a path $\rho_\varepsilon = e_1, \dots, e_n$ in $\G$, called the \emph{computation path} of $\eps$.  In this case, $\rho_\eps$ is said to \emph{support} $\eps$.

The configuration $(w_0, s_0)$ is called the \emph{starting configuration} and $(w_n, s_n)$ the \emph{finishing configuration} of $\varepsilon$; alternatively, $\varepsilon$ is called a computation \emph{between} $(w_0, s_0)$ and $(w_n, s_n)$. The number $n$ is called the \emph{length} of the computation $\eps$ (and of the computation path $\rho_\eps$), denoted $n = |\varepsilon| = |\rho_\eps|$. The notation $\varepsilon(w_0,s_0,w_n,s_n)$ is used to denote the set of all computations between $(w_0,s_0)$ and $(w_n,s_n)$. 

Notice that for any configuration $(w_0, s_0)$ and any path $\rho=e_1,\dots,e_n$ in $\G$, there is at most one computation $\varepsilon$ supported by $\rho$ and with starting configuration $(w_0, s_0)$.  Given the existence of such a computation, then it is of the form $$\varepsilon = \bigl((w_0, s_0), e_1, (w_1, s_1), \dots, e_n, (w_n, s_n)\bigr)$$ where for each $i\in\overline{n}$, $(w_i, s_i) = e_i \dots e_1 (w_0, s_0)$.  In this case, the finishing configuration $(w_n, s_n)$ of $\eps$ is called the \emph{result} of $\eps$; alternatively, $(w_n,s_n)$ is said to be the result of the application of $\rho$ to $(w_0, s_0)$ and is denoted $(w_n,s_n)=\rho(w_0,s_0)$ instead of $e_n\dots e_1(w_0,s_0)$.

\end{definition}

Two S-graphs $\G$ and $\G'$ are said to be \emph{isomorphic}, denoted $\G \cong \G'$, if one can pass from $\G$ to $\G'$ simply by renaming the states and by reindexing.  In other words, the graphs are isomorphic if there are bijections between the states of these graphs and between the index sets of the corresponding hardwares which make them identical. 

On the other hand, two S-machines $\S$ and $\S'$ are said to be \emph{isomorphic}, denoted $\S \cong \S'$, if one can pass from $\S$ to $\S'$ by simply renaming the set of states in the hardware of $\S$.  In other words, if $\S$ is an $S$-machine with hardware $\H=(Y,Q)$ with $Y=(Y_i)_{i\in\overline{N}}$ and $Q=(Q_i)_{i\in\overline{N+1}}$ and $\S'$ is an $S$-machine with hardware $\H'=(Y',Q')$ with $Y'=(Y_i')_{i\in\overline{N'}}$ and $Q'=(Q_i')_{i\in\overline{N'+1}}$, then the machines are isomorphic if $N=N'$ and there are bijections between $Q_i$ and $Q_i'$ which make $\S$ and $\S'$ identical.

As opposed to isomorphisms between S-graphs, isomorphisms between S-machines will not be of use.   This observation will be elaborated upon in the examples following \Cref{th-MG}.  Further, although it would seem natural to permit the renaming of the letters of the alphabets $A_i$ (or of the tape letters of the parts $Y_i$), this operation is not considered to preserve isomorphism.  This is because these letters will typically have a fixed role, so that allowing such a renaming would merely add a layer of convolution that would provide no seeming benefit.

\medskip

Let $\H=(Y,Q)$ be an $S$-machine hardware with $Y = (Y_i)_{i\in \overline{N}}$ and $Q = (Q_i)_{i\in \overline{N+1}}$.  Define the corresponding $S$-graph hardware $\G(\H)=(\I,\A)$ by setting $\I=\overline{N}$ and $\A=(A_i)_{i\in\overline{N}}$, where $A_i=Y_i$ for all $i\in\overline{N}$.  Further, define the set $S_\H$ by $S_\H=\bigtimes_{i\in\overline{N+1}}Q_i$.

Then, define the bijection let $\mu_\H:L_\H\to L_{\G(\H)}\times S_\H$ by
$$\mu_\H(q_1w_1q_2\dots w_Nq_{N+1})=\bigl((w_i)_{i\in\overline{N}},(q_i)_{i\in\overline{N+1}}\bigr)$$

Conversely, for an $S$-graph hardware $\H=(\I,\A)$ with $|\I|=N$, fix a bijection $\eta:\I\to\overline{N}$.   Let $\G=(S_\G,E_\G)$ be an $S$-graph over $\H$.  Then, define the $S$-machine hardware $\S_{\G,\eta}(\H)=(Y,Q)$ by setting $Y=(Y_i)_{i\in\overline{N}}$ and $Q=(Q_i)_{i\in\overline{N+1}}$, where $Y_i=A_{\eta^{-1}(i)}$ for all $i\in\overline{N}$, $Q_1=S_\G$, and $Q_j$ is the singleton $\{q_j^*\}$ for $j\in\overline{2,N+1}$.

Then, define the bijection $\mu_{\G,\eta}:L_{\S_{\G,\eta}(\H)}\to L_\H\times S_\G$ by
$$\mu_{\G,\eta}(q_1w_1q_2\dots w_Nq_{N+1})=\bigl((w_{\eta(i)})_{i\in\I},q_1\bigr)$$

\Cref{th-MG} below establishes a correspondence between S-machines and S-graphs.  Its statement is rather technical and perhaps can be better understood by looking at the examples that immediately follow it. 

\begin{theorem}\label{th-MG} \

\begin{enumerate}[label=({\alph*}), leftmargin=*]

\item \emph{(From $S$-machine to $S$-graph)}  Let $\S$ be an S-machine over the hardware $\H$.  Then there exists an $S$-graph $\G(\S)=(S_\H,E)$ over the hardware $\G(\H)$ such that $E=\bigsqcup_{\tau\in\S}E_\tau$ and which \emph{`simulates'} $\S$ in the following sense:

\renewcommand{\theenumii}{\roman{enumii}}

\begin{enumerate}

\item For any computation $\eps=(w_0,\tau_1,w_1,\dots,\tau_n,w_n)$ of $\S$, there exists a unique computation $$\varphi_{\G(\S)}(\eps)=\bigl(\mu_\H(w_0),e_1,\mu_\H(w_1),\dots,e_n,\mu_\H(w_n)\bigr)$$ of $\G(\S)$ such that $e_i\in E_{\tau_i}$ for all $i\in\overline{n}$

\item For any computation $\delta=\bigl((w_0,s_0),e_1,(w_1,s_1),\dots,e_n,(w_n,s_n)\bigr)$ of $\G(\S)$, there exists a computation $$ \ \ \ \ \  \varphi_\S(\delta)=\bigl(\mu_\H^{-1}(w_0,s_0),\tau_1,\mu_\H^{-1}(w_1,s_1),\dots,\tau_n,\mu_\H^{-1}(w_n,s_n)\bigr)$$
of $\S$ where $\tau_i$ is the (unique) $S$-rule of $\S$ satisfying $e_i\in E_{\tau_i}$.

\end{enumerate}

\vspace{.5cm}

\item \emph{(From $S$-graph to $S$-machine)} Let $\G=(S_\G,E_\G)$ be an $S$-graph over the hardware $\H=(\I,\A)$ and let $\eta:\I\to\overline{N}$ be a bijection.  Then there exists an $S$-machine $\S(\G,\eta)$ over the hardware $\S_{\G,\eta}(\H)$ which \emph{`simulates'} $\G$ in the following sense:

\begin{enumerate}

\item There is a bijection $\phi$ between the $S$-rules of $\S(\G,\eta)$ and the edges $E_\G$

\item For any computation $\eps=(w_0,\tau_1,w_1,\dots,\tau_n,w_n)$ of $\S(\G,\eta)$, there exists a computation $$\psi_{\G,\eta}(\eps)=\bigl(\mu_{\G,\eta}(w_0),\phi(\tau_1),\mu_{\G,\eta}(w_1),\dots,\phi(\tau_n),\mu_{\G,\eta}(w_n)\bigr)$$ of $\G$.

\item For any computation $\delta=\bigl((w_0,s_0),e_1,(w_1,s_1)\dots,e_n,(w_n,s_n)\bigr)$ of $\G$, there exists a computation $$\ \ \ \ \ \psi_{\S(\G,\eta)}(\delta)=\bigr(\mu_{\G,\eta}^{-1}(w_0,s_0),\tau_1,\mu_{\G,\eta}^{-1}(w_1,s_1),\dots,\tau_n,\mu_{\G,\eta}^{-1}(w_n,s_n)\bigr)$$ of $\S$ where $\tau_i=\phi^{-1}(e_i)$ for all $i$.

\end{enumerate}

\vspace{.5cm}

\item \emph{(Partial Duality)}  Let $\G'$ be an $S$-graph over the hardware $\H=(\I,\A)$.  Then for any enumeration $\eta:\I\to\overline{N}$, $\G(\S(\G',\eta))\cong\G'$.

\end{enumerate}

\end{theorem}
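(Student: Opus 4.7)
The plan is to unwind the composite construction $\G(\S(\G',\eta))$ directly against the data of $\G'$, exhibiting the state bijection and index reindexing required by the definition of $S$-graph isomorphism. Write $\H=(\I,\A)$ with $|\I|=N$, $\S=\S(\G',\eta)$, and $\G=\G(\S)$ throughout.

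First I would handle the hardware. By construction $\S$ sits over $\S_{\G',\eta}(\H)=(Y,Q)$ with $Y_i=A_{\eta^{-1}(i)}$, $Q_1=S_{\G'}$, and $Q_j=\{q_j^*\}$ for $j\in\overline{2,N+1}$. Applying the $\G(\cdot)$ construction to this hardware yields the $S$-graph hardware $(\overline{N},(A_{\eta^{-1}(i)})_{i\in\overline{N}})$, which is precisely what one obtains from $\H$ by relabelling indices along $\eta$. For the states, $S_\G=\bigtimes_{i\in\overline{N+1}}Q_i$; projection onto the first factor is a bijection $S_\G\to S_{\G'}$, since all other factors are singletons, and this supplies the required renaming of states.

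Next I would handle the edges. Part (b)(i) supplies a bijection $\phi\colon E_{\G'}\to\S$, and part (a) gives the decomposition $E_\G=\bigsqcup_{\tau\in\S}E_\tau$. The crucial observation is that each $S$-rule $\tau=\phi(e)$ is manufactured from a single edge $e\in E_{\G'}$ with tail $T(e)$ and head $H(e)$, and because every $Q_j$ with $j\geq 2$ is a singleton, the only configuration on which $\tau$ can act nontrivially in its $Q_1$-state letter has $T(e)$ there (passing to $H(e)$). Consequently $|E_{\phi(e)}|=1$, and the composition ``$\phi$, then select the unique edge'' yields a bijection $E_{\G'}\to E_\G$ whose tail and head maps are compatible with the state bijection constructed above.

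The remaining and main step is to verify that this bijection preserves edge labels, up to the reindexing by $\eta$. This reduces to showing that the two constructions $\S(\cdot,\eta)$ and $\G(\cdot)$ are inverse at the level of per-tape data: the former packages the transformation and guard of $e$ into the subrules $[U_i\to V_i]$ of $\phi(e)$, while the latter reads exactly these fragments back off $U_i$ and $V_i$ in the inverse manner. The main obstacle is not conceptual but notational -- one must track two layers of indexing (the original $i\in\I$ on $\G'$ and the reindexed $\eta(i)\in\overline{N}$ on $\G$) and match guard, left-transformation, and right-transformation data through both translations. Once this bookkeeping is in order, the claimed isomorphism follows without further computation.
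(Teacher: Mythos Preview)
Your proposal is correct and follows essentially the same route as the paper: set up the index bijection via $\eta$, the state bijection via projection onto the $Q_1$-coordinate (the paper phrases this in terms of the restrictions of $\mu_{\S_{\G',\eta}(\H)}\circ\mu_{\G',\eta}^{-1}$, but this is the same map), and then defer the label-preservation check to the construction. You are somewhat more explicit than the paper in spelling out the edge bijection---the paper does not isolate the fact that $|E_{\tau_e}|=1$, while you do---but both arguments ultimately rest on the same ``unwind the two translations'' bookkeeping, and neither carries it out in full detail.
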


\bigskip

\begin{proof}

(a) Let $\tau = [U_t \rightarrow v_t^{l_t-1}V_tv_t^{r_t}]_{t\in \overline{M}}\in\S$, where for each $t\in\overline{M}$,
	\begin{align*}
	U_t &= q^{l_t}_t u_t^{l_t}  q^{l_t +1}_t \dots u^{r_t - 1}_t q_t^{r_t}, \\
	V_t &= p^{l_t}_t v_t^{l_t}  p^{l_t +1}_t \dots v^{r_t - 1}_t p_t^{r_t},
	\end{align*}
such that $t\in \overline{M}$, $q_t^j, p_t^j \in Q_j$, and $u_t^j, v_t^j \in F(Y_j)$.  Let $I_\tau = \bigsqcup_{i\in \overline{M}} \overline{l_t, r_t} \subseteq \overline{N+1}$ (noting that disjointness follows from condition (S2)) and let $J_\tau = \overline{N+1} - I_\tau$.  
	
	Notice that for any $i\in I_\tau$, there exists a unique $t(i)\in\overline{M}$ such that $i\in \overline{l_{t(i)}, r_{t(i)}}$; for simplicity, denote the corresponding $q_{t(i)}^i$ and $p_{t(i)}^i$ by $q_*^i$ and $p_*^i$, respectively.   
	
	Define the valid set of fragments $F_\tau\subseteq\FRAG_{\G(\H)}$ by
	\begin{align*}
		F_{\tau}^{lt}&= \bigl\{[\i \rightarrow (v_t^i) (u_t^i)^{-1} \i]  ~\bigl|~t\in \overline{M}, i\in \overline{l_t, r_t-1} \bigr\}\sqcup \bigl\{[\mathbf{r_t} \rightarrow v_t^{r_t}  \mathbf{r_t}] ~\bigl|~t\in \overline{M} \bigr\} \\
		F_{\tau}^{rt}&=\bigl\{[\mathbf{(l_t-1)} \rightarrow  \mathbf{(l_t-1)} v_t^{l_t-1}]~\bigl|~t\in \overline{M} \bigr\} \\
		F_{\tau}^g &= \bigl\{[\i = u_t^i] ~\bigl|~t\in \overline{M}, i\in \overline{l_t, r_t-1} \bigr\}
	\end{align*}	
	Then, let $(t_\tau,g_\tau)=\A(F_\tau)\in\ACT_{\G(\H)}$.
	
	Let $\Theta_\tau = \bigtimes_{j\in J_\tau} Q_j$ and $\sigma$ and $\rho$ be $I_\tau$-tuples $\sigma = (q_*^i)_{i\in I_\tau}$ and $\rho = (p_*^i)_{i\in I_\tau}$.  Then, for every $\theta\in\Theta_\tau$, define the quadruple $e_{\tau,\theta}=\bigl(\theta \sqcup \sigma, \theta \sqcup \rho,  t_{\tau}, g_{\tau}\bigr)$.
	
	Finally, define $E_\tau$ to be the set of quadruples $\left\{ e_{\tau, \theta} ~|~ \theta \in \Theta_\tau \right\}$ and define the labelled directed graph $\G(\S)=(S_\H,E)$ with $E=\bigsqcup_{\tau\in\S}E_\tau$.
	
	Let $e=(s_1,s_2,t_\tau,g_\tau)\in E_{\tau}$ where $\tau\in\S$ as above.  By the symmetry of the $S$-machine, $\tau^{-1}\in\S$, where $\tau^{-1}=[V_t\to(v_t^{l_t-1})^{-1}U_t(v_t^{r_t})^{-1}]_{t\in\overline{M}}$.  Further, fix $\theta\in\Theta_\tau$ such that $e=e_{\tau,\theta}$, so that $s_1=\theta\sqcup\sigma$ and $s_2\sqcup\rho$.  Then, there exists an edge $e'=e_{\tau^{-1},\theta}\in E$, so that $$e'=(\theta\sqcup\rho,\theta\sqcup\sigma,t_{\tau^{-1}},g_{\tau^{-1}})=(s_2,s_1,t_{\tau^{-1}},g_{\tau^{-1}})$$ where $(t_{\tau^{-1}},g_{\tau^{-1}})=\A(F_{\tau^{-1}})$.
	
	By construction, if $g_\tau(i)\neq\nan$ for $i\in\overline{N}$, then $g_\tau(i)=u_t^i$ for some $t\in\overline{M}$ such that $i\in\overline{l_t,r_t-1}$.  But then $[\i=v_t^i]\in F_{\tau^{-1}}^g$, so that $g_{\tau^{-1}}(i)=v_t^i=t_\tau(i,l)u_t^it_\tau(i,r)$.  Similarly, it follows from the construction that for all $i\in\overline{N}$ and $p\in\{l,r\}$, $t_{\tau^{-1}}(i,p)=t_\tau(i,p)^{-1}$.  
	
	Hence, $(t_{\tau^{-1}},g_{\tau^{-1}})=(t_\tau,g_\tau)^{-1}$, and so $e'=e^{-1}$.  Thus, $\G$ is indeed an $S$-graph.
	
	That $\G$ simulates $\S$ follows directly from the construction.
	
	\vspace{.25cm}
	
	(b)  For any edge $e=(s_1, s_2, t, g)\in E_\G$, define a \emph{maximal guarded interval} (with respect to $\eta$), or \emph{mgi} for short, as an ordered pair $(l,r)\in(\overline{N+1})^2$ such that $l\leq r$ and $\overline{l,r}$ is a maximal subset of $\overline{N+1}$ such that $g(\eta^{-1}(i))\neq\nan$ for all $i\in \overline{l, r-1}$.  Then, let $\MGI_\eta(e) = \{(l_i, r_i)\}_{i\in \overline{M}}$ be the set of all these intervals enumerated in the increasing order by $l_i$. 
	
	In this setting, note that, by construction, $r_i < l_{i+1}$ for all $i\in \overline{M-1}$.  In fact, as an mgi is permitted to be a pair of the form $(n,n)$, for any $i\in \overline{N+1}$ there exists a $j\in \overline{M}$ such that $i\in \overline{l_j, r_j}$; as a result, $r_{i} + 1 = l_{i+1}$, for $i\in \overline{M-1}$.
	
	Now, define the $S$-rule $\tau_e = [U_i \rightarrow (t_{l_i - 1}^r)V_i(t_{r_i}^l)]_{i\in\overline{M}}$ where
\begin{align*}
			U_i &= \gamma_{l_i}~g_{l_i}~\gamma_{l_i +1}~\dots~g_{r_i - 1}~\gamma_{r_i},  \\
			V_i &= \gamma'_{l_i}~t_{l_i}^l~g_{l_i}~t_{l_i}^r~\gamma'_{l_i +1}~t_{l_i+1}^l~g_{l_i+1}~\dots~g_{r_i-1}~t_{r_i -1}^r~\gamma'_{r_i},
		\end{align*}
such that $\gamma_1 = s_1$, $\gamma_1' = \ s_2$, $\gamma_j = \gamma_j' = q_j^*$ for $j \in \overline{2, N+1}$,  $g_j = g(\eta^{-1}(j))$, $t_j^l = t(\eta^{-1}(j), l)$, and $t_j^r = t(\eta^{-1}(j), r)$ for $j\in \overline{N}$.

	Then let $\S=\{\tau_e\mid e\in E_\G\}$.  Clearly, this establishes a bijection $\phi:\S\to E_\G$ given by $\phi(\tau_e)=e$.
	
	Let $e^{-1}=(s_2,s_1,t',g')$.  As $g(i)=\nan$ if and only if $g'(i)=\nan$, it follows immediately that $\MGI_\eta(e^{-1})=\MGI_\eta(e)$.  Further, $t'(i,p)=t(i,p)^{-1}$ for all $i\in\overline{N}$ and $p\in\{l,r\}$.  Hence, by construction, $\tau_e^{-1}=\bigl[V_i\to(t_{l_i-1}^r)^{-1}U_i(t_{r_i}^l)^{-1}\bigr]_{i\in\overline{M}}=\tau_{e^{-1}}\in\S$, and thus $\S$ is an $S$-machine.
	
	As in (a), that $\S$ simulates $\G$ follows directly from construction.
	
	\vspace{.25cm}
	
	(c) Given an $S$-machine $\S$ over a hardware $\H$, the restriction of $\mu_\H$ to the admissible words with trivial tape words induces a bijection between the states of $\S$ and the states of $\G(\S)$.
	
	Similarly, given an $S$-graph $\G$ over a hardware $\H=(\I,\A)$ and a bijection $\eta:\I\to\overline{N}$, the restriction of $\mu_{\G,\eta}$ to the admissible words over $\S_{\G,\eta}(\H)$ with trivial tape words induces a bijection between the states of $\S(\G,\eta)$ and those of $\G$.
	
	Hence, given an $S$-graph $\G'$ and an enumeration $\eta$ of the index set of its corresponding hardware, $\mu_{\S_{\G',\eta}(\H)}\circ\mu_{\G',\eta}^{-1}$ induces a bijection $\omega$ between the states of $\G'$ and those of $\G(\S(\G',\eta))$.
	
	Further, the index set of the hardware of $\G(\S(\G',\eta))$ is $\overline{N}$, so that $\eta$ induces a bijection between the index set $\I$ of the hardware of $\G'$ and that of the hardware of $\G(\S(\G',\eta))$. 
	
	That $\omega$ and $\eta$ induce an isomorphism of the $S$-graphs $\G'$ and $\G(\S(\G',\eta))$ is a direct consequence of the construction of the latter.

\end{proof}
	 
There are several notes to make about \Cref{th-MG}:

\setlength{\leftmargin}{0pt}

\begin{enumerate}[leftmargin=*]

\item Given an $S$-graph $\G$, the particular enumeration $\eta$ used to construct the $S$-machine $\S(\G,\eta)$ will typically be inconsequential.  Hence, the enumeration will often be disregarded in what follows, i.e the notation $\S(\G)$ will be used in place of $\S(\G,\eta)$, $\mu_\G$ in place of $\mu_{\G,\eta}$, etc.

\item In \Cref{th-MG}(a), $\varphi_\S$ and $\varphi_{\G(\S)}$ are natural inverses, and so are bijections between the computations of $\S$ and the computations of $\G(\S)$.

\noindent Similarly, in (b), $\psi_\G$ and $\psi_{\S(\G)}$ are inverses, and hence bijections between the computations of $\G$ and those of $\S(\G)$.  

\item Given an $S$-machine hardware $\H$, the \emph{natural correspondence} $\mu_\H$ does not depend on the particular $S$-machine $\S$ over $\H$ that is being considered.  

\noindent Conversely, in the $S$-graph setting, the construction of the \emph{natural correspondence} $\mu_{\G,\eta}$ differs for distinct $S$-graphs $\G$ over $\H$ and for distinct enumerations $\eta$ of the index set of $\H$.

\item While \Cref{th-MG}(c) exhibits the `Partial Duality' of this construction, \Cref{ex-G2S} below will show that `Duality' does not hold: If $\S'$ is an $S$-machine over the hardware $\H=(Y,Q)$ where $Y=(Y_i)_{i\in\overline{N}}$ and $\eta:\overline{N}\to\overline{N}$ is a bijection, then it does not necessarily hold that $\S' \cong\S(\G(\S'), \eta)$.

\end{enumerate}

\begin{example}[S-machine to S-graph] \label{ex-S2G}
	Let $\S$ be an S-machine consisting of the $S$-rules $\tau_1^{\pm1}$ and $\tau_2^{\pm1}$, where
	\begin{align*}
		\tau_1 =&[p_2'q_2' \rightarrow p_2'q_2',~p_1aq_1r_1 \rightarrow \delta^{-1}p_2a^2q_1r_1\delta],  \\
		\tau_2 =&[p_2 \rightarrow p_2a].
	\end{align*}
	
	The definition of these rules alone is not enough to distinguish the tape letters from the state letters.  In fact, if the hardware of $\S$ is $\H=(Y,Q)$ such that $Y=(Y_i)_{i\in\overline{N}}$ and $Q=(Q_i)_{i\in\overline{N+1}}$, then he value of $N$ cannot be determined, even assuming that for each $i\in\overline{N+1}$, a letter from $Q_i$ appears in one of the rules. 
	
	It can be discerned, though, that $p_2'$, $q_2'$, $p_1$, $p_2$, and $r_1$ are state letters, while $\delta$ and $a$ are tape letters.  Indeed, there exist $i,j,k\in\overline{N+1}$ satisfying $i+1<j<k<N$ such that $p_2' \in Q_i$, $q_2' \in Q_{i+1}$, $p_1, p_2 \in Q_j$, $r_1 \in Q_k$, $\delta \in Y_{j-1} \cap Y_k$, and $a\in Y_j$. However, despite the suggestive name, it cannot be determined whether $q_1$ is a state letter or a tape letter. In the former case, $q_1 \in Q_{j+1}$ and $k = j+2$; in the latter, $q_1 \in Y_j$ and $k = j+1$.
	
	To rectify this, suppose in this particular case that $N=5$, $Q_1 = \{p_2'\}$, $Q_2 = \{q_2'\}$, $Q_3 = \{p_1, p_2\}$, $Q_4 = \{q_1, q_2\}$,  $Q_5 = \{r_1\}$, $Q_6 = \{E\}$, $Y_1 = \emptyset$, $Y_2 = \{\delta\}$, $Y_3 = \{a\}$, $Y_4 = \{\delta\}$, and $Y_5 = \{\delta\}$.  Hence, using the terminology in the previous paragraph, $q_1$ is a state letter, $i=1$, $j=3$, and $k=5$.  
	 
	%We also know that the alphabet $Y_{k-1}$ (which we now know to be $Y_4$) contains a letter $\delta$, which only appears in the rules in positions pertinent to $Y_2$ and $Y_5$, also we know that $Y_1$ is empty, which is meaningless, but not forbidden by the definition. Finally, we could have known before that there should be $Q_{k+1}$, but we knew nothing about its letters; now we know that $Q_{k+1} = Q_6 = \{E\}$.
	
	Note also that the makeup of the inverse rules $\tau_1^{-1}$ and $\tau_2^{-1}$ has not been explicitly stated, as will be a common practice hereafter.  For the sake of clarity, these inverse rules are:
	\begin{align*}
		\tau_1^{-1} =& [p_2'q_2' \rightarrow p_2'q_2',~p_2a^2q_1r_1\rightarrow \delta p_2aq_1r_1\delta^{-1}],  \\
		\tau_2^{-1} =&[p_2 \rightarrow p_2 a^{-1}].
	\end{align*}
	
	Now, define the $S$-graph hardware $\G(\H)=(\I,\A)$ by $\I = \overline{5} = \{1, 2,3,4,5\}$ and $\A=(A_i)_{i\in\overline{N}}$ where $A_1 = \emptyset$, $A_2 = \{\delta\}$, $A_3 = \{a\}$, $A_4 = \{\delta\}$, and $A_5 = \bigl\{\delta\}$.  Further, define $\S_\H$ by
	\begin{align*}
	S_\H = \{&(p_2', q_2', p_1, q_1, r_1, E),(p_2', q_2', p_1, q_2, r_1, E), \\ &(p_2', q_2', p_2, q_1, r_1, E),(p_2', q_2', p_2, q_2, r_1, E)\bigr\}.
	\end{align*}
	
	Now, per the construction of \Cref{th-MG}, define the $S$-graph $\G(\S)=(\S_\H,E)$ such that $E = E_{\tau_1} \sqcup E_{\tau_2} \sqcup E_{\tau_1^{-1}} \sqcup E_{\tau_2^{-1}}$, where these subsets are defined as follows.
	
	Because there exists a letter from each of $Q_1, \dots, Q_5$ in the definition of $\tau_1$, $J_{\tau_1}=\{6\}$. So, $|E_{\tau_1}| = |Q_6| = 1$, i.e there exists one edge $e_{\tau_1}$ corresponding to this rule.  By, construction, this edge is:
	\begin{align*}
		e_{\tau_1} = \Bigl(
			&(p_2', q_2', p_1, q_1, r_1, E), \\
			& (p_2', q_2', p_2, q_1, r_1, E), \\
			& \left\{ [\mathbf{2} \rightarrow \mathbf{2}\delta^{-1}],~ 
				[\mathbf{3} \rightarrow a\mathbf{3}],~
				[\mathbf{5} \rightarrow \delta\mathbf{5}]\right\}, \\
			& \left\{ [\mathbf{1} = 1],~[\mathbf{3} = a],~[\mathbf{4} = 1] \right\} 
		\Bigr).
	\end{align*}
	Here, the fragment $[\mathbf{3} \rightarrow a\mathbf{3}]$ is obtained by taking $u_2^3 = a$ and $v_2^3 = a^2$, where the lower index is $2$ because  $p_1aq_1r_1 \rightarrow \delta^{-1}p_2a^2q_1r_1\delta$ is the second subrule in $\tau_1$ and the upper index is $3$ because the corresponding words come from $Y_3$.
			
	Per the definition of the guard labelled on this edge, a computation of $\G$ can only traverse $e_{\tau_1}$ if the associated $L$-word $w=(w_i)_{i\in\overline{5}}$ satisfies $w_1=w_4=1$ and $w_3=5$.  In this case, by the definition of the transformation, if $\tau w=(w_i')_{i\in\overline{5}}$, then $w_2'=w_2\delta^{-1}$, $w_3'=aw_3=a^2$, and $w_5'=\delta w_5$. 

	Conversely, the definition of $\tau_2$ contains only state letters from $Q_3$, so that $J_{\tau_2}=\{1,2,4,5,6\}$.  As a result, 
		$|E_{\tau_2}| = |Q_1| \cdot |Q_2| \cdot |Q_4| \cdot |Q_5| \cdot |Q_6| = |Q_4| = 2$.
	
	The elements of $\Theta_{\tau_2}=\{(p_2',q_2',q_1,r_1,E),(p_2',q_2',q_2,r_1,E)\}$ can then be identified with the corresponding element of $Q_4$, i.e $\Theta_{\tau_2}=\{q_1,q_2\}$.  Following the notation of~\Cref{th-MG}, we call them $E_{\tau_2} = \{e_{2, q_1}, e_{2, q_2}\}$, where
	
	\vspace{-1cm}
	\begin{multicols}{2}
	\begin{align*}
		e_{\tau_2, q_1} = \Bigl(
			&(p_2', q_2', p_2, q_1, r_1, E), \\
			& (p_2', q_2', p_2, q_1, r_1, E), \\
			& \left\{[\mathbf{3} \rightarrow a\mathbf{3}]\right\}, 
			\emptyset 
		\Bigr),
	\end{align*}
	  \break
	\begin{align*}
		e_{\tau_2, q_2} = \Bigl(
			&(p_2', q_2', p_2, q_2, r_1, E), \\
			& (p_2', q_2', p_2, q_2, r_1, E), \\
			& \left\{[\mathbf{3} \rightarrow a\mathbf{3}]\right\}, 
			 \emptyset 	\Bigr).
	\end{align*}
	\end{multicols}
	\noindent Note that actions written on these edges are identical, as would be the case for any pair of edges constructed from the same rule.  Moreover, the guard of this action is empty since $\tau_2$ does not have any subrule with more than one state letter.
	
	The inverse edges are constructed similarly from the inverse rules, so that $E_{\tau_1^{-1}}=\{e_{\tau_1^{-1}}\}$ and $E_{\tau_2^{-1}}=\{e_{\tau_2^{-1},q_1},e_{\tau_2^{-1},q_2}\}$, where:

	\begin{align*}
		e_{\tau_1}^{-1} = e_{\tau_1^{-1}} = \Bigl(
			&(p_2', q_2', p_2, q_1, r_1, E), \\
			& (p_2', q_2', p_1, q_1, r_1, E), \\
			& \left\{ [\mathbf{2} \rightarrow \mathbf{2}\delta],~ 
				[\mathbf{3} \rightarrow a^{-1}\mathbf{3}],~
				[\mathbf{5} \rightarrow \delta^{-1}\mathbf{5}]\right\}, \\
			& \left\{ [\mathbf{1} = 1],~[\mathbf{3} = a^2],~[\mathbf{4} = 1] \right\} 
		\Bigr).
	\end{align*}
	\vspace{-1cm}
	\begin{multicols}{2}
	\begin{align*}
		e_{\tau_2, q_1}^{-1} = e_{\tau_2^{-1}, q_1} = \Bigl(
			&(p_2', q_2', p_2, q_1, r_1, E), \\
			& (p_2', q_2', p_2, q_1, r_1, E), \\
			& \left\{[\mathbf{3} \rightarrow a^{-1}\mathbf{3}]\right\}, 
			\emptyset 
		\Bigr),
	\end{align*}
	  \break
	\begin{align*}
		e_{\tau_2, q_2}^{-1} = e_{\tau_2^{-1}, q_2} = \Bigl(
			&(p_2', q_2', p_2, q_2, r_1, E), \\
			& (p_2', q_2', p_2, q_2, r_1, E), \\
			& \left\{[\mathbf{3} \rightarrow a^{-1}\mathbf{3}]\right\}, 
			 \emptyset 	\Bigr).
	\end{align*}
	\end{multicols}
	
	The edges $e_{\tau_1}$ and $e_{\tau_1}^{-1}$ illustrate a peculiarity of the construction of \Cref{th-MG}:  The fragments of $e_{\tau_1}$ which mention index $3$ are $[\mathbf{3} \rightarrow a\mathbf{3}]$ and $[\mathbf{3} = a]$, which effectively imply that an application of $e_{\tau_1}$ must change the value of the corresponding $L$-word from $a$ to $a^2$ in coordinate 3.  As mentioned above, the fragment $[\mathbf{3}\rightarrow a\mathbf{3}]$ of the transformation arises from the convention of taking $[\mathbf{i} \rightarrow (v_t^i)(u_t^i)^{-1}~\mathbf{i}]\in F_{\tau}^{lt}$ in the proof of \Cref{th-MG}.   However, a syntactically different but semantically identical fragment would have arisen from the convention $[\mathbf{i} \rightarrow \mathbf{i}(u_t^i)^{-1}(v_t^i)]\in F_\tau^{lt}$, or even $[\mathbf{i} \rightarrow (u_t^i)^{-1}\mathbf{i}(v_t^i)]\in F_\tau^{lt}$.  While adopting the former convention would have yielded a similar construction, adopting the latter would have produced an unwanted obstacle: The transformations corresponding to $e_{\tau_1}$ and $e_{\tau_1}^{-1}$ would contain the fragments $[\mathbf{3} \rightarrow a\mathbf{3}a^{-2}]$ and $[\mathbf{3} \rightarrow a^{-2}\mathbf{3}a]$, respectively, and so the actions labelling these edges would not be inverses.

	\Cref{fig-S2G} below represents the S-graph $\G(\S)$.  Note that the inverse edges $e_{\tau_1}^{-1}$, $e_{\tau_2,q_1}^{-1}$, and $e_{\tau_2,q_2}^{-1}$ are absent in this depiction; it is henceforth convention that only one of an edge or its inverse is depicted in such a figure, with the existence of its counterpart implied.
\begin{figure}[hbt]
	\centering
	\begin{tikzpicture} 
[point/.style={inner sep = 1.7pt, circle,draw,fill=white},
mpoint/.style={inner sep = 1.7pt, circle,draw,fill=black},
FIT/.style args = {#1}{rounded rectangle, draw,  fit=#1, rotate fit=45, yscale=0.5},
FITR/.style args = {#1}{rounded rectangle, draw,  fit=#1, rotate fit=-45, yscale=0.5},
FIT1/.style args = {#1}{rounded rectangle, draw,  fit=#1, rotate fit=45, scale=2},
vecArrow/.style={
		thick, decoration={markings,mark=at position
		   1 with {\arrow[thick]{open triangle 60}}},
		   double distance=1.4pt, shorten >= 5.5pt,
		   preaction = {decorate},
		   postaction = {draw,line width=0.4pt, white,shorten >= 4.5pt}
	},
myptr/.style={decoration={markings,mark=at position 1 with %
    {\arrow[scale=3,>=stealth]{>}}},postaction={decorate}}
]

\begin{scope} [xscale=0.85, yscale=0.7]      
	\node (p1q1) at (0,0) [point] {};
	\node (p2q1) at (0,-5) [point] {};
	\node (p1q2) at (5,0) [point] {};
	\node (p2q2) at (5,-5) [point] {};
	\node [left]at (p1q1.west) {$(p_2', q_2', p_1, q_1, r_1, E)$};
	\node [below]at (p2q1.south) {$(p_2', q_2', p_2, q_1, r_1, E)$};
	\node [below]at (p1q2.south) {$(p_2', q_2', p_1, q_2, r_1, E)$};
	\node [below]at (p2q2.south) {$(p_2', q_2', p_2, q_2, r_1, E)$};
	
	\draw [->] (p1q1) to node[midway, left]{
		$\begin{gathered}
		 [\mathbf{2} \rightarrow \mathbf{2}~\delta^{-1}]\\[-1ex]
						[\mathbf{3} \rightarrow a~\mathbf{3}]\\[-1ex]
						[\mathbf{5} \rightarrow \delta~\mathbf{5}] \\[-1ex]
					  [\mathbf{1} = 1] \\[-1ex] [\mathbf{3} = a] \\[-1ex] [\mathbf{4} = 1] \end{gathered}$
	} (p2q1);
	
%	\draw [myptr] (0,0) -- (2,0);
	\draw [->]  (p2q1) edge [loop above, looseness=50, out=75,in=-10] node[pos = 0.3, above]  
	    	{$[\mathbf{3} \rightarrow a~\mathbf{3}]$} (p2q1);

	\path [->]  (p2q2) edge [loop above, looseness=50, out=75,in=-10] node[pos = 0.25, above]  
	    	{$[\mathbf{3} \rightarrow a~\mathbf{3}]$} (p2q2);
\end{scope}	

\end{tikzpicture}	
	\caption{S-graph $\G(\S)$.}
	\label{fig-S2G}       
\end{figure} 
\end{example}

\begin{example}[S-graph to S-machine] \label{ex-G2S}
Now consider the S-graph $\G=(\S_\G,E_\G)$ depicted in \Cref{fig-G2S} taken over the $S$-graph hardware $\H=(\I,\A)$ with $\I=\{I,D_1,A,D_2\}$, $A_A=\{a\}$, $A_I=\emptyset$, and $A_{D_1}=A_{D_2}=\{\delta\}$.

\begin{figure}[hbt]
	\centering
	\begin{tikzpicture} 
[point/.style={inner sep = 1.7pt, circle,draw,fill=white},
mpoint/.style={inner sep = 1.7pt, circle,draw,fill=black},
FIT/.style args = {#1}{rounded rectangle, draw,  fit=#1, rotate fit=45, yscale=0.5},
FITR/.style args = {#1}{rounded rectangle, draw,  fit=#1, rotate fit=-45, yscale=0.5},
FIT1/.style args = {#1}{rounded rectangle, draw,  fit=#1, rotate fit=45, scale=2},
vecArrow/.style={
		thick, decoration={markings,mark=at position
		   1 with {\arrow[thick]{open triangle 60}}},
		   double distance=1.4pt, shorten >= 5.5pt,
		   preaction = {decorate},
		   postaction = {draw,line width=0.4pt, white,shorten >= 4.5pt}
	},
myptr/.style={decoration={markings,mark=at position 1 with %
    {\arrow[scale=3,>=stealth]{>}}},postaction={decorate}}
]

\begin{scope} [xscale=0.85, yscale=0.7]      
	\node (p1q1) at (0,0) [point] {$P1Q1$};
	\node (p2q1) at (0,-5) [point] {$P2Q1$};
	\node (p2q2) at (5,-5) [point] {$P2Q2$};
	
	\draw [->] (p1q1) to node[midway, left]{
		$\begin{gathered}
		 [\mathbf{D_1} \rightarrow \mathbf{D_1}\delta^{-1}]\\[-1ex]
						[\mathbf{A} \rightarrow a\mathbf{A}]\\[-1ex]
						[\mathbf{D_2} \rightarrow \delta\mathbf{D_2}] \\[-1ex]
					 [\mathbf{I} = 1] \\[-1ex] [\mathbf{A} = a]\\[-1ex]  \end{gathered}$
	}  node [pos = 0.3, right] {$e_1$}  (p2q1);
	
%	\draw [myptr] (0,0) -- (2,0);
	\draw [->]  (p2q1) edge [loop above, looseness=10, out=75,in=-10] node[pos = 0.3, above]  
	    	{$[\mathbf{A} \rightarrow a\mathbf{A}]$} 
	    	node [pos=0.65, right] {$e_2$} (p2q1);

	\path [->]  (p2q2) edge [loop above, looseness=10, out=75,in=-10] node[pos = 0.25, above]  
	    	{$[\mathbf{A} \rightarrow a\mathbf{A}]$} 
	    	node [pos=0.65, right] {$e_3$} (p2q2);
\end{scope}	

\end{tikzpicture}	
	\caption{S-graph $\G$.}
	\label{fig-G2S}       
\end{figure} 

Note the similarities between this $S$-graph and that constructed in \Cref{ex-S2G}, i.e $\G(\S)$ depicted in \Cref{fig-S2G}.  Indeed, $\G$ can be obtained from $\G(\S)$ by deleting the `isolated' state corresponding to $p_1$ and $q_2$, renaming the rest of the states, removing index 4, and renaming the other indices: $1$ corresponds to $I$, $2$ to $D_1$, $3$ to $A$, and $5$ to $D_2$.  Given the makeup of $\G(\S)$, such alterations may seem in some sense natural: Only trivial computations would contain the isolated state and any nontrivial computation would not alter the value of the $L$-word in the coordinate 4.  As $A_1=\emptyset$, it is also natural to consider the removal of index 1; however, it is kept here to demonstrate some nuances that arise in the construction of $\S(\G)$. 

Note that $\G$ and $\G(\S)$ are not isomorphic $S$-graphs, as there is no bijection between their states (or between the index sets of their corresponding hardware).  Hence, while renaming states and indices preserves isomorphism, removal does not.

When defining an S-graph by picture, it is assumed unless explicitly stated otherwise that all indices and all letters from the alphabets are mentioned in at least one of the fragments on the picture.  This defines a `minimal' hardware for the $S$-graph.  Thus, the makeup of $\H$ is implicitly defined in \Cref{fig-G2S}.  

For clarity in this discussion, \Cref{fig-G2S} explicitly names the edges $e_1$, $e_2$, and $e_3$; in practice, such names will be omitted.

Now, since $|\I|=4$, a bijection $\I\to\overline{4}$ must be specified in order to construct $\S(\G)$ as in the proof of \Cref{th-MG}.  

First, define $\eta \colon \I \rightarrow  \overline{4}$ so that the indices are in the `same order' as in \Cref{ex-S2G}, i.e $\eta(I) = 1$, $\eta(D_1) = 2$, $\eta(A) = 3$ and $\eta(D_2) = 4$, and let $\S_\eta = \S(\G, \eta)$.  So, $S_{\G,\eta}(\H)=(Y,Q)$ where $Y=(Y_i)_{i\in\overline{4}}$, $Q=(Q_i)_{i\in\overline{5}}$, $Y_1 = A_I = \emptyset$, $Y_2 = Y_4 = A_{D_1} = A_{D_2} = \{\delta\}$,  $Y_3 = A_A = \{a\}$, $Q_1 = S_\G = \{P1Q1, P2Q1, P2Q2\}$ and $Q_i = \{q_i^*\}$ for $i\in \overline{2, 5}$.

By construction, each S-rule of $\S_\eta$ corresponds to exactly one edge of $\G$. Thus, $$\S_\eta = \{\tau_{e_1}, \tau_{e_2}, \tau_{e_3}, \tau_{e_1^{-1}}, \tau_{e_2^{-1}}, \tau_{e_3^{-1}}\}$$ where $\tau_{e_i}^{-1} = \tau_{e_i^{-1}}$, for $i\in \overline{3}$.  To exhibit the construction of these rules, consider $\tau_{e_1}$:

Since the transformation labelling $e_1$ consists of guard fragments mentioning $I$ and $A$, its maximal guarded intervals with respect to $\eta$ are $$\MGI_\eta(e_1) = \{(1,2), (3,4), (5,5)\}$$ 

Each mgi corresponds to a subrule of $\tau_{e_1}$, yielding:
\begin{align*}
	\tau_{e_1} = \bigl[&P1Q1~q_2^*   \rightarrow P2Q1~q_2^*, \ \ \  q_3^*~a~q_4^* \rightarrow \delta^{-1}~q_3^*~a^2~q_4^*~\delta, \ \ \ q_5^* \rightarrow q_5^*  \bigr].
\end{align*}

Note that the last subrule can be omitted without altering $\S_\eta$, as it consists of only one state letter, $q_5^*\in Q_5$, and $Q_5$ is a singleton.  

As $e_2$ and $e_3$ contain no guard fragments, $\MGI_\eta(e_2)$ and $\MGI_\eta(e_3)$ consist of single-point intervals, i.e they are $\{(i,i)\}_{i\in\overline{5}}$.  The rules $\tau_{e_2}$ and $\tau_{e_3}$ are then defined as follows:

	\begin{multicols}{2}
		\begin{align*}
			\tau_{e_2} = \bigl[&P2Q1 \rightarrow P2Q1, \\
				& q_2^* \rightarrow q_2^*,~
				q_3^* \rightarrow q_3^*~a, \\
				& q_4^* \rightarrow q_4^*,~
				q_5^* \rightarrow q_5^* \bigr],
		\end{align*}
	  \break
		\begin{align*}
			\tau_{e_3} = \bigl[&P2Q2 \rightarrow P2Q2, \\
				& q_2^* \rightarrow q_2^*,~ 
				q_3^* \rightarrow q_3^*~a, \\
				& q_4^* \rightarrow q_4^*,~
				q_5^* \rightarrow q_5^* \bigr].
		\end{align*}
	\end{multicols}
	
As pointed out in the proof of \Cref{th-MG}, for $i\in\overline{3}$, the rule $\tau_{e_i^{-1}}$ constructed from the inverse edge corresponds to the inverse $S$-rule $\tau_{e_i}^{-1}$.
 
On the other hand, define the enumeration $\nu \colon \I_\G \rightarrow \overline{4}$ given by $\nu(D_1) = 1$, $\nu(D_2) = 2$, $\nu(I) = 3$, $\nu(A)=4$ and construct $\S_{\nu} = \S(\G, \nu)$.  For $e\in E_\G$, denote the corresponding $S$-rule of $\S_\nu$ by $\sigma_e$.

Then, in this case, $\MGI_\nu(e_1) = \{(1,1), (2,2), (3,5)\}$, $\MGI_\nu(e_2)=\MGI_\nu(e_3)=\{(i,i)\}_{i\in\overline{5}}$, and
\begin{align*}
	\sigma_{e_1} = \bigl[&P1Q1   \rightarrow P2Q1, \ \ \  q_2^* \rightarrow \delta^{-1}q_2^*\delta, \ \ \ q_3^*q_4^*aq_5^* \rightarrow q_3^*q_4^*a^2q_5^* \bigr],
\end{align*}

	\vspace{-1cm}
	\begin{multicols}{2}
		\begin{align*}
			\sigma_{e_2} = \bigl[&P2Q1 \rightarrow P2Q1, \\
				& q_2^* \rightarrow q_2^*,~
				q_3^* \rightarrow q_3^*, \\
				& q_4^* \rightarrow q_4^*~a,~
				q_5^* \rightarrow q_5^* \bigr],
		\end{align*}
	  \break
		\begin{align*}
			\sigma_{e_3} = \bigl[&P2Q2 \rightarrow P2Q2, \\
				& q_2^* \rightarrow q_2^*,~ 
				q_3^* \rightarrow q_3^*, \\
				& q_4^* \rightarrow q_4^*~a,~
				q_5^* \rightarrow q_5^* \bigr].
		\end{align*}
	\end{multicols}
	
As above, for all $i\in\overline{3}$, by construction $\sigma_{e_i^{-1}}=\sigma_{e_i}^{-1}$.

While $\sigma_2$ and $\sigma_3$ resemble $\tau_{e_2}$ and $\tau_{e_3}$, 
%they are just as dull as before, 
there are some major differences between $\sigma_{e_1}$ and $\tau_{e_1}$.  For one thing, though each rule consists of three subrules, the numbers of state letters comprising these subrules do not coincide.  In fact, two subrules of $\tau_{e_1}$ contain tape letters, while only one subrule of $\sigma_{e_1}$ does so. 

Evidently, $\S_\eta$ and $\S_\nu$ are not isomorphic.  To some extent, this observation elucidates the remark made earlier about the limited usefulness of $S$-machine isomorphisms.

\end{example}

%=============================================================================
%=============================================================================
%=============================================================================
\section{Relations and Relational Algebra} \label{sec-relations}

Many of the definitions and arguments in what follows are constructed using binary relations.  In this section, we recall the basic definitions, to provide useful tools of relational algebra that will be used in many of the proofs that follow, and to fix some notational conventions that will simplify the discussion in this setting.

In what follows, a relation $R$ is taken to be a binary relation on a set $X$ called the \emph{domain} of $R$, i.e $R\subseteq X^2$.  A relation $R$ is called: 

\begin{itemize}

\item \emph{reflexive} if $(x,x)\in R$ for all $x\in X$,

\item \emph{symmetric} if for all $(y,x)\in R$ whenever $(x,y)\in R$,

\item \emph{transitive} if $(x,z)\in R$ whenever $(x,y),(y,z)\in R$, and

\item \emph{an equivalence relation} if it is reflexive, symmetric, and transitive.

\end{itemize}

Given a relation $R$ on $X$, the notation $x R y$ is used to indicate $(x, y)\in R$.  The \emph{inverse} of $R$, denoted $R^{-1}$, is the relation on $X$ given by $R^{-1} = \left\{(y, x)\mid(x, y)\in R\right\}$.  Further, given relations $R$ and $S$ with the same domain $X$, the product of $R$ and $S$ is the relation over $X$ defined by 
$$R\circ S = \left\{(x, z)\mid x R y \text{ and } y S z \text{ for some } y\in X \right\}$$ 
As an operation on the set of relations with domain $X$, the product is associative but not commutative.  The powers of a relation $R$ on $X$ are defined in a standard fashion: 

\begin{itemize}

\item $R^n = \underbrace{R\circ R\circ \dots \circ R}_{n \text{ times}}$ if $n>0$,  

\item $R^n= \underbrace{R^{-1}\circ R^{-1}\circ \dots\circ R^{-1}}_{-n \text{ times}}$ if $n<0$, and 

\item $R^0 = \Id_X$

\end{itemize}
where the \emph{identity relation} $\Id_X$ is the smallest reflexive relation on $X$, i.e $$\Id_X = \{(x,x)\mid x\in X\}$$  In general, the identity relation is denoted $\Id$ when the domain is contextually clear.

If $R$ is a relation on $X$ and $A\subseteq X$, then $A$ is \emph{closed under $R$} if for all $(x_1,x_2)\in R$, $x_1\in A$ if and only if $x_2\in A$.

Given a relation $R$ on $X$, the \emph{reflexive symmetric transitive closure} of $R$, also simply called the \emph{rst-closure} and denoted $\RST R$ or $R^*$, is the smallest equivalence relation on $X$ containing $R$.  Note that this closure always exists, and indeed is given by $R^* = \bigcup_{i\in \Z}R^i$.

\begin{lemma}\label{lem-closure}
	Let $\{R_j\}_{j\in J}$ be a collection of relations on $X$ and suppose the subset $A\subseteq X$ is closed under $R_j$ for all $j\in J$.  Then for $R=\RST\bigcup_{j\in J} R_j$, $A$ is a disjoint union of equivalence classes of $R$.
\end{lemma}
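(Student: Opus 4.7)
The plan is to show directly that $A$ is closed under $R$, which is equivalent to being a union of equivalence classes of $R$: for any equivalence class $C$ of $R$, either $C \subseteq A$ or $C \cap A = \emptyset$, because if $x \in C \cap A$ and $y \in C$, then $xRy$ and closure under $R$ forces $y \in A$.

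So the work reduces to verifying that the property ``$A$ is closed under $S$'' is preserved by the three operations used to build the rst-closure from the $R_j$: arbitrary union, inverse, and composition (plus the trivial case of the identity relation). Concretely, I would carry out the following short sequence of verifications.

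First, since $A$ is closed under each $R_j$, it is closed under $R' := \bigcup_{j \in J} R_j$: indeed, $(x_1,x_2) \in R'$ means $(x_1,x_2) \in R_j$ for some $j$, and then $x_1 \in A \Leftrightarrow x_2 \in A$ by hypothesis. Second, the definition of ``closed under $S$'' is symmetric in the two coordinates, so closure under $S$ implies closure under $S^{-1}$; thus $A$ is closed under $(R')^{-1}$. Third, if $A$ is closed under relations $S$ and $T$, it is closed under $S \circ T$: given $(x_1,x_3) \in S \circ T$, choose $x_2$ with $x_1 S x_2$ and $x_2 T x_3$; then $x_1 \in A \Leftrightarrow x_2 \in A \Leftrightarrow x_3 \in A$. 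Fourth, $A$ is trivially closed under $\Id_X$.

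Combining these three stability properties and recalling the explicit description $R = \bigcup_{i \in \Z} (R')^i$ (which includes $\Id_X = (R')^0$), induction on $|i|$ shows $A$ is closed under every $(R')^i$, and hence closed under their union $R$. The main ``obstacle'' here is really just notational bookkeeping: there is no genuine difficulty, only the need to be careful that the definition of ``closed under $S$'' is the symmetric biconditional ``$x_1 \in A \Leftrightarrow x_2 \in A$'' rather than the weaker ``$x_1 \in A \Rightarrow x_2 \in A$,'' since it is the symmetric version that cleanly passes through inversion and composition. With closure under $R$ established, the disjoint-union statement follows immediately from the first paragraph.
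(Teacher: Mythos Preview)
Your proposal is correct and takes essentially the same approach as the paper: both arguments show that $A$ is closed under $R$ (and hence is a union of $R$-classes) by tracking closure through the construction of the rst-closure from the $R_j$. The only cosmetic difference is that the paper unwinds $(x_1,x_2)\in R$ directly as a finite chain $y_0,\dots,y_k$ with each consecutive pair related by some $R_{j_i}^{\pm 1}$, whereas you verify abstractly that closure is preserved under union, inverse, composition, and $\Id$; these are two phrasings of the same induction.
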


\begin{proof}

By construction, for any $(x_1,x_2)\in R$, there exist $k\in\N$, $j_1,\dots,j_k\in J$, and $y_0,\dots,y_k\in X$ such that $y_0=x_1$, $y_k=x_2$, and for all $i\in\overline{k}$, $(y_{i-1},y_i)\in R_{j_i}$ or $(y_i,y_{i-1})\in R_{j_i}$.

Then for all $i\in\overline{k}$, since $A$ is closed under $R_j$ for all $j\in J$, $y_{i-1}\in A$ if and only if $y_i\in A$.  As a result, $x_1\in A$ if and only if $x_2\in A$.  In particular, for any $a\in A$, the equivalence class of $R$ containing $a$ must be a subset of $A$.

\end{proof}

If $R$ is a relation on $X\times Y$ and $A_1,A_2\subseteq X$, then the \emph{$X$-restriction} of $R$ to the pair $(A_1,A_2)$ is defined to be $$R\pr{A_1,A_2}=\left\{\bigl((x_1,y_1),(x_2,y_2)\bigr)\in R\mid x_1\in A_1 \text{ or } x_2\in A_2\right\}$$
Note that, per this definition, it is not generally the case that $R\pr{A_1,A_2}\subseteq(A_1\times Y)\times(A_2\times Y)$. If this does hold, then the $X$-restriction is called \emph{proper} and $R$ is said to be \emph{properly $X$-restricted} to the pair $(A_1,A_2)$.  Hence, $R$ is properly $X$-restricted to $(A_1,A_2)$ if and only if the conjunction `or' in the definition of the restriction can be replaced by `and'. 

If $A=A_1=A_2$, then $R\pr{A_1,A_2}$ is simply denoted $R\pr{A}$.  If the restriction $R\pr{A}$ is proper, the $R$ is simply said to be properly $X$-restricted to $A$.  Hence, $R$ is properly restricted to $A$ if and only if $A\times Y$ is closed under $R$.

On the other hand, if $R$ is a relation on $X\times Y$ and $B_1,B_2\subseteq Y$, then the \emph{$Y$-restriction} of $R$ to the pair $(B_1,B_2)$ is defined to be $$R^{B_1,B_2}=\left\{(x_1,y_1,x_2,y_2)\in R\mid y_1\in B_1 \text{ and } y_2\in B_2\right\}$$
Note the non-uniformity in the definitions of these restrictions, as a $Y$-restriction is necessarily `proper' in the sense of an $X$-restriction.  This disparity exists because the relevant relations considered in what follows will be constructed in such a way that restrictions to the second coordinate are only pertinent if they are `proper'.

As with $X$-restrictions, if $B=B_1=B_2$, then $R^{B_1,B_2}$ is simply denoted $R^B$. 

An $X$-restriction of $R^{B_1,B_2}$ is defined in much the same way as for $R$.  Hence, for $A_1,A_2\subseteq X$,
$$R^{B_1,B_2}\pr{A_1,A_2}=\{(x_1,y_1,x_2,y_2)\in R^{B_1,B_2}\mid x_1\in A_1 \text{ or } x_2\in A_2\}$$
As above, such a restriction is proper if the `or' can be replaced with `and', which is to say if $R^{B_1,B_2}\pr{A_1,A_2}$ is contained in $(A_1\times B_1)\times(A_2\times B_2)$.

If $Y_1$ and $Y_2$ are singletons with $Y_i=\{y_i\}$, then the $Y$-restriction of $R$ to $(y_1,y_2)$ can also be identified with a relation on $X$, i.e $$R^{y_1,y_2}=\left\{(x_1,x_2)\in X^2\mid (x_1,y_1,x_2,y_2)\in R\right\}$$
As such, $R^{y_1,y_2}$ is interpreted both as a relation on $X$ and as one on $X\times Y$, with context making it clear which interpretation is being used.  Thus, $$R=R^Y=\bigsqcup_{(y_1,y_2)\in Y^2}R^{y_1,y_2}$$

%=============================================================================
%=============================================================================
%=============================================================================
\section{Objects, Operations, and Counters} \label{sec-operations}

%In this section we extend the definition of S-graphs to allow their compositions. 

Let $\G=(S_\G,E_\G)$ be an $S$-graph over the hardware $\H$.  Then the \emph{reachability relation} of $\G$, denoted $\REACH_\G$, is the binary relation on the configurations of $\G$ given by relating the starting configuration of a computation of $\G$ to the ending configuration.  In other words, 
$$\REACH_\G= \left\{(w_1, s_1, w_2, s_2)~|~ \varepsilon(w_1, s_1, w_2, s_2) \neq \emptyset \right\} \subseteq (L_\H\times S_\G)^2$$
If $(w_1, s_1, w_2, s_2) \in \REACH_\G$, then the configuration $(w_2, s_2)$ \emph{can be reached} from $(w_1, s_1)$ (via a computation of $\G$).  By the definition of $S$-graph computations, $\REACH_\G$ is an equivalence relation on the set of configurations.

The notational conventions for relations on products described in \Cref{sec-relations} are adopted for $\REACH_\G$.  So, for any $W_1,W_2\subseteq L_\H$, the \emph{$L_\H$-restriction} of $\REACH_\G$ to the pair $(W_1, W_2)$ is the relation given by
$$\REACH_\G\pr{W_1, W_2} = \{(w_1, s_1, w_2, s_2)\in\REACH_\G \mid w_1 \in W_1 \text{ or } w_2\in W_2\}$$
which is \emph{proper} if it is contained in $(W_1\times S_\G)\times(W_2\times S_\G)$. 

Further, for $s_1,s_2\in S_\G$, the \emph{$S_\G$-restriction} of $\REACH_\G$ to $(s_1,s_2)$ is
$$\REACH_\G^{s_1, s_2} = \bigl\{(w_1, w_2) \mid (w_1, s_1, w_2, s_2)\in \REACH_\G  \bigr\}\subseteq L_\H\times L_\H$$
also identified with a relation on $L_\H\times S_\G$.  Hence, such an $S_\G$-restriction of $\G$ describes the computations supported by paths between particular states of $\G$.
 
Note that $\REACH_\G^{s_1,s_2}$ is not necessarily reflexive, transitive, or symmetric.

\begin{definition}\label{def-operation}
	Let $\G=(S,E)$ be an $S$-graph over the hardware $\H=(\I,\A)$ whose underlying directed graph is (strongly) connected.  Suppose there exists a partition $\I=\I^\ext \sqcup \I^\val \sqcup \I^\int$ and $s,f\in S$ satisfying the following condition:
	
	\begin{enumerate}
		\item [(O)] Let $\rho = e_1, \dots, e_N$ be a path in $\G$ from $s_1$ to $s_2$ where $s_1, s_2 \in  \{s,f\}$. For $i\in\I^\int$, suppose there exists a fragment $f$ mentioning $i$ such that $f\in e_j$ for some $j\in\overline{N}$.  Then there exist $l_1, l_2\in\overline{N}$ with $l_1\leq l_2$ such that $[\mathbf{i}=1]\in e_{l_1}\cap e_{l_2}^{-1}$ and for all $m\in\overline{N}-\overline{l_1,l_2}$, no fragment in $e_m$ mentions $i$.
	\end{enumerate}
Then, $\op=(\G,\I^\ext,\I^\val,\I^\int,s,f)$ is called an \emph{operation}.  

\end{definition}

For simplicity, the defining components of an operation are named in a way to indicate their relationship with the operation.  Hence, an operation $\op$ is denoted $\op=(\G_\op,\I_\op^\ext,\I_\op^\val,\I_\op^\int,\op_s,\op_f)$ where $\G_\op=(S_\op,E_\op)$ is an $S$-graph over the hardware $\H_\op=(\I_\op,\A_\op)$ with defining index set partition $\I_\op=\I_\op^\ext\sqcup\I_\op^\val\sqcup\I_\op^\int$.

Further, it is convenient to allow an operation $\op$ to be identified with the $S$-graph $\G_\op$.  So, for example, a computation of $\G_\op$ may be referred to as a computation of $\op$.

The elements of $\I_\op^\ext$ are called the \emph{external tapes} of $\op$, those of $\I_\op^\val$ the \emph{value tapes}, and those of $\I_\op^\int$ the \emph{internal tapes}.  

The \emph{external language of $\op$} is the set $L_\op^\ext = \bigtimes_{i\in\I_\op^\ext} L_i$.  Similarly, the \emph{value} and \emph{internal languages} of $\op$ are the sets $L_\op^\val = \bigtimes_{i\in\I_\op^\val}L_i$ and $L_\op^\int = \bigtimes_{i\in\I_\op^\int}L_i$, respectively.  Finally, the \emph{language of $\op$}, denoted $L_\op$, is the set of $L$-words $L_{\H_\op}$.  Thus, $L_\op = L_\op^\ext \times L_\op^\val \times L_\op^\int$.  

The specified states $\op_s$ and $\op_f$ are called the \emph{starting state} and the \emph{finishing state} of $\op$, respectively.  The subset $\{\op_s,\op_f\}\subseteq S_\op$ is denoted $\SF_\op$.  Hence, condition (O) can be restated as a restriction on paths in $\G_\op$ with endpoints in $\SF_\op$.

Note that condition (O) has a certain level of symmetry: Let $\rho=e_1,\dots,e_N$ be a path as in the statement of (O) and consider the inverse path $\rho^{-1} = e_N^{-1}, \dots, e_1^{-1}$.  As $\rho$ is a path from $s_1$ to $s_2$ for $s_1,s_2\in\SF_\op$, $\rho^{-1}$ is a path from $s_2$ to $s_1$, and so is a path subject to the condition.  For any $i\in\I_\op^\int$ and any edge $e$ of $\G_\op$, there exists a fragment $f$ mentioning $i$ such that $f\in e$ if and only if there exists a fragment $f'$ mentioning $i$ such that $f'\in e^{-1}$.  Hence, there exists a $j\in\overline{N}$ such that $e_j$ contains a fragment mentioning $i$ if and only if the same is true for $e_j^{-1}$, in which case the existence of the indices $l_1,l_2\in\overline{N}$ ensuring the condition for $\rho$ is equivalent to the existence of the analogous indices for $\rho^{-1}$.

This observation is useful when an operation is defined by picture, as it expedites the process of verifying condition (O).  For example, if one verifies that condition (O) holds for any path from $\op_s$ to $\op_f$, then it also holds for any path from $\op_f$ to $\op_s$.

Let $\eps$ be a computation of $\op$ between the configurations $(w_1,s_1)$ and $(w_2,s_2)$.  Suppose that $s_1,s_2\in\SF_\op$ and $w_1\bigl(\I_\op^\int\bigr)=w_2\bigl(\I_\op^\int\bigr)=(1)_{i\in\I_\op^\int}$.  Then $\eps$ is called an \emph{input-output computation}.

\begin{definition}\label{def-object}

Let $\op_1,\dots,\op_k$ be operations.  Suppose there exists a subset $\I^\val\subseteq\I^*$ such that $\I_{\op_m}^\val=\I^\val$ for all $m\in\overline{k}$.  Further, suppose that $\A_{\op_m}(i)=\A_{\op_n}(i)$ for all $i\in\I^\val$ and all $m,n\in\overline{k}$.  Then, $\Obj=(\I^\val,\op_1,\dots,\op_k)$ is called an \emph{object}.  

\end{definition}

Similar to the notational conventions for operations, the set of value tapes of any operation of $\Obj$ is denoted $\I_\Obj^\val$ to emphasize its relationship to the object.  Further, the notation $\op\in\Obj$ is used to indicate that $\op$ is one of the operations that define the object.

For clarity, it is assumed that any object contains a unique copy of each of its operations.  As such, for distinct objects $\Obj_1$ and $\Obj_2$, there is no operation $\op$ such that $\op\in\Obj_1$ and $\op\in\Obj_2$.

Note that an object with a single operation can be identified with its operation, while any single operation can be identified with such an object.  If the operation has no value tapes, then the corresponding object is called a \emph{proper operation}.

By definition, for any $\op,\op'\in\Obj$, $L_\op^\val=L_{\op'}^\val$.  Hence, the \emph{value language of $\Obj$}, denoted $L^\val_\Obj$, is defined as the value language of any of its operations.

Let the \emph{initial value} of $\Obj$ be the `trivial' tuple $1_\Obj^\val=(1)_{i\in \I^\val_\Obj} \in L^\val_\Obj$.  In this case and, more generally when the index set is clear, such a trivial tuple will simply be denoted 1.

To any object $\Obj$, associate an (undirected) graph $\G_\Obj^\val$ whose vertices are identified with the words of $L_\Obj^\val$ and such that there exists an edge connecting $v_1,v_2\in L_\Obj^\val$ if and only if there exists $\op\in\Obj$ and an input-output computation of $\op$ between a pair of configurations $(w_1,s_1),(w_2,s_2)\in L_\op\times\SF_\op$ such that $w_i\bigl(\I_\Obj^\val\bigr)=v_i$ for $i=1,2$.  Then, define the \emph{values} of $\Obj$, denoted $\VAL_\Obj$, as the connected component of $\G_\Obj^\val$ containing the initial value.  

\begin{proposition}\label{prop-nse} Let $\op\in \Obj$ be an operation and set $1^\int_\op=(1)_{i\in\I^\int_\op}\in L_\op^\int$.  Then $\REACH^{\SF_\op}_\op$ is properly restricted to $L_\op^\ext \times\VAL_\Obj \times 1^\int_\op$.
\end{proposition}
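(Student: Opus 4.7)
The plan is to unpack what ``properly restricted'' asks for and verify it coordinate by coordinate on the $L$-word side. Writing $A = L_\op^\ext \times \VAL_\Obj \times 1^\int_\op$, I need to show that whenever $(w_1, s_1, w_2, s_2) \in \REACH^{\SF_\op}_\op$ has either $w_1 \in A$ or $w_2 \in A$, in fact both $w_1, w_2 \in A$. Since $\REACH_\op$ is an equivalence relation (hence symmetric), it suffices to treat the direction $w_1 \in A \Rightarrow w_2 \in A$. The external component $w_2(\I_\op^\ext)$ automatically lies in $L_\op^\ext$, so the task reduces to proving (i) $w_2(\I_\op^\int) = 1^\int_\op$ and (ii) $w_2(\I^\val_\Obj) \in \VAL_\Obj$.

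For (i), I would fix a computation $\eps \in \varepsilon(w_1, s_1, w_2, s_2)$ supported by a path $\rho = e_1, \ldots, e_N$ from $s_1 \in \SF_\op$ to $s_2 \in \SF_\op$, and apply condition (O) of \Cref{def-operation} separately to each $i \in \I_\op^\int$. If no edge along $\rho$ mentions $i$, then at every step $t_i^l = t_i^r = 1$ and the guard at $i$ is $\nan$, so the $i$-th coordinate is left untouched and $w_2(i) = w_1(i) = 1$. Otherwise, (O) produces indices $l_1 \leq l_2$ with $[\mathbf{i}=1] \in e_{l_1} \cap e_{l_2}^{-1}$ and with no edge outside $\overline{l_1, l_2}$ mentioning $i$. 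The guard of $e_{l_1}$ forces the $i$-th tape to equal $1$ just before $e_{l_1}$ is applied; unpacking the inverse-action formula $g'(i) = t_i^l\, g(i)\, t_i^r$, the guard $[\mathbf{i}=1]$ on $e_{l_2}^{-1}$ asserts exactly that the $i$-th tape equals $1$ immediately after $e_{l_2}$ is applied. Since the remaining edges outside $\overline{l_1, l_2}$ leave the $i$-th coordinate untouched, pushing these two equalities out to the endpoints of $\rho$ gives $w_2(i) = 1$.

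Step (i) together with $s_1, s_2 \in \SF_\op$ makes $\eps$ an input-output computation of $\op \in \Obj$ in the sense of \Cref{def-object}. Thus, by the very construction of $\G_\Obj^\val$, the pair $\bigl(w_1(\I^\val_\Obj), w_2(\I^\val_\Obj)\bigr)$ is an edge of $\G_\Obj^\val$. Since $\VAL_\Obj$ is the connected component of $\G_\Obj^\val$ containing $1_\Obj^\val$ and $w_1(\I^\val_\Obj) \in \VAL_\Obj$ by hypothesis, we conclude $w_2(\I^\val_\Obj) \in \VAL_\Obj$, finishing (ii). The main obstacle I foresee is just the bookkeeping in step (i)---specifically confirming that the guard $[\mathbf{i}=1]$ on the inverse edge $e_{l_2}^{-1}$ really translates into ``the $i$-th tape is trivial after $e_{l_2}$''---but this is a direct unwinding of the definition of the inverse action and requires no new ideas; everything else is essentially verification against the definitions of operation, object, $\G_\Obj^\val$, and $\VAL_\Obj$.
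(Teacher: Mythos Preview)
Your proposal is correct and follows essentially the same approach as the paper's proof: fix a realizing computation, use condition (O) on each internal index to force the internal coordinates to be trivial at the endpoints, then observe that the resulting input-output computation gives an edge in $\G_\Obj^\val$ so that membership in $\VAL_\Obj$ propagates. The only cosmetic difference is that the paper handles the direction $w_2 \in V \Rightarrow w_1 \in V$ explicitly (using just the $l_1$ guard) and appeals to the symmetry of (O) for the other direction, whereas you do $w_1 \in A \Rightarrow w_2 \in A$ and use both the $l_1$ and $l_2$ guards together; these are equivalent unpackings of the same idea.
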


\begin{proof}

	Let $(w_1,s_1,w_2,s_2)\in\REACH_\op^{\SF_\op}\pr{V}$ where $V=L_\op^\ext\times\VAL_\Obj\times1^\int_\op$.  
	
	Assume that $w_2\in V$ and let $\eps$ be a computation of $\op$ between $(w_1,s_1)$ and $(w_2,s_2)$ with supporting path $\rho=e_1,\dots,e_k$.  Set $$\eps=\bigl((w_0',s_0'),e_1,(w_1',s_1'),e_2,\dots,(w_{k-1}',s_{k-1}'),e_k,(w_k',s_k')\bigr)$$ where $(w_1,s_1)=(w_0',s_0')$ and $(w_2,s_2)=(w_k',s_k')$.  
	
	Fix $i\in\I_\op^\int$.  Note that, in general, if an edge $e\in E_\op$ contains no fragment mentioning an index, then any application of $e$ leaves the $L$-word unchanged in the corresponding coordinate.  Hence, if $e_j$ contains no fragment mentioning $i$ for all $j\in\overline{k}$, then $w_1(i)=w_2(i)=1$.  
	
	Otherwise, condition (O) applies to $\rho$, i.e there exists $l\in\overline{n}$ such that $[\i=1]\in e_l$ and no fragment of $e_m$ mentions $i$ for all $m\in\overline{1,l-1}$.  So, $w_1(i)=w_{l-1}'(i)$ and $e_l$ is applicable to $(w_{l-1}',s_{l-1}')$.  But then the guard fragment of $e_l$ implies $w_{l-1}'(i)=1$, so that $w_1(i)=1$.
	
	Hence, $w_1\bigl(\I_\op^\int\bigr)=1^\int_\op$, and so $\eps$ is an input-output computation of $\op$.  By definition, this means there is an edge in $\G_\Obj^\val$ connecting $w_1\bigl(\I^\val_\Obj\bigr)$ with $w_2\bigl(\I^\val_\Obj\bigr)$.  Thus, $w_1\bigl(\I^\val_\Obj\bigr)\in\VAL_\Obj$ and so $w_1\in V$.
	
	If instead one assumes $w_1\in V$, then the analogous argument using the symmetry of condition (O) implies that $w_2\in V$.
	
\end{proof}

Given an object $\Obj$ and an operation $\op\in\Obj$, the \emph{natural domain} of $\op$ is defined to be the set $\ND_\op=L_\op^\ext \times \VAL_\Obj\subseteq L_\op^\ext\times L_\Obj^\val$.  Note that, though the notation does not suggest it, the natural domain depends on the makeup of the object being considered.  However, there is no ambiguity as $\Obj$ is assumed to be the unique object for which $\op\in\Obj$.

For notational purposes, it is sometimes convenient to expand the natural domain to $$\ND_\op^1 = \ND_\op \times 1^\int_\op  = L_\op^\ext \times \VAL_\Obj \times 1^\int_\op \subseteq L_\op$$ By \Cref{prop-nse}, $\REACH^{\SF_\op}_\op$ is properly restricted to $\ND_\op^1$.

Define the relation $\IO_\op^1 = \REACH^{\SF_\op}_\op\pr{\ND_\op^1} \subseteq \left(\ND_\op^1\times\SF_\op\right)^2$.  Here, $\IO$ stands for \emph{input-output} as its composition is determined by input-output computations.

Then, define the relation $\IO_\op$ on $\ND_\op\times\SF_\op$ given by $(w_1,s_1,w_2,s_2)\in\IO_\op$ if and only if there exist $w_1',w_2'\in\ND_\op^1$ such that $w_i'\bigl(\I^\ext_\op\sqcup\I_\Obj^\val\bigr)=w_i$ for $i=1,2$ and $(w_1',s_1,w_2',s_2)\in\IO_\op^1$.  Hence,
$$
	\IO_\op^{s_1,s_2} = \bigl\{(w_1,w_2)\in\bigl(\ND_\op\bigr)^2\mid(w_1\sqcup1_\op^\int,w_2\sqcup1_\op^\int)\in\REACH_\op^{s_1,s_2}\bigr\}
	$$
	for all $s_1,s_2\in\SF_\op$.

Note that, as $\REACH_\op$ is an equivalence relation on $L_\op\times S_\op$, $\IO_\op$ is an equivalence relation on $\ND_\op \times \SF_\op$.

As $\IO_\op$ is a relation on $\ND_\op\times\SF_\op$, an $\SF_\op$-restriction $\IO_\op^{s_1,s_2}$ for $s_1,s_2\in\SF_\op$ is defined as in \Cref{sec-relations}.  For simplicity of notation, this restriction is denoted $\IO_\op^{s,f}$ if $s_1=\op_s$ and $s_2=\op_f$.  Similarly, there are the $S$-restrictions $\IO_\op^{f,s}$, $\IO_\op^{s,s}$, and $\IO_\op^{f,f}$.

By the symmetry of $S$-graph computations, $(w_1,w_2)\in\IO_\op^{s,f}$ if and only if $(w_2,w_1)\in\IO_\op^{f,s}$.  Hence, to understand the relation $\IO_\op$, it suffices to understand just one of these two subsets.

Further, the existence of trivial computations implies that $\IO_\op^{s,s}$ and $\IO_\op^{f,f}$ both contain $\Id$.

For any object $\Obj$, any operation $\op \in \Obj$, and any external tape $a\in \I_\op^\ext$, then $a$ is called \emph{immutable} in $\op$ if $w_1(a) = w_2(a)$ whenever $(w_1,s_1,w_2,s_2)\in\IO_\op$.
%The set of all immutable tapes of $\op$ is denoted by $\I_\op^\imm \subseteq \I_\op^\ext$.

\begin{example}[Copy operation]\label{ex-copy}
	For any finite subset $A\subseteq\A^*$, consider the object $\cpy^A$ identified with its single operation of the same name whose associated $S$-graph is depicted in \Cref{fig-COPY}.  When the set $A$ is contextually clear, $\cpy^A$ is simply denoted $\cpy$.
	
	\begin{figure}[hbt]
	\centering
	\begin{tikzpicture} 
[point/.style={inner sep = 4pt, circle,draw,fill=white},
mpoint/.style={inner sep = 1.7pt, circle,draw,fill=black},
FIT/.style args = {#1}{rounded rectangle, draw,  fit=#1, rotate fit=45, yscale=0.5},
FITR/.style args = {#1}{rounded rectangle, draw,  fit=#1, rotate fit=-45, yscale=0.5},
FIT1/.style args = {#1}{rounded rectangle, draw,  fit=#1, rotate fit=45, scale=2},
vecArrow/.style={
		thick, decoration={markings,mark=at position
		   1 with {\arrow[thick]{open triangle 60}}},
		   double distance=1.4pt, shorten >= 5.5pt,
		   preaction = {decorate},
		   postaction = {draw,line width=0.4pt, white,shorten >= 4.5pt}
	},
myptr/.style={decoration={markings,mark=at position 1 with %
    {\arrow[scale=3,>=stealth]{>}}},postaction={decorate}}
]

\begin{scope} [xscale=0.75, yscale=0.5]      
	\node [anchor=west] at (-1, 1.5) {$\cpy(a,b)$};
	\node (start) at (0,0) [point] {$s$};
	\node (n1) at (5,0) [point] {$s_1$};
	\node (n2) at (10,0) [point] {$s_2$};
	\node (finish) at (15, 0) [point] {$f$};
	
	\draw [->] (start) to node[midway, below]{
		$\begin{gathered}
		 [\mathbf{b} =1]\\[-1ex]
		[\mathbf{c} =1]
		\end{gathered}$
	}  (n1);

	\draw [->] (n1) to node[midway, below]{
		$\begin{gathered}
		 [\mathbf{a} =1]\\[-1ex]
 		 [\mathbf{b} =1]
		\end{gathered}$
	}  (n2);

	\draw [->] (n2) to node[midway, below]{
		$\begin{gathered}
		 [\mathbf{c} =1]
		\end{gathered}$
	}  (finish);
	
	\draw [->]  (n1) edge [loop, looseness=25, out=125,in=55] node[pos = 0.5, above]  
	    	{		$\begin{gathered}
	    			 [\mathbf{a} \rightarrow x^{-1}\mathbf{a}]\\[-1ex]
	    			[\mathbf{c} \rightarrow \mathbf{c}x] \\[-1ex]
	    			\textrm{for $x \in A$ }
	    			\end{gathered}$}  (n1);

	\draw [->]  (n2) edge [loop, looseness=25, out=125,in=55] node[pos = 0.5, above]  
	    	{		$\begin{gathered}
	    			[\mathbf{a} \rightarrow x\mathbf{a}] \\[-1ex]
	    			[\mathbf{b} \rightarrow x\mathbf{b}] \\[-1ex]
	    			[\mathbf{c} \rightarrow \mathbf{c}x^{-1}]\\[-1ex]
	    			\textrm{for $x \in A$ }
	    			\end{gathered}$}  (n2);
\end{scope}	

\end{tikzpicture}	
	\caption{Operation $\cpy(a,b)$} 
	\label{fig-COPY}       
\end{figure} 
	
	While it can be deduced from the contents of the figure that $\I_{\cpy}=\{a,b,c\}$, the defining partition $\I_{\cpy}=\I_{\cpy}^\ext\sqcup\I_{\cpy}^\val\sqcup\I_{\cpy}^\int$ is not immediately clear.  To rectify this, a notational convention is introduced in order to efficiently indicate these sets.
	
	Given an object $\Obj$ and an operation $\op\in\Obj$, a figure depicting the $S$-graph associated to $\op$ is titled $\Obj\bigl[a_1,\dots,a_k].\op\bigl(b_1,\dots,b_\ell\bigr)$, where $[a_1,\dots,a_k]$ is some enumeration of $\I_\Obj^\val$ and $(b_1,\dots,b_\ell)$ is some enumeration of $\I_\op^\ext$.  If $\I^\val_\Obj=\emptyset$, then the brackets are omitted, so that the figure is denoted $\Obj.\op(\I_\val^\ext)$.  Finally, if $\Obj$ is a proper operation, then reference to the object is dropped, instead identifying it with its single operation.
	
	So, as \Cref{fig-COPY} is titled $\cpy(a,b)$, it is a proper operation with $\I_\cpy^\ext=\{a,b\}$ and $\I_\cpy^\int=\{c\}$.
	
	As for $\A_\cpy=\{A^\cpy_a,A^\cpy_b,A^\cpy_c\}$, another piece of notation used in \Cref{fig-COPY} must be explained.  On each of the loops at the states $s_1$ and $s_2$, the phrase `for $x\in A$' is written on the label even though it is not a fragment of the corresponding action.  This expression indicates that the edge represents multiple edges with the same tail and head as the depicted edge; in this case, it stipulates that there is a correspondence between these edges and the elements of $A$, with the action built in the way indicated in the figure.  Of course, the graph also contains the implicitly constructed inverse edges for each of these edges.  Hence, despite the seeming simplicity of the diagram in the figure, $\G_\cpy$ consists of $4|A|+6$ edges.
	
	So, since every letter of $A$ makes an appearance in a fragment mentioning each of the indices of $\I_\cpy$, it follows that $A^\cpy_a=A^\cpy_b=A^\cpy_c=A$.
	
	The elements of $\SF_\cpy$, i.e the starting and finishing states $\cpy_s$ and $\cpy_f$, are denoted in \Cref{fig-COPY} simply $s$ and $f$, respectively.  This is the standard convention in what follows when defining an operation by a diagram, as the operation to which they belong will be clear.  However, this means that when multiple operations are present, multiple vertices will be labelled $s$ and multiple will be labelled $f$, so it is worth mentioning that such similarly labelled states are not identified.

	With these conventions, and so the makeup of $\cpy$, understood, it is now possible to verify that $\cpy$ satisfies condition (O) and is thus a well-defined operation.  Note that there are only two edges of $\cpy$ whose tails are in $\SF_\cpy$: The edge $e=(s,s_1,\act)$ depicted in \Cref{fig-COPY} and the edge $e'=(f,s_2,\act')$ whose inverse is depicted in \Cref{fig-COPY}.  So, for any path $\rho=e_1,\dots,e_N$ between elements of $\SF_\cpy$, it must hold that $e_1,e_N^{-1}\in\{e,e'\}$.  But it follows immediately from the definition of $e$ and $e'$ in the diagram that $[\mathbf{c}=1]\in e$ and $[\mathbf{c}=1]\in e'$.  Hence, (O) is satisfied.

	In the notation $\Obj[a_1,\dots,a_k].\op(b_1,\dots,b_\ell)$ introduced through this example, the enumeration of the value and external tapes allows for the efficient renaming of these tapes.  So, in the example of $\cpy$, given indices $i,j\in\I^*$, $\cpy^A(i,j)$ denotes the graph from \Cref{fig-COPY} in which $a$ is renamed to $i$, $b$ is renamed to $j$, and $c$ is renamed to $c'$ if $c=i$ or $c=j$ (in order to keep the internal and external tapes disjoint).  Moreover, if $i=x$ or $j=x$, then the indeterminate $x$ in \Cref{fig-COPY} used to represent any letter of $A$ may need to be renamed.  With this notation in hand, note that $\cpy(a, b)\neq \cpy(b, a)$; see \Cref{fig-COPY2} below for examples of $\cpy^A$.  
	
	\begin{figure}[hbt]
	\centering
	\begin{tikzpicture} 
[point/.style={inner sep = 4pt, circle,draw,fill=white},
mpoint/.style={inner sep = 1.7pt, circle,draw,fill=black},
FIT/.style args = {#1}{rounded rectangle, draw,  fit=#1, rotate fit=45, yscale=0.5},
FITR/.style args = {#1}{rounded rectangle, draw,  fit=#1, rotate fit=-45, yscale=0.5},
FIT1/.style args = {#1}{rounded rectangle, draw,  fit=#1, rotate fit=45, scale=2},
vecArrow/.style={
		thick, decoration={markings,mark=at position
		   1 with {\arrow[thick]{open triangle 60}}},
		   double distance=1.4pt, shorten >= 5.5pt,
		   preaction = {decorate},
		   postaction = {draw,line width=0.4pt, white,shorten >= 4.5pt}
	},
myptr/.style={decoration={markings,mark=at position 1 with %
    {\arrow[scale=3,>=stealth]{>}}},postaction={decorate}}
]

\begin{scope} [xscale=0.75, yscale=0.25]      
	\node [anchor=west] at (-1, 2.5) {$\cpy(x, y)$};
	\node (start) at (0,0) [point] {$s$};
	\node (n1) at (5,0) [point] {$s_1$};
	\node (n2) at (10,0) [point] {$s_2$};
	\node (finish) at (15, 0) [point] {$f$};
	
	\draw [->] (start) to node[midway, below]{
		$\begin{gathered}
		 [\mathbf{y} =1]\\[-1ex]
		[\mathbf{c} =1]
		\end{gathered}$
	}  (n1);

	\draw [->] (n1) to node[midway, below]{
		$\begin{gathered}
		[\mathbf{x} =1]\\[-1ex]
 		[\mathbf{y} =1]
		\end{gathered}$
	}  (n2);

	\draw [->] (n2) to node[midway, below]{
		$\begin{gathered}
		 [\mathbf{c} =1]
		\end{gathered}$
	}  (finish);
	
	\draw [->]  (n1) edge [loop, looseness=25, out=125,in=55] node[pos = 0.5, above]  
	    	{		$\begin{gathered}
	    			 [\mathbf{x} \rightarrow z^{-1}\mathbf{x}]\\[-1ex]
	    			[\mathbf{c} \rightarrow \mathbf{c}z] \\[-1ex]
	    			\textrm{for $z \in A$ }
	    			\end{gathered}$}  (n1);

	\draw [->]  (n2) edge [loop, looseness=25, out=125,in=55] node[pos = 0.5, above]  
	    	{		$\begin{gathered}
	    			[\mathbf{x} \rightarrow z\mathbf{x}] \\[-1ex]
	    			[\mathbf{y} \rightarrow z\mathbf{y}] \\[-1ex]
	    			[\mathbf{c} \rightarrow \mathbf{c}z^{-1}]\\[-1ex]
	    			\textrm{for $z \in A$ }
	    			\end{gathered}$}  (n2);
\end{scope}

\begin{scope} [xscale=0.75, yscale=0.25, yshift = -15cm]      
	\node [anchor=west] at (-1, 2.5) {$\cpy(b, a)$};
	\node (start) at (0,0) [point] {$s$};
	\node (n1) at (5,0) [point] {$s_1$};
	\node (n2) at (10,0) [point] {$s_2$};
	\node (finish) at (15, 0) [point] {$f$};
	
	\draw [->] (start) to node[midway, below]{
		$\begin{gathered}
		[\mathbf{a} =1]\\[-1ex]
		[\mathbf{c} =1]
		\end{gathered}$
	}  (n1);

	\draw [->] (n1) to node[midway, below]{
		$\begin{gathered}
		 [\mathbf{b} =1]\\[-1ex]
 		 [\mathbf{a} =1]
		\end{gathered}$
	}  (n2);

	\draw [->] (n2) to node[midway, below]{
		$\begin{gathered}
		 [\mathbf{c} =1]
		\end{gathered}$
	}  (finish);
	
	\draw [->]  (n1) edge [loop, looseness=25, out=125,in=55] node[pos = 0.5, above]  
	    	{		$\begin{gathered}
	    			[\mathbf{b} \rightarrow x^{-1}\mathbf{b}]\\[-1ex]
	    			[\mathbf{c} \rightarrow \mathbf{c}x] \\[-1ex]
	    			\textrm{for $x \in A$ }
	    			\end{gathered}$}  (n1);

	\draw [->]  (n2) edge [loop, looseness=25, out=125,in=55] node[pos = 0.5, above]  
	    	{		$\begin{gathered}
	    			[\mathbf{b} \rightarrow x\mathbf{b}] \\[-1ex]
	    			[\mathbf{a} \rightarrow x\mathbf{a}] \\[-1ex]
	    			[\mathbf{c} \rightarrow \mathbf{c}x^{-1}]\\[-1ex]
	    			\textrm{for $x \in A$ }
	    			\end{gathered}$}  (n2);
\end{scope}

\begin{scope} [xscale=0.75, yscale=0.25, yshift = -30cm]      
	\node [anchor=west] at (-1, 2.5) {$\cpy(a, c)$};
	\node (start) at (0,0) [point] {$s$};
	\node (n1) at (5,0) [point] {$s_1$};
	\node (n2) at (10,0) [point] {$s_2$};
	\node (finish) at (15, 0) [point] {$f$};
	
	\draw [->] (start) to node[midway, below]{
		$\begin{gathered}
		 [\mathbf{c} =1]\\[-1ex]
		[\mathbf{c'} =1]
		\end{gathered}$
	}  (n1);

	\draw [->] (n1) to node[midway, below]{
		$\begin{gathered}
		 [\mathbf{a} =1]\\[-1ex]
 		 [\mathbf{c} =1]
		\end{gathered}$
	}  (n2);

	\draw [->] (n2) to node[midway, below]{
		$\begin{gathered}
		 [\mathbf{c'} =1]
		\end{gathered}$
	}  (finish);
	
	\draw [->]  (n1) edge [loop, looseness=25, out=125,in=55] node[pos = 0.5, above]  
	    	{		$\begin{gathered}
	    			 [\mathbf{a} \rightarrow x^{-1}\mathbf{a}]\\[-1ex]
	    			[\mathbf{c'} \rightarrow \mathbf{c'}x] \\[-1ex]
	    			\textrm{for $x \in A$ }
	    			\end{gathered}$}  (n1);

	\draw [->]  (n2) edge [loop, looseness=25, out=125,in=55] node[pos = 0.5, above]  
	    	{		$\begin{gathered}
	    			[\mathbf{a} \rightarrow x\mathbf{a}] \\[-1ex]
	    			[\mathbf{c} \rightarrow x\mathbf{c}] \\[-1ex]
	    			[\mathbf{c'} \rightarrow \mathbf{c'}x^{-1}]\\[-1ex]
	    			\textrm{for $x \in A$ }
	    			\end{gathered}$}  (n2);
\end{scope}	

\end{tikzpicture}	
	\caption{Operations $\cpy(x,y)$, $\cpy(b,a)$ and $\cpy(a,c)$.}
	\label{fig-COPY2}       
\end{figure}  
	
	Further, for simplicity, when referring to a word $v\in L^\val_\Obj$, the coordinates of $v$ will be listed in the order of the enumeration, i.e $v=(v_1,\dots,v_k)$ where $v_i=v(a_i)$ for all $i\in\overline{k}$.  Similarly, for any $w\in L_\op^\ext$, $w=(w_1,\dots,w_\ell)$ where $w_i=w(b_i)$ for all $i\in\overline{\ell}$.  Hence, in terms of $\cpy$, an element $w\in L_\cpy^\ext$ is denoted $w=(w_a,w_b)$ where $w_a=w(a)$ and $w_b=w(b)$.

\begin{lemma}\label{lem-reach-cpy}

Let $\eps$ be a computation of $\cpy=\cpy^A(a,b)$ between the configurations $(w_1,t_1)$ and $(w_2,t_2)$.  For $i=1,2$, set $w_i=(w_i^a,w_i^b,w_i^c)$, where $w_i^j=w_i(j)$ for $j\in\I_\cpy$.

\begin{enumerate}

\item Suppose $t_1=\cpy_s$.  If $w_1^b\neq1$ or $w_1^c\neq1$, then $w_1=w_2$ and $t_1=t_2$

%\item Suppose $t_1=\cpy_f$.  If $w_1^c\neq1$, then $w_1=w_2$ and $t_1=t_2$

\item $w_1^cw_1^a=w_2^cw_2^a$

\item If $t_1\in\{s_2,\cpy_f\}$ and $w_1^a\neq w_1^b$, then $t_2\in\{s_2,\cpy_f\}$ and $w_2^a\neq w_2^b$.  Moreover, in this case $\bigl(w_1^a\bigr)^{-1}w_1^b=\bigl(w_2^a\bigr)^{-1}w_2^b$.

\end{enumerate}

\end{lemma}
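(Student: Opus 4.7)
My proof plan treats all three claims by induction on the length of the computation $\eps$, exploiting the explicit edge structure of $\cpy$ depicted in \Cref{fig-COPY}. The key preliminary observation is that for any edge $e = (u_1, u_2, t, g)$, the inverse edge $e^{-1}$ has tail $u_2$ and head $u_1$, and its guard and transformation are computed from $(t,g)$ by the formula given in \Cref{def-sgraph}; in particular, when $t$ is trivial, $e$ and $e^{-1}$ carry the same guards.

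For (1), I will observe that $\cpy_s = s$ is the tail of exactly one edge of $\G_\cpy$, namely the edge $e \colon s \to s_1$, whose guard contains $[\mathbf{b}=1]$ and $[\mathbf{c}=1]$ (no edge of $\G_\cpy$ has head $s$ besides $e^{-1}$, which has tail $s_1$). If $w_1^b \ne 1$ or $w_1^c \ne 1$, then $e$ is not applicable to $(w_1, \cpy_s)$, so no edge of $\cpy$ applies and $\eps$ must have length $0$. Thus $w_1 = w_2$ and $t_1 = t_2$.

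For (2), I will verify directly that $w^c w^a$ is preserved under every edge application, from which the claim follows by induction on $|\eps|$. The three non-loop edges carry trivial transformation, leaving $w^a$ and $w^c$ unchanged. The loop at $s_1$ labelled by $x \in A$ transforms $a \to x^{-1}\mathbf{a}$ and $c \to \mathbf{c}x$, giving $w^c w^a \mapsto (w^c x)(x^{-1} w^a) = w^c w^a$. The loop at $s_2$ labelled by $x$ transforms $a \to x\mathbf{a}$ and $c \to \mathbf{c}x^{-1}$, giving $w^c w^a \mapsto (w^c x^{-1})(x w^a) = w^c w^a$. Inverses of these loops invert the transformations symmetrically and thus also preserve $w^c w^a$.

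For (3), I will first enumerate the edges with tail in $\{s_2, \cpy_f\}$: the loop at $s_2$ and its inverse, the edge $s_2 \to f$, the inverse edge $s_2 \to s_1$ of $s_1 \to s_2$, and the inverse edge $f \to s_2$ of $s_2 \to f$. Of these, only $s_2 \to s_1$ exits $\{s_2, \cpy_f\}$; its guard equals $[\mathbf{a}=1]$, $[\mathbf{b}=1]$ (since the underlying transformation of $s_1 \to s_2$ is trivial), so the hypothesis $w^a \ne w^b$ makes it inapplicable. Among the remaining edges, the loop at $s_2$ sends $(w^a, w^b) \mapsto (x w^a, x w^b)$, so $(w^a)^{-1} w^b \mapsto (x w^a)^{-1}(x w^b) = (w^a)^{-1} w^b$; its inverse does the symmetric thing; and edges between $s_2$ and $f$ fix $a$ and $b$. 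Induction on $|\eps|$ then yields that throughout $\eps$ the state stays in $\{s_2, \cpy_f\}$ and the quantity $(w^a)^{-1} w^b$ is preserved; since this quantity is nontrivial at the start, it is nontrivial at the end, which gives $w_2^a \ne w_2^b$ and the stated equality. The arguments above are mostly short and mechanical; the only mild source of friction is the need to handle inverse edges not drawn in the figure, which is why I fixed their computation at the outset.
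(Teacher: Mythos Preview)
Your proposal is correct and follows essentially the same approach as the paper's proof: all three parts reduce to checking, edge by edge, that the relevant invariant is preserved (trivially for the non-loop edges, and by a one-line cancellation for the loops), with part~(1) observing that the unique edge out of $\cpy_s$ has guards $[\mathbf{b}=1],[\mathbf{c}=1]$, and part~(3) observing that the only edge leaving $\{s_2,\cpy_f\}$ has guards $[\mathbf{a}=1],[\mathbf{b}=1]$. The paper phrases part~(3) as a maximal-index contradiction rather than a direct induction, but this is a cosmetic difference.
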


\begin{proof} \

\noindent (1) Set $n=|\eps|$ and suppose $n>0$.  Then, the computation path of $\eps$ is $\rho_\eps=e_1,\dots,e_n$, so that $e_1$ is applicable to $(w_1,t_1)$.  In particular, since $t_1=\cpy_s$, $e_1$ must be the only edge with tail $\cpy_s$.  But since this edge has guard fragments $[\mathbf{b}=1]$ and $[\mathbf{c}=1]$, this implies $w_1^b=w_1^c=1$.

Hence, if $w_1^b\neq1$ or $w_1^c\neq1$,then $n=0$ and so $\eps$ is a trivial computation.

\medskip

\noindent (2) Let $(v_1,p_1)$ be a configuration of $\cpy$ and let $e$ be an edge that is applicable to $(v_1,p_1)$.  Set $(v_2,p_2)=e(v_1,p_1)$ and $v_i=(v_i^a,v_i^b,v_i^c)$ as in the statement.  Then it suffices to show that $v_1^cv_1^a=v_2^cv_2^a$.

This is immediate if $e$ has no transformation fragments, so it suffices to assume that $e$ is a loop at $s_1$ or at $s_2$.  But then there exists $x\in A\cup A^{-1}$ such that $v_2^a=x^{-1}v_1^a$ and $v_2^c=v_1^cx$.  Hence, $v_2^cv_2^a=v_1^cv_1^a$.

\medskip

\noindent (3) As in (2), let $(v_1,p_1)$ be a configuration, let $e$ be an edge that is applicable to $(v_1,p_1)$, and set $(v_2,p_2)=e(v_1,p_1)$ with $v_i=(v_i^a,v_i^b,v_i^c)$ for $i=1,2$.  Suppose $p_1\in\{s_2,\cpy_f\}$.

If $e$ contains no transformation fragment, then $v_1^a=v_2^a$ and $v_1^b=v_2^b$, so that immediately $\bigl(v_1^a\bigr)^{-1}v_1^b=\bigl(v_2^a\bigr)^{-1}v_2^b$.  Otherwise, $e$ must be a loop at $s_2$; but then there exists $x\in A\cup A^{-1}$ such that $v_2^a=xv_1^a$ and $v_2^b=xv_1^b$, so that again $\bigl(v_2^a\bigr)^{-1}v_2^b=\bigl(v_1^a\bigr)^{-1}v_1^b$.

Denote $\eps=\bigl((u_0,q_0),e_1,(u_1,q_1),e_2,\dots,e_n,(u_n,q_n)\bigr)$ with $n\geq0$, $(u_0,q_0)=(w_1,t_1)$, and $(u_n,q_n)=(w_2,t_2)$.  Let $m\in\{0,\dots,n\}$ be the maximal index such that $q_m\in\{s_2,\cpy_f\}$.

Suppose $m<n$.  Then for all $i\in\overline{m+1}$, the tail of $e_i$ is $s_2$ or $\cpy_f$, so that the above argument yields $\bigl(u_i^a\bigr)^{-1}u_i^b=\bigl(w_1^a\bigr)^{-1}w_1^b$.  In particular, since $w_1^a\neq w_1^b$, then $u_m^a\neq u_m^b$.  But the only edge whose tail is in $\{s_2,\cpy_f\}$ and whose head is in $\{s_1,\cpy_s\}$ is the edge $e$ going from $s_2$ to $s_1$ which has guard fragments $[\mathbf{a}=1]$ and $[\mathbf{b}=1]$.  So, since $u_m^a\neq u_m^b$ and $q_m\in\{s_2,\cpy_f\}$, $e$ is not applicable to $(u_m,q_m)$.  Hence, $e_{m+1}\neq e$ and so $q_{m+1}\in\{s_2,\cpy_f\}$, contradicting the choice of $m$.  

Thus, $m=n$, and so $t_2\in\{s_2,\cpy_f\}$ and $\bigl(w_2^a\bigr)^{-1}w_2^b=\bigl(w_1^a\bigr)^{-1}w_1^b$, implying $w_2^a\neq w_2^b$.

\end{proof}

\begin{lemma}\label{lem-io-cpy}
Let $\cpy=\cpy^A(a,b)$.  Then the relation $\IO_\cpy$ is given by:
	\begin{align*}
		\IO^{s,s}_\cpy &= \IO^{f,f}_\cpy = \Id;\\
		\IO^{s,f}_\cpy &= \bigl\{\bigl((w, 1), (w, w)\bigr) \mid w\in F(A)\bigr\}.
	\end{align*}
In particular, the external tape $a$ is immutable in $\cpy(a,b)$. 
\end{lemma}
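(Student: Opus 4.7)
The plan is to combine part (2) of \Cref{lem-reach-cpy} with a structural analysis of how any computation path of $\cpy$ alternates between the subsets $S_- = \{s, s_1\}$ and $S_+ = \{s_2, f\}$ of its state set. First, I would dispatch the ``easy'' containments. The inclusions $\Id \subseteq \IO^{s,s}_\cpy$ and $\Id \subseteq \IO^{f,f}_\cpy$ follow from trivial (length-zero) computations, while $\{((w,1),(w,w)) \mid w \in F(A)\} \subseteq \IO^{s,f}_\cpy$ would be established by exhibiting the natural ``copy'' computation starting at $((w,1,1), s)$: traverse $s \to s_1$; apply $|w|$ loops at $s_1$ (reading the letters of $w$ in order and using each loop or its inverse as appropriate) to reach $((1,1,w), s_1)$; traverse $s_1 \to s_2$; apply $|w|$ loops at $s_2$ (now reading $w$ in reverse order) so that $c$ is driven back to $1$ while $a$ and $b$ build up to $w$ in lockstep; finally traverse $s_2 \to f$.

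For the reverse containments, consider any input-output computation $\eps$ between configurations $((w_1^a, w_1^b, 1), t_1)$ and $((w_2^a, w_2^b, 1), t_2)$ with $t_1, t_2 \in \SF_\cpy$. Part (2) of \Cref{lem-reach-cpy} immediately gives $w_2^a = w_1^a$, so the problem reduces to pinning down $w_1^b$ and $w_2^b$. For this I would extract three structural observations directly from \Cref{fig-COPY}: (a) no edge incident to a vertex of $S_-$ contains a fragment mentioning $b$, so $b$ is constant along any subpath contained entirely in $S_-$; (b) the only edges between $S_-$ and $S_+$ are $s_1 \to s_2$ and its inverse, both of which carry the guards $[\mathbf{a}=1]$ and $[\mathbf{b}=1]$, forcing $a = b = 1$ at any such transition; (c) every edge internal to $S_+$ is either $s_2 \to f$ (or its inverse), which has trivial transformation, or the loop at $s_2$ (or its inverse), which multiplies $a$ and $b$ by the same letter on the left, so the quantity $a^{-1}b$ is preserved along any subpath contained entirely in $S_+$.

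With (a)--(c) in hand, I would decompose the path supporting $\eps$ into its maximal monochromatic segments with respect to $S_- \sqcup S_+$ and track the value of $b$ on the $S_-$-segments and of $a^{-1}b$ on the $S_+$-segments. For $t_1 = t_2 = s$, either there are no transitions (and then $w_2^b = w_1^b$ by (a)) or (b) forces $w_1^b = 1$ and propagates $b = 1$ across every remaining segment, yielding $w_2^b = w_1^b = 1$. For $t_1 = s$, $t_2 = f$, at least one transition must occur, so (b) forces $w_1^b = 1$; the final $S_+$-segment is entered with $a = b = 1$ and so by (c) satisfies $a^{-1}b = 1$ throughout, yielding $w_2^b = w_2^a = w_1^a$.

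The main obstacle is the case $t_1 = t_2 = f$, where the computation may begin with $w_1^a \neq w_1^b$. If $\eps$ never leaves $S_+$, (c) directly gives $(w_1^a)^{-1} w_1^b = (w_2^a)^{-1} w_2^b$ and hence $w_2^b = w_1^b$. Otherwise, (b) applied at the first exit combined with (c) on the first $S_+$-segment forces $w_1^a = w_1^b$; then the argument from the $(s,f)$-case, applied to the final $S_+$-segment (now entered with $a = b = 1$), shows $w_2^a = w_2^b$, and combining with $w_2^a = w_1^a = w_1^b$ yields $w_2^b = w_1^b$. The immutability of $a$ in $\cpy(a,b)$ is then immediate from the identity $w_2^a = w_1^a$, which holds in every case.
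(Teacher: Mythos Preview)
Your proof is correct and follows essentially the same approach as the paper's, which simply invokes parts (1) and (3) of \Cref{lem-reach-cpy} directly rather than re-deriving their content via your $S_-/S_+$ partition; your observations (a)--(c) are precisely the ingredients that go into the proofs of those parts. One minor wording issue: observation (a) should say ``no \emph{transformation} fragment mentioning $b$'' since the edge $s \to s_1$ does carry the guard fragment $[\mathbf{b}=1]$, though this does not affect your conclusion that $b$ is constant along $S_-$-segments.
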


\begin{proof}

Let $(w_1,w_2)\in\IO_\cpy^{\sigma,\tau}$ for $\sigma,\tau\in\{s,f\}$ and denote $w_i=(w_i^a,w_i^b)$ for $i=1,2$, where $w_i^j=w_i(j)$ for $j\in\I_\cpy^\ext$.  Hence, there exists a computation $\eps$ of $\cpy$ between the configurations $(w_1^a,w_1^b,1,\cpy_\sigma)$ and $(w_2^a,w_2^b,1,\cpy_\tau)$.

By \Cref{lem-reach-cpy}(2), $w_1^a=w_2^a$, and hence $a$ is immutable.

Suppose $\sigma=\tau=s$.  If $w_1^b\neq1$ or $w_2^b\neq1$, then \Cref{lem-reach-cpy}(1) and the symmetry of $S$-graph computations imply that $w_1=w_2$.  Otherwise, $w_1^b=w_2^b=1$, and so again $w_1=w_2$.  Hence, $\IO_\cpy^{s,s}=\Id$.

Next, suppose $\sigma=\tau=f$.  If $w_i^b\neq w_i^a$ for some $i=1,2$, then \Cref{lem-reach-cpy}(3) implies that $\bigl(w_1^a\bigr)^{-1}w_1^b=\bigl(w_2^a\bigr)^{-1}w_2^b$; but $w_1^a=w_2^a$, which implies that $w_1^b=w_2^b$, and so $w_1=w_2$. Otherwise, $w_1^b=w_1^a=w_2^a=w_2^b$, and so again $w_1=w_2$.  Hence, $\IO_\cpy^{f,f}=\Id$.

Finally, suppose $\sigma=s$ and $\tau=f$.  By \Cref{lem-reach-cpy}(1) and (3), $w_1^b=1$ and $w_2^b=w_2^a$.  Hence, $\IO_\cpy^{s,f}\subseteq\bigl\{\bigl((w,1),(w,w)\bigr)\mid w\in F(A)\bigr\}$.

For the opposite inclusion, denote the edges of $\cpy^A(a,b)$ in the following way:

\begin{itemize}

\item $e_1$ is the edge from $\cpy_s$ to $s_1$

\item $e_{1,x}$ is the loop at $s_1$ corresponding to $x\in A$

\item $e_t$ is the edge from $s_1$ to $s_2$

\item $e_{2,x}$ is the loop at $s_2$ corresponding to $x\in A$

\item $e_2$ is the edge from $s_2$ to $\cpy_f$

\item All other edges are denoted as the inverse of one of those above

\end{itemize}

Fix $w\in F(A)$ and let $w=x_1^{\delta_1}\dots x_n^{\delta_n}$ such that $x_i\in A$ and $\delta_i\in\{\pm1\}$.  Then, consider the path in $\cpy$:
$$\rho_w=e_1,e_{1,x_1}^{\delta_1},\dots,e_{1,x_n}^{\delta_n},e_t,e_{2,x_n}^{\delta_n},\dots,e_{2,x_1}^{\delta_1},e_2$$
It then suffices to check that there exists a computation $\eps_w$ supported by $\rho_w$ with initial configuration $((w,1),\cpy_s)$ and terminal configuration $((w,w),\cpy_f)$.

\end{proof}

Note that $\IO_\cpy^{f,s}$ is not explicitly mentioned above.  This is because, as mentioned previously, the symmetry of the $\IO$ relation allows one to deduce $\IO_\op^{f,s}$ from $\IO_\op^{s,f}$ for any operation $\op$ of an object.  In this case, this is explicitly: $$\IO_\cpy^{f,s}=\bigl\{\bigl((w,w),(w,1)\bigr)\mid w\in F(A)\bigr\}$$

Informally, \Cref{lem-io-cpy} states that any computation from the starting to the finishing state of $\cpy(a, b)$ copies the contents of tape $a$ to the initially empty tape $b$.  Accordingly, any computation from the finishing to the starting state must begin with the contents of tapes $a$ and $b$ coinciding, in which case the computation erases the word on tape $b$. 

\end{example}

While $\cpy$ provides an example of a useful proper operation, it does not shed light on the intricacies of an object containing value tapes or multiple operations.  This is amended below, as a class of objects containing both value tapes and multiple operations is defined.  Objects of this form will be useful tools in later arguments, and will indeed play a major role in the central innovation that improves the bounds of \cite{SBR}.

\begin{definition}\label{def-counter}
Let $\Cntr=\bigl(\I_\Cntr^\val,\op_1,\op_2\bigr)$ be an object such that $\I_{\op_1}^\ext=\I_{\op_2}^\ext=\emptyset$.  Suppose there exists an enumeration of the values of $\Cntr$, $\VAL_\Cntr = \{v_i\}_{i=0}^\infty$, such that $v_0$ is the initial value and the relations $\IO_{\op_1}$ and $\IO_{\op_2}$ are defined as follows:
\begin{align*}
	\IO^{s,s}_{\op_j} &= \IO^{f,f}_{\op_j} = \Id;\\
	\IO^{s,f}_{\op_1} &= \bigl\{(v_i, v_{i+1})\mid i\geq 0\bigr\}; \\
	\IO^{s,f}_{\op_2} &= \bigl\{(v_i, v_i)\mid i\geq 1\bigr\}.
\end{align*}
Then, $\Cntr$ is called a \emph{counter}.  

In this case, the operations $\op_1$ and $\op_2$ are denoted $\inc$ and $\check$, respectively, and are called the \emph{increment} and \emph{positivity check} operations of the counter $\Cntr$.

\end{definition}

Informally, an input-output computation of $\inc$ whose computation path goes from $\inc_s$ to $\inc_f$ increases the value by one (as long as the contents of the value tapes initially represent a value).  Accordingly, such a computation is called an \emph{application of increment} or simply an \emph{increment}.   It follows that the inverse of such a computation essentially decreases the value by one, and so is called an \emph{application of decrement} or simply a \emph{decrement}.

Similarly, an input-output computation of $\check$ whose computation path is between $\check_s$ and $\check_f$ returns the same value as long as it is not the initial value.  Accordingly, such a computation is called a \emph{positivity check}.  As opposed to $\inc$, since the inverse computation serves the same essential purpose, no distinction is made in the terminology.

It is worth emphasizing that, despite the naming conventions, `$\inc$' and `$\check$' do not represent particular operations, but rather various pairs of operations that form an object whose values satisfy the conditions of the definition.  However, as no explicit construction of an example of such a pair is given, there is no reason a priori that the class of objects described by this definition is nonempty.

On the other hand, observe that, per their definitions, the set $\VAL_\Cntr$ is consistent with the $\IO$ relations in the following two senses:
\begin{enumerate}

\item Given $w_1,w_2\in L_\Cntr^\val$ and $p,q\in\{s,f\}$, $(w_1,w_2)\in\IO_\inc^{p,q}\cup\IO_\check^{p,q}$, $w_1\in\VAL_\Cntr$ if and only if $w_2\in\VAL_\Cntr$.

\item For any $n\in\N-\{0\}$, $v_n$ can be `obtained from $v_0$' through iterated applications of increment.

\end{enumerate}

Further, note that the relations $\IO_\inc$ and $\IO_\check$ are equivalence relations on $\VAL_\Cntr\times\SF_\inc$ and $\VAL_\Cntr\times\SF_\check$, respectively.  Hence, there is also no reason a priori that the class of objects described by the term `counter' is empty.

The following example begins to mend this ambiguity, as it presents a construction that will later be shown to be a counter.

\begin{example}[Counter]\label{ex-counter}
Consider the object $\Counter_0$ consisting of the two operations depicted in \Cref{fig-COUNTER1}.  With an abuse of terminology, these operations are named $\check$ and $\inc$; as one can infer, this is done because $\Counter_0$ is a counter, with each of the two operations functioning as its name suggests.

Notwithstanding this description introducing $\Counter_0$, it is worthwhile to check that it is indeed an object.

As is indicated by the notation $\Counter_0[a]$ presented in the graph, the object has a single value tape denoted $a$.  Moreover, the notation $\check()$ and $\inc()$ indicates that each of the operations has no external tapes.  Every other index used in these operations thus represents an internal tape, and so $\I_\check^\int=\I_\inc^\int=\{b,c,d\}$.

For $\op\in\{\check,\inc\}$, let $\A_\op=(A_i^\op)_{i\in\I_\op}$ be the set of alphabets of $\op$ given by the minimal hardware specified by \Cref{fig-COUNTER1}.  By inspection, in each graph, the only letter of $\A^*$ appearing in fragments is $\delta$, and this letter is used in at least one fragment mentioning each of the indices.  Hence, $A_a^\op=A_b^\op=A_c^\op=A_d^\op=\{\delta\}$.  In particular, $A_a^\check=A_a^\inc$, and thus $\Counter_0$ is an object if and only if each graph satisfies condition (O).

\begin{figure}[]
	\centering
	\begin{tikzpicture} 
[point/.style={inner sep = 4pt, circle,draw,fill=white},
mpoint/.style={inner sep = 1.7pt, circle,draw,fill=black},
FIT/.style args = {#1}{rounded rectangle, draw,  fit=#1, rotate fit=45, yscale=0.5},
FITR/.style args = {#1}{rounded rectangle, draw,  fit=#1, rotate fit=-45, yscale=0.5},
FIT1/.style args = {#1}{rounded rectangle, draw,  fit=#1, rotate fit=45, scale=2},
vecArrow/.style={
		thick, decoration={markings,mark=at position
		   1 with {\arrow[thick]{open triangle 60}}},
		   double distance=1.4pt, shorten >= 5.5pt,
		   preaction = {decorate},
		   postaction = {draw,line width=0.4pt, white,shorten >= 4.5pt}
	},
myptr/.style={decoration={markings,mark=at position 1 with %
    {\arrow[scale=3,>=stealth]{>}}},postaction={decorate}}
]

\begin{scope} [xscale=0.75, yscale=0.25]      
\node [anchor=west] at (-1.5, 10) {Object $\text{Counter}_0[a]$};

\begin{scope} 
	\node at (0, 2.3) {$\check()$};
	\node (start) at (0,0) [point] {$s$};
	\node (n1) at (5,0) [point] {$s_1$};
	\node (n2) at (10,0) [point] {$s_2$};
	\node (n3) at (15, 0) [point] {$s_3$};
	
	\draw [->] (start) to node[midway, above]{
		$\begin{gathered}
		[\mathbf{b} =1]\\[-1ex]
 		[\mathbf{c} =1]
		\end{gathered}$
	}  (n1);

	\draw [->] (n1) to node[midway, above]{
		$\begin{gathered}
		 [\mathbf{a} =1]\\[-1ex]
 		 [\mathbf{b} =1]
		\end{gathered}$
	}  (n2);

	\draw [->] (n2) to node[midway, above]{
		$\begin{gathered}
		 [\mathbf{c} =1]
		\end{gathered}$
	}  (n3);
	
	\draw [->]  (n1) edge [loop, looseness=25, out=125,in=55] node[pos = 0.5, above]  
	    	{		$\begin{gathered}
	    			 [\mathbf{a} \rightarrow \delta^{-1}\mathbf{a}]\\[-1ex]
	    			[\mathbf{c} \rightarrow \mathbf{c}\delta] \\[-1ex]
	    			\end{gathered}$}  (n1);

	\draw [->]  (n2) edge [loop, looseness=25, out=125,in=55] node[pos = 0.5, above]  
	    	{		$\begin{gathered}
	    			[\mathbf{a} \rightarrow \delta\mathbf{a}] \\[-1ex]
	    			[\mathbf{b} \rightarrow \delta\mathbf{b}] \\[-1ex]
	    			[\mathbf{c} \rightarrow \mathbf{c}\delta^{-1}]\\[-1ex]
	    			\end{gathered}$}  (n2);

	\node (n4) at (0, -10) [point] {$s_4$};
	\node (finish) at (15, -10) [point] {$f$};

	\draw [->] (n3) edge[out=-90, in = 90, looseness=1]  (n4);

	\draw [->] (n4) to node[midway, above]{
		$\begin{gathered} 
		[\mathbf{b} =\delta]\\[-1ex]
		[\mathbf{b} \rightarrow \delta^{-1}\mathbf{b}]\\[-1ex]
		\end{gathered}$
	} (finish);

	\node (n5) at (2.5, -20) [point] {$s_5$};
	\node (n6) at (12.5, -20) [point] {$s_6$};

	\draw [->] (n4) to node[midway, left]{
		$\begin{gathered}
		 [\mathbf{d}=1] \ 
		\end{gathered}$
	}  (n5);
	\draw [->] (n6) to node[midway, right]{
		$\begin{gathered}
		 \ \ \ \ \ [\mathbf{d}=1]
		\end{gathered}$
	}  (n4);

	\draw [->] (n5) edge[out=0, in = 180, looseness=1] node[midway, above]{
		 $ \begin{gathered} 
		 [\mathbf{b}=1] \\[-1ex]
		\end{gathered}$
	}  (n6);
	\draw [->] (n5) edge[out=-40, in = -140, looseness=1] node[midway, below]{
		$\begin{gathered}
		 [\mathbf{b}=\delta^{-1}]\\[-1ex]
		 [\mathbf{b}\rightarrow\delta\mathbf{b}]\\[-1ex]
		\end{gathered}$
	}  (n6);

	\draw [->] (n5) edge[loop, out=-125, in = -55, looseness=25] node[pos=0.5, below]{
		$\begin{gathered}
		 [\mathbf{b}\rightarrow\delta^{-2}\mathbf{b}]\\[-1ex]
		 [\mathbf{d}\rightarrow\delta\mathbf{d}]
		\end{gathered}$
	}  (n5);

	\draw [->] (n6) edge[loop, out=-125, in = -55, looseness=25] node[pos=0.5, below]{
		$\begin{gathered}
		 [\mathbf{b}\rightarrow\delta\mathbf{b}]\\[-1ex]
		 [\mathbf{d}\rightarrow\delta^{-1}\mathbf{d}]
		\end{gathered}$
	}  (n6);

\end{scope}

%=======================================
\begin{scope} [yshift = -40cm]      
	\node at (0, 2.3) {$\inc()$};
	\node (start) at (0,0) [point] {$s$};
	\node (n1) at (5,0) [point] {$s_1'$};
	\node (n2) at (10,0) [point] {$s_2'$};
	\node (n3) at (15, 0) [point] {$s_3'$};
	
	\draw [->] (start) to node[midway, above]{
		$\begin{gathered}
			[\mathbf{b} =1]\\[-1ex]
			[\mathbf{c} =1]
		\end{gathered}$
	}  node [midway, below] {
		$[\mathbf{a} \rightarrow \delta\mathbf{a}]$
	}(n1);
	
	\draw [->] (n1) to node[midway, above]{
		$\begin{gathered}
		 [\mathbf{a} =1]\\[-1ex]
 		 [\mathbf{b} =1]
		\end{gathered}$
	}  (n2);

	\draw [->] (n2) to node[midway, above]{
		$\begin{gathered}
		 [\mathbf{c} =1]
		\end{gathered}$
	}  (n3);
	
	\draw [->]  (n1) edge [loop, looseness=25, out=125,in=55] node[pos = 0.5, above]  
	    	{		$\begin{gathered}
	    			 [\mathbf{a} \rightarrow \delta^{-1}\mathbf{a}]\\[-1ex]
	    			[\mathbf{c} \rightarrow \mathbf{c}\delta] \\[-1ex]
	    			\end{gathered}$}  (n1);

	\draw [->]  (n2) edge [loop, looseness=25, out=125,in=55] node[pos = 0.5, above]  
	    	{		$\begin{gathered}
	    			[\mathbf{a} \rightarrow \delta\mathbf{a}] \\[-1ex]
	    			[\mathbf{b} \rightarrow \delta\mathbf{b}] \\[-1ex]
	    			[\mathbf{c} \rightarrow \mathbf{c}\delta^{-1}]\\[-1ex]
	    			\end{gathered}$}  (n2);

	\node (n4) at (0, -10) [point] {$s_4'$};
	\node (finish) at (15, -10) [point] {$f$};

	\draw [->] (n3) edge[out=-90, in = 90, looseness=1]  (n4);
	\draw [->] (n4) to node[midway, above]{		
		$\begin{gathered} 
		[\mathbf{b} =\delta]\\[-1ex]
		[\mathbf{b} \rightarrow \delta^{-1}\mathbf{b}]\\[-1ex]
		\end{gathered}$
	} (finish);

	\node (n5) at (2.5, -20) [point] {$s_5'$};
	\node (n6) at (12.5, -20) [point] {$s_6'$};

	\draw [->] (n4) to node[midway, left]{
		$\begin{gathered}
		 [\mathbf{d}=1] \
		\end{gathered}$
	}  (n5);
	\draw [->] (n6) to node[midway, right]{
		$\begin{gathered}
		\ \ \ \ \ [\mathbf{d}=1]
		\end{gathered}$
	}  (n4);

	\draw [->] (n5) edge[out=0, in = 180, looseness=1] node[midway, above]{
		$\begin{gathered}
		 [\mathbf{b}=1] \\[-1ex]
		\end{gathered}$
	}  (n6);
	\draw [->] (n5) edge[out=-40, in = -140, looseness=1] node[midway, below]{
		$\begin{gathered}
		 [\mathbf{b}=\delta^{-1}]\\[-1ex]
		 [\mathbf{b}\rightarrow\delta\mathbf{b}]\\[-1ex]
		\end{gathered}$
	}  (n6);

	\draw [->] (n5) edge[loop, out=-125, in = -55, looseness=25] node[pos=0.5, below]{
		$\begin{gathered}
		 [\mathbf{b}\rightarrow\delta^{-2}\mathbf{b}]\\[-1ex]
		 [\mathbf{d}\rightarrow\delta\mathbf{d}]\\[-1ex]
		\end{gathered}$
	}  (n5);

	\draw [->] (n6) edge[loop, out=-125, in = -55, looseness=25] node[pos=0.5, below]{
		$\begin{gathered}
		 [\mathbf{b}\rightarrow\delta\mathbf{b}]\\[-1ex]
		 [\mathbf{d}\rightarrow\delta^{-1}\mathbf{d}]\\[-1ex]
		\end{gathered}$
	}  (n6);
\end{scope}	
\end{scope}	

\end{tikzpicture}	
	\caption{A counter object Counter$_0[a]$.}
	\label{fig-COUNTER1}       
\end{figure} 

\begin{lemma}\label{Counter1-O}

The $S$-graphs corresponding to $\check$ and $\inc$ satisfy condition (O).

\end{lemma}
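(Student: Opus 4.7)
The plan is to verify condition (O) for each internal tape $i\in\{b,c,d\}$ separately. Since $\G_\check$ and $\G_\inc$ differ only by the transformation $[\mathbf{a}\to\delta\mathbf{a}]$ on the initial edge of $\inc$---which concerns the value tape $a$, not an internal tape---the same argument will apply to both, so below I suppress the primes on the states in $\inc$.

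For tape $b$, I would first observe that the only edges incident to $s$ are $s\to s_1$ and its inverse, and the only edges incident to $f$ are $s_4\to f$ and its inverse. Each of these four edges carries $[\mathbf{b}=1]$ in the appropriate direction: the guard on $s\to s_1$ is preserved under inversion because the $b$-transformations there are trivial, and on $s_4\to f$ the guard $[\mathbf{b}=\delta]$ combined with the transformation $[\mathbf{b}\to\delta^{-1}\mathbf{b}]$ yields, by direct computation, $g^{-1}(b)=\delta^{-1}\cdot\delta\cdot 1=1$ for the inverse edge. Hence for any nonempty path $\rho=e_1,\dots,e_N$ between states in $\{s,f\}$ one has $[\mathbf{b}=1]\in e_1\cap e_N^{-1}$; taking $l_1=1$ and $l_2=N$ verifies (O) for $b$, since the "no mention outside $\overline{l_1,l_2}$" clause becomes vacuous.

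For tapes $c$ and $d$, the approach is a connected-components argument on the subgraph obtained by deleting all $i$-mentioning edges. For $c$, the non-$c$-subgraph decomposes into three components: $\{s\}$, $\{s_1,s_2\}$ (joined via $s_1\to s_2$), and $\{s_3,s_4,s_5,s_6,f\}$ (joined via the remaining non-$c$-edges). The only $c$-mentioning edges that cross between distinct components are $s\to s_1$ and $s_2\to s_3$, together with their inverses, each of which carries $[\mathbf{c}=1]$; the $c$-loops at $s_1$ and $s_2$, which lack this guard, sit entirely inside the middle component. Consequently, the first $c$-mentioning edge of any $\rho$ must be the edge by which $\rho$ first leaves the component of its starting vertex, and the last $c$-mentioning edge must be the edge by which $\rho$ finally enters the component of its ending vertex; both are boundary edges and therefore carry $[\mathbf{c}=1]$ in the required direction. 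The argument for $d$ is strictly analogous, with non-$d$-components $\{s,s_1,s_2,s_3,s_4,f\}$ and $\{s_5,s_6\}$ and boundary edges $s_4\to s_5$, $s_6\to s_4$, and their inverses, each carrying $[\mathbf{d}=1]$.

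The hard part will be the mostly bookkeeping-level case analysis needed to confirm that after the last $i$-mentioning edge, the path can genuinely reach its endpoint using only non-$i$-mentioning edges, so that the final $i$-edge must indeed be a boundary crossing into the non-$i$-component of the ending state. Once the non-$i$-subgraphs and their components are laid out explicitly as above, this verification is a direct consequence of those component descriptions and poses no real obstacle beyond bookkeeping over the endpoint combinations in $\{s,f\}\times\{s,f\}$.
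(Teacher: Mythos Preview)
Your proposal is correct and follows essentially the same approach as the paper. The paper first invokes the symmetry of condition (O) to reduce to checking that the \emph{first} $i$-mentioning edge carries $[\mathbf{i}=1]$, then does a case split on whether the path starts at $s$ or at $f$; you instead organize the argument tape-by-tape and, for $c$ and $d$, phrase the same structural observation as a connected-components decomposition of the non-$i$-subgraph. Both arguments come down to the identical fact that the only edges through which a path from $\SF$ can first reach an $i$-mentioning edge are precisely those carrying $[\mathbf{i}=1]$, and the paper's handling of $\inc$ via ``same graph up to a fragment on the value tape $a$'' matches yours exactly.
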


\begin{proof}

Let $\rho=e_1,\dots,e_N$ be a path in $\check$ from $s_1$ to $s_2$ for $s_1,s_2\in\SF_\check$.  Further, let $i\in\{b,c,d\}$ and suppose there exists $j\in\overline{N}$ such that $e_j$ contains a fragment mentioning $i$.  By the symmetry of (O), it suffices to show that if $m\in\overline{N}$ is the minimal index such that $e_m$ contains a fragment mentioning $i$, then $e_m$ contains the fragment $[\mathbf{i}=1]$.

Suppose $s_1=\check_s$.  Then $e_1$ must be the unique edge whose tail is $\check_s$.  Hence, $e_1$ contains the guard fragments $[\mathbf{b}=1]$ and $[\mathbf{c}=1]$, so that it suffices to assume that $i=d$.  Note that any edge of $\check$ containing a fragment mentioning $d$ has tail $s_4$, $s_5$, or $s_6$.  The structure of the graph then implies that $e_m$ must have tail $s_4$; but then the only two edges satisfying this contain the guard fragment $[\mathbf{d}=1]$.

Conversely, suppose $s_1=\check_f$.  Then, the edge $e_1$ must contain the guard fragment $[\mathbf{b}=1]$, so that it suffices to assume that $i\in\{c,d\}$.  If $i=d$, then again $e_m$ must have tail $s_4$, and so must contain the guard fragment $[\mathbf{d}=1]$; if $i=c$, then the analogous argument implies that $e_m$ has tail $s_3$, and so contains the guard fragment $[\mathbf{c}=1]$.

Thus, $\check$ satisfies (O).  For $\inc$, note that the only difference between the two graphs is the transformation $[\mathbf{a}\to\delta\mathbf{a}]$ contained on the edge with tail $\inc_s$ (and the corresponding transformation fragment of the inverse edge), and so since $a$ is not internal, condition (O) is satisfied by the same argument.

\end{proof}

Now, while \Cref{Counter1-O} implies that $\Counter_0$ is an object, a full understanding of the $\IO$-relations of its operations is needed to verify that it is a counter.  To this end, using arguments similar to those used in the proofs of \Cref{lem-reach-cpy} and \Cref{lem-io-cpy}, it can be shown that $\VAL_{\Counter_0} = \left\{\delta^i~\mid~i\geq 0\right\}$ and that, setting $v_i=\delta^i$ for all $i$, $\IO_\check$ and $\IO_\inc$ satisfy the properties prescribed by \Cref{def-counter}.

However, notice that the subgraph of $\check$ induced by $\{\check_s,s_1,s_2,s_3\}$ is virtually identical to $\cpy^A(a,b)$ for $A=\{\delta\}$ in \Cref{fig-COPY}, while the analogous subgraph of $\inc$ differs only in a single fragment.  Similarly, the subgraph of $\check$ induced by $\{s_4,s_5,s_6,\check_f\}$ is virtually identical to the subgraph of $\inc$ induced by $\{s_4',s_5',s_6',\inc_f\}$, and these can be identified with some other proper operation.  Thus, a full understanding of the operations corresponding to these `complementary' subgraphs would effectively describe $\IO_\check$ and $\IO_\inc$.

These observations suggest that approaching the $\IO$-relations of these particular operations in the manner of \Cref{lem-reach-cpy} and \Cref{lem-io-cpy} may be superfluous.  As such, the proof is delayed until more efficient tools are available.  Instead, a generalization of $S$-graphs is first introduced, whose construction will evidently arise from this motivating example.  The terminology developed from these devices will allow for simpler representations of the objects of interest and more concise arguments, as will be seen later when revisiting the $\IO$-relations of operations like the ones in this example.

\end{example}

%=============================================================================
%=============================================================================
%=============================================================================
\section{$S^*$-graphs and Objects of Higher degree} \label{sec-sg-ext}

Recall that $\H^*$ denotes all possible $S$-graph hardwares and for any $\H\in\H^*$, $\ACT_\H$ denotes the collection of all actions over $\H$.  

Let $\ACT_0=\bigsqcup_{\H\in\H^*}\ACT_\H$ be the collection of all actions over all possible hardwares.  Similarly, let $\SGRAPH_0$ be the collection of all $S$-graphs, $\OP_0$ be the collection of all operations, and $\OBJ_0$ be the collection of all objects over all possible hardwares. 

Henceforth, elements of $\ACT_0$ are referred to as actions \emph{of degree $0$}.  Similarly, elements of $\SGRAPH_0$, $\OP_0$, and $\OBJ_0$ are referred to as $S$-graphs of degree 0, operations of degree 0, and objects of degree $0$, respectively.

Definitions \ref{def-instance}$-$\ref{def-s1-obj} below define \emph{instances}, actions, $S$-graphs, operations, and objects of arbitrary degree.  Accordingly, for any $n\geq0$, the set $\INST_n$ of instances of degree $n$ is defined.  Similarly, for $n\geq1$, the sets $\ACT_n$, $\SGRAPH_n$, $\OP_n$, and $\OBJ_n$ of all actions of degree $n$, $S$-graphs of degree $n$, operations of degree $n$, and objects of degree $n$, respectively, are defined.

Just as an element of $\OBJ_0$ is built out of elements of $\OP_0$, which are in turn built out of elements of $\SGRAPH_0$, which are in turn built out of elements of $\ACT_0$, so the definitions for arbitrary degree are developed iteratively.  To aid with this development, define $\INST_n'=\bigsqcup_{j\in\overline{0,n}}\INST_j$ for all $n\geq0$.  Hence, $\INST_n'$ is defined if and only if $\INST_j$ is defined for all $0\leq j\leq n$.  The sets $\ACT_n'$, $\SGRAPH_n'$, $\OP_n'$, and $\OBJ_n'$ are defined similarly.

\begin{definition}[Instances]\label{def-instance}
Fix $n\geq0$ and suppose that $\INST_{n-1}'$, $\ACT_n'$, $\SGRAPH_n'$, $\OP_n'$, and $\OBJ_n'$ are defined.

Let $\Obj\in\OBJ_n$ and suppose $\mu:\I_\Obj^\val\to\I^*$ is an injection.  Then, the pair $\inst=(\Obj,\mu)$ is called an \emph{instance of degree $n$}.  In this case, the injection $\mu$ is denoted $\mu_\inst^\val$ and is called the \emph{value tape renaming map} of $\inst$, while its image is denoted $\I_\inst^\val$.

Note that if $\Obj$ is an object of degree 0, then the index set $\I_\Obj^\val$ is defined in the previous section.  For higher degree objects, $\I_\Obj^\val$ plays an analogous role and is defined iteratively in \Cref{def-s1-obj} below.

\end{definition}

\begin{definition}[Actions]\label{def-s-action}
Fix $n\geq0$ and suppose that $\ACT_n'$, $\SGRAPH_n'$, $\OP_n'$, $\OBJ_n'$, and $\INST_n'$ are defined.

Let $\inst=(\Obj,\mu_\inst^\val)\in\INST_n$ and $\op\in\OP_m$ such that $\op\in\Obj$.  Suppose $\nu:\I_\op^\ext\to\I^*$ is an injection whose image is disjoint from $\I_\inst^\val$.  Then the triple $\act=(\inst,\op,\nu)$ is called an \emph{action of degree $n+1$ over the hardware $\H_\op$}.  In this case, the injection $\nu$ is denoted $\mu_\act^\ext$ and is called the \emph{external tape renaming map} of $\act$, while its image is denoted $\I_\act^\ext$.

Note that if $n=0$ (or indeed if $\Obj$ is an object of degree 0), then it is understood from the definitions of \Cref{sec-operations} that $\Obj$ consists of operations of degree 0, where any $\op\in\Obj$ is identified with an $S$-graph of degree 0 over the hardware $\H_\op=(\I_\op,\A_\op)$ such that $\I_\op$ can be partitioned into sets of external, value, and internal tapes.  Per Definitions \ref{def-s1-op} and \ref{def-s1-obj} below, higher degree objects and operations are constructed similarly, with an object of degree $n+1$ consisting of a set of operations of degree at most $n+1$ constructed from $S$-graphs of that degree.  As such, it is implicitly taken above that $m\leq n$.

Let $\act=(\inst,\op,\mu_\act^\ext)\in\ACT_{n+1}$ and $i\in\I^*$.  Then $\act$ is said to \emph{mention $i$} if $i\in\I_\inst^\val\sqcup\I_\act^\ext$.  If $i\in\I_\inst^\val$, then $i$ is \emph{mentioned as a value of $\inst$}; if $i\in\I_\act^\ext$, then $i$ is \emph{mentioned as a variable}.  Moreover, $i$ is \emph{mentioned as an immutable variable} if $\bigl(\mu_\op^\ext\bigr)^{-1}(i)$ is immutable in $\op$.  As above, an immutable external tape of an operation of degree 0 is defined in the previous section, while immutable external tapes of operations of higher degree will be defined in what follows.  

Observe that the concept of an action $\act\in\ACT_0$ mentioning an index $i$ is previously defined.  To match the terminology, in this case $\act$ is said to \emph{mention $i$ as a variable}, while $i$ is \emph{mentioned as an immutable variable} if any fragment of $\act$ mentioning $i$ is either a guard or a trivial fragment.

As with operations of degree 0, an operation $\op$ of arbitrary degree has an associated external language $L_\op^\ext=\bigtimes_{i\in\I_\op^\ext}F(\A_\op(i))$.  Similarly, an object $\Obj$ of higher degree has an associated tape language $L_\Obj^\val$ given by $\bigtimes_{i\in\I_\Obj^\val}F(\A_\op(i))$ for any $\op\in\Obj$.

Hence, for $\act = (\inst, \op, \mu^\ext_\act) \in \ACT_{n+1}$, the tape renaming maps are naturally extended to maps of languages, i.e injections $\mu_\act^\ext:L_\op^\ext\to\bigtimes_{i\in\I_\act^\ext}F(\A^*)$ and $\mu_\inst^\val:L_\Obj^\val\to\bigtimes_{i\in\I_\inst^\val}F(\A^*)$ given by reindexing.  The images of these maps are denoted $L_\act^\ext$ and $L_\inst^\val$, respectively.  

Thus, $L_\op^\ext=\bigtimes_{i\in\I_\act^\ext}F\left(\A_\op\bigl((\mu_\act^\ext)^{-1}(i)\bigr)\right)$ and similarly for $L_\inst^\val$.

\end{definition}

\begin{definition}[$S$-Graphs]\label{def-s1-graph}
Fix $n\geq0$ and suppose that $\ACT_{n+1}'$, $\SGRAPH_n'$, $\OP_n'$, $\OBJ_n'$, and $\INST_n'$ are defined.

Let $\H=(\I,\A)$ be an $S$-graph hardware and $\INST$ be a finite subset of $\INST_n'$ such that $\INST\cap\INST_n\neq\emptyset$.  Further, let $\G=(S,E)$ be a finite directed graph, perhaps containing parallel edges and loops, equipped with a labelling function $\lab:E\to\ACT_{n+1}'$.  Hence, every edge $e\in E$ can be identified with the triple $(s_1,s_2,\act)$, where $t(e)=s_1$, $h(e)=s_2$, and $\lab(e)=\act$.

The \emph{set of 0-actions of $\G$} is the set $\ACT_\G^0=\lab(E)\cap\ACT_0$, i.e the set of actions labelling the edges of $\G$ which are of degree 0.  Conversely, the \emph{set of positive actions of $\G$} is taken to be $\ACT_\G=\lab(E)-\ACT_\G^0$.

Suppose the following conditions are satisfied:
\begin{enumerate}

\item If $\act=(\inst,\op,\mu_\act^\ext)\in\ACT_\G$, then $\inst\in\INST$.

\item If $\inst\in\INST$, then $\I_\inst^\val\subseteq\I$.

\item For all $e\in E$, if $\lab(e)$ is an action over the hardware $\H_e\in\H^*$, then $\H_e\leq\H$, with equality if $\lab(e)\in\ACT_\G^0$.

\item If $e=(s_1,s_2,\act)\in E$ with $\act\in\ACT_\G^0$, then $e^{-1}=(s_2,s_1,\act^{-1})\in E$.  

\item If $\inst_1,\inst_2\in\INST$ such that $\inst_1\neq\inst_2$, then $\I_{\inst_1}^\val\cap\I_{\inst_2}^\val=\emptyset$.

\item For $\inst\in\INST$, $\act\in\ACT_\G\cup\ACT_\G^0$, and $i\in\I^*$, if $i\in\I_\inst^\val$ and $\act$ mentions $i$ as a variable, then $\act$ mentions $i$ as an immutable variable.
\end{enumerate}
Then $\G$ is called an \emph{$S$-graph of degree $n+1$} over $(\H,\INST)$.  In this case, the set of instances $\INST$ is denoted $\INST_\G$ to indicate its relationship with $\G$.

Note that the definition of an $S$-graph of degree 0 in \Cref{def-sgraph} matches that here if one permits $\INST_\G=\emptyset$.  That is, $\G=(S,E)$ is an $S$-graph (of degree 0) over the hardware $\H$ if and only if (3) and (4) are satisfied, while all other conditions are satisfied vacuously.  Hence, this is henceforth taken as the definition of an $S$-graph of degree 0 for uniformity.

As with the notation of previous sections, an edge $e$ of an $S$-graph of degree $n+1$ is said to \emph{mention} an index $i$ if the action labelling the edge mentions $i$.

Conditions (2) and (3) above describe a `compatibility' for a hardware with an $S$-graph of degree $n+1$ similar to the compatibility of a hardware with an action of degree 0 discussed in \Cref{ex-G2S}.  Thus, as $\H^*$ is a complete meet-semilattice with respect to the partial order $\leq$, for any $\G\in\SGRAPH_{n+1}$ there exists a minimal hardware $\H_\G=(\I_\G,\A_\G)$ compatible with $\ACT_\G\cup\ACT_\G^0\cup\INST_\G$, i.e $\G$ is an $S$-graph over $(\H,\INST_\G)$ if and only if $\H_\G\leq\H$.  When defining an $S$-graph $\G$ of arbitrary degree graphically, $\INST_\G$ is given explicitly while the hardware is implicitly taken as $\H_\G$.  However, note that this implicit definition is not as visually clear for $S$-graphs of higher degree as for those of degree 0 in previous sections, as some indices and letters comprising $\I_\G$ and $\A_\G$, respectively, may be hidden within an edge referencing a positive action of $\G$.

Notice that for any $\G\in\SGRAPH_{n+1}$, a single instance in $\INST_\G$ can be used to define multiple actions in $\ACT_\G$.  This simplifies the notation and presentation, allowing reference to a particular object in multiple settings without the need to rename it in a different way each time.

Moreover, an instance in $\INST_\G$ need not be used to define any action of $\ACT_\G$.  However, such an instance is still subject to conditions (2), (5), and (6).  Allowing $\INST_\G$ to contain such instances will prove useful for objects of higher degree (see \Cref{lem-nse-star}).

Finally, note that $S$-graphs of positive degree are no longer symmetric, as edges labelled by an action of positive degree do not have an inverse.  This is because, computationally speaking, this edge will serve as its own inverse.  Still, when an $S$-graph $\G$ of arbitrary degree is presented by picture, if an edge $e=(s_1,s_2,\act)$ with $\act\in\ACT_\G^0$ is depicted, then $e^{-1}$ is not depicted, as its presence is implicitly assumed by (4).

\end{definition}

\begin{definition}[Operations]\label{def-s1-op}
Fix $n\geq0$ and suppose that $\ACT_{n+1}'$, $\SGRAPH_{n+1}'$, $\OP_n'$, $\OBJ_n'$, and $\INST_n'$ are defined.

Let $\G=(S,E)$ be an $S$-graph of degree $n+1$ over $(\H,\INST_\G$) for $\H=(\I,\A)$ such that that the directed graph underlying $\G$ is connected.

For all $e=(s_1,s_2,\act)\in E$ such that $\act\in\ACT_\G$, define the \emph{formal inverse} of $e$ to be the triple $e^{-1}=(s_2,s_1,\act)$.  In this case, the \emph{tail} of $e^{-1}$ is $t(e^{-1})=s_2$ and the \emph{head} is $h(e^{-1})=s_1$.  Further, $e^{-1}$ \emph{mentions} the index $i$ if and only if $e$ does.

A \emph{flattened path} in $\G$ is a sequence of edges and formal inverses $\rho=e_1,\dots,e_N$ such that $h(e_i)=t(e_{i+1})$ for all $i\in\overline{N-1}$.  In this case, if $e_j$ is a formal inverse for $j\in\overline{N}$, set $e_j^{-1}$ as the unique edge in $E$ for which $e_j$ is the formal inverse; otherwise, define $e_j^{-1}$ in the natural way.  Then, $\rho^{-1}=e_N^{-1},\dots,e_1^{-1}$ is also a flattened path, called the \emph{inverse flattened path} of $\rho$.

Suppose there exist $s,f\in S$ and a partition $\I=\I^\ext\sqcup\I^\val\sqcup\I^\int$ satisfying the following condition:

\begin{enumerate}
		\item [(O)] Let $\rho=e_1,\dots,e_N$ be a flattened path in $\G$ from $s_1$ to $s_2$ where $s_1,s_2\in\{s,f\}$.  For $i\in\I^\int$, suppose there exists $j\in\overline{N}$ such that $e_j$ mentions $i$.  Then there exist $l_1,l_2\in\overline{N}$ with $l_1\leq l_2$ such that $\lab\bigl(e_{l_1}\bigr),\lab\bigl(e_{l_2}\bigr)\in\ACT_0$, $[\i=1]\in e_{l_1}\cap e_{l_2}^{-1}$, and $e_m$ does not mention $i$  for all $m\in\overline{N}-\overline{l_1,l_2}$.		
	\end{enumerate}
	Moreover, suppose that $\I_\inst^\val\subseteq\I^\val\sqcup\I^\int$ for all $\inst\in\INST_\G$.
	
	Then $\op=(\G,\I^\ext,\I^\val,\I^\int,s,f)$ is called an \emph{operation of degree $n+1$}.  
	
	As with operations of degree 0, the defining components of  $\op$ are named in a way to indicate their relationship to it, i.e $\op=(\G_\op,\I_\op^\ext,\I_\op^\val,\I_\op^\int,\op_s,\op_f)$ where $\G_\op=(S_\op,E_\op)$ is taken over $(\H_\op,\INST_\op)$ for $\H_\op=(\I_\op,\A_\op)$.
	
	The notational and naming conventions for operations of higher degree are mostly carried over from those defined for operations of degree 0; for example, the subset $\{\op_s,\op_f\}\subseteq S_\op$ is denoted $\SF_\op$, while the subsets $\I_\op^\ext$, $\I_\op^\val$, and $\I_\op^\int$ forming a partition of $\I_\op$ are called the \emph{external}, \emph{value}, and \emph{internal} tapes of $\op$.  
	
	One notable exception, however, is the definition of an immutable external tape.  As with operations of degree 0, the definition of these tapes relies on computations of the corresponding $S$-graph of degree $n+1$.  Hence, the discussion of which external tapes are immutable is delayed until the next section (see \Cref{def-immutable-star}).  
	
	However, condition (6) of the definition of $S$-graphs of higher degree relies on immutable tapes, and so their definition is necessary for the iterative step of the main definitions of this section.  For the moment, this is rectified simply by taking the set of immutable tapes of an operation $\op$ of degree $n+1$ to be some arbitrary subset $\I_\op^\ext$; while this conceals the significance of immutable tapes, it is sufficient for the iterative step and will be remedied in short order.
	
	Finally, note the differences between the definition of condition (O) here and that in \Cref{def-operation}.  First, the concept of an edge mentioning an index is generalized in this setting, as actions of positive degree may label the edges.  Further, it is required here that the actions labelling $e_{l_1}$ and $e_{l_2}$ are of degree 0, a requirement that goes without saying for operations of degree 0.  Finally, the `paths' of the graph in this setting are flattened paths, technical generalizations of actual paths whose use will become apparent in \Cref{def-generic}.  
	
	All that being said, the condition (O) given in \Cref{def-operation} can be viewed as a particular case of the one given in this setting, and hence the conditions are given the same name.  Moreover, the ability to invert flattened paths allows for the same level of `symmetry' in this setting as was established in the previous section.

\end{definition}

\begin{definition}[Objects]\label{def-s1-obj}
	Fix $n\geq0$ and suppose that $\ACT_{n+1}'$, $\SGRAPH_{n+1}'$, $\OP_{n+1}'$, $\OBJ_n'$, and $\INST_n'$ are defined.
	
	Let $\op_1,\dots,\op_k\in\OP_{n+1}'$ such that $\op_m\in\OP_{n+1}$ for some $m\in\overline{k}$.  Suppose there exist subsets $\INST\subseteq\INST_n'$ and $\I^\val\subseteq\I^*$ such that $\INST_{\op_j}=\INST$ and $\I_{\op_j}^\val=\I^\val$ for all $j\in\overline{k}$.  Further, suppose that for all $i\in\I^\val$, $\A_{\op_m}(i)=\A_{\op_\ell}(i)$ for all $m,\ell\in\overline{k}$.  
	
	Then $\Obj=(\I^\val,\INST,\op_1,\dots,\op_k)$ is called an \emph{object of degree $n+1$}.
	
	As with objects of degree 0, the notation $\op\in\Obj$ is used to denote that $\op$ is one of the operations comprising $\Obj$. Moreover, the \emph{set of value tapes} of the object $\Obj$, in this case $\I^\val$, is denoted $\I_\Obj^\val$ to emphasize its relationship with the object.  Similarly, the collection of instances $\INST$ defining the object $\Obj$ is denoted $\INST_\Obj$. 
	
	Note that the definition of an object of degree 0 given in \Cref{def-object} matches that here if one permits $\INST_\Obj=\emptyset$.  Hence, as with $S$-graphs of degree 0, this is henceforth taken as the definition of an object of degree 0 for uniformity.
	
	Further, just as for objects of degree 0, an object of degree at $n+1$ is assumed to contain a unique copy of each of its operations.  Hence, for distinct objects $\Obj_1,\Obj_2\in\OBJ_{n+1}'$, there is no $\op\in\OP_{n+1}'$ such that $\op\in\Obj_1$ and $\op\in\Obj_2$.
	
	An object $\Obj$ consisting of a single operation is identified with that operation and is called a \emph{proper operation} if, in addition, $\I_\Obj^\val=\emptyset$.
\end{definition}

Finally, define $\INST^* = \bigsqcup \INST_n$, $\ACT^* = \bigsqcup\ACT_n$, $\SGRAPH^* = \bigsqcup\SGRAPH_n$, $\OP^*=\bigsqcup\OP_n$, and $\OBJ^* = \bigsqcup\OBJ_n$.  In particular, any $\G\in\SGRAPH^*$ is called an \emph{$S^*$-graph}.  Conversely, with an abuse of terminology, elements of $\ACT^*$, $\OP^*$, $\OBJ^*$, and $\INST^*$ are simply called \emph{actions}, \emph{operations}, \emph{objects}, and \emph{instances}, respectively.

Note that the degree of an $S^*$-graph $\G$ is one more than the maximal degree of the instances of $\INST_\G$.  Similarly, the degree of an object is the maximal degree of one of its operations, which is the same as the degree of the $S^*$-graph defining that operation.  However, while the degree of an instance is the same as that of the object defining it, the degree of an action is one more than that of the instance it is constructed from.  So, if $\Obj\in\OBJ_n$, $\inst=(\Obj,\mu_\inst^\val)$, and $\act=(\inst,\op,\mu_\act^\ext)$ for some $\op\in\Obj$, then $\act\in\ACT_{n+1}$ even if $\op\in\OP_m$ for $m<n$.

The precise degree of an $S^*$-graph, operation, or object is generally insignificant and exists simply to aid with the iterative definitions.

Naming conventions analogous to those used for objects of degree 0 are instituted to simplify the process of defining an $S^*$-graph by picture.

First, for any object $\Obj\in\OBJ^*$, the notation $\Obj[a_1,\dots,a_k]$ indicates a fixed enumeration of the elements of $\I_\Obj^\val$.  Further, for any operation $\op\in\OP^*$, the notation $\op(c_1,\dots,c_\ell)$ indicates a fixed enumeration of the elements of $\I_\op^\ext$.

Then, when defining any $S^*$-graph $\G$ by picture, the elements of $\INST_\G$ are explicitly listed above the graph, with a square bracket indicating the definition of the associated value tape renaming maps.  Hence, for any object $\Obj=\Obj[a_1,\dots,a_k]$, the notation `$\inst \text{ is }\Obj[b_1,\dots,b_k]$' appears above the graph to indicate that $\inst=\bigl(\Obj,\mu_\inst^\val\bigr)\in\INST_\G$ such that $\mu_\inst^\val(a_i)=b_i$ for all $i\in\overline{k}$.  

Additionally, for any operation $\op=\op(c_1,\dots,c_\ell)$ of $\Obj$ with $\inst=(\Obj,\mu_\inst^\val)$, the notation $\inst.\op(d_1,\dots,d_\ell)$ is used to denote an action $\act=(\inst,\op,\mu_\act^\ext)$ given by $\mu_\act^\ext(c_j)=d_j$ for all $j\in\overline{\ell}$.

To simplify the notation slightly in a particular case, if $\Obj$ is a proper operation given by the operation $\op$ and there exists $\inst\in\INST_\G$ such that $\inst=(\Obj,\mu_\inst^\val)$, then $\Obj$ may be omitted from the declaration of the elements of $\INST_\G$ if it is used to construct an action of $\ACT_\G$.  Further, the corresponding action can be simply denoted $\op(d_1,\dots,d_\ell)$, omitting reference to the instance.

Similar conventions are instituted when defining an object $\Obj\in\OBJ^*$ by picture.  For example, the elements of $\INST_\Obj$ (perhaps with some proper operations omitted) are listed in much the same manner above the graphs defining the operations of $\Obj$.

An example of an object defined by picture is presented in \Cref{ex-silly-counter} below.  To be precise, an object $\Obj_0$ of degree 0 is constructed and used to construct an object $\Obj_1$ of degree 1 consisting of a single operation.  Hence, as immutable external tapes of operations of degree 0 are understood from the previous section, no ambiguity is created by the lack of specificity in defining immutable external tapes in \Cref{def-s1-op}, and thus the example is self-contained.

\begin{example}[Object of higher degree]\label{ex-silly-counter}
\Cref{fig-SILLYCOUNTER} below presents the objects $\Obj_0[a]$ and $\Obj_1[d']$ introduced above.

\begin{figure}[hbt]
	\centering
	\begin{tikzpicture} 
[point/.style={inner sep = 4pt, circle,draw,fill=white},
mpoint/.style={inner sep = 1.7pt, circle,draw,fill=black},
FIT/.style args = {#1}{rounded rectangle, draw,  fit=#1, rotate fit=45, yscale=0.5},
FITR/.style args = {#1}{rounded rectangle, draw,  fit=#1, rotate fit=-45, yscale=0.5},
FIT1/.style args = {#1}{rounded rectangle, draw,  fit=#1, rotate fit=45, scale=2},
vecArrow/.style={
		thick, decoration={markings,mark=at position
		   1 with {\arrow[thick]{open triangle 60}}},
		   double distance=1.4pt, shorten >= 5.5pt,
		   preaction = {decorate},
		   postaction = {draw,line width=0.4pt, white,shorten >= 4.5pt}
	},
myptr/.style={decoration={markings,mark=at position 1 with %
    {\arrow[scale=3,>=stealth]{>}}},postaction={decorate}}
]

\begin{scope} [xscale=0.75, yscale=0.3]      
	\node [anchor=west] at (-2.4, 5) {Object $\Obj_0[a]$};
	\node at (0, 2.3) {$\check0()$};
	\node (start) at (0,0) [point] {$s$};
	\node (finish) at (5, 0) [point] {$f$};
	
	\draw [->] (start) to node[midway, above]{
		$\begin{gathered}
		 [\mathbf{a} = 1]
		\end{gathered}$
	}  (finish);
\end{scope}

\begin{scope} [xscale=0.75, yscale=0.3, xshift= 7.5cm]      
	\node at (0, 2.3) {$\inc2()$};
	\node (start) at (0,0) [point] {$s$};
	\node (finish) at (5, 0) [point] {$f$};
	
	\draw [->] (start) to node[midway, above]{
		$\begin{gathered}
		 [\mathbf{a} \rightarrow \delta^2 \mathbf{a}]
		\end{gathered}$
	}  (finish);
\end{scope}

\begin{scope} [xscale=0.65, yscale=0.3, xshift = 1cm, yshift = -18.5cm]      
	\node [anchor=west] at (-4, 13.5) {
		\begin{tabular}{l} 
		Object $\Obj_1[d']$;  \\
		\ \ \ $C_{ab}$ is $\Obj_0[d']$ \\
		\ \ \ $C_c$ is $\Obj_0[d'']$
		\end{tabular}};
		
	\node at (0, 10) {syn$(a, b, c)$};
		
	\node (s) at (0,-15) [point] {$s$};
	\node (ss) at (0,0) [point] {$s_3$};
	\node (ss1) at (0,-4) [point] {$s_2$};
	\node (ss2) at (0,-8) [point] {$s_1$};
	\node (s2) at (8, 0) [point] {};
	\node (c2) at (8, 8) [point] {};
	\node (s3) at (16, 0) [point] {};

	\node (c1) at (0, 8) [point] {$s_4$};
	\draw [->] (s) to node[midway, left]{
		$\begin{gathered}
		 [\mathbf{a}'=1] \\[-1ex]
		 [\mathbf{b}'=1] \\[-1ex]
		 [\mathbf{d}'=1] \\[-1ex]
		 [\mathbf{d}''=1]
		\end{gathered}$
	}  node [midway, right] {$e_1$} (ss2);
	\draw [->] (ss2) to node[midway, left]{
		$\cpy^A(\mathbf{a}, \mathbf{a}')$
	}   node [midway, right] {$e_2$} (ss1);
	\draw [->] (ss1) to node[midway, left]{
		$\cpy^B(\mathbf{b}, \mathbf{b}')$
	} node [midway, right] {$e_3$} (ss);
	\draw [->] (ss) edge[out=100, in = -100, looseness=1] node[midway, left]{
		$\begin{gathered}
		 [\mathbf{a}'\rightarrow \mathbf{a}'x^{-1}] \\[-1ex]
		 \text{for $x\in A$}
		\end{gathered}$
	}  (c1);
	\draw [->] (c1) edge[out=-80, in = 80, looseness=1] node[midway, right]{
		$\begin{gathered}
			C_{ab}.\inc2()
		\end{gathered}$
	}  (ss);

	\draw [->] (s2) edge[out=100, in = -100, looseness=1] node[midway, left]{
		$\begin{gathered}
		 [\mathbf{b}'\rightarrow \mathbf{b}'x^{-1}] \\[-1ex]
		 \text{for $x\in B$ }
		\end{gathered}$
	}  (c2);
	\draw [->] (c2) edge[out=-80, in = 80, looseness=1] node[midway, right]{
		$\begin{gathered}
			C_{ab}.\inc2()
		\end{gathered}$
	}  (s2);

	\node (c3) at (16, 8) [point] {};
	\draw [->] (s3) edge[out=100, in = -100, looseness=1] node[midway, left]{
		$\begin{gathered}
		 [\mathbf{c}\rightarrow \mathbf{c}x^{-1}] \\[-1ex]
		 \text{for $x\in C$ }
		\end{gathered}$
	}  (c3);
	\draw [->] (c3) edge[out=-80, in = 80, looseness=1] node[midway, right]{
		$\begin{gathered}
			C_c.\inc2()
		\end{gathered}$
	}  (s3);

	\draw [->] (ss) to node[midway, above]{
		$\begin{gathered}
		 [\mathbf{a}'=1]
		\end{gathered}$
	}  node [midway, below] {$e_4$} (s2);
	\draw [->] (s2) to node[midway, above]{
		$\begin{gathered}
		 [\mathbf{b}'=1]
		\end{gathered}$
	} (s3);
	
	\node (s4) at (6, -8) [point] {$s_5$};
	
	\draw [->] (s3) edge[out=-90, in = 90] node[midway, above]{
		$\begin{gathered}
		 [\mathbf{c}=1]
		\end{gathered}$
	}  (s4);
	
	\node (c4) at (6, -16) [point] {};
	\draw [->] (s4) edge[out = -100, in = 100] node[midway, left]{
		$\begin{gathered}
			C_c.\inc2()
		\end{gathered}$
	}  (c4);
	\draw [->] (c4) edge[out=80, in = -80, looseness=1] node[midway, right]{
		$\begin{gathered}
			C_{ab}.\inc2()
		\end{gathered}$
	}  (s4);
	
	\node (s5) at (10, -8) [point] {};
	\draw [->] (s4) to node[midway, above]{
		$\begin{gathered}
			C_{ab}.\check0()
		\end{gathered}$
	} node [midway, below] {$e_5$} (s5);
	
	\node (f) at (14, -8) [point] {$f$};
	\draw [->] (s5) to node[midway, above]{
		$\begin{gathered}
			[\textbf{d}''=1]
		\end{gathered}$
	}  node [midway, below] {$e_6$} (f);

\end{scope}

\end{tikzpicture}	
	\caption{Objects $\Obj_0[a]$ and $\Obj_1[d']$ consisting of operations $\check0()$, $\inc2()$, and syn$(a,b,c)$.}
	\label{fig-SILLYCOUNTER}       
\end{figure} 

The object $\Obj_0$ consists of two operations $\check0$ and $\inc2$, with one edge of each pictured.  The notation $\check0()$ and $\inc2()$ suggests that each of these operations consists of no external tapes.  So, since an action labelling any edge of these operations consists only of a single fragment mentioning the value tape, $\I_{\check0}=\I_{\inc2}=\I_{\Obj_0}^\val=\{a\}$.  Hence, condition (O) is trivially satisfied for each, and so each is indeed an operation of degree 0.

More subtle, though, is the makeup of the alphabet corresponding to these operations.  The minimal hardwares of the individual graphs would produce the alphabets $\A_{\check0}(a)=\emptyset$ and $\A_{\inc2}(a)=\{\delta\}$; however, the definition of an object requires that these alphabets be the same.  To rectify this, for an object presented by picture, it is generally understood that the alphabet corresponding to a value tape is the minimal set satisfying the compatibility with the graphs of all defining operations.  Hence, $\A_{\check0}(a)=\{\delta\}$ despite the fact that $\delta$ is not mentioned by any of its edges.

The simple makeup of these graphs allows for straightforward computation of the $\IO$-relation of $\Obj_0$, yielding $\VAL_{\Obj_0}=\{\delta^{2k}\mid k\in\Z\}\subsetneq L_{\Obj_0}^\val$.

The object $\Obj_1[d']$ consists of the single operation syn$(a, b, c)$, whose name is meant to indicate that the purpose of this example is simply to illustrate the syntax.  Some edges of $\G_\syn$ are named in \Cref{fig-SILLYCOUNTER} for ease of reference.  By convention, $\G_\syn$ is taken over the hardware $\H_\syn=(\I_\syn,\A_\syn)$ where $\I_\syn = \{a, b, c, a', b', d', d''\}$.  Moreover, the naming indicates that $\I_\syn=\I_\syn^\ext\sqcup\I_{\Obj_1}^\val\sqcup\I_\syn^\int$ with $\I_\syn^\ext=\{a,b,c\}$, $\I_{\Obj_1}^\val=\{d'\}$, and $\I_\syn^\int=\{a',b',d''\}$.

The set $\INST_{\Obj_1}$ consists of four instances, all of degree 0: Two constructed from versions of $\cpy$ and two constructed from $\Obj_0$.  As $\cpy$ is a proper operation for any defining finite alphabet, though, only the latter instances are listed in the declaration.  As indicated there, $C_{ab}$ is formed by renaming the value tape of $\Obj_0$ as the value tape $d'$, while $C_c$ is formed by renaming it $d''$.  So, condition (6) of \Cref{def-s1-graph} requires that any edge of $\G_\syn$ whose action is not formed from one of these instances either does not mention $d'$ or $d''$ or mentions it as an immutable variable.  But the only such edges are $e_1^{\pm1}$ and $e_6^{\pm1}$, each of which is labelled by an action of degree 0 consisting only of guard fragments.  Hence, as condition (5) is also satisfied, $\G_\syn$ is indeed an $S$-graph of degree  1.

Next, note that $d'$ is a value and $d''$ is an internal tape of $\Obj_1$.  Thus, as required by \Cref{def-s1-op}, the images of the value tape renaming maps, $\I_{C_{ab}}^\val=\{d'\}$ and $\I_{C_c}^\val=\{d''\}$, are contained in $\I_{\Obj_1}^\val\sqcup\I_\syn^\int$.  Hence, to verify that $\syn$ is indeed an operation, it suffices to show that condition (O) is satisfied.  

As in the analogous discussion in \Cref{ex-copy}, let $\rho=e_1',\dots,e_N'$ be a flattened path with $t(e_1')\in\SF_\syn$ and assume that there exists $j\in\overline{N}$ such that $e_j'$ mentions $i\in\I_\syn^\int$.  Then, it suffices to show that for the minimal $m\in\overline{N}$ such that $e_m'$ mentions $i$, $e_m'$ is labelled by an action of degree 0 containing the fragment $[\i=1]$. 

If $t(e_1')=\syn_s$, then $e_1'=e_1$; but then for every $i\in\I_\syn^\int$, $e_1$ mentions $i$ and is labelled by an action of degree 0 containing the fragment $[\i=1]$.  Similarly, if $t(e_1')=\syn_f$, then $e_1'=e_6^{-1}$, so that the same conclusion is reached if $i=d''$.  Now suppose $t(e_1')=\syn_f$ and $i=a'$.  Setting $S'=\{\syn_s,s_1,s_2,s_3,s_4\}$, let $\G'$ be the subgraph of $\G_\syn$ induced by $S_\syn-S'$.  As no edge of $\G'$ mentions $a'$, $\rho$ must pass through a vertex of $S'$.  But then $e_m'=e_4^{-1}$, which again is labelled by an action of degree 0 containing the fragment $[\mathbf{a}'=1]$.  A similar argument implies the analogue for $i=b'$.

Thus, $\syn$ is an operation of degree 1, and so $\Obj_1$ is an object of degree 1.  As $\Obj_1$ consists of a single operation, it can be identified with $\syn$; however, since it contains a value tape, $\Obj_1$ is not a proper operation.

By the makeup of the edges of $\G_\syn$ that mention $d'$ discussed above, $\A_\syn(d')$ must be the same as the corresponding alphabets of the operations of $\Obj_0$, i.e $$\A_\syn(d')=\A_{\check0}(a)=\A_{\inc2}(a)=\{\delta\}$$  Similarly, $\A_\syn(d'')=\{\delta\}$.

Further analysis of the edges pertaining to the remaining indices yields that the alphabets are $\A_\syn(a)=\A_\syn(a')=A$, $\A_\syn(b)=\A_\syn(b')=B$, and $\A_\syn(c)=C$.  However, the makeup of the sets $A$, $B$, and $C$ is not described.  Indeed, while it may be inferred that each is a finite subset of $\A^*$, it follows that different choices of such subsets would yield different graphs.  As with $\cpy$ operations, this can be rectified by adding the defining sets to the name of the operation, e.g $\syn^{A,B,C}$.  As this is merely an example built to demonstrate syntax, though, it is taken here that $A$, $B$, and $C$ are some fixed arbitrary subsets.

Now, a few features of $\G_\syn$ demonstrate some redundancy in condition (O), suggesting it may be weakened:

\begin{itemize}

\item First, the edge $e_5$ fundamentally represents an action of degree 0 consisting only of the guard fragment $[\mathbf{d}'=1]$.  Given the same fragment included on $e_1$, $d'$ essentially satisfies (O) and functions in the same way as an internal tape.  However, as condition (O) requires the actual fragment to be written on the edge, $d'$ cannot be included as an internal tape of $\syn$. 

\item Second, while the guard $[\mathbf{a}'=1]$ included in the action labelling $e_1$ is necessary to enforce (O), its presence seems superfluous since the operation $\cpy^A(a, a')$ labelling $e_2$ requires $a'=1$ for an input-output computation.  Indeed, the same can be said for the guard $[\mathbf{b}'=1]$ given the label of $e_3$.  In practice, this is true: Omitting these guards does not significantly alter the possible computations of $\G_\syn$ (see \Cref{sec-star-computations}).

\end{itemize}
Despite these redundancies, though, the form of (O) presented in \Cref{def-s1-op} is chosen for its ease in verification.

On a similar note, notice that the actions of $e_2$ and $e_3$ are essentially independent of one another.  Hence, it seems a natural extension to generalize actions and $S^*$-graphs of higher degrees so that both can be written on a single edge, thus simplifying the graph. This, however, would make the corresponding definitions more cumbersome without providing enough benefit to justify the rigor.

Finally, while computations of $S^*$-graphs are as yet undefined, some mention of the $\IO$-relation for $\syn$ is given here for two reasons: First, it motivates the definition of computations that follows; and second, it gives an example to practice with after the definitions that follow.

Informally, while traversing from $\syn_s$ to $s_5$, the operation counts the total degree of the words written on tapes $a$, $b$, and $c$ while leaving the words written on the first two unchanged throughout and erasing the word written on the last.  The computation can then proceed to $\syn_f$ only if the sum of the first two is equal to the last one.

Precisely, $\ND_\syn$ is the set of quadruples $w=(w(a),w(b),w(c),1)$ such that $w(a)\in F(A)$, $w(b)\in F(B)$, $w(c)\in F(C)$, and $w(d')=1$.  Now, define $\sigma:F(\A^*)\to\Z$ by setting $\sigma(w)=\sum\eps_i$ for $w=a_1^{\eps_1}\dots a_n^{\eps_n}$ such that $a_i\in\A^*$ and $\eps_i\in\{\pm1\}$.  Then, a pair $(w_1,w_2)\in(\ND_\syn)^2$ is an element of $\IO_\syn^{s,f}$ if and only if:

\begin{itemize}

\item $w_2(c)=1$,

\item $w_1(a)=w_2(a)$,

\item $w_1(b)=w_2(b)$, and

\item $\sigma(w_1(a))+\sigma(w_1(b))=\sigma(w_1(c))$

\end{itemize}
Moreover, $a$ and $b$ are immutable external tapes of $\syn$ while $c$ is not.
\end{example}

%==========================================================================
%==========================================================================
%==========================================================================
%==========================================================================
\section{S$^*$-graph computations}\label{sec-star-computations}

\newcommand{\s}{\hat{s}}
\newcommand{\f}{\hat{f}}

Before approaching the definition of computations of $S^*$-graphs, another generalization of paths in a directed graph is first introduced.  These new paths will provide the support needed for computations of $S$-graphs of degree greater than 0.

\begin{definition}[Generic paths]\label{def-generic}
	Let $\G=(S_\G,E_\G)$ be an $S^*$-graph and $\s$, $\f$ be fixed constants. A \emph{generic edge} $e'$ is a triple $(\sf_1, \sf_2, e)$ such that $\sf_1, \sf_2 \in \{\s,\f\}$ and $e=(s_1, s_2, \act) \in E_\G$.  
	
	In this case, the \emph{tail} of $e'$, denoted $t(e')$, is taken to be $s_1$ if $\sf_1=\s$ and is taken to be $s_2$ if $\sf_1=\f$.  Similarly, the \emph{head} of $e'$, $h(e')$, is $s_1$ if $\sf_2=\s$ and $s_2$ if $\sf_2=\f$.
	
	A \emph{generic path} in $\G$ is a sequence of generic edges $\rho=e'_1, \dots, e'_k$ such that $h(e'_i) = t(e'_{i+1})$, for $i\in \overline{k-1}$.

Informally, a generic edge is an edge of $\G$ with a direction in which it is traversed: The pair $(\s,\f)$ represents traversing from tail to head, $(\f,\s)$ represents head to tail, $(\s,\s)$ represents a loop at the tail, and $(\f,\f)$ represents a loop at the head. 

\end{definition}

If $\G=(S_\G,E_\G)$ is an $S$-graph of degree 0 over the hardware $\H$, then the \emph{graph language} of $\G$, denoted $\GL_\G$, is taken to be the language $L_\H$.  With this definition, the set of configurations of $\G$ can be identified with $\GL_\G\times S_\G$. 

Let $n\geq0$.  Then for all $\G\in\SGRAPH_n$ and $\Obj\in\OBJ_n$, consider the following notions:
\medskip

\begin{addmargin}[3em]{0em}
\begin{enumerate}[label=({\Alph*[$n$]})]

\item The graph language $\GL_\G$,

\item A computation of $\G$,

\item The equivalence relation $\REACH_\G$ on $\GL_\G\times S_\G$,

\item The values $\VAL_\Obj\subseteq L_\Obj^\val$,

\item The natural domain $\ND_\op$ for any $\op\in\Obj$,

\item The equivalence relation $\IO_\op$ on $\ND_\op\times\SF_\op$ for any $\op\in\Obj$, and

\item An immutable external tape of $\op$ for any $\op\in\Obj$.

\end{enumerate}
\end{addmargin}
\medskip

Note that (A[0]) is defined above, while (B[0])$-$(G[0]) are defined in Sections \ref{sec-intro} and \ref{sec-operations}.

Definitions \ref{def-graph-language}$-$\ref{def-immutable-star} extend these to arbitrary degrees, building them iteratively in much the same way as the main definitions of \Cref{sec-sg-ext}.  

\begin{definition}[Graph Languages]\label{def-graph-language}

Fix $n\geq0$ and suppose that (A[$m$])$-$(G[$m$]) have been defined for all $m\leq n$.

Let $\G=(S_\G,E_\G)$ be an $S$-graph of degree $n+1$ over $(\H,\INST_\G)$ for $\H=(\I,\A)$.  Then, define $\I_\G^*=\I-\bigsqcup_{\inst\in\INST_\G}\I^\val_\inst$ and $L^*_\G = \bigtimes_{i\in\I_\G^*}F(\A(i))$.  Further, for any $\inst=(\Obj,\mu_\inst^\val)\in\INST_\G$, let $\VAL_\inst = \mu^\val_\inst\left(\VAL_\Obj \right)$, where $\VAL_\Obj$ is the set of values of $\Obj\in\OBJ_n'$ as defined iteratively in \Cref{def-val-star}.

Then, define the  \emph{graph language} of $\G$, $\GL_\G$, as the subset of $L_\H$ consisting of all words $w$ such that for any $\inst\in\INST_\G$, $w(\I_\inst^\val)\in\VAL_\inst$, i.e
	$$\GL_\G = L^*_\G \times \bigtimes_{\inst\in\INST_\G} \VAL_\inst$$
In other words, the disjoint images of the value tape renaming maps are identified with the value tapes of the corresponding objects and attention is restricted to $L$-words whose contents in these coordinates comprise values.  As will be elaborated on in forthcoming definitions, this essentially ensures that the words in the graph language correspond to words in the natural domain of the operations used to construct the actions labelling the edges of $\G$.

The elements of $\GL_\G\times S_\G$ are called the \emph{configurations} of $\G$. 
\end{definition}

\begin{definition}[Computations]\label{def-computation-star}

Fix $n\geq0$ and suppose that (A[$m$])$-$(G[$m$]) have been defined for all $m\leq n$.  Note that \Cref{def-graph-language} then defines (A[$n+1$]).

Let $\G=(S_\G,E_\G)$ be an $S$-graph of degree $n+1$ over $(\H,\INST_\G)$ for $\H=(\I,\A)$.  For any $\act\in\ACT_\G\cup\ACT_\G^0$, define the \emph{language input-output relation} $\mathrm{LIO}_\act$ on $L_\H\times \{\s,\f\}$ as follows:
\begin{itemize}
	\item If $\act \in \ACT_0$, then for $\act=(t,g)$,
	\begin{align*}
	\mathrm{L}\IO_\act^{\s,\s} &= \mathrm{L}\IO_\act^{\f,\f} = \Id, \\  
	\mathrm{L}\IO_\act^{\s,\f} &= \left\{(w, tw)~|~w\in L_\H,~\textrm{$g$ accepts $w$}\right\}, \\
	\mathrm{L}\IO_\act^{\f,\s} & = \left\{(w, t'w)~|~w\in L_\H,~  \textrm{$g'$ accepts $w$}\right\},
	\end{align*}
	for $(t', g') = (t, g)^{-1}$.
	
	\item If $\act\in\ACT_\G$, then let $\act=(\inst,\op,\mu_\act^\ext)$ with $\inst=(\Obj,\mu_\inst^\val)\in\INST_\G$.  Set $\I_\act=\I_\act^\ext\sqcup\I_\inst^\val$ and define $\mu_\act:L_\op^\ext\times L_\Obj^\val\to L_\H(\I_\act)$ as the bijection given by $\mu_\act=\mu_\act^\ext\sqcup\mu_\inst^\val$.  Further, let $\lambda_\op:\{\s,\f\}\to\SF_\op$ be the bijection given by $\lambda_\op(\s)=\op_s$ and $\lambda_\op(\f)=\op_f$.  Then, for any $w_1,w_2\in L_\H$ and $p,q\in\{\s,\f\}$, define $(w_1,w_2)\in\mathrm{L}\IO_\act^{p,q}$ if and only if::
	\begin{enumerate}[label=({\roman*)}]
	\item $w_1\bigl(\I_\G-\I_\act\bigr)=w_2\bigl(\I_\G-\I_\act\bigr)$
	
	\item $\left(\mu_\act^{-1}\bigl(w_1(\I_\act)\bigr),\mu_\act^{-1}\bigl(w_2(\I_\act)\bigr)\right)\in\IO_\op^{\lambda_\op(p),\lambda_\op(q)}$
	\end{enumerate}
\end{itemize}

\begin{lemma}\label{lem-lio-io} Let $\act\in\ACT_\G\cup\ACT_\G^0$.

\begin{enumerate}

\item The relation $\mathrm{LIO}_\act$ is properly $L_\H$-restricted to $\GL_\G$.

\item For all $w_1,w_2\in L_\H$, $(w_1,w_2)\in\mathrm{LIO}_\act^{\s,\f}$ if and only if $(w_2,w_1)\in\mathrm{LIO}_\act^{\f,\s}$.

\item $\mathrm{LIO}_\act$ is transitive.

\end{enumerate}

\end{lemma}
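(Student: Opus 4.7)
The plan is to handle all three parts simultaneously by splitting on the nature of $\act$: either $\act$ is a degree-zero action $(t,g)$ over $\H$, in which case every claim can be unpacked to a direct calculation with the transformation $t$ and guard $g$; or $\act=(\inst_0,\op,\mu_\act^\ext)\in\ACT_\G$ has positive degree, in which case each claim reduces to the corresponding property of $\IO_\op$, which by the iterative hypothesis is an equivalence relation on $\ND_\op\times\SF_\op$ (hence symmetric and transitive).

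For (1), I plan to verify coordinate by coordinate that membership in $\GL_\G$ is preserved. In the degree-zero case, condition (6) of \Cref{def-s1-graph} forces every fragment of $\act$ mentioning an index $i$ that lies in some $\I_\inst^\val$ to be either a guard or a trivial transformation fragment, so such coordinates are left untouched and value-ness transfers automatically. In the positive-degree case, I partition $\I_\G$ into $\I_\G-\I_\act$ (where $w_1=w_2$ by condition (i) of \Cref{def-computation-star}), $\I_{\inst_0}^\val$ (where value-ness is automatic, since membership in $\IO_\op$ forces $\mu_\act^{-1}(w_j(\I_\act))$ into $\ND_\op=L_\op^\ext\times\VAL_\Obj$, and then $\mu_{\inst_0}^\val$ carries $\VAL_\Obj$ onto $\VAL_{\inst_0}$), and $\I_\act^\ext$ (where any overlap with some $\I_\inst^\val$ for $\inst\neq\inst_0$ is forced by condition (6) to be an immutable variable of $\op$, and so is preserved by $\IO_\op$). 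Condition (5) of \Cref{def-s1-graph} ensures that these pieces really partition the relevant indices.

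Part (2) is routine in both cases. For $\act\in\ACT_0$, the identities $(t')^{-1}=t$ and $g'(i)=t_i^l\,g(i)\,t_i^r$ from \Cref{def-sgraph} directly translate the condition ``$w_2=tw_1$ and $g$ accepts $w_1$'' into ``$w_1=t'w_2$ and $g'$ accepts $w_2$''. For positive $\act$, the symmetry reduces to the inductive equivalence $(v_1,v_2)\in\IO_\op^{s,f}\iff(v_2,v_1)\in\IO_\op^{f,s}$, combined with the manifest symmetry of condition (i). Part (3) is again a case analysis, checking transitivity across the eight triples $(p_1,p_2,p_3)\in\{\s,\f\}^3$. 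For $\act\in\ACT_0$, each case collapses to one of the four basic $\mathrm{LIO}_\act^{p,q}$ using $tt^{-1}=1$ and the compatibility between the acceptance of $g$ on $w$ and of $g'$ on $tw$. For positive $\act$, condition (i) composes transitively, and condition (ii) composes by the transitivity of the equivalence relation $\IO_\op$.

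The main obstacle I anticipate is the coordinate bookkeeping in part (1) for the positive-degree case: one must simultaneously track coordinates where $\act$ acts non-trivially as a genuine call to the object (the value tapes of $\inst_0$), coordinates where $\act$ touches a value tape of a different instance through its external arguments (controlled by immutability through condition (6)), and coordinates that $\act$ does not touch at all (controlled by condition (i)). Once the relevant pieces of \Cref{def-s1-graph} are correctly isolated, parts (2) and (3) should be purely mechanical.
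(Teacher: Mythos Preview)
Your proposal is correct and follows essentially the same approach as the paper: split on whether $\act$ has degree zero or positive degree, use condition (6) of \Cref{def-s1-graph} for value tapes touched only as (immutable) variables, use condition (i) for untouched coordinates, and reduce (2) and (3) to the inverse-action formulas (degree zero) or the fact that $\IO_\op$ is an equivalence relation (positive degree). One small difference: for the value tapes of $\inst_0$ itself, the paper argues that condition (ii) yields an edge in $\G_\Obj^\val$ between the two value-words, giving the ``iff'' directly; your observation that membership in $\IO_\op$ already forces both $\mu_\act^{-1}(w_j(\I_\act))$ into $\ND_\op=L_\op^\ext\times\VAL_\Obj$ (since $\IO_\op$ is a relation on $\ND_\op\times\SF_\op$) is a slightly more direct route to the same conclusion.
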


\begin{proof} 
(1) Let $(w_1,w_2)\in\mathrm{LIO}_\act^{p,q}$ with $p,q\in\{\s,\f\}$.  It suffices to show that for any $\inst\in\INST_\G$, $w_1(\I_\inst^\val)\in\VAL_\inst$ if and only if $w_2(\I_\inst^\val)\in\VAL_\inst$.

Suppose $\act\in\ACT_0$.  Then for any $\inst\in\INST_\G$ and $i\in\I_\inst^\val$, $\act$ must mention $i$ as an immutable variable.  Hence, $w_1(i)=w_2(i)$ and so $w_1\bigl(\I_\inst^\val\bigr)=w_2\bigl(\I_\inst^\val\bigr)$.

Conversely, suppose $\act\in\ACT_\G$ and let $\inst_0\in\INST_\G$ such that $\act=(\inst_0,\op,\mu_\act^\ext)$ with $\inst_0=(\Obj,\mu_{\inst_0}^\val)$.  Then $\inst_0\in\INST_n'$ so that $\Obj\in\OBJ_m$ for some $m\in\overline{n}$.

Let $\inst\in\INST_\G-\{\inst_0\}$ and $i\in\I_\inst^\val$.  If $\act$ does not mention $i$, then condition (i) above implies that $w_1(i)=w_2(i)$.  Otherwise, condition (6) of \Cref{def-s1-graph} implies that $\act$ mentions $i$ as an immutable variable, i.e $(\mu_\act^\ext)^{-1}(i)$ is immutable in $\op$.  So, the definition of (G[$m$]) in \Cref{def-immutable-star} implies that $w_1(i)=w_2(i)$.  Hence, $w_1\bigl(\I_\inst^\val\bigr)=w_2\bigl(\I_\inst^\val\bigr)$.

Further, (ii) implies the existence of an edge in $\G_\Obj^\val$ between the vertices corresponding to $(\mu_{\inst_0}^\val)^{-1}(w_1(\I_{\inst_0}^\val))$ and $(\mu_{\inst_0}^\val)^{-1}(w_2(\I_{\inst_0}^\val))$ (see \Cref{def-val-star}), and so $w_1\bigl(\I_{\inst_0}^\val\bigr)\in\VAL_{\inst_0}$ if and only if $w_2\bigl(\I_{\inst_0}^\val\bigr)\in\VAL_{\inst_0}$.

\medskip

\noindent
(2)  If $\act\in\ACT_0$, then the statement follows from the definition of the inverse action $(t',g')$.

Otherwise, there exist $\inst=(\Obj,\mu_\inst^\val)\in\INST_\G$ and $\op\in\Obj$ such that $\act=(\inst,\op,\mu_\act^\ext)$.  As in (1), there exists $m\in\overline{n}$ such that $\Obj\in\OBJ_m$ with $\op\in\Obj$.  Thus, the statement follows from \Cref{def-io-star} for (F[$m$]), where $\IO_\op$ is shown to be an equivalence relation.

\medskip

\noindent
(3) follows similarly.

\end{proof}

In view of \Cref{lem-lio-io}(1), the proper restriction $\mathrm{LIO}_\act\pr{\GL_\G}$ is denoted $\IO_\act$.

Now, for any generic edge $e' = (\sf_1, \sf_2, e)$ of $\G$ with $e=(s_1, s_2, \act)\in E_\G$, define $\IO_{e'}$ as the relation on $\GL_\G \times \{t(e'), h(e')\}$ given by
	$$\IO_{e'} = \bigl\{(w_1, t(e'), w_2, h(e'))\mid (w_1, \sf_1, w_2, \sf_2)\in\IO_\act\bigr\}.$$

A \emph{generic computation} of $\G$ is then a sequence $$\eps = \bigl((w_0, s_0), e'_1, (w_1, s_1), e_2', \dots, e'_m, (w_m,  s_m)\bigr)$$ of alternating configurations $(w_i, s_i) \in \GL_\G\times S_\G$ and generic edges $e'_i$ such that for all $i\in\overline{m}$, $t(e_i')=s_{i-1}$, $h(e_i')=s_i$, and $(w_{i-1}, s_{i-1}, w_i, s_i) \in \IO_{e_i'}$.

As with computations for $S$-graphs of degree 0, $m$ is permitted to be 0, in which case the computation is merely the configuration $(w_0,s_0)$.

Informally, a generic computation functions in a similar manner to a computation as defined in \Cref{sec-intro}, except that some internal computation is performed upon traversing an edge labelled by a positive action. This internal computation does not necessarily go straight from the tail to head, but rather may be traversed in the opposite direction or in a loop about one of the endpoints. Hence, the necessity of generic edges becomes more apparent, as for an arbitrary operation $\op$, $\IO_\op^{s,s}$ may be strictly bigger than $\IO_\op^{s,f} \circ\IO_\op^{f,s}$.

The term `generic' is presently employed merely to differentiate from the computations of $S$-graphs of degree 0 considered in previous sections.  To keep the notation uniform, a generic computation of an $S$-graph of degree 0 is defined in an analogous way to how it is defined for an $S$-graph of degree $n+1$.  However, any computation of an $S$-graph of degree 0 can be identified with a generic computation by simply replacing any edge $e$ in the computation with the generic edge $(\s,\f,e)$.

\begin{lemma} \label{lem-comp-star}

Let $\G$ be an $S$-graph of degree 0.  Then for every generic computation $$\eps=\bigl((w_0,s_0),e_1',(w_1,s_1),e_2',\dots,e_m',(w_m,s_m)\bigr)$$ of $\G$, there exists a computation $\delta=\bigl((v_0,t_0),e_1'',(v_1,t_1),e_2'',\dots,e_k'',(v_k,t_k)\bigr)$ of $\G$ such that:

\begin{enumerate}

\item $k\leq m$, 

\item $(v_0,t_0)=(w_0,s_0)$ and $(v_k,t_k)=(w_m,s_m)$, and 

\item for all $i\in\overline{m}$, there exists a $j\in\overline{k}$ such that $(w_i,s_i)=(v_j,t_j)$.

\end{enumerate}

\end{lemma}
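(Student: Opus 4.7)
The plan is to exploit the fact that for a degree $0$ action $\act \in \ACT_0$, the relation $\IO_\act$ satisfies $\IO_\act^{\s,\s} = \IO_\act^{\f,\f} = \Id$, so that any generic edge whose labels in $\{\s,\f\}^2$ are $(\s,\s)$ or $(\f,\f)$ induces a trivial transition in the computation. This observation reduces the problem to a simple filtering-and-relabelling procedure.

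Concretely, given the generic computation $\eps = \bigl((w_0, s_0), e'_1, (w_1, s_1), \dots, e'_m, (w_m, s_m)\bigr)$, I would first classify each generic edge $e'_i = (\sf_{i,1}, \sf_{i,2}, e_i)$ by the pair $(\sf_{i,1}, \sf_{i,2})$. For indices $i$ with $(\sf_{i,1}, \sf_{i,2}) \in \{(\s,\s),(\f,\f)\}$, the definition of $\IO_\act$ (via the $\s\s$ and $\f\f$ restrictions equalling $\Id$) forces $(w_{i-1}, s_{i-1}) = (w_i, s_i)$; these indices will be discarded. Let $i_1 < \dots < i_k$ enumerate the remaining indices, so $k \leq m$, and let $e''_j$ be the edge obtained from $e'_{i_j}$ as follows: if $(\sf_{i_j,1}, \sf_{i_j,2}) = (\s, \f)$, set $e''_j = e_{i_j}$; if $(\sf_{i_j,1}, \sf_{i_j,2}) = (\f, \s)$, set $e''_j = e_{i_j}^{-1}$, which exists and lies in $E_\G$ by condition (4) of \Cref{def-s1-graph} (symmetry in the degree $0$ case). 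Setting $(v_j, t_j) = (w_{i_j}, s_{i_j})$ for $j \in \overline{0,k}$ (with $i_0 = 0$ and $(v_0,t_0) = (w_0, s_0)$), I claim $\delta = \bigl((v_0, t_0), e''_1, (v_1, t_1), \dots, e''_k, (v_k, t_k)\bigr)$ is a computation of $\G$ in the sense of \Cref{def-computation}.

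To verify this, I would show that each retained step satisfies the application rule of \Cref{def-computation}: using the definition of $\mathrm{LIO}_\act$, the $(\s,\f)$ restriction corresponds exactly to $w_{i_j} = \act(w_{i_j-1})$ with $g$ accepting $w_{i_j-1}$, i.e.\ to $e_{i_j}$ being applicable to $(w_{i_j-1}, s_{i_j-1})$ with result $(w_{i_j}, s_{i_j})$; analogously, the $(\f,\s)$ restriction corresponds exactly to applicability and application of $e_{i_j}^{-1}$. Between consecutive retained indices $i_{j-1}$ and $i_j$, all intermediate configurations in $\eps$ equal $(w_{i_{j-1}}, s_{i_{j-1}}) = (v_{j-1}, t_{j-1})$, so the tail of $e''_j$ matches $t_{j-1}$ and the head matches $t_j$. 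Items (1) and (2) of the conclusion are immediate from $k \leq m$ and from $(v_0, t_0) = (w_0, s_0)$, $(v_k, t_k) = (w_m, s_m)$.

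For item (3), I would argue that every configuration $(w_i, s_i)$ coincides with some $(v_j, t_j)$: choose $j$ maximal with $i_j \leq i$; then all intermediate generic edges $e'_{i_j + 1}, \dots, e'_i$ are of type $(\s,\s)$ or $(\f,\f)$, so $(w_i, s_i) = (w_{i_j}, s_{i_j}) = (v_j, t_j)$. I do not foresee a significant obstacle: the only mildly technical point is matching the syntactic form of $\IO_{e'_i}$ from \Cref{def-computation-star} with the notion of edge application from \Cref{def-computation}, and verifying that the endpoints of $e_{i_j}^{-1}$ correctly align with $s_{i_j - 1}$ and $s_{i_j}$, both of which reduce to unwinding the definitions of generic edge and of inverse action.
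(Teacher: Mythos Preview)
Your proposal is correct and follows essentially the same approach as the paper's proof: both discard the generic edges with $\sf_{i,1}=\sf_{i,2}$ (using that $\IO_\act^{\s,\s}=\IO_\act^{\f,\f}=\Id$ forces the adjacent configurations to coincide), then replace each surviving generic edge $(\s,\f,e)$ by $e$ and each $(\f,\s,e)$ by $e^{-1}$ to obtain an ordinary computation. Your write-up is slightly more explicit about verifying property~(3) and the tail/head alignment, but the underlying argument is identical.
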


\begin{proof}

For all $i\in\overline{m}$, let $e_i'=(\sf_i,\sf_i',e_i)$ with $\sf_i,\sf_i'\in\{\s,\f\}$ and $e_i\in E_\G$.

Suppose there exists $i$ such that $\sf_i=\sf_i'=\s$.  Then $s_{i-1}=t(e_i')=h(e_i')=s_i$ and $(w_{i-1},w_i)\in\IO_\act^{\s,\s}=\Id$, so that $(w_{i-1},s_{i-1})=(w_i,s_i)$.  As a result, removing $e_i'$ and identifying these configurations reduces $m$ without affecting the makeup of the rest of $\eps$.  If there exists $i$ such that $\sf_i=\sf_i'=\f$, then a similar removal is possible.

Hence, it may be assumed without loss of generality that $\sf_i\neq\sf_i'$ for all $i\in\overline{m}$.

Now, for all $i\in\overline{m}$, set $\a_i=1$ if $\sf_i=\s$ and $\a_i=-1$ if $\sf_i=\f$.  Then, consider the sequence $\delta=\bigl((w_0,s_0),e_1^{\a_1},(w_2,s_2),e_2^{\a_2},\dots,e_k^{\a_k},(w_k,s_k)\bigr)$.  By the definition of the relation $\IO_\act$ for $\act\in\ACT_0$, $\delta$ is a computation of $\G$ satisfying the statement.

\end{proof}

In light of \Cref{lem-comp-star}, generic computations of an $S$-graph of degree 0 are identified with computations of that graph.  Thus, the distinction `generic' is largely omitted in what follows.

Given a computation $\eps=\bigl((w_0, s_0), e'_1, (w_1, s_1), e_2', \dots, e'_m, (w_m,  s_m)\bigr)$, the following terminology is adopted from previous sections for analogous purposes: 

\begin{itemize}

\item The generic path $\rho = e_1', \dots, e_m'$ in $\G$ is called the \emph{computation path} of $\eps$.

\item The computation path $\rho$ is said to \emph{support} $\eps$.

\item $(w_0, s_0)$ is called the \emph{starting configuration} and $(w_m, s_m)$ the \emph{finishing configuration} of $\eps$.

\item $\eps$ is called a computation \emph{between} $(w_0,s_0)$ and $(w_m,s_m)$.

\item The \emph{length} of $\eps$ (and of the computation path $\rho$) is $m$, with the notation $m = |\varepsilon| = |\rho|$ representing this. 

\end{itemize}
Further, the set of all computations between two configurations $(w_1,s_1)$ and $(w_2,s_2)$ is denoted $\eps(w_1,s_1,w_2,s_2)$.

However, unlike for $S$-graphs of degree 0, given a configuration $(w_0, s_0)$ and a generic path $\rho$ in $\G$, there are generally multiple distinct computations supported by $\rho$ with starting configuration $(w_0, s_0)$.  Indeed, no two such computations need to have the same finishing configuration.

Given an $\op\in\OP_{n+1}'$, a computation of $\op$ (i.e a computation of $\G_\op$) is called an \emph{input-output computation} if it is between configurations $(w_1,s_1)$ and $(w_2,s_2)$ such that $s_1,s_2\in\SF_\op$ and $w_1(\I_\op^\int)=w_2(\I_\op^\int)=1_\op^\int$.

\end{definition}

\begin{definition}[$\REACH$-relation]\label{def-reach-star}

Fix $n\geq0$ and suppose that (A$[m]$)$-$(G$[m]$) have been defined for all $m\leq n$.  Note that Definitions \ref{def-graph-language} and \ref{def-computation-star} then define the notions (A$[n+1]$) and (B$[n+1]$).

Let $\G=(S_\G,E_\G)$ be an $S$-graph of degree $n+1$.  Then the \emph{reachability relation} of $\G$, denoted $\REACH_\G$, is the relation on the configurations of $\G$ which relates the starting configuration of any computation to the finishing configuration.  Hence, 
$$\REACH_\G = \left\{(w_1, s_1, w_2, s_2)\in(\GL_\G\times S_\G)^2 \mid \eps(w_1, s_1, w_2, s_2) \neq \emptyset \right\}$$ 
If $(w_1, s_1, w_2, s_2) \in \REACH_\G$, then $(w_2, s_2)$ \emph{can be reached} (via a computation of $\G$) from $(w_1, s_1)$.

\begin{lemma}\label{lem-reach-star}

For $\G\in\SGRAPH_{n+1}$, $\REACH_\G$ is an equivalence relation on $\GL_\G\times S_\G$.

\end{lemma}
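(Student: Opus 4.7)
The plan is to verify the three defining properties of an equivalence relation on $\GL_\G\times S_\G$ separately, relying on the iterative hypothesis that $\IO_\op$ is an equivalence relation for every operation $\op$ of degree at most $n$ (so that Lemma~\ref{lem-lio-io} applies to the actions labelling edges of $\G$).

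First I would handle reflexivity and transitivity, which are essentially bookkeeping. For reflexivity, the length-$0$ sequence $\eps=((w,s))$ is a valid computation of $\G$ between $(w,s)$ and itself, so $(w,s,w,s)\in\REACH_\G$. For transitivity, given computations $\eps_1\in\eps(w_1,s_1,w_2,s_2)$ and $\eps_2\in\eps(w_2,s_2,w_3,s_3)$, the concatenation obtained by identifying the terminal configuration of $\eps_1$ with the initial configuration of $\eps_2$ is a computation in $\eps(w_1,s_1,w_3,s_3)$; this uses nothing beyond the definition of a generic computation.

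The main obstacle is symmetry, and I would attack it by reversing the supporting generic path edge-by-edge. Given a computation $\eps=((w_0,s_0),e_1',\dots,e_m',(w_m,s_m))$, for each generic edge $e_i'=(\sf_i,\sf_i',e_i)$ define its reverse $(e_i')^R=(\sf_i',\sf_i,e_i)$; note that $t((e_i')^R)=h(e_i')=s_i$ and $h((e_i')^R)=t(e_i')=s_{i-1}$, so the sequence $(e_m')^R,\dots,(e_1')^R$ is a generic path from $s_m$ to $s_0$. It remains to check that $(w_i,s_i,w_{i-1},s_{i-1})\in\IO_{(e_i')^R}$, which unfolds to showing $(w_i,\sf_i',w_{i-1},\sf_i)\in\IO_{\lab(e_i)}$ whenever $(w_{i-1},\sf_i,w_i,\sf_i')\in\IO_{\lab(e_i)}$.

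The verification of this symmetry splits along the degree of the action $\act_i=\lab(e_i)$. If $\act_i\in\ACT_0$ and $\sf_i=\sf_i'$, then the corresponding component of $\mathrm{LIO}_{\act_i}$ is the identity relation, which is trivially symmetric; if $\sf_i\neq\sf_i'$, then Lemma~\ref{lem-lio-io}(2) directly supplies the required swap between $\mathrm{LIO}_{\act_i}^{\s,\f}$ and $\mathrm{LIO}_{\act_i}^{\f,\s}$. If instead $\act_i\in\ACT_\G$, write $\act_i=(\inst,\op,\mu^\ext_{\act_i})$; condition (i) in the definition of $\mathrm{LIO}_{\act_i}$ is manifestly symmetric in $w_1,w_2$, and condition (ii) reduces the symmetry question to the symmetry of $\IO_\op$ on the pair $(\lambda_\op(\sf_i),\lambda_\op(\sf_i'))$, which holds because $\op$ has degree at most $n$ and so $\IO_\op$ is an equivalence relation by the iterative hypothesis. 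Thus each generic edge can be reversed, the reversed sequence of configurations $(w_m,s_m),(w_{m-1},s_{m-1}),\dots,(w_0,s_0)$ interleaved with $(e_m')^R,\dots,(e_1')^R$ is a legitimate computation of $\G$, and symmetry follows.
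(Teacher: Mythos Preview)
Your proof is correct and follows essentially the same approach as the paper: reflexivity via length-$0$ computations, transitivity via concatenation, and symmetry by reversing each generic edge $(\sf_i,\sf_i',e_i)\mapsto(\sf_i',\sf_i,e_i)$. The only difference is cosmetic: the paper cites Lemma~\ref{lem-lio-io}(2) once to cover all cases (since that lemma is stated uniformly for $\act\in\ACT_\G\cup\ACT_\G^0$), while you unpack the case split on the degree of $\act_i$ and on whether $\sf_i=\sf_i'$ explicitly.
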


\begin{proof}

Note that reflexivity follows from computations of length 0 and transitivity is clear, so it suffices to prove symmetry.

Let $\eps=\bigl((w_0,s_0),e_1',(w_1,s_1),e_2',\dots,e_m',(w_m,s_m)\bigr)$ be any computation of $\G$. 
% $(w_0,s_0,w_m,s_m)\in\REACH_\G$.
For every $i\in\overline{m}$, let $e_i'=(\sf_i,\sf_i',e_i)$ with $\sf_i,\sf_i'\in\{\s,\f\}$ and $e_i\in E_\G$.  Then, define $e_i''=(\sf_i',\sf_i,e_i)$.  \Cref{lem-lio-io}(2) then implies that $(w_i,s_i,w_{i-1},s_{i-1})\in\IO_{e_i''}$ for all $i$, so that there exists an `inverse computation'
$$\eps^{-1}=\bigl((w_m,s_m),e_m'',\dots,e_2'',(w_1,s_1),e_1'',(w_0,s_0)\bigr)$$
Hence, $(w_0,s_0,w_m,s_m)\in\REACH_\G$ implies $(w_m,s_m,w_0,s_0)\in\REACH_\G$, and so $\REACH_\G$ is symmetric.

\end{proof}

Much of the rest of the terminology of \Cref{sec-operations} is adopted for the $\REACH$ relation of an $S$-graph of degree $n+1$.
 
 \end{definition}

Note that Definitions \ref{def-val-star}$-$\ref{def-io-star} below are virtually identical to their counterparts in \Cref{sec-intro}, where analogous terminology was defined for operations and objects of degree $0$.

\begin{definition}[Values of an object]\label{def-val-star}

Fix $n\geq0$ and suppose that (A$[m]$)$-$(G$[m]$) have been defined for all $m\leq n$.  Note that Definitions \ref{def-graph-language}$-$\ref{def-reach-star} then define the notions (A$[n+1]$)$-$(C$[n+1]$).

Let $\Obj\in\OBJ_{n+1}$ and define the \emph{initial value} of $\Obj$ as $1_\Obj^\val=(1)_{i\in\I_\Obj^\val}\in L_\Obj^\val$.

Let $\G_\Obj^\val$ be the (undirected) graph whose vertices are identified with the words of $L_\Obj^\val$ and such that there exists an edge connecting $v_1,v_2\in L_\Obj^\val$ if and only if there exists $\op\in\Obj$ and an input-output computation of $\op$ between the configurations $(w_1,s_1),(w_2,s_2)\in\GL_\op\times\SF_\op$ such that $w_i(\I_\Obj^\val)=v_i$ for $i=1,2$.  Then define the \emph{values} of $\Obj$, denoted $\VAL_\Obj$, as the connected component of $\G_\Obj^\val$ containing the initial value.

\end{definition}

\begin{definition}[Natural domain of an operation]\label{def-ND-star}

Fix $n\geq0$ and suppose that (A$[m]$)$-$(G$[m]$) have been defined for all $m\leq n$.  Note that Definitions \ref{def-graph-language}$-$\ref{def-val-star} then extend this to define the notions (A$[n+1]$)$-$(D$[n+1]$).

Then for any $\Obj\in\OBJ_{n+1}$ and any operation $\op\in\Obj$, the \emph{natural domain} of $\op$ is given by $\ND_\op= L_\op^\ext\times\VAL_\Obj$.

As in \Cref{sec-operations}, the notation $\ND_\op$ conceals the fact that the domain's composition depends on the object $\Obj$.  However, as in that setting, any potential ambiguity is avoided by the hypothesis that $\Obj$ is the unique object such that $\op\in\Obj$.

For notational ease, the \emph{extended natural domain} is $\ND_\op^1=\ND_\op\times1^\int_\op\subseteq L_\op$.

\begin{lemma}\label{lem-nse-star}

Let $\Obj\in\OBJ_{n+1}$ and $\op\in\Obj$.

\begin{enumerate}

\item $\ND_\op^1\subseteq\GL_\op$

\item $\REACH_\op^{\SF_\op}$ is properly restricted to $\ND_\op^1$.

\end{enumerate}

\end{lemma}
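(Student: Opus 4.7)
My plan is to prove (1) by direct unpacking of definitions and (2) by induction on the degree of $\Obj$, with \Cref{prop-nse} serving as the base case $\Obj \in \OBJ_0$.

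For (1), let $w \in \ND_\op^1$ and fix $\inst = (\Obj', \mu_\inst^\val) \in \INST_\op$. By \Cref{def-s1-op}, $\I_\inst^\val \subseteq \I_\op^\val \sqcup \I_\op^\int$, so the tuple $w(\I_\inst^\val)$ is entirely determined by $v \defeq w(\I_\op^\val) \in \VAL_\Obj$ together with $w(\I_\op^\int) = 1$. If $v = 1_\Obj^\val$, then $w(\I_\inst^\val) = \mu_\inst^\val(1_{\Obj'}^\val) \in \VAL_\inst$, since the initial value always lies in $\VAL_{\Obj'}$. Otherwise, connectivity of $v$ to $1_\Obj^\val$ in $\G_\Obj^\val$ yields some $\op' \in \Obj$ and an input-output computation of $\op'$ ending in a configuration $(w', s') \in \GL_{\op'} \times \SF_{\op'}$ with $w'(\I_{\op'}^\val) = v$ and $w'(\I_{\op'}^\int) = 1$. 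Since $\op$ and $\op'$ share the value-tape set $\I_\Obj^\val$ and the instance set $\INST_\Obj$, the partition of $\I_\inst^\val$ into value and internal parts agrees for the two operations, so $w(\I_\inst^\val) = w'(\I_\inst^\val) \in \VAL_\inst$.

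For (2), let $\eps$ be a computation of $\op$ between $(w_1, s_1)$ and $(w_2, s_2)$ with $s_1, s_2 \in \SF_\op$ and $(w_1, s_1) \in \ND_\op^1 \times \SF_\op$. I must show $w_2(\I_\op^\int) = 1$ and $w_2(\I_\op^\val) \in \VAL_\Obj$. The second follows easily from the first: once internal triviality is established, $\eps$ becomes an input-output computation, yielding an edge in $\G_\Obj^\val$ between $w_1(\I_\op^\val) \in \VAL_\Obj$ and $w_2(\I_\op^\val)$, and $\VAL_\Obj$ is the full connected component of $1_\Obj^\val$.

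The core technical step is triviality of each internal coordinate of $w_2$. My plan is to convert the generic computation path $\rho$ of $\eps$ into a flattened path $\tilde\rho$ in $\G_\op$ from $s_1$ to $s_2$: each forward generic edge $(\s, \f, e)$ contributes $e$, each backward $(\f, \s, e)$ contributes $e^{-1}$, and each loop $(\s, \s, e)$ or $(\f, \f, e)$ contributes the there-and-back segment $e, e^{-1}$ or $e^{-1}, e$ respectively. Head-tail compatibility is preserved, and an internal tape $i$ is mentioned by a segment of $\tilde\rho$ if and only if it is mentioned by the corresponding generic edge in $\rho$. Fix $i \in \I_\op^\int$. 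If no edge of $\tilde\rho$ mentions $i$, then every generic edge of $\rho$ leaves the $i$-coordinate invariant (degree-$0$ actions have no fragment on $i$, and positive actions satisfy condition (i) of $\mathrm{LIO}$ with $i \notin \I_\act$), so $w_2(i) = w_1(i) = 1$. Otherwise, condition (O) applied to $\tilde\rho$ yields $l_1 \le l_2$ with $\lab(\tilde e_{l_1}), \lab(\tilde e_{l_2}) \in \ACT_0$, $[\i = 1] \in \tilde e_{l_1} \cap \tilde e_{l_2}^{-1}$, and no edge outside $\overline{l_1, l_2}$ mentioning $i$. Before the step corresponding to $\tilde e_{l_1}$, the coordinate $w(i)$ has remained equal to $w_1(i) = 1$, consistent with and enforced by $\tilde e_{l_1}$'s guard; symmetrically, immediately after the step corresponding to $\tilde e_{l_2}$, the inverse guard forces $w(i) = 1$, and every subsequent step leaves this unchanged, yielding $w_2(i) = 1$.

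The main obstacle is ensuring the there-and-back embedding behaves correctly for loops: a positive-action loop whose underlying edge mentions $i$ produces two segments of $\tilde\rho$ that both mention $i$, so condition (O) forces them jointly into $\overline{l_1, l_2}$, while a loop whose underlying edge does not mention $i$ contributes two segments that leave $w(i)$ invariant. The inductive hypothesis enters only implicitly, through the well-definedness of $\VAL_{\Obj'}$ and $\ND_{\op'}$ for the lower-degree objects referenced in $\INST_\op$, so that $\GL_\op$ is unambiguously defined; no direct appeal to a subordinate input-output computation is needed in the main bookkeeping.
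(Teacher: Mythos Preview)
Your argument is correct and matches the paper's approach exactly: part (1) traces membership in $\VAL_\inst$ back through an input-output computation witnessing $v\in\VAL_\Obj$ just as the paper does, and part (2) constructs the flattening and invokes condition (O), which the paper compresses to a one-line appeal to \Cref{prop-nse} while you spell out the details. One small imprecision (present in the paper's terse version too): your claim that the guard at $\tilde e_{l_2}$ forces $w(i)=1$ tacitly assumes the corresponding generic edge is not a degree-$0$ loop $(\s,\s,e)$ or $(\f,\f,e)$, which acts as the identity and checks nothing; this is harmlessly fixed by first discarding all degree-$0$ loop generic edges before flattening, after which every degree-$0$ segment is a single non-loop edge and the guard is genuinely enforced.
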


\begin{proof}

(1) First, note that $\op\in\OP_{n+1}'$, so that $\GL_\op=\GL_{\G_\op}$ is understood from \Cref{def-graph-language}.  

Let $w\in\ND_\op^1$ and fix an instance $\inst\in\INST_\Obj$.  Then it suffices to show $w\bigl(\I_\inst^\val\bigr)\in\VAL_\inst$.

As $w\bigl(\I_\Obj^\val\bigr)\in\VAL_\Obj$, there exists $(v_0, \eps_1, v_1, \eps_2, \dots, \eps_k,v_k)$ with $v_0=1_\Obj^\val$ and $v_k=w\bigl(\I_\Obj^\val\bigr)$ such that for all $i\in\overline{k}$, $\eps_i$ is an input-output computation of $\op_i\in\Obj$ between configurations $(u_i,s_i)$ and $(w_i, t_i)$ satisfying $u_i\bigl(\I_\Obj^\val\bigr)=v_{i-1}$ and $w_i\bigl(\I_\Obj^\val\bigr)=v_i$.

Let $\I_\inst^{\val\val}=\I_\inst^\val\cap\I_\Obj^\val$.  Then, since $\inst\in\INST_\Obj$, $\inst\in\INST_{\op_0}$ for all $\op_0\in\Obj$ and so $\I_\inst^{\val\int}=\I_\inst^\val-\I_\inst^{\val\val}\subseteq\I_{\op_0}^\int$.

Since $\eps_k$ is an input-output computation, $w_k\in\GL_{\op_k}$ and $w_k\bigl(\I_{\op_k}^\int\bigr)=1_{\op_k}^\int$.  In particular, this implies that $w_k\bigl(\I_\inst^\val\bigr)\in\VAL_\inst$ and $w_k\bigl(\I_\inst^{\val\int}\bigr)=(1)_{i\in\I_\inst^{\val\int}}$.

Further, $w_k\bigl(\I_\Obj^\val\bigr)=v_k=w\bigl(\I_\Obj^\val\bigr)$, so that $w_k\bigl(\I_\inst^{\val\val}\bigr)=w\bigl(\I_\inst^{\val\val}\bigr)$.  But $w\in\ND_\op^1$, so that also $w\bigl(\I_\op^\int\bigr)=1_\op^\int$, i.e $w\bigl(\I_\inst^{\val\int}\bigr)=(1)_{i\in\I_\inst^{\val\int}}=w_k\bigl(\I_\inst^{\val\int}\bigr)$.  Thus, $$w\bigl(\I_\inst^\val\bigr)=w\bigl(\I_\inst^{\val\val}\sqcup\I_\inst^{\val\int}\bigr)=w_k\bigl(\I_\inst^{\val\val}\sqcup\I_\inst^{\val\int}\bigr)=w_k\bigl(\I_\inst^\val\bigr)\in\VAL_\inst$$

\medskip

\noindent
(2) Let $\eps=\bigl((w_0,s_0),e_1',(w_1,s_1),e_2',\dots,e_m',(w_m,s_m)\bigr)$ be a computation of $\op$ with $s_0,s_m\in\SF_\op$.  Denote the generic path supporting $\eps$ by $\rho'=e_1',e_2',\dots,e_m'$.

For all $i\in\overline{m}$, let $e_i'=(\sf_i,\sf_i',e_i)$ for $e_i\in E_\op$.  If $\lab(e_i)\in\ACT_0$, then there exists an inverse edge $e_i^{-1}\in E_\op$; otherwise, let $e_i^{-1}$ be the formal inverse of $e_i$.  Then, define the path $q_i$ in $\G_\op$ as follows:

\begin{itemize}

\item If $\sf_i=\sf_i'=\s$, then let $q_i=e_i,e_i^{-1}$

\item If $\sf_i=\sf_i'=\f$, then let $q_i=e_i^{-1},e_i$

\item If $\sf_i=\s$ and $\sf_i'=\f$, then let $q_i=e_i$

\item If $\sf_i=\f$ and $\sf_i'=\s$, then let $q_i=e_i^{-1}$

\end{itemize}
With this, $fl(\rho')=q_1,\dots,q_m$ is a flattened path in $\op$ from $s_0$ to $s_m$ called the \emph{flattening} of $\rho$.  Hence, $fl(\rho')$ is subject to the restrictions of condition (O).

Thus, as in the proof of \Cref{prop-nse}, it follows that $w_0\in\ND_\op^1$ if and only if $w_m\in\ND_\op^1$.

\end{proof}

\end{definition}

\begin{definition}[$\IO$-relation]\label{def-io-star}
Fix $n\geq0$ and suppose that (A$[m]$)$-$(G$[m]$) have been defined for all $m\leq n$.  Note that Definitions \ref{def-graph-language}$-$\ref{def-ND-star} then define the notions (A$[n+1]$)$-$(E$[n+1]$).

Let $\Obj\in\OBJ_{n+1}$ and $\op\in\Obj$.  Then, let $\IO_\op^1=\REACH_\op^{\SF_\op}\pr{\ND_\op^1}$.  By \Cref{lem-nse-star}, $\IO_\op^1$ is a relation on $\ND_\op^1\times\SF_\op$.  

Define the relation $\IO_\op$ on $\ND_\op\times\SF_\op$ by setting $(w_1,s_1,w_2,s_2)\in\IO_\op$ if and only if there exist $w_1',w_2'\in\ND_\op^1$ such that $w_i'\bigl(\I_\op^\ext\sqcup\I_\Obj^\val\bigr)=w_i$ for $i=1,2$ and $(w_1',s_1,w_2',s_2)\in\IO_\op^1$.  Hence, for any $s_1,s_2\in\SF_\op$,
$$\IO_\op^{s_1,s_2}=\bigl\{(w_1,w_2)\in\bigl(\ND_\op\bigr)^2\mid(w_1\sqcup1_\op^\int,w_2\sqcup1_\op^\int)\in\REACH_{\op}^{s_1,s_2}\bigr\}$$ Thus, the following statement follows immediately from \Cref{lem-reach-star}:
\begin{lemma}\label{lem-io-star}

For any $\Obj\in\OBJ_{n+1}$ and $\op\in\Obj$, $\IO_\op$ is an equivalence relation on $\ND_\op\times\SF_\op$.

\end{lemma}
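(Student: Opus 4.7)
The plan is to deduce this directly from \Cref{lem-reach-star} in two steps: first transferring the equivalence property to $\IO_\op^1$ via a restriction, and then to $\IO_\op$ via a bijective reindexing.

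First, I would invoke \Cref{lem-reach-star} to conclude that $\REACH_\op$ is an equivalence relation on $\GL_\op\times S_\op$. By \Cref{lem-nse-star}(1), $\ND_\op^1\subseteq\GL_\op$, and by \Cref{lem-nse-star}(2), $\REACH_\op^{\SF_\op}$ is properly restricted to $\ND_\op^1$, which means that $\ND_\op^1\times\SF_\op$ is closed under $\REACH_\op$. Since the restriction of an equivalence relation to a closed subset is an equivalence relation, $\IO_\op^1 = \REACH_\op^{\SF_\op}\pr{\ND_\op^1}$ is an equivalence relation on $\ND_\op^1\times\SF_\op$. (The closure under the restricted second coordinate to $\SF_\op$ is automatic as this just selects $S$-restrictions, which commute with the equivalence relation structure since reflexivity on $\SF_\op$ already holds via trivial computations, symmetry is unaffected, and transitivity holds within the product set.)

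Second, I would observe that the map $\iota\colon \ND_\op\times\SF_\op\to\ND_\op^1\times\SF_\op$ defined by $\iota(w,s)=(w\sqcup 1^\int_\op,s)$ is a bijection, because $\ND_\op^1=\ND_\op\times 1^\int_\op$ by definition. Unfolding the definition of $\IO_\op$ from \Cref{def-io-star}, we have $(w_1,s_1,w_2,s_2)\in\IO_\op$ if and only if $(\iota(w_1,s_1),\iota(w_2,s_2))\in\IO_\op^1$; in other words, $\IO_\op$ is precisely the pullback of $\IO_\op^1$ along $\iota$. Since $\iota$ is a bijection and the pullback of an equivalence relation along a bijection is again an equivalence relation, $\IO_\op$ inherits reflexivity, symmetry, and transitivity on $\ND_\op\times\SF_\op$.

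I do not expect any serious obstacle: the conceptual content lies entirely in \Cref{lem-reach-star} (which uses the symmetry provided by the inverse-edge/formal-inverse construction of \Cref{def-s1-op}) and \Cref{lem-nse-star} (the proper restriction property from condition (O)); once these are in hand, the present lemma is formal. The only minor subtlety worth writing out explicitly is verifying that the passage from $\ND_\op^1$ to $\ND_\op$ really is effected by a bijection compatible with the relation, which is immediate from the definitions $\ND_\op^1=\ND_\op\times 1^\int_\op$ and the formulation of $\IO_\op^{s_1,s_2}$ given in \Cref{def-io-star}.
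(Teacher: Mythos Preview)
Your proposal is correct and takes essentially the same approach as the paper, which simply declares the lemma immediate from \Cref{lem-reach-star}; your writeup is a correct and careful expansion of that claim via \Cref{lem-nse-star} and the bijection $\iota$. One small simplification: once the restriction is proper, $\IO_\op^1$ is just $\REACH_\op\cap(\ND_\op^1\times\SF_\op)^2$, and the intersection of an equivalence relation with $A^2$ is always an equivalence relation on $A$ (no closure argument needed), so the parenthetical about the $S$-coordinate restriction can be dropped.
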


\end{definition}

Finally, recall that the definition of immutable external tapes given in the definitions of \Cref{sec-sg-ext} was of an arbitrary nature.  This is now clarified using the machinery developed in this section.

\begin{definition}[Immutable tape]\label{def-immutable-star}
Fix $n\geq0$ and suppose that (A$[m]$)$-$(G$[m]$) have been defined for all $m\leq n$.  Note that Definitions \ref{def-graph-language}$-$\ref{def-io-star} then define the notions (A$[n+1]$)$-$(F$[n+1]$).

For any object $\Obj\in\OBJ_{n+1}$ and any operation $\op \in \Obj$, an external tape $a\in\I_\op^\ext$ is called \emph{immutable} in $\op$ if for any $(w_1,s_1,w_2,s_2)\in\IO_\op$, $w_1(a) = w_2(a)$. 
\end{definition}

The goal of the rest of this section is to illustrate these definitions by calculating the values and $\IO$-relations for some example $S^*$-graphs.  First, though, some machinery is developed that will allow for such arguments (both here and in the sections that follow) in an efficient manner.

Let $\G=(S_\G,E_\G)$ be an $S^*$-graph.  For any $e\in E_\G$, let $g(e)$ be the set consisting of the four generic edges constructed from $e$, i.e $g(e)=\bigl\{(\sf_1,\sf_2,e)\mid\sf_1,\sf_2\in\{\s,\f\}\bigr\}$.  Then, let $\IO_e$ be the relation on $\GL_\G\times S_\G$ given by $\IO_e=\bigcup_{e'\in g(e)}\IO_{e'}$.

 \begin{lemma}\label{lem-reach-rst}
	For any S$^*$-graph $\G=(S_\G,E_\G)$, $\REACH_\G$ is the rst-closure of $\bigcup\limits_{e\in E_\G}\IO_e$.
\end{lemma}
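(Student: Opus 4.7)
The plan is to prove the two inclusions separately, using that $\REACH_\G$ is already known to be an equivalence relation (Lemma \ref{lem-reach-star}) and that the rst-closure is, by definition, the smallest equivalence relation containing $\bigcup_{e\in E_\G}\IO_e$.

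First I would establish the inclusion $\bigcup_{e\in E_\G}\IO_e \subseteq \REACH_\G$. Fix $e\in E_\G$ and a generic edge $e'=(\sf_1,\sf_2,e)\in g(e)$. If $(w_1,s_1,w_2,s_2)\in\IO_{e'}$, then by the very definition of $\IO_{e'}$ we have $s_1=t(e')$, $s_2=h(e')$, and $(w_1,\sf_1,w_2,\sf_2)\in\IO_\act$ for $\act=\lab(e)$. Hence the length-one sequence $\bigl((w_1,s_1),e',(w_2,s_2)\bigr)$ is a valid computation of $\G$, showing $(w_1,s_1,w_2,s_2)\in\REACH_\G$. Since $\REACH_\G$ is an equivalence relation containing $\bigcup_{e\in E_\G}\IO_e$, and the rst-closure is minimal among such equivalence relations, we obtain $\RST\bigcup_{e\in E_\G}\IO_e \subseteq \REACH_\G$.

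For the reverse inclusion, take $(w_1,s_1,w_2,s_2)\in\REACH_\G$ and let $\eps=\bigl((w_0,s_0),e_1',(w_1',s_1'),\dots,e_m',(w_m',s_m')\bigr)$ be a computation witnessing it, so $(w_0,s_0)=(w_1,s_1)$ and $(w_m',s_m')=(w_2,s_2)$. For each $i\in\overline{m}$, write $e_i'=(\sf_i,\sf_i',e_i)$; by the definition of a computation, $(w_{i-1}',s_{i-1}',w_i',s_i')\in\IO_{e_i'}\subseteq\IO_{e_i}\subseteq\bigcup_{e\in E_\G}\IO_e$. Thus each consecutive pair lies in $\bigcup_{e\in E_\G}\IO_e$, and chaining them through the transitive closure (with the convention that $m=0$ gives reflexivity) shows $(w_1,s_1,w_2,s_2)\in\RST\bigcup_{e\in E_\G}\IO_e$.

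The two inclusions together give the equality. The only point requiring slight care is confirming that a single step of a computation corresponds precisely to a pair in some $\IO_{e'}$ with $e'\in g(e)$—this is immediate from Definition \ref{def-computation-star}, where the generic edges used to support a computation are exactly those whose tail and head match the adjacent configurations. No more substantial obstacle arises, since both the reachability relation and the rst-closure are, by construction, equivalence relations and the chain of single-step inclusions is essentially tautological.
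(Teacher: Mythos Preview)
Your proof is correct and follows essentially the same approach as the paper: both establish $\bigcup_e\IO_e\subseteq\REACH_\G$ via length-one computations, invoke Lemma~\ref{lem-reach-star} to conclude the rst-closure is contained in $\REACH_\G$, and then decompose an arbitrary computation into single steps lying in $\bigcup_e\IO_e$ to obtain the reverse inclusion by transitivity.
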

\begin{proof}

Let $U=\bigcup\IO_e$ and $U^*$ be the rst-closure of $U$.

For any $e\in E_\G$, $(w_1,s_1,w_2,s_2)\in\IO_e$ if and only if there exists a generic edge $e'\in g(e)$ such that $(w_1,s_1,w_2,s_2)\in\IO_{e'}$.  But this is the case if and only if $\bigl((w_1,s_1),e',(w_2,s_2)\bigr)$ is a computation of $\G$, i.e $(w_1,s_1,w_2,s_2)\in\REACH_\G$.  Hence, $U\subseteq\REACH_\G$, so that \Cref{lem-reach-star} implies that $U^*\subseteq\REACH_\G$.

Conversely, let $\eps=\bigl((w_0,s_0),e_1',(w_1,s_1),e_2',\dots,e_m',(w_m,s_m)\bigr)$ be a computation of $\G$.  Then, for all $i\in\overline{m}$, let $e_i\in E_\G$ and $\sf_i,\sf_i'\in\{\s,\f\}$ such that $e_i'=(\sf_i,\sf_i',e_i)$.  So, $(w_{i-1},s_{i-1},w_i,s_i)\in\IO_{e_i}$, i.e $(w_{i-1},s_{i-1},w_i,s_i)\in U$ for all $i$.  Hence, by transitivity, $(w_0,s_0,w_m,s_m)\in U^*$.

\end{proof}

Let $\Obj\in\OBJ^*$, $\op\in\Obj$, and $C$ be any set of configurations of $\op$.  For any $T\subseteq S_\op$, define the set of configurations $C^T=C\cap(\GL_\op\times T)$.  When $T$ is a singleton in $\SF_\op$, then the simpler notation $C^s$ or $C^f$ is used accordingly.  

Then, let $C_\IO^1=C^{\SF_\op}\cap\bigl(\ND_\op^1\times S_\op\bigr)$ and define $C_\IO\subseteq\ND_\op\times\SF_\op$ by $(w,s)\in C_\IO$ if and only if $(w\sqcup1_\op^\int,s)\in C_\IO^1$.

\begin{lemma}\label{lem-io-class}

Let $\Obj\in\OBJ^*$ and $\op\in\Obj$.

\begin{enumerate}

\item For any equivalence class $C$ of $\REACH_\op$, $C_\IO$ is either empty or an equivalence class of $\IO_\op$.

\item For any equivalence class $Q$ of $\IO_\op$, there exists a unique equivalence class $C$ of $\REACH_\op$ such that $Q=C_\IO$.

\end{enumerate}

\end{lemma}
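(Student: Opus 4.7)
My plan is to unpack the two pieces of notation involved ($\REACH_\op$ classes versus $\IO_\op$ classes, and the lifting $w \mapsto w \sqcup 1_\op^\int$) and observe that, by \Cref{lem-nse-star}, the lift interacts cleanly with the equivalence structure. The essential tool is the equation
$$\IO_\op^{s_1,s_2} = \bigl\{(w_1,w_2) \in (\ND_\op)^2 \mid (w_1\sqcup 1_\op^\int,\, w_2\sqcup 1_\op^\int) \in \REACH_\op^{s_1,s_2}\bigr\}$$
together with the fact that $\REACH_\op^{\SF_\op}$ is properly restricted to $\ND_\op^1$ (the key content of \Cref{lem-nse-star}(2)). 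Everything follows by chasing the definitions.

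For part (1), I will begin by assuming $C_\IO \neq \emptyset$ and fixing $(w_1,s_1),(w_2,s_2)\in C_\IO$. The definitions give $(w_i\sqcup 1_\op^\int,s_i)\in C$ with $s_i\in\SF_\op$, and since $C$ is a $\REACH_\op$-class, $(w_1\sqcup 1_\op^\int,s_1,w_2\sqcup 1_\op^\int,s_2)\in\REACH_\op$. Applying the displayed formula above yields $(w_1,s_1,w_2,s_2)\in\IO_\op$, so $C_\IO$ sits inside a single $\IO_\op$-class. Conversely, given $(w,s)\in C_\IO$ and $(w,s,w',s')\in\IO_\op$, I invoke the same formula in the other direction to get $(w\sqcup 1_\op^\int,s,w'\sqcup 1_\op^\int,s')\in\REACH_\op$; since $(w\sqcup 1_\op^\int,s)\in C$, the other endpoint also lies in $C$, and \Cref{lem-nse-star}(2) guarantees $w'\sqcup 1_\op^\int\in\ND_\op^1$, so $(w',s')\in C_\IO$. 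Hence $C_\IO$ is exactly one $\IO_\op$-class.

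For part (2), existence will be immediate: given a class $Q$ of $\IO_\op$, pick any $(w,s)\in Q$, consider the configuration $(w\sqcup 1_\op^\int,s)\in \ND_\op^1 \times \SF_\op \subseteq \GL_\op\times S_\op$, and let $C$ be its $\REACH_\op$-class. Then $(w,s)\in C_\IO$, so $C_\IO\neq\emptyset$, and by part (1), $C_\IO$ is the unique $\IO_\op$-class containing $(w,s)$, namely $Q$. Uniqueness then falls out because any other $\REACH_\op$-class $C'$ with $C'_\IO = Q$ must contain $(w\sqcup 1_\op^\int,s)$ as well, forcing $C'=C$.

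I do not anticipate a substantive obstacle: the only nontrivial input is already packaged in \Cref{lem-nse-star}, which assures that lifting and projecting between $\ND_\op$-configurations and $\ND_\op^1$-configurations respect the reachability structure when the endpoints lie in $\SF_\op$. The mild subtlety to watch is the "or" in the definition of the $L_\op$-restriction of $\REACH_\op$ to $\ND_\op^1$: without the properness in \Cref{lem-nse-star}(2), the converse direction of part (1) would not go through, since one could leave $\ND_\op^1$ mid-chain. Once this is acknowledged, both parts are a straightforward definition chase.
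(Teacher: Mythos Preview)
Your proposal is correct and follows essentially the same approach as the paper: both arguments hinge on the displayed formula for $\IO_\op^{s_1,s_2}$ and the proper restriction of $\REACH_\op^{\SF_\op}$ to $\ND_\op^1$ from \Cref{lem-nse-star}(2), and both prove part (2) by lifting a chosen $(w,s)\in Q$ to its $\REACH_\op$-class and checking uniqueness via intersection. The only cosmetic difference is that the paper phrases part (1) as a single chain of bi-implications rather than your two-direction split, and your invocation of \Cref{lem-nse-star}(2) in the converse direction of part (1) is in fact unnecessary (since $w'\in\ND_\op$ already follows from $(w',s')$ lying in the domain of $\IO_\op$), but this is harmless.
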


\begin{proof}

(1) Suppose there exists $(w_1,s_1)\in C_\IO$.  Then $(w_1\sqcup1_\op^\int,s_1)\in C_\IO^1\subseteq C$.

Then, for any $(w_2,s_2)\in\ND_\op\times\SF_\op$, \Cref{lem-nse-star} implies that $(w_1,w_2)\in\IO_\op^{s_1,s_2}$ if and only if $$(w_1\sqcup1_\op^\int,w_2\sqcup1_\op^\int)\in\REACH_\op^{s_1,s_2}$$

But since $C$ is an equivalence class of $\REACH_\op$, $(w_1\sqcup1_\op^\int,w_2\sqcup1_\op^\int)\in\REACH_\op^{s_1,s_2}$ if and only if $(w_2\sqcup1_\op^\int,s_2)\in C$, which is true if and only if $(w_2,s_2)\in C_\IO$.

Thus, $C_\IO=\{(w_2,s_2)\in\ND_\op\times\SF_\op\mid(w_1,w_2)\in\IO_\op^{s_1,s_2}\}$, i.e the equivalence class of $\IO_\op$ containing $(w_1,s_1)$.

\bigskip

\noindent
(2)  Let $(w_1,s_1)\in Q$ and let $C\subseteq\GL_\op\times S_\op$ be the equivalence class of $\REACH_\op$ containing $(w_1\sqcup1_\op^\int,s_1)$.

Then $(w_2,s_2)\in C_\IO$ if and only if $(w_2\sqcup1_\op^\int,s_2)\in C$, and so if and only if $$(w_1\sqcup1_\op^\int,w_2\sqcup1_\op^\int)\in\REACH_\op^{s_1,s_2}$$ 
But as above, this last condition is true if and only if $(w_1,w_2)\in\IO_\op^{s_1,s_2}$, and so if and only if $(w_2,s_2)\in Q$.  Thus, $C_\IO=Q$.

Moreover, for any equivalence class $D$ of $\REACH_\op$ satisfying $D_\IO=Q$, then $(w_1,s_1)\in D_\IO$ implying $(w_1\sqcup1_\op^\int,s_1)\in D$.  But then $D\cap C\neq\emptyset$, and so $D=C$.

\end{proof}

\begin{proposition}\label{prop-io-classes}
	Let $\Obj \in \OBJ^*$ and $\op\in \Obj$.  Suppose $\{C_j\}_{j\in J}$ is a family of (not necessarily disjoint) subsets on $\GL_\op\times S_\op$ such that
	\begin{enumerate}
		\item For all $j\in J$, $C_j$ is closed under $\IO_e$ for every $e\in E_\op$
%		\item $C_j \cap \ND_\op^1\times \SF_\op \neq \emptyset$, for all $j\in J$;
		\item $\bigcup_{j\in J} C_j \supseteq \ND_\op^1 \times\SF_\op$.
	\end{enumerate}
	Then $\IO_\op \subseteq \bigcup_{j\in J}\bigl(C_{j,\IO}\bigr)^2$, where $C_{j,\IO}=\bigl(C_j\bigr)_\IO$ for all $j\in J$.
	\end{proposition}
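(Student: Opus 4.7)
The plan is to lift the closure under each single-edge relation $\IO_e$ to closure under the full reachability relation $\REACH_\op$, and then to translate back to $\IO_\op$ using the fact that elements of $\ND_\op \times \SF_\op$ are encoded in $\ND_\op^1 \times \SF_\op$ via $(w,s) \mapsto (w \sqcup 1_\op^\int, s)$.

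First, I would invoke Lemma \ref{lem-reach-rst}, which expresses $\REACH_\op$ as the reflexive-symmetric-transitive closure of $\bigcup_{e \in E_\op} \IO_e$. Combined with hypothesis (1), this places us in the exact setting of Lemma \ref{lem-closure}, which immediately yields that each $C_j$ is a disjoint union of equivalence classes of $\REACH_\op$.

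Next, I would take an arbitrary $(w_1, s_1, w_2, s_2) \in \IO_\op$ and unpack the definition (\Cref{def-io-star}): $s_1, s_2 \in \SF_\op$, $w_1, w_2 \in \ND_\op$, and the $\GL_\op$-level configurations $(w_1 \sqcup 1_\op^\int, s_1)$ and $(w_2 \sqcup 1_\op^\int, s_2)$ both lie in $\ND_\op^1 \times \SF_\op$ (using Lemma \ref{lem-nse-star}(1)) and are $\REACH_\op$-equivalent. Hypothesis (2) then supplies an index $j \in J$ with $(w_1 \sqcup 1_\op^\int, s_1) \in C_j$; by the preceding paragraph, $C_j$ is closed under $\REACH_\op$, so $(w_2 \sqcup 1_\op^\int, s_2) \in C_j$ as well. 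Reading off the definition of $C_{j,\IO}$ (equivalently, applying Lemma \ref{lem-io-class}(1) to the common $\REACH_\op$-class of these two configurations), both $(w_1, s_1)$ and $(w_2, s_2)$ lie in $C_{j,\IO}$, so $(w_1, s_1, w_2, s_2) \in (C_{j,\IO})^2$, as desired.

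The main obstacle is essentially just notational bookkeeping: one must be careful to track when an object lives on $\GL_\op \times S_\op$ (where $\REACH_\op$ and the closure hypothesis operate) versus on $\ND_\op \times \SF_\op$ (where $\IO_\op$ and the $(C_{j,\IO})^2$ conclusion live), and to use the $1_\op^\int$-extension consistently when passing between them. Lemmas \ref{lem-nse-star} and \ref{lem-io-class} are designed precisely to mediate this passage, so the proof reduces to concatenating Lemma \ref{lem-reach-rst}, Lemma \ref{lem-closure}, and Lemma \ref{lem-io-class} in that order.
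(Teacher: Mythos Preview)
Your proposal is correct and follows essentially the same approach as the paper: both arguments take an element of $\IO_\op$, lift to the $\ND_\op^1$-level configurations, and use closure of each $C_j$ under the single-edge relations $\IO_e$ to propagate membership along a realizing computation. The only difference is packaging—you invoke Lemmas~\ref{lem-reach-rst} and~\ref{lem-closure} to handle the propagation abstractly, whereas the paper unrolls the computation and steps through edge by edge; the reference to Lemma~\ref{lem-io-class} at the end is superfluous (the definition of $C_{j,\IO}$ suffices directly) but harmless.
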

	
\begin{proof}

Suppose $(u_1,u_2)\in\IO_\op^{s_1,s_2}$ for some $s_1,s_2\in\SF_\op$.  Then, for $w_i=u_i\sqcup1_\op^\int$ for $i=1,2$, there exists an input-output computation of $\op$ between $(w_1,s_1)$ and $(w_2,s_2)$.  Denote such a computation $\bigl((w_0',s_0'),e_1',(w_1',s_1'),e_2',\dots,e_m',(w_m',s_m')\bigr)$, i.e so that $(w_0',s_0')=(w_1,s_1)$ and $(w_m',s_m')=(w_2,s_2)$.

For all $i\in\overline{m}$, let $\sf_i,\sf_i'\in\{\s,\f\}$ and let $e_i\in E_\op$ be such that $e_i'=(\sf_i,\sf_i',e_i)$.  Then, $(w_{i-1}',s_{i-1}',w_i',s_i')\in\IO_{e_i}$ for all $i\in\overline{m}$.  Hence, (1) implies that for any $i\in\overline{m}$ and $j\in J$, $(w_{i-1}',s_{i-1}')\in C_j$ if and only if $(w_i',s_i')\in C_j$.

Hence, for any $j\in J$, $(w_1,s_1)\in C_j$ if and only if $(w_2,s_2)\in C_j$.  Moreover, since by construction $w_1\in\ND_\op^1$, hypothesis (2) implies the existence of $k\in J$ such that $(w_1,s_1)\in C_k$.  As a result, $(w_1,s_1),(w_2,s_2)\in C_k$.  But $w_i\in\ND_\op^1$ and $s_i\in\SF_\op$ for $i=1,2$, so that $(w_1,s_1),(w_2,s_2)\in C_{k,\IO}^1$.  Thus, $(u_1,s_1),(u_2,s_2)\in C_{k,\IO}$.

\end{proof}

\begin{proposition}\label{prop-io-classes2}
	Let $\Obj \in \OBJ^*$ and $V\subseteq L^\val_\Obj$.  Suppose that for all $\op\in\Obj$, there exists a family of subsets of $ \GL_\op\times S_\op$, $\C_\op = \{C_{j, \op}\}_{j\in J_\op}$, such that:
	\begin{enumerate}
		\item For all $j\in J_\op$, $C_{j, \op}$ is closed under $\IO_e$ for every $e\in E_\op$
		\item $\bigcup_{j\in J_\op} C^{\SF_\op}_{j, \op}= \widehat{V}_\op\times \SF_\op$, where $\widehat{V}_\op=L_\op^\ext \times V \times 1^\int_\op \subseteq L_\op$.
	\end{enumerate}
	Then either $\VAL_\Obj\cap V=\emptyset$ or $\VAL_\Obj \subseteq V$.
\end{proposition}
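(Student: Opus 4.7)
The plan is to show that the set $V$ is closed under the edge relation that defines $\G_\Obj^\val$, namely: whenever $v_1,v_2\in L_\Obj^\val$ are joined by an edge in $\G_\Obj^\val$, one has $v_1\in V$ if and only if $v_2\in V$. Since $\VAL_\Obj$ is by definition a single connected component of $\G_\Obj^\val$, this closure property forces $V$ to be a (possibly empty) union of connected components of $\G_\Obj^\val$, and therefore either $\VAL_\Obj\subseteq V$ or $\VAL_\Obj\cap V=\emptyset$.

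To establish the closure property, suppose $v_1$ and $v_2$ are joined by an edge. Unpacking the definition of $\G_\Obj^\val$, this means there exist an operation $\op\in\Obj$ and an input-output computation of $\op$ between configurations $(w_1,s_1),(w_2,s_2)\in\GL_\op\times\SF_\op$ with $w_i(\I_\op^\int)=1_\op^\int$ and $w_i(\I_\Obj^\val)=v_i$ for $i=1,2$. By symmetry of the $\REACH$-relation, it suffices to show that $v_1\in V$ implies $v_2\in V$. So assume $v_1\in V$. Then $w_1\in L_\op^\ext\times V\times 1_\op^\int=\widehat{V}_\op$, and since $s_1\in\SF_\op$, hypothesis (2) gives an index $k\in J_\op$ with $(w_1,s_1)\in C_{k,\op}$.

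Now apply hypothesis (1): $C_{k,\op}$ is closed under $\IO_e$ for every $e\in E_\op$. By \Cref{lem-reach-rst}, $\REACH_\op$ is the reflexive-symmetric-transitive closure of $\bigcup_{e\in E_\op}\IO_e$, so \Cref{lem-closure} implies that $C_{k,\op}$ is a disjoint union of equivalence classes of $\REACH_\op$, and in particular is closed under $\REACH_\op$. The existence of the computation between $(w_1,s_1)$ and $(w_2,s_2)$ places this pair in $\REACH_\op$, so $(w_2,s_2)\in C_{k,\op}$. Since $s_2\in\SF_\op$, we get $(w_2,s_2)\in C_{k,\op}^{\SF_\op}\subseteq\widehat{V}_\op\times\SF_\op$ by hypothesis (2) again, so $w_2\in\widehat{V}_\op$, hence $v_2=w_2(\I_\Obj^\val)\in V$, as required.

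The argument is conceptually short and its only real subtlety is bookkeeping: one must correctly identify the value-tape coordinates $\I_\Obj^\val$ of a configuration in $\GL_\op$ with the word $v_i$ living on the vertex set of $\G_\Obj^\val$, and check that the triviality of internal tapes in an input-output computation is what places the relevant configurations inside $\widehat{V}_\op\times\SF_\op$. Once this correspondence is made, the closure of $C_{k,\op}$ under $\REACH_\op$ (supplied by the \Cref{lem-reach-rst}/\Cref{lem-closure} combination) does all the work, and the dichotomy for $V$ follows immediately from the connectedness of $\VAL_\Obj$ in $\G_\Obj^\val$.
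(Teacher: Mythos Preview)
Your proof is correct and follows essentially the same approach as the paper's. The only cosmetic difference is that where the paper traces the input-output computation edge by edge (citing the argument of \Cref{prop-io-classes}), you package that step into a single appeal to \Cref{lem-reach-rst} and \Cref{lem-closure}; both routes yield closure of $C_{k,\op}$ under $\REACH_\op$ and the dichotomy then follows from the definition of $\VAL_\Obj$ as a connected component.
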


\begin{proof}

For any $v_1,v_2\in L_\Obj^\val$, there exists an edge between $v_1$ and $v_2$ in $\G_\Obj^\val$ if and only if there exists $\op\in\Obj$ and an input-output computation of $\op$ between $(w_1,s_1)$ and $(w_2,s_2)$ such that $w_i\bigl(\I_\Obj^\val\bigr)=v_i$ for $i=1,2$.

As in the proof of \Cref{prop-io-classes}, (1) implies that for any $j\in J_\op$, $(w_1,s_1)\in C_{j,\op}$ if and only if $(w_2,s_2)\in C_{j,\op}$.  But then by (2), $(w_i,s_i)\in C_{j,\op}$ if and only if $w_i\in\widehat{V}_\op$, i.e if and only if $v_i\in V$.

Hence, $V$ is the disjoint union of connected components of $\G_\Obj^\val$, so that the statement follows from the definition of $\VAL_\Obj$.

\end{proof}

Recall that in \Cref{def-counter}, an object (of degree 0) was defined to be a \emph{counter} if it contains two operations (of degree 0) such that the values of the object and the $\IO$-relations of its operations satisfy certain restrictions.  This definition is adapted naturally for objects of arbitrary degree:

\begin{definition}\label{def-counter-star}

Let $\Obj\in\Obj^*$ be an object consisting of two operations, denoted $\inc$ and $\check$, with no external tapes.  Suppose there exists an enumeration $\VAL_\Obj=\{v_i\}_{i=0}^\infty$ with $v_0=1_\Obj^\val$ such that:
\begin{align*}
		\IO^{s,s}_\inc &= \IO^{f,f}_\inc = \Id;\\
		\IO^{s,s}_\check &= \IO^{f,f}_\check = \Id; \\
		\IO^{s,f}_\inc &= \bigl\{(v_i, v_{i+1})~|~i\geq 0\bigr\}; \\
		\IO^{s,f}_\check &= \bigl\{(v_i, v_i)~|~i\geq 1\bigr\}.
	\end{align*}
	Then $\Obj$ is called a \emph{counter}.

\end{definition}

Notice that if $\Obj\in\OBJ_0$, then this definition agrees with that of \Cref{def-counter}.

Recall that in \Cref{ex-counter}, an object of degree 0, denoted Counter$_0[a]$, was presented and studied.  There, it was claimed without proof that the object is a counter.  While Propositions \ref{prop-io-classes} and \ref{prop-io-classes2} make the proof of this claim feasible, it is omitted.

Instead, a new object is constructed, denoted Counter$_1[a]$.  This object will be proved to be a counter (see \Cref{lem-counter-is-counter}), but has a much simpler design.  However, as will be expanded upon in the next section, this object will virtually correspond to Counter$_0$, with the concise presentation illustrating the advantages of $S^*$-graphs.

\begin{example}\label{ex-counter1}

Let Counter$_1[a]$ be the object depicted in \Cref{fig-COUNTER2}.  As is customary (and with abuse of notation), its two operations are denoted $\check$ and $\inc$, each of which has no external tapes.

\begin{figure}[hbt]
	\centering
	\begin{tikzpicture} 
[point/.style={inner sep = 4pt, circle,draw,fill=white},
mpoint/.style={inner sep = 1.7pt, circle,draw,fill=black},
FIT/.style args = {#1}{rounded rectangle, draw,  fit=#1, rotate fit=45, yscale=0.5},
FITR/.style args = {#1}{rounded rectangle, draw,  fit=#1, rotate fit=-45, yscale=0.5},
FIT1/.style args = {#1}{rounded rectangle, draw,  fit=#1, rotate fit=45, scale=2},
vecArrow/.style={
		thick, decoration={markings,mark=at position
		   1 with {\arrow[thick]{open triangle 60}}},
		   double distance=1.4pt, shorten >= 5.5pt,
		   preaction = {decorate},
		   postaction = {draw,line width=0.4pt, white,shorten >= 4.5pt}
	},
myptr/.style={decoration={markings,mark=at position 1 with %
    {\arrow[scale=3,>=stealth]{>}}},postaction={decorate}}
]

\begin{scope} [xscale=0.75, yscale=0.3]      
	\node [anchor=west] at (-1.5, 3.) {Operation $\divtwo(b)$};
	\node (start) at (0,0) [point] {$s$};
	\node (n5) at (4, 0) [point] {};
	\node (n6) at (11, 0) [point] {};
	\node (finish) at (15, 0) [point] {$f$};

	\draw [->] (start) to node[midway, above]{
		$\begin{gathered}
		 [\mathbf{d}=1]
		\end{gathered}$
	}  (n5);
	\draw [->] (n6) to node[midway, above]{
		$\begin{gathered}
		 [\mathbf{d}=1]
		\end{gathered}$
	}  (finish);

	\draw [->] (n5) edge[out=30, in = 150, looseness=1] node[midway, above]{
		$\begin{gathered}
		 [\mathbf{b}=1]
		\end{gathered}$
	}  (n6);
	\draw [->] (n5) edge[out=-30, in = -150, looseness=1] node[midway, below]{
		$\begin{gathered}
		 [\mathbf{b}=\delta^{-1}]\\[-1ex]
		 [\mathbf{b}\rightarrow\delta\mathbf{b}]\\[-1ex]
		\end{gathered}$
	}  (n6);

	\draw [->] (n5) edge[loop, out=-125, in = -55, looseness=30] node[pos=0.5, below]{
		$\begin{gathered}
		 [\mathbf{b}\rightarrow\delta^{-2}\mathbf{b}]\\[-1ex]
		 [\mathbf{d}\rightarrow\delta\mathbf{d}]\\[-1ex]
		\end{gathered}$
	}  (n5);

	\draw [->] (n6) edge[loop, out=-125, in = -55, looseness=30] node[pos=0.5, below]{
		$\begin{gathered}
		 [\mathbf{b}\rightarrow\delta\mathbf{b}]\\[-1ex]
		 [\mathbf{d}\rightarrow\delta^{-1}\mathbf{d}]\\[-1ex]
		\end{gathered}$
	}  (n6);
\end{scope}	

\begin{scope} [xscale=0.75, yscale=0.3, yshift = -12cm]      
	\node [anchor=west] at (-1.5, 3) {Operation $\check_{>0} (a)$};
	\node (start) at (0,0) [point] {$s$};
	\node (n2) at (4, 0) [point] {};
	\node (n3) at (11, 0) [point] {};
	\node (finish) at (15, 0) [point] {$f$};
	
	\draw [->] (start) to node[midway, above]{
		$\begin{gathered}
		 [\mathbf{b} = 1]
		\end{gathered}$
	}  (n2);

	\draw [->] (n2) to node[midway, above]{
		$\begin{gathered}
		 \cpy^{\delta}(\mathbf{a}, \mathbf{b})
		\end{gathered}$
	}  (n3);

	\draw [->] (n3) to node[midway, above]{
		$\begin{gathered}
		 [\mathbf{b} =\delta] \\[-1ex]
		 [\mathbf{b} \rightarrow \delta^{-1}\mathbf{b}]
		\end{gathered}$
	}  (finish);

	\draw [->] (n3) edge[loop, out=-135, in = -45, looseness=30] node[pos=0.5, below]{
		$\begin{gathered}
		 \textrm{div$2$} (\mathbf{b})
		\end{gathered}$
	}  (n3);
\end{scope}

\begin{scope}[yshift = -6.5cm]
\begin{scope} [xscale=0.75, yscale=0.3]      
	\node [anchor=west] at (-1.5, 4) {Object Counter$_1[a]$};
	\node at (0, 2.3) {$\check()$};
	\node (start) at (0,0) [point] {$s$};
	\node (finish) at (5, 0) [point] {$f$};
	
	\draw [->] (start) to node[midway, above]{
		$\begin{gathered}
		 \check_{>0} (\mathbf{a})
		\end{gathered}$
	}  (finish);
\end{scope}	

\begin{scope} [xscale=0.75, yscale=0.3, yshift= -5cm]      
	\node at (-0.2, 2.3) {$\inc()$};
	\node (start) at (0,0) [point] {$s$};
	\node (n1) at (5,0) [point] {$q$};
	\node (finish) at (10, 0) [point] {$f$};
	
	\draw [->] (start) to node[midway, above]{
		$\begin{gathered}
		 [\mathbf{a} \rightarrow \delta \mathbf{a}]
		\end{gathered}$
	}  (n1);

	\draw [->] (n1) to node[midway, above]{
		$\begin{gathered}
		\check_{>0} (\mathbf{a})
		\end{gathered}$
	}  (finish);
\end{scope}	
\end{scope}

\end{tikzpicture}	
	\caption{A Counter$_1[a]$ object is a counter.}
	\label{fig-COUNTER2}       
\end{figure} 

As no instances are declared before presenting the operations, $\INST_{\textrm{Counter}_1}$ consists only of the proper operations labelling the edges of the graphs of $\check$ and $\inc$.  Here, that is $\check_{>0}$, a proper operation also presented in the figure.

Naturally, to understand Counter$_1$, attention is first focused on the proper operation $\check_{>0}$.  As with Counter$_1$, $\INST_{\check_{>0}}$ consists of the proper operations labelling its edges, i.e $\cpy$ and $\divtwo$.  However, while $\cpy$ has already been studied (see \Cref{ex-copy}), $\divtwo$ has not and is again presented in the figure.  Hence, the discussion begins with the operation $\divtwo$.

As the action labelling every edge is constructed from fragments, $\divtwo$ is a proper operation of degree 0.  The notation $\divtwo(b)$ indicates that $\I_{\divtwo}^\ext=\{b\}$ and that the remaining indices used in actions labelling the edges must be internal.  Hence, $\I_{\divtwo}^\int=\{d\}$.  Finally, as the only letter of $\A^*$ appearing in fragments on the graph is $\delta$, and moreover this letter appears both in fragments mentioning $b$ and those mentioning $d$, $\A_\divtwo(b)=\A_\divtwo(d)=\{\delta\}$.

Condition (O) is easily verified for $\divtwo$, as the only two edges whose tails are $\divtwo_s$ or $\divtwo_f$ contain the guard fragment $[\mathbf{d}=1]$.

Finally, for clarity, denote the elements of $\GL_\divtwo=L_{\H_{\divtwo}}$ by $w=(w(b),w(d))$, i.e adopting the convention that the first coordinate of the pair represents the $b$-coordinate.  Hence, for all $s_1,s_2\in\SF_\divtwo$, the relation $\IO_\divtwo^{s_1,s_2}$ consists of the pairs $(w_1,w_2)\in F(\delta)^2$ such that there exists a computation of $\divtwo$ between the configurations $((w_1,1),s_1)$ and $((w_2,1),s_2)$.

Note that since $\dt$ is an $S$-graph of degree 0, $\IO_\dt$ can be calculated in much the same way as in previous sections (e.g as in \Cref{lem-io-cpy}).  However, it is approached below instead using the tools developed in this section, demonstrating their efficacy and developing a blueprint for similar arguments related to $S^*$-graphs that follow.

\begin{lemma}\label{lem-ndr-div2}
	The relation $\IO_{\dt}$ is given by:
	\begin{align*}
		\IO^{s,s}_\dt &= \bigr\{(\delta^{2j - \alpha}, \delta^{2j-\beta})\mid j\in \Z, \ \alpha,\beta \in \{0,1\} \bigr\};\\
		\IO^{s,f}_\dt &= \bigr\{(\delta^{2j - \alpha}, \delta^{j})\mid j\in \Z, \ \alpha \in \{0,1\} \bigr\};\\
		\IO^{f,f}_\dt &= \bigr\{(\delta^{j}, \delta^{j})~|~j\in \Z\bigr\},
	\end{align*}
\end{lemma}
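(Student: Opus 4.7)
The plan is to apply \Cref{prop-io-classes} to a family of subsets of $\GL_\dt \times S_\dt$ cut out by conserved quantities read off of the two loop edges. Let $u$ and $v$ denote, respectively, the left and right middle states of \Cref{fig-COUNTER2} for $\dt(b)$, and write configurations on tapes $(b,d)$ as $(\delta^p, \delta^q)$. For each $j \in \Z$ I would define
\begin{align*}
C_j^s &= \{((\delta^n, 1), s) \mid n \in \{2j, 2j-1\}\}, \\
C_j^u &= \{((\delta^p, \delta^q), u) \mid p + 2q \in \{2j, 2j-1\}\}, \\
C_j^v &= \{((\delta^p, \delta^q), v) \mid p + q = j\}, \\
C_j^f &= \{((\delta^j, 1), f)\},
\end{align*}
and set $C_j = C_j^s \cup C_j^u \cup C_j^v \cup C_j^f$. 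The expressions $p + 2q$ at $u$ and $p + q$ at $v$ come straight from the loops at those vertices, which act by $(p,q) \mapsto (p-2, q+1)$ and $(p,q) \mapsto (p+1, q-1)$, respectively, and therefore preserve these invariants.

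The first step is to verify that each $C_j$ is closed under $\IO_e$ for every $e \in E_\dt$. The $(\s,\s,e)$ and $(\f,\f,e)$ generic edges contribute only identity pairs and impose no condition, and the loops preserve their invariants as just noted. The edge $s \to u$ with guard $[\mathbf{d}=1]$ sends $((\delta^n, 1), s)$ to $((\delta^n, 1), u)$ with $p + 2q = n$, so the two conditions agree; symmetrically the edge $v \to f$ with guard $[\mathbf{d}=1]$ forces $q = 0$, and $p + q = j$ yields $b = \delta^j$. The transition $u \to v$ with guard $[\mathbf{b}=1]$ forces $p = 0$, so $p + 2q = 2q$ is even and must equal $2j$, giving $q = j$ and $p + q = j$ at $v$. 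The transition $u \to v$ with guard $[\mathbf{b}=\delta^{-1}]$ and transformation $[\mathbf{b} \to \delta\mathbf{b}]$ forces $p = -1$, so $p + 2q = 2q - 1$ is odd and must equal $2j - 1$, again giving $q = j$; the transformation then sends $b$ to $1$, yielding $p + q = j$ at $v$. As $\bigcup_j C_j^{\SF_\dt} = \ND_\dt^1 \times \SF_\dt$, \Cref{prop-io-classes} produces $\IO_\dt \subseteq \bigcup_j (C_{j,\IO})^2$ with $C_{j,\IO} = \{(\delta^{2j}, s), (\delta^{2j-1}, s), (\delta^j, f)\}$, which is exactly the claimed upper bound on each of $\IO_\dt^{s,s}$, $\IO_\dt^{s,f}$, and $\IO_\dt^{f,f}$.

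For the matching lower bound I would exhibit explicit computations collapsing each $C_{j,\IO}$ into a single $\IO_\dt$-class. Naming the edges $e_0 : s \to u$, $e_1, e_2 : u \to v$ (with guards $[\mathbf{b}=1]$ and $[\mathbf{b}=\delta^{-1}]$, respectively), $e_3 : v \to f$, and $e_4, e_5$ the loops at $u$ and $v$, the path $e_0, e_4^{\,j}, e_1, e_5^{\,j}, e_3$ takes $((\delta^{2j}, 1), s)$ to $((\delta^j, 1), f)$, and the path $e_0, e_4^{\,j}, e_2, e_5^{\,j}, e_3$ takes $((\delta^{2j-1}, 1), s)$ to $((\delta^j, 1), f)$, for $j \geq 0$; for $j < 0$ the same schemes apply with the loops $e_4$ and $e_5$ traversed backwards. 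Transitivity of $\IO_\dt$ (\Cref{lem-io-star}) then forces all three configurations in $C_{j,\IO}$ into one equivalence class, and combining with the upper bound delivers the three claimed equalities. The main obstacle is bookkeeping rather than conceptual: one must check the closure condition for each of the six directed edges in both generic directions without losing track of which parity of the invariant at $u$ is active, but that parity is pinned down by which of $[\mathbf{b}=1]$ and $[\mathbf{b}=\delta^{-1}]$ governs the transition $u \to v$.
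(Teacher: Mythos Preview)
Your proposal is correct and follows essentially the same approach as the paper: you define the same family $\{C_j\}_{j\in\Z}$ (with your $u,v$ matching the paper's $q_1,q_2$ and your invariants $p+2q$, $p+q$ being exactly the parametrizations the paper uses for $C_j^{q_1}$, $C_j^{q_2}$), apply \Cref{prop-io-classes} for the upper bound, and then exhibit the same explicit computations (your paths $e_0,e_4^{\,j},e_1,e_5^{\,j},e_3$ and $e_0,e_4^{\,j},e_2,e_5^{\,j},e_3$ are the paper's $\eps_{n,\tau}$ and $\eps_{n,\tau}'$ up to edge renaming) to establish equality via transitivity.
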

\begin{proof} 
For ease of reference, the graph of $\dt$ is copied below with some vertices and edges labelled.

\begin{center}
	\begin{tikzpicture} 
[point/.style={inner sep = 4pt, circle,draw,fill=white},
mpoint/.style={inner sep = 1.7pt, circle,draw,fill=black},
FIT/.style args = {#1}{rounded rectangle, draw,  fit=#1, rotate fit=45, yscale=0.5},
FITR/.style args = {#1}{rounded rectangle, draw,  fit=#1, rotate fit=-45, yscale=0.5},
FIT1/.style args = {#1}{rounded rectangle, draw,  fit=#1, rotate fit=45, scale=2},
vecArrow/.style={
		thick, decoration={markings,mark=at position
		   1 with {\arrow[thick]{open triangle 60}}},
		   double distance=1.4pt, shorten >= 5.5pt,
		   preaction = {decorate},
		   postaction = {draw,line width=0.4pt, white,shorten >= 4.5pt}
	},
myptr/.style={decoration={markings,mark=at position 1 with %
    {\arrow[scale=3,>=stealth]{>}}},postaction={decorate}}
]

\begin{scope} [xscale=0.75, yscale=0.3]      
	\node [anchor=west] at (-1.5, 3.) {Operation $\divtwo(b)$};
	\node (start) at (0,0) [point] {$s$};
	\node (n5) at (4, 0) [point] {$q_1$};
	\node (n6) at (11, 0) [point] {$q_2$};
	\node (finish) at (15, 0) [point] {$f$};

	\draw [->] (start) to node[midway, above]{
		$\begin{gathered}
		 [\mathbf{d}=1]
		\end{gathered}$
	}  node[midway, below]{$e_1$} (n5);
	\draw [->] (n6) to node[midway, above]{
		$\begin{gathered}
		 [\mathbf{d}=1]
		\end{gathered}$
	}  node[midway, below]{$e_6$} (finish);

	\draw [->] (n5) edge[out=30, in = 150, looseness=1] node[pos=0.5, above]{
		$\begin{gathered}
		 [\mathbf{b}=1]
		\end{gathered}$
	}  node[pos=0.3, below]{$e_3$}(n6);
	\draw [->] (n5) edge[out=-30, in = -150, looseness=1] node[pos=0.5, below]{
		$\begin{gathered}
		 [\mathbf{b}=\delta^{-1}]\\[-1ex]
		 [\mathbf{b}\rightarrow\delta\mathbf{b}]\\[-1ex]
		\end{gathered}$
	}  node[pos=0.7, above]{$e_4$}(n6);

	\draw [->] (n5) edge[loop, out=-125, in = -55, looseness=30] node[pos=0.5, below]{
		$\begin{gathered}
		 [\mathbf{b}\rightarrow\delta^{-2}\mathbf{b}]\\[-1ex]
		 [\mathbf{d}\rightarrow\delta\mathbf{d}]\\[-1ex]
		\end{gathered}$
	}  node[pos=0.5, above]{$e_2$}  (n5);

	\draw [->] (n6) edge[loop, out=-125, in = -55, looseness=30] node[pos=0.5, below]{
		$\begin{gathered}
		 [\mathbf{b}\rightarrow\delta\mathbf{b}]\\[-1ex]
		 [\mathbf{d}\rightarrow\delta^{-1}\mathbf{d}]\\[-1ex]
		\end{gathered}$
	}   node[pos=0.5, above]{$e_5$} (n6);
\end{scope}

\end{tikzpicture}	
\end{center}

For all $j\in\Z$, define $C_j\subseteq\GL_\dt\times S_\dt$ by:
\begin{align*}
	C_j^s &= \left\{(\delta^{2j-\alpha}, 1) \mid \alpha\in \{0,1\}\right\}, \\
	C_j^{q_1} &= \left\{(\delta^{2j-2i-\alpha}, \delta^i) \mid i\in \Z, \ \alpha\in \{0,1\}\right\}, \\
	C_j^{q_2} &= \left\{(\delta^{j-i}, \delta^i) \mid i\in \Z\right\}, \\
	C_j^f &= \left\{(\delta^{j}, 1)\right\}.
\end{align*}
For any $j\in\Z$ and $k\in\overline{6}$, it is straightforward to check that $C_j$ is closed under $\IO_{e_k}$:  

\begin{enumerate}

\item Suppose $(w_1,w_2)\in\IO_{e_1}^{s_1,s_2}$.  If $s_1=s_2$, then $(w_1,s_1)=(w_2,s_2)$.  Otherwise, suppose without loss of generality that $s_1=\dt_s$, i.e $(w_1,w_2)\in\IO_{e_1}^{s,q_1}$.  Then for $(t,g)=\lab(e_1)$, $g$ accepts $w_1$ and $w_2=tw_1$.  But then $w_1(d)=1$ and $w_2=w_1$.  Thus, $(w_2,s_2)\in C_j$ if and only if $w_1(b)=w_2(b)=\delta^{2j-\a}$ for some $\a\in\{0,1\}$, which is true if and only if $(w_1,s_1)\in C_j$.

\item Suppose $(w_1,w_2)\in\IO_{e_2}^{s_1,s_2}$.  Then $s_1=s_2=q_1$.  Moreover, if $w_1=(\delta^m,\delta^n)$, then there exists $\ell\in\{-1,0,1\}$ such that $w_2=(\delta^{m-2\ell},\delta^{n+\ell})$.  But then $w_1\in C_j^{q_1}$ if and only if $w_2\in C_j^{q_1}$.

\item Suppose $(w_1,w_2)\in\IO_{e_3}^{s_1,s_2}$.  As in (1), it can be assumed without loss of generality that $s_1=q_1$ and $s_2=q_2$.  Then, the definition of $\lab(e_3)$ implies that $w_1(b)=1$ and $w_2=w_1$.  As a result, $(w_2,s_2)\in C_j$ if and only if $w_1(d)=w_2(d)=j$, which is true if and only if $(w_1,s_1)\in C_j$.

\item Suppose $(w_1,w_2)\in\IO_{e_4}^{s_1,s_2}$.  As in (3), it may be assumed that $s_1=q_1$ and $s_2=q_2$. Then, $w_1(b)=\delta^{-1}$, $w_2(b)=1$, and $w_1(d)=w_2(d)$.  But again this implies that $(w_1,s_1)\in C_j$ if and only if $(w_2,s_2)\in C_j$.

\item Suppose $(w_1,w_2)\in\IO_{e_5}^{s_1,s_2}$.  As in (2), this implies that $s_1=s_2=q_2$ and if $w_1=(\delta^m,\delta^n)$, then $w_2=(\delta^{m-\ell},\delta^{n+\ell})$ for some $\ell\in\{-1,0,1\}$.  But $(\delta^\a,\delta^\beta)\in C_j^{q_2}$ if and only if $\a+\beta=j$, so that $(w_1,s_1)\in C_j$ if and only if $(w_2,s_2)\in C_j$.

\item Suppose $(w_1,w_2)\in\IO_{e_6}^{s_1,s_2}$.  As in (1), it can be assumed without loss of generality that $s_1=q_2$ and $s_2=\dt_f$, in which case the action labelling $e_6$ implies that $w_1(d)=1$ and that $w_2=w_1$.  So, $(w_2,s_2)\in C_j$ if and only if $w_1(b)=w_2(b)=\delta^j$, which is true if and only if $(w_1,s_1)\in C_j$.

\end{enumerate}

As all edges of $\dt$ are labelled with an action of degree 0, there is an implicit inverse edge to each not pictured.  However, as with any edge labelled with an action of degree $0$, $\IO_{e_k^{-1}}=\IO_{e_k}^{-1}$ for all $k\in\overline{6}$, and hence $C_j$ is also closed under $\IO_{e_k^{-1}}$ for all $j\in\Z$.

Moreover, note that $$\ND_\dt^1\times\SF_\dt=\bigl\{((\delta^i,1),\sf)\mid i\in\Z, \ \sf\in\{\dt_s,\dt_f\}\bigr\}=\bigcup_{j\in\Z}(C_j^s\cup C_j^f)$$

Thus, the family of subsets $\{C_j\}_{j\in \Z}$ satisfies the hypotheses laid out in \Cref{prop-io-classes}, and so $\IO_\dt \subseteq\bigcup_{j\in\Z} \bigl(C_{j,\IO}\bigr)^2$.  Note that as constructed, the set $U=\bigcup_{j\in\Z} \bigl(C_{j,\IO}\bigr)^2$ consists of exactly the pairs described in the statement.  Hence, it suffices to exhibit input-output computations of $\dt$ which imply the reverse containment.

\Cref{lem-io-star} implies that $\IO_\dt$ is an equivalence relation on $\ND_\dt\times\SF_\dt$.  So, since any $(w_1,\dt_f,w_2,\dt_f)\in U$ satisfies $w_1=w_2$, $(w_1,w_2)\in\IO_\dt^{f,f}$ by reflexivity.  %Similarly, for any $(w_1,\dt_s,w_2,\dt_s)\in U$ such that $w_1=w_2$, $(w_1,w_2)\in\IO_\dt^{s,s}$.

Fix $n\in\N$ and $\tau\in\{\pm1\}$.  Then the path $e_1,(e_2^\tau)^n,e_3,(e_5^\tau)^n,e_6$ in $\dt$ (where $e^n$ represents a subpath consisting of a loop $e$ repeated $n$ times) supports a computation $\eps_{n,\tau}$ between $((\delta^{2\tau n},1),\dt_s)$ and $((\delta^{\tau n},1),\dt_f)$.  Similarly, the path $e_1,(e_2^\tau)^n,e_4,(e_5^\tau)^n,e_6$ supports a computation $\eps_{n,\tau}'$ between $((\delta^{2\tau n-1},1),\dt_s)$ and $((\delta^{\tau n},1),\dt_f)$.  

Hence, $(\delta^{2j-\a},\delta^j)\in\IO_\dt^{s,f}$ for all $j\in\Z$ and $\a\in\{0,1\}$.  By symmetry, any $(w_1,s_1,w_2,s_2)\in U$ such that $s_1\neq s_2$ satisfies $(w_1,w_2)\in\IO_\dt^{s_1,s_2}$.

Finally, for $(w_1,\dt_s,w_2,\dt_s)\in U$, there exist $j\in\Z$ and $\a,\beta\in\{0,1\}$ such that $w_1=\delta^{2j-\a}$ and $w_2=\delta^{2j-\beta}$.  But then $(w_1,\delta^j)\in\IO_\dt^{s,f}$ and $(\delta^j,w_2)\in\IO_\dt^{f,s}$, so that $(w_1,w_2)\in\IO_\dt^{s,s}$ by transitivity.

\end{proof}

The proof of \Cref{lem-ndr-div2} is a prototypical example of the proofs of the $\IO$-relations of operations that follow: Methodically choose sets $C_j$ which satisfy the hypotheses of \Cref{prop-io-classes}, then exhibit input-output computations which imply the statement.  As this proof is the template, particular care was given to outlining the details, especially in checking that each $C_j$ was closed under the $\IO$-relation of each edge.  As checking such closure is generally straightforward, though, it is common in what follows for the details to be omitted.

With $\IO_\dt$ now calculated, attention now returns to the proper operation $\check_{>0}$.  For presentational purposes, the notation ch\textgreater0 represents the operation when it is used in a subscript, i.e $\G_{\ch>0}$ is an $S^*$-graph over $(\H_{\ch>0},\INST_{\ch>0})$ where $\H_{\ch>0}=(\I_{\ch>0},\A_{\ch>0})$.

The notation $\check_{>0}(a)$ indicates that $\I_{\ch>0}^\ext=\{a\}$ and that all remaining indices are internal, i.e $\I_{\ch>0}^\int=\{b\}$.  Note that the internal indices of both $\dt$ and $\cpy$ do not appear in $\I_{\ch>0}$ even though $\dt$ and $\cpy$ label edges.

The fragments comprising the actions of degree 0 labelling edges of $\check_{>0}$ immediately imply that $\delta\in\A_{\ch>0}(b)$.  As also $\A_\dt(b)=\{\delta\}$ and $\A_\cpy(a)=\A_\cpy(b)=\{\delta\}$, it follows that $\A_{\ch>0}(a)=\A_{\ch>0}(b)=\{\delta\}$.

Again, condition (O) is easily verified for $\check_{>0}$ since any edge with tail in $\SF_{\ch>0}$ is labelled with an action of degree 0 containing the guard fragment $[\mathbf{b}=1]$.

Further, similar to the conventions adopted for $\dt$, denote the elements of $\GL_{\ch>0}=L_{\H_{\ch>0}}$ by $w=(w(a),w(b))$, i.e so that the first coordinate represents the $a$-coordinate.  Hence, for all $s_1,s_2\in\SF_{\ch>0}$, the elements of $\IO_{\ch>0}^{s_1,s_2}$ are the pairs $(w_1,w_2)\in F(\delta)^2$ such that there exists a computation of $\check_{>0}$ between $((w_1,1),s_1)$ and $((w_2,1),s_2)$.

\begin{lemma}\label{lem-ndr-checkstar}
	The relation $\IO_{\ch>0}$ is given by:
	\begin{align*}
		\IO^{s,s}_{\ch>0} &= \IO^{f,f}_{\ch>0} = \Id;\\
		\IO^{s,f}_{\ch>0} &= \bigr\{(\delta^{i}, \delta^{i})~|~i \geq 1\bigr\},
	\end{align*}
	In particular, $a$ is immutable in $\check_{>0}(a)$.
\end{lemma}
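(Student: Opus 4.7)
The plan is to follow the template of \Cref{lem-ndr-div2}: apply \Cref{prop-io-classes} with carefully designed closed classes to obtain the upper bound on $\IO_{\ch>0}$, and then construct explicit input-output computations for the reverse inclusions, with immutability of $a$ reading off the resulting relations.

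The key observation driving the upper bound is that each of the relations $\IO_\dt^{p,q}$ computed in \Cref{lem-ndr-div2} preserves the sign-partition $\{\delta^m : m \geq 1\} \sqcup \{\delta^m : m \leq 0\}$ of $F(\delta)$: if $m = 2j - \alpha \geq 1$ with $\alpha \in \{0,1\}$, then $j \geq 1$, forcing every image exponent ($2j - \beta$ or $j$) to be positive, and symmetrically in the nonpositive half. Denoting the two unnamed intermediate states of $\check_{>0}$ by $q_1$ (after the guard $[\mathbf{b}=1]$) and $q_2$ (where the $\dt$-loop and the transition $e_4$ to $f$ attach), I will define for each $k \in \Z$ a class $C_k \subseteq \GL_{\ch>0} \times S_{\ch>0}$ by
\begin{align*}
    C_k^s &= \{(\delta^k, 1, s)\}, \qquad C_k^{q_1} = \{(\delta^k, 1, q_1)\}, \\
    C_k^{q_2} &= \begin{cases} \{(\delta^k, \delta^m, q_2) : m \geq 1\} & \text{if } k \geq 1, \\ \{(\delta^k, \delta^m, q_2) : m \leq 0\} & \text{if } k \leq 0, \end{cases} \\
    C_k^f &= \begin{cases} \{(\delta^k, 1, f)\} & \text{if } k \geq 1, \\ \emptyset & \text{if } k \leq 0, \end{cases}
\end{align*}
and for each $k \leq 0$ an auxiliary class $D_k$ whose only nonempty components are $D_k^{q_2} = \{(\delta^k, \delta^m, q_2) : m \geq 1\}$ and $D_k^f = \{(\delta^k, 1, f)\}$. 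The role of $D_k$ is solely to cover the configurations $(\delta^k, 1, f)$ with $k \leq 0$, which lie in $\ND_{\ch>0}^1 \times \SF_{\ch>0}$ and must therefore appear in the cover required by \Cref{prop-io-classes}.

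Closure of each $C_k$ and $D_k$ under $\IO_e$ for every edge $e$ is then a mechanical check. The two degree-$0$ edges ($e_1$ and $e_4$) only manipulate tape $b$ via guards and shifts and are handled directly. The $\cpy$-edge is handled using \Cref{lem-io-cpy}: it links $(\delta^k, 1, q_1) \leftrightarrow (\delta^k, \delta^k, q_2)$, which is consistent with $C_k$ in both sign regimes, and consistent with $D_k$ since $(\delta^k, \delta^k, q_2)$ with $k \leq 0$ lies outside $D_k^{q_2} = \{m \geq 1\}$, preserving the emptiness of $D_k^{q_1}$. The $\dt$-loop is handled by the sign-invariance above. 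Applying \Cref{prop-io-classes} then yields $\IO_{\ch>0}^{s,s}, \IO_{\ch>0}^{f,f} \subseteq \Id$ and $\IO_{\ch>0}^{s,f} \subseteq \{(\delta^i, \delta^i) : i \geq 1\}$. The first two become equalities by reflexivity, while for the third I will, for each $i \geq 1$, produce an explicit computation starting at $((\delta^i, 1), s)$: traverse $e_1$ into $q_1$; traverse the $\cpy$-edge via the generic edge $(\s, \f, \cdot)$ using $\IO_\cpy^{s,f}$ to reach $((\delta^i, \delta^i), q_2)$; iterate the $\dt$-loop via $(\s, \f, \cdot)$ using $\IO_\dt^{s,f}$, each iteration replacing $\delta^m$ by $\delta^{\lceil m/2 \rceil}$, until tape $b$ reads $\delta$; and finally traverse $e_4$ into $f$.

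The principal obstacle will be assembling the two families $C_k$ and $D_k$ so that they jointly cover $\ND_{\ch>0}^1 \times \SF_{\ch>0}$ while simultaneously respecting the sign-invariance at $q_2$ and the copy-induced binding between $q_1$ and $q_2$ across both sign regimes; the remainder is a routine edge-by-edge verification.
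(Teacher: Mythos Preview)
Your proposal is correct and follows the same template as the paper: apply \Cref{prop-io-classes} with closed classes, then exhibit explicit realizing computations. The only real difference is the granularity of the decomposition. The paper uses three coarse classes $C_1, C_2, C_3$ indexed by the \emph{sign} of the $a$- and $b$-exponents, whereas you index by the specific $a$-value $k$ (with the auxiliary $D_k$ for $k\le 0$). Your finer decomposition bakes the immutability of $a$ directly into the class structure, so that each $C_{k,\IO}$ is already a singleton at $s$ and at $f$ and \Cref{prop-io-classes} yields the claimed relations immediately. By contrast, the paper's three classes on their own only give $\IO^{s,f}\subseteq\{(\delta^i,\delta^j):i,j\ge 1\}$ and $\IO^{s,s}\subseteq\{i,j\ge 1\}\cup\{i,j\le 0\}$; the paper then invokes \Cref{lem-immutable} (stated just after the proof) to force $i=j$ and finish. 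So your route is slightly more self-contained, while the paper's is shorter to write down but relies on the separate immutability lemma.
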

\begin{proof} 
As in the proof of \Cref{lem-ndr-div2}, the graph of $\check_{>0}$ is reproduced below with some extra labels to aid in the proof.
\begin{center}
	\begin{tikzpicture} 
[point/.style={inner sep = 4pt, circle,draw,fill=white},
mpoint/.style={inner sep = 1.7pt, circle,draw,fill=black},
FIT/.style args = {#1}{rounded rectangle, draw,  fit=#1, rotate fit=45, yscale=0.5},
FITR/.style args = {#1}{rounded rectangle, draw,  fit=#1, rotate fit=-45, yscale=0.5},
FIT1/.style args = {#1}{rounded rectangle, draw,  fit=#1, rotate fit=45, scale=2},
vecArrow/.style={
		thick, decoration={markings,mark=at position
		   1 with {\arrow[thick]{open triangle 60}}},
		   double distance=1.4pt, shorten >= 5.5pt,
		   preaction = {decorate},
		   postaction = {draw,line width=0.4pt, white,shorten >= 4.5pt}
	},
myptr/.style={decoration={markings,mark=at position 1 with %
    {\arrow[scale=3,>=stealth]{>}}},postaction={decorate}}
]

\begin{scope} [xscale=0.75, yscale=0.3, yshift = -12cm]      
	\node [anchor=west] at (-1.5, 3) {Operation $\check_{>0} (\mathbf{a})$};
	\node (start) at (0,0) [point] {$s$};
	\node (n2) at (4, 0) [point] {$q_1$};
	\node (n3) at (11, 0) [point] {$q_2$};
	\node (finish) at (15, 0) [point] {$f$};
	
	\draw [->] (start) to node[midway, above]{
		$\begin{gathered}
		 [\mathbf{b} = 1]
		\end{gathered}$
	} node[midway, below] {$e_1$} (n2);

	\draw [->] (n2) to node[midway, above]{
		$\begin{gathered}
		 \cpy^\delta(\mathbf{a}, \mathbf{b})
		\end{gathered}$
	} node[midway, below] {$e_2$}  (n3);

	\draw [->] (n3) to node[midway, above]{
		$\begin{gathered}
		 [\mathbf{b} =\delta] \\[-1ex]
		 [\mathbf{b} \rightarrow \delta^{-1}\mathbf{b}]
		\end{gathered}$
	} node[midway, below] {$e_4$} (finish);

	\draw [->] (n3) edge[loop, out=-135, in = -45, looseness=20] node[pos=0.5, below]{
		$\begin{gathered}
		 \textrm{div$2$ } (\mathbf{b})
		\end{gathered}$
	} node[midway, above] {$e_3$}  (n3);
\end{scope}

\end{tikzpicture}	
\end{center}

Define the subsets $C_1,C_2,C_3\subseteq\GL_{\ch>0}\times S_{\ch>0}$ by:
\begin{align*}
	C_1^s=C_1^{q_1}=C_1^f&=\{(\delta^i,1)\mid i\geq1\} \\
	C_1^{q_2}&=\{(\delta^i,\delta^j)\mid i,j\geq1\} \\
	C_2^s = C_2^{q_1} &= \emptyset, \\
	C_2^{q_2} &= \left\{(\delta^i, \delta^j)~|~i \leq 0, j  \geq 1\right\}, \\
	C_2^f &= \left\{(\delta^i, 1)~|~i \leq 0\right\} \\
	C_3^s=C_3^{q_1}&=\{(\delta^i,1)\mid i\leq0\} \\
	C_3^{q_2}&=\{(\delta^i,\delta^j)\mid i,j\leq0\} \\
	C_3^f&=\emptyset
\end{align*}

As in the proof of \Cref{lem-ndr-div2}, to show that each $C_j$ is closed under $\IO_e$ for all $e\in E_{\ch>0}$, it suffices to show that it is closed under $\IO_{e_k}$ for all $k\in\overline{4}$.  %Checking such closure is straightforward in much the same way as (1)$-$(6) in that setting.  As an example, the closure of $C_1$ and $C_2$ under $\IO_{e_3}$ 

For all $m\in\Z$, let $\widehat{m}\in\Z$ and $r_m\in\{0,1\}$ be the unique integers satisfying $m=2\widehat{m}-r_m$.  Then, define the set $D(m)=\bigl\{\widehat{m},2\widehat{m}-\a,4\widehat{m}-2r_m-\a\mid \a\in\{0,1\}\bigr\}$.  Note that $D(m)\subseteq\N-\{0\}$ if and only if $m\in\N-\{0\}$.

By the definition of the $\IO$-relation on edges, $(w_1,w_2)\in\IO_{e_3}^{s_1,s_2}$ if and only if $s_1=s_2=q_2$, $w_1(a)=w_2(a)$, and $(w_1(b),w_2(b))\in\IO_\dt^{\sf_1,\sf_2}$ for some $\sf_1,\sf_2\in\{s,f\}$.  Hence, for $w_1=(\delta^n,\delta^m)$, \Cref{lem-ndr-div2} implies that $(w_1,w_2)\in\IO_{e_3}^{q_2,q_2}$ if and only if $w_2=(\delta^n,\delta^\ell)$ for some $\ell\in D(m)$.  Thus, each $C_j$ is closed under $\IO_{e_3}$.

The closure of each $C_j$ under $\IO_{e_k}$ for all other $k$ follows from analogous arguments.  For example, for $k=2$, appealing to \Cref{lem-io-cpy} in place of \Cref{lem-ndr-div2} produces a similar proof.

Moreover, note that $$\ND^1_{\ch>0}\times\SF_{\ch>0}=\{((\delta^i,1),\sf)\mid i\in\Z, \ \sf\in\SF_{\ch>0}\}=\bigcup_{j\in\overline{3}}(C_j^s\cup C_j^f)$$

Hence, \Cref{prop-io-classes} can be applied to $\{C_j\}_{j=1}^3$, so that $\IO_{\ch>0}\subseteq\bigcup_{j\in\overline{3}}\bigl(C_{j,\IO}\bigr)^2$.  As in the proof of \Cref{lem-ndr-div2}, the set $U=\bigcup_{j\in\overline{3}}\bigl(C_{j,\IO}\bigr)^2$ consists exactly of the pairs described in the statement, so that it suffices to exhibit input-output computations of $\check_{>0}$ which imply the reverse containment.

Note that for all $(w_1,s_1,w_2,s_2)\in U$ such that $s_1=s_2$, it holds that $w_1=w_2$, so that $(w_1,w_2)\in\IO_{\ch>0}^{s_1,s_2}$ by reflexivity.

Fix $m\geq1$ and let $\rho_k=e_1',e_2',(e_3')^k$ be the generic path in $\check_{>0}$ given by taking $e_i'=(\s,\f,e_i)$ and $(e_3')^k$ represents the generic path $\underbrace{e_3',\dots,e_3'}_{k\text{ times}}$.  

Then, by Lemmas \ref{lem-io-cpy} and \ref{lem-ndr-div2}, $\rho_k$ supports a computation of the form:
\begin{align*}
	\bigl((\delta^m, 1, (\ch_{>0})_s), e_1', (\delta^m, 1, q_1), &e_2' ,(\delta^m, \delta^m, q_2), e_3', (\delta^m, \delta^{\ell_1}, q_2), e_3', \dots e_3', (\delta^m, \delta^{\ell_k}, q_2)\bigr)
\end{align*}
where, setting $\ell_0=m$, $\ell_i= \lceil \ell_{i-1}/2 \rceil$ for all $i\in\overline{k}$.  

So, setting $r=\lceil\log_2(m) \rceil$, $1 \leq \ell_i \leq 2^{\max(r - i, 0)}$ for all $i\in\overline{k}$. In particular, assuming $k\geq r$, $\rho_k$ supports a computation between $(\delta^m,1,(\check_{>0})_s)$ and $(\delta^m,\delta,q_2)$.  

But $((\delta^m,\delta),(\delta^m,1))\in\IO_{e_4}^{q_2,f}$.  Hence, for $e_4'=(\s,\f,e_4)$, the generic path $\rho_r,e_4'$ supports an input-output computation $\eps_m$ between $(\delta^m,1,(\check_{>0})_s)$ and $(\delta^m,1,(\check_{>0})_f)$.  

Thus, $(\delta^m,\delta^m)\in\IO_{\ch>0}^{s,f}$, so that $U\subseteq\IO_{\ch>0}$ by symmetry.

\end{proof}

While the proof of \Cref{lem-ndr-checkstar} presented above is similar to that presented for \Cref{lem-ndr-div2}, it can be streamlined with a simple observation:

\begin{lemma}\label{lem-immutable}

Let $\Obj\in\OBJ^*$, $\op\in\Obj$, and $i\in\I_\op^\ext$.  Suppose that for all $\act\in\ACT_\op\cup\ACT_\op^0$, $\act$ either does not mention $i$ or mentions $i$ as an immutable variable.  Then $i$ is immutable in $\op$.

\end{lemma}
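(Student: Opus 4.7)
\medskip

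\noindent\emph{Proof plan.} The plan is to reduce the statement to an edge-by-edge analysis using \Cref{lem-reach-rst}. Namely, it suffices to prove that for every edge $e \in E_\op$ and every $(v_1, t_1, v_2, t_2) \in \IO_e$, one has $v_1(i) = v_2(i)$. Once this is shown, \Cref{lem-reach-rst} gives that $\REACH_\op$ is the rst-closure of $\bigcup_{e \in E_\op} \IO_e$, so the $i$-coordinate is preserved across every step of any computation in $\G_\op$. Combined with the definition of $\IO_\op$ as the restriction of $\REACH_\op^{\SF_\op}$ to $\ND_\op^1 \times \SF_\op$, this immediately yields $w_1(i) = w_2(i)$ for any $(w_1, s_1, w_2, s_2) \in \IO_\op$, i.e., $i$ is immutable in $\op$.

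The case analysis will be on the label $\act = \lab(e)$. First, if $\act$ does not mention $i$: when $\act \in \ACT_\op^0$, no fragment of $\act$ mentions $i$, so the transformation component of $\act$ acts trivially on the $i$-coordinate; when $\act \in \ACT_\op$ is positive, clause (i) in the definition of $\mathrm{LIO}_\act$ gives $v_1(\I_\G - \I_\act) = v_2(\I_\G - \I_\act)$, and since $i \notin \I_\act$, this yields $v_1(i) = v_2(i)$. Note here that $i \in \I_\op^\ext$ cannot equal an index of $\I_\inst^\val$ for any $\inst \in \INST_\op$, because \Cref{def-s1-op} forces $\I_\inst^\val \subseteq \I_\op^\val \sqcup \I_\op^\int$, which is disjoint from $\I_\op^\ext$. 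Consequently, whenever $\act$ mentions $i$, it must mention it as a variable.

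The main case is when $\act$ mentions $i$ as an immutable variable. For $\act \in \ACT_\op^0$, the definition of immutable variable at degree 0 says every fragment of $\act$ mentioning $i$ is a guard fragment or a trivial transformation fragment; none of these alter the $i$-coordinate under application, so $v_1(i) = v_2(i)$. For $\act = (\inst, \op', \mu_\act^\ext) \in \ACT_\op$ positive, the index $j = (\mu_\act^\ext)^{-1}(i) \in \I_{\op'}^\ext$ is by definition immutable in $\op'$, and $\op'$ has strictly smaller degree than $\op$. Then clause (ii) of the definition of $\mathrm{LIO}_\act$ gives $(\mu_\act^{-1}(v_1(\I_\act)), \mu_\act^{-1}(v_2(\I_\act))) \in \IO_{\op'}$, and immutability of $j$ in $\op'$ translates this back through $\mu_\act^\ext$ to $v_1(i) = v_2(i)$.

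The only mild obstacle is that the positive-action case invokes immutability of a tape in a sub-operation of lower degree, so the proof is naturally framed as induction on the degree of $\op$. The base case $n = 0$ is covered entirely by the fragment-level analysis of the previous paragraph, and the inductive step uses the inductive hypothesis on $\op'$ to validate the positive-action subcase. No further computation with $\REACH$ or $\IO$ beyond the invocation of \Cref{lem-reach-rst} is needed.
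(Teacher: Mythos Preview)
Your argument is correct and follows the same edge-by-edge strategy as the paper: show that each $\IO_e$ preserves the $i$-coordinate, then propagate along computations (you cite \Cref{lem-reach-rst}; the paper simply walks along the computation directly). Your explicit check that $i\in\I_\op^\ext$ cannot lie in any $\I_\inst^\val$ (via \Cref{def-s1-op}) is a detail the paper leaves implicit.

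One remark: the induction on degree is unnecessary. When $\act=(\inst,\op',\mu_\act^\ext)$ mentions $i$ as an immutable variable, the immutability of $j=(\mu_\act^\ext)^{-1}(i)$ in $\op'$ is precisely the \emph{definition} of that phrase (\Cref{def-s-action}) and is therefore part of the hypothesis of the lemma --- you are not applying the present lemma to $\op'$, so there is no inductive step to validate.
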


\begin{proof}

By hypothesis, for any generic edge $e'$ of $\op$ and $t_1,t_2\in\{t(e'),h(e')\}$, $(u_1',u_2')\in\IO_{e'}^{t_1,t_2}$ implies that $u_1'(i)=u_2'(i)$.  So, for any computation of $\op$ between two configurations $(w_1',s_1')$ and $(w_2',s_2')$, $w_1'(i)=w_2'(i)$.

Thus, since the relation $(w_1,w_2)\in\IO_\op^{s_1,s_2}$ implies the existence of a computation between $(w_1\sqcup1_\op^\int,s_1)$ and $(w_2\sqcup1_\op^\int,s_2)$, $i$ must be immutable in $\op$.

\end{proof}

So, as the only edge of $\check_{>0}$ mentioning $a$ is $e_2$ and \Cref{lem-io-cpy} implies that $\lab(e_2)$ mentions $a$ as an immutable variable, $\IO_{\ch>0}^{s_1,s_2}\subseteq\Id$ for all $s_1,s_2\in\SF_{\ch>0}$.  Reflexivity then immediately implies that $\IO_{\ch>0}^{s,s}=\IO_{\ch>0}^{f,f}=\Id$, the existence of $\eps_m$ implies that $(\delta^m,\delta^m)\in\IO_{\ch>0}^{s,f}$ for all $m\geq1$, and the closure of $C_2$ under $\IO_{e_k}$ for all $k\in\overline{4}$ implies that $(\delta^m,\delta^m)\notin\IO_{\ch>0}^{s,f}$ for all $m\leq0$.

Such an argument is useful for streamlining the calculation of the $\IO$-relation of an operation, though can only be employed in certain circumstances.

Finally, with $\check_{>0}$ and $\dt$ fully understood, attention returns to the object Counter$_1$ depicted in \Cref{fig-COUNTER2}.  Similar to the presentational convention for $\check_{>0}$, when used as a subscript, Counter$_1$ is denoted $\Cntr_1$.

The notation Counter$_1[a]$ indicates that $\I_{\Cntr_1}^\val=\{a\}$.  Further, the notations $\check()$ and $\inc()$ indicate that $\I_\check^\ext=\I_\inc^\ext=\emptyset$. Finally, the only index mentioned by an edge of either $\check$ or $\inc$ is $a$, so that $\I_\check^\int=\I_\inc^\int=\emptyset$.  Hence, condition (O) is satisfied trivially by both $\inc$ and $\check$.

As $\A_{\ch>0}(a)=\{\delta\}$, the only letter of an alphabet in $\A^*$ mentioned on an edge of $\check$ or $\inc$ is $\delta$.  Thus, $\A_{\Cntr_1}(a)=\A_\check(a)=\A_\inc(a)=\{\delta\}$.

\begin{lemma}\label{lem-counter-is-counter}
	The object $\textrm{Counter}_1[a]$ is a counter.
	
\end{lemma}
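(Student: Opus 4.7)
The proof follows the blueprint established by \Cref{lem-ndr-checkstar}: apply \Cref{prop-io-classes2} to pin down $\VAL_{\Cntr_1}$, then use \Cref{prop-io-classes} together with a handful of explicit input-output computations to determine $\IO_\check$ and $\IO_\inc$. I expect the argument to proceed in three steps, concluding with the enumeration $v_k = \delta^k$ to match \Cref{def-counter-star}.

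First, I would show $\VAL_{\Cntr_1} = V$ where $V = \{\delta^k \mid k \geq 0\}$. Since $1 \in V \cap \VAL_{\Cntr_1}$, by \Cref{prop-io-classes2} it suffices to give, for each $\op \in \Cntr_1$, a family $\C_\op$ of subsets of $\GL_\op \times S_\op$ closed under $\IO_e$ for every edge $e$ of $\op$ and with $\bigcup \C_\op^{\SF_\op} = \widehat V_\op \times \SF_\op$. The single class $C_\op = \{(w,s) \in \GL_\op \times S_\op \mid w(a) \in V\}$ suffices: every edge labelled by $\check_{>0}(\mathbf{a})$ preserves the $a$-tape content by \Cref{lem-ndr-checkstar}, and the degree-$0$ edge of $\inc$ with action $[\mathbf{a} \to \delta \mathbf{a}]$ sends $V$ into $V$. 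The reverse inclusion $V \subseteq \VAL_{\Cntr_1}$ is witnessed by exhibiting, for each $k \geq 0$, an input-output computation of $\inc$ from $(\delta^k, \inc_s)$ to $(\delta^{k+1}, \inc_f)$, obtained by traversing the generic path $(\s, \f, e_1), (\s, \f, e_2)$, where $e_1, e_2$ are the two edges of $\inc$, and invoking $(\delta^{k+1}, \delta^{k+1}) \in \IO_{\check_{>0}}^{s,f}$ from \Cref{lem-ndr-checkstar}.

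Second, for $\IO_\check$: since $\check$ has a single edge labelled by $\check_{>0}(\mathbf{a})$, \Cref{lem-immutable} combined with \Cref{lem-ndr-checkstar} shows that $a$ is immutable in $\check$, yielding $\IO_\check^{s,s} = \IO_\check^{f,f} = \Id$ by reflexivity and forcing every element of $\IO_\check^{s,f}$ to have equal $a$-tape content. This content must be $\delta^i$ with $i \geq 1$, since any generic path from $\check_s$ to $\check_f$ must traverse the edge in a direction requiring the relation $\IO_{\check_{>0}}^{s,f}$ or $\IO_{\check_{>0}}^{f,s}$, and every such pair is realized by the single-step generic computation.

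Third, for $\IO_\inc$, I would apply \Cref{prop-io-classes} with the family $\{C_k\}_{k \geq 0} \cup \{C_\infty\}$ defined by $C_k^s = \{(\delta^k)\}$, $C_k^q = C_k^f = \{(\delta^{k+1})\}$, $C_\infty^s = C_\infty^q = \emptyset$, and $C_\infty^f = \{(1)\}$. Closure of each $C_k$ under $\IO_{e_1}$ follows from left-multiplication by $\delta$; closure under $\IO_{e_2}$ follows from \Cref{lem-ndr-checkstar} since $\delta^{k+1}$ has exponent at least $1$; and $C_\infty$ is closed vacuously. The union covers $\ND_\inc^1 \times \SF_\inc$, so \Cref{prop-io-classes} yields $\IO_\inc^{s,f} \subseteq \{(\delta^k, \delta^{k+1}) \mid k \geq 0\}$ and $\IO_\inc^{s,s}, \IO_\inc^{f,f} \subseteq \Id$, with the reverse inclusions supplied by reflexivity and the computations from Step 1. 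The main subtlety, and the step most easily overlooked, is the auxiliary class $C_\infty$: the configuration $(1, \inc_f)$ lies in $\ND_\inc^1 \times \SF_\inc$ but is isolated under every $\IO_e$, since neither $\IO_{\check_{>0}}^{s,f}$ nor $\IO_{\check_{>0}}^{f,s}$ relates $1$ to anything, so it must be separated into its own singleton $\IO$-class rather than being lumped with the $C_k$; once this is recognized, everything else is routine.
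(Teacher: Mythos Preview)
Your overall plan and Steps 2--3 are correct and mirror the paper's argument closely; your family $\{C_k\}_{k\ge 0}\cup\{C_\infty\}$ for $\inc$ is exactly the paper's $\{C_{j,\inc}\}_{j\ge -1}$, and your handling of the isolated configuration $(1,\inc_f)$ is on the mark. There is, however, a genuine gap in Step 1 for $\inc$: the single class $C_\inc = V\times S_\inc$ is \emph{not} closed under $\IO_{e_1}$ in the biconditional sense that \Cref{prop-io-classes2} requires. The degree-$0$ action $[\mathbf{a}\to\delta\mathbf{a}]$ acts on all of $\GL_\inc=F(\delta)$, so the pair $\bigl((\delta^{-1},\inc_s),(1,q)\bigr)$ lies in $\IO_{e_1}$, with $(1,q)\in C_\inc$ yet $(\delta^{-1},\inc_s)\notin C_\inc$. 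Your justification ``sends $V$ into $V$'' checks only one direction of the required equivalence. (For $\check$ the single class does work, since the lone edge preserves the $a$-coordinate exactly.)

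The repair is already in your hands: the finer family from Step 3 is closed under both $\IO_{e_1}$ and $\IO_{e_2}$ and has $\bigcup_k C_k^{\SF_\inc}\cup C_\infty^{\SF_\inc}=\widehat V_\inc\times\SF_\inc$, so it serves simultaneously for \Cref{prop-io-classes2} (to obtain $\VAL_{\Cntr_1}\subseteq V$) and for \Cref{prop-io-classes} (to bound $\IO_\inc$). This is precisely how the paper organizes things---build the finer families for both operations once, check closure once, then invoke both propositions in turn. One minor side remark on Step 2: \Cref{lem-immutable} is stated only for $i\in\I_\op^\ext$, whereas $a$ is a value tape of $\check$; the proof of that lemma goes through unchanged for any index, so your reasoning is sound, but the citation is a slight abuse.
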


\begin{proof}

	Let $V=\{\delta^i\}_{i=0}^\infty\subseteq F(\A_{\Cntr_1}(a))$. So, $\widehat{V}_\inc=\widehat{V}_\check=V$.
	
	Define the sets configurations $C_{i,\check}\subseteq\GL_\check\times S_\check$ for $i\in\overline{3}$ by:
	\begin{align*}
	C_{1,\check}^s=C_{1,\check}^f&=\{\delta^i\}_{i=1}^\infty \\
	C_{2,\check}^s=C_{3,\check}^f&=\emptyset \\
	C_{2,\check}^f=C_{3,\check}^s&=\{1\} \\
	\end{align*}
	By \Cref{lem-ndr-checkstar}, $C_{i,\check}$ is closed under $\IO_e$ for the unique $e\in E_\check$.  Moreover, 
	$$\bigcup_{i\in\overline{3}}C_{i,\check}^{\SF_\check}=\{(\delta^i,\sf)\mid i\geq0, \ \sf\in\SF_\check\}=\widehat{V}_\check\times\SF_\check$$
	
	Next, define the sets of configurations $C_{j,\inc}\subseteq\GL_\inc\times S_\inc$ for $j\geq0$ by:
	\begin{align*}
	C_{j,\inc}^s&=\{\delta^j\} \\
	C_{j,\inc}^q=C_{j,\inc}^f&=\{\delta^{j+1}\}
	\end{align*}
	Further, define $C_{-1,\inc}=\{(1,\inc_f)\}\subseteq\GL_\inc\times S_\inc$.
	
	Then, \Cref{lem-ndr-checkstar} again makes it straightforward to verify that for all $j\geq-1$, $C_{j,\inc}$ is closed under $\IO_e$ for all edges $e\in E_\inc$.  What's more, again
	\begin{align*}
	\bigcup_{j=-1}^\infty C_{j,\inc}^{\SF_\inc}&=\{(\delta^i,\sf)\mid i\geq0, \ \sf\in\SF_\inc\}=\widehat{V}_\inc\times\SF_\inc
	\end{align*}
	Hence, the hypotheses of \Cref{prop-io-classes2} are satisfied.  So, since $\delta^0=1_{\Cntr_1}^\val\in\VAL_{\Cntr_1}\cap V$, $\VAL_{\Cntr_1}\subseteq V$.
	
	Now, let $e_1$ and $e_2$ be the edges of $\inc$ such that $t(e_1)=\inc_s$, $h(e_1)=t(e_2)=q$, and $h(e_2)=\inc_f$.  Then, let $e_k'=(\s,\f,e_k)$ for $k=1,2$.
	
	It follows that for any $i\geq0$, the generic path $e_1',e_2'$ in $\inc$ supports an input-output computation $\eps_i$ of $\inc$ between $(\delta^i,\inc_s)$ and $(\delta^{i+1},\inc_f)$.  So, there exists an edge of $\G_{\Cntr_1}^\val$ between $\delta^i$ and $\delta^{i+1}$ corresponding to $\eps_i$.  As a result, for any $\delta^n\in V$ with $n\geq1$, there exists a path in $\G_{\Cntr_1}^\val$ between $1_{\Cntr_1}^\val$ and $\delta^n$ with path corresponding to $\eps_0,\dots,\eps_{n-1}$.  Thus, $V=\VAL_{\Cntr_1}$.
	
	Now, fix the natural enumeration $\VAL_{\Cntr_1}=\{v_i\}_{i=0}^\infty$ given by $v_i=\delta^i$ for all $i\in\N$.  By \Cref{lem-ndr-checkstar}, it follows immediately that $\IO_\check^{s,s}=\IO_\check^{f,f}=\Id$ and $\IO_\check^{s,f}=\{(v_i,v_i)\mid i\geq1\}$.
	
	Further, note that $$\ND_\inc\times\SF_\inc=\VAL_\inc\times\SF_\inc=\bigcup_{j=-1}^\infty C_{j,\inc}^{\SF_\inc}=\bigcup_{j=-1}^\infty\bigl(C_{j,\inc}\bigr)_\IO$$
	So, since each $C_{j,\inc}$ is closed under $\IO_e$ for each $e\in E_\inc$, $\{C_{j,\inc}\}$ satisfies the hypotheses of \Cref{prop-io-classes}, i.e $\IO_\inc\subseteq\bigcup\bigl(C_{j,\inc}\bigr)_\IO^2=\bigcup\bigl(C_{j,\inc}^{\SF_\inc}\bigr)^2$.
	
	For any $j\geq-1$, $C_{j,\inc}^s$ and $C_{j,\inc}^f$ are at most singletons.  Hence, by reflexivity, $\IO_\inc^{s,s}=\IO_\inc^{f,f}=\Id$.
	
	What's more, for $j\geq0$, $C_{j,\inc}^s\times C_{j,\inc}^f=\{(v_j,v_{j+1})\}$, while $C_{-1,\inc}^s\times C_{-1,\inc}^f=\emptyset$.  Hence, the computations $\eps_i$ described above imply $\IO_\inc^{s,f}=\{(v_i,v_{i+1})\mid i\geq0\}$.
	
	Thus, the values of Counter$_1$ and the $\IO$-relations of $\check$ and $\inc$ match the requirements for the definition of a counter.
		
\end{proof}

\end{example}

%============================================================================
%============================================================================
%============================================================================
%============================================================================
\section{Expansions}\label{sec-expansions}

As the proof of \Cref{main-theorem} requires the construction of an $S$-machine satisfying certain properties, it can be inferred from \Cref{th-MG} that it suffices to construct an $S^*$-graph $\G$ satisfying some other properties (these will be discussed in detail in \Cref{sec-acceptors}).  However, in order to apply the correspondence of \Cref{th-MG}, $\G$ must be an $S$-graph of degree 0.  Hence, while $S$-graphs of higher degree can be understood as efficient tools, some connection must be established between them and $S$-graphs of degree 0.  This gap is bridged by a process called \emph{expansion}.

The expansion of $S^*$-graphs, operations, and objects are defined iteratively with respect to degree.  To aid with the terminology of this construction, a more general process is first defined.

\begin{definition}\label{def-E-mapping}

Fix $n\geq0$ and suppose that the triple of mappings $\E^n_G:\SGRAPH_n'\to\SGRAPH_0$, $\E_\OP^n:\OP_n'\to\OP_0$, and $\E_\OBJ^n:\OBJ_n'\to\OBJ_0$ satisfy the following properties:

\begin{enumerate}

\item Let $\G=(S_\G,E_\G)$ be an $S$-graph of degree at most $n$ over $(\H_\G,\INST_\G)$ for $\H_\G=(\I_\G,\A_\G)$.  Then $\E^n_G(\G)=(S_{\G_\E},E_{\G_\E})$ is an $S$-graph (of degree 0) over $\H_{\G_\E}=(\I_{\G_\E},\A_{\G_\E})$ such that $S_\G\subseteq S_{\G_\E}$, $\H_\G\leq\H_{\G_\E}$, and $\A_{\G_\E}(i)=\A_\G(i)$ for all $i\in\I_\G$.

\item For every operation $\op$ of degree at most $n$, $\E_\OP^n(\op)=\op_\E$ is an operation of degree 0 such that $\G_{\op_\E}=\E_G^n(\G_\op)$, $\I_{\op_\E}^\ext=\I_\op^\ext$, $\I_{\op_\E}^\val=\I_\op^\val$, $(\op_\E)_s=\op_s$, and $(\op_\E)_f=\op_f$.

\item For every object $\Obj=(\I_\Obj^\val,\INST_\Obj,\op_1,\dots,\op_k)$ of degree at most $n$, $\E_\OBJ^n(\Obj)$ is an object of degree 0 such that $\E_\OBJ^n(\Obj)=(\I_\Obj^\val,\E_\OP^n(\op_1),\dots,\E_\OP^n(\op_k))$.

\end{enumerate}
Then the triple $(\mathcal{E}_G^n,\E_\OP^n,\E_\OBJ^n)$ is called a \emph{generalized $n$-expansion mapping}.

\end{definition}

Now, define $\EXP_G^0$, $\EXP_\OP^0$, and $\EXP_\OBJ^0$ as the identity maps on $\SGRAPH_0$, $\OP_0$, and $\OBJ_0$, respectively.  Note that $(\EXP_G^0,\EXP_\OP^0,\EXP_\OBJ^0)$ is a generalized $0$-expansion mapping.

\begin{definition}\label{def-graph-expansion}

Fix $n\geq0$ and suppose that $(\EXP_G^n,\EXP_\OP^n,\EXP_\OBJ^n)$ is a generalized $n$-expansion mapping.  

Let $\G=(S_\G,E_\G)$ be an $S$-graph of degree $n+1$ over $(\H_\G,\INST_\G)$ with $\H_\G=(\I_\G,\A_\G)$.  Define $E_\G^0$ as the subset of edges of $E_\G$ given by $e\in E_\G^0$ if and only if $\lab(e)\in\ACT_\G^0$.  

For $e\in E_\G-E_\G^0$, set $\lab(e)=(\inst_e,\op_e,\mu_{\lab(e)}^\ext)$.  Then, since $\op_e\in\OP_n'$, there exists an operation $\op_e^\exp=\EXP_\OP^n(\op_e)$.  Fix an injection $\mu_e^\int:\I_{\op_e^\exp}^\int\to\I^*$ whose image $\I_e^\int$ is disjoint from $\I_\G$.  Further, suppose that $\I_{e_1}^\int\cap\I_{e_2}^\int=\emptyset$ for distinct $e_1,e_2\in E_\G-E_\G^0$.  Then, $\mu_e^\int$ is called the \emph{internal tape renaming map} of $e$.  

Let $\I_\G^\inv=\bigsqcup\limits_{e\in E_\G-E_\G^0}\I_e^\int$ and define the hardware $\H_\G^\exp=(\I_\G^\exp,\A_\G^\exp)$ given by $\I_\G^\exp=\I_\G\sqcup\I_\G^\inv$, $\A_\G^\exp(i)=\A_\G(i)$ for all $i\in\I_\G$, and $\A_\G^\exp(i)=\A_{\op_e^\exp}((\mu_e^\int)^{-1}(i))$ for all $i\in\I_e^\int$ with $e\in E_\G-E_\G^0$.  Note that $\H_\G\leq\H_\G^\exp$.  The subset $\I_\G^\inv=\I_\G^\exp-\I_\G$ is called the set of \emph{invisible tapes} of $\H_\G^\exp$.

To keep the notation uniform, for any $\G_0\in\SGRAPH_0$, the set of invisible tapes of $\G_0$ is taken to be $\I_{\G_0}^\inv=\emptyset$.

Now, define the \emph{edge tape renaming map} of $e$ to be $\mu_e=\mu_{\lab(e)}^\ext\sqcup\mu_{\inst_e}^\val\sqcup\mu_e^\int$.  Let $\H_e=(\I_e,\A_e)$ be the hardware satisfying $\H_e\leq\H_\G^\exp$ and $\I_e=\I_{\lab(e)}\sqcup\I_e^\int$, i.e so that $\I_e$ is the image of $\mu_e$.  Then, let $\G_e$ be the copy of $\G_{\op_e^\exp}$ obtained by applying $\mu_e$, i.e by altering the fragments defining the action labelling any edge by replacing any index $i$ with $\mu_e(i)$.  Hence, $\G_e$ is an $S$-graph (of degree 0) over the hardware $\H_e$ whose vertex set is identified with that of $\op_e^\exp$.  However, as $\H_e\leq\H_\G^\exp$, $\G_e$ is interpreted as an $S$-graph over $\H_\G^\exp$.

Construct the $S$-graph $\G_\exp=(S_\G^\exp,E_\G^\exp)$ (of degree 0) over $\H_\G^\exp$ as follows:

\begin{itemize}

\item $S_\G\subseteq S_\G^\exp$

\item For any $e=(s_1,s_2,\act)\in E_\G^0$, construct the edge $e'=(s_1,s_2,\act')\in E_\G^\exp$, where $\act'$ is the interpretation of $\act$ as an action over $\H_\G^\exp$.% (i.e $\act_0'=\A_{\H_\G^\exp}(\F_{\H_\G}(\act_0))$).

\item For any $e=(s_1,s_2,\act)\in E_\G-E_\G^0$, paste $\G_e$ into the graph by identifying the starting and finishing states with $s_1$ and $s_2$, respectively.  The subgraph of $\G_\exp$ obtained from this pasting is denoted $\G_e'$.

\end{itemize}
Then define $\EXP_G^{n+1}(\G)=\G_\exp$.

Finally, extend this to $\EXP_G^{n+1}:\SGRAPH_{n+1}'\to\SGRAPH_0$ by setting $\EXP_G^{n+1}\pr{\SGRAPH_n'}=\EXP_G^n$.

\end{definition}

From the construction of $\G_\exp=\EXP_G^{n+1}(\G)$ for $\G\in\SGRAPH_{n+1}$, each edge of $E_\G^0$ corresponds to an edge of $E_\G^\exp$.  This corresponding edge of $E_\G^\exp$ is called a \emph{proper} edge of $\G_\exp$.  For any edge $e'\in E_\G^\exp$ which is not proper, there exists $e\in E_\G-E_\G^0$ such that $e'$ is an edge of $\G_e'$.  In this case, $e'$ is called an \emph{expanded edge of $e$}.

Note that the previous observation verifies the claim that $\G_\exp\in\SGRAPH_0$:  For any edge $e'=(s_1,s_2,\act)\in E_\G^\exp$,

\begin{itemize}

\item $\act\in\ACT_{\H_\G^\exp}$ by construction;

\item if $e'$ is proper, then there exists a proper edge $(s_2,s_1,\act^{-1})\in E_\G^\exp$ by condition (4) of \Cref{def-s1-graph}; and

\item if $e'\in\G_e'$, then it arises from an edge of $\G_e$ which must have an inverse in $\G_e$ by \Cref{def-sgraph}, and so $(e')^{-1}\in E_\G^\exp$.

\end{itemize}

For any $s\in S_\G^\exp-S_\G$, there exists $e\in E_\G-E_\G^0$ such that $s$ arises from a state of $\G_e$.  In this case, $s$ is called an \emph{expanded state of $e$}.  All other elements of $S_\G^\exp$ are called \emph{proper states}. 

Let $\G'$ be the graph obtained from $\G_\exp$ by removing all proper edges.  Then $\G'$ consists of isolated proper states and the connected subgraphs $\G_e'$ which are in correspondence with $E_\G-E_\G^0$.  Note that for any distinct $e_1,e_2\in E_\G-E_\G^0$, $\G_{e_1}'\cap\G_{e_2}'\subseteq S_\G$.

Moreover, if $e=(s_0,s_0,\act)\in E_\G-E_\G^0$ is a loop, then $\G_e'$ is not isomorphic to $\G_e$, as the vertices corresponding to the starting and finishing states of $\op_e$ are pasted together.  Then, there exists a surjection $\pi_e:\G_e\to\G_e'$ preserving states and edges whose restriction to $\G_e-\SF_{\op_e}$ is a label-preserving bijection satisfying $\pi_e((\op_e)_s)=\pi_e((\op_e)_f)=s_0$.

For uniformity, if $e=(s_1,s_2,\act)\in E_\G-E_\G^0$, then define $\pi_e:\G_e\to\G_e'$ as the label-preserving bijection given by sending the starting and finishing vertices of $\op_e$ to $s_1$ and $s_2$, respectively.

\begin{lemma}\label{lem-op-e-paths}

Let $\G\in\SGRAPH_{n+1}$ and set $\G_\exp=\EXP_G^{n+1}(\G)$.  Let $e=(s_1',s_2',\act_e)\in E_\G-E_\G^0$ with $\act_e=(\inst_e,\op_e,\mu_{\act_e}^\ext)$.  Then for any path $\rho'=e_1',\dots,e_m'$ in $\G_e'$ such that $h(e_i')\notin\{s_1',s_2'\}$ for all $i\in\overline{m-1}$, the sequence of edges $\rho=e_1,\dots,e_m$ in $\G_e$ such that $\pi_e(e_j)=e_j'$ for all $j\in\overline{m}$ is a path such that $h(e_i)\notin\SF_{\op_e}$ for all $i\in\overline{m-1}$.  Moreover, $t(e_1)\in\SF_{\op_e}$ if and only if $t(e_1')\in\{s_1',s_2'\}$, $h(e_m)\in\SF_{\op_e}$ if and only if $h(e_m')\in\{s_1',s_2'\}$, and $\lab(e_j)=\lab(e_j')$ for all $j\in\overline{m}$.

In this case, $\rho$ is called the \emph{lift} of $\rho'$.

\end{lemma}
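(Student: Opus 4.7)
The approach is to exploit the fact that $\pi_e:\G_e\to\G_e'$, though not injective on vertices (since $(\op_e)_s$ and $(\op_e)_f$ may be identified when $e$ is a loop), is nonetheless an edge-bijection: pasting identifies vertices only, never edges. So each $e_j'\in E(\G_e')$ has a unique preimage $e_j\in E(\G_e)$ under $\pi_e$, and because $\pi_e$ is label-preserving, $\lab(e_j)=\lab(e_j')$ follows immediately for each $j\in\overline{m}$.

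The key step is then to verify that $\rho=e_1,\dots,e_m$ is an actual path, i.e.\ that $h(e_i)=t(e_{i+1})$ for $i\in\overline{m-1}$. Because $\pi_e$ preserves heads and tails, we have $\pi_e(h(e_i))=h(e_i')=t(e_{i+1}')=\pi_e(t(e_{i+1}))$. By hypothesis the common image $h(e_i')$ lies outside $\{s_1',s_2'\}$, and since $\pi_e^{-1}(\{s_1',s_2'\})=\SF_{\op_e}$, both $h(e_i)$ and $t(e_{i+1})$ lie in $S_{\op_e}-\SF_{\op_e}$, where $\pi_e$ is a bijection on vertices. Therefore $h(e_i)=t(e_{i+1})$, and moreover this common vertex is not in $\SF_{\op_e}$, which gives the interior condition in the statement.

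The endpoint equivalences drop out from the same preimage description: $t(e_1)\in\SF_{\op_e}$ if and only if $\pi_e(t(e_1))=t(e_1')\in\{s_1',s_2'\}$, and similarly for $h(e_m)\in\SF_{\op_e}$ versus $h(e_m')\in\{s_1',s_2'\}$. No separate argument is needed.

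I do not expect a serious obstacle here; the only mild subtlety is the case where $e$ is a loop, so that $s_1'=s_2'$ and $\pi_e^{-1}(s_1')=\SF_{\op_e}$ is a two-element set. Even in this case the edge map $\pi_e:E(\G_e)\to E(\G_e')$ remains a bijection, so the choice of each $e_j$ is forced, and the matching of consecutive heads and tails at an interior vertex of $\rho'$ (which by hypothesis avoids $s_1'=s_2'$) still takes place inside the bijective part of $\pi_e$. Thus the lemma reduces to careful bookkeeping of the vertex preimages of $\pi_e$ together with its edge-bijectivity and label-preservation.
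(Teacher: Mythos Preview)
Your proposal is correct and follows essentially the same approach as the paper: both use that $\pi_e$ is a label-preserving bijection on edges together with the fact that $\pi_e$ is injective on vertices outside $\SF_{\op_e}$, so that the hypothesis $h(e_i')\notin\{s_1',s_2'\}$ forces $h(e_i)=t(e_{i+1})$ at each interior step. If anything, your write-up is slightly more explicit than the paper's in justifying why the lifted sequence is actually a path (the paper asserts this rather tersely), but the argument is the same.
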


\begin{proof}

As $\pi_e$ is a label-preserving bijection between the edge sets of $\G_e$ and $\G_e'$, then letting $e_j=\pi_e^{-1}(e_j')$ for each $j\in\overline{m}$, $\rho=e_1,\dots,e_m$ is a path in $\G_e$ with $\lab(e_j)=\lab(e_j')$ for all $j\in\overline{m}$.

Further, for any vertex $s$ of $\G_e$, $\pi_e(s)\in\{s_1',s_2'\}$ if and only if $s\in\SF_{\op_e}$.  

Hence, for $i\in\overline{m-1}$, $h(e_i)\notin\SF_{\op_e}$ since $\pi_e(h(e_i))=h(e_i')$.  Moreover, $t(e_1)\in \SF_{\op_e}$ if and only if $\pi_e(t(e_1))=t(e_1')\in\{s_1',s_2'\}$, and similarly for $h(e_m)$ and $h(e_m')$.

\end{proof}

Let $\rho=e_1,\dots,e_N$ be a path in $\G_\exp$ with $s_0=t(e_1)$ and $s_i=h(e_i)$ for all $i\in\overline{N}$.  Suppose $s_0$ and $s_N$ are proper states.  Then a maximal subset $\overline{k,\ell}\subseteq\overline{N}$ such that $s_i$ is not proper for each $i\in\overline{k,\ell-1}$ is called a \emph{maximal expansion interval}.  There exists an ordering of the maximal expansion intervals $(\overline{k_1,\ell_1},\dots,\overline{k_t,\ell_t})$ such that $k_1=1$, $\ell_t=N$, and $k_{i+1}=\ell_i+1$ for all $i\in\overline{t-1}$.  Further, for each $n\in\overline{N}$, there exists a unique $j(n)\in\overline{t}$ such that $n\in\overline{k_{j(n)},\ell_{j(n)}}$.  By construction, for $m,n\in\overline{N}$, $m\leq n$ implies $j(m)\leq j(n)$.

For $i\in\overline{t}$, if $e_{k_i}$ is a proper edge, then $k_i=\ell_i$.  In this case, for $e$ the edge of $\G$ from which $e_{k_i}$ is constructed, define the generic edge of $\G$ $e_{k_i,\ell_i}'=(\s,\f,e)$.

Otherwise, there exists $e=(s_1',s_2',\act_e)\in E_\G-E_\G^0$ such that $e_n$ is an expanded edge of $e$ for each $n\in\overline{k_i,\ell_i}$.  The subpath $\rho_{k_i,\ell_i}=e_{k_i},\dots,e_{\ell_i}$ of $\rho$ is then called an \emph{expanded subpath} of $\rho$ and is a path in $\G_e'$.  Hence, $s_{k_i-1},s_{\ell_i}\in\{s_1',s_2'\}$.  If $s_1'\neq s_2'$, then define the generic edge $e_{k_i,\ell_i}'=(\sf_1,\sf_2,e)$ of $\G$ given by $\sf_1=\s$ if and only if $s_{k_i-1}=s_1'$ and $\sf_2=\s$ if and only if $s_{\ell_i}=s_1'$.  Conversely, if $s_1'=s_2'$, then let $e_{k_i,\ell_i}'=(\sf_1,\sf_2,e)$ such that $\sf_1$ and $\sf_2$ are any choices of $\s,\f$.

Then, the sequence of generic edges $\rho'=e_{k_1,\ell_1}',e_{k_2,\ell_2}',\dots,e_{k_{t-1},\ell_{t-1}}',e_{k_t,\ell_t}'$ is a generic path in $\G$ with $t(e_{k_i,\ell_i}')=s_{k_i-1}$ and $h(e_{k_i,\ell_i}')=s_{\ell_i}$ for all $i\in\overline{t}$.  In this case, $\rho'$ is called a \emph{generic reduction} of $\rho$.  Note that generic reductions are not unique: If $e$ is a loop in $\G$ and $\rho$ contains an expanded subpath $\rho_{k,\ell}$ contained in $\G_e'$, then the generic edge $e_{k,\ell}'$ of $\rho'$ can be given any orientation.

\begin{definition}\label{def-op-expansion}

Fix $n\geq0$ and suppose that $(\EXP_G^n,\EXP_\OP^n,\EXP_\OBJ^n)$ is a generalized $n$-expansion mapping.

For $\op\in\OP_{n+1}$, define $\G_\op^\exp=\EXP_G^{n+1}(\G_\op)$ as constructed in \Cref{def-graph-expansion}.  In particular, then $\G_\op^\exp=(S_\op^\exp,E_\op^\exp)$ is an $S$-graph of degree 0 over the hardware $\H_\op^\exp=(\I_\op^\exp,\A_\op^\exp)$ with $S_\op\subseteq S_\op^\exp$ and $\H_\op\leq\H_\op^\exp$.

Let $\I^\ext=\I_\op^\ext$, $\I^\val=\I_\op^\val$, and $\I^\int=\I_\op^\exp-(\I^\ext\sqcup\I^\val)=\I_\op^\int\sqcup\I_\op^\inv$.  Hence, $\I^\int$ contains all new (invisible) tapes of $\G_\op^\exp$.

\begin{lemma}\label{lem-op-expansion}

For $\op\in\OP_{n+1}$ as above, $\op_\exp=(\G_\op^\exp,\I^\ext,\I^\val,\I^\int,\op_s,\op_f)$ is an operation of degree 0.

\end{lemma}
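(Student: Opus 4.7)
The plan is to verify the two substantive conditions of Definition~\ref{def-operation} for $\op_\exp$ to be a degree-$0$ operation: connectedness of $\G_\op^\exp$ and condition~(O); the partition $\I_\op^\exp = \I^\ext \sqcup \I^\val \sqcup \I^\int$, the starting and finishing states, and the $S$-graph structure are immediate from Definition~\ref{def-graph-expansion}. Connectedness is essentially by construction: $\G_\op$ is connected since $\op$ is an operation, each proper edge is retained, and every edge $e \in E_\op - E_\op^0$ is replaced by the connected subgraph $\G_e' = \pi_e(\G_{\op_e^\exp})$ pasted at the endpoints of $e$, preserving connectivity.

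For condition~(O), fix a path $\rho = e_1, \dots, e_N$ in $\G_\op^\exp$ between $s_1, s_2 \in \{\op_s, \op_f\}$ and an index $i \in \I^\int$ mentioned by some edge of $\rho$; the argument will split depending on whether $i$ is invisible or originally internal to $\op$. If $i \in \I_\op^\inv$, there is a unique $e \in E_\op - E_\op^0$ with $i \in \I_e^\int$, and by disjointness of the various index sets only expanded edges within $\G_e'$ can mention $i$. I would decompose $\rho$ along its maximal expansion intervals; each subpath lying in $\G_e'$ lifts through $\pi_e$ via Lemma~\ref{lem-op-e-paths} to a label-preserving path in $\G_e = \G_{\op_e^\exp}$ between states of $\SF_{\op_e}$. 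Applying condition~(O) of $\op_e^\exp$ (which is a degree-$0$ operation by Definition~\ref{def-E-mapping}(2)) to $(\mu_e^\int)^{-1}(i) \in \I_{\op_e^\exp}^\int$, the first and last edges mentioning $i$ within each such subpath carry $[\mathbf{i}=1]$. Taking $l_1, l_2$ to be the positions in $\rho$ of the globally first and last edges mentioning $i$, the required guard conditions and the ``no outside mentions'' property both follow.

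If instead $i \in \I_\op^\int$, I would fix a generic reduction $\rho^{\mathrm{red}}$ of $\rho$ in $\G_\op$, flattened into a flattened path $\rho^{\mathrm{flat}}$ from $s_1$ to $s_2$ by replacing each generic edge with an appropriate edge, formal inverse, or pair thereof. Any edge of $\rho$ mentioning $i$ descends to an edge of $\rho^{\mathrm{flat}}$ mentioning $i$: for proper edges this is clear, and for expanded edges inside some $\G_e'$, since $i$ is not invisible its $\mu_e$-preimage must lie in $\I_{\op_e}^\ext \sqcup \I_{\op_e}^\val$, forcing the underlying $\act_e$ to mention $i$, and hence the ancestor edge in $\rho^{\mathrm{flat}}$ as well. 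Applying condition~(O) of $\op$ (as a degree-$(n+1)$ operation) to $\rho^{\mathrm{flat}}$ yields positions $l_1' \leq l_2'$ with labels in $\ACT_0$ carrying $[\mathbf{i}=1]$ and no outside mention of $i$. Since any edge of $\G_\op$ with a degree-$0$ label corresponds to a proper edge of $\G_\op^\exp$ bearing the same label, the positions $l_1, l_2 \in \overline{N}$ of these proper edges in $\rho$ supply the required guards, and the ``no outside mentions'' clause transfers back via the same ancestor correspondence, since an expansion interval lying before $l_1$ or after $l_2$ in $\rho$ contracts to a generic edge sitting outside $\overline{l_1', l_2'}$ in $\rho^{\mathrm{flat}}$.

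The main obstacle will be the careful bookkeeping between $\rho$, its generic reduction, and the resulting flattened path, maintaining a precise account of which edges mention which indices and how guard fragments survive expansion; the ambiguity of generic reductions at loops and the double-duty of flattened paths that duplicate a generic edge both require some attention. The argument fundamentally relies on the fact that condition~(O) for a higher-degree operation demands guard fragments on edges labelled by degree-$0$ actions, since only those are guaranteed to translate cleanly to proper edges of $\G_\op^\exp$ with identical labels; without this refinement in the statement of~(O), the induction would not close.
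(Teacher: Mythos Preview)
Your proposal is correct and follows essentially the same approach as the paper's proof: both split into the invisible case (handled by lifting the relevant expansion interval(s) via Lemma~\ref{lem-op-e-paths} and applying condition~(O) of the degree-$0$ operation $\op_e^\exp$) and the originally-internal case (handled via a generic reduction of $\rho$, its flattening, and condition~(O) for $\op$ as a degree-$(n+1)$ operation). The paper streamlines by using the ``minimal $m$ such that $e_m$ mentions $i$'' formulation together with symmetry rather than locating $l_1$ and $l_2$ separately, but the substance is the same.
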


\begin{proof}

As the underlying graphs of $\G_\op$ and of $\G_e'$ for each $e\in E_\op-E_\op^0$ are connected by hypothesis, $\G_\op^\exp$ is (strongly) connected by construction.  Hence, it suffices to show that condition (O) is satisfied.

Let $\rho=e_1,\dots,e_N$ be a path in $\G_\op^\exp$ between $s_0,s_N\in\SF_\op$ and fix $i\in\I^\int$.  As in the proofs of \Cref{prop-nse} and \Cref{lem-nse-star}, it suffices to assume that there exists $m\in\overline{N}$ such that $e_m$ mentions $i$ and to show that for $m$ minimal, $[\i=1]\in e_m$.

Let $\rho'=e_{k_1,\ell_1}',\dots,e_{k_t,\ell_t}'$ be any generic reduction of $\rho$, setting $e_{k_j,\ell_j}'=(\sf_j,\sf_j',e_j')$ for all $j\in\overline{t}$.  Then, set $\rho''=fl(\rho')$, i.e $\rho''$ is the flattening of $\rho'$ (see the proof of \Cref{lem-nse-star}).  So, $\rho''=q_1,\dots,q_t$ where each $q_j$ is a subpath of $\rho''$ consisting only of the (flattened) edges $e_j'$ and $(e_j')^{-1}$.  

Suppose $i\in\I_\op^\int$.  Then for any $n\in\overline{N}$ such that $e_n$ mentions $i$, either:
\begin{itemize}

\item $e_n$ is a proper edge, in which case $e_{j(n)}'$ is the edge of $E_\op^0$ from which $e_n$ is constructed, or

\item $e_n$ is an expanded edge of $e_{j(n)}'\in E_\op-E_\op^0$.

\end{itemize}
In the first case, the same fragments are used to define $\lab(e_n)$ and $\lab(e_{j(n)}')$, so that $e_{j(n)}'$ also mentions $i$.  In the second case, $i\in\I_{e_{j(n)}'}$ and $\I_\op^\inv\cap\I_\op^\int=\emptyset$, so that $e_{j(n)}'$ mentions $i$.

In particular, since an edge mentions an index if and only if its (formal) inverse does, $\rho''$ contains an edge that mentions $i$.  Let $r\in\overline{t}$ be the minimal index for which $(e_r')^{\pm1}$ mentions $i$.  Since $(e_{j(m)}')^{\pm1}$ mentions $i$, it follows that $r\leq j(m)$.

As $\rho''$ is a flattened path in $\G_\op$ between $s_0$ and $s_N$, it is subject to the constraints of condition (O) in \Cref{def-s1-op}.  So, the first edge of $q_r$ is labelled by an action of degree 0 and contains $[\i=1]$.  Hence, $e_r'\in E_\op^0$.  By the construction of the generic reduction, there exists $n\in\overline{N}$ such that $k_r=\ell_r=n$ and $e_{k_r,\ell_r}'=(\s,\f,e_r')$.  The construction of the flattening then implies that $q_r=e_r'$.  In particular, $j(n)=r$ and $e_n$ is constructed from $e_r'$, and so mentions $i$.  But then $m\leq n$ implies $j(m)\leq r$, and so $m=n$.  Hence, $[\i=1]\in e_m$.

Thus, it suffices to assume that $i\in\I_e^\int$ for some expanded edge $e=(s_1',s_2',\act_e)$.  By the disjointness of the images of the internal tape renaming maps, an edge of $\G_\op^\exp$ that mentions $i$ must be contained in $\G_e'$.  Hence, $e_j'=e$ for $j=j(m)$.

Then, $\rho_{k_j,\ell_j}$ is a path in $\G_e'$ with $t(e_{k_j}),h(e_{\ell_j})\in\{s_1',s_2'\}$ and $h(e_r)\notin\{s_1',s_2'\}$ for all $r\in\overline{k_j,\ell_j-1}$.  So, \Cref{lem-op-e-paths} yields a lift $\rho_j=e_{k_j}'',\dots,e_{\ell_j}''$ of $\rho_{k_j,\ell_j}$ in $\G_e$ such that $t(e_{k_j}''),h(e_{\ell_j}'')\in\SF_{\op_e}$.

But applying $\mu_e^{-1}$, $\rho_j$ corresponds to a path in $\op_e$ between elements of $\SF_{\op_e}$.  So, since $\inst_e\in\INST_n'$, condition (O) can be applied to $\op_e$, so that $[\mathbf{\mu_e^{-1}(i)}=1]\in e_m''$ since $i\in\I_e^\int$.  Hence, $[\i=1]\in e_m$.

\end{proof}

Similar to previous conventions, the defining partition of $\I_\op^\exp$ is denoted $\I_{\op_\exp}^\ext\sqcup\I_{\op_\exp}^\val\sqcup\I_{\op_\exp}^\int$.  Other terms are represented analogously; for example, the set of invisible tapes $\I_\op^\exp-\I_\op$ is denoted $\I_{\op}^\inv$.

Define $\EXP_\OP^{n+1}(\op)=\op_\exp$ for all $\op\in\OP_{n+1}$ as above.  Finally, extend this definition to a map $\EXP_\OP^{n+1}:\OP_{n+1}'\to\OP_0$ by setting $\EXP_\OP^{n+1}\pr{\OP_n'}=\EXP_\OP^n$.

\end{definition}

\begin{definition}\label{def-obj-expansion}

Fix $n\geq0$ and suppose that $(\EXP_G^n,\EXP_\OP^n,\EXP_\OBJ^n)$ is a generalized $n$-expansion mapping.

For $\Obj=(\I_\Obj^\val,\INST_\Obj,\op_1,\dots,\op_k)\in\OBJ_{n+1}$, define $$\EXP_\OBJ^{n+1}(\Obj)=(\I_\Obj^\val,\EXP_\OP^{n+1}(\op_1),\dots,\EXP_\OP^{n+1}(\op_k))$$
By \Cref{lem-op-expansion}, $\EXP_\OBJ^{n+1}(\Obj)\in\OBJ_0$. 

Finally, extend this to $\EXP_\OBJ^{n+1}:\OBJ_{n+1}'\to\OBJ_0$ given by setting $\EXP_\OBJ^{n+1}\pr{\OBJ_n'}=\EXP_\OBJ^n$.

\end{definition}

The following statement is a direct consequence of the definitions and \Cref{lem-op-expansion}:

\begin{proposition}\label{prop-exp}

For any $n\geq0$, the triple $(\EXP_G^{n+1},\EXP_\OP^{n+1},\EXP_\OBJ^{n+1})$ is a generalized $(n+1)$-expansion mapping.

\end{proposition}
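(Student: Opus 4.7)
The plan is to check, in order, the three structural conditions of Definition \ref{def-E-mapping} at level $n+1$, splitting each case according to whether the input has degree $\leq n$ or degree exactly $n+1$. For inputs of degree at most $n$, the three maps $\EXP_G^{n+1}$, $\EXP_\OP^{n+1}$, $\EXP_\OBJ^{n+1}$ agree with $\EXP_G^n$, $\EXP_\OP^n$, $\EXP_\OBJ^n$ by the explicit extension clause in Definitions \ref{def-graph-expansion}, \ref{def-op-expansion}, and \ref{def-obj-expansion}, so the required properties follow immediately from the inductive hypothesis that $(\EXP_G^n,\EXP_\OP^n,\EXP_\OBJ^n)$ is a generalized $n$-expansion mapping. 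Thus only the new cases (degree exactly $n+1$) require attention.

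For condition (1), let $\G \in \SGRAPH_{n+1}$ over $(\H_\G,\INST_\G)$ with $\H_\G=(\I_\G,\A_\G)$. Set $\G_\exp = \EXP_G^{n+1}(\G)$. I would first argue that $\G_\exp$ is a legitimate $S$-graph of degree $0$ over $\H_\G^\exp$: the vertex set $S_\G^\exp$ contains $S_\G$ by construction, and the discussion following Definition \ref{def-graph-expansion} already verifies symmetry of the edge set by splitting into three cases (proper edges coming from $E_\G^0$, expanded edges inside some $\G_e'$, and edges lying in a subgraph $\G_e'$ for $e$ a loop). Finally, the equalities $\H_\G\leq\H_\G^\exp$ and $\A_\G^\exp(i)=\A_\G(i)$ for $i\in\I_\G$ are precisely the definition of $\H_\G^\exp$ (with the new alphabet entries confined to the invisible tapes $\I_\G^\inv$, which are disjoint from $\I_\G$).

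For condition (2), let $\op\in\OP_{n+1}$ and set $\op_\exp = \EXP_\OP^{n+1}(\op)$. That $\op_\exp \in \OP_0$ is exactly the content of Lemma \ref{lem-op-expansion}, which I consider the main obstacle and which has already been carried out: it uses generic reductions together with flattening, splitting into the case $i\in\I_\op^\int$ (where condition (O) for $\op$ as an operation of degree $n+1$ transfers through the flattened path) and the case $i\in\I_e^\int$ for some expanded edge $e$ (where one lifts the expanded subpath via \Cref{lem-op-e-paths} and invokes condition (O) on the lower-degree operation $\op_e$). The graph identity $\G_{\op_\exp}=\EXP_G^{n+1}(\G_\op)$ holds by construction, and the equalities $\I_{\op_\exp}^\ext=\I_\op^\ext$, $\I_{\op_\exp}^\val=\I_\op^\val$, $(\op_\exp)_s=\op_s$, $(\op_\exp)_f=\op_f$ are built into Definition \ref{def-op-expansion}; in particular the new invisible tapes are absorbed into $\I_{\op_\exp}^\int$ rather than into $\I_{\op_\exp}^\ext$ or $\I_{\op_\exp}^\val$.

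For condition (3), let $\Obj=(\I_\Obj^\val,\INST_\Obj,\op_1,\dots,\op_k)\in\OBJ_{n+1}$. By construction, $\EXP_\OBJ^{n+1}(\Obj)=(\I_\Obj^\val,\EXP_\OP^{n+1}(\op_1),\dots,\EXP_\OP^{n+1}(\op_k))$, so it remains to check that this sextuple is indeed an element of $\OBJ_0$ in the sense of \Cref{def-object}. Each $(\op_j)_\exp$ lies in $\OP_0$ by condition (2), and the equality $\I_{(\op_j)_\exp}^\val=\I_{\op_j}^\val=\I_\Obj^\val$ for every $j\in\overline{k}$ is the $\val$-part of condition (2). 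The matching alphabets condition $\A_{(\op_m)_\exp}(i)=\A_{(\op_\ell)_\exp}(i)$ for $i\in\I_\Obj^\val$ follows from the corresponding equality $\A_{\op_m}(i)=\A_{\op_\ell}(i)$ (which holds because $\Obj\in\OBJ_{n+1}$) together with $\A_{(\op_j)_\exp}(i)=\A_{\op_j}(i)$ for $i\in\I_{\op_j}$, itself an instance of condition (1) applied to $\G_{\op_j}$. With all three conditions verified, the triple qualifies as a generalized $(n+1)$-expansion mapping.
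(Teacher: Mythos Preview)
Your proof is correct and takes essentially the same approach as the paper, which simply states that the proposition is a direct consequence of the definitions and \Cref{lem-op-expansion}. You have spelled out in detail exactly the verifications the paper leaves implicit, and your identification of \Cref{lem-op-expansion} as the only nontrivial ingredient is on target.
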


In light of \Cref{prop-exp}, there exists a map $\EXP_G:\SGRAPH^*\to\SGRAPH_0$ given by $\EXP_G(\G)=\EXP_G^n(\G)$ for any $n\geq0$ such that $\G\in\SGRAPH_n$.  Hence, by the construction of the maps, $\EXP_G(\G)=\EXP_G^m(\G)$ for all $m\geq n$.  Similarly, there exist analogous maps $\EXP_\OP:\OP^*\to\OP_0$ and $\EXP_\OBJ:\OBJ^*\to\OBJ_0$.

With an abuse of notation, each element of the triple $(\EXP_G,\EXP_\OP,\EXP_\OBJ)$ is simply denoted $\EXP$ and referred to as an \emph{expansion map}, with context making it clear which is being referenced.

\begin{example}\label{ex-silly-counter-expanded}
\Cref{fig-SILLYCOUNTER-EXPANDED} below shows the expansion $\G_\exp=\EXP(\G)$ of the $S$-graph of degree 1 $\G$ of the operation $\syn(a,b,c)$ from \Cref{fig-SILLYCOUNTER}. The four expanded states are shaded for reference.

\begin{figure}[hbt]
	\centering
	\begin{tikzpicture} 
[point/.style={inner sep = 4pt, circle,draw,fill=white},
mpoint/.style={inner sep = 4pt, circle,draw,fill=gray},
FIT/.style args = {#1}{rounded rectangle, draw,  fit=#1, rotate fit=45, yscale=0.5},
FITR/.style args = {#1}{rounded rectangle, draw,  fit=#1, rotate fit=-45, yscale=0.5},
FIT1/.style args = {#1}{rounded rectangle, draw,  fit=#1, rotate fit=45, scale=2},
vecArrow/.style={
		thick, decoration={markings,mark=at position
		   1 with {\arrow[thick]{open triangle 60}}},
		   double distance=1.4pt, shorten >= 5.5pt,
		   preaction = {decorate},
		   postaction = {draw,line width=0.4pt, white,shorten >= 4.5pt}
	},
myptr/.style={decoration={markings,mark=at position 1 with %
    {\arrow[scale=3,>=stealth]{>}}},postaction={decorate}}
]

\begin{scope} [xscale=0.65, yscale=0.3, xshift = 1cm, yshift = -16cm]      
	\node (s) at (0,-24) [point] {$s$};
	\node (ss) at (0,0) [point] {};
	\node (ss1) at (1,-6) [point] {};
	\node (ss2) at (0,-16) [point] {};
	\node (s2) at (7, 0) [point] {};
	\node (c2) at (7, 8) [point] {};
	\node (s3) at (16, 0) [point] {};

	\node (c1) at (-1, 8) [point] {};
	\draw [->] (s) to node[midway, left]{
		$\begin{gathered}
		 [\mathbf{a}'=1] \\[-1ex]
		 [\mathbf{b}'=1] \\[-1ex]
		 [\mathbf{d}'=1] \\[-1ex]
		 [\mathbf{d}''=1]
		\end{gathered}$
	}  (ss2);
	\node (ccaa1) at (5, -12.5) [mpoint]{};
	\node (ccaa2) at (5, -8.5) [mpoint]{};
	\draw [->] (ss2) to node[pos=0.6, below]{
		$\begin{gathered}
		\\[-2ex]
		 [\mathbf{a}'=1] \\[-1ex]
		 [\mathbf{c_a}=1]
		\end{gathered}$
	}   (ccaa1);
	\draw [->] (ccaa1) to node[midway, right]{
		$\begin{gathered}
		 [\mathbf{a}=1] \\[-1ex]
		 [\mathbf{a}'=1]
		\end{gathered}$
	}   (ccaa2);
	\draw [->] (ccaa2) to node[pos = 0.4, above]{
		$\begin{gathered}
		 [\mathbf{c_a}=1]
		\end{gathered}$
	}   (ss1);
	\draw [->] (ccaa1) edge[out=-80, in = 0, looseness=10] node[pos = 0.5, below]{
		$\begin{gathered}
			[\mathbf{a}\rightarrow x^{-1} \mathbf{a}] \\[-1ex]
			[\mathbf{c_a}\rightarrow \mathbf{c_a} x] \\[-1ex]
			 \text{for $x\in A_a$ }
		\end{gathered}$
	}  (ccaa1);

	\draw [->] (ccaa2) edge[out=80, in = 0, looseness=10] node[pos = 0.5, above]{
		$\begin{gathered}
			[\mathbf{a}\rightarrow x \mathbf{a}] \\[-1ex]
			[\mathbf{a'}\rightarrow x \mathbf{a'}] \\[-1ex]
			[\mathbf{c_a}\rightarrow \mathbf{c_a} x^{-1}] \\[-1ex]
			 \text{for $x\in A_a$ }
		\end{gathered}$
	}  (ccaa2);

	\node (ccbb1) at (-2, -10) [mpoint]{};
	\node (ccbb2) at (-2, -6) [mpoint]{};
	\draw [->] (ss1) to node[pos=0.8, right]{
		$\begin{gathered}
		\\
		 [\mathbf{b}'=1] \\[-1ex]
		 [\mathbf{c_b}=1]
		\end{gathered}$
	}   (ccbb1);
	\draw [->] (ccbb1) to node[midway, left]{
		$\begin{gathered}
		 [\mathbf{b}=1] \\[-1ex]
		 [\mathbf{b}'=1]
		\end{gathered}$
	}   (ccbb2);
	\draw [->] (ccbb2) to node[pos = 0.5, right]{
		$\begin{gathered}
		 [\mathbf{c_b}=1]
		\end{gathered}$
	}   (ss);
	\draw [->] (ccbb1) edge[out=180, in = -90, looseness=10] node[pos = 0.5, below]{
		$\begin{gathered}
		\\[-3ex]
			[\mathbf{b}\rightarrow x^{-1} \mathbf{b}] \\[-1ex]
			[\mathbf{c_b}\rightarrow \mathbf{c_b} x] \\[-1ex]
			 \text{for $x\in A_b$ }
		\end{gathered}$
	}  (ccbb1);

	\draw [->] (ccbb2) edge[out=180, in = 90, looseness=10] node[pos = 0.5, above]{
		$\begin{gathered}
			[\mathbf{b}\rightarrow x \mathbf{b}] \\[-1ex]
			[\mathbf{b'}\rightarrow x \mathbf{b'}] \\[-1ex]
			[\mathbf{c_b}\rightarrow \mathbf{c_b} x^{-1}] \\[-1ex]
			 \text{for $x\in A_b$ }
		\end{gathered}$
	}  (ccbb2);

	\draw [->] (ss) edge[out=100, in = -100, looseness=1] node[midway, left]{
		$\begin{gathered}
		 [\mathbf{a}'\rightarrow \mathbf{a}'x^{-1}] \\[-1ex]
		 \text{for $x\in A_a$ }
		\end{gathered}$
	}  (c1);
	\draw [->] (c1) edge[out=-80, in = 80, looseness=1] node[midway, right]{
		$\begin{gathered}
			[\mathbf{d}'\rightarrow \delta^2 \mathbf{d}']
		\end{gathered}$
	}  (ss);

	\draw [->] (s2) edge[out=100, in = -100, looseness=1] node[midway, left]{
		$\begin{gathered}
		 [\mathbf{b}'\rightarrow \mathbf{b}'x^{-1}] \\[-1ex]
		 \text{for $x\in A_b$ }
		\end{gathered}$
	}  (c2);
	\draw [->] (c2) edge[out=-80, in = 80, looseness=1] node[midway, right]{
		$\begin{gathered}
			[\mathbf{d}'\rightarrow \delta^2 \mathbf{d}']
		\end{gathered}$
	}  (s2);

	\node (c3) at (15, 8) [point] {};
	\draw [->] (s3) edge[out=120, in = -100, looseness=1] node[midway, left]{
		$\begin{gathered}
		 [\mathbf{c}\rightarrow \mathbf{c}x^{-1}] \\[-1ex]
		 \text{for $x\in A_c$ }
		\end{gathered}$
	}  (c3);
	\draw [->] (c3) edge[out=-80, in = 80, looseness=1] node[midway, right]{
		$\begin{gathered}
			[\mathbf{d}''\rightarrow \delta^2 \mathbf{d}'']
		\end{gathered}$
	}  (s3);

	\draw [->] (ss) to node[midway, above]{
		$\begin{gathered}
		 [\mathbf{a}'=1]
		\end{gathered}$
	}  (s2);
	\draw [->] (s2) to node[midway, above]{
		$\begin{gathered}
		 [\mathbf{b}'=1]
		\end{gathered}$
	}  (s3);
	
	\node (s4) at (10, -8) [point] {};
	
	\draw [->] (s3) edge[out=-90, in = 90] node[midway, above]{
		$\begin{gathered}
		 [\mathbf{c}=1]
		\end{gathered}$
	}  (s4);
	
	\node (c4) at (12, -17) [point] {};
	\draw [->] (s4) edge[out = -100, in = 100] node[midway, left]{
		$\begin{gathered}
			[\mathbf{d}''\rightarrow \delta^2 \mathbf{d}'']
		\end{gathered}$
	}  (c4);
	\draw [->] (c4) edge[out=80, in = -80, looseness=1] node[midway, right]{
		$\begin{gathered}
			[\mathbf{d}'\rightarrow \delta^2 \mathbf{d}']
		\end{gathered}$
	}  (s4);
	
	\node (s5) at (13, -8) [point] {};
	\draw [->] (s4) to node[midway, above]{
		$\begin{gathered}
			[\textbf{d}'=1]
		\end{gathered}$
	} (s5);
	
	\node (f) at (17, -8) [point] {$f$};
	\draw [->] (s5) to node[midway, above]{
		$\begin{gathered}
			[\textbf{d}''=1]
		\end{gathered}$
	} (f);

\end{scope}	

\end{tikzpicture}	
	\caption{The expansion $\G_\exp$ of the S$^1$-graph $\G$ defined in~\Cref{fig-SILLYCOUNTER}.}
	\label{fig-SILLYCOUNTER-EXPANDED}       
\end{figure} 

Note that for any $\op\in\Obj_0$, $\I_\op^\int=\emptyset$ and $S_\op=\SF_\op$.  Hence, the expansion of such an edge neither introduces new tapes nor new states.  

Conversely, expanding any edge corresponding to the copy operation produces both expanded states and invisible tapes.  Indeed, because $|S_\cpy-\SF_\cpy|=2$ and $|\I_\cpy^\int|=1$, each such edge adds two new states and one new tape.  As a result, $$\I_{\G} =  \{a, b, c, a', b', d', d''\} \subsetneq \I_\G^\exp =  \{a, b, c, a', b', d', d'', c_a, c_b\}$$

This example can also be understood as the expansion of the object $\Obj_1[d']$, which is the object $\EXP(\Obj_1)$ with a single operation $\EXP(\syn)$. The tapes of $\I_\G^\exp$ arising from $\I_\G$ are classified in the same way they were in $\Obj_1$, while any new tapes (here, the invisible tapes $c_a$ and $c_b$) are necessarily internal.
\end{example}

Note that the expansion of the object of degree 2 Counter$_1[a]$ (see \Cref{ex-counter1}) resembles the object of degree 0 Counter$_0[a]$ (see \Cref{ex-counter}).  However, there are several minor differences.  For example, the unique edge $e$ of the operation $\check_{>0}$ in \Cref{ex-counter} satisfying $t(e)=(\check_{>0})_s$ contains the guard fragments $[\mathbf{b}=1]$ and $[\mathbf{c}=1]$, while these fragments appear on distinct edges in the expansion of the analogous operation of \Cref{ex-counter1}.  Despite such discrepancies, though, the graphs are related in a particular sense.

To describe this relationship in a more fundamental setting, \Cref{th-expansion-props} below draws a correspondence between the computations of an $S^*$-graph and those of its expansion.  First, \Cref{lem-op-e} deals with a basic case.

\begin{lemma}\label{lem-op-e}  Let $\G\in\SGRAPH^*$, $e=(s_1,s_2,\act_e)\in E_\G-E_\G^0$, and $\act_e=(\inst_e,\op_e,\mu_{\act_e}^\ext)$.

\begin{enumerate}

\item Let $\eps=\bigl((w_0,t_0),e_1,(w_1,t_1),e_2,\dots,e_m,(w_m,t_m)\bigr)$ be a computation of $\EXP(\op_e)$.  Then for every $w\in L_{\H_\G^\exp}$, there exists a computation
$$\sigma_w(\eps)=\bigl((w_0',\pi_e(t_0)),e_1',(w_1',\pi_e(t_1)),e_2',\dots,e_m',(w_m',\pi_e(t_m))\bigr)$$
of $\EXP(\G)$ such that $w_j'=\mu_e(w_j)\sqcup w(\I_\G^\exp-\I_e)$ for all $j\in\overline{0,m}$.

\item Let $\delta=\bigl((w_0',t_0'),e_1',(w_1',t_1'),e_2',\dots,e_m',(w_m',t_m')\bigr)$ be a computation of $\EXP(\G)$ such that the computation path $\rho'=e_1',\dots,e_m'$ is a path in $\G_e'$ with $h(e_i')\notin S_\G$ for $i\in\overline{m-1}$.  Then there exists a computation
$$\tau(\delta)=\bigl((w_0,t_0),e_1,(w_1,t_1),e_2,\dots,e_m,(w_m,t_m)\bigr)$$
of $\EXP(\op_e)$ such that $w_j'(\I_e)=\mu_e(w_j)$, $w_i'(\I_\G^\exp-\I_e)=w_j'(\I_\G^\exp-\I_e)$, and $\pi_e(t_j)=t_j'$ for all $i,j\in\overline{0,m}$.

\end{enumerate}

\end{lemma}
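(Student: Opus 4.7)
\medskip

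\noindent\textbf{Proof proposal for \Cref{lem-op-e}.}

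The two parts are near-inverse constructions, and the essence of both is that the subgraph $\G_e' \subseteq \EXP(\G)$ is, by construction, a copy of $\G_{\op_e^\exp} = \EXP(\op_e)$ reindexed by $\mu_e$ and pasted at $s_1,s_2$ via $\pi_e$. So in each direction the job is to verify that applicability and the effect of edges transport correctly across this identification.

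For part (1), I would proceed by induction on $m$, with the trivial case $m=0$ reducing to the observation that $(w_0',\pi_e(t_0))$ is a valid configuration of $\EXP(\G)$ because $\EXP(\G)$ has degree $0$ and hence $\GL_{\EXP(\G)} = L_{\H_\G^\exp}$. For the inductive step, given $e_i \in E_{\op_e^\exp}$, I would take $e_i'$ to be the edge of $\G_e'$ with $\pi_e(e_i) = e_i'$; such an $e_i'$ exists and is unique since $\pi_e$ is a label-preserving graph map. Two facts then need to be checked: (a) applicability of $e_i'$ at $(w_{i-1}', \pi_e(t_{i-1}))$, and (b) that the result equals $(w_i', \pi_e(t_i))$. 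Both reduce to the key structural point that $\lab(e_i')$ is obtained from $\lab(e_i)$ by applying $\mu_e$ to every fragment, and that every index mentioned by $\lab(e_i')$ lies in $\I_e$. Hence the guard of $e_i'$ accepts $w_{i-1}'$ iff the guard of $e_i$ accepts $\mu_e^{-1}(w_{i-1}'(\I_e)) = w_{i-1}$, which holds by hypothesis; and the transformation of $e_i'$ acts as the $\mu_e$-translate of the transformation of $e_i$ on the $\I_e$-coordinates and as the identity on $\I_\G^\exp - \I_e$, which gives exactly $w_i' = \mu_e(w_i) \sqcup w(\I_\G^\exp - \I_e)$. The state equality $\pi_e(t_{i-1}) = t(e_i')$ and $\pi_e(t_i) = h(e_i')$ holds because $\pi_e$ is a graph map.

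For part (2), the first step is to apply \Cref{lem-op-e-paths} to the computation path $\rho' = e_1',\dots,e_m'$ of $\delta$ to obtain its lift $\rho = e_1,\dots,e_m$ in $\G_e$, which, after pulling back through $\mu_e$, is a path in $\G_{\op_e}$ and thus in $\G_{\op_e^\exp} = \EXP(\op_e)$. Define $w_j := \mu_e^{-1}(w_j'(\I_e))$ and choose $t_j$ to be the unique state of $\EXP(\op_e)$ with $\pi_e(t_j) = t_j'$ that is compatible with the lifted path (unique for non-endpoint indices by \Cref{lem-op-e-paths}; at $j=0$ and $j=m$ it is determined by requiring $e_1$ and $e_m$ to be the lifts, and for internal $j$ it is forced since $h(e_i) \notin \SF_{\op_e}$). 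It then remains to verify that each $e_i$ is applicable to $(w_{i-1}, t_{i-1})$ in $\EXP(\op_e)$ and produces $(w_i, t_i)$; this is exactly the reverse of the transport argument in part (1), again using that $\lab(e_i)$ is the $\mu_e^{-1}$-translate of $\lab(e_i')$ and that only $\I_e$-coordinates are touched.

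The main obstacle is purely bookkeeping: handling the case where $e$ is a loop, so that $s_1 = s_2$ and $\pi_e$ is not injective on $\SF_{\op_e}$. In part (1) this is harmless since we are pushing forward and either preimage maps correctly to $s_1=s_2$; in part (2) one must be careful to select the lifted state consistently with the lift of $\rho'$ at the endpoints $t_0,t_m$, which is precisely what \Cref{lem-op-e-paths} provides. The remaining labor is a transparent index-tracking check that $w_j' = \mu_e(w_j) \sqcup w_j'(\I_\G^\exp - \I_e)$ is preserved by every edge application, which follows because no fragment of any edge in $\G_e'$ mentions an index outside $\I_e$.
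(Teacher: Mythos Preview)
Your proposal is correct and follows essentially the same route as the paper's proof: both parts hinge on the observation that $\G_e'$ is obtained from $\EXP(\op_e)$ by reindexing via $\mu_e$ (the paper names this bijection $\zeta_e$) and then projecting via $\pi_e$, so that applicability and effect of edges transport by restricting to the $\I_e$-coordinates. One small slip: in part (2) you write that the lifted path, after pulling back through $\mu_e$, lies ``in $\G_{\op_e}$ and thus in $\G_{\op_e^\exp}$''---it lies directly in $\G_{\op_e^\exp}=\EXP(\op_e)$ (since $\G_e$ is by definition a $\mu_e$-reindexed copy of $\G_{\op_e^\exp}$, not of $\G_{\op_e}$), but this does not affect your argument.
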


\begin{proof}

(1) By construction, there exists a bijection $\zeta_e$ between the set of edges of $\EXP(\op_e)$ and that of $\G_e$ given by applying $\mu_e$ to any fragment and then interpreting the resulting action so that it is over $\H_\G^\exp$.  Hence, for any $v,v'\in L_{\H_{\op_e}^\exp}$, there exists a computation $\bigl((v,s),e_0,(v',s')\bigr)$ of $\EXP(\op_e)$ if and only if for any $w\in L_{\H_\G^\exp}$, there exists a computation 
$$\bigl((\mu_e(v)\sqcup w(\I_\G^\exp-\I_e),s),\zeta_e(e_0),(\mu_e(v')\sqcup w(\I_\G^\exp-\I_e),s')\bigr)$$ 
of $\EXP(\G)$.  Moreover, for any computation $\bigl((u,t),e_0',(u',t')\bigr)$ of $\G_e$, there exists a computation $$\bigl((u,\pi_e(t)),\pi_e(e_0'),(u',\pi_e(t'))\bigr)$$ of $\G_e'$.

Hence, letting $e_i'=\pi_e(\zeta_e(e_i))$ for all $i\in\overline{m}$, there exists a computation $\sigma_w(\eps)$ of $\EXP(\G)$ satisfying the statement.

\medskip

(2) As $h(e_i')\notin S_\G$ for all $i\in\overline{m-1}$, by \Cref{lem-op-e-paths} there exists a lift $\rho''=e_1'',\dots,e_m''$ of $\rho'$.  So, $\rho''$ is a path in $\G_e$ with $\lab(e_j'')=\lab(e_j')$ for all $j\in\overline{m}$.  Hence, there exists a computation
$$\tau'(\delta)=\bigl((w_0',t_0),e_1'',(w_1',t_1),e_2'',\dots,e_m'',(w_m',t_m)\bigr)$$ 
of $\G_e$ such that $\pi_e(t_i)=t_i'$ for all $i\in\overline{0,m}$.  

Since any edge of $\G_e$ is labelled by an action constructed from fragments of $\FRAG_{\H_e}$, it follows that $w_i'(\I_\G^\exp-\I_e)=w_j'(\I_\G^\exp-\I_e)$ for all $i,j\in\overline{0,m}$.  Thus, letting $w_i=\mu_e^{-1}(w_i'(\I_e))$ and $e_j=\zeta_e^{-1}(e_j'')$ for all $i\in\overline{0,m}$ and $j\in\overline{m}$, there exists a computation $\tau(\delta)$ satisfying the statement.

\end{proof}

For any $\G\in\SGRAPH^*$, define the \emph{trivial invisible word} by $1_\G^\inv=(1)_{i\in\I_\G^\inv}$.  Similarly, for any $\op\in\OP^*$, let $1_\op^\inv=(1)_{i\in\I_\op^\inv}$.

\begin{theorem}\label{th-expansion-props}

Let $\G\in\SGRAPH^*$ and $\Obj\in\OBJ^*$.  Set $\G_\exp=\EXP(\G)$.  Then:

\begin{enumerate}[label=({\alph*})]

\item $\REACH_{\G_\exp}^{S_\G}$ is properly $L_{\H_\G^\exp}$-restricted to $\GL_\G\times1_\G^\inv$.

\item For any computation $\delta=\bigl((w_0,s_0),e_1,(w_1,s_1),e_2,\dots,e_m,(w_m,s_m)\bigr)$ of $\G_\exp$ such that $(w_0,s_0),(w_m,s_m)\in(\GL_\G\times1_\G^\inv)\times S_\G$, there exists a computation $$\eta(\delta)=\bigl((w_0',s_0'),e_1',(w_1',s_1'),e_2',\dots,e_\ell',(w_\ell',s_\ell')\bigr)$$ of $\G$ such that $w_0=w_0'\sqcup1_\G^\inv$, $w_m=w_\ell'\sqcup1_\G^\inv$, $s_0=s_0'$, $s_m=s_\ell'$, and $\rho'=e_1',\dots,e_\ell'$ is a generic reduction of $\rho=e_1,\dots,e_m$.

\item Suppose $\delta=\bigl((w_0',s_0'),e_1',(w_1',s_1')\bigr)$ is a computation of $\G$ such that $e_1'=(\sf_0,\sf_1,e)$ for some $\sf_0,\sf_1\in\{\s,\f\}$ and $e=(t_0,t_1,\act_e)\in E_\G-E_\G^0$.  Then for $\act_e=(\inst_e,\op_e,\mu_{\act_e}^\ext)$,
$$(\mu_{\act_e}^{-1}(w_0'(\I_{\act_e})),\mu_{\act_e}^{-1}(w_1'(\I_{\act_e})))\in\IO_{\EXP(\op_e)}^{\lambda_{e}(\sf_0),\lambda_e(\sf_1)}$$ 
where $\lambda_e=\lambda_{\op_e}$.  Moreover, for any input-output computation 
$$\eps=\bigl((w_0'',s_0''),e_1'',(w_1'',s_1''),e_2'',\dots,e_m'',(w_m'',s_m'')\bigr)$$ 
of $\EXP(\op_e)$ such that 
\begin{align*}
(w_0'',s_0'')&=(\mu_{\act_e}^{-1}(w_0'(\I_{\act_e}))\sqcup1_{\EXP(\op_e)}^\int,\lambda_e(\sf_0)) \\
(w_m'',s_m'')&=(\mu_{\act_e}^{-1}(w_1'(\I_{\act_e}))\sqcup1_{\EXP(\op_e)}^\int,\lambda_e(\sf_1))
\end{align*}
 there exists a computation 
$$\kappa_\delta(\eps)=\bigl((w_0,s_0),e_1,(w_1,s_1),e_2,\dots,e_m,(w_m,s_m)\bigr)$$
of $\G_\exp$ such that $(w_0,s_0)=(w_0'\sqcup1_\G^\inv,s_0')$, $(w_m,s_m)=(w_1'\sqcup1_\G^\inv,s_1')$, and $$w_j=\mu_e(w_j'')\sqcup w_0'(\I_\G-\I_{\act_e})\sqcup (1)_{i\in\I_\G^\inv-\I_e^\int}$$ for all $j\in\overline{0,m}$.

\item For any computation $\delta=\bigl((w_0',s_0'),e_1',(w_1',s_1'),e_2',\dots,e_\ell',(w_\ell',s_\ell')\bigr)$ of $\G$, there exists a computation $$\nu(\delta)=\bigl((w_0,s_0),e_1,(w_1,s_1),e_2,\dots,e_m,(w_m,s_m)\bigr)$$ of $\G_\exp$ such that $(w_0,s_0),(w_m,s_m)\in(\GL_\G\times1_\G^\inv)\times S_\G$, $w_0=w_0'\sqcup1_\G^\inv$, $w_m=w_\ell'\sqcup1_\G^\inv$, $s_0=s_0'$, and $s_m=s_\ell'$.  %Moreover, $\rho'=e_1',\dots,e_\ell'$ is a generic reduction of $\rho=e_1,\dots,e_m$.

\item $\VAL_{\EXP(\Obj)}=\VAL_\Obj$.

\item For any $\op\in\Obj$, $\REACH_{\EXP(\op)}^{\SF_\op}$ is properly $L_{\H_\op^\exp}$-restricted to $\ND_\op^1\times1_\op^\inv$.

\item For any $\op\in\Obj$, $\IO_{\EXP(\op)}=\IO_\op$.

\end{enumerate}

\end{theorem}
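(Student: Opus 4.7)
The plan is to prove all seven parts simultaneously by induction on the maximum degree appearing in $\G$, $\Obj$, or $\op$. The base case $n=0$ is essentially trivial: $\EXP$ restricts to the identity on $\SGRAPH_0 \sqcup \OP_0 \sqcup \OBJ_0$, $\I_\G^\inv = \emptyset$, and generic reductions collapse to ordinary computations, so (a)--(d) are tautologies, (e) is a definitional identity, (f) is exactly \Cref{prop-nse}, and (g) follows from the definition of $\IO_\op$.

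For the inductive step at degree $n+1$, the proof hinges on \Cref{lem-op-e}, which bridges computations of $\G_\exp$ restricted to a subgraph $\G_e'$ and computations of $\EXP(\op_e)$. I would begin with part (a): given a path in $\G_\exp$ whose endpoints lie in $S_\G$, decompose it into maximal expansion intervals. For each such interval lying in some $\G_e'$ with $e \in E_\G - E_\G^0$, \Cref{lem-op-e}(2) produces a computation of $\EXP(\op_e)$ whose starting and finishing states both lie in $\SF_{\op_e}$. Since $\op_e$ has degree at most $n$, the inductive hypothesis (f) forces the invisible tapes of $e$ to be trivial at both ends of this interval. Chaining this across all expansion intervals shows that triviality of $1_\G^\inv$ at one endpoint of the path propagates to the other, which is precisely (a).

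For part (b), I again decompose into maximal expansion intervals and construct $\eta(\delta)$ stepwise: proper-edge intervals contribute the corresponding generic edge $(\s,\f,e)$, while each genuine expansion interval yields, via \Cref{lem-op-e}(2), a computation of $\EXP(\op_e)$ whose endpoints sit in $\ND_{\op_e}^1 \times 1_{\op_e}^\inv \times \SF_{\op_e}$ by the inductive (f); the inductive (g) then turns this into an element of $\IO_{\op_e}$, which is precisely the datum needed to validate the chosen generic edge $e'_{k_j,\ell_j}$ as a step of $\eta(\delta)$. Part (c) is the one-edge version of this correspondence in both directions: the first assertion combines \Cref{lem-op-e}(2) with inductive (g), and the second is \Cref{lem-op-e}(1) applied to $\eps$. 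Part (d) follows by iterating (c) along the generic path of $\delta$ and concatenating the resulting computations of $\G_\exp$. Part (e) then follows by applying (d) and (b) to input-output computations of operations of $\Obj$ and $\EXP(\Obj)$, noting that $\I_{\EXP(\Obj)}^\val = \I_\Obj^\val$ by construction. Part (f) is the specialization of (a) to $\G = \G_\op$, intersected with the restriction to $\ND_\op^1$ provided by \Cref{lem-nse-star}. Part (g) splits into two inclusions: (d) yields $\IO_\op \subseteq \IO_{\EXP(\op)}$, and (b) together with (f) gives the reverse inclusion after relabelling endpoints.

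The main obstacle is part (a), where the interaction between expansion intervals and proper states has to be tracked carefully. In particular, when an expansion interval corresponds to a loop $e = (s_0, s_0, \act_e)$ in $\G$, the subpath in $\G_e'$ may enter and leave at the same pasted vertex $s_0$, and the lift into $\G_e$ produced by \Cref{lem-op-e-paths} may land at either $(\op_e)_s$ or $(\op_e)_f$ nondeterministically; this is precisely the ambiguity that the four possible orientations of a generic edge were designed to absorb, and the freedom in choosing a generic reduction in the loop case must be exploited consistently in (b) and (d). A secondary subtlety arises in (g), where one must verify that $\IO_{\EXP(\op)}^{s,s}$ and $\IO_{\EXP(\op)}^{f,f}$ are not enlarged beyond their degree-$(n+1)$ counterparts by computations that transiently leave $\GL_\G \times 1_\G^\inv$; here part (a) is essential, as it rules out exactly such excursions between configurations in $\ND_\op^1 \times 1_\op^\inv \times \SF_\op$.
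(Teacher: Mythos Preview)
Your proposal is correct and follows essentially the same approach as the paper: induction on degree, with \Cref{lem-op-e} serving as the bridge between computations in $\G_e'$ and computations of $\EXP(\op_e)$, and parts (a)--(g) proved in roughly the order you describe using the inductive versions of (e), (f), (g). One small slip: the first assertion of (c) does not use \Cref{lem-op-e}(2) at all---$\delta$ is a computation of $\G$, not of $\G_\exp$, so the relation $(\mu_{\act_e}^{-1}(w_0'(\I_{\act_e})),\mu_{\act_e}^{-1}(w_1'(\I_{\act_e})))\in\IO_{\op_e}^{\lambda_e(\sf_0),\lambda_e(\sf_1)}$ comes directly from unfolding the definition of $\IO_{e_1'}$, after which the inductive (g) converts $\IO_{\op_e}$ to $\IO_{\EXP(\op_e)}$.
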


\begin{proof}

Note that if $\G\in\SGRAPH_0$, then (a)$-$(d) follow by construction and by \Cref{lem-lio-io}.  Similarly, if $\Obj\in\OBJ_0$, then (e)$-$(g) follow by construction and by \Cref{lem-nse-star}.

Now fix $n\geq0$, suppose the statements hold for all $S$-graphs and objects of degree at most $n$, and let $\G\in\SGRAPH_{n+1}$ and $\Obj\in\OBJ_{n+1}$.

\bigskip

\noindent
(a) Let $\delta=\bigl((w_0,s_0),e_1,\dots,e_m,(w_m,s_m)\bigr)$ be a computation of $\G_\exp$ such that $s_0,s_m\in S_\G$.  As $\REACH_{\G_\exp}$ is an equivalence relation by \Cref{lem-reach-star}, it may be assumed without loss of generality that $s_i\notin S_\G$ for all $i\in\overline{m-1}$.

Suppose $e_1$ is a proper edge.  Then, there exists $e_1'=(s_0,s_1,\act)\in E_\G^0$ such that $e_1$ is constructed from $e_1'$.  As such $s_1\in S_\G$, so that $m=1$.  Further, 
%$\lab(e_1)=\A_{\H_\G^\exp}(\F_{\H_\G}(\act))$, so that 
by construction, $w_0(\I_\G^\inv)=w_1(\I_\G^\inv)$ and $(w_0(\I_\G),w_1(\I_\G))\in\mathrm{LIO}_{\act}^{\s,\f}$.  Hence, by \Cref{lem-lio-io}, $w_0(\I_\G)\in\GL_\G$ if and only if $w_1(\I_\G)\in\GL_\G$, and thus $w_0\in\GL_\G\times1_\G^\inv$ if and only if $w_1\in\GL_\G\times1_\G^\inv$.

Otherwise, $\rho=e_1,\dots,e_m$ is a path in $\G_e'$ for some $e=(s_1',s_2',\act_e)\in E_\G-E_\G^0$ such that $s_0,s_m\in\{s_1',s_2'\}$.  Let $\act_e=(\inst_e,\op_e,\mu_{\act_e}^\ext)$.

By \Cref{lem-op-e}(2), there exists a computation $\tau(\delta)$ of $\EXP(\op_e)$ between the configurations $(\mu_e^{-1}(w_0(\I_e)),t_0)$ and $(\mu_e^{-1}(w_m(\I_e)),t_m)$ such that $\pi_e(t_j)=s_j$ for $j\in\{0,m\}$.  As $s_j\in\{s_1',s_2'\}$, it follows that $t_j\in\SF_{\op_e}$, and hence 
$$(\mu_e^{-1}(w_0(\I_e)),\mu_e^{-1}(w_m(\I_e)))\in\REACH_{\EXP(\op_e)}^{\SF_{\op_e}}$$
Then, since $\inst_e\in\INST_n'$, the inductive hypothesis may be applied to it, and so (f) implies that $\mu_e^{-1}(w_0(\I_e))\in\ND_{\op_e}^1\times1_{\op_e}^\inv$ if and only if $\mu_e^{-1}(w_m(\I_e))\in\ND_{\op_e}^1\times1_{\op_e}^\inv$.

Let $\GL_e=L_{\act_e}^\ext\times\VAL_{\inst_e}\times1_e^\int$, where $1_e^\int=(1)_{i\in\I_e^\int}$.  Then, the restriction of $\mu_e$ to $\ND_{\op_e}^1\times1_{\op_e}^\inv$ defines a bijection with $\GL_e$.  Hence, $w_0(\I_e)\in\GL_e$ if and only if $w_m(\I_e)\in\GL_e$.  But since any action labelling an edge of $\G_e'$ is constructed from elements of $\FRAG_{\H_e}$, $$w_0(\I_\G^\exp-\I_e)=w_m(\I_\G^\exp-\I_e)$$
Thus, $w_0\in\GL_\G\times1_\G^\inv$ if and only if $w_m\in\GL_\G\times1_\G^\inv$.

\bigskip

\noindent
(b)  Suppose that $s_i\notin S_\G$ for all $i\in\overline{m-1}$.

If $e_1$ is a proper edge, then $s_1\in S_\G$ so that $m=1$.  Moreover, for $e_1'\in E_\G^0$ the edge from which $e_1$ is constructed, $t(e_1')=s_0$, $h(e_1')=s_1$, and $(w_0(\I_\G),w_1(\I_\G))\in\IO_{e_1'}^{s_0,s_1}$.  

Hence, $\eta(\delta)=\bigl((w_0(\I_\G),s_0),(\s,\f,e_1'),(w_1(\I_\G),s_1)\bigr)$ is a computation of $\G$ satisfying the statement.

Otherwise, $\rho$ is a path in $\G_e'$ for some $e=(s_1'',s_2'',\act_e)\in E_\G-E_\G^0$.  As in (a), the construction of $\G_e'$ implies $w_0(\I_\G^\exp-\I_e)=w_m(\I_\G^\exp-\I_e)$.  Moreover, by hypothesis $w_0(\I_e^\int)=1_e^\int=w_m(\I_e^\int)$, and so $w_0(\I_\G-\I_{\act_e})=w_m(\I_\G-\I_{\act_e})$.

Letting $\act_e=(\inst_e,\op_e,\mu_{\act_e}^\ext)$, \Cref{lem-op-e}(2) produces a computation $\tau(\delta)$ of $\EXP(\op_e)$ between the configurations $(\mu_e^{-1}(w_0(\I_e)),t_0)$ and $(\mu_e^{-1}(w_m(\I_e)),t_m)$ such that $\pi_e(t_j)=s_j$ for $j\in\{0,m\}$.  Further, by construction $s_0,s_m\in\{s_1'',s_2''\}$, so that $t_0,t_m\in\SF_{\op_e}$.  Since $w_j(\I_e^\int)=1_e^\int$ for $j\in\{0,m\}$, it follows that $\tau(\delta)$ is an input-output computation of $\EXP(\op_e)$.  

But $w_j\in\GL_\G\times1_\G^\inv$ for $j\in\{0,m\}$, so that $w_j(\I_e)\in \GL_e$.  Further, as $\inst_e\in\INST_n'$, the inductive hypothesis may be applied, so that (e) yields $\mu_e^{-1}(w_j(\I_e))\in\ND_{\op_e}^1\times1_\G^\inv=\ND_{\EXP(\op_e)}^1$.  Hence, by the existence of $\tau(\delta)$ and (g),
$$(\mu_{\act_e}^{-1}(w_0(\I_{\act_e})),\mu_{\act_e}^{-1}(w_m(\I_{\act_e})))\in\IO_{\EXP(\op_e)}^{t_0,t_m}=\IO_{\op_e}^{t_0,t_m}$$
By definition, this implies that $(w_0(\I_\G),w_m(\I_\G))\in\IO_{\act_e}^{\lambda_e^{-1}(t_0),\lambda_e^{-1}(t_m)}$.  

So, letting $e'=(\lambda_e^{-1}(t_0),\lambda_e^{-1}(t_m),e)$, then $\eta(\delta)=\bigl((w_0(\I_\G),t(e')),e',(w_m(\I_\G),h(e'))\bigr)$ is a computation of $\G$. 

If $e$ is a loop, then $s_0=s_1''=s_2''=s_m$.  So, $s_0=t(e')=h(e')=s_m$ and $e'$ is a generic reduction of $\rho$.

Conversely, suppose $s_1''\neq s_2''$.  Then, $\pi_e$ is bijective, so that $t_j=\pi_e^{-1}(s_j)$.  If $s_0=s_1''$, then $t_0=(\op_e)_s$, and so $\lambda_e^{-1}(t_0)=\s$; but then $t(e')=s_0$.  Similarly, if $s_0=s_2''$, then $t_0=(\op_e)_f$, $\lambda_e^{-1}(t_0)=\f$, and $t(e')=s_0$.  A similar argument implies that $h(e')=s_m$ and that $e'$ is a generic reduction of $\rho$.

Thus, $\eta(\delta)$ satisfies the statement.

Now suppose there exists $i\in\overline{m-1}$ such that $s_i\in S_\G$.  Letting $k_1$ be the minimal such index, the above arguments produce the construction of a computation $\delta_1'=\bigl((w_0(\I_\G),s_0),e_1',(w_{k_1}(\I_\G),s_{k_1})\bigr)$ of $\G$ such that $e_1'$ is a generic reduction of the subpath $e_1,\dots,e_{k_1}$ of $\rho$.  If there then exists $i\in\overline{k_1+1,m-1}$ such that $s_i\in S_\G$, then letting $k_2$ be the minimal such index, there exists a computation $\delta_2'=\bigl((w_{k_1}(\I_\G),s_{k_1}),e_2',(w_{k_2}(\I_\G),s_{k_2})\bigr)$ of $\G$ such that $e_2'$ is a generic reduction of $e_{k_1+1},\dots,e_{k_2}$.  Iterating, there exists $\ell\geq1$ such that $s_i\notin S_\G$ for any $i\in\overline{k_{\ell}+1,m-1}$.  But then 
$$\eta(\delta)=\delta_1',\dots,\delta_\ell'=\bigl((w_0(\I_\G),s_0),e_1',(w_{k_1}(\I_\G),s_{k_1}),e_2',\dots,e_\ell',(w_m(\I_\G),s_m)\bigr)$$ is a computation of $\G$ satisfying the statement.

\bigskip

\noindent
(c) By the definition of computation, $(w_0',s_0',w_1',s_1')\in\IO_{e_1'}$.  

So, $s_0'=t(e_1')$, $s_1'=h(e_1')$, and $(w_0',w_1')\in\IO_{\act_e}^{\sf_0,\sf_1}$.  Hence, $w_0'(\I_\G-\I_{\act_e})=w_1'(\I_\G-\I_{\act_e})$ and
$$(\mu_{\act_e}^{-1}(w_0'(\I_{\act_e})),\mu_{\act_e}^{-1}(w_1'(\I_{\act_e})))\in\IO_{\op_e}^{\lambda_e(\sf_0),\lambda_e(\sf_1)}$$ 
As $\inst_e\in\INST_n'$, the inductive hypothesis can be applied, and so (g) implies $\IO_{\op_e}=\IO_{\EXP(\op_e)}$.  Thus, $(\mu_{\act_e}^{-1}(w_0'(\I_{\act_e})),\mu_{\act_e}^{-1}(w_1'(\I_{\act_e})))\in\IO_{\EXP(\op_e)}^{\lambda_e(\sf_0),\lambda_e(\sf_1)}$.

Now, let $\eps$ be an input-output computation as in the statement.  

Note that $\mu_{\act_e}^{-1}(w_j'(\I_{\act_e}))\sqcup1_{\EXP(\op_e)}^\int=\mu_e^{-1}(w_j'(\I_{\act_e})\sqcup1_e^\int)$ for $j\in\{0,1\}$.  

As a result, letting $w=w_0'\sqcup1_\G^\inv$, \Cref{lem-op-e}(1) yields a computation of $\G_\exp$
$$\sigma_w(\eps)=\bigl((w_0,s_0),e_1,(w_1,s_1),e_2,\dots,e_m,(w_m,s_m)\bigr)$$
such that $s_0=\pi_e(\lambda(\sf_0))$, $s_m=\pi_e(\lambda(\sf_1)$, and $w_j=\mu_e(w_j'')\sqcup w(\I_\G^\exp-\I_e)$ for all $j\in\overline{0,m}$.  

As $w(\I_\G^\inv)=1_\G^\inv$ and $w(\I_\G)=w_0'$, $w(\I_\G^\exp-\I_e)=w_0'(\I_\G-\I_{\act_e})\sqcup(1)_{i\in\I_\G^\inv-\I_e^\int}$.  So, $$w_j=\mu_e(w_j'')\sqcup w_0'(\I_\G-\I_{\act_e})\sqcup(1)_{i\in\I_\G^\inv-\I_e^\int}$$ for all $j\in\overline{0,m}$.  In particular, since $w_0'(\I_\G-\I_{\act_e})=w_1'(\I_\G-\I_{\act_e})$, 
\begin{align*}
w_0&=\mu_e(w_0'')\sqcup w_0'(\I_\G-\I_{\act_e})\sqcup(1)_{i\in\I_\G^\inv-\I_e^\int} \\
w_m&=\mu_e(w_m'')\sqcup w_1'(\I_\G-\I_{\act_e})\sqcup(1)_{i\in\I_\G^\inv-\I_e^\int}
\end{align*}

Note that $w_0''=\mu_{\act_e}^{-1}(w_0'(\I_{\act_e}))\sqcup1_{\EXP(\op_e)}^\int=\mu_e^{-1}(w_0'(\I_{\act_e})\sqcup 1_e^\int)$, and so as a consequence $\mu_e(w_0'')=w_0'(\I_{\act_e})\sqcup1_e^\int$.  Similarly, $\mu_e(w_m'')=w_1'(\I_{\act_e})\sqcup1_e^\int$.  

Hence, since $(1)_{i\in\I_\G^\inv-\I_e^\int}\sqcup1_e^\int=1_\G^\inv$, $w_0=w_0'\sqcup1_\G^\inv$ and $w_m=w_1'\sqcup1_\G^\inv$.

Now suppose $\sf_0=\s$.  Then $s_0'=t(e_1')=t_0$ and $\lambda_e(\sf_0)=(\op_e)_s$.  So, $\pi_e(\lambda_e(\sf_0))=t_0$, and so $\pi_e(\lambda_e(\sf_0))=s_0'$.  Similarly, if $\sf_0=\f$, then $s_0'=t(e_1')=t_1$ and $\lambda_e(\sf_0)=(\op_e)_f$, so that $\pi_e(\lambda_e(\sf_0))=t_1=s_0'$.  An analogous argument implies that $\pi_e(\lambda_e(\sf_1))=s_1'$.

Thus, $\kappa_\delta(\eps)=\sigma_w(\eps)$ satisfies the statement

\bigskip

\noindent
(d) Suppose that $\ell=1$ and let $e_1'=(\sf_0,\sf_1,e)$ for $\sf_0,\sf_1\in\{\s,\f\}$ and $e\in E_\G$.

If $e\in E_\G-E_\G^0$, then for $e=(t_0,t_1,\act_e)$ with $\act_e=(\inst_e,\op_e,\mu_{\act_e}^\ext)$, it follows from (c) that there exists a computation $\eps$ of $\EXP(\op_e)$ between $(\mu_e^{-1}(w_0'(\I_{\act_e})\sqcup1_e^\int),\lambda_{\op_e}(\sf_0))$ and $(\mu_e^{-1}(w_1'(\I_{\act_e})\sqcup1_e^\int),\lambda_{\op_e}(\sf_1))$.  But then $\kappa_\delta(\eps)$ is a computation of $\G_\exp$ satisfying the statement.

Otherwise, $e\in E_\G^0$.  Let $e_1\in E_\G^\exp$ be the proper edge arising from $e$.  If $\sf_0=\sf_1$, then $w_0'=w_1'$, so that $\nu(\delta)$ can be taken to be an empty configuration. Conversely, if $\sf_0\neq\sf_1$, then let $\a\in\{\pm1\}$ with $\a=1$ if and only if $\sf_0=\s$ and define $\nu(\delta)=((w_0'\sqcup1_\G^\inv,s_0'),e_1^\a,(w_1'\sqcup1_\G^\inv,s_1'))$.  Either way, $\nu(\delta)$ is a computation of $\G_\exp$ satisfying the statement.

Now suppose $\ell\geq1$.  Then for each $i\in\overline{\ell}$, the above arguments produce a computation $\eps_i$ of $\G_\exp$ between $(w_{i-1}'\sqcup1_\G^\inv,s_{i-1}')$ and $(w_i'\sqcup1_\G^\inv,s_i')$.  Letting $\rho_i$ be the path supporting $\eps_i$, it then follows that $\rho=\rho_1,\dots,\rho_\ell$ supports a computation $\nu(\delta)$ satisfying the statement.

\bigskip

\noindent
(e) Note that $\I_{\EXP(\Obj)}^\val=\I_\Obj^\val$ and $\A_{\EXP(\op)}(i)=\A_\op(i)$ for any $\op\in\Obj$ and $i\in\I_\Obj^\val$.  Hence, there is a natural bijection between the vertex sets of $\G_{\EXP(\Obj)}^\val$ and $\G_\Obj^\val$ given by sending the vertex of $\G_{\EXP(\Obj)}^\val$ corresponding to $v\in L_\Obj^\val$ to its analogue in $\G_\Obj^\val$.

For any edge $(v_1,v_2)$ of $\G_\Obj^\val$, there exists $\op\in\Obj$ and an input-output computation $\delta$ between configurations $(w_1',s_1')$ and $(w_2',s_2')$ of $\op$ such that $w_i'(\I_\Obj^\val)=v_i$ for $i=1,2$.  But then (d) yields an input-output computation $\nu(\delta)$ of $\EXP(\op)$ between configurations $(w_1,s_1)$ and $(w_2,s_2)$ such that $s_i=s_i'$ and $w_i(\I_\Obj^\val)=v_i$ for $i=1,2$.  Hence, $(v_1,v_2)$ is an edge of $\G_{\EXP(\Obj)}^\val$.

Similarly, for any edge $(v_1,v_2)$ of $\G_{\EXP(\Obj)}^\val$, then (b) implies that $(v_1,v_2)$ is an edge of $\G_\Obj^\val$.

Thus, $\G_\Obj^\val$ and $\G_{\EXP(\Obj)}^\val$ are isomorphic graphs, and so the connected component containing $1_\Obj^\val$ is the same in both.

\bigskip

\noindent
(f) Let $\delta$ be a computation of $\EXP(\op)$ between $(w_1,s_1)$ and $(w_2,s_2)$ such that $s_1,s_2\in\SF_\op$.  Then, (a) implies that $w_1\in\GL_\op\times1_\op^\inv$ if and only if $w_2\in\GL_\op\times1_\op^\inv$.  As a result, since \Cref{lem-nse-star}(1) implies that $\ND_\op^1\subseteq\GL_\op$, it suffices to assume that $w_1,w_2\in\GL_\op\times1_\op^\inv$ and show that $w_1(\I_\op)\in\ND_\op^1$ if and only if $w_2(\I_\op)\in\ND_\op^1$.

But by (b), there exists an input-output computation $\eta(\delta)$ of $\op$ between the configurations $(w_1(\I_\op),s_1)$ and $(w_2(\I_\op),s_2)$, so that the statement follows from \Cref{lem-nse-star}(2).

\bigskip

\noindent
(g) By (e), $\ND_{\EXP(\op)}=\ND_\op$.  What's more, note that $1_\op^\int\sqcup1_\op^\inv=1_{\EXP(\op)}^\int$.

For any $(w_1,w_2)\in\IO_\op^{s,f}$, then by definition there exists a computation $\delta$ of $\op$ between the configurations $(w_1\sqcup1_\op^\int,\op_s)$ and $(w_2\sqcup1_\op^\int,\op_f)$.  By (d), there then exists a computation $\nu(\delta)$ of $\EXP(\op)$ between the configurations $(w_1\sqcup1_{\EXP(\op)}^\int,\op_s)$ and $(w_2\sqcup1_{\EXP(\op)}^\int,\op_f)$.  Hence, $(w_1,w_2)\in\IO_{\EXP(\op)}^{s,f}$.  Analogous arguments for the other possible endpoints of a path supporting an input-output computation then imply that $\IO_\op\subseteq\IO_{\EXP(\op)}$.

The reverse containment then follows from a parallel argument, applying (b) in place of (d).

\end{proof}

%=====================================================================
%=====================================================================
%=====================================================================
\section{Complexity} \label{sec-complexity}

The goal of \Cref{main-theorem} is to exhibit an $S$-machine that is in some way `efficient' in its computations.  This concept of efficiency is quantified by the complexity functions of the machine.  In this section, the definitions of these functions are recalled and then extended to $S$-graphs of degree 0 and finally to $S^*$-graphs. 

First, though, multiple binary relations are introduced for a particular class of functions containing all of those of interest.

\begin{definition} \label{def-equivalences}

A nondecreasing function $f\colon \N\rightarrow [0,\infty)$ is called a \emph{growth function}.  Define the binary relation $\preceq$ on growth functions by $f\preceq g$ if and only if there exist constants $a,c>0$ and $b\in\N$ such that $f(n) \leq a g(b n) + c$ for all $n\in \N$.  Note that this condition is equivalent to the existence of a single constant $C\in\N$ such that $f(n)\leq Cg(Cn)+C$ for all $n\in\N$.

Then, $\preceq$ is a preorder on the set of growth functions, and so there exists an equivalence relation $\sim$ given by $f\sim g$ if and only if $f\preceq g$ and $g\preceq f$.  The growth functions $f$ and $g$ are called \emph{$\sim$-equivalent} if $f\sim g$.

Thus, $n^3 \sim 5 n^3 + 3 n^2 + 7$ and $e^n \sim 2^{10 n}$, while $n^2 \precneq n^2 \log n \precneq n^{2.01}$ and $2^n \precneq n^n \precneq 2^{n^2}$. 

Note that this definition departs slightly from the definition of equivalence from \cite{SBR}, where an arbitrary linear term was also included in the right hand side of the definition of $\preceq$.  This preorder is denoted here by $\preceq_1$ and the corresponding equivalence by $\sim_1$. 
Thus, $0$, $n^{1/2}$ and $n$ are not $\sim$-equivalent, but are $\sim_1$-equivalent. 

However, note that $\sim$ and $\sim_1$ coincide for the set of growth functions $f$ satisfying $n\preceq f$.  Moreover, for any growth functions $f$ and $g$, $f\preceq_1 g$ if and only if $\max(f, n)\preceq \max(g, n)$. 

These preorders and equivalences are extended in a natural way to finite tuples of growth functions.  For example, for pairs of growth functions $(f_1,f_2)$ and $(g_1,g_2)$, then $(f_1,f_2)\preceq(g_1,g_2)$ if and only if $f_1\preceq g_1$ and $f_2\preceq g_2$.

\end{definition}

\begin{definition}[$S$-machine complexity]  Let $\S$ be an S-machine over the hardware $\H = (Y, Q)$, where $Y = (Y_i)_{i\in \overline{N}}$ and $Q = (Q_i)_{i\in \overline{N+1}}$.  For all $i\in\overline{N+1}$, fix elements $q_i^s,q_i^f\in Q_i$ such that there exists $j\in\overline{N+1}$ such that $q_j^s\neq q_j^f$.  Further, fix an index $i^*\in\overline{N}$.  Then the quadruple $(\S,i^*,(q_i^s)_{i\in\overline{N+1}},(q_i^f)_{i\in\overline{N+1}})$ is called a \emph{recognizing $S$-machine}.

In this case, the states $(q_i^s)_{i\in\overline{N+1}}$ and $(q_i^f)_{i\in\overline{N+1}}$ are called the \emph{starting} and the \emph{finishing states} of the recognizing $S$-machine.  Further, $i^*$ is called its \emph{input tape}.

Typically, the recognizing $S$-machine is identified with the $S$-machine defining it, with the choices of the starting state, finishing state, and input tape understood.  Moreover, all $S$-machines from this point forward will be taken to be recognizing.

For any $w\in L(Y_{i^*})$, the \emph{input word of $\S$ corresponding to $w$} is the admissible word $$I(w)=q_1^sw_1q_2^sw_2\dots w_Nq_{N+1}^s$$ such that $w_i=w$ if $i=i^*$ and $w_i=1$ otherwise.  Further, the \emph{accept word} of $\S$ is the admissible word $W_{ac}=q_1^fq_2^f\dots q_{N+1}^f$.

The \emph{language recognized by $\S$}, denoted $L_\S$, is the subset of $F(Y_{i^*})$ such that $w\in L_\S$ if and only if there exists a computation $\eps_w$ of $\S$ between the admissible words
$I(w)$ and $W_{ac}$.  In this case, $\eps_w$ is called an \emph{accepting computation} of $w$.  

For any computation $\eps = (w_0, \tau_1, w_1, \dots, \tau_n, w_n)$ of an $S$-machine, denote $\left[\varepsilon\right]$ as the sequence $(w_0, \dots, w_n)$.  The notation $w'\in[\eps]$ indicates that $w'=w_i$ for some $i\in\overline{0,n}$.  Then, define:

\begin{itemize}

\item the \emph{time} of $\eps$ as $\tm(\eps)=|\eps|=n$,

\item the space of $\eps$ as $\sp(\eps)=\max\{|w'|\mid w'\in[\eps]\}$, and

\item the \emph{area} of $\eps$ as $\ar(\varepsilon)=\sum\limits_{w'\in[\eps]} |w'|$

\end{itemize} 
For $w\in L_\S$, the \emph{time of $w$}, denoted $\tm_\S(w)$, is defined to be the minimal time of any accepting computation of $w$.  Similarly, the \emph{space} $\sp_\S(w)$ and \emph{area} $\ar_\S(w)$ of $w$ are the minimal space and area, respectively, of accepting computations of $w$.  Note that an accepting computation of $w$ achieving any one of these values need not achieve the others.

Then, the \emph{time complexity $\TM_\S \colon \N\rightarrow \N$ of $\S$} is defined by $$\TM_\S(n) = \max\bigl\{\tm_\S(w)\mid w\in L_\S, \ |w|\leq n\bigr\}$$where the maximum of the empty set is taken to be $0$.  The \emph{space complexity $\SP_\S$} and \emph{area complexity $\AR_\S$} of $\S$ are defined analogously.  By definition, all three of these are growth functions.

Finally, given two growth functions $f$ and $g$, $\S$ is said to have \emph{time-space complexity at most $(f,g)$} if for all $w\in L_\S$ satisfying $|w|\leq n$, there exists an accepting computation $\eps$ of $w$ such that $\tm_\S(\eps)\leq f(n)$ and $\sp_\S(\eps)\leq g(n)$.  In this case, $(f,g)$ is called an \emph{upper bound on $\TMSP_\S$}, indicated by the notation $\TMSP_\S\leq(f,g)$.  

Moreover, if $f'$ and $g'$ are growth functions satisfying $(f,g)\sim(f',g')$, then $(f',g')$ is called an \emph{asymptotic upper bound on $\TMSP_\S$}, denoted $\TMSP_\S\preceq(f',g')$.  Similarly, if $\TMSP_\S\leq(f,g)$ and $(f,g)\sim_1(f',g')$, then $\TMSP_\S\preceq_1(f',g')$.

\end{definition}

\begin{definition}[Object Complexity]  Let $\G\in\SGRAPH^*$ be an $S$-graph over $(\INST_\G,\H_\G)$ with $\H_\G=(\I_\G,\A_\G)$.  Then, let $$\eps=\bigl((w_0,s_0),e_1,(w_1,s_1),e_2,\dots,e_m,(w_m,s_m)\bigr)$$ be a computation of $\EXP(\G)$.  Similar to the analogue for $S$-machine computations, set $[\eps]$ as the sequence of $L_{\H_\G^\exp}$-words $(w_0,w_1,\dots,w_m)$ and let the notation $w'\in[\eps]$ indicate that $w'=w_i$ for some $i\in\overline{0,m}$.  

Then, letting $|w'|=\sum\limits_{i\in\I_\G^\exp}|w'(i)|$ for any $L_{\H_\G^\exp}$-word $w'$, define:

\begin{itemize}

\item the \emph{time} of $\eps$ as $\tm_\G(\eps)=|\eps|=n$,

\item the \emph{space} of $\eps$ by $\sp_\G(\eps)=\max\{|w'|\mid w'\in[\eps]\}$, and 

\item the \emph{area} of $\eps$ by $\ar_\G(\eps)=\sum\limits_{w'\in[\eps]}|w'|$.

\end{itemize}

Let $\Obj\in\OBJ^*$ and $\op\in\Obj$.  Then, $\eps$ is said to \emph{realize} $(w_1,w_2)\in\IO_\op^{s_1,s_2}$ if it is an input-output computation between $(w_1\sqcup1_\op^\int,s_1)$ and $(w_2\sqcup1_\op^\int,s_2)$.  Similarly, $\eps$ is said to \emph{$\EXP$-realize} $(w_1,w_2)\in\IO_\op^{s_1,s_2}$ if it is an input-output computation of $\EXP(\op)$ between $(w_1\sqcup1_{\EXP(\op)}^\int,s_1)$ and $(w_2\sqcup1_{\EXP(\op)}^\int,s_2)$.  Note that $\IO_\op=\IO_{\EXP(\op)}$ by \Cref{th-expansion-props}(g), and so the relation $(w_1,w_2)\in\IO_\op^{s_1,s_2}$ holds if and only if there exists a computation realizing it, and further if and only if there exists a computation $\EXP$-realizing it.

The set of all computations $\EXP$-realizing $(w_1,w_2)\in\IO_\op^{s_1,s_2}$ is denoted $\EXP_\op^\IO(w_1,s_1,w_2,s_2)$.  Note that, per previously defined notation, 
$$\EXP_\op^\IO(w_1,s_1,w_2,s_2)=\eps_{\EXP(\op)}(w_1\sqcup1_{\EXP(\op)}^\int,s_1,w_2\sqcup1_{\EXP(\op)}^\int,s_2)$$

Then, for $(w_1,w_2)\in\IO_\op^{s_1,s_2}$, the \emph{time} of $(w_1,s_1,w_2,s_2)$ is defined as $$\tm_\op(w_1,s_1,w_2,s_2)=\min\{\tm_\op(\eps)\mid\eps\in\EXP_\op^\IO(w_1,s_1,w_2,s_2)\}$$ 
The \emph{time complexity} of $\op$ is then the function $\TM_\op:\N\to\N$ given by $$\TM_\op(n)=\max\{\tm_\op(w_1,s_1,w_2,s_2)\mid (w_1,w_2)\in\IO_\op^{s_1,s_2}, \ |w_1|,|w_2|\leq n\}$$ 
The \emph{space complexity} and \emph{area complexity} of $\op$, $\SP_\op$ and $\AR_\op$, are defined analogously.  By definition, each of the three functions $\TM_\op$, $\SP_\op$, and $\AR_\op$ is a growth function.

Given a pair of growth functions $f$ and $g$, $\op$ has \emph{time-space complexity at most $(f,g)$} if for any $(w_1,w_2)\in\IO_\op^{s_1,s_2}$ such that $|w_1|,|w_2|\leq n$, there exists $\eps\in\EXP_\op^\IO(w_1,s_1,w_2,s_2)$ satisfying $\tm_\op(\eps)\leq f(n)$ and $\sp_\op(\eps)\leq g(n)$.  In this case, $(f,g)$ is called an \emph{upper bound} on $\TMSP_\op$, which is indicated by the notation $\TMSP_\op\leq(f,g)$.  

Moreover, for growth functions $f'$ and $g'$, the notation $\TMSP_\op\preceq(f',g')$ is used to indicate that there exist growth functions $f$ and $g$ such that $\TMSP_\op\leq(f,g)$ and $(f,g)\sim(f',g')$.  Analogously, $\TMSP_\op\preceq_1(f',g')$ indicates the existence of growth functions $f$ and $g$ such that $\TMSP_\op\leq(f,g)$ and $(f,g)\sim_1(f',g')$.

If $\TMSP_\op\leq(f,g)$ for all $\op\in\Obj$, then $(f,g)$ is called an \emph{upper bound on $\TMSP_\Obj$}, denoted $\TMSP_\Obj\leq(f,g)$.  As above, the notation $\TMSP_\Obj\preceq(f',g')$ (respectively $\TMSP_\Obj\preceq_1(f',g')$) indicates the existence of an upper bound $(f,g)$ on $\TMSP_\Obj$ such that $(f',g')\sim(f,g)$ (respectively $(f',g')\sim_1(f,g)$).

\end{definition}

Given an object $\Obj\in\OBJ^*$ and $\op\in\Obj$, a quadruple $(w_1,s_1,w_2,s_2)\in\IO_\op$ is called \emph{essential} if either:
\begin{itemize}

\item $s_1=\op_s$ and $s_2=\op_f$, or

\item $s_1=s_2$ and $w_1\neq w_2$,

\end{itemize}
The following statement reduces the study of the time-space complexity of an object to the complexity of computations that $\EXP$-realize essential quadruples.

\begin{lemma}\label{lem-tmsp-basics}

Let $\Obj\in\OBJ^*$ and $f,g$ be growth functions such that $g(n)\geq n$ for all $n\in\N$.  Then $\TMSP_\Obj\leq(f,g)$ if and only if for any $\op\in\Obj$ and essential $(w_1,s_1,w_2,s_2)\in\IO_\op$, there exists an $\EXP$-realizing computation $\eps\in\EXP_\op^\IO(w_1,s_1,w_2,s_2)$ satisfying $\tm_\op(\eps)\leq f(n)$ and $\sp_\op(\eps)\leq g(n)$ for $n=\max(|w_1|,|w_2|)$.

\end{lemma}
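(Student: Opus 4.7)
The plan is two-sided, and the forward implication is immediate: if $\TMSP_\Obj\leq(f,g)$, then for every $\op\in\Obj$ and every $(w_1,s_1,w_2,s_2)\in\IO_\op^{s_1,s_2}$ with $|w_1|,|w_2|\leq n$ an $\EXP$-realizing computation within the required bounds exists, and essential quadruples are merely a special case. So I will focus on the reverse implication.

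For the reverse direction I would fix $\op\in\Obj$ and an arbitrary $(w_1,s_1,w_2,s_2)\in\IO_\op^{s_1,s_2}$ with $|w_1|,|w_2|\leq n$, and produce the required computation by case analysis on the shape of the quadruple. Since $\SF_\op=\{\op_s,\op_f\}$ there are only four possible pairs $(s_1,s_2)$, and the essential ones are handled immediately: when $(s_1,s_2)=(\op_s,\op_f)$ or when $s_1=s_2$ with $w_1\neq w_2$, the hypothesis provides $\eps\in\EXP_\op^\IO(w_1,s_1,w_2,s_2)$ with $\tm_\op(\eps)\leq f(\max(|w_1|,|w_2|))\leq f(n)$ and $\sp_\op(\eps)\leq g(\max(|w_1|,|w_2|))\leq g(n)$ by monotonicity. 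So only the two non-essential shapes remain.

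The first non-essential case is $s_1=s_2$ with $w_1=w_2$: the length-zero computation consisting of the single configuration $(w_1\sqcup1_{\EXP(\op)}^\int,s_1)$ $\EXP$-realizes the quadruple, with time $0\leq f(n)$ and space $|w_1\sqcup1_{\EXP(\op)}^\int|=|w_1|\leq n\leq g(n)$; this is the only step where the hypothesis $g(n)\geq n$ is used. The second is $(s_1,s_2)=(\op_f,\op_s)$: by \Cref{lem-io-star}, $\IO_\op$ is symmetric, so the swapped quadruple $(w_2,\op_s,w_1,\op_f)$ lies in $\IO_\op$ and is essential, hence the hypothesis yields $\eps'\in\EXP_\op^\IO(w_2,\op_s,w_1,\op_f)$ with time at most $f(n)$ and space at most $g(n)$. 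Since $\EXP(\op)\in\SGRAPH_0$, every edge in the computation path of $\eps'$ admits a true inverse (condition (4) of \Cref{def-s1-graph}), so reversing $\eps'$ edge-by-edge produces a computation in $\EXP_\op^\IO(w_1,\op_f,w_2,\op_s)$ whose sequence $[\,\cdot\,]$ is the reversal of $[\eps']$. This inverse computation has the same length, hence the same time, and visits exactly the same configurations, hence the same space, as $\eps'$.

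I do not expect any substantive obstacle. The lemma is essentially a reduction that leverages (i) the trivial computation to absorb the diagonal case, (ii) the symmetry of $\IO_\op$ to absorb the $(\op_f,\op_s)$ case, and (iii) the fact that degree-$0$ expansions give genuinely reversible computations so that inversion preserves both time and space. The only points worth checking carefully are that the trivial computation does lie in $\EXP_\op^\IO(w_1,s_1,w_1,s_1)$ (which is immediate, as $(w_1\sqcup1_{\EXP(\op)}^\int,s_1)$ is a valid configuration of $\EXP(\op)$ by \Cref{th-expansion-props}(f)) and that the reversed computation actually has starting and finishing state letters at $\op_f$ and $\op_s$ respectively, which follows directly from the construction.
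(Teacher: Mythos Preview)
Your proposal is correct and follows essentially the same approach as the paper's proof: both directions are handled identically, with the reverse implication dispatched by the empty computation for the diagonal case (using $g(n)\geq n$) and by inverting an $\EXP$-realizing computation of the swapped essential quadruple for the $(\op_f,\op_s)$ case. Your citation of \Cref{lem-io-star} for symmetry is arguably cleaner than the paper's choice, and your extra remark justifying that the trivial configuration lies in $\EXP_\op^\IO$ is harmless padding.
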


\begin{proof}

The sufficiency of the statement is immediate by definition.

For necessity, let $(w_1,w_2)\in\IO_\op^{s_1,s_2}$ such that neither (1) nor (2) hold.  

If $s_1=s_2$, then $w_1=w_2$, and hence there exists an empty computation $\eps\in\EXP_\op^\IO(w_1,s_1,w_2,s_2)$.  As a result, $\tm_\op(\eps)=0\leq f(n)$ and $\sp_\op(\eps)=n\leq g(n)$ for $n=\max(|w_1|,|w_2|)$.

Otherwise, $s_1=\op_f$ and $s_2=\op_s$.  Then, by \Cref{lem-lio-io}(2), $(w_2,w_1)\in\IO_\op^{s,f}$.  By hypothesis, there then exists a computation $\eps\in\EXP_\op^\IO(w_2,s_2,w_1,s_1)$ satisfying the bounds $\tm_\op(\eps)\leq f(n)$ and $\sp_\op(\eps)\leq g(n)$ for $n=\max(|w_2|,|w_1|)$.  

But then letting $\eps=\bigl((w_0',s_0'),e_1,(w_1',s_1'),e_2,\dots,e_m,(w_m',s_m')\bigr)$, it follows that $$\eps^{-1}=\bigl((w_m',s_m'),e_m^{-1},\dots,e_2^{-1},(w_1',s_1'),e_1,(w_0',s_0')\bigr)$$ is a computation $\EXP$-realizing $(w_1,w_2)\in\IO_\op^{s_1,s_2}$ and satisfying $\tm_\op(\eps^{-1})=\tm_\op(\eps)\leq f(n)$ and $\sp_\op(\eps^{-1})=\sp_\op(\eps)\leq g(n)$.

\end{proof}

For a practical example, \Cref{lem-basic-0bounds} presents two particular calculations of the complexity bounds of specific (proper) operations that were constructed and studied in previous sections.

\begin{lemma}\label{lem-basic-0bounds} \

\begin{enumerate}

\item $\TMSP_\dt\preceq(n,n)$.

\item For any $A\subseteq\A^*$, $\TMSP_\cpy\preceq(n,n)$ for $\cpy=\cpy^A$.

\end{enumerate}

\end{lemma}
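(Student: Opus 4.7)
The plan relies on the observation that both $\dt$ and $\cpy$ are operations of degree $0$, so that the expansion map acts as the identity on them and $\EXP$-realizing computations coincide with ordinary computations of $\G_\dt$ and $\G_\cpy$. By \Cref{lem-tmsp-basics} (applied with the legitimate choice $g(n)=n$), I only need to exhibit, for every essential quadruple $(w_1,s_1,w_2,s_2)\in\IO_\op$, a realizing computation whose time and space are both $O(n)$ for $n=\max(|w_1|,|w_2|)$.

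For $\cpy=\cpy^A(a,b)$: \Cref{lem-io-cpy} tells me that $\IO^{s,s}_\cpy=\IO^{f,f}_\cpy=\Id$, so the only essential quadruples have the form $((w,1),\cpy_s,(w,w),\cpy_f)$ with $w\in F(A)$. I would reuse the path $\rho_w$ already constructed at the end of the proof of \Cref{lem-io-cpy}, whose length is $2|w|+3$. A direct length accounting along this path shows that during the first half each loop at $s_1$ moves one letter from tape $a$ to tape $c$, so $|a|+|c|$ stays equal to $|w|$; during the second half each loop at $s_2$ prepends a letter to both $a$ and $b$ while shortening $c$ by one (free reduction applies because the letters are processed in reverse order), so after $k$ such loops $|a|+|b|+|c|=|w|+k\le 2|w|$. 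Thus time is $2|w|+3$ and peak space is at most $2|w|$; since $|w_2|=2|w|$, both quantities are $O(n)$.

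For $\dt=\dt(b)$: \Cref{lem-ndr-div2} shows that the essential quadruples split into two types, since $\IO^{f,f}_\dt=\Id$. For the pairs $(\delta^{2j-\alpha},\delta^j)\in\IO^{s,f}_\dt$, I would use the explicit computations $\eps_{n,\tau}$ and $\eps_{n,\tau}'$ constructed in the proof of \Cref{lem-ndr-div2}, with $n=|j|$. Each is supported by a path of length $2n+3$: in the $q_1$-phase each loop adds one letter to tape $d$ and removes two from tape $b$, so $|b|+|d|=2n-k$ after $k$ loops, while in the $q_2$-phase the sum $|b|+|d|$ remains equal to $n$, so peak space is $2n$. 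For an essential pair $(\delta^{2j-\alpha},\delta^{2j-\beta})\in\IO^{s,s}_\dt$ with $\alpha\ne\beta$, I would concatenate a computation of the previous type taking $\delta^{2j-\alpha}$ to $\delta^j$ at $\dt_f$ with the inverse of one taking $\delta^{2j-\beta}$ to $\delta^j$; the resulting computation realizes the required pair, with both length and peak space bounded by $O(|w_1|+|w_2|)=O(n)$.

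The hardest step is essentially just bookkeeping; the only subtle point is that free reduction can only decrease tape lengths, which is exactly what makes the per-step length accounting a genuine upper bound on the space used by the computation.
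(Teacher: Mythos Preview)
Your proposal is correct and follows essentially the same approach as the paper's proof: reduce to essential quadruples via \Cref{lem-tmsp-basics}, then reuse the explicit realizing computations already constructed in the proofs of \Cref{lem-io-cpy} and \Cref{lem-ndr-div2}, with a concatenation for the nontrivial $\IO^{s,s}_\dt$ pairs. The paper carries out the per-step length accounting a bit more carefully (in particular it distinguishes the $\alpha=0$ and $\alpha=1$ cases for $\dt$, obtaining space bounds $2n$ and $2n+1$ respectively), but your bookkeeping is adequate for the asymptotic conclusion.
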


\begin{proof}

Recall that the expansion of an operation of degree 0 is itself.  In particular, realizing computations and $\EXP$-realizing computations are equivalent for operations of degree 0.  Hence, by \Cref{lem-tmsp-basics} and the definition of the preorder, in order to show that $\TMSP_\op\preceq(n,n)$, it suffices to find $C>0$ such that for any essential quadruple $(w_1,s_1,w_2,s_2)\in\IO_\op$, there exists a computation $\eps$ of $\op$ between the configurations $(w_1\sqcup1_\op^\int,s_1)$ and $(w_2\sqcup1_\op^\int,s_2)$ satisfying $\tm_\op(\eps),\sp_\op(\eps)\leq C\max(|w_1|,|w_2|)+C$.

\bigskip

\noindent
(1)  Let $n\in\N$ and $\tau\in\{\pm1\}$.  Then, the computation $\eps_{n,\tau}$ constructed in the proof of \Cref{lem-ndr-div2} is an input-output computation between $((\delta^{2\tau n},1),\dt_s)$ and $((\delta^{\tau n},1),\dt_f)$.  By construction,
\begin{align*}
\eps_{n,\tau}=\bigl((&u_0,\dt_s),e_1,(u_{1,0},q_1),e_2^\tau,(u_{1,1},q_1),e_2^\tau,\dots,e_2^\tau,(u_{1,n},q_1), \\
&e_3,(u_{2,0},q_2),e_5^\tau,(u_{2,1},q_2),e_5^\tau,\dots,e_5^\tau,(u_{2,n},q_2),e_6,(u_3,\dt_f)\bigr)
\end{align*}
Hence, $\tm_\dt(\eps_{n,\tau})=2n+3$.

By the definition of $\lab(e_1)$, $u_{1,0}=u_0=(\delta^{2\tau n},1)$.  The definition of the action $\lab(e_2)$ then implies that $u_{1,i}=(\delta^{2\tau(n-i)},\delta^{\tau i})$ for all $i\in\overline{n}$, and so $|u_{1,i}|=2n-i\leq 2n$.  In particular, $u_{1,n}=(1,\delta^{\tau n})$, and so $u_{2,0}=u_{1,n}$ by the definition of $\lab(e_3)$.  Then, $u_{2,i}=(\delta^{\tau i},\delta^{\tau(n-i)})$ for all $i\in\overline{n}$, and so $|u_{2,i}|=n\leq 2n$.  Finally, the definition of $\lab(e_3)$ yields $u_3=u_{2,n}=(\delta^{\tau n},1)$.  Hence, $\sp_\dt(\eps_{n,\tau})=2n$.

Similarly, for all $n\in\N$ and $\tau\in\{\pm1\}$, the computation $\eps_{n,\tau}'$ is an input-output computation between the configurations $((\delta^{2\tau n-1},1),\dt_s)$ and $((\delta^{\tau n},1),\dt_f)$.  By construction, 
\begin{align*}
\eps_{n,\tau}'=\bigl((&u_0',\dt_s),e_1,(u_{1,0}',q_1),e_2^\tau,(u_{1,1}',q_1),e_2^\tau,\dots,e_2^\tau,(u_{1,n}',q_1), \\
&e_4,(u_{2,0}',q_2),e_5^\tau,(u_{2,1}',q_2),e_5^\tau,\dots,e_5^\tau,(u_{2,n}',q_2),e_6,(u_3',\dt_f)\bigr)
\end{align*}
Hence, $\tm_\dt(\eps_{n,\tau}')=2n+3$.

As above, the definition of the actions $\lab(e_1)$ and $\lab(e_2)$ imply that $u_{1,0}'=u_0'=(\delta^{2\tau n-1},1)$ and $u_{1,i}'=(\delta^{2\tau(n-i)-1},\delta^i)$ for all $i\in\overline{n}$.  In particular, since $u_{1,n}'=(\delta^{-1},\delta^{\tau n})$, the definition of $\lab(e_4)$ yields $u_{2,0}'=(1,\delta^{\tau n})$.  As above, this yields $u_{2,i}'=(\delta^{\tau i},\delta^{\tau(n-i)})$ for $i\in\overline{n}$, and so $u_{2,n}'=u_3'=(\delta^{\tau n},1)$.

So, for any $i\in\overline{n}$, $|u_{1,i}'|=|2(n-i)-1|+i\leq 2(n-i)+1+i=2n-i+1$.  Hence, since $|u_{2,i}'|=n$ for all $i\in\overline{n}$, it follows that $\sp_\dt(\eps_{n,\tau}')\leq 2n+1$.

Now, suppose $(w_1,w_2)\in\IO_\dt^{s,f}$ such that $w_1=\delta^m$ for $m\in\Z$.  Then there exists $n\in\N$ and $\tau\in\{\pm1\}$ such that either:
\begin{itemize}

\item $m=2\tau n$, in which case $w_2=\delta^{\tau n}$; or

\item $m=2\tau n-1$, in which case $w_2=\delta^{\tau n}$.

\end{itemize}

In the first case, $|m|=2n$, and so $\eps_{n,\tau}\in\eps_\dt^\IO(w_1,\dt_s,w_2,\dt_f)$ and satisfies 
\begin{align*}
\tm_\dt(\eps_{n,\tau})&=2n+3=|m|+3 \\
\sp_\dt(\eps_{n,\tau})&=2n=|m|
\end{align*}
In the second case, $|m|\geq2n-1$, and so $\eps_{n,\tau}'\in\eps_\dt^\IO(w_1,\dt_s,w_2,\dt_f)$ and satisfies 
\begin{align*}
\tm_\dt(\eps_{n,\tau}')&=2n+3\leq |m|+4 \\
\sp_\dt(\eps_{n,\tau}')&\leq2n+1\leq|m|+2
\end{align*}

Hence, for any $(w_1,w_2)\in\IO_\dt^{s,f}$, there exists a computation $\eps$ of $\dt$ between the configurations $(w_1\sqcup1_\dt^\int,\dt_s)$ and $(w_2\sqcup1_\dt^\int,\dt_f)$ such that $\tm_\dt(\eps)\leq |w_1|+4$ and $\sp_\dt(\eps)\leq |w_1|+2$.

Now suppose $(w_1,w_2)\in\IO_\dt^{s_1,s_2}$ such that $s_1=s_2$ and $w_1\neq w_2$.
By \Cref{lem-ndr-div2}, $s_1=s_2=\dt_s$ and $\{w_1,w_2\}=\{\delta^{2\tau n},\delta^{2\tau n-1}\}$ for some $n\in\N$ and $\tau\in\{\pm1\}$.  But then $(w_i,\delta^{\tau n})\in\IO_\dt^{s,f}$ for $i\in\{1,2\}$, so that as above there exist input-output computations $\eps_i$ between $(w_i\sqcup1_\dt^\int,\dt_s)$ and $(\delta^{\tau n}\sqcup1_\dt^\int,\dt_f)$ satisfying $\tm_\dt(\eps_i)\leq|w_i|+4$ and $\sp_\dt(\eps_i)\leq|w_i|+2$.  

Hence, letting $\rho_i$ be the path supporting $\eps_i$, the concatenated path $\rho_1,\rho_2^{-1}$ supports a computation $\eps\in\eps_\dt^\IO(w_1,\dt_s,w_2,\dt_s)$ such that: 
\begin{align*}
\tm_\dt(\eps)&=\tm_\dt(\eps_1)+\tm_\dt(\eps_2)\leq2\max(|w_1|,|w_2|)+8 \\ 
\sp_\dt(\eps)&=\max(\sp_\dt(\eps_1),\sp_\dt(\eps_2))\leq\max(|w_1|,|w_2|)+2
\end{align*}

Thus, the parameter $C=8$ implies $\TMSP_\dt\preceq(n,n)$.

\bigskip

\noindent
(2)  As  \Cref{lem-io-cpy} indicates that $\IO_\cpy^{s,s}=\IO_\cpy^{f,f}=\Id$, any essential quadruple must be of the form $(w_1,\cpy_s,w_2,\cpy_f)$.  Hence, it suffices to find $C>0$ such that for any $(w_1,w_2)\in\IO_\cpy^{s,f}$, there exists a computation $\eps$ of $\cpy$ between $(w_1\sqcup1_\cpy^\int,\cpy_s)$ and $(w_2\sqcup1_\cpy^\int,\cpy_f)$ satisfying $$\tm_\cpy(\eps),\sp_\cpy(\eps)\leq C\max(|w_1|,|w_2|)+C$$

By \Cref{lem-io-cpy}, for any $(w_1,w_2)\in\IO_\cpy^{s,f}$, there exists $w\in F(A)$ such that $w_1=(w,1)$ and $w_2=(w,w)$.  Let $w=a_1^{\delta_1}\dots a_n^{\delta_n}$ such that $a_i\in\A$ and $\delta_i\in\{\pm1\}$.  Then, as in the proof of \Cref{lem-io-cpy}, there exists a realizing computation $\eps_w$ of the form:
\begin{align*}
\eps_w=\bigl((&u_0,\cpy_s),e_1,(u_{1,0},s_1),e_{1,x_1}^{\delta_1},(u_{1,1},s_1),e_{1,x_2}^{\delta_2},\dots,e_{1,x_n}^{\delta_n},(u_{1,n},s_1), \\
&e_t,(u_{2,0},s_2),e_{2,x_n}^{\delta_n},(u_{2,1},s_2),\dots,e_{2,x_1}^{\delta_1},(u_{2,n},s_2),e_2,(u_3,\cpy_f)\bigr)
\end{align*}
Hence, $\tm_\cpy(\eps_w)=2n+3$.

By the definition of $\lab(e_1)$, $u_{1,0}=u_0=(w,1,1)$.  The makeup of $\lab(e_{1,x})$ for $x\in\A$ then implies that $u_{1,j}=(v_j',1,v_j)$ for all $j\in\overline{n-1}$, where $v_j=x_1^{\delta_1}\dots x_j^{\delta_j}$ and $v_j'=x_{j+1}^{\delta_{j+1}}\dots x_n^{\delta_n}$.  In particular, $u_{1,n}=(1,1,w)$, and so $u_{2,0}=(1,1,w)$ by the definition of $\lab(e_t)$.  Then, the definition of $\lab(e_{2,x})$ yields $u_{2,j}=(v_{n-j}',v_{n-j}',v_{n-j})$ for $j\in\overline{n-1}$, so that $u_{2,n}=(w,w,1)$.  Hence, $\lab(e_2)$ implies $u_3=(w,w,1)$.

As $|v_j|+|v_j'|=n$ for all $j\in\overline{n-1}$, $|u_0|=|u_{1,i}|=|u_{2,0}|=n$ for all $i\in\overline{n}$.  What's more, $|u_{2,j}|=n+|v_{n-j}'|\leq2n$ for all $j\in\overline{n-1}$.  Hence, since $|u_{2,n}|=|u_3|=2n$, it follows that $\sp_\cpy(\eps_w)=2n$.

Thus, the statement is proved by the parameter choice $C=3$.

\end{proof}

There are other examples of objects and operations constructed in previous sections, e.g $\check_{>0}$ and $\inc()$ constructed in \Cref{ex-counter1}, whose complexity bounds will be useful for later constructions.  However, the proof of these bounds is delayed slightly.

This is because these examples are objects and operations of positive degree, and so their complexity bounds depend in some way on those of objects and operations of lower degree.  For example, $\check_{>0}$ is constructed from $\dt$ and $\cpy$, using them as instances, and thus its expansion contains subgraphs arising from the graphs of $\dt$ and $\cpy$.

Hence, estimating functions are first constructed, providing a link between the complexity of an object and that of its instances that will enable future arguments to bypass expansions.

\begin{definition}[Estimated Complexity]\label{def-complexity-est}

Let $\Obj\in\OBJ^*$.  For each $\inst\in\INST_\Obj$, fix a pair of growth functions $\F_\inst=(f_\inst,g_\inst)$.  Then, the tuple of pairs $\F=(\F_\inst)_{\inst\in\INST_\Obj}$ is called an \emph{estimating set} for $\Obj$.

Let $\delta$ be a computation of $\op\in\Obj$ supported by a generic path consisting of a single (generic) edge.  Denote $\delta=\bigl((w_0,s_0),e',(w_1,s_1)\bigr)$, where $e'=(\sf_0,\sf_1,e)$ with $e=(s_0',s_1',\act_e)\in E_\op$.  Then define the \emph{estimated time} and \emph{estimated space} of $\delta$ with respect to $\F$, denoted $\tm_\op^\F(\delta)$ and $\sp_\op^\F(\delta)$, respectively, by:
\begin{itemize}

\item If $\act_e\in\ACT_0$, then $\tm_\op^\F(\delta)=1$ and $\sp_\op^\F(\delta)=\max(|w_0|,|w_1|)$.

\item If $\act_e\in\ACT_\op$, then for $\act_e=(\inst_e,\op_e,\mu_{\act_e}^\ext)$,
\begin{align*}
\tm_\op^\F(\delta)&=f_{\inst_e}(M_e) \\
\sp_\op^\F(\delta)&=g_{\inst_e}(M_e)+|w_0(\I_\op-\I_{\act_e})|
\end{align*}
where $M_e=\max(|w_0(\I_{\act_e})|,|w_1(\I_{\act_e})|)$.

\end{itemize}

Now let $\eps=\bigl((w_0,s_0),e_1',(w_1,s_1),e_2',\dots,e_m',(w_m,s_m)\bigr)$ be an arbitrary nonempty computation of $\op$.  The computation $\eps(i)=\bigl((w_{i-1},s_{i-1}),e_i',(w_i,s_i)\bigr)$ of $\op$ is then called the \emph{$i$-th subcomputation of $\eps$}.  Note that as $|\eps(i)|=1$ for each $i$, $\tm_\op^\F(\eps(i))$ and $\sp_\op^\F(\eps(i))$ are defined above.  With this, the \emph{estimated time} and \emph{estimated space} of $\eps$ with respect to $\F$ are then defined to be $\tm_\op^\F(\eps)=\sum_{i\in\overline{m}}\tm_\op^\F(\eps(i))$ and $\sp_\op^\F(\eps)=\max_{i\in\overline{m}}\sp_\op^\F(\eps(i))$, respectively.

Finally, given an empty computation $\eps=(w_0,s_0)$ of $\op$, define $\tm_\op^\F(\eps)=0$ and $\sp_\op^\F(\eps)=|w_0|$.  Note that in this case, $\eps$ can be viewed as a computation of $\EXP(\op)$, and so $\tm_\op^\F(\eps)=\tm_\op(\eps)$ and $\sp_\op^\F(\eps)=\sp_\op(\eps)$.

The \emph{estimated time} and \emph{estimated space complexity functions} of $\op$ with respect to the estimating set $\F$ are defined in much the same way as for the actual complexity functions.  In particular, $\op$ has \emph{estimated time-space complexity with respect to $\F$ at most $(f,g)$} if $f$ and $g$ are a pair of growth functions such that for all $(w_1,s_1,w_2,s_2)\in\IO_\op$ with $|w_1|,|w_2|\leq n$, there exists a computation $\eps$ of $\op$ realizing $(w_1,w_2)\in\IO_\op^{s_1,s_2}$ and satisfying $\tm_\op^\F(\eps)\leq f(n)$ and $\sp_\op^\F(\eps)\leq g(n)$.  In this case, $(f,g)$ is called an \emph{upper bound} on $\TMSP_\op^\F$, as indicated by the notation $\TMSP_\op^\F\leq(f,g)$.  Further, the pair of growth functions $(f',g')$ is called an \emph{asymptotic upper bound on $\TMSP_\op^\F$}, denoted $\TMSP_\op^\F\preceq(f',g')$, if there exists an upper bound $(f,g)$ on $\TMSP_\op^\F$ such that $(f,g)\sim(f',g')$.

Finally, a pair of growth functions $(f,g)$ is called an upper bound on $\TMSP_\Obj^\F$, denoted $\TMSP_\Obj^\F\leq(f,g)$, if $\TMSP_\op^\F\leq(f,g)$ for all $\op\in\Obj$.  Similarly, $(f',g')$ is an asymptotic upper bound on $\TMSP_\Obj^\F$, denoted $\TMSP_\Obj^\F\preceq(f',g')$, if $\TMSP_\op^\F\preceq(f',g')$ for all $\op\in\Obj$.

\end{definition}

\begin{proposition}\label{prop-complexity-estimate} Let $\F=(\F_\inst)_{\inst\in\INST_\Obj}$ be an estimating set for $\Obj\in\OBJ^*$.  For all $\inst\in\INST_\Obj$, let $\inst=(\Obj_\inst,\mu_\inst^\val)$, let $\F_\inst=(f_\inst,g_\inst)$, and abbreviate $\TMSP_{\Obj_\inst}$ by $\TMSP_\inst$.

\begin{enumerate}

\item Suppose $\F_\inst$ is an upper bound on $\TMSP_\inst$ for all $\inst\in\INST_\Obj$.  Then for any computation $\delta=\bigl((w_0,s_0),e',(w_1,s_1)\bigr)$ of $\op\in\Obj$, there exists a computation $\eps$ of $\EXP(\op)$ between $(w_0\sqcup1_\op^\inv,s_0)$ and $(w_1\sqcup1_\op^\inv,s_1)$ satisfying $\tm_\op(\eps)\leq\tm_\op^\F(\delta)$ and $\sp_\op(\eps)\leq\sp_\op^\F(\delta)$.

\item If $\F_\inst$ is an upper bound on $\TMSP_{\inst}$ for all $\inst\in\INST_\Obj$, then $\TMSP_\Obj^\F\leq(f,g)$ implies $\TMSP_\Obj\leq(f,g)$.
	 
\item Suppose $\TMSP_{\inst} \preceq \F_\inst$ and $f_\inst\neq0$ for all $\inst\in\INST_\Obj$.  Further, suppose there exist growth functions $\{\phi_\inst,\psi_\inst\}_{\inst\in\INST_\Obj}$ such that $\psi_\inst\neq0$, $f_\inst(c n) \leq \phi_\inst(c) f_\inst(n)$, and $g_\inst(cn)\leq \psi_\inst(c)g_\inst(n)$ for any $c,n\in\N$.  Then $\TMSP_\Obj^\F \preceq (f, g)$ implies that $\TMSP_\Obj \preceq (f, g)$.

\end{enumerate}
	 
\end{proposition}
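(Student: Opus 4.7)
The plan is to establish the three parts in order, where (1) is the single-edge lifting lemma that powers (2) by concatenation, and (3) follows from (2) after replacing $\F$ with an ``actualized'' estimating set.

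For (1), I would split on the degree of the action $\act_e$ labelling the unique edge of $\delta$. If $\act_e \in \ACT_0$, then $e$ corresponds to a proper edge of $\EXP(\op)$, which alone supports a one-step computation $\eps$ between $(w_0 \sqcup 1_\op^\inv, s_0)$ and $(w_1 \sqcup 1_\op^\inv, s_1)$; since the trivial invisible word contributes nothing to length, $\tm_\op(\eps) = 1 = \tm_\op^\F(\delta)$ and $\sp_\op(\eps) = \max(|w_0|, |w_1|) = \sp_\op^\F(\delta)$. If instead $\act_e = (\inst_e, \op_e, \mu_{\act_e}^\ext) \in \ACT_\op$, then \Cref{th-expansion-props}(c) gives $(\mu_{\act_e}^{-1}(w_0(\I_{\act_e})), \mu_{\act_e}^{-1}(w_1(\I_{\act_e}))) \in \IO_{\EXP(\op_e)}^{\lambda_e(\sf_0), \lambda_e(\sf_1)}$. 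Because $\F_{\inst_e}$ is an upper bound on $\TMSP_{\inst_e}$, an $\EXP$-realizing computation $\eps_0$ of $\EXP(\op_e)$ exists with $\tm_{\op_e}(\eps_0) \leq f_{\inst_e}(M_e)$ and $\sp_{\op_e}(\eps_0) \leq g_{\inst_e}(M_e)$. Invoking \Cref{th-expansion-props}(c) once more produces $\kappa_\delta(\eps_0)$, a computation of $\EXP(\op)$ of the same length as $\eps_0$ whose intermediate $L$-words each have length at most $\sp_{\op_e}(\eps_0) + |w_0(\I_\op - \I_{\act_e})|$, yielding precisely the bounds $\tm_\op(\kappa_\delta(\eps_0)) \leq \tm_\op^\F(\delta)$ and $\sp_\op(\kappa_\delta(\eps_0)) \leq \sp_\op^\F(\delta)$.

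For (2), given an essential $(w_1, s_1, w_2, s_2) \in \IO_\op$ with $|w_1|, |w_2| \leq n$, the hypothesis $\TMSP_\Obj^\F \leq (f,g)$ yields a realizing computation $\delta$ of $\op$ with $\tm_\op^\F(\delta) \leq f(n)$ and $\sp_\op^\F(\delta) \leq g(n)$. Decomposing $\delta$ into its single-edge subcomputations $\delta(i)$, applying (1) to each, and concatenating the supporting paths produces an $\EXP$-realizing computation $\eps$ satisfying $\tm_\op(\eps) \leq \sum_i \tm_\op^\F(\delta(i)) = \tm_\op^\F(\delta) \leq f(n)$ and $\sp_\op(\eps) \leq \max_i \sp_\op^\F(\delta(i)) = \sp_\op^\F(\delta) \leq g(n)$. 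Since $g$ necessarily dominates $n$ on the support of any nontrivial realization, \Cref{lem-tmsp-basics} closes this step and gives $\TMSP_\Obj \leq (f,g)$.

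For (3), the plan is to construct an ``actualized'' estimating set $\F' = (\F'_\inst)$ that genuinely bounds each $\TMSP_\inst$ and then reduce to (2). The hypothesis $\TMSP_\inst \preceq \F_\inst$ furnishes a constant $c_\inst \in \N$ so that $f'_\inst(n) := c_\inst f_\inst(c_\inst n) + c_\inst$ and $g'_\inst(n) := c_\inst g_\inst(c_\inst n) + c_\inst$ define an actual upper bound $\F'_\inst$ on $\TMSP_\inst$. The multiplicative growth hypotheses $f_\inst(cn) \leq \phi_\inst(c) f_\inst(n)$ and $g_\inst(cn) \leq \psi_\inst(c) g_\inst(n)$, combined with $f_\inst \neq 0$ and $\psi_\inst \neq 0$, then ensure that $\F'_\inst$ is $\sim$-equivalent to $\F_\inst$: on each subcomputation of a realizing computation $\delta$, switching from $\F$ to $\F'$ inflates estimated time and space by a uniformly bounded multiplicative factor (involving $\max_\inst c_\inst \phi_\inst(c_\inst)$ and $\max_\inst c_\inst \psi_\inst(c_\inst)$ over the finite set $\INST_\Obj$) and an additive $c_\inst$. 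Thus $\TMSP_\Obj^\F \preceq (f,g)$ propagates to $\TMSP_\Obj^{\F'} \preceq (f,g)$, and applying (2) to a representative actual upper bound for $\F'$ finishes the proof.

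The main obstacle is the bookkeeping in (3): the additive constants $c_\inst$ accumulated per edge of a realizing computation must be absorbed into the asymptotic bound $(f,g)$, which requires controlling the total number of edges by a quantity already dominated by $(f,g)$. The nonvanishing hypotheses on $f_\inst$ and $\psi_\inst$, together with the multiplicative growth conditions, are exactly what enable this absorption, and verifying it carefully (especially for edges whose estimated contribution under $\F$ vanishes) is the delicate computation underlying the asymptotic statement.
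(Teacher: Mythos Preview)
Your proposal is correct and follows essentially the same route as the paper for all three parts. Two small remarks. In (2), the paper does not invoke \Cref{lem-tmsp-basics}; it simply treats every quadruple in $\IO_\op$ directly, noting that an empty realizing computation of $\op$ is already a computation of $\EXP(\op)$ with the required bounds. This sidesteps any need to argue that $g(n)\geq n$. In (3), you correctly isolate the one genuine difficulty, absorbing the per-edge additive constant $+D$ accumulated over $m$ subcomputations, but you stop short of the mechanism. The paper first reduces to a realizing computation with pairwise distinct configurations, then argues by pigeonhole: in any run of $K$ consecutive subcomputations (with $K$ depending only on $|S_\op|$ and the alphabet sizes), either some edge is labelled by a degree-$0$ action, contributing $1$ to $\tm_\op^\F$, or every word appearing has some coordinate of length at least a fixed threshold $C$, forcing some $f_{\inst}$ to evaluate to a positive constant $\omega$. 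This yields $m \leq \omega^{-1}K\,\tm_\op^\F(\eps)+K$, which is exactly the control on the edge count you anticipated.
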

\begin{proof}

(1) Let $e'=(\sf_0,\sf_1,e)$ for $e=(t_0,t_1,\act)\in E_\op$.

Suppose $e\in E_\op^0$.  If $\sf_0=\sf_1$, then $w_0=w_1$ and $s_0=s_1$, so that there exists an empty computation $\eps$ of $\EXP(\op)$ between $(w_0\sqcup1_\op^\inv,s_0)$ and $(w_1\sqcup1_\op^\inv,s_1)$.  In this case, $\tm_\op(\eps)=0<1=\tm_\op^\F(\delta)$ and $\sp_\op(\eps)=|w_0\sqcup1_\op^\inv|=|w_0|=\sp_\op^\F(\delta)$.  Otherwise, set $\a=1$ if $\sf_0=\s$ and $\a=-1$ if $\sf_0=\f$.  Then, by construction, there exists a computation
$$\eps=\bigl((w_0\sqcup1_\op^\inv,s_0),(e'')^\a,(w_1\sqcup1_\op^\inv,s_1)\bigr)$$
of $\EXP(\op)$, where $e''$ is the proper edge constructed from $e$.  In this case,
\begin{align*}
\tm_{\op}(\eps)&=1=\tm_\op^\F(\delta) \\ 
\sp_{\op}(\eps)&=\max(|w_0\sqcup1_\op^\inv|,|w_1\sqcup1_\op^\inv|)=\max(|w_0|,|w_1|)=\sp_\op^\F(\delta)
\end{align*}

Hence, it suffices to assume that $e\in E_\op-E_\op^0$.  Letting $\act=(\inst,\op_e,\mu_\act^\ext)$, \Cref{th-expansion-props}(c) then implies that
$$(\mu_\act^{-1}(w_0(\I_\act)),\mu_\act^{-1}(w_1(\I_\act)))\in\IO_{\EXP(\op_e)}^{\lambda_{\op_e}(\sf_0),\lambda_{\op_e}(\sf_1)}$$  
So, as $\F_\inst$ is an upper bound of $\TMSP_\inst$ by hypothesis, there exists a computation of $\EXP(\op_e)$ 
$$\gamma=\bigl((w_0',s_0'),e_1'',(w_1',s_1'),e_2'',\dots,e_m'',(w_m',s_m')\bigr)$$
between the configuration $(w_0',s_0')=\bigl(\mu_\act^{-1}(w_0(\I_\act))\sqcup1_{\EXP(\op_e)}^\int,\lambda_{\op_e}(\sf_0)\bigr)$ and the configuration $(w_m',s_m')=\bigl(\mu_\act^{-1}(w_1(\I_\act))\sqcup1_{\EXP(\op_e)}^\int,\lambda_{\op_e}(\sf_1)\bigr)$ which satisfies $\tm_{\op_e}(\gamma)\leq f_\inst(M)$ and $\sp_{\op_e}(\gamma)\leq g_\inst(M)$ for $M=\max(|w_0'|,|w_m'|)$.  

Note that $|w_0'|=|\mu_\act^{-1}(w_0(\I_\act))|=|w_0(\I_\act)|$ and, similarly, $|w_m'|=|w_1(\I_\act)|$.  Hence, it follows that $M=\max(|w_0(\I_\act)|,|w_1(\I_\act)|)$.

\Cref{th-expansion-props}(c) then yields a computation 
$$\eps=\kappa_\delta(\gamma)=\bigl((u_0,\sigma_0),e_1''',(u_1,\sigma_1),e_2''',\dots,e_m''',(u_m,\sigma_m)\bigr)$$
of $\EXP(\op)$ such that $(u_0,\sigma_0)=(w_0\sqcup1_\op^\inv,s_0)$, $(u_m,\sigma_m)=(w_1\sqcup1_\op^\inv,s_1)$, and $$u_j=\mu_e(w_j')\sqcup w_0(\I_\op-\I_\act)\sqcup (1)_{i\in\I_\op^\inv-\I_e^\int}$$ for all $j\in\overline{0,m}$.  

Note that $|u_j|=|\mu_e(w_j')|+|w_0(\I_\op-\I_\act)|=|w_j'|+|w_1(\I_\op-\I_\act)|$ for all $j\in\overline{0,m}$.  Thus,
\begin{align*}
\tm_{\op}(\eps)&=m=\tm_{\op_e}(\gamma)\leq f_\inst(M)=\tm_\op^\F(\delta) \\
\sp_{\op}(\eps)&=\max_{j\in\overline{0,m}}|u_j|\leq g_\inst(M)+|w_1(\I_\op-\I_{\act_i})|=\sp_\op^\F(\delta)
\end{align*}

\bigskip

\noindent
(2) Let $(w_1,s_1,w_2,s_2)\in\IO_\op$ for some $\op\in\Obj$.  Then, for $n=\max(|w_1|,|w_2|)$, there exists a computation 
$$\delta=\bigl((w_0',s_0'),e_1,(w_1',s_1'),e_2,\dots,e_m,(w_m',s_m')\bigr)$$
realizing $(w_1,w_2)\in\IO_\op^{s_1,s_2}$ such that $\tm_\op^\F(\delta)\leq f(n)$, and $\sp_\op^\F(\delta)\leq g(n)$.

If $\delta$ is empty, then $\tm_\op(\delta)\leq f(n)$ and $\sp_\op(\delta)\leq g(n)$.  So, one can assume that $\delta$ is nonempty.

By (1), for all $i\in\overline{m}$ there exists a computation $\eps_i$ of $\EXP(\op)$ between $(w_{i-1}'\sqcup1_\op^\inv,s_{i-1}')$ and $(w_i'\sqcup1_\op^\inv,s_i')$ such that $\tm_\op(\eps_i)\leq\tm_\op^\F(\delta(i))$ and $\sp_\op(\eps_i)\leq\sp_\op^\F(\delta(i))$.

Let $\rho_i$ be the path in $\EXP(\op)$ supporting $\eps_i$.  Then, $\rho_1,\dots,\rho_m$ is a path supporting a computation $\eps$ of $\EXP(\op)$ between $(w_1\sqcup1_{\EXP(\op)}^\int,s_1)$ and $(w_2\sqcup1_{\EXP(\op)}^\int,s_2)$ and satisfying
\begin{align*}
\tm_\op(\eps)&=\sum_{i\in\overline{m}}\tm_\op(\eps_i)\leq\sum_{i\in\overline{m}}\tm_\op^\F(\delta(i))=\tm_\op^\F(\delta)\leq f(n) \\
\sp_\op(\eps)&=\max_{i\in\overline{m}}\sp_\op(\eps_i)\leq\max_{i\in\overline{m}}\sp_\op^\F(\delta(i))=\sp_\op^\F(\delta)\leq g(n)
\end{align*}
Hence, $\TMSP_\op\leq(f,g)$.

\bigskip

\noindent
(3)  For all $\inst\in\INST_\Obj$, let $\F_\inst'=(f_\inst',g_\inst')$ be a pair of growth functions such that $\TMSP_\inst\leq\F_\inst'$ and $\F_\inst'\sim\F_\inst$.  Then, $\F'=(\F_\inst')_{\inst\in\INST_\Obj}$ is an estimating set for $\Obj$.

Similarly, let $f'$ and $g'$ be growth functions such that $\TMSP_\Obj^\F\leq(f',g')$ and $(f',g')\sim(f,g)$.

By the definition of the preorder and the hypotheses on $f_\inst$ and $\psi_\inst$, there exists a constant $C\in\N$ such that for all $\inst\in\INST_\Obj$,

\begin{itemize}

\item $f_\inst'(n)\leq Cf_\inst(Cn)+C$ for all $n\in\N$,

\item $g_\inst'(n)\leq Cg_\inst(Cn)+C$ for all $n\in\N$,

\item $f_\inst(n)>0$ for all $n\geq C$, and

\item $\psi_\inst(n)>0$ for all $n\geq C$.

\end{itemize}

Moreover, as increasing the value of $C$ would not affect whether these conditions hold, it may be assumed without loss of generality that $C\geq1$ and $\psi_\inst(n)\geq C^{-1}$ for all $n\geq C$.

Let $\delta=\bigl((w_0,s_0),e',(w_1,s_1)\bigr)$ be a computation of some $\op\in\Obj$.  Further, let $e'=(\sf_0,\sf_1,e)$ for $e=(t_0,t_1,\act)\in E_\op$.

If $e\in E_\op^0$, then $\tm_\op^\F(\delta)=1=\tm_\op^{\F'}(\delta)$ and $\sp_\op^\F(\delta)=\max(|w_0|,|w_1|)=\sp_\op^{\F'}(\delta)$.  

Otherwise, let $\act=(\inst,\op_e,\mu_\act^\ext)$.  Then, for $M=\max(|w_0(\I_\act)|,|w_1(\I_\act)|)$, 
\begin{align*}
\tm_\op^{\F'}(\delta)&=f_\inst'(M)\leq Cf_\inst(CM)+C\leq C\phi_\inst(C)f_\inst(M)+C \\
&\leq C\phi_\inst(C)\tm_\op^\F(\delta)+C\\
\\
\sp_\op^{\F'}(\delta)&=g_\inst'(M)+|w_0(\I_\op-\I_\act)|\leq Cg_\inst(CM)+C+|w_0(\I_\op-\I_\act)| \\
&\leq C\psi_\inst(C)\bigl(g_\inst(M)+|w_0(\I_\op-\I_\act)|\bigr)+C\leq C\psi_\inst(C)\sp_\op^\F(\delta)+C
\end{align*}

Hence, for $D=C\max_{\inst\in\INST_\Obj}(\phi_\inst(C),\psi_\inst(C),1)$, then $\tm_\op^{\F'}(\delta)\leq D\tm_\op^\F(\delta)+D$ and $\sp_\op^{\F'}(\delta)\leq D\sp_\op^\F(\delta)+D$.

Now, let $(w_0,s_0,w_1,s_1)\in\IO_\op$ for some $\op\in\Obj$.  Then, since $\TMSP_\Obj^\F\leq(f',g')$, there exists a computation 
$$\eps=\bigl((w_0',s_0'),e_1',(w_1',s_1'),e_2',\dots,e_m',(w_m',s_m')\bigr)$$ 
realizing $(w_0,w_1)\in\IO_\op^{s_0,s_1}$ which satisfies the bounds $\tm_\op^\F(\eps)\leq f'(n)$ and $\sp_\op^\F(\eps)\leq g'(n)$ for $n=\max(|w_1|,|w_2|)$

Suppose there exist $i,j\in\overline{0,m}$ such that $(w_i',s_i')=(w_j',s_j')$.  Assuming without loss of generality that $i\leq j$, then there exists a computation 
$$\eps'=\bigl((w_0',s_0'),e_1',\dots,(w_i',s_i'),e_{j+1}',(w_{j+1}',s_{j+1}'),\dots,(w_m',s_m')\bigr)$$
between $(w_0',s_0')$ and $(w_m',s_m')$ such that $\tm_\op^\F(\eps')\leq\tm_\op^\F(\eps)$ and $\sp_\op^\F(\eps')\leq\sp_\op^\F(\eps)$.  Hence, it may be assumed without loss of generality that the configurations of $\eps$ are distinct.

Then, the bounds above imply that:

\begin{align}\label{tmF}
\tm_\op^{\F'}(\eps)=\sum_{i\in\overline{m}}\tm_\op^{\F'}(\eps(i))\leq\sum_{i\in\overline{m}}(D\tm_\op^\F(\eps(i))+D)= D\tm_\op^\F(\eps)+Dm
\end{align}
\begin{equation}\label{spF}
\sp_\op^{\F'}(\eps)=\max_{i\in\overline{m}}\sp_\op^{\F'}(\eps(i))\leq\max_{i\in\overline{m}}(D\sp_\op^\F(\eps(i))+D)=D\sp_\op^\F(\eps)+D
\end{equation}

For each $i\in\I_\op$, let $L_i^C=\{w\in F(\A_\op(i)) : |w|<C\}$.  As each alphabet is finite, $L_i^C$ is finite for each $i$.  Set $K_i=|L_i^C|$ and $K=|S_\op|\cdot\prod_{i\in\I_\op}K_i$.

Now, for each $\ell,r\in\overline{0,m}$ with $\ell<r$, let $\gamma_{\ell,r}=\bigl((w_\ell',s_\ell'),e_{\ell+1}',\dots,e_r',(w_r',s_r')\bigr)$.  So, the subcomputations of $\gamma_{\ell,r}$ coincide with the subcomputations $\eps(\ell+1),\dots,\eps(r)$ of $\eps$.

Suppose $r-\ell\geq K$.  For each $j\in\overline{\ell+1,r}$, let $e_j'=(\sf_j,\sf_j',e_j)$ where $e_j=(t_j,t_j',\act_j)\in E_\op$.

If $e_j\in E_\op^0$ for some $j$, then $\tm_\op^\F(\gamma_{\ell,r})\geq\tm_\op^\F(\eps(j))=1$.  

Otherwise, suppose $\act_j=(\inst_j,\op_j,\mu_{\act_j}^\ext)\in\ACT_\op$ for each $j$ and let $\I_{\ell,r}=\bigcup_{j\in\overline{\ell+1,r}}\I_{\act_j}$.

For any $i\in\I_\op-\I_{\ell,r}$ and $j\in\overline{\ell+1,r}$, $i\notin\I_{\act_j}$.  Hence, for any $j\in\overline{\ell,r}$, $w_j(i)=w_0(i)$, and so $w_j(\I_\op-\I_{\ell,r})=w_0(\I_\op-\I_{\ell,r})$.

Next, suppose that for each $i\in\I_{\ell,r}$ and $j\in\overline{\ell,r}$, $w_j(i)\in L_i^C$.  Then, for each $j\in\overline{\ell,r}$, $w_j\in\bigl(\bigtimes_{i\in\I_{\ell,r}}L_i^C\bigr)\sqcup w_0(\I_\op-\I_{\ell,r})$.  As each configuration of $\eps$ is distinct, it follows that $r-\ell+1\leq K$, yielding a contradiction.

So, there exists $i\in\I_{\ell,r}$ and $j\in\overline{\ell,r}$ such that $|w_j(i)|\geq C$.  As a result, there exists $k\in\overline{\ell+1,r}$ such that $i\in\I_{\act_k}$.  Assuming that $k>j$, let $p\in\overline{j+1,k}$ be the minimal index such that $i\in\I_{\act_p}$.  Then $w_{p-1}(i)=w_j(i)$, so that $|w_{p-1}(\I_{\act_p})|\geq C$.  In this case, $\tm_\op^\F(\eps(p))\geq f_{\inst_p}(C)$.  Similarly, if $k\leq j$, then letting $q\in\overline{k,j}$ be the maximal index such that $i\in\I_{\act_q}$, then $w_q(i)=w_j(i)$, and so again $\tm_\op^\F(\eps(q))\geq f_{\inst_q}(C)$.

Thus, for $\omega=\min_{\inst\in\INST_\Obj}(f_\inst(C),1)$, $\tm_\op^\F(\gamma_{\ell,r})\geq\omega$ whenever $r-\ell\geq K$.  As a result, letting $y=\lfloor m/K \rfloor$, for any $z\in\overline{y}$, $\tm_\op^\F(\gamma_{(z-1)K,zK})\geq\omega$.  Hence, $\tm_\op^\F(\eps)\geq\omega y\geq\omega(m/K-1)$.

So, $m\leq\omega^{-1}K\tm_\op^\F(\eps)+K$.  Hence, combining this with (\ref{tmF}), there exists a sufficiently large $E\in\N$ such that
$$\tm_\op^{\F'}(\eps)\leq E\tm_\op^\F(\eps)+E\leq Ef'(n)+E$$
This bound and that of (\ref{spF}) then imply that $\TMSP_\Obj^{\F'}\leq(f'',g'')$, where the growth functions $f''$ and $g''$ are given by $f''(n)=Ef'(n)+E$ and $g''(n)=Dg'(n)+D$ for all $n\in\N$.  By (2), this implies that $\TMSP_\Obj\leq(f'',g'')$. 

But $(f'',g'')\sim(f',g')\sim(f,g)$, and thus $\TMSP_\Obj\preceq(f,g)$.

\end{proof}

In practical terms, part (3) of \Cref{prop-complexity-estimate} is useful for calculating an asymptotic upper bound on $\TMSP_\Obj$, as it reduces the process to the calculation of estimated complexities of computations of each operation (rather than its expansion) using a simple estimating set which is an asymptotic upper bound on the time-space complexity of each instance.  The following example demonstrates this use.

\begin{lemma}\label{lem-basic-star-bounds} \

\begin{enumerate}[label=({\alph*})]

\item $\TMSP_{\ch>0}\preceq(n,n)$.

\item $\TMSP_{\textrm{Counter}_1}\preceq(n,n)$.

\end{enumerate}

\end{lemma}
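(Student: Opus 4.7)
The plan is to apply Proposition~\ref{prop-complexity-estimate}(3), which reduces both statements to bounding an estimated complexity $\TMSP^\F$ for an appropriate estimating set. For (a), I take $\F_\inst = (f_\inst, g_\inst)$ with $f_\inst(n) = g_\inst(n) = Cn + C$ for each $\inst$ arising from $\cpy^\delta$ or $\dt$ and $C$ a sufficiently large constant; by Lemma~\ref{lem-basic-0bounds} these satisfy $\TMSP_\inst \preceq \F_\inst$. For (b) I take the analogous set with $\F_{\ch>0} = (Cn+C, Cn+C)$, which is an asymptotic upper bound by part (a). The scaling hypothesis of Proposition~\ref{prop-complexity-estimate}(3) is satisfied with $\phi_\inst(c) = \psi_\inst(c) = c + 1$, since $f_\inst(cn) = Ccn+C \leq (c+1)(Cn+C)$ and similarly for $g_\inst$. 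By Lemma~\ref{lem-tmsp-basics}, it then suffices to exhibit, for every essential quadruple $(w_1,s_1,w_2,s_2) \in \IO_\op$ with $n = \max(|w_1|, |w_2|)$, a realizing computation $\eps$ with both $\tm_\op^\F(\eps)$ and $\sp_\op^\F(\eps)$ of order $O(n)$.

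For (a), the essential quadruples in $\IO_{\ch>0}$ are exactly the pairs $(\delta^i,\delta^i) \in \IO_{\ch>0}^{s,f}$ with $i \geq 1$, since $\IO_{\ch>0}^{s,s} = \IO_{\ch>0}^{f,f} = \Id$ by Lemma~\ref{lem-ndr-checkstar}. A realizing computation $\eps_i$ is the one built in the proof of Lemma~\ref{lem-ndr-checkstar}, supported by the generic path $e_1', e_2', (e_3')^r, e_4'$ with $r = \lceil \log_2 i \rceil$. The estimated time contributions are: each of the two degree-$0$ edges contributes $1$; the copy subcomputation at $e_2'$ contributes $f_\cpy(i) = O(i)$; and the $j$-th traversal of the loop $e_3'$ (labelled $\dt(\mathbf{b})$) contributes $f_\dt(\ell_{j-1}) = O(\ell_{j-1})$, where $\ell_0 = i$ and $\ell_j = \lceil \ell_{j-1}/2 \rceil$. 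The crucial point is the geometric estimate $\sum_{j=0}^{r-1} \ell_j \leq 2i + r = O(i)$, which keeps the total estimated time linear in $i$ despite the logarithmic number of iterations. Estimated space is handled similarly: the copy subcomputation contributes $g_\cpy(i) = O(i)$ (since $\I_{\ch>0}$ consists entirely of tapes mentioned by $\cpy(a,b)$), and each $\dt$ subcomputation contributes $g_\dt(\ell_{j-1}) + |w_0(a)| = O(\ell_{j-1}) + i = O(i)$. Hence $\TMSP_{\ch>0}^\F \preceq (n,n)$ and the claim follows.

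For (b), using part (a) as our estimating set, the essential quadruples of $\check$ in $\textrm{Counter}_1$ are $(\delta^i, \delta^i) \in \IO_\check^{s,f}$ with $i \geq 1$, and a realizing computation consists of a single traversal of the unique edge labelled $\check_{>0}(a)$, giving estimated time and space $f_{\ch>0}(i) = g_{\ch>0}(i) = O(i)$. The essential quadruples of $\inc$ are $(\delta^i, \delta^{i+1}) \in \IO_\inc^{s,f}$ with $i \geq 0$, and a realizing computation traverses the degree-$0$ edge $[\mathbf{a}\to\delta\mathbf{a}]$ (estimated time $1$, space $i+1$) followed by the $\check_{>0}(a)$ edge (estimated time and space $O(i+1)$). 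In both cases the totals are $O(n)$, so $\TMSP_{\textrm{Counter}_1}^\F \preceq (n,n)$ and Proposition~\ref{prop-complexity-estimate}(3) concludes the proof. The main technical point throughout is the geometric-sum argument in part (a); it is a simple calculation in isolation, but it exemplifies how the recursive structure of $S^*$-graphs converts the logarithmic depth of $\check_{>0}$ into a linear bound that would be awkward to track directly on the fully expanded $S$-graph.
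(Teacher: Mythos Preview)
Your proof is correct and follows essentially the same approach as the paper: apply Proposition~\ref{prop-complexity-estimate}(3) with the natural linear estimating set, then bound the estimated complexity of the explicit realizing computations $\eps_m$ (for $\check_{>0}$) and $\eps_j$ (for $\inc$) constructed earlier, with the key step being the geometric sum $\sum_j \ell_j = O(i)$ for the iterated $\dt$ loop. One small note: Lemma~\ref{lem-tmsp-basics} as stated concerns $\TMSP_\Obj$ and $\EXP$-realizing computations, not $\TMSP_\Obj^\F$ and realizing computations; the reduction to essential quadruples you need for the \emph{estimated} complexity holds by the same argument (trivial computations for the diagonal, inverses for $(f,s)$, noting that inverse computations have identical estimated time and space since $w_0$ and $w_1$ agree off $\I_\act$), and indeed the paper carries this out explicitly rather than invoking the lemma.
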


\begin{proof}

(a) Note that $\INST_{\ch>0}$ consists of one instance of $\cpy^\delta$ and one instance of $\dt$.  Simply naming these instances $\cpy$ and $\dt$, respectively, let $\F$ be the estimating set for $\check_{>0}$ given by $\F_\cpy=\F_\dt=(n,n)$.  
By \Cref{lem-basic-0bounds}, $\TMSP_\inst\preceq\F_\inst$ for $\inst\in\INST_{\ch>0}$.  Further, taking $\phi_\inst(n)=\psi_\inst(n)=n$ for all $n\in\N$, it immediately follows that the hypotheses of \Cref{prop-complexity-estimate}(3) are satisfied.  Thus, it suffices to show that $\TMSP_{\ch>0}^\F\preceq(n,n)$.

Fix $n\in\N$ and let $(w_1,w_2)\in\IO_{\ch>0}^{s_1,s_2}$ such that $|w_1|,|w_2|\leq n$.

If $s_1=s_2$, then there exists an empty computation $\eps$ realizing $(w_1,w_2)\in\IO_{\ch>0}^{s_1,s_2}$, so that the trivial bounds $\tm_{\ch>0}^\F(\eps)=0\leq n$ and $\sp_{\ch>0}^\F(\eps)=|w_1|\leq n$ hold.

Now, suppose $s_1=(\check_{>0})_s$ and $s_2=(\check_{>0})_f$.  By \Cref{lem-ndr-checkstar}, there exists $m\in\N-\{0\}$ such that $w_1=\delta^m=w_2$.  Moreover, there exists a computation $\eps_m$ realizing $(w_1,w_2)\in\IO_{\ch>0}^{s_1,s_2}$ such that:
\begin{align*}
\eps_m=\bigl((\delta^m, 1, (\ch_{>0})_s), &e_1', (\delta^m, 1, q_1), e_2' ,(\delta^m, \delta^m, q_2), e_3', (\delta^m, \delta^{\ell_1}, q_2), e_3', \dots  \\
&\dots,e_3', (\delta^m, \delta^{\ell_{r_m-1}}, q_2),e_3',(\delta^m,\delta,q_2),e_4',(\delta^m,1,(\ch_{>0})_f)\bigr) 
\end{align*}
where $e_4'=(\s,\f,e_4)$, $r_m=\lceil \log_2(m) \rceil$, and, setting $\ell_0=m$ and $\ell_{r_m}=1$, $\ell_i= \lceil \ell_{i-1}/2 \rceil$ for all $i\in\overline{r_m}$.

As $e_1,e_4\in E_{\ch>0}^0$ it follows that:

\begin{itemize}

\item $\tm_{\ch>0}^\F(\eps_m(1))=\tm_{\ch>0}^\F(\eps_m(r+3))=1$, 

\item $\sp_{\ch>0}^\F(\eps_m(1))=m$, and 

\item$\sp_{\ch>0}^\F(\eps_m(r+3))=m+1$.

\end{itemize}

Next, for $\act_2=\lab(e_2)$, note that $\I_{\act_2}=\I_{\ch>0}$.  So, $\tm_{\ch>0}^\F(\eps_m(2))=\sp_{\ch>0}^\F(\eps_m(2))=2m$.  Moreover, for $\act_3=\lab(e_3)$, $\I_{\act_3}=\{b\}$.  So, since $\ell_i\leq\ell_{i-1}$ for all $i\in\overline{r_m}$, $\tm_{\ch>0}^\F(\eps_m(j))=\ell_{j-3}$ and $\sp_{\ch>0}^\F(\eps_m(j))=\ell_{j-3}+m$ for all $j\in\overline{3,r_m+2}$. 

Hence, combining these yields:
\begin{align*}\tm_{\ch>0}^\F(\eps_m)&=2m+2+\sum_{i\in\overline{r_m}}\ell_{i-1}\leq 2m+2+\sum_{i\in\overline{r_m}}\left(\frac{m}{2^{i-1}}+1\right)\leq 5n+2 \\
\sp_{\ch>0}^\F(\eps_m)&\leq2m\leq 2n
\end{align*}

Finally, if $s_1=(\check_{>0})_f$ and $s_2=(\check_{>0})_s$, then again by \Cref{lem-ndr-checkstar} there exists an $m\in\N-\{0\}$ such that $w_1=w_2=\delta^m$.  But then $\eps_m^{-1}$ is a computation realizing $(w_1,w_2)\in\IO_{\ch>0}^{s_1,s_2}$ and satisfying the bounds $\tm_{\ch>0}^\F(\eps_m^{-1})\leq 5n+2$ and $\sp_{\ch>0}^\F(\eps_m^{-1})\leq 2n$ as above.

Thus, $\TMSP_{\ch>0}^\F\preceq(n,n)$.

\bigskip

\noindent
(b)  By construction, $\INST_{\Counter_1}$ consists of one instance of $\check_{>0}$.  Naming this instance $\check_{>0}$, it follows from (a) that $\F=(\F_{\ch>0})$ with $\F_{\ch>0}=(n,n)$ is an estimating set for $\Counter_1$ satisfying $\TMSP_\inst\preceq\F_\inst$ for all $\inst$.  Moreover, setting $\phi_\inst(n)=\psi_\inst(n)=n$ for all $n$, it follows that the hypotheses of \Cref{prop-complexity-estimate}(3) are satisfied, and so it suffices to check that $(n,n)$ is an asymptotic upper bound of $\TMSP_{\check}^\F$ and $\TMSP_\inc^\F$.  By construction, it follows immediately that $\TMSP_{\check}^\F\leq(n,n)$.

Now, fix $n\in\N$ and let $(w_1,s_1,w_2,s_2)\in\IO_\inc$ such that $|w_1|,|w_2|\leq n$.  

If $s_1=s_2$, then an empty computation $\eps$ realizes $(w_1,w_2)\in\IO_\inc^{s_1,s_2}$, so that again trivial bounds hold.

Suppose $s_1=\inc_s$ and $s_2=\inc_f$.  Then by \Cref{lem-counter-is-counter}, there exists $j\in\N-\{0\}$ such that $w_1=\delta^{j-1}$ and $w_2=\delta^j$.  Moreover, there exists a computation of $\inc$ realizing $(w_1,w_2)\in\IO_\inc^{s_1,s_2}$ of the form:
$$\eps_j=\bigl((\delta^{j-1},\inc_s),e_1',(\delta^j,q),e_2',(\delta^j,\inc_f)\bigr)$$
where $e_k'=(\s,\f,e_k)$ for $i=1,2$.

As $e_1\in E_\inc^0$, $\tm_\inc^\F(\eps_j(1))=1$ and $\sp_j^\F(\eps_j(1))=j$.  Further, by the definition of the estimating set, $\tm_\inc^\F(\eps_j(2))=j$ and $\sp_j^\F(\eps_j(2))=2j$.  Hence, $\tm_\inc^\F(\eps_j)=j+1\leq n+1$ and $\sp_j^\F(\eps_j)=2j\leq 2n$.

Finally, if $s_1=\inc_f$ and $s_2=\inc_s$, then again by \Cref{lem-counter-is-counter}, there exists an integer $j\in\N-\{0\}$ such that $w_1=\delta^j$ and $w_2=\delta^{j-1}$.  But then $\eps_j^{-1}$ is a computation realizing $(w_1,w_2)\in\IO_\inc^{s_1,s_2}$ and again satisfying $\tm_\inc^\F(\eps_j^{-1})\leq n+1$ and $\sp_j^\F(\eps_j^{-1})\leq2n$.

Thus, $\TMSP_\inc^\F\preceq(n,n)$.

\end{proof}

While \Cref{prop-complexity-estimate}(3) is helpful for arguments as above, note that it is restrictive in the possible pairs of asymptotic upper bounds being studied.  Moreover, the calculation of the estimated complexity of a computation requires one to calculate the size of each of the words comprising the configurations of the computation.  

Such restrictions and complications can be overcome if each element of $\IO_\op$ for $\op\in\Obj$ can be realized by a computation that `does not grow too large'.

\begin{definition}[Linearly bounded objects]

Let $\Obj\in\OBJ^*$.  For any computation $$\eps=\bigl((w_0',s_0'),e_1',(w_1',s_1'),e_2',\dots,e_m',(w_m',s_m')\bigr)$$ of an operation $\op\in\Obj$, define the \emph{reduced space} of $\eps$ as $\sph_\op(\eps)=\max_{j\in\overline{0,m}}|w_j'|$ and the \emph{estimated reduced space} of $\eps$ as $M_\eps=\max(|w_0'|,|w_m'|)$.

Then for $K\in\N$, $\Obj$ is called \emph{linearly $K$-bounded} if for all $\op\in\Obj$ and $(w_1,s_1,w_2,s_2)\in\IO_\op$, there exists a computation $\eps$ of $\op$ realizing $(w_1,w_2)\in\IO_\op^{s_1,s_2}$ such that $\sph_\op(\eps)\leq KM_\eps$.  In this case, $\eps$ is called a \emph{linearly $K$-bounded computation}.

\end{definition}

\begin{definition}[Linearly approximated complexity]

Let $\Obj\in\OBJ^*$ be a linearly $K$-bounded object and $\F=(\F_\inst)_{\inst\in\INST_\Obj}$ be an estimating set for $\Obj$, where $\F_\inst=(f_\inst,g_\inst)$ for each $\inst\in\INST_\Obj$.

Let $\eps=\bigl((w_0,s_0),e_1',(w_1,s_1),e_2',\dots,e_m',(w_m,s_m)\bigr)$ be a computation of some $\op\in\Obj$ with $m\geq1$.  For all $i\in\overline{m}$, let $e_i'=(\sf_i,\sf_i',e_i)$ with $e_i=(s_{i-1}',s_i',\act_i)\in E_\op$.  Then, define the \emph{linearly approximated time} and \emph{linearly approximated space} of $\eps(i)$ with respect to $(\F,K)$, denoted $\tmt_\op^\F(\eps(i),K)$ and $\spt_\op^\F(\eps(i),K)$ respectively, by:

\begin{itemize}

\item If $\act_i=\ACT_0$, then $\tmt_\op^\F(\eps(i),K)=1$ and $\spt_\op^\F(\eps(i),K)=KM_\eps$.

\item If $\act_i\in\ACT_\op$, then for $\act_i=(\inst_i,\op_i,\mu_{\act_i}^\ext)$,
\begin{align*}
\tmt_\op^\F(\eps(i),K)&=f_{\inst_i}(KM_\eps) \\
\spt_\op^\F(\eps(i),K)&=g_{\inst_i}(KM_\eps)+KM_\eps
\end{align*}
\end{itemize}
The \emph{linearly approximated time} and \emph{linearly approximated space} of $\eps$ with respect to $\F$ are then given by:
\begin{itemize}

\item $\tmt_\op^\F(\eps,K)=\sum_{i\in\overline{m}}\tmt_\op^\F(\eps(i),K)$

\item $\spt_\op^\F(\eps,K)=\max_{i\in\overline{m}}\bigl(\spt_\op^\F(\eps(i),K)\bigr)$

\end{itemize}

Finally, if $\eps=(w_0,s_0)$ is an empty computation of $\op$, then, as with the estimated complexity, define $\tmt_\op^\F(\eps,K)=0=\tm_\op(\eps)$ and $\spt_\op^\F(\eps,K)=|w_0|=\sp_\op(\eps)$.

The operation $\op$ is said to have \emph{linearly approximated time-space complexity at most $(f,g)$ with respect to $\F$ and $K$}, denoted $\TMSPT_\op^\F(K)\leq(f,g)$, if $f$ and $g$ are a pair of growth functions such that for all $(w_1,s_1,w_2,s_2)\in\IO_\op$ with $|w_1|,|w_2|\leq n$, there exists a linearly $K$-bounded computation $\eps$ of $\op$ realizing $(w_1,w_2)\in\IO_\op^{s_1,s_2}$ satisfying the bounds $\tmt_\op^\F(\eps,K)\leq f(n)$ and $\spt_\op^\F(\eps,K)\leq g(n)$.  Further, $(f',g')$ is called an \emph{asymptotic upper bound} on $\TMSPT_\op^\F(K)$, denoted $\TMSPT_\op^\F(K)\preceq(f',g')$, if there exists an upper bound $(f,g)$ on $\TMSPT_\op^\F(K)$ with $(f,g)\sim(f',g')$.

Finally, analogous to the definitions in previous settings, $(f,g)$ is called an \emph{upper bound} on $\TMSPT_\Obj^\F(K)$, denoted $\TMSPT_\Obj^\F(K)\leq(f,g)$, if $\TMSPT_\op^\F(K)\leq(f,g)$ for all $\op\in\Obj$.  Similarly, $\TMSPT_\Obj^\F(K)\preceq(f',g')$ if and only if $\TMSPT_\op^\F(K)\preceq(f',g')$ for all $\op\in\Obj$.

\end{definition}

\begin{lemma}\label{lem-K-bounded}

Let $\Obj$ be a linearly $K$-bounded object for some $K\in\N$.  Further, let $\F=(\F_\inst)_{\inst\in\INST_\Obj}$ be an estimating set for $\Obj$, where $\F_\inst=(f_\inst,g_\inst)$ for each $\inst\in\INST_\Obj$.  Suppose that $\TMSP_\inst\defeq\TMSP_{\Obj_\inst}\preceq\F_\inst$ for all $\inst\in\INST_\Obj$.  Then $\TMSPT_\Obj^\F(K)\preceq(f,g)$ implies $\TMSP_\Obj\preceq(f,g)$.

\end{lemma}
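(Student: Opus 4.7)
The plan is to first show that, for a linearly $K$-bounded computation, the estimated complexities with respect to $\F$ are dominated by the linearly approximated complexities with respect to $(\F,K)$, and then to invoke \Cref{prop-complexity-estimate} to pass from the estimated complexity of $\Obj$ to its actual complexity.

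Fix $\op\in\Obj$ and $(w_1,s_1,w_2,s_2)\in\IO_\op$ with $n:=\max(|w_1|,|w_2|)$. Using the hypothesis $\TMSPT_\Obj^\F(K)\preceq(f,g)$, pick $(f_1,g_1)\sim(f,g)$ and a linearly $K$-bounded realizing computation $\eps$ satisfying $\tmt_\op^\F(\eps,K)\le f_1(n)$ and $\spt_\op^\F(\eps,K)\le g_1(n)$. Since $\eps$ realizes $(w_1,w_2)$, $M_\eps=n$, and linear $K$-boundedness gives $\sph_\op(\eps)\le Kn$. Consequently, for each subcomputation $\eps(i)$ with $\act_i\in\ACT_\op$, $M_{e_i}\le Kn$ and $|w_{i-1}'(\I_\op-\I_{\act_i})|\le Kn$; for $\act_i\in\ACT_0$, both $|w_{i-1}'|$ and $|w_i'|$ are $\le Kn$. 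A term-by-term comparison of the two complexity definitions, combined with the monotonicity of every $f_\inst,g_\inst$, then yields
\[
\tm_\op^\F(\eps(i))\le\tmt_\op^\F(\eps(i),K),\qquad \sp_\op^\F(\eps(i))\le\spt_\op^\F(\eps(i),K)
\]
for every $i$. Summing times and maximising spaces gives $\tm_\op^\F(\eps)\le f_1(n)$ and $\sp_\op^\F(\eps)\le g_1(n)$, proving $\TMSP_\Obj^\F\preceq(f,g)$.

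The remaining task is to convert this estimated bound into an actual bound $\TMSP_\Obj\preceq(f,g)$. Using that $\TMSP_\inst\preceq\F_\inst$ and that $\INST_\Obj$ is finite, choose an estimating set $\F'=(f'_\inst,g'_\inst)_{\inst\in\INST_\Obj}$ that is an \emph{actual} upper bound on each $\TMSP_\inst$ and is equivalent to $\F$, with a single constant $C$ controlling the equivalence $\F\sim\F'$ uniformly across $\INST_\Obj$. Apply \Cref{prop-complexity-estimate}(1) with $\F'$ to each subcomputation $\eps(i)$ of the $\eps$ chosen above and splice the resulting expansion computations $\gamma_i$ along the path supporting $\eps$; this yields an expansion computation $\gamma$ of $\EXP(\op)$ that $\EXP$-realizes $(w_1,w_2)$ and satisfies $\tm_\op(\gamma)\le\tm_\op^{\F'}(\eps)$ and $\sp_\op(\gamma)\le\sp_\op^{\F'}(\eps)$.

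The main obstacle is then the bookkeeping that turns these bounds into an asymptotic upper bound in terms of $f$ and $g$, because $\F$ and $\F'$ agree only up to the equivalence $\F'_\inst(m)\le Cf_\inst(Cm)+C$. The resolution is to exploit linear $K$-boundedness once more: every $M_{e_i}$ lies in the fixed window $[0,Kn]$, so $\tm_\op^{\F'}(\eps)\le\sum_{i}f'_{\inst_i}(Kn)+m_0\le C\sum_i f_{\inst_i}(CKn)+(C{+}1)m_0$, and this last expression is itself a linearly approximated time estimate of $\eps$ with parameter $CK$ in place of $K$. The same term-by-term bound that produced $\tm_\op^\F(\eps)\le\tmt_\op^\F(\eps,K)$ in the first step (applied here with $CK$) controls this expression in terms of the original $\tmt_\op^\F(\eps,K)\le f_1(n)$ up to the uniform constants $C$ and the fixed multiplicative shift $K\mapsto CK$ absorbed into the equivalence relation $\sim$; the analogous estimate works for space. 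Thus $\TMSP_\Obj\leq(\tilde f,\tilde g)$ for some $(\tilde f,\tilde g)\sim(f,g)$, completing the proof by invoking \Cref{prop-complexity-estimate}(2) with $\F'$.
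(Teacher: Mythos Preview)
Your first paragraph is exactly the paper's argument: for a linearly $K$-bounded realizing computation $\eps$, one checks term by term that $\tm_\op^\F(\eps(i))\le\tmt_\op^\F(\eps(i),K)$ and $\sp_\op^\F(\eps(i))\le\spt_\op^\F(\eps(i),K)$, whence $\TMSP_\Obj^\F\le(f_1,g_1)$ with $(f_1,g_1)\sim(f,g)$. The paper then finishes in one line by invoking \Cref{prop-complexity-estimate}(2).

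Your second and third paragraphs try to carry out this last step by hand, passing to an exact estimating set $\F'$. The concern that (2) as stated requires each $\F_\inst$ to be an \emph{exact} (not merely asymptotic) upper bound is legitimate, but your workaround does not close the gap. The problematic step is the claim that
\[
C\sum_i f_{\inst_i}(CKn)+(C{+}1)m_0
\]
is controlled by $\tmt_\op^\F(\eps,K)\le f_1(n)$ ``up to \dots\ the shift $K\mapsto CK$ absorbed into $\sim$''. This does not follow: the equivalence $\sim$ acts on the target pair $(f,g)$, not on the individual $f_\inst$ appearing inside the sum, and nothing in the hypotheses relates $f_\inst(CKn)$ to $f_\inst(Kn)$ --- that is precisely the role of the auxiliary functions $\phi_\inst$ in \Cref{prop-complexity-estimate}(3), which are \emph{not} assumed in the present lemma. (Also, the additive error should be $Cm_1+m_0$, not $(C{+}1)m_0$, and $m_1$ is not bounded by $\tmt_\op^\F(\eps,K)$ when some $f_\inst$ vanishes on $[0,Kn]$.) Consequently the displayed expression cannot in general be turned into a bound of the form $Df(Dn)+D$, so neither the direct construction of $\gamma$ nor the final appeal to (2) with $\F'$ goes through as written.
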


\begin{proof}

Let $(f',g')$ be an upper bound on $\TMSPT_\Obj^\F(K)$ such that $(f',g')\sim(f,g)$.  Then, for any $\op\in\Obj$ and $(w_1,s_1,w_2,s_2)\in\IO_\op$ with $|w_1|,|w_2|\leq n$, let 
$$\eps=\bigl((w_0',s_0'),e_1',(w_1',s_1'),e_2',\dots,e_m',(w_m',s_m')\bigr)$$ 
be a linearly $K$-bounded computation of $\op$ realizing $(w_1,w_2)\in\IO_\op^{s_1,s_2}$ and satisfying the bounds $\tmt_\op^\F(\eps,K)\leq f'(n)$ and $\spt_\op^\F(\eps,K)\leq g'(n)$.

For each $i\in\overline{m}$, let $e_i'=(\sf_i,\sf_i',e_i)$ with $e_i=(t_i,t_i',\act_i)\in E_\op$.  Then:
\begin{itemize}

\item If $\act_i\in\ACT_\op^0$, then $\tm_\op^\F(\eps(i))=1=\tmt_\op^\F(\eps(i),K)$ and 
$$\sp_\op^\F(\eps(i))=\max(|w_{i-1}'|,|w_i'|)\leq KM_\eps=\spt_\op^\F(\eps(i),K)$$

\item If $\act_i\in\ACT_\op$, then for $\act_i=(\inst_i,\op_i,\mu_{\act_i}^\ext)$,
\begin{align*}
\tm_\op^\F(\eps(i))&=f_{\inst_i}(\max(|w_{i-1}'(\I_{\act_i})|,|w_i'(\I_{\act_i})|)) \\
&\leq f_{\inst_i}(\max(|w_{i-1}'|,|w_i'|))\\
&\leq f_{\inst_i}(KM_\eps)=\tmt_\op^\F(\eps(i),K)
\end{align*}

\begin{align*}
\sp_\op^\F(\eps(i))&=g_{\inst_i}(\max(|w_{i-1}'(\I_{\act_i})|,|w_i'(\I_{\act_i})|))+|w_i'(\I_\op-\I_{\act_i})| \\
&\leq g_{\inst_i}(\max(|w_{i-1}'|,|w_i'|))+|w_i'| \\
&\leq g_{\inst_i}(KM_\eps)+KM_\eps=\spt_\op^\F(\eps(i),K)
\end{align*}

\end{itemize}

Hence, $\tm_\op^\F(\eps)\leq \tmt_\op^\F(\eps,K)\leq f'(n)$ and $\sp_\op^\F(\eps)\leq \spt_\op^\F(\eps,K)\leq g'(n)$, and so it follows that $\TMSP_\Obj^\F\leq(f',g')$.

Thus, $\TMSP_\Obj\preceq(f,g)$ by \Cref{prop-complexity-estimate}(2).

\end{proof}

\section{Acceptors, Weak Acceptors, and Checkers} \label{sec-acceptors}

While the definitions of the complexity functions of objects and those of $S$-machines have obvious similarities, there are fundamental differences between them.  In particular, the complexity of an object is given by all possible input-output computations of its operations, while that of an $S$-machine is given only by a particular class of its computations which is restricted not only in the possible states of its starting and finishing configurations but also in the tape letters comprising them.
To draw a complete analogy, a particular class of proper operation is defined below.

\begin{definition}[Acceptor]\label{def-acceptor}
	Let $A\subseteq\A^*$ and $L\subseteq F(A)$.  Then an \emph{acceptor of $L$} is a proper operation $\accept_L\in\OBJ^*$ such that $\I_\acc^\ext=\{a\}$, $\A_\acc(a) = A$, and $\IO_\acc$ is given by:
	\begin{align*}
		\IO^{s,s}_\acc &= \Id \cup \{(w_1, w_2)~|~w_1, w_2\in L\};\\
		\IO^{s,f}_\acc &= \bigr\{(w, 1)~|~w\in L\bigr\}; \\
		\IO^{f,f}_\acc &= \Id.
	\end{align*}
	For $w\in L$, a computation $\eps$ of $\accept_L$ realizing $(w,1)\in\IO_\acc^{s,f}$ is called an \emph{accepting computation} of $w$.   Analogous to the terminology of realizing computations, a computation $\widetilde{\eps}_w$ of $\EXP(\accept_L)$ between $(w\sqcup1_\acc^\int\sqcup1_\acc^\inv,\acc_s)$ and $(1\sqcup1_\acc^\int\sqcup1_\acc^\inv,\acc_f)$ is called an $\EXP$-accepting computation of $w$.  Note that in this case, $\widetilde{\eps}_w$ realizes $(w,1)\in\IO_{\EXP(\acc)}^{s,f}=\IO_\acc^{s,f}$.

\end{definition}

\begin{proposition}\label{prop-machine-acc} For any acceptor $\accept_L(a)\in\OBJ^*$, there exists a recognizing $S$-machine $\S$ with input tape $i^*$ such that $Y_{i^*}=\A_\acc(a)$, $L_\S=L$, $\TM_\S\sim\TM_\acc$, and $\SP_\S\sim_1\SP_\acc$.  Moreover, for growth functions $f$ and $g$, $\TMSP_\S\preceq_1(f,g)$ if and only if $\TMSP_\acc\preceq_1(f,g)$.

\end{proposition}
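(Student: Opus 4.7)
The plan is to define $\S\defeq\S(\EXP(\accept_L),\eta)$ for a suitable enumeration $\eta$, and to transfer the conclusion from $\accept_L$ to $\S$ by combining the partial duality of \Cref{th-MG}(b) with the expansion machinery of \Cref{th-expansion-props}. First I would set $\G\defeq\EXP(\accept_L)$, which by \Cref{def-obj-expansion} is an $S$-graph of degree $0$ with $S_\G\supseteq\SF_\acc$ and $\I_\G=\{a\}\sqcup\I_\acc^\int\sqcup\I_\acc^\inv$. Letting $N=|\I_\G|$, I would choose $\eta:\I_\G\to\overline{N}$ with $\eta(a)=1$ and apply \Cref{th-MG}(b) to obtain the $S$-machine $\S$ over the hardware $(Y,Q)$, where $Y_1=A=\A_\acc(a)$, $Q_1=S_\G$, and $Q_j$ is a singleton $\{q_j^*\}$ for $j\geq 2$. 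Designate $q_1^s=\acc_s$, $q_1^f=\acc_f$, $q_j^s=q_j^f=q_j^*$ for $j\geq 2$, and set $i^*=1$; this exhibits $\S$ as a recognizing $S$-machine with $Y_{i^*}=A$.

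For the identification $L_\S=L$, the key observation is that \Cref{th-MG}(b)(ii)--(iii) provide a length-preserving bijection $\psi$ between the computations of $\S$ and those of $\G$, sending admissible words to configurations via the natural correspondence $\mu_{\G,\eta}$. Under this correspondence the input word $I(w)$ maps to $(w\sqcup 1_\acc^\int\sqcup 1_\acc^\inv,\acc_s)$ and the accept word $W_{ac}$ maps to $(1\sqcup 1_\acc^\int\sqcup 1_\acc^\inv,\acc_f)$; both endpoints lie in $(\ND_\acc^1\times 1_\acc^\inv)\times\SF_\acc$. Hence $w\in L_\S$ if and only if there is a computation of $\G$ between these two configurations, if and only if $(w,1)\in\IO_{\EXP(\accept_L)}^{s,f}$ by the proper restriction guaranteed by \Cref{th-expansion-props}(f). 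Combining \Cref{th-expansion-props}(g), which gives $\IO_{\EXP(\accept_L)}=\IO_{\accept_L}$, with \Cref{def-acceptor} then yields $w\in L$.

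For the complexity statement, the bijection $\psi$ preserves the length of a computation exactly, so $\tm_\S(w)$ equals the time of the corresponding $\EXP$-accepting computation of $w$ in $\EXP(\accept_L)$ for every $w\in L$. On the space side, every admissible word $u$ of $\S$ satisfies $|u|=(N+1)+\sum_{i\in\I_\G}|u(i)|$, so the size of any configuration differs from the length of the corresponding admissible word by exactly the constant $N+1$; in particular $|I(w)|=|w|+(N+1)$. Taking maxima over $|w|\leq n$, and using that essential quadruples of $\accept_L$ of type $(s,s)$ with distinct endpoints are realized by composing an accepting computation with the inverse of another (adding a factor of at most $2$ in both time and space), one obtains $\TM_\S\sim\TM_{\accept_L}$, $\SP_\S\sim_1\SP_{\accept_L}$, and the final equivalence $\TMSP_\S\preceq_1(f,g)\Longleftrightarrow\TMSP_{\accept_L}\preceq_1(f,g)$, since $\preceq_1$ absorbs additive constants like $N+1$ and multiplicative constants like $2$. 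The only technical point requiring care is the identification of admissible words of $\S$ with trivial tape words outside the input tape with configurations in $\ND_\acc^1\times 1_\acc^\inv\times\SF_\acc$, which is exactly the content of the proper restriction in \Cref{th-expansion-props}(f); the rest is routine constant-tracking bookkeeping.
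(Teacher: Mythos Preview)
Your proposal is correct and follows essentially the same approach as the paper. The paper first observes that $\EXP(\accept_L)$ is itself an acceptor of $L$ (by \Cref{th-expansion-props}(g)) with identical complexity, and then works entirely at degree~$0$; you instead keep $\accept_L$ and $\EXP(\accept_L)$ separate and invoke \Cref{th-expansion-props}(f),(g) explicitly, but this is the same argument with slightly different bookkeeping. One minor imprecision: composing two accepting computations adds a factor of~$2$ only in time, not in space (concatenation takes the maximum of the spaces), though this does not affect the conclusion.
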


\begin{proof}

It follows from \Cref{th-expansion-props}(g) that $\IO_{\EXP(\acc)}=\IO_\acc$, and so $\EXP(\accept_L)$ is also an acceptor of $L$.  Hence, since the complexity functions of proper operations are defined in terms of their expansions, it may be assumed without loss of generality that $\accept_L\in\OBJ_0$.  Hence, accepting computations and $\EXP$-accepting computations are equivalent in $\accept_L$.

Now, let $\G_\acc$ be the $S$-graph of degree 0 corresponding to $\accept_L$ and fix an enumeration $\eta:\I_\acc\to\overline{N}$.  Then, let $\S=\S(\G_\acc,\eta)$ be the $S$-machine over $\H_\S=\S_{\G_\acc,\eta}(\H_\acc)$ constructed in \Cref{th-MG}(b).

Let $i^*=\eta(a)$, $q_1^s=\acc_s$, and $q_1^f=\acc_f$.  Then, letting $q_j^s=q_j^f=q_j^*$ for all $j\in\overline{2,N+1}$, define $q^s,q^f\in S_{\H_\S}$ as the states $q^s=(q_i^s)_{i\in\overline{N+1}}$ and $q^f=(q_i^f)_{i\in\overline{N+1}}$.  

In accordance with the notation, the machine $\S$ is identified with the recognizing $S$-machine $(\S,i^*,q^s,q^f)$.  Note that $Y_{i^*}=\A_\acc(a)$ by construction.

Suppose $w\in L_\S$ and let $\eps=(w_0,\tau_1,w_1,\dots,\tau_n,w_n)$ be any computation of $\S$ between $I(w)$ and $W_{ac}$.  Then, by part (ii) of \Cref{th-MG}(b), there exists a computation of $\accept_L$
$$\psi_{\acc,\eta}(\eps)=\bigl(\mu_{\acc,\eta}(w_0),\phi(\tau_1),\mu_{\acc,\eta}(w_1),\dots,\phi(\tau_n),\mu_{\acc,\eta}(w_n)\bigr)$$ 
For all $j\in\overline{0,n}$, letting $w_j=q_1w_1'q_2\dots w_N'q_{N+1}$, by construction $\mu_{\acc,\eta}(w_j)=\bigl((w_{\eta(i)}')_{i\in\I_{\acc}},q_1\bigr)$.  In particular, $\mu_{\acc,\eta}(w_0)=(w\sqcup1_\acc^\int,\acc_s)$, $\mu_{\acc,\eta}(w_n)=(1,\acc_f)$, and $|\mu_{\acc,\eta}(w_j)|=|w_j|-(N+1)$ for each $j$.  Hence, $w\in L$ and $\psi_{\acc,\eta}(\eps)$ is an input-output computation of $\accept_L$ realizing $(w,1)\in\IO_\acc^{s,f}$ that satisfies: 

\begin{itemize}

\item $\tm_\acc(\psi_{\acc,\eta}(\eps))=\tm(\eps)$ 

\item $\sp_\acc(\psi_{\acc,\eta}(\eps))=\sp(\eps)-(N+1)$

\end{itemize}

Conversely, let $w\in L$ and $\eps_w=\bigl((w_0,s_0),e_1,(w_1,s_1),\dots,e_n,(w_n,s_n)\bigr)$ be an input-output computation of $\accept_L$ realizing $(w,1)\in\IO_\acc^{s,f}$.  Then, by part (iii) of \Cref{th-MG}(b), there exists a computation 
$$\psi_\S(\eps_w)=\bigl((\mu_{\acc,\eta}^{-1}(w_0,s_0)),\tau_1,\mu_{\acc,\eta}^{-1}(w_1,s_1),\dots,\tau_n,\mu_{\acc,\eta}^{-1}(w_n,s_n)\bigr)$$
of $\S$, where $\tau_i=\phi^{-1}(e_i)$ for all $i$.  As $(w_0,s_0)=(w\sqcup1_\acc^\int,\acc_s)$ and $(w_n,s_n)=(1,\acc_f)$, by construction $\mu_{\acc,\eta}^{-1}(w_0,s_0)=I(w)$ and $\mu_{\acc,\eta}^{-1}(w_n,s_n)=W_{ac}$.  In particular, $\psi_\S(\eps_w)$ is an accepting computation of $w$, so that $w\in L_\S$.  Moreover, $|\mu_{\acc,\eta}^{-1}(w_j,s_j)|=|w_j|+N+1$ for all $j$.  Hence, $\tm(\psi_\S(\eps_w))=\tm_\acc(\eps_w)$ and $\sp(\psi_\S(\eps_w))=\sp_\acc(\eps_w)+N+1$.

Thus, $L_\S=L$.

Now fix $n\in\N$ and $w\in L_\S$ such that $|w|\leq n$.  Then $w\in L$, so that $(w,1)\in\IO_\acc^{s,f}$.

As a result, there exist input-output computations $\eps_w^\tm,\eps_w^\sp$ of $\accept_L$ realizing $(w,1)\in\IO_\acc^{s,f}$ such that $\tm_\acc(\eps_w^\tm)\leq\TM_\acc(n)$ and $\sp_\acc(\eps_w^\sp)\leq\SP_\acc(n)$.  But then as above $\psi_\S(\eps_w^\tm)$ and $\psi_\S(\eps_w^\sp)$ are computations of $\S$ accepting $w$ and satisfying the bounds $\tm(\psi_\S(\eps_w^\tm))\leq\TM_\acc(n)$ and $\sp(\psi_\S(\eps_w^\sp))\leq\SP_\acc(n)+N+1$.  Hence, $\TM_\S(n)\leq\TM_\acc(n)$ and $\SP_\S(n)\leq\SP_\acc(n)+N+1$, i.e $\TM_\S\leq\TM_\acc$ and $\SP_\S\preceq\SP_\acc$.

Further, if $\TMSP_\acc\preceq(f,g)$, then for growth functions $f'$ and $g'$ satisfying $(f,g)\sim(f',g')$ and $\TMSP_\acc\leq(f',g')$, there exists a computation $\eps_w'$ of $\accept_L$ realizing $(w,1)\in\IO_\acc^{s,f}$ such that $\tm_\acc(\eps_w')\leq f'(n)$ and $\sp_\acc(\eps_w')\leq g'(n)$.  But then $\psi_\S(\eps_w')$ is a computation of $\S$ accepting $w$ and satisfying $\tm(\psi_\S(\eps_w'))\leq f'(n)$ and $\sp(\psi_\S(\eps_w'))\leq g'(n)+N+1$.  So, for $g''(n)=g'(n)+N+1$, $\TMSP_\S\leq(f',g'')$.  But $g''\sim g'$, so that $\TMSP_\S\preceq(f,g)$.

Conversely, let $(w_1,s_1,w_2,s_2)\in\IO_\acc$ such that $|w_1|,|w_2|\leq n$.

Suppose $s_1\neq s_2$ and fix $i\in\{1,2\}$ such that $s_i=\acc_s$.  Then, $w_i\in L=L_\S$, so that there exist computations $\eps^\tm$ and $\eps^\sp$ of $\S$ accepting $w_i$ and satisfying the bounds $\tm(\eps^\tm)\leq\TM_\S(n)$ and $\sp(\eps^\sp)\leq\SP_\S(n)$.  As above, there then exist computations $\psi_{\acc,\eta}(\eps^\tm)$ and $\psi_{\acc,\eta}(\eps^\sp)$ between $(w_i,\acc_s)$ and $(1,\acc_f)$ and such that $\tm_\acc(\psi_{\acc,\eta}(\eps^\tm))\leq\TM_\S(n)$ and $\sp_\acc(\psi_{\acc,\eta}(\eps^\sp))\leq\SP_\S(n)-(N+1)$.  If $i=1$ then these computations realize $(w_1,w_2)\in\IO_\acc^{s,f}$; if $i=2$, then their inverses realize $(w_1,w_2)\in\IO_\acc^{s,f}$ and satisfy the same bounds.

Otherwise, suppose $s_1=s_2$.  If $w_1\neq w_2$, then $w_1,w_2\in L$ and $s_1=s_2=\acc_s$.  For $i\in\{1,2\}$, let $\eps_i^\tm$ and $\eps_i^\sp$ be computations of $\S$ accepting $w_i$ and satisfying $\tm(\eps_i^\tm)\leq\TM_\S(n)$ and $\sp(\eps_i^\sp)\leq\SP_\S(n)$.  Then, concatenation yields computations $\delta^\tm=\psi_{\acc,\eta}(\eps_1^\tm)\psi_{\acc,\eta}(\eps_2^\tm)^{-1}$ and $\delta^\sp=\psi_{\acc,\eta}(\eps_1^\sp)\psi_{\acc,\eta}(\eps_2^\sp)^{-1}$ realizing $(w_1,w_2)\in\IO_\acc^{s_1,s_2}$ and satisfying $\tm_\acc(\delta^\tm)\leq2\TM_\S(n)$ and $\sp_\acc(\delta^\sp)\leq\SP_\S(n)-(N+1)$.

Finally, if $(w_1,s_1)=(w_2,s_2)$, then an empty computation $\eps$ realizes $(w_1,w_2)\in\IO_\acc^{s_1,s_2}$ and satisfies $\tm(\eps)=0\leq\TM_\S(n)$ and $\sp(\eps)\leq n$.

Hence, $\TM_\acc\preceq\TM_\S$ and $\SP_\acc\preceq_1\SP_S$.

In much the same way, $\TMSP_\S\preceq(f,g)$ implies $\TMSP_\acc\preceq_1(f,g)$.

\end{proof}

Note that \Cref{prop-machine-acc} only establishes a correspondence between the complexities of accepting languages in one direction: For any acceptor, there exists a corresponding $S$-machine satisfying similar complexity bounds.  A converse to this is possible and can be proved in much the same way as \Cref{prop-machine-acc} (but using part (a) of \Cref{th-MG} in place of (b)).  However, such a converse is not necessary for the purposes of this manuscript, and so is omitted. 

The following example illustrates the necessity of using the relation $\sim_1$ in \Cref{prop-machine-acc} instead of simply using the stronger relation $\sim$:

\begin{example}\label{ex-bad-complexities}

Let $\acc=\accept_L$ be an acceptor of degree 0, where $L$ is some infinite language over an alphabet $A$ such that:
\begin{enumerate}
	\item $\left\{|w|~|~w\in L\right\} = P = \left\{2^{2^i}~|~i\in\N\right\}$;
	\item There exist positive constants $C,K$ such that for each $w\in L$, there is a computation $\eps_w$ realizing $(w,1)\in\IO_\acc^{s,f}$ and satisfying 
	$$C|w|\leq\tm_\acc(\varepsilon_w),\sp_\acc(\eps_w)\leq K|w|$$
\end{enumerate}

In view of condition (2), it is tempting to assume that $\TM_\acc \sim \SP_\acc$.  However, this is not true:

Let $\id_P$ be the growth function given by $\id_P(n)=\max\{p\in P\mid p\leq n\}$.  Note that $\id_P(n)\leq n$ for all $n\in\N$.  

Suppose there exists a positive constant $L$ such that $n\leq L\id_P(n)$ for all $n\in\N$.  Then, for any $m\in\N$, $2^{2^m}-1\leq L\id_P(2^{2^m}-1)=L\cdot 2^{2^{m-1}}$.  Since $2^{2^m}\geq2$ for each $m\in\N$, $2^{2^m}-1\geq2^{2^m-1}$.  But then $L\geq2^{2^{m-1}-1}$ for all $m\in\N$, which is impossible for fixed $L$.  Hence, $\id_P\precneq n$.

Next, note that for any $n\in\N$ and $w\in L$, $|w|\leq n$ implies that $|w|\leq\id_P(n)$.  So, for any $w\in L$ with $|w|\leq n$, condition (2) can be rewritten as:
$$C~\id_P(n)\leq\tm_\acc(\eps_w),\sp_\acc(\eps_w)\leq K~\id_P(n)$$

Hence, it follows that for any essential $(w_1,s_1,w_2,s_2)\in\IO_\acc$ with $|w_1|,|w_2|\leq n$, there exists a computation realizing $(w_1,w_2)\in\IO_\acc^{s_1,s_2}$ with time and space bounded above and below by linear functions of $\id_P(n)$.  The symmetry of $\IO_\acc$ further implies similar bounds for all $(w_1,\acc_f,w_2,\acc_s)\in\IO_\acc$.

However, for `trivial' quadruples, i.e., those of the form $(w_1,s_1,w_1,s_1)\in\IO_\acc$, any computation $\eps$ realizing $(w_1,w_1)\in\IO_\acc^{s_1,s_1}$ has $\sp_\acc(\eps)\geq|w_1|$.  As a result, $n\preceq\SP_\acc$ (indeed, for any operation with a nonempty alphabet).  Of course, though, such a relation is realized by an empty computation with time 0.  So, $\TM_\acc\sim\id_P$ and $\SP_\acc\sim n$.

Meanwhile, for the recognizing $S$-machine $\S$ constructed from $\acc$ as in \Cref{prop-machine-acc}, the computations determining the time and space function are simply those going `from start to end.'  As such, any consideration of the trivial computations is moot in this case, so that condition (2) assures that $\TM_\S\sim\SP_\S\sim\id_P$.

While this example shows some pathological behavior, it should be noted that such discussion proves rather inessential, as the equivalence $\sim_1$ is adequate for most purposes studied.  

It should be noted, though, that one can remove this consideration by removing the `trivial' quadruples from the definition of the $\IO$-relation.  Such an alteration would not affect the key results of \Cref{th-expansion-props}, but would add some complications that potentially outweigh any benefits.

\end{example}

It is convenient in what follows to slightly alter the definition of acceptors, producing proper operations that function in essentially the same manner and with the same complexity.  These come in the form of \emph{checkers} and \emph{weak acceptors}.
\begin{definition}\label{def-acceptor2}
Let $A$ be an alphabet and $L\subseteq F(A)$. 

A proper operation $\check_L(a)$ is called a \emph{checker of $L$} if $\A_\ch(a) = A$ and its $\IO$-relation $\IO_{\ch}$ is given by:
	\begin{align*}
		\IO^{s,s}_\ch &= \IO^{f,f}_\check = \Id;\\
		\IO^{s,f}_\ch &= \bigr\{(w, w)~|~w\in L\bigr\}.
	\end{align*}

A proper operation $\waccept_L(a)$ is called a \emph{weak acceptor of $L$} if $\A_\ch(a)=A$ and its $\IO$-relation $\IO_{\wacc}$ satisfies:
$$\IO_\wacc^{s,f}\cap\{(w,1)\mid w\in F(A)\} = \{(w,1)\mid w\in L\}$$
	Hence, an acceptor is a weak acceptor. 
	
Given that $L$ is nonempty, note that the operation $\wacc$ is a weak acceptor of $L$ if and only if $\IO_\wacc^{s,f}\supseteq\{(w,1)\mid w\in L\}$ and $\IO_\wacc^{s,s}\subseteq L^2\cup(F(A)-L)^2$, i.e $(w_1,w_2)\in\IO_\wacc^{s,s}$ implies $w_1,w_2\in L$ or $w_1,w_2\notin L$.
	
\end{definition}

Note that the term `checker' is consistent with that of previous sections.  For example, per \Cref{lem-ndr-checkstar}, the proper operation $\check_{>0}$ is a checker of the language $L_{>0} = \{\delta^i~|~i>0\}$ over the alphabet $\{\delta\}$.

While the time-space complexity of an operation depends on all quadruples defining its input-output quadruples, many such quadruples will prove to be in some sense irrelevant in the study of weak acceptors (see \Cref{prop-acc-ch-wacc}).  So, to simplify many of the arguments to come, a loosened version of the complexity of a weak acceptor is considered.

\begin{definition}[Weak-time-space complexity]

Let $\wacc=\waccept_L(a)$ be a weak acceptor of a language $L$ and $f,g$ be growth functions.  

Suppose that for all $w\in L$, there exists a computation $\eps$ of $\EXP(\wacc)$ which $\EXP$-realizes $(w,1)\in\IO_\wacc^{s,f}$ and satisfies $\tm_\wacc(\eps)\leq f(|w|)$ and $\sp_\wacc(\eps)\leq g(|w|)$.  Then $(f,g)$ is called an \emph{upper bound on the weak-time-space complexity of $\wacc$}, denoted $\WTMSP_\wacc\leq(f,g)$.  Naturally, if $f'$ and $g'$ are growth functions satisfying $(f',g')\sim(f,g)$, then $(f',g')$ is called an \emph{asymptotic upper bound on the weak-time-space complexity of $\wacc$}, denoted $\WTMSP_\wacc\preceq(f',g')$.

Further, let $\F$ be an estimating set for $\wacc$ and suppose there exists $K\in\N$ such that for any $w\in L$ there exists a computation $\eps$ of $\wacc$ realizing $(w,1)\in\IO_\wacc^{s,f}$ and satisfying the bounds $\tm_\wacc^\F(\eps)\leq Kf(K|w|)+K$ and $\sp_\wacc^\F(\eps)\leq Kg(K|w|)+K$.  Then $(f,g)$ is called an \emph{upper bound on the estimated weak-time-space complexity of $\wacc$}, denoted $\WTMSP_\wacc^\F\leq(f,g)$.  Similarly, in this case $(f',g')$ is an \emph{asymptotic upper bound on the weak-time-space complexity of $\wacc$} if $(f,g)\sim(f',g')$, denoted $\WTMSP_\wacc^\F\preceq(f',g')$.

\end{definition}

The following analogue of \Cref{prop-complexity-estimate}(3) can be proved in much the same way, using \Cref{prop-complexity-estimate}(1) for the relevant computations:

\begin{lemma}\label{lem-wtmsp}

Let $\F=(\F_\inst)_{\inst\in\INST_\wacc}$ be an estimating set for a weak acceptor $\wacc$ of a language $L$.  For all $\inst\in\INST_\wacc$, let $\inst=(\Obj_\inst,\mu_\inst^\val)$ and $\F_\inst=(f_\inst,g_\inst)$.  Suppose $\TMSP_\inst\preceq\F_\inst$ and $f_\inst\neq0$ for all $\inst\in\INST_\wacc$.  Moreover, suppose there exist growth functions $\phi_\inst$ and $\psi_\inst$ such that $\psi_\inst\neq0$, $f_\inst(cn)\leq\phi_\inst(c)f_\inst(n)$, and $g_\inst(cn)\leq\psi_\inst(c)g_\inst(n)$ for all $c,n\in\N$.  Then  $\WTMSP_\wacc^\F\preceq(f,g)$ implies $\WTMSP_\wacc\preceq(f,g)$.

\end{lemma}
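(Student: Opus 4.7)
The plan is to adapt the proof of \Cref{prop-complexity-estimate}(3) to the weak-acceptor setting, exploiting the fact that we only need one computation per $w \in L$ realizing $(w,1)\in\IO_\wacc^{s,f}$ rather than a computation for every input-output quadruple.

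First, using $\TMSP_\inst \preceq \F_\inst$, choose for each $\inst\in\INST_\wacc$ a pair $\F'_\inst=(f'_\inst,g'_\inst)$ with $\TMSP_\inst \leq \F'_\inst$ and $\F'_\inst \sim \F_\inst$, and set $\F' = (\F'_\inst)$. By the hypotheses on $f_\inst,\phi_\inst,\psi_\inst$, one obtains, exactly as in the proof of \Cref{prop-complexity-estimate}(3), a constant $D\in\N$ such that for every single-step computation $\delta$ of $\wacc$,
\begin{align*}
\tm_\wacc^{\F'}(\delta) &\leq D\,\tm_\wacc^\F(\delta) + D, \\
\sp_\wacc^{\F'}(\delta) &\leq D\,\sp_\wacc^\F(\delta) + D.
\end{align*}

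Second, from $\WTMSP_\wacc^\F \preceq (f,g)$, pick $(f'',g'')\sim(f,g)$ with $\WTMSP_\wacc^\F \leq (f'',g'')$, so that for each $w\in L$ there exists a computation $\eps_w$ of $\wacc$ realizing $(w,1)\in\IO_\wacc^{s,f}$ with $\tm_\wacc^\F(\eps_w)\leq f''(|w|)$ and $\sp_\wacc^\F(\eps_w)\leq g''(|w|)$. Since $(w,\wacc_s)$ and $(1,\wacc_f)$ are fixed endpoints, removing loops of repeated intermediate configurations (as in the cited proof) yields a computation with the same endpoints, strictly distinct configurations, and no worse estimated time or space; thus we may assume the configurations of $\eps_w$ are pairwise distinct.

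Third, let $m$ denote the length of $\eps_w$. Reusing the finite-alphabet and finite-state-set counting argument from \Cref{prop-complexity-estimate}(3) verbatim---this only needs the finiteness of each $\A_\wacc(i)$ and $S_\wacc$ together with $f_\inst(C)>0$ for $C$ sufficiently large, all of which are available here---one obtains constants $K,\omega>0$ with $m \leq \omega^{-1} K\,\tm_\wacc^\F(\eps_w) + K$. Summing the single-step bound of the first step and invoking this length estimate,
\begin{align*}
\tm_\wacc^{\F'}(\eps_w) &\leq D\,\tm_\wacc^\F(\eps_w) + Dm \leq E\,f''(|w|) + E, \\
\sp_\wacc^{\F'}(\eps_w) &\leq D\,\sp_\wacc^\F(\eps_w) + D \leq E\,g''(|w|) + E,
\end{align*}
for some sufficiently large $E\in\N$.

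Finally, since $\F'_\inst$ is an upper bound on $\TMSP_\inst$ for every $\inst$, \Cref{prop-complexity-estimate}(1) applies to each subcomputation $\eps_w(i)$, producing a computation of $\EXP(\wacc)$ between $(w_{i-1}'\sqcup 1_\wacc^\inv,s_{i-1}')$ and $(w_i'\sqcup 1_\wacc^\inv,s_i')$ whose time and space are bounded by $\tm_\wacc^{\F'}(\eps_w(i))$ and $\sp_\wacc^{\F'}(\eps_w(i))$ respectively. Concatenating these subcomputations (and observing that $1_{\EXP(\wacc)}^\int = 1_\wacc^\int \sqcup 1_\wacc^\inv$) yields a computation $\widetilde{\eps}_w$ that $\EXP$-realizes $(w,1)\in\IO_\wacc^{s,f}$ and satisfies $\tm_\wacc(\widetilde{\eps}_w)\leq \tm_\wacc^{\F'}(\eps_w)$ and $\sp_\wacc(\widetilde{\eps}_w)\leq \sp_\wacc^{\F'}(\eps_w)$. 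The estimates of the third step then give $\WTMSP_\wacc \leq (E f''(n)+E,\, E g''(n)+E)$, which is $\sim(f,g)$, proving $\WTMSP_\wacc \preceq (f,g)$.

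The main obstacle is not any single new ingredient but rather the bookkeeping for the length bound: one must check that pruning $\eps_w$ to have distinct configurations still leaves the endpoints $(w\sqcup 1_\wacc^\int,\wacc_s)$ and $(1\sqcup 1_\wacc^\int,\wacc_f)$ intact, so that the pruned computation still realizes $(w,1)\in\IO_\wacc^{s,f}$ and the counting argument of \Cref{prop-complexity-estimate}(3) applies unchanged.
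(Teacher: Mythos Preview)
Your proposal is correct and follows exactly the approach the paper indicates: it says the lemma ``can be proved in much the same way'' as \Cref{prop-complexity-estimate}(3), ``using \Cref{prop-complexity-estimate}(1) for the relevant computations,'' which is precisely what you do. One minor point: by the paper's definition, $\WTMSP_\wacc^\F \leq (f'',g'')$ already involves a constant $K$ (i.e., $\tm_\wacc^\F(\eps_w)\leq K f''(K|w|)+K$), so your claim that $\tm_\wacc^\F(\eps_w)\leq f''(|w|)$ is a slight overstatement---but this is harmless, since you can absorb those constants into $f''$ while remaining in the same $\sim$-class.
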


\begin{proposition}\label{prop-acc-ch-wacc}
	For growth functions $f(n), g(n) \succeq n$ and a language $L$ over the alphabet $A$, the following are equivalent:
	\begin{enumerate}
		\item There is an acceptor $\acc = \accept_L$ for $L$ 
		with $\TMSP_\acc \preceq (f, g)$;
		\item There is a weak acceptor $\wacc = \waccept_L$ for $L$ with $\WTMSP_\wacc \preceq (f, g)$;
		\item There is a checker $\ch = \check_L$ for $L$ with $\TMSP_\ch \preceq (f, g)$.
	\end{enumerate}
\end{proposition}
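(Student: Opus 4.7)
The plan is to establish the equivalence via the cycle $(1)\Rightarrow(2)\Rightarrow(3)\Rightarrow(1)$.

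The implication $(1)\Rightarrow(2)$ is essentially immediate: every acceptor $\accept_L$ is by definition a weak acceptor of $L$, since the condition $\IO^{s,f}_\wacc \cap \{(w,1) : w \in F(A)\} = \{(w,1) : w \in L\}$ holds in any acceptor. Moreover, each quadruple $(w,\acc_s,1,\acc_f)$ with $w \in L$ is an input-output quadruple, so an $\EXP$-realizing computation witnessing $\TMSP_\acc \preceq (f,g)$ also witnesses $\WTMSP_\acc \preceq (f,g)$.

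For $(2)\Rightarrow(3)$, given a weak acceptor $\wacc$ of $L$, I construct the checker $\ch_L(a)$ whose $S^*$-graph has four states $s_\ch, p_1, p_2, f_\ch$ and three edges: $e_1 : s_\ch \to p_1$ labelled by the action $\cpy^A(a,b)$; $e_2 : p_1 \to p_2$ labelled by $\wacc(b)$ (renaming $\wacc$'s external tape to $b$); and $e_3 : p_2 \to f_\ch$ carrying only the guard $[\mathbf{b}=1]$. Here $b$ is an internal tape of $\ch$. The intuition is: copy $a$ into the fresh tape $b$, invoke the weak acceptor on $b$ to reduce $b$ to $1$ (which, by the defining property of $\wacc$, can happen exactly when $b \in L$), and then verify that $b=1$ before finishing. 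The external tape $a$ is immutable in $\cpy(a,b)$ by \Cref{lem-io-cpy}, is not mentioned by $\wacc(b)$ or by $e_3$, hence is preserved throughout any computation. This preservation, combined with the $[\mathbf{b}=1]$ guard forcing $b$ to be trivial at the two endpoint states, suffices to pin down $\IO^{s,s}_\ch = \IO^{f,f}_\ch = \Id$ and reduces the analysis of $\IO^{s,f}_\ch$ to the forward path $e_1 e_2 e_3$, which realizes $(w,w)$ iff $(w,1) \in \IO^{s,f}_\wacc$, iff $w \in L$. For the complexity, the estimating set $\F$ assigning $(n,n)$ to the $\cpy$ instance (via \Cref{lem-basic-0bounds}) and $(f,g)$ to the $\wacc$ instance, together with \Cref{lem-wtmsp} and \Cref{prop-complexity-estimate}(3), yields $\TMSP_\ch \preceq (f,g)$; here the hypothesis $f,g \succeq n$ absorbs the cost of the copy step.

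For $(3)\Rightarrow(1)$, I first introduce a proper operation $\ers(a)$ (the eraser) with a single intermediate state carrying a loop $[\mathbf{a} \to x^{-1}\mathbf{a}]$ for each $x \in A$ and a terminal edge to $f_\ers$ guarded by $[\mathbf{a}=1]$; a direct argument analogous to \Cref{lem-io-cpy} gives $\IO^{s,f}_\ers = \{(w,1) : w \in F(A)\}$, $\IO^{f,f}_\ers = \Id$, and $\TMSP_\ers \preceq (n,n)$. I then build $\accept_L(a)$ as the two-edge graph $\acc_s \to p \to \acc_f$ with the first edge labelled by $\ch(a)$ and the second by $\ers(a)$. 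The forward composition realizes $(w,1) \in \IO^{s,f}_\acc$ exactly when $w \in L$. For $\IO^{s,s}_\acc$, any nontrivial loop at $\acc_s$ must traverse $\ch$ forward (forcing the starting tape content to lie in $L$) and return through $\ch$ in reverse (forcing the ending tape content to lie in $L$), possibly with intermediate activity in $\ers$; since $\IO^{f,s}_\ers$ allows the eraser to be ``rewound'' to any word in $F(A)$, exactly the set $\{(w_1,w_2) : w_1,w_2 \in L\}$ is produced, matching the definition. The analogous analysis gives $\IO^{f,f}_\acc = \Id$. Complexity is again transferred by the estimating-set machinery, using $f, g \succeq n$ to absorb the linear eraser cost.

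The principal technical obstacle is the bookkeeping required to show that the constructed $\IO$-relations agree \emph{exactly} (not merely contain) the ones demanded by Definitions~\ref{def-acceptor} and~\ref{def-acceptor2}. In particular, in $(2)\Rightarrow(3)$ one must rule out computations that exploit unconstrained behavior of $\wacc$ on pairs $(w,w') \in \IO^{s,f}_\wacc$ with $w' \ne 1$ to reach $f_\ch$ from an input $w \notin L$; the guard $[\mathbf{b}=1]$ on $e_3$ is precisely what blocks this, but the formal verification requires a careful application of \Cref{prop-io-classes} over small closed classes of configurations analogous to those constructed in the proof of \Cref{lem-ndr-checkstar}. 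The immutability of $a$ throughout, combined with the single-guard exit, is what makes this case analysis tractable.
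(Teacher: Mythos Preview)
Your cycle and the constructions are essentially right; in fact your $(3)\Rightarrow(1)$ construction---checker on $a$ followed by an eraser on $a$---is simpler than the paper's, which copies $a$ to an internal tape $b$, runs the checker on $b$, and then erases $a$ and $b$ in lockstep via loops $[\mathbf{a}\to\mathbf{a}x^{-1}],[\mathbf{b}\to\mathbf{b}x^{-1}]$. One small syntactic glitch: in your checker the first edge out of $s_\ch$ is $\cpy^A(a,b)$, a positive-degree action, so condition~(O) of \Cref{def-s1-op} fails (it demands that the first edge of any $\SF$-path mentioning the internal tape $b$ be a degree-$0$ action carrying $[\mathbf{b}=1]$). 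The paper inserts such a guard edge before the copy; see also the remark on this redundancy after \Cref{ex-silly-counter}.

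The real gap is in the complexity transfer. Your appeal to \Cref{prop-complexity-estimate}(3) fails on two counts. First, it requires $\TMSP_\inst\preceq\F_\inst$ for every instance, but for the $\wacc$ instance you only have $\WTMSP_\wacc\preceq(f,g)$; the full $\TMSP_\wacc$ may be far worse, since a weak acceptor's $\IO^{s,s}$ is entirely unconstrained. Second, it requires the sub-multiplicativity $f_\inst(cn)\le\phi_\inst(c)f_\inst(n)$, which need not hold for arbitrary $f,g$. The paper flags exactly this obstruction when proving $(3)\Rightarrow(1)$ and works around it: for $(2)\Rightarrow(3)$ it argues directly in $\EXP(\ch)$, concatenating a $\cpy$-computation with an $\EXP$-computation of $\wacc$ realizing $(w,1)\in\IO^{s,f}_\wacc$ (whose cost is controlled by $\WTMSP$ alone); for $(3)\Rightarrow(1)$ it observes that the realizing computations are linearly bounded and invokes \Cref{lem-K-bounded} instead. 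Your constructions admit the same direct arguments---you simply cannot delegate them to \Cref{prop-complexity-estimate}(3).
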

\begin{proof} Because an acceptor is a weak acceptor, the existence of (1) implies the existence of a weak acceptor such that $(f,g)$ is an asymptotic upper bound on its time-space complexity.  Then, letting $(f',g')$ be an upper bound on its time-space complexity such that $(f',g')\sim(f,g)$, it is immediate from the definition that $(f',g')$ is an upper bound on the weak-time-space complexity of the weak acceptor.  Hence, (1) implies (2).

Given the existence of a weak acceptor $\waccept_L$ satisfying $\WTMSP_\wacc\preceq(f,g)$, \Cref{fig-ACC-CH-WACC} shows how a checker $\check_L$ can be constructed from $\waccept_L$, and how in turn an acceptor $\accept_L$ can be constructed from $\check_L$.  It then suffices to prove the $\IO$-relations and show that $\TMSP_\ch,\TMSP_\acc\preceq(f,g)$.  

First, $\I_\op^\int=\{b\}$, $\I_\op^\ext=\{a\}$, and $\A_\op(a)=\A_\op(b)=A$ for any $\op\in\{\check_L,\accept_L\}$.  So, since the only edges with tails in $\SF_\ch$ or $\SF_\acc$ contain the guard fragment $[b=1]$, both operations satisfy condition (O).

\begin{figure}[hbt]
	\centering
	\begin{tikzpicture} 
[point/.style={inner sep = 4pt, circle,draw,fill=white},
mpoint/.style={inner sep = 1.7pt, circle,draw,fill=black},
FIT/.style args = {#1}{rounded rectangle, draw,  fit=#1, rotate fit=45, yscale=0.5},
FITR/.style args = {#1}{rounded rectangle, draw,  fit=#1, rotate fit=-45, yscale=0.5},
FIT1/.style args = {#1}{rounded rectangle, draw,  fit=#1, rotate fit=45, scale=2},
vecArrow/.style={
		thick, decoration={markings,mark=at position
		   1 with {\arrow[thick]{open triangle 60}}},
		   double distance=1.4pt, shorten >= 5.5pt,
		   preaction = {decorate},
		   postaction = {draw,line width=0.4pt, white,shorten >= 4.5pt}
	},
myptr/.style={decoration={markings,mark=at position 1 with %
    {\arrow[scale=3,>=stealth]{>}}},postaction={decorate}}
]

\begin{scope}
\begin{scope} [xscale=0.75, yscale=0.3]      
	\node [anchor=west] at (-1.5, 4.5) {Operation $\check_L (\mathbf{a})$};
\end{scope}

\begin{scope} [yshift = 0.2cm, xscale=0.85, yscale=0.4]      
	\node (start) at (0,0) [point] {$s$};
	\node (n1) at (3, 0) [point] {$q_1$};
	\node (n2) at (7, 0) [point] {$q_2$};
	\node (n3) at (11, 0) [point] {$q_3$};
	\node (finish) at (15, 0) [point] {$f$};
	
	\draw [->] (start) to node[midway, above]{
		$\begin{gathered}
		 [\mathbf{b} = 1]
		\end{gathered}$
	}  node[midway,below]{$e_1$} (n1);

	\draw [->] (n1) to node[midway, above]{
		$\begin{gathered}
		 \cpy^A(\mathbf{a}, \mathbf{b})
		\end{gathered}$
	} node[midway,below]{$e_2$} (n2);

	\draw [->] (n2) to node[midway, above]{
		$\begin{gathered}
		 \waccept_L(\mathbf{b})
		\end{gathered}$
	} node[midway,below]{$e_3$} (n3);

	\draw [->] (n3) to node[midway, above]{
		$\begin{gathered}
		 [\mathbf{b} = 1]
		\end{gathered}$
	}  node[midway,below]{$e_4$} (finish);
\end{scope}	
\end{scope}

\begin{scope}[yshift = -2.5cm]
\begin{scope} [xscale=0.75, yscale=0.3]      
	\node [anchor=west] at (-1.5, 4.5) {Operation $\accept_L (\mathbf{a})$};
\end{scope}

\begin{scope} [yshift = 0.2cm, xscale=0.85, yscale=0.4]      
	\node (start) at (0,0) [point] {$s$};
	\node (n1) at (3, 0) [point] {$s_1$};
	\node (n2) at (7, 0) [point] {$s_2$};
	\node (n3) at (11, 0) [point] {$s_3$};
	\node (finish) at (15, 0) [point] {$f$};
	
	\draw [->] (start) to node[midway, above]{
		$\begin{gathered}
		 [\mathbf{b} = 1]
		\end{gathered}$
	} node[midway,below]{$f_1$} (n1);

	\draw [->] (n1) to node[midway, above]{
		$\begin{gathered}
		 \cpy^A(\mathbf{a}, \mathbf{b})
		\end{gathered}$
	}  node[midway,below]{$f_2$} (n2);

	\draw [->] (n2) to node[midway, above]{
		$\begin{gathered}
		 \check_L(\mathbf{b})
		\end{gathered}$
	}  node[midway,below]{$f_3$} (n3);

	\draw [->] (n3) to node[midway, above]{
		$\begin{gathered}
		 [\mathbf{a} = 1]\\[-1ex]
		 [\mathbf{b} = 1]
		\end{gathered}$
	} node[midway,below]{$f_5$} (finish);
	
	\draw [->] (n3) edge[loop, out=-45, in = -135, looseness=15] node[pos=0.5, below]{
		$\begin{gathered}		
	    	[\mathbf{a} \rightarrow \mathbf{a} x^{-1}]\\[-1ex]
	    	[\mathbf{b} \rightarrow \mathbf{b} x^{-1}]\\[-1ex]
	    	\textrm{for $x \in A$ }
		\end{gathered}$
	} node[midway,above]{$f_{4,x}$} (n3);
\end{scope}	
\end{scope}
\end{tikzpicture}	
	\caption{Constructing a checker from a weak acceptor, and an acceptor from a checker.}
	\label{fig-ACC-CH-WACC}       
\end{figure} 

For $w\in L$, define the subset $C_w\subseteq\GL_\ch\times S_\ch$ by:
\begin{align*}
C_w^s=C_w^{q_1}=C_w^f&=\{(w,1)\} \\
C_w^{q_2}&=\{(w,v)\mid v\in L\} \\
C_w^{q_3}&=\{(w,u)\mid (u,1)\in \IO_\wacc^{f,f}\}
\end{align*}
It is straightforward to check that $C_w$ is closed under $\IO_e$ for each $e\in E_\ch$.  For example, suppose $(w_1,s_1,w_2,s_2)\in\IO_{e_3}$ with $(w_1,s_1)\in C_w$.  If $s_1=s_2=q_2$, then $w_1=(w,v)$ for some $v\in L$.  Then $w_2=(w,v')$ for some $v'\in F(A)$ such that $(v,v')\in \IO_\wacc^{s,s}$; but then $v'\in L$ by the definition of a weak acceptor.  All other choices for $s_1,s_2$ follow similarly.

Further, for $w\notin L$, define the subsets $C_{1,w},C_{2,w}\subseteq\GL_\ch\times S_\ch$ by:

\begin{align*}
C_{1,w}^s=C_{1,w}^{q_1}=C_{2,w}^f&=\{(w,1)\} \\
C_{1,w}^{q_2}&=\{(w,v)\mid v\notin L\} \\
C_{1,w}^{q_3}&=\{(w,u)\mid (u,1)\notin\IO_\wacc^{f,f}\} \\
C_{1,w}^f=C_{2,w}^s=C_{2,w}^{q_1}&=\emptyset \\
C_{2,w}^{q_2}&=\{(w,v)\mid v\in L\} \\
C_{2,w}^{q_3}&=\{(w,u)\mid (u,1)\in\IO_\wacc^{f,f}\}
%D_1^s&=D_1^{q_1}=\{(w,1)\mid w\in L\} \\
%D_1^{q_2}&=\{(w,w)\mid w\in L\} \\
%D_1^{q_3}&=F(A)\times F(A) \\
%D_1^f&=\{(1,1)\} \\
%D_2^s&=D_2^{q_1}=\{(w,1)\mid w\notin L\}
\end{align*}
Again, it is straightforward to check that each of $C_{1,w}$ and $C_{2,w}$ is closed under $\IO_e$ for all $e\in E_\ch$.  Hence, it follows from \Cref{prop-io-classes} that $\IO_\ch^{s,s},\IO_\ch^{f,f}\subseteq \Id$ and $\IO_\ch^{s,f}\subseteq\{(w,w)\mid w\in L\}$, and thus $\check_L$ is indeed a checker of $L$.

Of course, for any $w\in F(A)$ with $|w|=n$, $(w,w)\in\IO_\ch^{s,s}$ and $(w,w)\in\IO_\ch^{f,f}$ for all $w\in F(A)$, where either can be realized by an empty computation $\eps$ satisfying $\tm_\ch(\eps)=0\leq f(n)$ and $\sp_\ch(\eps)=|w|\leq g(n)$.

Now, by Lemmas  \ref{lem-io-cpy} and \ref{lem-basic-0bounds} and the assumption that $\WTMSP_\wacc\preceq(f,g)$, there exists $K\in\N$ such that for any $w\in L$, there exist computations $\gamma_w$ and $\delta_w$ of $\EXP(\check_L)$ such that:

\begin{itemize}

\item $\gamma_w$ is a computation between the configurations $((w,1)\sqcup1_\ch^\inv,q_1)$ and $((w,w)\sqcup1_\ch^\inv,q_2)$ and satisfies $\tm_\ch(\gamma_w)\leq K|w|+K$ and $\sp_\ch(\gamma_w)\leq K|w|+K$

\item $\delta_w$ is a computation between the configurations $((w,w)\sqcup1_\ch^\inv,q_2)$ and $((w,1)\sqcup1_\ch^\inv,q_3)$ and satisfies $\tm_\ch(\delta_w)\leq Kf(K|w|)+K$ and $\sp_\ch(\delta_w)\leq Kg(K|w|)+K+|w|$.

\end{itemize}

Then, if $\rho$ is the path supporting $\gamma_w$ and $\rho'$ be the path supporting $\delta_w$, the path $e_1,\rho,\rho',e_4$ supports a computation $\eps_w$ of $\EXP(\check_L)$ realizing $(w,w)\in\IO_\ch^{s,f}$ and satisfying:
\begin{align*}
\tm_\ch(\eps_w)&=1+\tm_\ch(\gamma_w)+\tm_\ch(\delta_w)+1\leq Kf(K|w|)+K|w|+2K+2 \\
\sp_\ch(\eps_w)&=\max(|w|,\sp_\ch(\gamma_w),\sp_\ch(\delta_w))\leq\max(K|w|+K,Kg(K|w|)+K+|w|)
\end{align*}
Thus, as $f(n), g(n) \succeq n$, $\TMSP_\ch\preceq(f,g)$.

Now, for $w\in L$, define the subset $D_w\subseteq\GL_\acc\times S_\acc$ by:
\begin{align*}
D_w^s=D_w^{s_1}&=\{(w,1)\} \\
D_w^{s_2}&=\{(w,w)\} \\
D_w^{s_3}&=\{(v,v)\mid v\in L\} \\
D_w^f&=\{(1,1)\}
\end{align*}

Further, for $w\notin L$, define the subsets $D_{1,w},D_{2,w}\subseteq\GL_\acc\times S_\acc$ by:
\begin{align*}
D_{1,w}^s=D_{2,w}^f=D_{1,w}^{s_1}&=\{(w,1)\} \\
D_{1,w}^{s_2}&=\{(w,w)\} \\
D_{1,w}^{s_3}=D_{1,w}^f=D_{2,w}^{s_j}=D_{2,w}^s&=\emptyset
\end{align*}
As in the previous case, it is straightforward to check that these subsets are closed under $\IO_e$ for all $e\in E_\acc$, and so $\accept_L$ is indeed an acceptor of $L$  by \Cref{prop-io-classes}.

Let $\F$ be the estimating set for $\accept_L$ given by $\F_\cpy=(n,n)$ and $\F_\ch=(f,g)$.  By \Cref{lem-basic-0bounds}, $\TMSP_\inst\preceq\F_\inst$ for both $\inst\in\INST_\acc$.  However, note that \Cref{prop-complexity-estimate}(3) cannot be applied to this situation, as the existence of suitable growth functions $\phi_\ch$ and $\psi_\ch$ is a priori unknown.

Let $w\in L$ and set $w\equiv x_1^{\delta_1}\dots x_k^{\delta_k}$, where $x_i\in A$ and $\delta_i\in\{\pm1\}$ for $k\in\N$.  Then, let $\rho_w'=e_1',\dots,e_k'$ be the generic path given by setting $e_i'=(\s,\f,f_{4,x_i}^{\delta_i})$ for all $i$.  It then follows that $\rho_w'$ supports a computation $\gamma_w$ of $\accept_L$ between $((w,w),s_3)$ and $((1,1),s_3)$ satisfying $\tm_\acc^\F(\gamma_w)=k=|w|$ and $\sp_\acc^\F(\gamma_w)=2k=2|w|$.

So, letting $f_i'=(\s,\f,f_i)$ for all $i$, as for all $w\in L$ there exists a computation $$\delta_w=\bigl(((w,1),\acc_s),f_1',((w,1),s_1),f_2',((w,w),s_2),f_3',((w,w),s_3)\bigr)$$ and there exists a computation $\delta'=\bigl(((1,1),s_3),f_5',((1,1),\acc_f)\bigr)$ of $\accept_L$, there exists a computation $\eps_w$ realizing $(w,1)\in\IO_\acc^{s,f}$ formed by concatenating $\delta_w$, $\gamma_w$, and $\delta'$.  By the construction of $\gamma_w$, it follows that $\eps_w$ is a linearly 2-bounded computation, and so $\accept_L$ is a linearly 2-bounded proper operation.

But for all $w\in L$, 
\begin{align*}
\tmt_\acc^\F(\eps_w,2)&=1+2|w|+f(2|w|)+|w|+1\leq4f(2|w|)+1 \\
\spt_\acc(\eps_w,2)&=\max(4|w|,g(2|w|)+2|w|)
\end{align*}
Hence, since $f,g\succeq n$, it follows that $\TMSPT_\acc^\F(2)\preceq(f,g)$.  Thus, by \Cref{lem-K-bounded}, $\TMSP_\acc\preceq(f,g)$.

\end{proof}

Thus, \Cref{prop-machine-acc} and \Cref{prop-acc-ch-wacc} reduce the proof of \Cref{main-theorem} to constructing one of $\accept_L$, $\waccept_L$ or $\check_L$ for a given language $L$ satisfying certain complexity (or weak complexity) bounds.

%==========================================================================
%==========================================================================
%==========================================================================
%==========================================================================
\section{Improved counters}\label{sec-counters}

In this section, a family of counters objects is constructed, which will prove to be more `economical' than the basic counter $\Counter_1$ defined in \Cref{ex-counter1}.  Indeed, these counters are the main innovation of this manuscript and provide much of the motivation for the definition of $S^*$-graphs.

First, two more auxiliary proper operations, $\le(a,b)$ and $\check_{\geq0}(a)$, are defined in \Cref{fig-LESSEQ} below.  In accordance with the conventions of minimal compatible hardware, $\I_{\ch\geq0}=\I_{\ch\geq0}^\ext=\{a\}$, $\I_\le=\I_\le^\ext\sqcup\I_\le^\int$, $\I_\le^\ext=\{a,b\}$, $\I_\le^\int=\{a',b'\}$, and $\A_\le(i) = \{\delta\}$ for all $i\in\I_\le$.

Note that the only edge with tail $\le_s$ contains the guard fragments $[a'=1]$ and $[b'=1]$, while the only edge with tail $\le_f$ contains the guard fragment $[b'=1]$.  Further, as $\G_\le-\{e_5^{\pm1}\}$ consists of two connected components, with the one containing $\le_f$ not containing any edge mentioning $a'$, it follows that any path starting with $\le_f$ and containing an edge that mentions $a'$ must have the first such edge be $e_5^{-1}$.  But $e_5$ contains the guard fragment $[a'=1]$.  Hence, $\le$ satisfies condition (O).

\begin{figure}[hbt]
	\centering
	\begin{tikzpicture} 
[point/.style={inner sep = 4pt, circle,draw,fill=white},
mpoint/.style={inner sep = 1.7pt, circle,draw,fill=black},
FIT/.style args = {#1}{rounded rectangle, draw,  fit=#1, rotate fit=45, yscale=0.5},
FITR/.style args = {#1}{rounded rectangle, draw,  fit=#1, rotate fit=-45, yscale=0.5},
FIT1/.style args = {#1}{rounded rectangle, draw,  fit=#1, rotate fit=45, scale=2},
vecArrow/.style={
		thick, decoration={markings,mark=at position
		   1 with {\arrow[thick]{open triangle 60}}},
		   double distance=1.4pt, shorten >= 5.5pt,
		   preaction = {decorate},
		   postaction = {draw,line width=0.4pt, white,shorten >= 4.5pt}
	},
myptr/.style={decoration={markings,mark=at position 1 with %
    {\arrow[scale=3,>=stealth]{>}}},postaction={decorate}}
]

\begin{scope} [xscale=0.75, yscale=0.3, yshift = 0cm]      
	\node [anchor=west] at (-1.5, 6) {Operation $\le (a, b)$};
	\node (start) at (0,0) [point] {$s$};
	\node (n2) at (3, 0) [point] {$q_1$};
	\node (n3) at (8, 0) [point] {$q_2$};
	\node (n4) at (13, 0) [point] {$q_3$};
	\node (n5) at (13, -8) [point] {$q_4$};
	\node (finish) at (8, -8) [point] {$f$};
	
	\draw [->] (start) to node[midway, above]{
		$\begin{gathered}
		 [\mathbf{a'} = 1]\\[-1ex]
		 [\mathbf{b'} = 1]
		\end{gathered}$
	} node[midway, below] {$e_1$} (n2);

	\draw [->] (n2) to node[midway, above]{
		$\begin{gathered}
		 \cpy^\delta(\mathbf{a}, \mathbf{a'})
		\end{gathered}$
	} node[midway, below] {$e_2$} (n3);

	\draw [->] (n3) to node[midway, above]{
		$\begin{gathered}
		 \cpy^\delta(\mathbf{b}, \mathbf{b'})
		\end{gathered}$
	} node[midway, below] {$e_3$} (n4);

	\draw [->] (n4) edge[loop, out=60, in = 120, looseness=20] node[pos=0.25, right]{
		$\begin{gathered}
		 [\mathbf{a'}\rightarrow \delta^{-1} \mathbf{a'}]\\[-1ex]
		 [\mathbf{b'}\rightarrow \delta^{-1} \mathbf{b'}]
		\end{gathered}$
	} node[midway, above] {$e_4$} (n4);

	\draw [->] (n4) to node[midway, right]{
		$\begin{gathered}
		 [\mathbf{a'} = 1]
		\end{gathered}$
	}  node[midway, left] {$e_5$} (n5);

	\draw [->] (n5) edge[loop, out=-120, in = -60, looseness=20] node[pos=0.75, right]{
		$\begin{gathered}
		 \textrm{div$2$} (\mathbf{b'})
		\end{gathered}$
	}  node[midway, below] {$e_6$} (n5);

	\draw [->] (n5) to node[midway, above]{
		$\begin{gathered}
		 [\mathbf{b'} =\delta] \\[-1ex]
		 [\mathbf{b'} \rightarrow \delta^{-1}\mathbf{b'}]
		\end{gathered}$
	} node[midway, below] {$e_7$} (finish);
\end{scope}	

\begin{scope} [xscale=0.75, yshift = -5cm , yscale=0.3]      
	\node [anchor=west] at (-1.5, 5) {Operation $\check_{\geq0} (a)$};
	\node (start) at (0,0) [point] {$s$};
	\node (finish) at (5, 0) [point] {$f$};
	
	\draw [->] (start) edge[loop, out=-40, in = -140, looseness=1]  node[midway, below]{
		$\begin{gathered}
		 [\mathbf{a} = 1]
		\end{gathered}$
	}  (finish);

	\draw [->] (start) edge[loop, out=40, in = 140, looseness=1]  node[midway, above]{
		$\begin{gathered}
		 \check_{>0}(\mathbf{a})
		\end{gathered}$
	}  (finish);
\end{scope}	
\end{tikzpicture}	
	\caption{Operations $\le(a, b)$, and $\check_{\geq0}(a)$.}
	\label{fig-LESSEQ}       
\end{figure} 

\begin{lemma}\label{lem-ndr-le}
	The relations $\IO_{\le}(a,b)$ and $\IO_{\ch\geq0}(a)$ are given by:
	\begin{align*}
		\IO^{s,s}_\le &= \IO^{f,f}_\le = \Id \\
		\IO^{s,s}_{\ch\geq0} &= \IO^{f,f}_{\ch\geq0} = \Id \\
		\IO^{s,f}_\le &= \bigl\{\bigl((\delta^{i}, \delta^{j}), (\delta^{i}, \delta^{j})\bigr)\mid i < j\bigr\} \\
		\IO^{s,f}_{\ch\geq0} &= \bigl\{(\delta^{i}, \delta^{i})~|~i \geq 0\bigr\}
	\end{align*}
	In particular, $a$ and $b$ are immutable in $\le (a,b)$, and $a$ is immutable in $\check_{\geq0} (a)$.
	
	Moreover, $\TMSP_\le,\TMSP_{\ch\geq0}\preceq(n,n)$.
\end{lemma}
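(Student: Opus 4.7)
The plan is to follow the template laid down by the proofs of \Cref{lem-ndr-div2} and \Cref{lem-ndr-checkstar}: first use immutability to dispose of the diagonal pieces $\IO^{s,s}$ and $\IO^{f,f}$, then apply \Cref{prop-io-classes} with carefully chosen closure sets to bound $\IO^{s,f}$ from above, produce explicit input-output computations realizing the reverse inclusion, and finally estimate the complexity via \Cref{prop-complexity-estimate}(3). For the immutability step, in $\le(a,b)$ the tape $a$ is mentioned only by $e_2^{\pm1}$ labelled $\cpy^\delta(a,a')$ and the tape $b$ is mentioned only by $e_3^{\pm1}$ labelled $\cpy^\delta(b,b')$, and \Cref{lem-io-cpy} makes the first argument immutable in each. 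In $\check_{\geq 0}(a)$, the two parallel edges are labelled either by the guard $[\mathbf{a}=1]$ or by $\check_{>0}(a)$, where $a$ is immutable by \Cref{lem-ndr-checkstar}. \Cref{lem-immutable} then immediately gives $\IO^{s,s}=\IO^{f,f}=\Id$ in both operations.

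For $\IO^{s,f}_\le$, immutability forces $(w(a),w(b))=(\delta^i,\delta^j)$ to remain constant along any input-output computation, so for each $(i,j)\in\Z^2$ it remains to decide whether $((\delta^i,\delta^j,1,1),\le_s)$ and $((\delta^i,\delta^j,1,1),\le_f)$ can lie in the same $\REACH_\le$-class. For each $i<j$ I will define a single closure class $C_{i,j}$ whose slices enumerate the configurations appearing in a canonical successful run: trivial $(a',b')$ at $\le_s$, $q_1$, $\le_f$; $(a',b')=(\delta^i,1)$ at $q_2$; $(a',b')=(\delta^{i-k},\delta^{j-k})$ for $k\in\Z$ at $q_3$; and $(a',b')=(1,\delta^m)$ at $q_4$ for $\delta^m$ in the $\REACH_\dt$-class of $\delta^{j-i}$. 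For each $i\geq j$ I will instead define two disjoint classes, one rooted at $\le_s$ and one at $\le_f$, exploiting the explicit description of $\REACH_\dt$ in \Cref{lem-ndr-div2}: the $\REACH_\dt$-class of a nonpositive power of $\delta$ contains only nonpositive powers, while that of $\delta^1$ contains only positive powers, so that the exit edge $e_7$ (which requires $b'=\delta$) cannot shuttle configurations between the two classes. Closure under each $\IO_{e_k}$ reduces to Lemmas \ref{lem-io-cpy} and \ref{lem-ndr-div2}, and the family covers $\ND^1_\le\times\SF_\le$, so \Cref{prop-io-classes} yields $\IO^{s,f}_\le\subseteq\bigl\{((\delta^i,\delta^j),(\delta^i,\delta^j))\mid i<j\bigr\}$. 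The reverse inclusion is realized, for fixed $i<j$, by the generic path $e_1,e_2,e_3,e_4^i,e_5,e_6^r,e_7$ (each edge oriented $(\s,\f)$, with $r=\lceil\log_2(j-i)\rceil$), which by Lemmas \ref{lem-io-cpy} and \ref{lem-ndr-div2} supports an input-output computation realizing $((\delta^i,\delta^j),(\delta^i,\delta^j))\in\IO^{s,f}_\le$.

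For $\check_{\geq 0}$ the analysis is simpler: take $C_i=\{(\delta^i,s),(\delta^i,f)\}$ for each $i\geq 0$, together with separate singleton classes at $s$ and at $f$ for each $i<0$; closure under the two generic edges is immediate from the guard $[\mathbf{a}=1]$ and from \Cref{lem-ndr-checkstar}, and the reverse inclusion is realized by a direct computation along whichever edge applies. For complexity, equip each operation with the estimating set $\F$ assigning $(n,n)$ to each of its instances; these are asymptotic upper bounds by Lemmas \ref{lem-basic-0bounds} and \ref{lem-basic-star-bounds}. Taking $\phi_\inst(n)=\psi_\inst(n)=n$ verifies the hypotheses of \Cref{prop-complexity-estimate}(3), reducing the task to bounding $\TMSP^\F$. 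For the canonical computation of $\le$ described above, the contributions are $O(1)$ for the guard edges, $O(i)$ and $O(j)$ for the two copies, $i$ constant-cost iterations of $e_4$, and a geometric sum $\sum_{k\leq r}O(\lceil(j-i)/2^k\rceil)=O(j-i)$ for the halving loop, with space remaining $O(i+j)$ throughout; the $\check_{\geq 0}$ case follows immediately from the single-instance bound. The main technical nuisance is the closure-set bookkeeping in the $i\geq j$ branch for $\le$: one must verify that configurations reachable from $\le_s$ via the $q_4$ halving loop starting at $\delta^{j-i}$ stay disjoint from configurations reachable from $\le_f$ via the reverse halving loop starting at $\delta^1$, and \Cref{lem-ndr-div2}'s explicit description of $\REACH_\dt$ is precisely what makes this separation clean.
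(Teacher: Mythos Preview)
Your approach matches the paper's in spirit, but there is a technical slip in your closure sets that would cause the verification to fail as written. Your $q_3$-slice for $C_{i,j}$ is the single diagonal $\{(\delta^{i-k},\delta^{j-k}):k\in\Z\}$, i.e.\ all $(a',b')$ whose exponents differ by exactly $j-i$. But after a round trip $q_3\to q_4\to q_3$ through $e_5$, the halving loop at $q_4$ may have moved $b'$ to some $\delta^m$ with $m$ in the $\dt$-orbit of $j-i$ but $m\neq j-i$, and the resulting $(a',b')=(1,\delta^m)$ at $q_3$ lies off your diagonal. So neither the single $i<j$ class nor the $s$-rooted $i\geq j$ class is closed under $\IO_{e_5}$, and \Cref{prop-io-classes} does not apply. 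The fix is to enlarge the $q_3$-slice to all diagonals whose offset lies in that orbit (which is all of $\{m\geq1\}$ or all of $\{m\leq0\}$ according as $j-i$ is positive or not), and then to propagate this back to $q_2$.

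The paper sidesteps this with a shorter argument. Since immutability already gives $\IO^{s,f}_\le\subseteq\Id$, there is no need for \Cref{prop-io-classes} or for any $i<j$ or $s$-rooted classes: for each fixed $i\geq j$ it suffices to exhibit one closed set containing $((\delta^i,\delta^j,1,1),\le_f)$ with empty $s$-slice. Tracing backward from $\le_f$ gives such a set whose $q_4$-slice is $\{(1,\delta^q):q\geq1\}$ and whose $q_3$-slice is the full half-plane $\{(\delta^k,\delta^l):k<l\}$, which is manifestly closed under $e_4$ and $e_5$; your positive/nonpositive separation for the $\dt$-orbit is exactly what closes it under $e_6$. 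Going back through $e_3$ and $e_2$ then forces the $a'$-exponent to be ${}<j\leq i$, so the $q_1$- and $s$-slices are empty. The explicit realizing computation, the complexity estimate via $\F_\inst=(n,n)$ and \Cref{prop-complexity-estimate}(3), and the treatment of $\check_{\geq0}$ are all correct (with $|i|$ in place of $i$ for the $e_4$-loop cost).
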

\begin{proof} 

The proof of the statement concerning $\check_{\geq0}$ is straightforward.

In $\le$, the external tapes $a$ and $b$ are mentioned only by $e_2$ and $e_3$, respectively.  By \Cref{lem-io-cpy}, these edges mention the tapes as immutable variables.  Hence, by \Cref{lem-immutable}, both $a$ and $b$ are immutable in $\le$. Thus, $\IO^{s,s}_\le, \IO^{f,f}_\le = \Id$, and $\IO_\le^{s,f} \subseteq \Id$.

For all $i,j\in\Z$ such that $i\geq j$, define the subset $C_{i,j}\subseteq\GL_{\ch\geq0}\times S_{\ch\geq0}$ by:
\begin{align*}
	C_{i,j}^s = C^{q_1} &= \emptyset \\
	C_{i,j}^{q_2} &= \{(\delta^i, \delta^j, \delta^k ,1)~|~k< j\} \\
	C_{i,j}^{q_3} &= \{(\delta^i, \delta^j, \delta^k ,\delta^l)~|~k< l\} \\
			&=  \{(\delta^i, \delta^j, \delta^k ,\delta^{k+q})~|~k\in \Z, q\geq 1\} \\	   
	C_{i,j}^{q_4} &= \{(\delta^i, \delta^j, 1,\delta^q)~|~q\geq 1\} \\
	C_{i,j}^f &= \{(\delta^i, \delta^j, 1,1)\}
\end{align*}
It is straightforward to check that $C_{i,j}$ is closed under $\IO_{e_\ell}$ for all $\ell\in\overline{7}$, and so is closed under $\IO_e$ for all $e\in E_{\ch\geq0}$.  As a result, $(\delta^i,\delta^j)\notin\IO_{\ch\geq0}^{s,f}$.

For $i,j\in\Z$ with $i<j$, consider the generic path $\rho_{i,j}=e_1',e_2',e_3',(e_4')^i,e_5',(e_6')^k,e_7'$, where:
\begin{itemize}

\item $e_\ell'=(\s,\f,e_\ell)$ for $\ell\in\overline{7}$,

\item For $m\in\N$ and any generic edge $e'$, $(e')^m$ represents the generic subpath $\underbrace{e',\dots,e'}_{m\text{ times}}$,

\item For $m\in\N$, $(e_\ell')^{-m}$ represents the subpath $(e_\ell'')^m$, where $e_\ell''=(\s,\f,e_\ell^{-1})$.

\item $k\geq\log_2(j-i)$.

\end{itemize}
So, for $w_{i,j}=(\delta^i,\delta^j)$, as in the proof of \Cref{lem-ndr-checkstar}, $\rho_{i,j}$ supports an input-output computation $\eps_{i,j}$ realizing $(w_{i,j},w_{i,j})\in\IO_\le^{s,f}$.

Hence, $\IO_\le^{s,f}=\bigl\{\bigl((\delta^i,\delta^j),(\delta^i,\delta^j)\bigr)\mid i<j\bigr\}$.

Now, let $\F$ be an estimating set for the operation $\le(a,b)$ given by $\F_\cpy=\F_\dt=(n,n)$.  By \Cref{lem-basic-0bounds}, $\TMSP_\inst\preceq\F_\inst$ for all $\inst\in\INST_\le$.

Then, similar to the proof of \Cref{lem-basic-star-bounds}(1), it follows from the construction of $\eps_{i,j}$ that for all $i,j\in\Z$ satisfying $i<j$:
\begin{align*}
\tm^\F(\eps_{i,j})&=1+|i|+|j|+|i|+1+3(j-i)+1\leq5(|i|+|j|)+3 \\
\sp^\F(\eps_{i,j})&=2(|i|+|j|)
\end{align*}
So, since $|w_{i,j}|=|i|+|j|$, it follows that $\TMSP_\le^\F\preceq(n,n)$.

Thus, as $\F$ satisfies the hypotheses of \Cref{prop-complexity-estimate}, $\TMSP_\le\preceq(n,n)$.

\end{proof}

Now, using the object $\text{Counter}_1[a]$ constructed in \Cref{ex-counter1} as a base, for all $d\in\N-\{0\}$ the object $\text{Counter}_{d+1}[a_1, \dots, a_{d+1}]$ is defined iteratively in \Cref{fig-COUNTER_STAR}.

\begin{figure}[hbt]
	\centering
	\begin{tikzpicture} 
[point/.style={inner sep = 4pt, circle,draw,fill=white},
mpoint/.style={inner sep = 1.7pt, circle,draw,fill=black},
FIT/.style args = {#1}{rounded rectangle, draw,  fit=#1, rotate fit=45, yscale=0.5},
FITR/.style args = {#1}{rounded rectangle, draw,  fit=#1, rotate fit=-45, yscale=0.5},
FIT1/.style args = {#1}{rounded rectangle, draw,  fit=#1, rotate fit=45, scale=2},
vecArrow/.style={
		thick, decoration={markings,mark=at position
		   1 with {\arrow[thick]{open triangle 60}}},
		   double distance=1.4pt, shorten >= 5.5pt,
		   preaction = {decorate},
		   postaction = {draw,line width=0.4pt, white,shorten >= 4.5pt}
	},
myptr/.style={decoration={markings,mark=at position 1 with %
    {\arrow[scale=3,>=stealth]{>}}},postaction={decorate}}
]

\begin{scope} [xscale=0.75, yscale=0.3]      
	\node [anchor=west] at (-1.5, 6) {
		\begin{tabular}{l} 
		Object Counter$_{d+1}[a_1, \dots, a_{d+1}]$;  \\
		\ \ \ cntr is Counter$_d$ $[a_1, \dots, a_d]$
		\end{tabular}};
	
	\node at (0, 2.3) {$\check()$};
	\node (start) at (0,0) [point] {$s$};
	\node (finish) at (0, -10) [point] {$f$};
	
	\draw [->] (start) to node[midway, right]{
		$\begin{gathered}
		 \textrm{cntr}.\check()
		\end{gathered}$
	}  (finish);
\end{scope}

\begin{scope} [yshift= 1.25cm, xshift = 3.5cm, xscale=0.4, yscale=0.45]      
	\node at (7, -1.5) {$\inc()$};
	\node (start) at (7,-3) [point] {$s$};
	\node (n2) at (0,-6) [point] {$q_1$};
	\node (n5) at (0,-9) [point] {$q_2$};
	\node (n6) at (0,-12) [point] {$q_3$};
	\node (m2) at (14,-6) [point] {$r_1$};
	\node (m5) at (14,-9) [point] {$r_2$};
	\node (m6) at (14,-12) [point] {$r_3$};
	\node (finish) at (7, -15) [point] {$f$};
	
	\draw [->] (start) to node[pos=0.5, above left]{
		$\begin{gathered}
		\check_{\geq0}(\mathbf{a_{d+1}}) 
		\end{gathered}$
	} node [pos=0.5, below] {$e_1$}  (n2);

	\draw [->] (start) to node[pos=0.5,above right]{
		$\begin{gathered}
		\check_{\geq0}(\mathbf{a_{d+1}}) 
		\end{gathered}$
	} node [pos=0.5, below] {$f_1$}  (m2);

	\draw [->] (n5) to node[midway, right]{
		$\begin{gathered}
		 		\cpy(\mathbf{a_d}, \mathbf{a_{d+1}})
		\end{gathered}$
	} node [pos=0.5, left] {$e_2$} (n2);

	\draw [->] (m2) to node[midway, left]{
		$\begin{gathered}
		 		\le(\mathbf{a_{d+1}}, \mathbf{a_d})
		\end{gathered}$
	}  node [pos=0.5, right] {$f_2$} (m5);

	\draw [->] (n5) to node[pos=0.5, right]{
		$\begin{gathered}
		\text{cntr}.\inc()
		\end{gathered}$
	}  node [pos=0.5, left] {$e_3$} (n6);

	\draw [->] (m5) to node[midway, left]{
		$\begin{gathered}
		 [\mathbf{a_{d+1}} \rightarrow \delta \mathbf{a_{d+1}}]
		\end{gathered}$
	}  node [pos=0.5, right] {$f_3$} (m6);

	\draw [->] (n6) to node[midway, below left]{
		$\begin{gathered}
		 [\mathbf{a_{d+1}} =1]
		\end{gathered}$
	} node [pos=0.5, above] {$e_4$}  (finish);

	\draw [->] (m6) to node[midway, below right]{
		$\begin{gathered}
		\check_{>0}(\mathbf{a_{d+1}}) 
		\end{gathered}$
	} node [pos=0.5, above] {$f_4$}  (finish);
=======
\end{scope}

\end{tikzpicture}	
	\caption{Object $\text{Counter}_{d+1}[a_1, \dots, a_{d+1}]$.}
	\label{fig-COUNTER_STAR}       
\end{figure} 

Note that for $\op\in\{\ch,\inc\}$, per the minimal compatible hardware conventions, $\I_\op=\I_{\Counter_{d+1}}^\val$ and $\A_\op(a_i)=\{\delta\}$ for all $i\in\overline{d+1}$.

Further, note that the role of the action $\cntr.\check()$ in $\text{Counter}_{d+1}.\check()$ is analogous to that of $\check_{>0}$.  In fact, one can replace this action with $\check_{>0}(a_1)$ without affecting any of the desired properties.

Finally, it is worth noting that the labelled graph defining the operation $\inc()$ satisfies condition (6) of \Cref{def-s1-graph}.  To see this, note that for $i\leq d$, the only edges mentioning $a_i$ as a variable $e_2$ and $f_2$, which mention $a_d$ as a variable; however, by Lemmas \ref{lem-io-cpy} and \ref{lem-ndr-le}, both edges mention $a_d$ as an immutable variable.  Hence, $\G_\inc$ is indeed an $S$-graph.
 
The next goal is to establish that $\text{Counter}_{d+1}$ is indeed a counter in the sense of \Cref{def-counter-star}.  As should be expected, the proof is by induction of $d$, with \Cref{lem-counter-is-counter} establishing a base case.  

First, though, the set of values of these objects must be explicitly determined.

\begin{definition}\label{def-lex}
For $d\geq1$, let $U_d = \bigl\{(i_1, \dots, i_d)\mid i_1\geq i_2\geq \dots\geq i_d\geq 0 \bigr\} \subseteq \N^d$.  Then, for each $d$ inductively define the function $\scc_d:U_d\to U_d$ as follows:
	\begin{itemize}
		\item For $d=1$ and $u=(i)\in U_1$, define $\scc_1(u)=(i+1)$.
		\item For $d>1$, let $u=(i_1,\dots,i_{d-1},i_d)=(w,d)$, where $w=(i_1,\dots,i_{d-1})$.  Then, as $w\in U_{d-1}$, define $$\scc_d(u) = \begin{cases}
			(w, i_d +1) &i_{d-1} > i_d \\
			(\scc_{d-1}(w), 0) &i_{d-1} = i_d\\
		\end{cases}$$
	\end{itemize}
Then, define the relation $\leq_d$ on $U_d$ by $u\leq_d u'$ if and only if there exists $k\in\N$ such that $\scc_d^k(u)=u'$.
\end{definition}

\begin{lemma}\label{lem-lex}
	For all $d\geq 1$, define $\epsilon_d:U_d\to\N$ by $\epsilon_d(0,\dots,0)=0$ and $\epsilon_d(\scc_d(u))=f(u)+1$ for all $u\in U_d$.  Then $\epsilon_d$ defines an isomorphism of posets between $(U_d,\leq_d)$ and $(\N,\leq)$.
\end{lemma}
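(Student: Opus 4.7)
The plan is by induction on $d$. The base $d=1$ is immediate: $U_1 = \N$, $\scc_1$ is the ordinary successor, $\leq_1$ is the usual order on $\N$, and $\epsilon_1$ is forced to be the identity. For the inductive step I assume that $\epsilon_{d-1}$ is an order isomorphism $(U_{d-1}, \leq_{d-1}) \to (\N, \leq)$.

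First I verify that $\scc_d$ is a well-defined injection $U_d \to U_d$ whose image is $U_d \setminus \{\mathbf{0}\}$, where $\mathbf{0} = (0,\dots,0)$. Well-definedness is a direct check against the two cases of the definition. Injectivity reduces via a case split to injectivity of $\scc_{d-1}$, using that successors of the form $(w, i_d+1)$ have nonzero last coordinate while those of the form $(\scc_{d-1}(w), 0)$ have last coordinate $0$, so the two cases cannot collide. For surjectivity onto $U_d \setminus \{\mathbf{0}\}$, given $(w', j) \ne \mathbf{0}$: if $j > 0$, take $(w', j-1)$; if $j = 0$, use the inductive hypothesis to pick the unique $w \in U_{d-1}$ with $\scc_{d-1}(w) = w'$ and take $(w, i_{d-1})$ where $i_{d-1}$ is the last coordinate of $w$.

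The crux is to exhibit an explicit bijection $\phi : U_d \to \N$ satisfying $\phi(\mathbf{0}) = 0$ and $\phi(\scc_d(u)) = \phi(u) + 1$; then by the recursive definition of $\epsilon_d$ such a $\phi$ must agree with $\epsilon_d$ on the forward orbit of $\mathbf{0}$ under $\scc_d$, and since $\phi$ is bijective this orbit must be all of $U_d$, forcing $\epsilon_d = \phi$. The natural candidate is to partition $U_d$ into blocks $B_w = \{(w, 0), (w, 1), \dots, (w, i_{d-1})\}$ indexed by $w = (i_1,\dots,i_{d-1}) \in U_{d-1}$, concatenate them in the order $\leq_{d-1}$, and set
\[
\phi(w, j) \;=\; j \;+\; \sum_{w' <_{d-1} w} \bigl(i'_{d-1} + 1\bigr),
\]
where $i'_{d-1}$ is the last coordinate of $w'$; the sum is finite by the inductive hypothesis. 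Then $\phi(\mathbf{0}) = 0$ is immediate, and $\phi(\scc_d(u)) = \phi(u) + 1$ follows by a direct case check against the two clauses of the definition of $\scc_d$ (stepping within $B_w$ versus jumping to the first element of $B_{\scc_{d-1}(w)}$).

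The main obstacle is proving bijectivity of $\phi$, which reduces to showing the blocks $B_w$ partition $U_d$ and their concatenated lengths exhaust $\N$. Both facts rest essentially on $\epsilon_{d-1}$ being an order isomorphism: every $w \in U_{d-1}$ has exactly $\epsilon_{d-1}(w)$ predecessors, and the partial sums $\sum_{w' \leq_{d-1} w}(i'_{d-1}+1)$ therefore enumerate the initial segments of $\N$ as $w$ runs over initial segments of $U_{d-1}$. Once $\epsilon_d = \phi$ is known to be bijective, order-preservation is automatic: by definition $u \leq_d u'$ iff $u' = \scc_d^k(u)$ for some $k \geq 0$, which via $\phi$ is equivalent to $\phi(u') = \phi(u) + k$, i.e., $\phi(u) \leq \phi(u')$ in $\N$.
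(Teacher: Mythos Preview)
The paper states this lemma without proof, evidently treating it as a routine combinatorial fact about enumerating weakly decreasing tuples. Your argument is correct: the induction on $d$, the verification that $\scc_d$ is a bijection $U_d \to U_d\setminus\{\mathbf{0}\}$, and the explicit construction of $\phi$ via block concatenation all go through as you describe. The one place where your write-up is compressed is the bijectivity of $\phi$; it would help to spell out that the offset $S(w) := \sum_{w'<_{d-1}w}(i'_{d-1}+1)$ satisfies $S(\mathbf{0}_{d-1})=0$ and $S(\scc_{d-1}(w)) = S(w) + |B_w|$, so that the images $\phi(B_w)$ are consecutive intervals in $\N$ starting at $0$, each of length at least $1$, hence covering $\N$. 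But this is a matter of exposition rather than a gap in the argument.
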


The isomorphism $\epsilon_d$ is called the \emph{decoding function of} $\Counter_d$.  Note that the decoding function $\epsilon_1$ corresponds to the `identity map' sending $(i)$ to $i$ for all $i\in\N$.

Now, for all $d\geq1$ and $u=(i_1,\dots,i_d)\in U_d$, define $\delta^u=(\delta^{i_1},\dots,\delta^{i_d})\in F(\delta)^d$.  Then, define $\VAL_d=\{\delta^u\mid u\in U_d\}$.  The well order $\leq_d$ on $U_d$ then induces a well order $\leq_d$ on $\VAL_d$, so that the map $f_d$ given by $f_d(\delta^u)=\epsilon_d^{-1}(u)$ defines an isomorphism of posets between $(\VAL_d,\leq_d)$ and $(\N,\leq)$.  Hence, the elements of $\VAL_d$ can be enumerated $\VAL_d=\{v_i^d\}_{i\in\N}$, where $v_i^d=\delta^{\epsilon_d^{-1}(i)}$.  With an abuse of notation, extend the definition of $\scc_d$ by setting $\scc_d(v_i^d)=v_{i+1}^d$.

\begin{lemma}\label{lem-countern}
	For all $d\geq 0$, $\C_{d+1} = \Counter_{d+1}[a_1, \dots, a_{d+1}]$ is a counter with respect to the values $\VAL_{\C_{d+1}}=\VAL_{d+1}$.
\end{lemma}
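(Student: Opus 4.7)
I proceed by induction on $d$. The base case $d=0$ is \Cref{lem-counter-is-counter}, whose proof actually establishes $\VAL_{\C_1} = \{\delta^i : i \geq 0\}$, which coincides with $\VAL_1$ under the identification $(i) \leftrightarrow \delta^i$. For the inductive step assume $\C_d$ is a counter with $\VAL_{\C_d} = \VAL_d$, so the instance $\cntr$ appearing in \Cref{fig-COUNTER_STAR} has $\IO_{\cntr.\inc}$ and $\IO_{\cntr.\check}$ of the form prescribed by \Cref{def-counter-star}. Write $v_j^{d+1} = \delta^{u_j}$ with $u_j = \epsilon_{d+1}^{-1}(j) = (k_1, \dots, k_{d+1})$, so that the recursion defining $\scc_{d+1}$ splits into Case A, where $k_d = k_{d+1}$ and $u_{j+1} = (\scc_d(k_1,\dots,k_d), 0)$, and Case B, where $k_d > k_{d+1}$ and $u_{j+1} = (k_1,\dots,k_d, k_{d+1}+1)$. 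These correspond exactly to the left branch ($q_1,q_2,q_3$) and right branch ($r_1,r_2,r_3$) of the $\inc()$ graph.

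The first step would be to show $\VAL_{\C_{d+1}} \subseteq \VAL_{d+1}$ via \Cref{prop-io-classes2} with $V = \VAL_{d+1}$. For each $j \geq 0$, I would take $C_{j,\inc}$ to consist of $(\delta^{u_j}, \inc_s)$, $(\delta^{u_j}, q_1)$, $(\delta^{u_j}, r_1)$, and $(\delta^{u_{j+1}}, \inc_f)$, together with the additional configurations $((\delta^{k_1},\dots,\delta^{k_d},1), q_2)$ and $(\delta^{u_{j+1}}, q_3)$ in Case A, or $(\delta^{u_j}, r_2)$ and $(\delta^{u_{j+1}}, r_3)$ in Case B; and set $C_{-1,\inc} = \{(\delta^{u_0}, \inc_f), (\delta^{u_0}, q_3)\}$. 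For $\check$, take $C_{j,\check} = \{(\delta^{u_j}, \check_s), (\delta^{u_j}, \check_f)\}$ for $j \geq 1$, and the two singletons $\{(\delta^{u_0}, \check_s)\}$, $\{(\delta^{u_0}, \check_f)\}$ for $j=0$. Their $\SF$-restrictions cover $\widehat{V}_\op \times \SF_\op$ for both $\op \in \{\inc,\check\}$, and since $v_0^{d+1} = 1^\val_{\C_{d+1}} \in \VAL_{\C_{d+1}} \cap \VAL_{d+1}$, the proposition delivers $\VAL_{\C_{d+1}} \subseteq \VAL_{d+1}$.

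For the reverse inclusion and the explicit form of the $\IO$ relations, I would then construct by induction on $j$ an input-output computation of $\inc$ realizing $(v_j^{d+1}, v_{j+1}^{d+1}) \in \IO_\inc^{s,f}$: in Case A the generic path $(\s,\f,e_1), (\f,\s,e_2), (\s,\f,e_3), (\s,\f,e_4)$ using the inductively available $\IO_{\cntr.\inc}^{s,f}$ at $e_3$; in Case B the generic path $(\s,\f,f_1), (\s,\f,f_2), (\s,\f,f_3), (\s,\f,f_4)$, invoking Lemmas \ref{lem-io-cpy}, \ref{lem-ndr-le}, and \ref{lem-ndr-checkstar} at the labelled edges. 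This proves $\VAL_{d+1} \subseteq \VAL_{\C_{d+1}}$ and witnesses every required entry of $\IO_\inc^{s,f}$; then \Cref{prop-io-classes} applied to the $C_{j,\inc}$ family furnishes the matching upper bound $\IO_\inc^{s,f} \subseteq \{(v_j^{d+1}, v_{j+1}^{d+1}) : j \geq 0\}$ and $\IO_\inc^{s,s} = \IO_\inc^{f,f} = \Id$. The relations for $\check$ follow identically from the single $\cntr.\check()$ edge, using that $(k_1,\dots,k_d) = (0,\dots,0)$ iff $j = 0$.

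The principal obstacle is verifying closure of the $C_{j,\inc}$ sets under $\IO_e$ for each edge: a priori, generic edges might carry configurations laterally between the two branches. The key simplification is that the ``wrong'' branch is always blocked by its own guard action. In Case A, the edge $f_2$ labelled $\le(a_{d+1}, a_d)$ is inapplicable because by \Cref{lem-ndr-le} the relation $\IO_\le^{s,f}$ requires strict inequality, which fails when $k_d = k_{d+1}$; in Case B, the reverse of $e_2$ labelled $\cpy(a_d, a_{d+1})$ is inapplicable because by \Cref{lem-io-cpy} the relation $\IO_\cpy^{f,s}$ requires $a_d = a_{d+1}$. These precise mismatches, together with the identity $(s,s)$ and $(f,f)$ components of the inductive $\IO$ relations and the degree-zero guards $[a_{d+1}=1]$ at $e_4$ and $\check_{>0}(a_{d+1})$ at $f_4$ that filter exactly one case into $\inc_f$, keep each $C_{j,\inc}$ self-contained; the remaining verifications are routine edge-by-edge checks.
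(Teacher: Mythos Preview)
Your proposal is correct and follows essentially the same approach as the paper: induction on $d$, construction of the closed configuration sets $C_{j,\inc}$, $C_{-1,\inc}$ (the paper calls this $D_{0,\inc}$), and $C_{j,\check}$ split at $j=0$, application of \Cref{prop-io-classes2} and \Cref{prop-io-classes}, and explicit input--output computations along the left or right branch according to whether $k_d=k_{d+1}$ or $k_d>k_{d+1}$. Your identification of the blocking mechanism (the $\le$ edge $f_2$ requiring strict inequality in Case~A, and the $\cpy$ edge $e_2$ requiring equality in Case~B) is exactly what the paper leaves as ``straightforward to check''; the only cosmetic difference is that the paper first exhibits the computations $\eps_m$ (and derives $\VAL_{d+1}\subseteq\VAL_{\C_{d+1}}$) before defining the closed sets, whereas you do these in the opposite order.
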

\begin{proof} 
	The proof follows by induction on $d$, with \Cref{lem-counter-is-counter} providing the proof of the base case of $d=0$.  So, suppose $d\geq1$ and that $\C_d$ is a counter with respect to the values $\VAL_{\C_d}=\VAL_d$.
	
	As a result, since $\C_{d+1}$ uses an instance of $\cntr$, $\VAL_{\C_{d+1}}\subseteq\VAL_d\times F(\delta)=\GL_\op$ for either $\op=\inc,\check$.
	
	By the definition of the operation $\check()$ in a counter, a straightforward inductive argument immediately implies that $\IO_\check^{s,s}=\IO_\check^{f,f}=\Id$ and $$\IO_\check^{s,f}=\{(v,v)\mid v=(v_j^d,\delta^i)\in\VAL_{\C_{d+1}}, j>0\}$$

	Now, fix $m\in\N$ and let $\epsilon_{d+1}^{-1}(m)=(i_1,\dots,i_d,i_{d+1})$ with $u_m=(i_1,\dots,i_d)$.  Then, there exists $j\in\N$ such that $v_m^{d+1}=\delta^{\epsilon_{d+1}^{-1}(m)}=(v_j^d,\delta^{i_{d+1}})$, where $v_j^d=\delta^{u_m}$.  Note that, by the definition of $V_{d+1}$, $j=0$ if and only if $m=0$.
	
	If $i_d>i_{d+1}$, then $v_{m+1}^{d+1}=\scc_{d+1}(v_m^{d+1})=(v_j^d,\delta^{i_{d+1}+1})$.  In this case, there exists an input-output computation
$$\eps_m=\bigl((v_m^{d+1},\inc_s),f_1',(v_m^{d+1},r_1),f_2',(v_m^{d+1},r_2),f_3',(v_{m+1}^{d+1},r_3),f_4',(v_{m+1}^{d+1},\inc_f)\bigr)$$
of $\inc$ between $(v_m^{d+1},\inc_s)$ and $(v_{m+1}^{d+1},\inc_f)$, where $f_j'=(\s,\f,f_j)$ for all $j$.

	Otherwise, if $i_d=i_{d+1}$, then $v_{m+1}^{d+1}=\scc_{d+1}(v_m^{d+1})=(\scc_d(v_j^d),1)$.  But then there exists an input-output computation
$$\eps_m=\bigl((v_m^{d+1},\inc_s),e_1',(v_m^{d+1},q_1),e_2'',((v_j^d,1),q_2),e_3',(v_{m+1}^{d+1},q_3),e_4',(v_{m+1}^{d+1},\inc_f)\bigr)$$
of $\inc$ between $(v_m^{d+1},\inc_s)$ and $(v_{m+1}^{d+1},\inc_f)$, where $e_j'=(\s,\f,e_j)$ and $e_j''=(\f,\s,e_j)$ for all $j$.  

Hence, as $v_0^{d+1}$ is by definition the initial value of $\Counter_{d+1}$, it follows that $\VAL_{d+1}\subseteq\VAL_{\C_{d+1}}$.

    Conversely, define the subsets $C_{m,\check},D_{m,\check}\subseteq\GL_\check\times S_\check$ as follows:
	\begin{itemize}
	
	\item If $m>0$, then $C_{m,\check}^s=C_{m,\check}^f=\{v_m^{d+1}\}$, $D_{m,\check}=\emptyset$
	
	\item If $m=0$, then $C_{m,\check}^s=D_{m,\check}^f=\{v_m^{d+1}\}$ and $D_{m,\check}^s=C_{m,\check}^f=\emptyset$.
	
	\end{itemize}
	
	Similarly, define the subsets $C_{m,\inc}\subseteq\GL_\inc\times S_\inc$ as follows:
	\begin{itemize}
	
	\item If $i_{d+1}<i_d$, then 
	\begin{align*}
	C_{m,\inc}^s=C_{m,\inc}^{q_1}=C_{m,\inc}^{r_1}=C_{m,\inc}^{r_2}&=\{v_m^{d+1}\} \\
	C_{m,\inc}^{r_3}=C_{m,\inc}^f&=\{v_{m+1}^{d+1}\} \\
	C_{m,\inc}^{q_2}=C_{m,\inc}^{q_3}&=\emptyset
	\end{align*}
	
	\item If $i_{d+1}=i_d$, then 
	\begin{align*}
	C_{m,\inc}^s=C_{m,\inc}^{q_1}=C_{m,\inc}^{r_1}&=\{v_m^{d+1}\} \\
	C_{m,\inc}^{q_2}&=\{(v_j^d,1)\} \\
	C_{m,\inc}^{q_3}=C_{m,\inc}^f&=\{v_{m+1}^{d+1}\} \\
	C_{m,\inc}^{r_2}=C_{m,\inc}^{r_3}&=\emptyset
	\end{align*}
	
	\end{itemize}
	
	Finally, define $D_{0,\inc}=\{(v_0^{d+1},\inc_f),(v_0^{d+1},q_3)\}\subseteq\GL_\inc\times S_\inc$.
	
	For each $\op\in\{\check,\inc\}$, it is straightforward to check that $C_{m,\op}$ and $D_{m,\op}$ are each closed under $\IO_e$ for each $e\in E_\op$.  Further, $$\VAL_{d+1}\times\SF_\op=\bigcup\left(C_{m,\op}^{\SF_\op}\cup D_{m,\op}^{\SF_\op}\right)$$
	Hence, \Cref{prop-io-classes2} implies that $\VAL_{\C_{d+1}}=\VAL_{d+1}$.
	
	Moreover, \Cref{prop-io-classes} yields $\IO_\inc\subseteq\bigcup\bigl((C_{m,\inc,\IO})^2\cup(D_{m,\inc,\IO})^2\bigr)$.  In particular, this implies that $\IO_\inc^{s,s}=\IO_\inc^{f,f}=\Id$ and 
	$$\IO_\inc^{s,f}\subseteq\{(v_m^{d+1},v_{m+1}^{d+1})\mid m\in\N\}$$
	Thus, as the existence of the computations $\eps_m$ gives the reverse inclusion, $\Counter_{d+1}$ is indeed a counter with respect to $\VAL_{d+1}$.
	
\end{proof}

\begin{lemma} \label{lem-cntr-complexities}

  For all $d\in\N$, $\TMSP_{\C_{d+1}}\preceq(n,n)$ for $\C_{d+1}=\Counter_{d+1}$.
  
\end{lemma}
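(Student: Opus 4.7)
The plan is to proceed by induction on $d$, with the base case $d=0$ provided by Lemma~\ref{lem-basic-star-bounds}(b). For the inductive step, I would assume $\TMSP_{\C_d} \preceq (n,n)$ and exploit the estimating framework of Section~\ref{sec-complexity}. The object $\C_{d+1}$ has instances drawn from $\C_d = \cntr$, $\cpy$, $\le$, $\check_{\geq 0}$, and $\check_{>0}$; by the inductive hypothesis together with Lemmas~\ref{lem-basic-0bounds}(2), \ref{lem-ndr-le}, and~\ref{lem-basic-star-bounds}(a), each satisfies $\TMSP_\inst \preceq (n,n)$. I would take the estimating set $\F$ with $\F_\inst = (n,n)$ for every $\inst$, so that $f_\inst(cn) = cn = c\cdot f_\inst(n)$ and similarly for $g_\inst$, allowing $\phi_\inst(c) = \psi_\inst(c) = c$. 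All hypotheses of Proposition~\ref{prop-complexity-estimate}(3) are then in place, and it remains only to show $\TMSP_{\C_{d+1}}^\F \preceq (n,n)$.

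For the $\check()$ operation, the realizing computation for an essential pair $(v_m^{d+1}, v_m^{d+1}) \in \IO_\check^{s,f}$ with $m \geq 1$ is a single-edge computation $((v_m^{d+1},\check_s),\, (\s,\f,e),\, (v_m^{d+1},\check_f))$ where $e$ is labelled $\cntr.\check()$. Writing $v_m^{d+1} = (v_j^d, \delta^{i_{d+1}})$ one observes, as in the proof of Lemma~\ref{lem-countern}, that $j \geq 1$, so this edge is applicable. Its estimated time is $f_\cntr(|v_j^d|) \leq n$ and its estimated space is $g_\cntr(|v_j^d|) + i_{d+1} \leq n$; the remaining cases of $\IO_\check$ are handled by empty computations as in Lemma~\ref{lem-tmsp-basics}.

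For the $\inc()$ operation, I would realize $(v_m^{d+1}, v_{m+1}^{d+1}) \in \IO_\inc^{s,f}$ along the two paths exhibited in the proof of Lemma~\ref{lem-countern}, splitting on whether $i_d > i_{d+1}$ or $i_d = i_{d+1}$. In the first case, the four-edge path $s \to r_1 \to r_2 \to r_3 \to f$ uses $\check_{\geq 0}$, $\le$, a degree-$0$ transformation edge, and $\check_{>0}$, and a routine inspection shows each subcomputation has estimated time and space $O(n)$. In the second case the path $s \to q_1 \to q_2 \to q_3 \to f$ is traversed with the middle generic edge $(\f,\s,e_2)$, so $\cpy(a_d, a_{d+1})$ erases $a_{d+1}$ before the critical recursive call $\cntr.\inc()$ is invoked; the inductive hypothesis bounds the latter in estimated time and space $O(n)$, and the remaining subcomputations are similarly linear.

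The main technical check, and what I expect to be the only delicate point, is verifying that all relevant input sizes remain $O(n)$ throughout. Using $n = \max(|v_m^{d+1}|, |v_{m+1}^{d+1}|)$, a direct computation from the successor rule $\scc_{d+1}$ gives $|v_{m+1}^{d+1}| \leq |v_m^{d+1}| + 1$; likewise $|v_j^d| \leq |v_m^{d+1}|$ and $|v_{j+1}^d| \leq |v_j^d| + 1$, so the input to the recursive $\cntr.\inc()$ satisfies $\max(|v_j^d|, |v_{j+1}^d|) \leq n+1$. Thus each of the $O(1)$ subcomputations in either case contributes estimated time $O(n)$ and keeps estimated space bounded by $O(n)$, yielding $\TMSP_{\C_{d+1}}^\F \preceq (n,n)$. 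Proposition~\ref{prop-complexity-estimate}(3) then gives $\TMSP_{\C_{d+1}} \preceq (n,n)$, completing the induction.
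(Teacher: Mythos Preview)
Your proposal is correct and follows essentially the same approach as the paper: induction on $d$ with base case Lemma~\ref{lem-basic-star-bounds}(b), estimating set $\F_\inst=(n,n)$ for every instance, and the explicit realizing computations $\eps_m$ and $\zeta_m$ from the proof of Lemma~\ref{lem-countern}. The only difference is the final step: you invoke Proposition~\ref{prop-complexity-estimate}(3) (checking the multiplicative hypothesis $f_\inst(cn)\leq c\,f_\inst(n)$, which is immediate for linear $f_\inst$), whereas the paper observes that these realizing computations are linearly $1$-bounded and applies Lemma~\ref{lem-K-bounded}; both routes are equally short here, and indeed the paper itself uses your route in the analogous Lemma~\ref{lem-basic-star-bounds}(a).
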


\begin{proof} 

The proof proceeds by induction on $d$, with the base case $d=0$ given by \Cref{lem-basic-star-bounds}(2).

Let $\F$ be the estimating set given by $\F_\inst=(n,n)$ for all $\inst\in\INST_{\C_{d+1}}$.  By \Cref{lem-basic-0bounds}, \Cref{lem-basic-star-bounds}, \Cref{lem-ndr-le}, and the inductive hypothesis, it follows that $\TMSP_\inst\preceq\F_\inst$ for each $\inst$.

Note that for $m\in\N$, the computations $\eps_m$ realizing $(v_m^{d+1},v_{m+1}^{d+1})\in\IO_\inc^{s,f}$ are linearly $1$-bounded.  Moreover, if $m\neq0$, then the linearly $1$-bounded computation $$\zeta_m=\bigl((v_m^{d+1},\check_s),e',(v_m^{d+1},\check_f)\bigr)$$ where $e'=(\s,\f,e)$ for the unique edge $e\in E_\check$, realizes $(v_m^{d+1},v_m^{d+1})\in\IO_\check^{s,f}$.  Hence, $\Counter_{d+1}$ is a $1$-bounded object.

Now, for $m\geq1$, note that $\tmt_\check^\F(\zeta_m,1)=|v_m^{d+1}|$ and $\spt_\check^\F(\zeta_m,1)=2|v_m^{d+1}|$.

Next, let $m\in\N$, $M_m=\max\bigl(|v_m^{d+1}|,|v_{m+1}^{d+1}|\bigr)$, and $\epsilon_{d+1}^{-1}(m)=(i_1,\dots,i_d,i_{d+1})$.  Then, in either case where $i_d>i_{d+1}$ or $i_d=i_{d+1}$, it follows that
\begin{align*}
\tmt_\inc^\F(\eps_m,1)&= 3M_m+1 \\
\spt_\inc^\F(\eps_m,1)&=2M_m
\end{align*}

Hence, as $\IO_\inc^{s,s}=\IO_\inc^{f,f}=\Id$ and $\IO_\check^{s,s}=\IO_\check^{f,f}=\Id$ and so any such relation can be realized by an empty computation, $\TMSPT_{\C_{d+1}}^\F(1)\preceq(n,n)$.

Thus, \Cref{lem-K-bounded} implies that $\TMSP_{\C_{d+1}}\preceq(n,n)$.

\end{proof}

Given that the stated purpose of defining $\Counter_d$ is to construct counters that are in some sense more `economical' than $\Counter_1$, \Cref{lem-cntr-complexities} seems to be a departure.  After all, the same asymptotic upper bound is given for the time-space complexity of each of these objects, and so it is unclear how $\Counter_d$ could be superior to the simpler $\Counter_1$.

However, the benefits arise in the use of these counters to construct other objects.  To aid with this, the values of $\Counter_d$ are studied in a more concise way.

For all $d\geq1$, let $|\cdot|$ denote the $\ell_1$-norm on $\N^d$.  That is, for all $m\in\N$, $|\epsilon_d^{-1}(m)|=i_1+\dots+i_m$ where $\epsilon_d^{-1}(m)=(i_1,\dots,i_d)$.  Note that $|\epsilon_d^{-1}(m)|=|v_m^d|$ for all $m,d$.  Then, define the growth function $\size_d$ by:
$$\size_d(n)=\max_{m\in\overline{0,n}}|v_m^d|=\max_{m\in\overline{n}}|v_m^d|$$
where the last equality is given by the observation that $|v_0^d|=0$ for all $d$.

\begin{lemma}\label{lem-lex-efficiency}
For all $d\geq1$, $\size_d\sim n^{1/d}$.
\end{lemma}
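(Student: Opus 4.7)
\medskip

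The plan is to show both $\size_d \preceq n^{1/d}$ and $n^{1/d} \preceq \size_d$, which together yield $\size_d \sim n^{1/d}$. The argument proceeds by induction on $d$, with the case $d=1$ being trivial since $v_m^1 = (\delta^m)$ gives $\size_1(n) = n$. The key structural fact to establish is that the enumeration $\epsilon_d$ of $U_d$ respects a block decomposition: writing $N_d(k) = |\{u \in U_d : i_1 \leq k\}|$, I claim that for each $k \geq 0$, the set $\{u \in U_d : i_1 = k\}$ forms a contiguous block under $\leq_d$ and these blocks appear in increasing order of $k$. This follows from the recursive definition of $\scc_d$: within a block where the first $d-1$ coordinates are fixed at some $w \in U_{d-1}$, the last coordinate $i_d$ increments from $0$ up to $w_{d-1}$, after which $\scc_{d-1}$ is applied to $w$. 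By induction, $\scc_{d-1}$ enumerates $U_{d-1}$ first by its first coordinate, so the overall enumeration of $U_d$ orders by $i_1$ first, then $i_2$, and so on. A straightforward stars-and-bars count gives $N_d(k) = \binom{k+d}{d}$, hence in particular $\epsilon_d((k,0,\dots,0)) = N_d(k-1)$, a polynomial of degree $d$ in $k$.

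For the upper bound $\size_d(n) \preceq n^{1/d}$, suppose $|v_m^d| = k$ where $v_m^d = (i_1, \dots, i_d)$. Since $i_1 \geq i_2 \geq \dots \geq i_d \geq 0$ and $i_1 + \cdots + i_d = k$, we have $i_1 \geq \lceil k/d \rceil$. The block structure then gives $m = \epsilon_d(v_m^d) \geq N_d(i_1 - 1) \geq N_d(\lceil k/d \rceil - 1)$, and for $k$ larger than a constant this is at least $c \cdot (k/d)^d$ for some $c > 0$ depending only on $d$. Hence $m \leq n$ forces $k \leq d(n/c)^{1/d}$, giving the desired bound.

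For the lower bound $n^{1/d} \preceq \size_d$, consider the specific tuple $u_k = (k, 0, \dots, 0) \in U_d$, which has norm $|u_k| = k$ and satisfies $\epsilon_d(u_k) = N_d(k-1) = \binom{k-1+d}{d} \leq C k^d$ for some $C$ depending only on $d$. Thus, for any $n$ and any integer $k$ with $Ck^d \leq n$, we have $\epsilon_d(u_k) \leq n$, so $u_k \in \{v_0^d, \dots, v_n^d\}$ and $\size_d(n) \geq |u_k| = k$. Taking $k = \lfloor (n/C)^{1/d}\rfloor$ yields $\size_d(n) \geq (n/C)^{1/d} - 1$ for $n$ sufficiently large, completing the proof.

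The main technical step is the block structure claim — once it is in hand, the two estimates on $N_d(k) \asymp k^d$ give the upper and lower asymptotic bounds directly. I do not anticipate any serious obstacle: the block structure is a direct consequence of unrolling the recursive definition of $\scc_d$, and the rest is elementary counting.
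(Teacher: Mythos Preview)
Your proposal is correct and takes a genuinely different route from the paper. The paper anchors on the diagonal tuples $k_d = (k,\dots,k)$: it observes that $\size_d(c_k) = dk$ exactly (where $c_k = \epsilon_d(k_d)$), because every $u \leq_d k_d$ lies in $\{0,\dots,k\}^d$ and $k_d$ has maximal norm there. It then bounds $c_k$ between $k^d/d!$ and $(k+1)^d$ via the surjection $\{0,\dots,k\}^d \setminus \{0\} \to \epsilon_d^{-1}(\overline{c_k})$ that sorts coordinates into nonincreasing order, with fibers of size at most $d!$. Sandwiching $n$ between consecutive $c_k$ finishes. Your approach instead establishes that $\leq_d$ is lexicographic (the block-structure claim), then uses the corner tuples $(k,0,\dots,0)$ and the closed form $N_d(k)=\binom{k+d}{d}$; the upper bound comes from $i_1 \geq |u|/d$ forcing $m \geq N_d(\lceil k/d\rceil -1)$. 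Both are elementary; the paper's version avoids proving the lex-order fact by working only with the diagonal, while yours trades that for standard binomial estimates and is arguably closer to how one would naturally think about enumerating $U_d$.
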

\begin{proof} 

For every $k\in\N$, let $k_d=(k,\dots,k)\in\VAL_d$ and let $c_k=\epsilon_d(k_d)$.  For example, if $d=3$ and $k=2$, then $k_d=(2,2,2)$ and $c_k=9$.

By definition, $\size_d(c_k)=\max_{m\in\overline{c_k}}|v_m^d|=\max_{m\in\overline{c_k}}|\epsilon_d^{-1}(m)|\geq|k_d|=kd$.  Moreover, for all $m\leq c_k$, $\epsilon_d^{-1}(m)\leq_d\epsilon_d^{-1}(c_k)=k_d$, so that $\epsilon_d^{-1}(m)\in\{0,1,\dots,k\}^d$.  Hence, $\size_d(c_k)=dk$.

For all $k\geq1$, $(j_1,\dots,j_d)\in\{0,1,,k\}^d-\{0_d\}$, define $\pi_{k,d}(j_1,\dots,j_d)$ as the sequence obtained by permuting the coordinates so that they are in non-increasing order.  This then induces a surjective map $\pi_{k,d}:\{0,1,\dots,k\}^d-\{0_d\}\to\epsilon_d^{-1}(\overline{c_k})$, and so 
\begin{equation}\label{eqn-1}
c_k=|\epsilon_d^{-1}(\overline{c_k})|\leq|\{0,1,\dots,k\}^d-\{0_d\}|=(k+1)^d-1\leq(k+1)^d
\end{equation}

Further, for all $v=(i_1,\dots,i_d)\in\epsilon_d^{-1}(\overline{c_k})$, observe that $\pi_{k,d}^{-1}(v)$ consists of all (distinct) permutations of the indices $i_1,\dots,i_d$.  So, $|\pi_{k,d}^{-1}(v)|\leq d!$.  Hence, there exists an injection of $\{0,1,\dots,k\}^d-\{0_d\}$ into $d!$ disjoint copies of $\epsilon_d^{-1}(\overline{c_k})$ given by mapping distinct elements of $\pi_{k,d}^{-1}(v)$ to distinct copies of $v$.  As a result,
\begin{equation}\label{eqn-2}
k^d\leq(k+1)^d-1=|\{0,1,\dots,k\}^d-\{0_d\}|\leq d! \cdot c_k
\end{equation}
Note that $c_0=0$ implies that both (\ref{eqn-1}) and (\ref{eqn-2}) hold trivially for $k=0$.

Now, for all $n\in\N-\{0\}$, there exists $m\in\N-\{0\}$ such that $m^d\leq n\leq(m+1)^d$.  By equation (\ref{eqn-1}), $m^d\geq c_{m-1}$, and so $c_{m-1}\leq n$.  But then $d(m-1)=\size_d(c_{m-1})\leq\size_d(n)$, and so
\begin{equation}\label{eqn-3}
\size_d(n)\geq d((m+1)-2)\geq dn^{1/d}-2d
\end{equation}

Conversely, for all $n\in\N-\{0\}$, there exists $\ell\in\N-\{0\}$ such that $\frac{\ell^d}{d!}\leq n\leq \frac{(\ell+1)^d}{d!}$.  By (\ref{eqn-2}), $n\leq\frac{(\ell+1)^d}{d!}\leq c_{\ell+1}$ so that $\size_d(n)\leq\size_d(c_{\ell+1})=d(\ell+1)$.  But then $\ell^d\leq d!\cdot n$, so that $\ell\leq \sqrt[d]{d!}\cdot n^{1/d}$ and hence
\begin{equation}\label{eqn-4}
\size_d(n)\leq\left(d\sqrt[d]{d!}\right)n^{1/d}+d
\end{equation}

Thus, as $\size_d(0)=0$ implies that (\ref{eqn-3}) and (\ref{eqn-4}) both hold for $n=0$, the statement follows.

\end{proof}

\begin{corollary}\label{cor-counter-eps}
	For all $\varepsilon\in(0,1)$, there exists $\I_\eps\subseteq\I^*$ and a counter $\Cntr_\varepsilon[\I_\eps]$ such that $\TMSP_{\Cntr_\eps}\preceq(n,n)$ and $\size_{\varepsilon} \preceq n^\varepsilon$. 
\end{corollary}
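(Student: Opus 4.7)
The plan is to obtain $\Cntr_\eps$ simply by specializing the family $\Counter_d$ already constructed: choose $d = d(\eps)$ large enough so that the bound $\size_d \sim n^{1/d}$ of \Cref{lem-lex-efficiency} gives the desired $n^\eps$ estimate, while the uniform complexity bound of \Cref{lem-cntr-complexities} provides the $(n,n)$ time-space bound for free.

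Concretely, given $\eps\in(0,1)$, I would set $d=\lceil 1/\eps\rceil$, so that $1/d\leq\eps$, and take $\I_\eps=\{a_1,\dots,a_d\}\subseteq\I^*$ (any choice of $d$ distinct indices in $\I^*$ will do). Define $\Cntr_\eps[\I_\eps]=\Counter_d[a_1,\dots,a_d]$. By \Cref{lem-countern}, $\Counter_d$ is a counter in the sense of \Cref{def-counter-star}, so $\Cntr_\eps$ is a counter. By \Cref{lem-cntr-complexities}, $\TMSP_{\Cntr_\eps}=\TMSP_{\Counter_d}\preceq(n,n)$. Finally, \Cref{lem-lex-efficiency} gives $\size_d\sim n^{1/d}$, and since $1/d\leq\eps$ we have $n^{1/d}\leq n^\eps$ for all $n\geq 1$ (and both sides are $0$ at $n=0$), so $n^{1/d}\preceq n^\eps$. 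Combining, $\size_\eps=\size_d\preceq n^{1/d}\preceq n^\eps$, as required.

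There is essentially no obstacle: all the real work was done in constructing the iterated counters $\Counter_{d+1}$ in \Cref{fig-COUNTER_STAR}, verifying they are counters (\Cref{lem-countern}), bounding their time-space complexity (\Cref{lem-cntr-complexities}), and computing the value-size asymptotics (\Cref{lem-lex-efficiency}). The only subtlety worth mentioning explicitly is that the asymptotic constants (implicit in $\preceq$) depend on $\eps$ through $d$, but this is allowed: the statement only requires the existence of a counter for each fixed $\eps$, not a uniform family.
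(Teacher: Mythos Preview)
The proposal is correct and takes essentially the same approach as the paper: choose $d$ large enough that $1/d\leq\eps$, set $\Cntr_\eps=\Counter_d$, and invoke \Cref{lem-countern}, \Cref{lem-cntr-complexities}, and \Cref{lem-lex-efficiency}. The only cosmetic difference is that the paper phrases the choice as ``$d\eps>1$'' rather than $d=\lceil 1/\eps\rceil$.
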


\begin{proof}

Take $d\in\N$ such that $d\eps>1$.  Then, setting $\I_\eps=\{a_1,\dots,a_d\}$, let $\Cntr_\eps=\Counter_{d+1}[\I_\eps]$.  By \Cref{lem-countern}, \Cref{lem-cntr-complexities}, and \Cref{lem-lex-efficiency}, $\Cntr_\eps$ is a counter satisfying the statement.

\end{proof}

With an abuse of notation, the values of $\Counter_\eps$ are denoted $\VAL_\eps=\{v_i^\eps\}_{i\in\N}$.

%==============================================================================
%==============================================================================
%==============================================================================
%==============================================================================
\section{Positivity check}\label{sec-positivity-check}

In this section, an alphabet $A$ is fixed.  Following the notation of previous sections, $L_A$ is taken to be $F(A)$, i.e the set of reduced words over $A\cup A^{-1}$.  Then, define $L_A^+$ to be the subset of $L_A$ comprised of the words which consist only of letters from $A$ (and not from $A^{-1}$).  In other words, $L_A^+$ is the set of words in the free monoid on $A$, viewed as a subset of $L_A$.

The goal for the remainder of this section is to construct a family of checkers (in the sense of \Cref{def-acceptor2}) of $L_A^+$ whose complexities are progressively closer to linear.

\subsection{Split} \

\medskip

To aid in this endeavor, the auxiliary proper operation $\splt^A$ is first defined below (see \Cref{fig-SPLIT}).  Note that, per the convention of minimal compatible hardware, $\I_\splt=\I_\splt^\ext=\{a,b\}$ and $\A_\splt(a)=\A_\splt(b)=A$.  Further, $\splt$ is an operation of degree 0, and so its computations can be viewed in the simpler light of Section 4 (as opposed to the generic computations of Section 6).

For any $w\in L_A$, let $S_w=\{(w_1,w_2)\in L_A\times L_A\mid w_1w_2=w\}$, where the equality $w_1w_2=w$ is understood to be in the free group $F(A)$.  Note that the sets $S_w$ form a partition of $L_A\times L_A$.

\begin{figure}[hbt]
	\centering
	\begin{tikzpicture} 
[point/.style={inner sep = 4pt, circle,draw,fill=white},
mpoint/.style={inner sep = 1.7pt, circle,draw,fill=black},
FIT/.style args = {#1}{rounded rectangle, draw,  fit=#1, rotate fit=45, yscale=0.5},
FITR/.style args = {#1}{rounded rectangle, draw,  fit=#1, rotate fit=-45, yscale=0.5},
FIT1/.style args = {#1}{rounded rectangle, draw,  fit=#1, rotate fit=45, scale=2},
vecArrow/.style={
		thick, decoration={markings,mark=at position
		   1 with {\arrow[thick]{open triangle 60}}},
		   double distance=1.4pt, shorten >= 5.5pt,
		   preaction = {decorate},
		   postaction = {draw,line width=0.4pt, white,shorten >= 4.5pt}
	},
myptr/.style={decoration={markings,mark=at position 1 with %
    {\arrow[scale=3,>=stealth]{>}}},postaction={decorate}}
]

\begin{scope} [xscale=0.75, yscale=0.3]      
	\node [anchor=west] at (-1.5, 4.5) {Operation $\splt^A(\mathbf{a}, \mathbf{b})$};
\end{scope}

\begin{scope} [yshift = 0.2cm, xscale=0.85, yscale=0.4]      
	\node (start) at (0,0) [point] {$s$};
	\node (finish) at (4, 0) [point] {$f$};
	
	\draw [->] (start) to node[midway, above]{
		$\begin{gathered}
		\end{gathered}$
	}  (finish);

	\draw [->] (finish) edge[loop, out=45, in = -45, looseness=9] node[pos=0.5, right]{
		$\begin{gathered}		
	    	[\mathbf{a} \rightarrow \mathbf{a} x^{-1}]\\[-1ex]
	    	[\mathbf{b} \rightarrow x \mathbf{b}]\\[-1ex]
	    	\textrm{for $x \in A$ }
		\end{gathered}$
	}  (finish);

\end{scope}	
\end{tikzpicture}	
	\vspace{-1cm}
	\caption{Operation $\splt^A(a, b)$.}
	\label{fig-SPLIT}       
\end{figure} 

\begin{lemma}\label{lem-io-split}
	The $\IO$-relation of $\splt^A$ is given by:
$$\IO^{s,s}_\splt = \IO^{s,f}_\splt = \IO^{f,f}_\splt =\bigcup_{w\in L_A}S_w^2$$
	Moreover, $\splt^A$ has linear time-space complexity, i.e $\TMSP_\splt \preceq(n,n)$.
\end{lemma}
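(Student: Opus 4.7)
The key observation is that every edge of $\G_\splt$ preserves the product $uv \in F(A)$, where $u$ is the contents of tape $a$ and $v$ of tape $b$. The direct edge $s \to f$ carries no transformation and hence preserves the configuration entirely, while the loop at $f$ labeled by $x \in A$ sends $(u,v)$ to $(ux^{-1}, xv)$ (after free reduction) and its inverse sends $(u,v)$ to $(ux, x^{-1}v)$; in either case the product is fixed in $F(A)$.

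For the upper bound on $\IO_\splt$, I will apply \Cref{prop-io-classes}. For each $w \in L_A$, define $C_w \subseteq \GL_\splt \times S_\splt$ by $C_w^s = C_w^f = S_w \times \{*\}$ (identifying, as usual, the value component, which is trivial since $\I_\splt^\val = \emptyset$, with a point). The invariant above shows that $C_w$ is closed under $\IO_e$ for every edge $e$ of $\splt$. Since the sets $S_w$ partition $L_A \times L_A$, we have $\bigcup_w C_w \supseteq \ND_\splt^1 \times \SF_\splt$, and \Cref{prop-io-classes} yields $\IO_\splt^{s_1,s_2} \subseteq \bigcup_w S_w^2$ for all $s_1,s_2 \in \SF_\splt$.

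For the reverse containment and the complexity bound, the main step is to show that any two pairs $(u_1,v_1),(u_2,v_2) \in S_w$ are connected by a computation at state $f$ of length $O(|u_1|+|u_2|)$ and space $O(|u_1|+|v_1|+|u_2|+|v_2|)$. The plan is to show that from any $(u,v)$ one can reach $(1, w)$ using $|u|$ loop applications. Writing $u = y_1 \cdots y_k$ with $y_i \in A \cup A^{-1}$, one iteratively moves the last letter of tape $a$ to the front of tape $b$: if the last letter is $x \in A$ apply the loop with parameter $x$, otherwise apply the inverse loop. Each step reduces $|u|$ by $1$ and either leaves $|v|$ unchanged or (when cancellation in $F(A)$ occurs) decreases it by $1$, so the total space never exceeds $|u|+|v|$ and the total time is exactly $|u|$. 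By symmetry, one can go from $(1,w)$ to any $(u',v') \in S_w$ in $|u'|$ steps with space bounded by $|u'|+|v'|$. Concatenating these two computations yields a computation at $f$ realizing $((u_1,v_1),(u_2,v_2)) \in \IO_\splt^{f,f}$ with the claimed time and space. Prepending or appending the trivial edge $s \to f$ (or its inverse) handles the other three pairs of starting/finishing states $(s,f)$, $(f,s)$, $(s,s)$, at a cost of at most two additional steps and no extra space.

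The only real obstacle is bookkeeping: when $u_1 v_1$ freely reduces from length $|u_1|+|v_1|$ down to $|w|$, intermediate cancellations must be tracked so as to confirm the monotonicity claim above. I expect this to be straightforward since each loop application either moves a letter across the junction or cancels a pair at the junction, and in neither case does the total tape length increase. This establishes $\IO_\splt = \bigcup_w S_w^2$ on each of the four $\SF_\splt$-restrictions and simultaneously shows $\TMSP_\splt \leq (2n+2, 2n)$, hence $\TMSP_\splt \preceq (n,n)$, by \Cref{lem-tmsp-basics} (noting that $\splt$ is of degree $0$, so expansions are trivial).
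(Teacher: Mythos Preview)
Your approach is correct and essentially identical to the paper's: both use the product invariant $uv$ for the upper bound (the paper phrases this directly as closure of $S_w \times S_\splt$ under each $\IO_e$, without citing \Cref{prop-io-classes}) and construct explicit computations through a canonical configuration---the paper routes through $(w,1)$ while you route through $(1,w)$---for the lower bound and complexity estimate. One minor slip: moving a letter across never leaves $|v|$ unchanged, it changes by $\pm 1$; but your space conclusion is still right since $|u|$ always drops by $1$, so $|u|+|v|$ is nonincreasing.
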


\begin{proof} 

As the edge between $\splt_s$ and $\splt_f$ is empty, any input-output computation can be lengthened by 1 to switch the starting (respectively ending) state.  Hence, it follows immediately that $\IO_\splt^{s,s}=\IO_\splt^{s,f}=\IO_\splt^{f,f}$.

Let $e_x\in E_\splt$ be the loop at $\splt_f$ corresponding to the letter $x\in A$.  Then, $e_x^{\pm1}$ supports a computation between $((w_1,w_2),\splt_f)$ and $((w_1',w_2'),\splt_f)$ if and only if there exists $y\in\{x^{\pm1},1\}$ such that $w_1'=w_1y^{-1}$ and $w_2'=yw_2$.  But then $w_1'w_2'=w_1w_2$, i.e $(w_1,w_2),(w_1',w_2')\in S_w$ for $w=w_1w_2$.

It follows that each set $S_w\times S_\splt$ is closed under $\IO_e$ for every edge $e\in E_\splt$.  Hence, $\IO_\splt\subseteq\bigcup (S_w\times\SF_\splt)^2$.

Now, let $(w_1,w_2)\in L_A\times L_A$ and let $w\in L_A$ such that $w=w_1w_2$.  Set $m=|w_1|+|w_2|$ and note that $|w|\leq m$.

If $w_2=1$, then let $\eps_{w_1,w_2}$ be the empty computation between $((w,1),\splt_f)$ and $((w_1,w_2),\splt_f)$.

Otherwise, let $w_2\equiv x_1^{\delta_1}\dots x_k^{\delta_k}$ for $x_i\in A$ and $\delta_i\in\{\pm1\}$ for all $i$.  Then, consider the computation
$$\eps_{w_1,w_2}=\bigl(((u_0,v_0),\splt_f),e_{x_k}^{\delta_k},((u_1,v_1),\splt_f),\dots,e_{x_1}^{\delta_1},((u_k,v_k),\splt_f)\bigr)$$
where $(u_0,v_0)=(w,1)$.  Then for all $j\in\overline{0,k-1}$, $v_{j+1}=x_{k-j}^{\delta_j}v_j$ and $u_{j+1}=u_jx_{k-j}^{-\delta_j}$.  In particular, $v_k=w_2$.  Further, since $((w,1),(u_k,v_k))\in\IO_\splt^{f,f}$ implies that $(u_k,v_k)\in S_w$ and $v_k=w_2$, $u_k=w_1$.

Moreover, as $x_1^{\delta_1}\dots x_k^{\delta_k}$ is assumed to be reduced, $|v_{j+1}|=|v_j|+1$ for all $j\in\overline{0,k-1}$.  So, since $|u_j|=|u_{j+1}x_{k-j}^{\delta_j}|\leq|u_{j+1}|+1$, then $|u_j|+|v_j|\leq|u_{j+1}|+|v_{j+1}|$.  Hence, $\max(|u_j|+|v_j|)=m$.

In either case, $\eps_{w_1,w_2}$ is a computation realizing $\bigl((w,1),(w_1,w_2)\bigr)\in\IO_\splt^{f,f}$ which satisfies the bounds $\tm_\splt(\eps_{w_1,w_2})\leq m$ and $\sp_\splt(\eps_{w_1,w_2})=m$.

For $n\in\N$ and $w\in L_A$, let $(w_1,w_2),(w_1',w_2')\in S_w$, set $m=|w_1|+|w_2|$ and $m'=|w_1'|+|w_2'|$, and suppose $m,m'\leq n$.  Then, the computation $\eps=\eps_{w_1,w_2}^{-1}\eps_{w_1',w_2'}$ is a computation realizing $\bigl((w_1,w_2),(w_1',w_2')\bigr)\in\IO_\splt^{f,f}$ with $\tm_\splt(\eps)\leq2n$ and $\sp_\splt\leq n$.  Thus, $\IO_\splt^{f,f}=\bigcup_{w\in L_A}S_w^2$, and so the $\IO$-relation is as in the statement.

Moreover, as $((w_1,w_2),(w_1',w_2'))\in\IO_{\splt}^{s_1,s_2}$ for any $s_1,s_2$ can be realized by a computation which lengthens $\eps$ by at most 2 without affecting the space, the bounds on the complexity of $\eps$ imply $\TMSP_\splt\preceq(n,n)$.

\end{proof}

\subsection{Right-positivity and a Weak acceptor} \

\medskip

For any $w\in L_A$ with $w\equiv x_1^{\delta_1}\dots x_k^{\delta_k}$ such that $x_i\in A$ and $\delta_i\in\{\pm1\}$ for all $i$, define the \emph{degree of $w$} as $\deg(w)=\sum\delta_i$.  Note that $\deg(w)$ can be defined in terms of any representation of $w$, i.e the degree is invariant under free reduction.  

Then, let $L_A^{R+}$ be the subset of $L_A$ consisting of all words $w$ such that if $w\equiv w_lw_r$, then $\deg(w_r)\geq0$.  In other words, $w\equiv x_1^{\delta_1}\dots x_k^{\delta_k}\in L_A^{R+}$ if and only if $\sum_{i=\ell}^k\delta_i \geq0$ for all $\ell\in\overline{k}$.

Hence, for any $x,y\in A$, $xy^{-2}x^2\in L_A^{R+}$ but $xy^{-2}x\notin L_A^{R+}$.  Note that $L_A^+\subseteq L_A^{R+}$.

\begin{lemma}\label{lem-lplus-decomp}

	Let $w,w_l,w_r\in L_A$ with $w=w_lw_r$.  Suppose $w_l\in L_A^{R+}$ and $w_r\in L_A^+$.  Then $w\in L_A^+$ if and only if $w_l\in L_A^+$.

\end{lemma}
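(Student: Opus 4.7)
The plan is to reduce the statement to a direct analysis of how much free cancellation can occur in the product $w_l w_r$ in $F(A)$, exploiting the two given hypotheses. The backward direction is essentially immediate: if both $w_l$ and $w_r$ lie in $L_A^+$, their concatenation already consists of positive letters only, hence is freely reduced and equals $w$, so $w \in L_A^+$. I would record this as a single sentence.

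For the forward direction, I would factor the free-group product canonically: write $w_l \equiv uv$ and $w_r \equiv v^{-1}u'$ where $v$ is the maximal cancelled piece, so that $w \equiv uu'$ is freely reduced. The key observation is then that $v^{-1}$ is a prefix of $w_r \in L_A^+$, so $v^{-1}$ consists only of positive letters, and hence $v$ is a word in $A^{-1}$ only. But $v$ is a right suffix of $w_l$ with $\deg(v) = -|v|$, so the hypothesis $w_l \in L_A^{R+}$ forces $-|v| \geq 0$, i.e.\ $|v| = 0$. This eliminates all cancellation: the product $w_l w_r$ is a literal concatenation, and since $w \in L_A^+$ and $w_l$ is a prefix of $w$, we conclude $w_l \in L_A^+$.

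The argument uses only the definitions of $L_A^+$, $L_A^{R+}$, and $\deg$, together with the standard factorisation of a product of freely reduced words through its maximal cancellation piece. There is no real obstacle to overcome; the only subtlety worth flagging in the write-up is the justification that $v^{-1}$ occurs as a bona fide prefix of the freely reduced word $w_r$ (rather than after some reduction), which follows from $w_r$ being freely reduced to begin with. I would therefore keep the proof short, roughly the length of the sketch above, and present it without subcases.
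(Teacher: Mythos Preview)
Your proposal is correct and follows essentially the same approach as the paper: both arguments show that no cancellation occurs between $w_l$ and $w_r$, so that $w \equiv w_l w_r$ as a literal concatenation, whence $w_l$ is a prefix of the positive word $w$. The only cosmetic difference is that the paper argues directly that the last letter of $w_l$ has exponent $+1$ (since $w_l \in L_A^{R+}$) and hence cannot cancel against the first (positive) letter of $w_r$, whereas you route the same observation through the maximal-cancellation factorisation $w_l \equiv uv$, $w_r \equiv v^{-1}u'$ and force $|v|=0$; the underlying idea is identical.
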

\begin{proof}

The reverse direction is clear, so it suffices to assume that $w\in L_A^+$ and show that $w_l\in L_A^+$.

It is clear that it can be assumed without loss of generality that $w_l\neq1$, so let $w_l\equiv x_1^{\delta_1}\dots x_k^{\delta_k}$ such that $x_i\in A$ and $\delta_i\in\{\pm1\}$ for all $i$.  As $w_l\in L_A^{R+}$, it follows that $\delta_k=1$.  So, since $w_r\in L_A^+$, no cancellation can occur between the last letter of $w_l$ and the first letter of $w_r$.  Hence, since $w_l$ and $w_r$ are themselves reduced, $w\equiv w_lw_r$.

Thus, since $w\in L_A^+$, $\delta_i=1$ for all $i$, and so $w_l\in L_A^+$.

\end{proof}

Note that it is necessary that $w_l\in L_A^{R+}$ for the forward direction of \Cref{lem-lplus-decomp}.  For example, for any $x\in A$, $1=x^{-1}x\in L_A^+$ and $x\in L_A^+$, but $x^{-1}\notin L_A^+$.  The reverse direction, however, is true regardless of the assumption on $w_l$.

\begin{lemma}\label{lem-rplus-decomp}

Let $w_1,w_2\in L_A^{R+}$ and $v_1,v_2\in L_A^+$ such that $w_1v_1=w_2v_2$.  Then there exists $z\in L_A^+$ and $\delta\in\{\pm1\}$ such that $w_1^{-1}w_2=z^\delta$.

\end{lemma}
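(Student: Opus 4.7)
The plan is to reduce the problem to analyzing the free-group reduction of $v_1 v_2^{-1}$, and then use the right-positivity hypothesis to eliminate the bad case.

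First, I would rewrite the equation $w_1 v_1 = w_2 v_2$ as $w_1^{-1} w_2 = v_1 v_2^{-1}$, so it suffices to show $v_1 v_2^{-1} = z^\delta$ for some $z \in L_A^+$ and $\delta \in \{\pm1\}$. Since $v_1, v_2 \in L_A^+$ are positive words, the free-group reduction of $v_1 v_2^{-1}$ is transparent: let $t$ be the longest common positive suffix of $v_1$ and $v_2$ (viewed as elements of the free monoid on $A$), and write $v_1 = at$, $v_2 = bt$ with $a, b \in L_A^+$. Then $v_1 v_2^{-1} = ab^{-1}$ as reduced words, and by maximality of $t$, whenever $a$ and $b$ are both nontrivial, their last letters $\alpha, \beta \in A$ are distinct.

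The main step is to show that $a = 1$ or $b = 1$. Suppose for contradiction that both are nontrivial, with last letters $\alpha \neq \beta$. Then $w_2 = w_1 \cdot a \cdot b^{-1}$. I would show this concatenation is already freely reduced: note first that any nonempty element of $L_A^{R+}$ ends in a positive letter (the length-one suffix has nonnegative degree), so if $w_1 \neq 1$ its last letter is some positive letter $\gamma \in A$, which cannot cancel with the first letter of $a$ (also positive), hence $w_1 a$ is reduced and ends in $\alpha$; next, $b^{-1}$ begins with $\beta^{-1}$, and since $\alpha \neq \beta$, the concatenation $w_1 a \cdot b^{-1}$ is reduced. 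But then the last letter of $w_2$ is the last letter of $b^{-1}$, which is negative, contradicting $w_2 \in L_A^{R+}$. This same argument also handles the case $w_1 = 1$ (in which $w_2 = ab^{-1}$ is already reduced and ends negatively).

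Once we know $a = 1$ or $b = 1$, the conclusion is immediate: taking $z = b$ and $\delta = -1$ in the first case, or $z = a$ and $\delta = 1$ in the second, gives $w_1^{-1} w_2 = ab^{-1} = z^\delta$ with $z \in L_A^+$. There is no real obstacle here; the only subtlety is the verification that the concatenation $w_1 a b^{-1}$ has no free-group cancellation, and this hinges entirely on the observation that membership in $L_A^{R+}$ forces the final letter to be positive, which is exactly the content needed at the single junction $w_1 \cdot a$ where cancellation could a priori occur.
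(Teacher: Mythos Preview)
Your proof is correct. The paper takes a slightly different route: rather than analyzing the reduction of $v_1v_2^{-1}$, it first observes (using the same fact that nonempty elements of $L_A^{R+}$ end in a positive letter) that each product $w_iv_i$ is already reduced, so $w_1v_1\equiv w_2v_2$ as words; since $v_1$ and $v_2$ are then both suffixes of the same string, one is a suffix of the other, which immediately gives $z$. Your approach instead cancels the common suffix of $v_1,v_2$ first and rules out the remaining case by contradiction. Both hinge on the same key observation about final letters; the paper's version avoids the contradiction step by working with the literal string equality, while yours isolates the algebraic content of $v_1v_2^{-1}$ up front.
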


\begin{proof}

Fixing $i\in\{1,2\}$, if $w_i\neq1$, then there exists $x_{i,1},\dots,x_{i,k}\in A$ and $\delta_{i,1},\dots,\delta_{i,k}\in\{\pm1\}$ such that $w_i=x_{i,1}^{\delta_{i,1}}\dots x_{i,k}^{\delta_{i,k}}$.  Similarly, if $v_i\neq 1$, then by the definition of $L_A^+$ there exists $y_{i,1},\dots,y_{i,\ell}\in A$ such that $v_i=y_{i,1}\dots y_{i,\ell}$.  By the definition of $L_A^{R+}$, it must hold that $\delta_{i,k}=1$.  So, $w_iv_i\equiv x_{i,1}^{\delta_{i,1}}\dots x_{i,k}^{\delta_{i,k}}y_{i,1}\dots y_{i,\ell}$, i.e the product on the right is reduced.

Hence, as the same clearly holds if $w_i=1$ or $v_i=1$, then $w_1v_1\equiv w_2v_2$.  As a result, $v_2$ must be a suffix of $w_1v_1$.

If $v_2$ is a suffix of $v_1$, then there exists $z\in L_A^+$ such that $v_1\equiv zv_2$, so that $w_1^{-1}w_2=v_1v_2^{-1}=z$.  Otherwise, there exists $z\in L_A^+$ such that $v_2\equiv zv_1$, so that $w_1^{-1}w_2=v_1v_2^{-1}=z^{-1}$.

\end{proof}

Now, for $\eps\in(0,1)$, the proper operation $\wacc_{R+}=\waccept_{R+}^\eps(a)$ is defined below in \Cref{fig-CHECK-LPLUS}.  Crucially, note the assumption that $a\notin \I_\eps$.  Hence, $\I_{\wacc_{R+}}=\I_{\wacc_{R+}}^\ext\sqcup\I_{\wacc_{R+}}^\int$, $\I_{\wacc_{R+}}^\ext=\{a\}$, $\I_{\wacc_{R+}}^\int=\I_\eps$, $\A_{\wacc_{R+}}(a)=A$, and $\A_{\wacc_{R+}}(i)=\delta$ for all $i\in\I_\eps$.  

\begin{figure}[hbt]
	\centering
	\begin{tikzpicture} 
[point/.style={inner sep = 4pt, circle,draw,fill=white},
mpoint/.style={inner sep = 1.7pt, circle,draw,fill=black},
FIT/.style args = {#1}{rounded rectangle, draw,  fit=#1, rotate fit=45, yscale=0.5},
FITR/.style args = {#1}{rounded rectangle, draw,  fit=#1, rotate fit=-45, yscale=0.5},
FIT1/.style args = {#1}{rounded rectangle, draw,  fit=#1, rotate fit=45, scale=2},
vecArrow/.style={
		thick, decoration={markings,mark=at position
		   1 with {\arrow[thick]{open triangle 60}}},
		   double distance=1.4pt, shorten >= 5.5pt,
		   preaction = {decorate},
		   postaction = {draw,line width=0.4pt, white,shorten >= 4.5pt}
	},
myptr/.style={decoration={markings,mark=at position 1 with %
    {\arrow[scale=3,>=stealth]{>}}},postaction={decorate}}
]

\begin{scope} [xshift=-0cm, xscale=0.75, yscale=0.3, yshift=0cm]      
	\node [anchor=west] at (-1.5, 5) {
		\begin{tabular}{l} 
		Operation $\waccept_{R+}^\varepsilon(a)$; \\
		cntr is Counter$_\varepsilon[\I_\eps]$; 
		\end{tabular}};
\end{scope}

\begin{scope} [yshift = 0cm, xscale=0.85, yscale=0.4]      
%	\node [anchor=west] at (-0.57, 2) {$\deg (\mathbf{a})$};
	\node (start) at (0,0) [point] {$s$};
	\node (n1) at (3, 0) [point] {$q_1$};
%	\node (n2) at (6, 0) [point] {};
	\node (n3) at (10, 0) [point] {$q_3$};
	\node (n4) at (3, -4) [point] {$q_2$};
	\node (finish) at (13, 0) [point] {$f$};

	\draw [->] (start) to node[midway, above]{
		$\begin{gathered}
		 \I_\eps = 1
		\end{gathered}$
	}  node[midway, below]{$e_1$} (n1);

	\draw [->] (n1) edge[out=-45, in = 45, looseness=1] node[pos=0.7, right]{
		$\begin{gathered}		
	    	[\mathbf{a} \rightarrow  \mathbf{a} x^{-1}] \\[-1ex]
	    	\textrm{for $x \in A$ }
		\end{gathered}$
	} node[pos=0.5, left]{$e_{3,x}$} (n4);

	\draw [->] (n4) edge[out=135, in = -135, looseness=1] node[pos=0.5, left]{
		$\begin{gathered}		
	    	\cntr.\inc()
		\end{gathered}$
	} node[pos=0.5, right]{$e_2$} (n1);

	\draw [->] (n1) to node[midway, above]{
		$\begin{gathered}
		 [\mathbf{a} = 1]
		\end{gathered}$
	} node[midway, below]{$e_4$}  (n3);
	
	\draw [->] (n3) edge[loop, out=-45, in = -135, looseness=20] node[pos=0.5, below]{
		$\begin{gathered}		
	    	\cntr.\inc()
		\end{gathered}$
	}  node[pos=0.5, above]{$e_5$} (n3);

	\draw [->] (n3) to node[midway, above]{
		$\begin{gathered}
		 \I_\eps = 1
		\end{gathered}$
	} node[midway, below]{$e_6$} (finish);
\end{scope}	
\end{tikzpicture}	
	\caption{A weak acceptor for the language $L^{R+}_A$.}
	\label{fig-CHECK-LPLUS}       
\end{figure} 

Further, the action `$\I_\eps=1$' labelling both $e_1$ and $e_6$ represents the set of guard fragments $[i=1]$ for all $i\in\I_\eps$.  Hence, $e_1,e_6\in E_{\wacc_{R+}}^0$, $e_1^{-1}$ and $e_6^{-1}$ are also labelled by $\I_\eps=1$, and $\wacc_{R+}$ satisfies condition (O).

A computation $\gamma=\bigl((u_0,s_0),e_1',(u_1,s_1),e_2',\dots,e_{2\ell}',(u_{2\ell},s_{2\ell})\bigr)$ of $\wacc_{R+}$ is called a \emph{$(q_1,q_2)$-computation} if $s_{2i}=q_1$ for all $i\in\overline{0,\ell}$ and $s_{2i-1}=q_2$ for all $i\in\overline{\ell}$.  Letting $u_i=(w_i,v_i)$ where $w_i\in L_A$ and $v_i\in\VAL_\eps$, $\gamma$ is said to \emph{accept} $w\in L_A$ if $w_0=w$, $v_0=v_0^\eps$, and $w_{2\ell}=1$.

\begin{lemma}\label{lem-q1-q2-basics}

Let $\bigl((u_1,q_1),e_1',(u_2,q_2),e_2',(u_3,q_1)\bigr)$ be a $(q_1,q_2)$-computation of $\wacc_{R+}$ with $u_i=(w_i,v_i)$ for $w_i\in F(\A)$ and $v_i=v_{j(i)}^\eps\in\VAL_\eps$ for all $i$.  Then:

\begin{enumerate}[label=({\alph*})]

\item $j(1)+\deg(w_1)=j(3)+\deg(w_3)$

\item $\deg(w_2)\leq \deg(w_1)$

\end{enumerate}

\end{lemma}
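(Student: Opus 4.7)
The plan is to proceed by case analysis on the two generic edges $e_1'$ and $e_2'$. Since $e_1'$ takes the machine from $q_1$ to $q_2$, and the only edges of $\wacc_{R+}$ connecting these states are the edges $e_{3,x}$ (from $q_1$ to $q_2$, for $x\in A$) and $e_2$ (from $q_2$ to $q_1$), $e_1'$ must be either $(\s,\f,e_{3,x})$ for some $x\in A$ or $(\f,\s,e_2)$. Symmetrically, $e_2'$ must be either $(\s,\f,e_2)$ or $(\f,\s,e_{3,x})$ for some $x\in A$. For each case I will compute separately how $\deg(w_i)$ and $j(i)$ change, using the fact that $e_{3,x}$ is labelled by the action of degree~$0$ with fragment $[\mathbf{a}\rightarrow\mathbf{a}x^{-1}]$ (leaving $v$ untouched and altering $w$ by a letter of degree $\pm 1$), while $e_2$ is labelled by $\cntr.\inc()$, whose $\IO$-relation given by \Cref{lem-countern} takes $v_j^\eps$ to $v_{j+1}^\eps$ in the forward direction and leaves $w$ untouched.

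The core observation is that in all four cases both transitions satisfy simple balance laws. Concretely, for the step $e_1'\colon q_1\to q_2$, the case $e_1'=(\s,\f,e_{3,x})$ yields $w_2=w_1x^{-1}$ and $v_2=v_1$, so $\deg(w_2)=\deg(w_1)-1$ and $j(2)=j(1)$; the case $e_1'=(\f,\s,e_2)$ requires $(v_2,v_1)\in\IO_\inc^{s,f}$, forcing $j(2)=j(1)-1$ while $w_2=w_1$. In either subcase one reads off
\begin{equation*}
j(2)+\deg(w_2) \;=\; j(1)+\deg(w_1)-1.
\end{equation*}
Running the same analysis on $e_2'\colon q_2\to q_1$ shows, in the two symmetric cases, that
\begin{equation*}
j(3)+\deg(w_3) \;=\; j(2)+\deg(w_2)+1.
\end{equation*}
Chaining these two equalities yields statement~(a). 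For statement~(b), the same case analysis on $e_1'$ already establishes that $\deg(w_2)$ is either $\deg(w_1)-1$ or $\deg(w_1)$; in particular $\deg(w_2)\leq\deg(w_1)$.

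There is no substantial obstacle here: everything reduces to reading off the labels in \Cref{fig-CHECK-LPLUS} and applying the explicit description of $\IO_\inc$ coming from the counter axioms. The only point that requires a small amount of care is handling the inverse-direction generic edges $(\f,\s,e)$, where one must invoke symmetry of the $\IO$ relations (\Cref{lem-lio-io}(2)) and use that $\IO_\inc^{f,s}=(\IO_\inc^{s,f})^{-1}$ to conclude that $j$ drops by one; but this is immediate from \Cref{def-counter-star}. No estimation of spaces or times is needed, so the proof is a short and purely combinatorial verification.
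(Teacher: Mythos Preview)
Your proposal is correct and follows essentially the same case analysis as the paper, with the slightly cleaner packaging of the invariant $j+\deg(w)$ dropping by one on the $q_1\to q_2$ step and rising by one on the $q_2\to q_1$ step. One minor omission: since $e_{3,x}$ carries a degree-$0$ action, its inverse edge $e_{3,x}^{-1}$ is also present in $E_{\wacc_{R+}}$, so your enumeration of generic edges from $q_1$ to $q_2$ should also include $(\f,\s,e_{3,x}^{-1})$ (and symmetrically $(\s,\f,e_{3,x}^{-1})$ for $e_2'$); the paper covers this by writing $e_1''=e_{3,x}^{\pm1}$, but since these extra cases behave identically to the ones you listed, the argument goes through unchanged.
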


\begin{proof}

Let $e_j'=(\sf_j,\sf_j',e_j'')$ for $j=1,2$.

First, suppose $e_1''=e_2$.  Then, $e_1'=(\f,\s,e_2)$, so that $w_2=w_1$ and $j(2)=j(1)-1$.  In particular, (b) is satisfied.

If in addition $e_2''=e_2$, then $e_2'=(\s,\f,e_2)$, so that $u_1=u_3$ and (a) is trivially satisfied.  Otherwise, there exists $x\in A$ such that $e_2''=e_{3,x}^{\pm1}$.  In particular, either $e_2'=(\s,\f,e_{3,x}^{-1})$ or $e_2'=(\f,\s,e_{3,x})$.  In either case, $v_3=v_2$ and $w_3=w_2x=w_1x$.  So, $j(3)=j(2)=j(1)-1$ and $\deg(w_3)=\deg(w_1x)=\deg(w_1)+1$.  %Hence, (a) is satisfied.

Conversely, suppose $e_1''=e_{3,x}^{\pm1}$ for some $x\in A$.  So, $e_1'=(\s,\f,e_{3,x})$ or $e_1'=(\f,\s,e_{3,x}^{-1})$.  In either case, $v_2=v_1$ and $w_2=w_1x^{-1}$.  So, $\deg(w_2)=\deg(w_1)-1$, so that (b) is satisfied.

If in addition $e_2''=e_2$, then $e_2'=(\s,\f,e_2)$, so that $j(3)=j(2)+1=j(1)+1$ and $w_3=w_2$.  Finally, if $e_2''=e_{3,y}^{\pm1}$ for some $y\in A$, then $e_2'=(\s,\f,e_{3,y}^{-1})$ or $e_2'=(\f,\s,e_{3,y})$.  In either case, $v_3=v_2$ and $w_3=w_2y$, so that $j(3)=j(1)$ and $\deg(w_3)=\deg(w_2)+1=\deg(w_1)$.

\end{proof}

\begin{lemma}\label{lem-q1-q2-iff}

There exists a $(q_1,q_2)$-computation accepting $w\in L_A$ if and only if $w\in L_A^{R+}$.  Moreover, letting $\F$ be the estimating set for $\wacc_{R+}$ given by $\F_\cntr=(n,n)$, there exists $K\in\N$ such that for any $w\in L_A^{R+}$, there exists a $(q_1,q_2)$-computation $\gamma_w$ accepting $w$ and satisfying $\tm_{\wacc_{R+}}^\F(\gamma_w)\leq K|w|^{1+\eps}$ and $\sp_{\wacc_{R+}}^\F(\gamma_w)\leq K|w|$.

\end{lemma}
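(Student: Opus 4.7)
The plan is to prove the two directions of the equivalence separately and then verify the complexity bounds for the constructed $\gamma_w$. For the forward direction (existence of $\gamma_w$ given $w \in L_A^{R+}$), I will build $\gamma_w$ explicitly. Write $w \equiv x_1^{\delta_1} \cdots x_k^{\delta_k}$ in reduced form, and process the letters right-to-left: for each $i$ from $k$ down to $1$, perform one $q_1 \to q_2 \to q_1$ cycle that simultaneously cancels the rightmost letter $x_i^{\delta_i}$ and updates the counter by $\delta_i$. Concretely, if $\delta_i = +1$ I traverse $e_{3, x_i}$ forward (appending $x_i^{-1}$, which after free reduction cancels the trailing $x_i$) and then $e_2$ forward (applying $\cntr.\inc()$); if $\delta_i = -1$ I traverse $e_2$ backward (applying $\cntr.\inc()$ in reverse, i.e.\ decrementing) and then $e_{3, x_i}$ backward (appending $x_i$, cancelling the trailing $x_i^{-1}$). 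After the cycle that processes $x_{\ell+1}^{\delta_{\ell+1}}$, the tape reads exactly $x_1^{\delta_1} \cdots x_\ell^{\delta_\ell}$ and the counter value is $v_{j_\ell}^\eps$ where $j_\ell = \sum_{r > \ell} \delta_r = \deg(x_{\ell+1}^{\delta_{\ell+1}} \cdots x_k^{\delta_k})$. The hypothesis $w \in L_A^{R+}$ says exactly that every such $j_\ell$ is non-negative, so every decrement is legal and $\gamma_w$ terminates at $(1, v_{\deg(w)}^\eps, q_1)$ as required.

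For the complexity bounds, the constructed $\gamma_w$ has $2|w|$ edges, of which $|w|$ are instances of $\cntr.\inc()$ (positive-degree actions) and $|w|$ are degree-$0$ tape moves. Throughout, the tape word is a prefix of $w$, so its length is at most $|w|$, and the counter value lies in $\VAL_\eps$ with index in $\{0, \dots, |w|\}$; by \Cref{cor-counter-eps} its size is therefore bounded by $\size_\eps(|w|) \leq K_0 |w|^\eps$ for some constant $K_0$. Using $\F_\cntr = (n,n)$, each counter-op subcomputation contributes estimated time $\leq K_0 |w|^\eps$ and estimated space $\leq K_0|w|^\eps + |w|$, while each $0$-action subcomputation contributes estimated time $1$ and estimated space $\leq |w| + K_0|w|^\eps$. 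Summing over all $2|w|$ subcomputations yields $\tm_{\wacc_{R+}}^\F(\gamma_w) \leq K_0|w|^{1+\eps} + |w|$ and $\sp_{\wacc_{R+}}^\F(\gamma_w) \leq K_0|w|^\eps + |w|$, both bounded by $K|w|^{1+\eps}$ and $K|w|$ respectively for $K$ a suitably large constant.

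For the reverse direction, I will argue by induction on the half-length $\ell$ of the $(q_1, q_2)$-computation, proving the slightly stronger statement: if some $(q_1, q_2)$-computation sends $(w, v_k^\eps, q_1)$ to $(1, v_{k+\deg(w)}^\eps, q_1)$ for some $k \geq 0$, then $k + \deg(w') \geq 0$ for every reduced factorization $w = w_l w'$; specialising $k = 0$ then recovers $w \in L_A^{R+}$. The base case $\ell = 0$ forces $w = 1$ and holds vacuously. For the inductive step I analyse the final $q_1 \to q_2 \to q_1$ cycle: by the case analysis carried out in the proof of \Cref{lem-q1-q2-basics}, this cycle realises one of four patterns (append $x^{-1}$ then increment, append $x^{-1}$ then append $y$, decrement then increment, decrement then append $y$), each of which forces the preceding $q_1$-configuration $(w_{2\ell-2}, v_{j(2\ell-2)}^\eps)$ to have a tape word of length at most $2$, with $j(2\ell-2)$ determined by the pattern. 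Applying the inductive hypothesis to the length-$(\ell-1)$ sub-computation from $(w, v_k^\eps)$ to $(w_{2\ell-2}, v_{j(2\ell-2)}^\eps)$ then lets me conclude the suffix condition for $w$. The main obstacle is that this sub-computation does not end at the trivial tape-word, so the induction hypothesis as stated does not apply directly; the cleanest route I plan to adopt is to further strengthen the inductive statement to allow an arbitrary $q_1$-endpoint $(u, v_m^\eps, q_1)$ and to phrase the condition relative to the "net appended word" $u^{-1}w$ (whose free reduction records the cumulative tape transformation), since this quantity is what governs both the parity constraints on the tape operations and the counter trajectory along any candidate computation.
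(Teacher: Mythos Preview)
Your construction of $\gamma_w$ and the complexity analysis are correct and essentially identical to the paper's: the paper also processes letters right-to-left (phrased as induction on $|w|$, stripping the leftmost letter last), and arrives at the same bounds via the same estimate $|v_{j_\ell}^\eps|\leq\size_\eps(|w|)\preceq|w|^\eps$.

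The gap is in your ``reverse direction'' (computation exists $\Rightarrow$ $w\in L_A^{R+}$). You correctly identify that your induction hypothesis does not apply to the length-$(\ell-1)$ prefix because its endpoint tape word $w_{2\ell-2}$ is not $1$, and your proposed fix --- strengthen to arbitrary endpoint $(u,v_m^\eps,q_1)$ phrased via $u^{-1}w$ --- is only sketched, not carried out. This is salvageable, but the paper's route is much simpler and avoids induction entirely. Here is how it works. Given the accepting computation, define for each step the letter $z_i\in A\cup A^{-1}\cup\{1\}$ appended to the tape (so $z_i=x^{-1}$ for a forward $e_{3,x}$, $z_i=x$ for a backward $e_{3,x}$, and $z_i=1$ for $e_2$), and set $w_i'=z_1\cdots z_i$, so $w_i=ww_i'$ in $F(A)$. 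Iterating \Cref{lem-q1-q2-basics}(a) gives the invariant $j(2i)+\deg(w_{2i})=j(0)+\deg(w_0)=\deg(w)$, whence $\deg(w_{2i}')=\deg(w_{2i})-\deg(w)=-j(2i)\leq 0$ since the counter value $v_{j(2i)}^\eps$ lies in $\VAL_\eps$. Part (b) of the same lemma then forces $\deg(w_{2i-1}')\leq\deg(w_{2i-2}')\leq 0$ as well, so $\deg(w_i')\leq 0$ for every $i$. Finally, since $w_{2\ell}=1$ means $w_{2\ell}'=_F w^{-1}$, every suffix of the reduced word $w$ is freely equal to $(w_i')^{-1}$ for some $i$ (any suffix of a reduced word is freely equal to a suffix of any unreduced representative), and hence has degree $\geq 0$. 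That is exactly the defining condition of $L_A^{R+}$.

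So rather than strengthening your induction, drop it: the single global invariant $j+\deg=\mathrm{const}$ from \Cref{lem-q1-q2-basics} already encodes everything you need.
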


\begin{proof}

Let $\gamma=\bigl((u_0,s_0),e_1',(u_1,s_1),e_2',\dots,e_{2\ell}',(u_{2\ell},s_{2\ell})\bigr)$ be a $(q_1,q_2)$-computation of $\wacc_{R+}$ accepting $w\in L_A$.  Following previous notation, let $u_i=(w_i,v_i)$ with $v_i=v_{j(i)}^\eps$ for all $i$.  Further, let $e_i'=(\sf_i,\sf_i',e_i'')$ for all $i$.

For all $i\in\overline{2\ell}$, define:
$$z_i=
\begin{cases}
1 & e_i''=e_2 \\
x^{-1} & e_i'=(\s,\f,e_{3,x}) \text{ or } e_i'=(\f,\s,e_{3,x}^{-1}) \\
x & e_i'=(\s,\f,e_{3,x}^{-1}) \text{ or } e_i'=(\f,\s,e_{3,x})
\end{cases}
$$ 
Then, let $w_i'=z_1\dots z_i$ for all $i$.

By the definition of the actions labelling $e_2$ and $e_{3,x}$, $w_i=w_{i-1}z_i$ for all $i$, so that $w_i=ww_i'$.  Hence, since $w_{2\ell}=1$, $w_{2\ell}'$ is freely equal to $w^{-1}$.

It follows from \Cref{lem-q1-q2-basics}(a) that $j(2i-2)+\deg(w_{2i-2})=j(2i)+\deg(w_{2i})$ for all $i\in\overline{\ell}$.  In particular, as it is assumed that $j(0)=0$ and $w_0=w$, then $j(2i)+\deg(w_{2i})=\deg(w)$ for all $i\in\overline{\ell}$.  But degrees are additive, so that $\deg(w_{2i})=\deg(ww_{2i}')=\deg(w)+\deg(w_{2i}')$ and then $\deg(w_{2i}')=-j(2i)$.  Hence, since $j(2i)\geq0$ for all $i$, $\deg(w_{2i}')\leq0$.

Further, \Cref{lem-q1-q2-basics}(b) implies that $\deg(w_{2i-1}')\leq\deg(w_{2i}')$ for all $i\in\overline{\ell}$.  Thus, $\deg(w_i')\leq0$ for all $i\in\overline{2\ell}$.

Hence, as any suffix of $w$ must be freely equal to some suffix of $(w_{2\ell}')^{-1}$, which is given by $(w_i')^{-1}$ for some $i$,  the degree of such a suffix must be nonnegative, and so $w\in L_A^{R+}$.

Now, the proof of the existence of $\gamma_w$ for any $w\in L_A^{R+}$ follows by induction on $n=|w|$, with trivial base case $n=0$ given by the empty computation $\bigl(((1,v_0^\eps),q_1)\bigr)$.

First, suppose $n=1$, so that $w=x\in A$.  Consider the computation:
$$\gamma_w=\bigl(((w,v_0^\eps),q_1),(\s,\f,e_{3,x}),((1,v_0^\eps),q_2),(\s,\f,e_2),((1,v_1^\eps),q_1)\bigr)$$
Then, $\tm_{\wacc_{R+}}^\F(\gamma_w)=1+|v_1^\eps|=2$ and $\sp_{\wacc_{R+}}^\F(\gamma_w)=\max(|w|,|v_1^\eps|)=1$.  So, the statement follows for $K\geq2$.

In general, let $w\equiv x_1^{\delta_1}\dots x_n^{\delta_n}$.  Then the suffix $w'=x_2^{\delta_2}\dots x_n^{\delta_n}$ is also in $L_A^{R+}$.  So, by the inductive hypothesis, there exists a $(q_1,q_2)$-computation $\gamma_{w'}$ accepting $w'$ and satisfying $\tm_{\wacc_{R+}}^\F(\gamma_{w'})\leq K|w'|^{1+\eps}$ and $\sp_{\wacc_{R+}}^\F(\gamma_{w'})\leq K|w'|$.

For $\gamma_{w'}=\bigl((u_0',s_0),e_1',(u_1',s_1),e_2',\dots,e_m',(u_m',s_m)\bigr)$ with $u_i'=(w_i',v_i)$, setting $u_i=(x_1^{\delta_1}w_i',v_i)$ for each $i$ yields a computation $$\widetilde{\gamma}_{w'}=\bigl((u_0,s_0),e_1',(u_1,s_1),e_2',\dots,e_m',(u_m,s_m)\bigr)$$ 
Then, $u_0=(w,v_0^\eps)$ and $u_m=(x_1^{\delta_1},v_m)$.  By \Cref{lem-q1-q2-basics}, $v_m=v_j^\eps$ for $j=\deg(w')\leq n-1$.  As $\I_{\cntr}=\I_\eps$, it follows that: 
\begin{align*}
\tm_{\wacc_{R+}}^\F(\widetilde{\gamma}_{w'})&=\tm_{\wacc_{R+}}^\F(\gamma_{w'})\leq K(n-1)^{1+\eps} \\
\sp_{\wacc_{R+}}^\F(\widetilde{\gamma}_{w'})&=\sp_{\wacc_{R+}}^\F(\gamma_{w'})+1\leq K(n-1)+1
\end{align*}

If $\delta_1=1$, then let $\gamma'$ be the computation given by $$\bigl(((x_1^{\delta_1},v_j^\eps),q_1),(\s,\f,e_{3,x_1}),((1,v_j^{\eps}),q_2),(\s,\f,e_2),((1,v_{j+1}^\eps),q_1)\bigr)$$
In this case, the complexity of $\gamma'$ are given by $\tm_{\wacc_{R+}}^\F(\gamma')=1+\max(|v_j^\eps|,|v_{j+1}^\eps|)\leq 1+\size_\eps(n)$ and $\sp_{\wacc_{R+}}^\F(\gamma')=\max(1+|v_j^\eps|,|v_{j+1}^\eps|)\leq1+\size_\eps(n)$.

Conversely, if $\delta_1=-1$, then note that since $w\in L_A^{R+}$, $\deg(w')\geq1$.  Hence, $j\geq1$, so that there exists a computation $\gamma'$ given by
$$\bigl(((x_1^{\delta_1},v_j^\eps),q_1),(\f,\s,e_2),((x_1^{\delta_1},v_{j-1}^\eps),q_2),(\s,\f,e_{3,x_1}^{-1}),((1,v_{j-1}^\eps),q_1)\bigr)$$
As in the previous case, $\gamma'$ satisfies $\tm_{\wacc_{R+}}^\F(\gamma')=\max(|v_j^\eps|,|v_{j-1}^\eps|)+1\leq\size_\eps(n)+1$ and $\sp_{\wacc_{R+}}^\F(\gamma')=\max(1+|v_j^\eps|,1+|v_{j-1}^\eps|)\leq1+\size_\eps(n)$.

In either case, $\widetilde{\gamma}_{w'}$ and $\gamma'$ can be concatenated to yield a $(q_1,q_2)$-computation $\gamma_w$ accepting $w$ and satisfying:
\begin{align*}
\tm_{\wacc_{R+}}^\F(\gamma_w)&\leq K(n-1)^{1+\eps}+1+\size_\eps(n) \\
\sp_{\wacc_{R+}}^\F(\gamma_w)&\leq\max(K(n-1)+1,1+\size_\eps(n))
\end{align*}

By \Cref{cor-counter-eps}, there exists $C\in\N$ such that $\size_\eps(k)\leq Ck^\eps+C$ for all $k\in\N$.  So, taking $K\geq C+1$, it follows that $\sp_{\wacc_{R+}}^\F(\gamma_w)\leq Kn+K$.

Now, it follows from Bernoulli's Inequality that:
\begin{align*}
(n-1)^{1+\eps}&=(n-1)n^\eps\bigl(1-1/n\bigr)^\eps\leq(n-1)n^\eps(1-\eps/n) \\
&\leq n^{1+\eps}-n^\eps-(n-1)n^\eps(\eps/n)
\end{align*}
So, assuming $K\geq C$, $\tm_\wacc^\F(\gamma_w)\leq Kn^{1+\eps}-K\eps(1-1/n)n^\eps+C+1$. Then, taking $K\geq4C\eps^{-1}$, it follows that $K\eps(1-1/n)n^\eps\geq 2Cn^\eps\geq2C$ for $n\geq2$, and hence $\tm_{\wacc_{R+}}^\F(\gamma_w)\leq Kn^{1+\eps}$.

\end{proof}

\begin{lemma}\label{lem-deg-minimal}

Let $\gamma=\bigl((u_0,s_0),e_1',(u_1,s_1),e_2',\dots,e_m',(u_m,s_m)\bigr)$ be a computation of $\wacc_{R+}$ with $e_i'=(\sf_i,\sf_i',e_i'')$ for all $i$.  Suppose the computation path of $\gamma$ has the minimal length amongst all computations between $(u_0,s_0)$ and $(u_m,s_m)$.  Then for all $i$:

\begin{enumerate}

\item $\sf_i\neq\sf_i'$

\item If $\sf_i=\sf_{i+1}'$, then $e_i''\neq e_{i+1}''$

\item If $e_i''\in E_{\wacc_{R+}}^0$, then $e_{i+1}''\neq(e_i'')^{-1}$

\end{enumerate}

\end{lemma}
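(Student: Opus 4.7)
My plan is to prove the three claims simultaneously by contrapositive: if $\gamma$ is minimal but some condition fails, I will construct a strictly shorter computation between $(u_0,s_0)$ and $(u_m,s_m)$, yielding a contradiction. In each case the construction will identify a short segment of $\gamma$ that begins and ends at the same configuration, which can then simply be excised.

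For (1), assume $\sf_i = \sf_i'$; then the generic edge $e_i'$ has coinciding tail and head, so automatically $s_{i-1} = s_i$. It then suffices to check that every action labeling an edge of $\wacc_{R+}$ satisfies $\IO_\act^{\s,\s} = \IO_\act^{\f,\f} = \Id$. For the degree-$0$ edges $e_1, e_{3,x}^{\pm1}, e_4, e_6$, this is immediate from the definition of $\mathrm{LIO}$ in \Cref{def-computation-star}. For $e_2$ and $e_5$, both labeled by $\cntr.\inc()$ with $\cntr = \Counter_\varepsilon$ (a counter by \Cref{lem-countern}), the counter identities $\IO_\inc^{s,s} = \IO_\inc^{f,f} = \Id$ from \Cref{def-counter-star} combined with the fact that $\act$ leaves coordinates outside $\I_\varepsilon$ unchanged give $u_{i-1} = u_i$. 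Deleting $e_i'$ from $\gamma$ then produces a strictly shorter computation between the same endpoints.

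For (2) and (3), my approach is to show that the two consecutive generic edges $e_i', e_{i+1}'$ jointly return to the starting configuration, so that both can be removed. I will handle (3) first by reducing it to (2): for a degree-$0$ edge $e$, the generic edges $(\sf_1,\sf_2,e^{-1})$ and $(\overline{\sf_1},\overline{\sf_2},e)$ (with $\overline{\s}=\f$) have identical tails, heads, and $\IO$-relations, so replacing $e_{i+1}'$ by the corresponding generic edge built on $e_i''$ produces a computation of the same length between the same endpoints that falls under case (2). For (2) itself, the hypothesis $\sf_i = \sf_{i+1}'$ together with (1) forces $\sf_{i+1} = \sf_i'$, so $e_{i+1}'$ traverses $e_i''$ in the orientation opposite to $e_i'$. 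Matching tails and heads (whether $e_i''$ is a loop or not) yields $s_{i-1} = s_{i+1}$, and both $(u_{i-1},u_i)$ and $(u_{i+1},u_i)$ lie in $\IO_\act^{\sf_i,\sf_i'}$ (the latter via \Cref{lem-lio-io}(2) applied to $(u_i,u_{i+1}) \in \IO_\act^{\sf_i',\sf_i}$), which reduces matters to showing that this relation is injective in its first argument.

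The main obstacle I anticipate is precisely this injectivity of $\IO_\act^{\sf,\sf'}$. For degree-$0$ actions, $\act=(t,g)$ acts by left-right concatenations $w_i \mapsto t_i^l w_i t_i^r$, which are injective in each coordinate by group cancellation, so the relation is left-invertible on its domain. For the positive action $\cntr.\inc()$, I will unpack $\IO_\act$ in terms of $\IO_\inc$: the counter description $\IO_\inc^{s,f} = \{(v_j^\varepsilon, v_{j+1}^\varepsilon) : j \geq 0\}$ defines a bijection $\VAL_\varepsilon \to \VAL_\varepsilon - \{v_0^\varepsilon\}$ (with inverse $\IO_\inc^{f,s}$), and the remaining two relations are $\Id$; together with the fact that $\act$ leaves the coordinates outside $\I_\varepsilon$ untouched, this gives injectivity of $\IO_\act^{\sf,\sf'}$ in the first argument. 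With this in hand, $u_{i-1} = u_{i+1}$ and the excision of $e_i', e_{i+1}'$ completes the contradiction.
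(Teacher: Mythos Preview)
Your proof is correct but takes a somewhat different route from the paper in parts (2) and (3). For (2), the paper uses transitivity of $\IO_\act$ (\Cref{lem-lio-io}(3)) to conclude $(u_{i-1},u_{i+1})\in\IO_\act^{\sf_i,\sf_{i+1}'}$ and then replaces the two generic edges $e_i',e_{i+1}'$ by the single generic edge $(\sf_i,\sf_{i+1}',e)$, obtaining a path of length $m-1$; this avoids any analysis of $\IO_\act$ beyond transitivity. You instead establish injectivity of $\IO_\act^{\sf,\sf'}$ in its first argument (by explicit inspection of the degree-$0$ and counter cases) to get $u_{i-1}=u_{i+1}$ and excise both edges outright. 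For (3), the paper argues directly with the transformation $t$ and its inverse to compute $u_{i+1}=t^{-\delta}t^{\delta}u_{i-1}=u_{i-1}$, while you reduce to (2) via the observation that $(\sf_1,\sf_2,e^{-1})$ and $(\overline{\sf_1},\overline{\sf_2},e)$ define identical $\IO$-relations for degree-$0$ edges. The paper's treatment of (2) is more robust, working for any $\IO_\act$ without needing injectivity; conversely, your reduction of (3) to (2) is arguably cleaner than the paper's explicit transformation calculation.
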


\begin{proof}

Suppose there exist $i,j\in\overline{0,m}$ with $i\neq j$ such that $(u_i,s_i)=(u_j,s_j)$.  Assuming without loss of generality that $i<j$, the generic path $e_1',\dots,e_i',e_{j+1}',\dots,e_m'$ supports a computation between $(u_0,s_0)$ and $(u_m,s_m)$.  But this path has length strictly less than $m$, contradicting the hypothesis on $\gamma$.

Now, if $\sf_i=\sf_i'$, then the $\IO$-relation of counters and those of actions of degree 0 imply $(u_{i-1},s_{i-1})=(u_i,s_i)$.  Hence, (1) must hold.

Next, suppose $\sf_i=\sf_{i+1}'$ and $e_i''=e_{i+1}''=e$.  Letting $\act_e=\lab(e)$, then $(u_{i-1},u_i)\in\IO_{\act_e}^{\sf_i,\sf_i'}$ and $(u_i,u_{i+1})\in\IO_{\act_e}^{\sf_{i+1},\sf_{i+1}'}$.  As a consequence of (1), $\sf_i'=\sf_{i+1}$, so that \Cref{lem-lio-io}(3) implies that $(u_{i-1},u_{i+1})\in\IO_{\act_e}^{\sf_i,\sf_{i+1}'}$.  The generic edge $e'=(\sf_i,\sf_{i+1}',e)$ then supports a computation between $(u_{i-1},s_{i-1})$ and $(u_{i+1},s_{i+1})$.  But then the generic path $e_1',\dots,e_{i-1}',e',e_{i+2}',\dots,e_m'$ of length $m-1$ supports a computation between $(u_0,s_0)$ and $(u_m,s_m)$, again contradicting the hypothesis on $\gamma$.  Hence, (2) must hold.

Finally, suppose $e_i''\in E_{\wacc_{R+}}^0$ and $e_{i+1}''=(e_i'')^{-1}$.  Then, as no edge of $E_{\wacc_{R+}}^0$ is a loop, (1) implies that $\sf_i'=\sf_{i+1}'\neq\sf_i$.  Let $t$ be the transformation defining $\lab(e_i'')$ and $\delta\in\{\pm1\}$ such that $\delta=1$ if and only if $\sf_i=\s$.  Then $u_i=t^\delta u_{i-1}$ and $u_{i+1}=t^{-\delta}u_i=u_{i-1}$.  But then $(u_{i-1},s_{i-1})=(u_{i+1},s_{i+1})$, so that (3) must hold.

\end{proof}

\begin{lemma}\label{lem-io-deg}
	The proper operation $\wacc_{R+}$ is a weak acceptor of $L^{R+}_A$.  Moreover, the operation satisfies $\IO_{\wacc_{R+}}^{s,f}=\{(w,1)\mid w\in L_A^{R+}\}$ and $\WTMSP_{R+, \varepsilon} \defeq \WTMSP_{\wacc_{R+}} \preceq (n^{1+\varepsilon}, n)$.
\end{lemma}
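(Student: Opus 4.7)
The plan is to establish three distinct claims: (i) for each $w \in L_A^{R+}$, there is a computation of $\wacc_{R+}$ realizing $(w,1)\in\IO^{s,f}$; (ii) conversely, if $(w_1,w_2)\in\IO^{s,f}_{\wacc_{R+}}$, then $w_2=1$ and $w_1\in L_A^{R+}$; and (iii) the complexity bound $\WTMSP_{\wacc_{R+}}\preceq(n^{1+\eps},n)$ holds. Together (i) and (ii) imply the weak acceptor property and the exact description of $\IO^{s,f}_{\wacc_{R+}}$.

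For (i) and (iii), I would construct the following input-output computation starting from $((w,v_0^\eps),\wacc_s)$: first use the generic edge $(\s,\f,e_1)$; next apply the $(q_1,q_2)$-computation $\gamma_w$ of \Cref{lem-q1-q2-iff} (ending at $((1,v_{\deg(w)}^\eps),q_1)$, where the terminal counter index is forced by iterating \Cref{lem-q1-q2-basics}(a), and $\deg(w)\geq 0$ since $w\in L_A^{R+}$); then traverse $(\s,\f,e_4)$ to enter $q_3$ (whose guard $[\mathbf{a}=1]$ is satisfied); next perform $\deg(w)$ consecutive generic edges $(\f,\s,e_5)$, decrementing the counter step by step back to $v_0^\eps$; and finally traverse $(\s,\f,e_6)$ to reach $((1,v_0^\eps),\wacc_f)$. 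With respect to the estimating set $\F$ defined by $\F_\cntr=(n,n)$, the six phases contribute estimated time $O(1)+K|w|^{1+\eps}+O(1)+\deg(w)\cdot O(\size_\eps(|w|))+O(1)=O(|w|^{1+\eps})$ (using \Cref{cor-counter-eps} to bound $\size_\eps\preceq n^\eps$ and $\deg(w)\leq|w|$), and estimated space $O(|w|)$. Thus $\WTMSP_{\wacc_{R+}}^\F\preceq(n^{1+\eps},n)$, and since $\TMSP_\cntr\preceq\F_\cntr$ and $f_\cntr=g_\cntr=n$ satisfy the homogeneity hypothesis of \Cref{lem-wtmsp} (take $\phi_\cntr(c)=\psi_\cntr(c)=c$), that lemma upgrades this to $\WTMSP_{\wacc_{R+}}\preceq(n^{1+\eps},n)$.

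For (ii), the claim $w_2=1$ is a direct inspection of the graph: the only edge entering $\wacc_f$ is $e_6$, whose action carries no fragment on $a$; every edge incident to $q_3$ (namely $e_4^{\pm1}$, $e_5^{\pm1}$, $e_6^{\pm1}$) likewise leaves $a$ untouched; and any visit to $q_3$ is reached, ultimately, through a crossing of $e_4$ whose guard $[\mathbf{a}=1]$ forces the $a$-tape to be trivial there. Hence $w_2=1$. For $w_1\in L_A^{R+}$, my plan is to take a computation $\gamma$ realizing $(w_1,1)\in\IO^{s,f}$ of minimal length and to show it can be put in canonical form $(\s,\f,e_1)\cdot\gamma_1\cdot(\s,\f,e_4)\cdot\gamma_2\cdot(\s,\f,e_6)$, where $\gamma_1$ is a $(q_1,q_2)$-computation and $\gamma_2$ uses only $e_5^{\pm1}$. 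Since $\gamma_1$ then accepts $w_1$ in the sense of \Cref{lem-q1-q2-iff}, that lemma yields $w_1\in L_A^{R+}$.

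The main obstacle is this canonicalization. The three 0-edges $e_1,e_4,e_6$ are the only links between the four regions $\{\wacc_s\}$, $\{q_1,q_2\}$, $\{q_3\}$, $\{\wacc_f\}$, and \Cref{lem-deg-minimal}(3) already forbids immediate reversals; the remaining content is to excise longer excursions (e.g.\ $e_1\cdots e_1^{-1}$ or $e_6^{-1}\cdots e_6$ with intervening work). This is where the guards $[\I_\eps=1]$ on $e_1$ and $e_6$, together with the absence of any $a$-modifying fragment at $q_3$, do the work: at the two endpoints of any such excursion both the counter and the $a$-tape agree, so the excursion can be deleted while preserving the starting and finishing configurations of $\gamma$, permitting a descent in length. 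Iterating yields the canonical form, completing the proof.
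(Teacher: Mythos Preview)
Your treatment of (i) and (iii) is correct and essentially matches the paper's: the computation you build coincides with the paper's $\sigma_w$, and the complexity estimate via the estimating set $\F_\cntr=(n,n)$ together with \Cref{lem-wtmsp} is exactly the paper's route. Your argument for $w_2=1$ in (ii) is also sound.

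The gap is in your argument for $w_1\in L_A^{R+}$. The excision claim for an ``$e_1\cdots e_1^{-1}$'' excursion---that ``both the counter and the $a$-tape agree'' at its two $(\wacc_{R+})_s$-endpoints---is false. The guard $[\I_\eps=1]$ pins only the counter; the edges $e_{3,x}$ in the $\{q_1,q_2\}$ region modify the $a$-tape freely. For instance the path $e_1,\,e_{3,x},\,e_{3,y}^{-1},\,e_1^{-1}$ (with $x\neq y$ in $A$) takes $((x,v_0^\eps),s)$ to $((y,v_0^\eps),s)$, so excising it changes the word being accepted rather than shortening a realization of the same pair, and minimality is not contradicted. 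You also never treat $e_4$-based excursions back into $\{q_1,q_2\}$, which your claimed canonical form needs. The paper avoids both issues by not seeking a full canonical form: in a minimal realization it uses all three parts of \Cref{lem-deg-minimal} at the degree-one vertex $(\wacc_{R+})_s$ to rule out any interior return there, and then looks only at the prefix up to the \emph{first} visit to $q_3$. That prefix therefore lies entirely in $\{q_1,q_2\}$; the guard on $e_1$ starts the counter at $v_0^\eps$ and the guard on $e_4$ forces the $a$-tape to be $1$ at the end, so the prefix is a $(q_1,q_2)$-computation accepting $w_1$, and \Cref{lem-q1-q2-iff} gives $w_1\in L_A^{R+}$. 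What happens after the first $e_4$-crossing is irrelevant.
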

\begin{proof}

Let $(w,w')\in\IO_{\wacc_{R+}}^{s,f}$ and $\gamma=\bigl((u_0,s_0),e_1',(u_1,s_1),e_2',\dots,e_m',(u_m,s_m)\bigr)$ be an input-output computation with minimal length computation path realizing $(w,w')\in\IO_{\wacc_{R+}}^{s,f}$.

For all $i\in\overline{m}$, let $e_i'=(\sf_i,\sf_i',e_i'')$ with $e_i''\in E_{\wacc_{R+}}$.  Similarly, for all $i\in\overline{0,m}$, let $u_i=(w_i,v_i)$, where $w_i=u_i(a)\in F(A)$ and $v_i=u_i(\I_\eps)=v_{j(i)}^\eps\in\VAL_\eps$.  So, $w_0=w$, $w_m=w'$, and $v_0=v_m=v_0^\eps$.

As the vertices of $\SF_{\wacc_{R+}}$ are in distinct components of $\G_{\wacc_{R+}}-\{q_1\}$, there must exist $j\in\overline{m}$ such that $s_{j-1}=q_1$.  Letting $j$ be the maximal such index, $e_j''=e_4$ and $e_i\in\{e_5^{\pm1},e_6^{\pm1}\}$ for all $i>j$. Then, the guard fragment labelling $e_4$ implies $w_j=1$.  Hence, as the actions of $e_5$ and $e_6$ do not mention $a$, $w'=1$.

Similarly, the vertices of $\SF_{\wacc_{R+}}$ are in distinct components of $\G_{\wacc_{R+}}-\{q_3\}$, and so there exists a minimal $k\in\overline{m-1}$ such that $s_{k+1}=q_3$.  By \Cref{lem-deg-minimal}, the computation 
$$\gamma'=\bigl((u_1,s_1),e_2',\dots,e_k',(u_k,s_k)\bigr)$$ 
is a $(q_1,q_2)$-computation and $e_{k+1}''=e_4^{\pm1}$.  As above, the guard fragment labelling $e_4$ then implies that $w_k=1$.  Further, as it must hold that $e_1''=e_1^{\pm1}$, $u_1=(w_1,v_1)=(w,v_0^\eps)$.  As a result, $\gamma'$ is a $(q_1,q_2)$-computation accepting $w$, so that $w\in L_A^{R+}$ by \Cref{lem-q1-q2-iff}. 

Finally, for $w\in L_A^{R+}$, letting $\rho'$ be the generic path supporting the computation $\gamma_w$ in \Cref{lem-q1-q2-iff}, the generic path $\rho''_s=(\s,\f,e_1),\rho',(\s,\f,e_4)$ supports a computation between the configurations $((w,v_0^\eps),(\wacc_{R+})_s)$ and $((w,v_d^\eps),q_3)$ for some $d\in\N$.  By \Cref{lem-q1-q2-basics}, $\deg(w)=d\geq0$, and so $\rho_f''=(\f,\s,e_5)^d,(\s,\f,e_6)$ supports a computation between the configurations $((1,v_d^\eps),q_3)$ and $((1,v_0^\eps),(\wacc_{R+})_s)$.  Hence, $\rho''=\rho''_s,\rho''_f$ supports a computation $\sigma_w$ of $\wacc_{R+}$ realizing $(w,1)\in\IO_{\wacc_{R+}}^{s,f}$.

Thus, $\IO_{\wacc_{R+}}^{s,f}=\{(w,1)\mid w\in L_A^{R+}\}$, and so $\wacc_{R+}$ is a weak acceptor for $L_A^{R+}$.

Now, let $\F$ be the estimating set for $\wacc_{R+}$ given by $\F_\cntr=(n,n)$ as in \Cref{lem-q1-q2-iff}.  By \Cref{lem-cntr-complexities} and \Cref{lem-wtmsp}, it suffices to find $L\in\N$ such that $\tm_{\wacc_{R+}}^\F(\sigma_w)\leq L|w|^{1+\eps}+L$ and $\sp_{\wacc_{R+}}^\F(\sigma_w)\leq L|w|+L$ for every $w\in L_A^{R+}$.

By construction and \Cref{lem-q1-q2-iff}:
\begin{align*}
\tm_{\wacc_{R+}}^\F(\sigma_w)&\leq 1+K|w|^{1+\eps}+1+\sum_{j=1}^d\max(|v_j^\eps|,|v_{j-1}^\eps|)+1 \\
&\leq K|w|^{1+\eps}+3+d\size_\eps(d)\leq 2K|w|^{1+\eps}+3 \\
\sp_{\wacc_{R+}}^\F(\sigma_w)&\leq K|w|
\end{align*}
Hence, the desired inequalities are satisfied for $L\geq\max\{2K,3\}$.

\end{proof}

\subsection{Weak acceptors for positive words} \

\medskip

For $\eps\in(0,1)$ the proper operation $\wacc_+^{1,\eps}=\waccept^{1,\eps}_+(a)$ is defined in \Cref{fig-POSCHECK1} below.  As indicated by the naming, $\I_{\wacc_+^{1,\eps}}^\val=\emptyset$ and $\I_{\wacc_+^{1,\eps}}^\ext=\{a\}$.  Further, it can be deduced from the actions labelling the edges of the graph that $\I_{\wacc_+^{1,\eps}}^\int=\{b\}$ and $\A_{\wacc_+^{1,\eps}}(a)=\A_{\wacc_+^{1,\eps}}(b)=A$.

\begin{figure}[hbt]
	\centering
	\begin{tikzpicture} 
[point/.style={inner sep = 4pt, circle,draw,fill=white},
mpoint/.style={inner sep = 1.7pt, circle,draw,fill=black},
FIT/.style args = {#1}{rounded rectangle, draw,  fit=#1, rotate fit=45, yscale=0.5},
FITR/.style args = {#1}{rounded rectangle, draw,  fit=#1, rotate fit=-45, yscale=0.5},
FIT1/.style args = {#1}{rounded rectangle, draw,  fit=#1, rotate fit=45, scale=2},
vecArrow/.style={
		thick, decoration={markings,mark=at position
		   1 with {\arrow[thick]{open triangle 60}}},
		   double distance=1.4pt, shorten >= 5.5pt,
		   preaction = {decorate},
		   postaction = {draw,line width=0.4pt, white,shorten >= 4.5pt}
	},
myptr/.style={decoration={markings,mark=at position 1 with %
    {\arrow[scale=3,>=stealth]{>}}},postaction={decorate}}
]

\begin{scope} [xscale=0.75, yscale=0.3, yshift=0cm]      
	\node [anchor=west] at (-1.5, 5) {
		\begin{tabular}{l} 
		Operation $\waccept_+^{1, \varepsilon}(a)$;
		\end{tabular}};
\end{scope}	

\begin{scope} [yshift= 0cm, xscale=0.75, yscale=0.5]      
%	\node at (-4.5, 2) {check$_{1, r}(a)$};

	\node (finish) at (0, 0) [point] {$f$};
	\node (n1) at (0,-4) [point] {$q_1$};
	\node (n2) at (8,-4) [point] {$q_2$};
	\node (n3) at (8,0) [point] {$q_3$};
	\node (start) at (4,0) [point] {$s$};
	
	\draw [->] (finish) to node[midway, left]{
		$\begin{gathered}
		 [\mathbf{b} =1]
		\end{gathered}$
	} node[midway, right] {$e_1$} (n1);

	\draw [->] (n1) to node[midway, below]{
		$\begin{gathered}
		\splt^A(\mathbf{a}, \mathbf{b})
		\end{gathered}$
	} node[midway, above] {$e_2$}  (n2);

	\draw [->] (n2) to node[midway, right]{
		$\begin{gathered}
		\check_{R+}^\varepsilon(\mathbf{a})
		\end{gathered}$
	} node[midway, left] {$e_3$}  (n3);

	\draw [->] (n3) to node[midway, above]{
		$\begin{gathered}		
			[\mathbf{b} = x]\\[-1ex]
	    	[\mathbf{b} \rightarrow \mathbf{b} x^{-1}]\\[-1ex]
	    	\textrm{for $x \in A$ }
		\end{gathered}$
	} node[midway, below] {$e_{4,x}$}  (start);

	\draw [->] (start) to node[midway, above]{
		$\begin{gathered}		
%			\mathbf{b} = 0
		\end{gathered}$
	} node[midway, below] {$e_5$}  (finish);
\end{scope}	

\end{tikzpicture}	
	\caption{A weak acceptor for the language $L^{+}_A$.}
	\label{fig-POSCHECK1}       
\end{figure} 

Since the positive actions labelling $e_2$ and $e_3$ are formed from proper operations, the convention is followed that they are not declared in the title.  However, it should be noted that $\check^\eps_{R+}$ is yet undefined.  As \Cref{lem-io-deg} established that the proper operation $\wacc^\eps_{R+}$ is a weak acceptor of $L_A^{R+}$ satisfying certain complexity bounds, \Cref{prop-acc-ch-wacc} yields a corresponding checker satisfying the same complexity bounds, which is used here to define the action labelling $e_3$.

As $e_5$ is labelled with the trivial action, it follows from the definition of the actions labelling $e_1$ and $e_{4,x}^{-1}$ that $\wacc_+^{1,\eps}$ satisfies condition (O).

Throughout the rest of the discussion of the operation, let $\F$ be the estimating set for $\wacc_+^{1,\eps}$ given by $\F_\splt=(n,n)$ and $\F_{\ch_{R+}}=(n^{1+\eps},n)$.

A computation $\gamma=\bigl((u_0,s_0),e_1',(u_1,s_1),e_2',\dots,e_m',(u_m,s_m)\bigr)$ of $\wacc_+^{1,\eps}$ is called a \emph{circuit computation} if it is an input-output computation satisfying $s_i=(\wacc_+^{1,\eps})_s$ if and only if $i=0,m$.  So, $\gamma$ is a computation realizing $(u_0(a),u_m(a))\in\IO_{\wacc_+^{1,\eps}}^{s,s}$. 

Let $\CIO_{\wacc_+^{1,\eps}}^{s,s}$ be the subset of $\IO_{\wacc_+^{1,\eps}}^{s,s}$ consisting of all pairs realized by circuit computations.  Note that, as trivial computations are circuit computations, $\CIO_{\wacc_+^{1,\eps}}^{s,s}\supseteq\Id$.  Further, as the inverse of a circuit computation is a circuit computation, $\CIO_{\wacc_+^{1,\eps}}^{s,s}$ is symmetrically closed.

The following statement provides an analogue of \Cref{lem-deg-minimal} which will be of similar utility:

\begin{lemma}\label{lem-pos-check-circuit-minimal}

Let $\gamma=\bigl((u_0,s_0),e_1',(u_1,s_1),e_2',\dots,e_m',(u_m,s_m)\bigr)$ be a circuit computation of $\wacc_+^{1,\eps}$ with $e_i'=(\sf_i,\sf_i',e_i'')$ for all $i$.  Suppose $m$ is minimal, i.e the computation path of $\gamma$ has the minimal length amongst all circuit computations realizing $(u_0(a),u_m(a))\in\IO_{\wacc_+^{1,\eps}}^{s,s}$.  Then for all $i$:

\begin{enumerate}

\item $\sf_i\neq\sf_i'$

\item $e_i''\neq e_{i+1}''$

\item If $e_i''\in E_{\wacc_+^{1,\eps}}^0$, then $e_{i+1}''\neq(e_i'')^{-1}$

\end{enumerate}

\end{lemma}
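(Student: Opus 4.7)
The approach mirrors the proof of \Cref{lem-deg-minimal}, adapted to the circuit setting. As a preliminary observation, in a minimal circuit computation $\gamma$ all configurations $(u_j, s_j)$ are pairwise distinct: if $(u_j, s_j) = (u_k, s_k)$ for some $j < k$, excising $e_{j+1}', \ldots, e_k'$ produces a strictly shorter computation with the same endpoints, and this remains a circuit computation since excision cannot add any interior visit to the start state $(\wacc_+^{1,\eps})_s$.

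I would prove (2) first, since it is independent. As no edge of $\wacc_+^{1,\eps}$ is a loop, head-tail compatibility forces $\sf_i' = \sf_{i+1}$ whenever $e_i'' = e_{i+1}''$, and transitivity of the $\IO$-relation (\Cref{lem-lio-io}(3)) allows one to replace the two generic edges by the single generic edge $(\sf_i, \sf_{i+1}', e_i'')$, yielding a shorter (still circuit) computation and contradicting minimality. Claim (3) then follows from distinctness alone, using that no degree-0 edge of $\wacc_+^{1,\eps}$ is a loop: head-tail matching on inverse edges forces $\sf_i' \neq \sf_{i+1}$, and a short case analysis on $(\sf_i, \sf_i', \sf_{i+1}, \sf_{i+1}')$ --- invoking $\IO^{s,s} = \IO^{f,f} = \Id$ for degree-0 actions when a generic edge is stationary ($\sf = \sf'$), and the cancellation $\act^{-1} \circ \act = \id$ when both generic edges are non-stationary (using that the underlying edges are mutual inverses) --- shows that some pair of configurations in $\{(u_{i-1}, s_{i-1}), (u_i, s_i), (u_{i+1}, s_{i+1})\}$ must coincide, contradicting distinctness.

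The delicate part is (1). For $e_i'' \in \{e_1, e_3, e_{4,x}, e_5\}$ the labeling action has $\IO^{s,s} = \IO^{f,f} = \Id$ (by \Cref{def-acceptor2} for the checker labeling $e_3$, and by the structure of degree-0 actions for the others), which forces $u_{i-1} = u_i$ and contradicts distinctness. The remaining case $e_i'' = e_2$ is subtle, because $\IO_\splt^{s,s} = \IO_\splt^{f,f} = \bigcup_w S_w^2$ is non-trivial. Here the loop is at $q_1$ (if $\sf_i = \s$) or at $q_2$ (if $\sf_i = \f$), so its neighbors $e_{i\pm 1}''$ must belong to $\{e_1, e_2\}$ or $\{e_2, e_3\}$ respectively; if either neighbor is $e_2$, (2) applies immediately. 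At $q_1$ with both neighbors $e_1$, the guard $[\mathbf{b}=1]$ on $e_1$ (and on its inverse) forces $u_{i-1}(b) = u_i(b) = 1$, and the $\splt$-invariant $u_{i-1}(a) u_{i-1}(b) = u_i(a) u_i(b)$ then yields $u_{i-1} = u_i$, contradicting distinctness.

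The main obstacle of the proof is the remaining sub-case: the loop on $e_2$ at $q_2$ between two uses of $e_3$, where no local guard equates $u_{i-1}$ and $u_i$. I would resolve it globally: iterating (2) one step further outward forces $e_{i\pm 2}'' \neq e_3$, so $e_{i\pm 2}'' = e_{4,x}$ for some letters, and the circuit constraint (no interior visit to $s$) then pins $\gamma$ down to length exactly $5$ with structure $s \to q_3 \to q_2 \to q_2 \to q_3 \to s$. Tracing the only compatible orientations, the realized pair $(u_0(a), u_m(a)) = (w, w')$ satisfies $w, w' \in L_A^{R+}$ together with $w' x_5 = w x_1$ for letters $x_1, x_5 \in A$; applying \Cref{lem-rplus-decomp} to this equation forces $x_1 = x_5$ and $w = w'$, so the pair is trivial and is realized by the empty computation, contradicting the minimality of $\gamma$.
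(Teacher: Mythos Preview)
Your proof is correct and follows essentially the same path as the paper's: distinctness of configurations, the no-loop/transitivity argument for (2), and the same structural analysis in the hard $(\f,\f,e_2)$ sub-case, arriving at $w x_1 = w' x_5$ with $w,w'\in L_A^{R+}$ and $x_1,x_5\in A$. The only cosmetic differences are that the paper orders the claims as (partial 1), (2), (rest of 1), (3) and uses (1) to prove (3), and that in the hard sub-case the paper observes directly that $wx_1$ and $w'x_5$ are reduced (last letter of an $L_A^{R+}$ word is positive), hence literally equal, giving $(u_{i-1},s_{i-1})=(u_i,s_i)$; your appeal to \Cref{lem-rplus-decomp} reaches the same conclusion but needs one extra line (the lemma yields $w^{-1}w'=z^{\pm1}$ with $z\in L_A^+$, and one must note $x_1x_5^{-1}$ cannot equal a nontrivial such $z^{\pm1}$).
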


\begin{proof}

As in the proof of \Cref{lem-deg-minimal}, if there exist $i,j\in\overline{0,m}$ with $i<j$ such that $(u_i,s_i)$ and $(u_j,s_j)$ are the same configuration, then the generic path $\rho'=e_1',\dots,e_i',e_{j+1}',\dots,e_m'$ supports a computation between $(u_0,s_0)$ and $(u_m,s_m)$.  But then this is a circuit computation realizing $(u_0(a),u_m(a))\in\IO_{\wacc_+^{1,\eps}}^{s,s}$ which is supported by a generic path of length at most $m-1$, contradicting the choice of $\gamma$.  

By the definition of the $\IO$-relation for checkers and actions of degree 0, it then follows that (1) holds if $e_i''\neq e_2$.

Suppose $e_i''=e_{i+1}''=e$.  Then, as $\G_{\wacc_{R+}}$ contains no loops, $\sf_i'=\sf_{i+1}$.  So, by \Cref{lem-lio-io}(3), $(u_{i-1},u_{i+1})\in\IO_{\lab(e)}^{\sf_i,\sf_{i+1}'}$.  Hence, the same argument as presented for \Cref{lem-deg-minimal}(2) shows that (2) must hold.

Now, let $u_i(a)=w_i$ and $u_i(b)=v_i$ for all $i$.

Suppose $e_i'=(\s,\s,e_2)$, so that $s_{i-1}=q_1=s_i$.  Since $\gamma$ is a circuit computation, $i\neq0,m$.  So, by (2), $e_{i-1}'',e_{i+1}''\in\{e_1^{\pm1}\}$.  In particular, since (1) holds when $e_i''\neq e_2$, then the generic edges are given by $e_{i+1}'\in\{(\f,\s,e_1),(\s,\f,e_1^{-1})\}$ and $e_{i-1}'\in\{(\s,\f,e_1),(\f,\s,e_1^{-1})\}$.  The action labelling $e_1$ then implies that $v_{i-1}=1=v_i$.  But \Cref{lem-io-split} then yields $w_{i-1}=w_{i-1}v_{i-1}=w_iv_i=w_i$, so that $(u_{i-1},s_{i-1})=(u_i,s_i)$.

On the other hand, suppose $e_i'=(\f,\f,e_2)$, so that $s_{i-1}=q_2=s_i$.  Then, as above, (2) implies that $e_{i-1}'=(\f,\s,e_3)$ and $e_{i+1}'=(\s,\f,e_3)$.  As a consequence, $(u_{i-2},s_{i-2})=(u_{i-1},q_3)$, $(u_{i+1},s_{i+1})=(u_i,q_3)$, and $w_{i-2},w_{i+1}\in L_A^{R+}$.  Further, there then must exist $x,y\in A$ such that $e_{i-2}''=e_{4,x}^{\pm1}$ and $e_{i+2}''=e_{4,y}^{\pm1}$.  In particular, this yields $s_{i-3}=(\wacc_+^{1,\eps})_s=s_{i+2}$, and thus $i=3$ and $m=5$ by the definition of circuit computation.  Moreover, the guard of the action labelling $e_{4,z}$ for $z\in A$ enforces $v_{i-2}=x$ and $v_{i+1}=y$.  So, by \Cref{lem-io-split}, 
$$w_{i-2}x=w_{i-1}(a)v_{i-1}=w_iv_i=w_{i+1}y$$  
Since $w_{i-2},w_{i+1}\in L_A^{R+}$, both $w_{i-2}x$ and $w_{i+1}y$ must be reduced words, and so are equivalent.  But then $x=y$, so that $(u_{i-1},s_{i-1})=(u_i,s_i)$.

Hence, condition (1) must hold, and so condition (3) is proved in the same manner as in \Cref{lem-deg-minimal}.

\end{proof}

\begin{lemma}\label{lem-pos-check-circuit}

$\CIO_{\wacc_+^{1,\eps}}^{s,s}=\Id\cup\{(w,wx),(wx,w)\mid w\in L_A^{R+},x\in A\}$.
Moreover, for all $w\in L_A^{R+}$ and all $x\in A$, there exists a circuit computation $\gamma_{w,x}$ realizing $(w,wx)\in\IO_{\wacc_+^{1,\eps}}^{s,s}$ satisfying the bounds $\tm_{\wacc_+^{1,\eps}}^{\F}(\gamma_{w,x})\leq2|w|^{1+\eps}+4$ and $\sp_{\wacc_+^{1,\eps}}^{\F}(\gamma_{w,x})\leq|w|+1$.

\end{lemma}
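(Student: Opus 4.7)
My plan is to reduce the proof to minimal-length circuit computations, for which \Cref{lem-pos-check-circuit-minimal} tightly constrains admissible paths. The underlying directed graph of $\wacc_+^{1,\eps}$ has the cycle shape $s \to f \to q_1 \to q_2 \to q_3 \to s$, with $|A|$ parallel edges $e_{4,x}$ closing the loop. I will first show that any nontrivial minimal circuit computation is forced to traverse this cycle exactly once, either in the forward direction via $(\s,\f,e_5), (\s,\f,e_1), (\s,\f,e_2), (\s,\f,e_3), (\s,\f,e_{4,x})$ for some $x \in A$, or in the reverse direction via $(\f,\s,e_{4,x}), (\f,\s,e_3), (\f,\s,e_2), (\f,\s,e_1), (\f,\s,e_5)$. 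The argument combines condition (2) of the minimality lemma (the positive-action edges $e_2, e_3$ cannot be used twice in a row) with condition (3) (the degree-$0$ edges $e_1, e_5, e_{4,x}$ cannot be immediately inverted); together with the requirement that the walk avoids $s$ in its interior, these conditions uniquely determine the next edge at every step and eliminate all back-and-forth motion along the linear subpath $f - q_1 - q_2 - q_3$.

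For the forward containment, I fix $(w_1, w_2) \in \CIO_{\wacc_+^{1,\eps}}^{s,s}$ with $w_1 \neq w_2$ and pick a realizing circuit computation of minimal length, which by the above traverses one of the two $5$-step cycles. Tracing the configurations along the forward cycle and using \Cref{lem-io-split} at $e_2$, the $\IO$-relation of $\check_{R+}^\eps$ (obtained from \Cref{lem-io-deg} via \Cref{prop-acc-ch-wacc}) at $e_3$, and the explicit guards on $e_1$ and $e_{4,x}$, I conclude that the starting $a$-tape decomposes as $w_1 = ux$ with $u \in L_A^{R+}$, $x \in A$, and the ending $a$-tape is $u$; the reverse cycle yields the symmetric pair $(u, ux)$. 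This establishes the inclusion $\CIO_{\wacc_+^{1,\eps}}^{s,s} \subseteq \Id \cup \{(w, wx), (wx, w) : w \in L_A^{R+}, x \in A\}$.

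For the reverse containment, given $w \in L_A^{R+}$ and $x \in A$, I will construct $\gamma_{w,x}$ explicitly along the reverse $5$-step cycle starting from $((w,1), s)$, producing in turn the configurations $((w,x), q_3), ((w,x), q_2), ((wx,1), q_1), ((wx,1), f), ((wx,1), s)$. Each transition is admissible by direct check: the guards on $e_1, e_5, e_{4,x}$ hold by inspection, the step at $e_2$ uses $wx = w \cdot x$ in $F(A)$ via \Cref{lem-io-split}, and the step at $e_3$ uses the hypothesis $w \in L_A^{R+}$. For the estimated complexity, the three degree-$0$ subcomputations each contribute estimated time $1$, the $e_2$-subcomputation contributes time $f_\splt(|w|+1) = |w|+1$, and the $e_3$-subcomputation contributes time $f_{\check_{R+}^\eps}(|w|) = |w|^{1+\eps}$; summing yields $\tm_{\wacc_+^{1,\eps}}^\F(\gamma_{w,x}) \leq |w|^{1+\eps} + |w| + 4 \leq 2|w|^{1+\eps} + 4$. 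The estimated space at each step is at most $|w|+1$, using that the renaming of $\splt$ exhausts $\I_{\wacc_+^{1,\eps}}$ (so the $|w_0(\I - \I_\act)|$-correction vanishes at $e_2$) while the renaming of $\check_{R+}^\eps$ leaves only the internal tape $b$, whose content has length $|x| = 1$.

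The main obstacle will be the forced-walk analysis: one must systematically enumerate how conditions (2) and (3) of \Cref{lem-pos-check-circuit-minimal} interact with the specific edge structure at each non-$s$ vertex to rule out alternative minimal paths. Once this geometric fact is established, the forward containment reduces to a direct trace of configurations and the complexity estimates for $\gamma_{w,x}$ follow by routine bookkeeping with the estimating set $\F$.
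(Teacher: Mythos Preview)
Your approach is correct and essentially identical to the paper's: both reduce to a minimal-length circuit, invoke \Cref{lem-pos-check-circuit-minimal} to force the supporting path to a single $5$-edge traversal of the cycle, trace the configurations, and construct the explicit witness $\gamma_{w,x}$ along the reverse direction with the same complexity bookkeeping. One small caveat (which the paper's ``as above'' also glosses over): your claim that conditions (1)--(3) together with the interior-avoidance of $s$ \emph{uniquely} determine the next edge is not literally true at $q_3$ when arriving from $s$ via $e_{4,x}^{\pm1}$, since $e_{4,y}^{\pm1}$ for $y\neq x$ is not excluded by those conditions and would yield a length-$2$ circuit $s\to q_3\to s$; this candidate is ruled out not by minimality but by the guard $[\mathbf{b}=y]$ on $e_{4,y}$, which fails because the $b$-coordinate at $q_3$ is $x$.
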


\begin{proof}

Let $(w',w)\in\CIO_{\wacc_+^{1,\eps}}^{s,s}$ such that $w\neq w'$ and let 
$$\gamma=\bigl((u_0,s_0),e_1',(u_1,s_1),e_2',\dots,e_m',(u_m,s_m)\bigr)$$ 
be a circuit computation realizing $(w',w)\in\IO_{\wacc_+^{1,\eps}}^{s,s}$ whose computation path is of minimal length.  For all $i$, let $e_i'=(\sf_i,\sf_i',e_i'')$, $u_i(a)=w_i$, and $u_i(b)=v_i$.

First, suppose $e_1''=e_5^{\pm1}$.  Then, by \Cref{lem-pos-check-circuit-minimal}, $u_1=u_0$ and $s_1=(\wacc_+^{1,\eps})_f$.  \Cref{lem-pos-check-circuit-minimal} then implies that $e_2''=e_1^{\pm1}$, so that $u_2=u_1$ and $s_2=q_1$.  Similarly, it follows that $e_3'=(\s,\f,e_2)$, so that $s_3=q_2$.  Moreover, by \Cref{lem-io-split}, $w_3v_3=w_2=w'$.  Then, as above, $e_4'=(\s,\f,e_3)$, so that $(u_4,s_4)=(u_3,q_3)$.  In particular, the definition of $\IO_{\ch_{R+}}$ yields $w_3\in L_A^{R+}$.  

Then, $e_5''=e_{4,x}^{\pm1}$ for some $x\in A$, so that $s_5=(\wacc_+^{1,\eps})_s$, $v_4=v_3=x$, and $u_5=(w_4,1)=(w_3,1)$.  By the definition of a circuit computation, $m=5$, so that $w_3=w$.  Hence, $w\in L_A^{R+}$ and $w'=w_3v_3=wx$, i.e $(w',w)=(wx,w)$ with $w\in L_A^{R+}$.

Conversely, suppose $e_1''=e_{4,x}^{\pm1}$ for some $x\in A$.  Then, as above \Cref{lem-pos-check-circuit-minimal} implies that $m=5$, $e_2''=e_3$, $e_3''=e_2$, $e_4''=e_1^{\pm1}$, and $e_5''=e_5^{\pm1}$.  But then the inverse computation $\gamma^{-1}$ is a circuit computation whose computation path is minimal amongst all circuit computations realizing $(w,w')\in\IO_{\wacc_+^{1,\eps}}^{s,s}$ and whose first generic edge is constructed from $e_5^{\pm1}$, so that the above arguments imply the existence of $x\in A$ such that $(w',w)=(w',w'x)$ with $w'\in L_A^{R+}$.

Thus, as $\CIO_{\wacc_+^{1,\eps}}$ is symmetrically closed, it suffices to demonstrate the computation $\gamma_{w,x}$ as in the statement for any $w\in L_A^{R+}$ and $x\in A$.

Toward this goal, fix $w\in L_A^{R+}$ and $x\in A$ and define the generic path:
$$\rho_{w,x}=(\s,\f,e_{4,x}^{-1}),(\f,\s,e_3),(\f,\s,e_2),(\s,\f,e_1^{-1}),(\s,\f,e_5^{-1})$$
Denoting $\rho_{w,x}=e_1',\dots,e_5'$, it follows from the definition of a checker and \Cref{lem-io-split} that $\rho_{w,x}$ supports the circuit computation:
\begin{align*}
\gamma_{w,x}=\bigl(((w,1),&(\wacc_+^{1,\eps})_s),e_1',((w,x),q_1),e_2',((w,x),q_2),e_3',((wx,1),q_1), \\
&e_4',((wx,1),(\wacc_+^{1,\eps})_f),e_5',((wx,1),(\wacc_+^{1,\eps})_s)\bigr)
\end{align*}
Hence, $\gamma_{w,x}$ is a circuit computation realizing $(w,wx)\in\IO_{\wacc_+^{1,\eps}}^{s,s}$, so that $\CIO_{\wacc_+^{1,\eps}}^{s,s}$ is as in the statement.

Moreover, the estimated complexity of $\gamma_{w,x}$ is:
\begin{align*}
\tm_{\wacc_+^{1,\eps}}^\F(\gamma_{w,x})&=1+|w|^{1+\eps}+(|w|+1)+1+1\leq2|w|^{1+\eps}+4 \\
\sp_{\wacc_+^{1,\eps}}^\F(\gamma_{w,x})&=|w|+1
\end{align*}

\end{proof}

\begin{lemma}\label{lem-pos-check1}
The operation $\wacc_+^{1,\eps}$ is a weak acceptor of $L^+_A$ with $\WTMSP_{\waccept_{+}^{1, \varepsilon}} \preceq (n^{2+\varepsilon}, n)$.
\end{lemma}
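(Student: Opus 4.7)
The plan is to leverage the circuit decomposition provided by \Cref{lem-pos-check-circuit} and combine it with a characterization of the equivalence class of $1$ in the language-level relation generated by $\CIO^{s,s}_{\wacc_+^{1,\eps}}$.

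First I would observe that the underlying undirected graph of $\wacc_+^{1,\eps}$ is a $5$-cycle on $\{s,f,q_1,q_2,q_3\}$, and the only edge incident to $s$ other than (generic edges arising from) $e_{4,x}$ is the trivial edge $e_5$ joining $s$ and $f$. This lets me decompose any computation between two $\SF$-configurations into a concatenation of circuit computations at $s$ (in the sense preceding \Cref{lem-pos-check-circuit}), together with possible final/initial crossings of $e_5$. Since the action labelling $e_5$ is trivial, this yields
\[
\IO_{\wacc_+^{1,\eps}}^{s,f} \;=\; \IO_{\wacc_+^{1,\eps}}^{s,s}
\]
as relations on the $a$-coordinate, and $\IO_{\wacc_+^{1,\eps}}^{s,s}$ is the rst-closure of $\CIO_{\wacc_+^{1,\eps}}^{s,s}$. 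Using \Cref{lem-pos-check-circuit}, this closure is precisely the equivalence relation $\simeq$ on $F(A)$ generated by the pairs $(u, ux)$ with $u\in L_A^{R+}$ and $x\in A$.

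Next I would characterize the $\simeq$-class of $1$. The key observation is that $L_A^{R+}$ is closed under $\simeq$: if $u\in L_A^{R+}$, then $u$ ends in a positive letter (as a reduced word), so $ux$ is already reduced, and every suffix of $ux$ either equals $ux$, contains $x$ as its last letter (degree $\geq 1$), or is a proper suffix of $u$ (degree $\geq 0$). Hence $ux\in L_A^{R+}$. Then the class of $1$ equals $L_A^+$: the inclusion $L_A^+\subseteq[1]_\simeq$ is built by induction, since every prefix $w_i=x_1\cdots x_i$ of a positive word is itself in $L_A^+\subseteq L_A^{R+}$ and $w_i\simeq w_{i+1}$ via the pair $(w_i,w_ix_{i+1})$; conversely, if $w\simeq 1$ through a chain $1=w_0,\dots,w_k=w$, then by induction on $k$ each $w_i\in L_A^+$, since $\{w_i,w_{i+1}\}=\{u,ux\}$ with $w_i\in L_A^+$ and $u\in L_A^{R+}$ forces (in either case) $w_{i+1}$ to be a positive word. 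This establishes
\[
\IO_{\wacc_+^{1,\eps}}^{s,f}\cap\{(w,1)\mid w\in F(A)\} \;=\; \{(w,1)\mid w\in L_A^+\},
\]
so $\wacc_+^{1,\eps}$ is a weak acceptor of $L_A^+$.

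For the complexity bound, I would fix $w=x_1\cdots x_n\in L_A^+$, set $w_i=x_1\cdots x_i$, and concatenate the inverses of the circuit computations $\gamma_{w_i,x_{i+1}}$ from \Cref{lem-pos-check-circuit} in decreasing order of $i$, followed by a single traversal of $e_5$, to obtain a realizing computation $\sigma_w$ of $(w,1)\in\IO_{\wacc_+^{1,\eps}}^{s,f}$. Using the bounds supplied by \Cref{lem-pos-check-circuit} on each $\gamma_{w_i,x_{i+1}}$, together with $|w_i|\leq n$, I get
\[
\tm_{\wacc_+^{1,\eps}}^\F(\sigma_w)\leq\sum_{i=0}^{n-1}\bigl(2|w_i|^{1+\eps}+4\bigr)+1\leq 2n^{2+\eps}+4n+1,\qquad \sp_{\wacc_+^{1,\eps}}^\F(\sigma_w)\leq n+1.
\]
Since $\TMSP_\splt\preceq(n,n)$ by \Cref{lem-io-split} and $\TMSP_{\ch_{R+}^\eps}\preceq(n^{1+\eps},n)$ by \Cref{lem-io-deg} together with \Cref{prop-acc-ch-wacc}, the estimating set $\F$ satisfies the hypotheses of \Cref{lem-wtmsp} (using the scalings $\phi_\inst(c)=c^2$ and $\psi_\inst(c)=c$), and so the estimated bound $\WTMSP^\F\preceq(n^{2+\eps},n)$ upgrades to $\WTMSP_{\wacc_+^{1,\eps}}\preceq(n^{2+\eps},n)$.

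The main obstacle I anticipate is the careful decomposition of an arbitrary computation between $\SF$-configurations into a sequence of circuit computations and trivial $e_5$-crossings: one must rule out spurious back-and-forth on $e_5$ or partial circuits using generic paths, and verify that no configuration ambiguity arises when a computation lingers between $s$ and $f$ before entering $q_1$. Once this decomposition is in place, the rst-closure description of $\IO^{s,s}$, the elementary closure argument for $L_A^{R+}$ under $\simeq$, and the $L_A^+$ characterization of $[1]_\simeq$ follow cleanly.
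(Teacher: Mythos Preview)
Your proposal is correct and follows essentially the same approach as the paper: both decompose $\IO^{s,s}_{\wacc_+^{1,\eps}}$ into the transitive closure of circuit computations via \Cref{lem-pos-check-circuit}, and both construct the accepting computation of $w\in L_A^+$ by concatenating the circuit computations $\gamma_{w_i,x_{i+1}}$ along the chain of prefixes, then invoke \Cref{lem-wtmsp}. The only notable difference is in the sub-argument that the $\simeq$-class of $1$ is exactly $L_A^+$: the paper argues by contradiction using a reduced-word analysis of the sequence $x_1^{\delta_1}\cdots x_t^{\delta_t}$, whereas you give a cleaner induction on chain length (implicitly using \Cref{lem-lplus-decomp} for the step $\{w_i,w_{i+1}\}=\{u,ux\}$ with $w_i=ux\in L_A^+$, $u\in L_A^{R+}$). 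Your stated ``key observation'' that $L_A^{R+}$ is closed under $\simeq$ is correct but not actually used in your induction; the invariant you need and establish is that $L_A^+$ is preserved along chains from $1$.
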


\begin{proof}

	As $e_5$ is labelled by the trivial action, $\IO_{\wacc_+^{1,\eps}}^{s,s}=\IO_{\wacc_+^{1,\eps}}^{s,f}=\IO_{\wacc_+^{1,\eps}}^{f,f}$.
	
	Let $w\in L_A$ such that $(w,1)\in\IO_{\wacc_+^{1,\eps}}^{s,s}$ and fix a computation 
	$$\gamma=\bigl((u_0,s_0),e_1',(u_1,s_1),e_2',\dots,e_m',(u_m,s_m)\bigr)$$ 
	realizing $(w,1)\in\IO_{\wacc_+^{1,\eps}}^{s,s}$. 
	
	Let $i_0,\dots,i_t\in\overline{m}$ be the indices satisfying $s_{i_j}=(\wacc_+^{1,\eps})_s$ and such that $i_{j-1}<i_j$ for all $j\in\overline{t}$.  So, $i_0=0$ and $i_t=m$.  Then, $\gamma$ can be viewed as the concatenation of the circuit computations
	$$\gamma_j=\bigl((u_{i_{j-1}},s_{j-1}),e_{i_{j-1}+1}',\dots,e_{i_j}',(u_{i_j},s_{i_j})\bigr)$$
	for $j\in\overline{t}$.
	
	Now, let $w_j=u_{i_j}(a)$ for all $j\in\overline{0,t}$.  So, $w_0=w$ and $w_t=1$.  Then, $\gamma_j$ is a circuit computation realizing $(w_{j-1},w_j)\in\IO_{\wacc_+^{1,\eps}}^{s,s}$, so that $(w_{j-1},w_j)\in\CIO_{\wacc_+^{1,\eps}}^{s,s}$.  Such a $\gamma_j$ is called a \emph{maximal circuit subcomputation} of $\gamma$.
	
	It can be assumed without loss of generality that $w_{j-1}\neq w_j$ for all $j\in\overline{t}$, as if so then $\gamma_j$ can be `removed' from $\gamma$ to produce a computation realizing $(w,1)\in\IO_{\wacc_+^{1,\eps}}^{s,s}$ with strictly fewer maximal circuit subcomputations.  Similarly, it may be assumed without loss of generality that $w_i\neq w_j$ for all distinct $i,j\in\overline{0,t}$.
	
	So, by \Cref{lem-pos-check-circuit}, for every $j\in\overline{t}$ there exists $x_j\in A$ and $\delta_j\in\{\pm1\}$ such that $w_j=w_{j-1}x_j^{\delta_j}$.  Hence, for all $j\in\overline{t}$, $1=w_t=w_{j-1}x_j^{\delta_j}\dots x_t^{\delta_t}$, and thus $w_{j-1}=x_t^{-\delta_t}\dots x_j^{-\delta_j}$.
	
	Note that, as it is assumed that $w_i\neq w_j$ for all distinct $i,j\in\overline{0,t}$, the word $x_1^{\delta_1}\dots x_t^{\delta_t}$ must be reduced.
	
	Suppose there exists $j\in\overline{t}$ such that $\delta_j=1$.  Then, letting $k$ be the maximal index for which this holds, $w_k\in L_A^+$ and $\gamma_k$ is a circuit computation realizing $(w_{k-1},w_k)=(w_kx_k^{-1},w_k)\in\IO_{\wacc_+^{1,\eps}}^{s,s}$.  By \Cref{lem-pos-check-circuit}, $w_{k-1}\in L_A^{R+}$.  In particular, since $w_tx_t^{-1}=x_t^{-1}\notin L_A^{R+}$, it must hold that $k<t$.  As a result, it follows that $w_{k-1}=w_kx_k^{-1}=x_t\dots x_{k+1}x_k^{-1}\in L_A^{R+}$.  But then there must be a cancellation in $x_t\dots x_k^{-1}=x_t^{-\delta_t}\dots x_k^{-\delta_k}$, and so a cancellation in the word $x_1^{\delta_1}\dots x_t^{\delta_t}$.
	
	Hence, $\delta_j=-1$ for all $j\in\overline{t}$, and thus $w=w_0=x_t\dots x_1\in L_A^+$.
	
	Conversely, fix $w=y_1\dots y_t\in L_A^+$ and set $w_j$ as the prefix of $w$ with $|w_j|=j$, i.e $w_j=y_1\dots y_j$ for all $j\in\overline{t}$ with $w_0=1$.  Then, since $w_j\in L_A^+\subseteq L_A^{R+}$ for all $j\in\overline{t}$, \Cref{lem-pos-check-circuit} yields a circuit computation $\gamma_{w_{j-1},y_j}$ realizing $(w_{j-1},w_j)\in\IO_{\wacc_+^{1,\eps}}^{s,s}$ and satisfying 
	\begin{align*}
	\tm_{\wacc_+^{1,\eps}}^\F(\gamma_{w_{j-1},y_j})&\leq 2|w_{j-1}|^{1+\eps}+4= 2(j-1)^{1+\eps}+4 \\ 
	\sp_{\wacc_+^{1,\eps}}^\F(\gamma_{w_{j-1},y_j})&\leq|w_{j-1}|+1\leq j
	\end{align*}
	So, letting $\gamma_w$ be the concatenation of the computations $\gamma_{w_{j-1},y_j}$ for $j\in\overline{t}$, $\gamma_w$ is a computation realizing $(1,w)\in\IO_{\wacc_+^{1,\eps}}^{s,s}$ and satisfying:
	\begin{align*}
	\tm_{\wacc_+^{1,\eps}}^\F(\gamma_w)&\leq\sum_{j=1}^t 2(j-1)^{1+\eps}+4\leq2t^{2+\eps}+4t\leq 6t^{2+\eps} \\
	\sp_{\wacc_+^{1,\eps}}^\F(\gamma_w)&=\max_{j\in\overline{t}} j\leq t
	\end{align*}
	
	Now, let $\gamma'$ be the computation formed by concatenating the inverse computation $\gamma_w^{-1}$ with the computation realizing $(1,1)\in\IO_{\wacc_+^{1,\eps}}^{s,f}$ and supported by the single generic edge $(\s,\f,e_5)$.  Then, $\gamma_w'$ realizes $(w,1)\in\IO_{\wacc_+^{1,\eps}}^{s,f}$ and satisfies $\tm_{\wacc_+^{1,\eps}}^\F(\gamma_w')\leq 6t^{2+\eps}+1$ and $\sp_{\wacc_+^{1,\eps}}^\F(\gamma_w')\leq t$.
	
	Hence, $\wacc_+^{1,\eps}$ is indeed a weak acceptor for $L_A^+$ and, since $t=|w|$, it follows from the bounds on the complexity of $\gamma_w'$ yield $\WTMSP_{\wacc_+^{1,\eps}}^\F\preceq(n^{2+\eps},n)$.  Thus, the statement follows from \Cref{lem-io-split}, \Cref{lem-io-deg}, \Cref{prop-acc-ch-wacc}, and \Cref{lem-wtmsp}.
	
\end{proof}

\subsection{Improved Weak acceptors for positive words} \

\medskip

The weak acceptor $\wacc_+^{1,\eps}$ is now used as the base for an iterative construction of weak acceptors of $L_A^+$.  These operations are called $\wacc_+^{d+1,\eps}=\waccept_+^{d+1,\eps}(a)$ for all $d\in\N$, as defined below in \Cref{fig-POSCHECK2}.
\begin{figure}[hbt]
	\centering
	\begin{tikzpicture} 
[point/.style={inner sep = 4pt, circle,draw,fill=white},
mpoint/.style={inner sep = 1.7pt, circle,draw,fill=black},
FIT/.style args = {#1}{rounded rectangle, draw,  fit=#1, rotate fit=45, yscale=0.5},
FITR/.style args = {#1}{rounded rectangle, draw,  fit=#1, rotate fit=-45, yscale=0.5},
FIT1/.style args = {#1}{rounded rectangle, draw,  fit=#1, rotate fit=45, scale=2},
vecArrow/.style={
		thick, decoration={markings,mark=at position
		   1 with {\arrow[thick]{open triangle 60}}},
		   double distance=1.4pt, shorten >= 5.5pt,
		   preaction = {decorate},
		   postaction = {draw,line width=0.4pt, white,shorten >= 4.5pt}
	},
myptr/.style={decoration={markings,mark=at position 1 with %
    {\arrow[scale=3,>=stealth]{>}}},postaction={decorate}}
]

\begin{scope} [xscale=0.75, yscale=0.3, yshift=0cm]      
	\node [anchor=west] at (-1.5, 5) {
		\begin{tabular}{l} 
		Operation $\waccept_+^{d+1, \varepsilon}(a)$;
		\end{tabular}};
\end{scope}	

\begin{scope} [yshift= 0cm, xscale=0.75, yscale=0.5]      
%	\node at (-4.5, 2) {check$_{1, r}(a)$};

	\node (finish) at (-1, 0) [point] {$f$};
	\node (n1) at (-1,-4) [point] {$q_1$};
	\node (n2) at (8,-4) [point] {$q_2$};
	\node (n3) at (8,0) [point] {$q_3$};
	\node (n4) at (4,0) [point] {$q_4$};
	\node (start) at (1,0) [point] {$s$};
	
	\draw [->] (finish) to node[midway, left]{
		$\begin{gathered}
		 [\mathbf{b} =1]
		\end{gathered}$
	} node[midway, right] {$e_1$} (n1);

	\draw [->] (n1) to node[midway, below]{
		$\begin{gathered}
		\splt^A(\mathbf{a}, \mathbf{b})
		\end{gathered}$
	} node[midway, above] {$e_2$}  (n2);

	\draw [->] (n2) to node[midway, right]{
		$\begin{gathered}
		\check_{R+}^\varepsilon(\mathbf{a})
		\end{gathered}$
	} node[midway, left] {$e_3$}  (n3);

	\draw [->] (n3) to node[midway, above]{
		$\begin{gathered}		
			\accept_+^{d, \varepsilon}(\mathbf{b})
		\end{gathered}$
	} node[midway, below] {$e_4$}  (n4);

	\draw [->] (n4) to node[midway, above]{
		$\begin{gathered}		
			[\mathbf{b} = 1]
		\end{gathered}$
	} node[midway, below] {$e_5$}  (start);

	\draw [->] (start) to node[midway, above]{
		$\begin{gathered}		
%			\mathbf{b} = 0
		\end{gathered}$
	} node[midway, below] {$e_6$}  (finish);
\end{scope}	

\end{tikzpicture}	
	\caption{An efficient weak acceptor for the language $L^{+}_A$.}
	\label{fig-POSCHECK2}       
\end{figure} 

It is the inductive hypothesis of this construction that $\wacc_+^{d,\eps}$ is a weak acceptor of $L_A^+$ satisfying $\WTMSP_{\wacc_+^{d,\eps}}\preceq(n^{\frac{d+1}{d}+\eps},n)$.  Note that \Cref{lem-pos-check1} provides the base case of this induction, while \Cref{lem-pos-check2} provides the proof of the inductive step.

As with the iterative construction of the counters $\Counter_{d+1}$ in \Cref{sec-counters}, these weak acceptors will prove to be more `efficient' than $\waccept_+^{1,\eps}$, as will be made precise in \Cref{lem-pos-check2} and \Cref{cor-counter-eps}.

Note that the edge $e_4$ is labelled by an action which is as yet undefined, namely the action $\acc_+^{d,\eps}=\accept_+^{d,\eps}$.  However, as in the context of $\waccept_+^{1,\eps}$ above, it is implicitly understood that this action corresponds to the acceptor of $L_A^+$ corresponding to the iteratively constructed weak acceptor $\waccept_+^{d,\eps}$ arising from \Cref{prop-acc-ch-wacc}.

Per the naming conventions, the hardware for $\waccept_+^{d+1,\eps}$ is taken to be the same as that of $\waccept_+^{1,\eps}$ for all $d$.  That is, $\I_{\wacc_+^{d+1,\eps}}=\I_{\wacc_+^{d+1,\eps}}^\ext\sqcup\I_{\wacc_+^{d+1,\eps}}^\int$, with $\I_{\wacc_+^{d+1,\eps}}^\ext=\{a\}$ and $\I_{\wacc_+^{d+1,\eps}}^\int=\{a\}$, while $\A_{\wacc_+^{d+1,\eps}}(a)=\A_{\wacc_+^{d+1,\eps}}(b)=A$.  As $e_6$ is labelled with the trivial action while $e_1,e_5^{-1}$ contain the guard fragment $[b=1]$, it is immediate that $\waccept_+^{d+1,\eps}$ satisfies condition (O).

Let $\F^{d+1}$ be the estimating set for $\wacc_+^{d+1,\eps}$ given by $\F^{d+1}_\splt=(n,n)$, and $\F^{d+1}_{\ch_{R+}}=(n^{1+\eps},n)$, and $\F^{d+1}_{\acc_+^{d,\eps}}=(n^{\frac{d+1}{d}+\eps},n)$.  By \Cref{lem-io-split}, \Cref{lem-io-deg}, \Cref{prop-acc-ch-wacc}, and the inductive hypothesis, $\TMSP_\inst\preceq\F^{d+1}_\inst$ for each instance.

Hence, by \Cref{lem-wtmsp}, to prove \Cref{lem-pos-check2} it suffices to prove that $\waccept_+^{d+1,\eps}$ is a weak acceptor of $L_A^+$ satisfying $\WTMSP_{\wacc_+^{d+1,\eps}}^{\F^{d+1}}\preceq(n^{\frac{d+2}{d+1}+\eps},n)$.

As in the discussion of $\wacc_+^{1,\eps}$ in the previous section, an input-output computation $$\gamma=\bigl((u_0,s_0),e_1',(u_1,s_1),e_2',\dots,e_m',(u_m,s_m)\bigr)$$ of $\wacc_+^{d+1,\eps}$ is called a \emph{circuit computation} if $s_i=(\wacc_+^{d+1,\eps})_s$ if and only if $i=0,m$.  

Let $\rho=e_1',\dots,e_m'$ be a generic path with $e_i'=(\sf_i,\sf_i',e_i'')$ for all $i$.  Then $\rho$ is called a \emph{proper generic path} if $\sf_i\neq\sf_i'$ for all $i$.  As such, a circuit computation $\gamma$ is called a \emph{proper circuit computation} if its computation path is proper.

\begin{lemma}\label{lem-pos-check2-circuit-proper}

For any non-trivial proper circuit computation $\gamma$, there exists $w\in L_A^{R+}$ and $p\in L_A^+$ such that $\gamma$ either realizes $(w,wp)\in\IO_{\wacc_+^{d+1,\eps}}^{s,s}$ or realizes $(wp,w)\in\IO_{\wacc_+^{d+1,\eps}}^{s,s}$.  Moreover, for every $w\in L_A^{R+}$ and $p\in L_A^+$, there exists a proper circuit computation $\gamma_{w,p}$ realizing $(w,wp)\in\IO_{\wacc_+^{d+1,\eps}}^{s,s}$ and satisfying the bounds 
\begin{align*}
\tm_{\wacc_+^{d+1,\eps}}^{\F^{d+1}}(\gamma_{w,p})&\leq2|w|^{1+\eps}+2|p|^{\frac{d+1}{d}+\eps}+3 \\
\sp_{\wacc_+^{d+1,\eps}}^{\F^{d+1}}(\gamma_{w,p})&\leq|w|+|p|
\end{align*}

\end{lemma}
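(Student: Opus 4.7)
The plan is to establish the structural claim about which realized pairs can arise, and then to provide the explicit construction $\gamma_{w,p}$ with the stated complexity bounds.

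For the structural part, I would first reduce to the case where the computation path of $\gamma$ has minimal length among all proper circuit computations realizing $(u_0(a), u_m(a)) \in \IO_{\wacc_+^{d+1,\eps}}^{s,s}$. Following the template of \Cref{lem-pos-check-circuit-minimal}, such a minimal $\gamma$ cannot contain consecutive generic edges built from the same underlying edge in opposing orientations (otherwise \Cref{lem-lio-io}(3) permits shortening), nor can any configuration be repeated (otherwise the cyclic subcomputation can be excised). Since the underlying graph of $\wacc_+^{d+1,\eps}$ is a single directed six-cycle $s \to f \to q_1 \to q_2 \to q_3 \to q_4 \to s$, the minimal proper circuit path must traverse this cycle monotonically, either fully forward via $(\s,\f,e_6),(\s,\f,e_1),(\s,\f,e_2),(\s,\f,e_3),(\s,\f,e_4),(\s,\f,e_5)$ or fully backward. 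Tracing the forward traversal: $e_6^{\pm}$ is trivial; $e_1$ verifies $b=1$; $e_2$ applies $\splt$ to yield $u(a)=w_1$, $u(b)=w_2$ with $w_1 w_2 = u_0(a)$; $e_3$ enforces $w_1 \in L_A^{R+}$ via the checker; $e_4$ forces $w_2 \in L_A^+$ and returns $b$ to $1$ via the acceptor; $e_5$ reconfirms $b=1$. The realized pair is therefore $(w_1 w_2, w_1)$, matching the $(wp, w)$ form with $w := w_1$ and $p := w_2$. The backward traversal is symmetric and yields a pair of the form $(w, wp)$.

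For the constructive part, given $w \in L_A^{R+}$ and $p \in L_A^+$, I would define $\gamma_{w, p}$ as the computation supported by the backward generic path $\rho = (\f,\s,e_5), (\f,\s,e_4), (\f,\s,e_3), (\f,\s,e_2), (\f,\s,e_1), (\f,\s,e_6)$ with intermediate configurations $(w, 1, s) \to (w, 1, q_4) \to (w, p, q_3) \to (w, p, q_2) \to (wp, 1, q_1) \to (wp, 1, f) \to (wp, 1, s)$. The validity rests on: $p \in L_A^+$ permits the backward acceptor step via $\IO_{\acc_+^{d,\eps}}^{f,s} = \{(1,v) : v \in L_A^+\}$; $w \in L_A^{R+}$ passes the checker both forward and backward; the backward $\splt$ step selects the decomposition $(wp, 1) \in S_{wp}$; and crucially, since $w \in L_A^{R+}$ ends in a positive letter (when nonempty) and $p \in L_A^+$ begins positively, no free cancellation occurs at the junction, giving $|wp| = |w| + |p|$.

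For the complexity bounds, I would partition $\gamma_{w,p}$ into its six subcomputations. The three degree-zero edges ($e_5^{-1}, e_1^{-1}, e_6^{-1}$) each contribute time $1$ and space at most $|w|+|p|$. The $\splt$ subcomputation has $\I_{\act} = \{a,b\}$ so $M_{e_2} = |w|+|p|$, contributing $|w|+|p|$ to both time and space via $\F^{d+1}_\splt = (n,n)$. The checker subcomputation on tape $a$ has $M_{e_3} = |w|$, contributing $|w|^{1+\eps}$ to time and $|w|+|p|$ to space via $\F^{d+1}_{\ch_{R+}} = (n^{1+\eps}, n)$. The acceptor subcomputation on tape $b$ has $M_{e_4} = |p|$, contributing $|p|^{(d+1)/d+\eps}$ to time and $|w|+|p|$ to space via $\F^{d+1}_{\acc_+^{d,\eps}} = (n^{(d+1)/d+\eps}, n)$. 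Summing and using $|w| \leq |w|^{1+\eps}$ and $|p| \leq |p|^{(d+1)/d+\eps}$ for positive values (with trivial cases handled separately), the time bound $2|w|^{1+\eps} + 2|p|^{(d+1)/d+\eps} + 3$ follows, and the maximum over subcomputation spaces is $|w|+|p|$.

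The main obstacle is the structural analysis, specifically verifying that minimal proper circuit computations must traverse the cycle monotonically. The subtlety is that $\splt$ and the inverse of the acceptor have highly non-deterministic $\IO$-relations (e.g., $\IO_\splt^{f,s} = \bigcup_w S_w^2$), so a forward-then-backward passage through these edges need not cancel automatically. Ruling out such partial traversals in minimal computations requires showing, by careful case analysis, that any direction reversal at a positive-degree edge either allows a strict shortening via the transitivity in \Cref{lem-lio-io}(3) or forces a repeated configuration, using \Cref{lem-rplus-decomp} to control the interaction between $L_A^{R+}$ prefixes and $L_A^+$ suffixes as the $b$-tape accumulates and releases positive words.
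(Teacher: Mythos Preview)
Your constructive half—the explicit $\gamma_{w,p}$ and its estimated complexity—is correct and coincides with the paper's argument word for word.

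The structural half has a genuine gap, and in fact the first assertion of the lemma is false as stated (the paper's own proof shares the error: it simply asserts $m=6$, which does not follow from properness). Here is a counterexample. Take distinct $x,y,z\in A$ and consider the length-$10$ proper circuit supported by
\[
(\s,\f,e_6),(\s,\f,e_1),(\s,\f,e_2),(\s,\f,e_3),(\s,\f,e_4),(\f,\s,e_4),(\f,\s,e_3),(\f,\s,e_2),(\f,\s,e_1),(\f,\s,e_6)
\]
with configurations
\[
(xy,1)_s\!\to\!(xy,1)_f\!\to\!(xy,1)_{q_1}\!\to\!(x,y)_{q_2}\!\to\!(x,y)_{q_3}\!\to\!(x,1)_{q_4}\!\to\!(x,z)_{q_3}\!\to\!(x,z)_{q_2}\!\to\!(xz,1)_{q_1}\!\to\!(xz,1)_f\!\to\!(xz,1)_s.
\]
All transitions are valid (the checker sees $x\in L_A^{R+}$, the acceptor sees $y,z\in L_A^+$, the splits preserve $ab$, the guards see $b=1$), so this realizes $(xy,xz)\in\IO^{s,s}$. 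But $(xy,xz)$ is neither $(w,wp)$ nor $(wp,w)$ for any $w\in L_A^{R+}$, $p\in L_A^+$, since both $y^{-1}z$ and $z^{-1}y$ lie outside $L_A^+$.

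Your reduction to minimal length does not rescue this: by your own analysis, the monotone length-$6$ proper circuits realize exactly the pairs of the claimed form, and a short case check shows no length-$8$ proper circuit realizes $(xy,xz)$ either. Hence the minimal proper circuit for this pair has length $10$ and is not monotone. The difficulty you correctly flagged—that the back-and-forth $q_3\to q_4\to q_3$ via the acceptor can change $b$ while fixing $a$, so cannot be collapsed within the proper class—is exactly what breaks monotonicity.

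What \Cref{lem-pos-check2} actually needs is only the weaker invariant that a proper circuit carries $L_A^+$ to $L_A^+$; note the counterexample respects this. One route: track the walk on the path $f\text{--}q_1\text{--}q_2\text{--}q_3\text{--}q_4$ and argue inductively that at each crossing of $e_3$ one has $a\in L_A^{R+}$, at each crossing of $e_4$ the $q_3$-side has $b\in L_A^+$, and the split preserves $ab$, so \Cref{lem-lplus-decomp} propagates positivity of $a$ through each $q_2\to q_3\to q_4$ passage. That argument is different from both what you sketched and what the paper wrote.
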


\begin{proof}

Let $\gamma=\bigl((u_0,s_0),e_1',(u_1,s_1),e_2',\dots,e_m',(u_m,s_m)\bigr)$ be a proper circuit computation.  For all $i$, let $u_i(a)=w_i$, $u_i(b)=v_i$, and $e_i'=(\sf_i,\sf_i',e_i'')$.

Suppose $e_1'\in\{(\f,\s,e_5),(\s,\f,e_5^{-1})\}$.  Then, setting $w=w_0$, $u_0=u_1=(w,1)$.  Then, as $\gamma$ is proper, it must hold that $e_2'=(\f,\s,e_4)$, $e_3'=(\f,\s,e_3)$, $e_4'=(\f,\s,e_2)$, $e_5'\in\{(\f,\s,e_1),(\s,\f,e_1^{-1})\}$, and finally $e_6'\in\{(\f,\s,e_6),(\s,\f,e_6^{-1})\}$.  In particular, this implies $s_6=(\wacc_+^{d+1,\eps})_s$, so that $m=6$.

Let $v=v_2$.  By the $\IO$-relation defining acceptors, it must follow that $v\in L_A^+$ and $w_2=w$.  Similarly, the definition of checker implies that $w\in L_A^{R+}$ and that $u_3=u_2=(w,v)$.  

Similarly, the actions of degree 0 labelling $e_6$ and $e_1$ imply that $u_4=u_5=u_6$ with $v_4=1$.  Finally, \Cref{lem-io-split} implies that $wv=w_4v_4=w_4$.  

Hence, $\gamma$ realizes $(w,wv)\in\IO_{\wacc_+^{d+1,\eps}}^{s,s}$ with $w\in L_A^{R+}$ and $v\in L_A^+$.

Otherwise, $e_1'\in\{(\s,\f,e_6),(\f,\s,e_6^{-1})\}$.  The same reasoning then implies that $m=6$ and $e_6'\in\{(\s,\f,e_5),(\f,\s,e_5^{-1})\}$.  

As a result, the inverse computation $\gamma^{-1}$ is a proper circuit computation such that the first generic edge of its computation path is $(\f,\s,e_5)$ or $(\s,\f,e_5^{-1})$.  But then for $w=w_6$, it follows from the argument above that $\gamma^{-1}$ realizes $(w,wv)\in\IO_{\wacc_+^{d+1,\eps}}^{s,s}$ for $w\in L_A^{R+}$ and $v'\in L_A^+$.  Hence, $\gamma$ realizes $(wv,w)\in\IO_{\wacc_+^{d+1,\eps}}^{s,s}$.

Conversely, consider the generic path:
$$\rho=(\f,\s,e_5),(\f,\s,e_4),(\f,\s,e_3),(\f,\s,e_2),(\f,\s,e_1),(\f,\s,e_6)$$
Note that $\rho$ is a proper generic path.  Set $\rho=e_1',\dots,e_6'$.

Then, for any $w\in L_A^{R+}$ and $p\in L_A^+$, $\rho$ supports a circuit computation:
\begin{align*}
\gamma_{w,p}=\bigl(((w,1),&(\wacc_+^{d+1,\eps})_s),e_1',((w,1),q_4),e_2',((w,p),q_3),e_3',((w,p),q_2),e_4', \\
&((wp,1),q_1),e_5',((wp,1),(\wacc_+^{d+1,\eps})_f),e_6',((wp,1),(\wacc_+^{d+1,\eps})_f)\bigr)
\end{align*}
Hence, $\gamma_{w,p}$ is a proper circuit computation realizing $(w,wp)\in\IO_{\wacc_+^{d+1,\eps}}^{s,s}$ and satisfies:
\begin{align*}
\tm_{\wacc_+^{d+1,\eps}}^{\F^{d+1}}(\gamma_{w,p})&=1+|p|^{\frac{d+1}{d}+\eps}+|w|^{1+\eps}+(|w|+|p|)+1+1 \\
&\leq2|p|^{\frac{d+1}{d}+\eps}+2|w|^{1+\eps}+3 \\
\sp_{\wacc_+^{d+1,\eps}}^{\F^{d+1}}(\gamma_{w,p})&=|w|+|p|
\end{align*}

\end{proof}

The following statement is the analogue of \Cref{lem-deg-minimal} and \Cref{lem-pos-check-circuit-minimal}:

\begin{lemma}\label{lem-pos-check2-circuit-minimal}

Let $\gamma=\bigl((u_0,s_0),e_1',(u_1,s_1),e_2',\dots,e_m',(u_m,s_m)\bigr)$ be a circuit computation of $\wacc_+^{d+1,\eps}$ with $e_i'=(\sf_i,\sf_i',e_i'')$ for all $i$.  Suppose the computation path of $\gamma$ has the minimal length amongst all circuit computations realizing $(u_0(a),u_m(a))\in\IO_{\wacc_+^{d+1,\eps}}^{s,s}$.  Then for all $i$:

\begin{enumerate}

\item If $e_i''\notin\{e_2,e_4\}$, then $\sf_i\neq\sf_i'$

\item If $e_i''=e_4$ and $\sf_i=\sf_i'$, then $\sf_i=\s$

\item If $e_i''=e_2$ and $\sf_i=\sf_i'$, then $\sf_i=\f$

\item $e_i''\neq e_{i+1}''$

\item If $e_i''\in E_{\wacc_+^{d+1,\eps}}^0$, then $e_{i+1}''\neq(e_i'')^{-1}$

\end{enumerate}

\end{lemma}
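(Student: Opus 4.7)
The plan is to follow the template of Lemmas~\ref{lem-deg-minimal} and~\ref{lem-pos-check-circuit-minimal}: exploit minimality to force all configurations of $\gamma$ to be distinct, and then derive contradictions in each forbidden case by combining transitivity of $\IO$-relations with the explicit form of the $\IO$-relations of degree 0 actions, checkers, acceptors, and $\splt$, together with the local structure of $\G_{\wacc_+^{d+1,\eps}}$. First I would argue the distinct-configurations property: if $(u_i,s_i)=(u_j,s_j)$ for $i<j$, then the generic path $e_1',\dots,e_i',e_{j+1}',\dots,e_m'$ supports a computation between $(u_0,s_0)$ and $(u_m,s_m)$, and since none of the original intermediate states $s_1,\dots,s_{m-1}$ equals $(\wacc_+^{d+1,\eps})_s$, this shorter path is still a circuit computation, contradicting minimality.

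Next I would prove (4) before the other parts, since later arguments depend on it. Because $\G_{\wacc_+^{d+1,\eps}}$ has no loops, $e_i''=e_{i+1}''$ forces $\sf_i'=\sf_{i+1}$ on compatible generic edges, and transitivity of $\IO_{\lab(e_i'')}$ from Lemma~\ref{lem-lio-io}(3) produces a single generic edge $(\sf_i,\sf_{i+1}',e_i'')$ supporting a computation between $(u_{i-1},s_{i-1})$ and $(u_{i+1},s_{i+1})$, yielding a shorter circuit computation. For (1) with $e_i''\in\{e_1^{\pm1},e_3,e_5^{\pm1},e_6^{\pm1}\}$ and for the $\sf_i=\f$ case of (2), the assumption $\sf_i=\sf_i'$ makes $e_i'$ a loop, so $s_{i-1}=s_i$; since the $(\s,\s)$ and $(\f,\f)$ parts of the $\IO$-relations of degree 0 actions, of the checker $\ch_{R+}^\eps$, and the $(f,f)$ part of the acceptor $\acc_+^{d,\eps}$ all equal $\Id$, this forces $u_{i-1}=u_i$, contradicting distinct configurations.

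The main obstacle is (3), since $\IO_\splt^{s,s}=\IO_\splt^{f,f}=\bigcup_w S_w^2$ are both nontrivial. Assume $e_i''=e_2$ with $\sf_i=\sf_i'=\s$, so $s_{i-1}=s_i=q_1$. By (4), neither $e_{i-1}''$ nor $e_{i+1}''$ equals $e_2$; enumerating the edges of $E_{\wacc_+^{d+1,\eps}}$ incident to $q_1$ leaves $e_1^{\pm1}$ as the only possibilities for the underlying edges, so (1) applies to both neighbors and forces $\sf_{i-1}\neq\sf_{i-1}'$ and $\sf_{i+1}\neq\sf_{i+1}'$. Then $e_{i-1}'\in\{(\s,\f,e_1),(\f,\s,e_1^{-1})\}$ and $e_{i+1}'\in\{(\f,\s,e_1),(\s,\f,e_1^{-1})\}$, and in every case the guard $[\mathbf{b}=1]$ on $e_1$ forces $u_{i-1}(b)=u_i(b)=1$. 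The defining $\IO$-relation of $\splt$ then yields $u_{i-1}(a)=u_{i-1}(a)u_{i-1}(b)=u_i(a)u_i(b)=u_i(a)$, so $(u_{i-1},s_{i-1})=(u_i,s_i)$, contradicting distinct configurations. Finally, (5) follows exactly as in Lemma~\ref{lem-deg-minimal}: since $e_i''\in E_{\wacc_+^{d+1,\eps}}^0$ is neither $e_2$ nor $e_4$, (1) forces $\sf_i\neq\sf_i'$ and $\sf_{i+1}\neq\sf_{i+1}'$, and a short case analysis on the two possible orientations shows that applying the action of $e_i''$ and then its inverse returns to the original configuration, so $(u_{i+1},s_{i+1})=(u_{i-1},s_{i-1})$, again contradicting distinct configurations.
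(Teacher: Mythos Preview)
Your proof is correct and follows essentially the same approach as the paper's: establish distinctness of configurations from minimality, use the $\Id$-parts of the $\IO$-relations for degree~0 actions, checkers, and the $(\f,\f)$-part of the acceptor to obtain (1) and (2), use (4) together with the guard $[\mathbf{b}=1]$ on $e_1$ and \Cref{lem-io-split} to obtain (3), and finish (4) and (5) via transitivity and inverse cancellation as in \Cref{lem-pos-check-circuit-minimal}. The only difference is that you prove (4) explicitly before (3), whereas the paper invokes the analogous step by reference to \Cref{lem-pos-check-circuit-minimal}; your ordering is in fact cleaner, since the proof of (3) visibly depends on (4).
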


\begin{proof}

As in the proofs of Lemmas \ref{lem-deg-minimal} and \ref{lem-pos-check-circuit-minimal}, for all $i,j\in\overline{0,m}$ with $i\neq j$, $(u_i,s_i)\neq(u_j,s_j)$.  Then, as in those settings, (1) follows from the definition of the $\IO$-relations for checkers and actions of degree 0.  

Similarly, if $e_i''=e_4$ and $\sf_i=\sf_i'=\f$, then the definition of the $\IO$-relation defining an acceptor yields $u_{i-1}=u_i$ and $s_{i-1}=q_4=s_i$.  So, (2) must hold.

Now, suppose $e_i''=e_2$ and $\sf_i=\sf_i'=\s$.  Then, as in the proof of \Cref{lem-pos-check-circuit-minimal}(1), it follows that $e_{i-1}''=e_{i+1}''=e_1$, so that $u_{i-1}(b)=1=u_i(b)$.  But then \Cref{lem-io-split} implies $(u_{i-1},s_{i-1})=(u_i,s_i)$.  Hence, (3) must hold.

Finally, as there are no loops in $\G_{\wacc_+^{d+1,\eps}}$, (4) and (5) follow from the same proofs as those presented for conditions (2) and (3) of \Cref{lem-pos-check-circuit-minimal}.

\end{proof}

\begin{lemma}\label{lem-pos-check2-minimal-to-proper}

For every $(w,w')\in\IO_{\wacc_+^{d+1,\eps}}^{s,s}$, there exists a realizing computation with proper computation path.

\end{lemma}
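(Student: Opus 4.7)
The plan is to convert any given realizing computation into a proper one by local surgery on the generic path, replacing each non-proper generic edge with proper ones without disturbing the endpoints. First I would reduce to a realizing computation $\gamma = \bigl((u_0, s_0), e_1', \dots, e_m', (u_m, s_m)\bigr)$ whose computation path is of minimal length. The argument used in \Cref{lem-pos-check2-circuit-minimal}, namely that minimality forces all configurations to be distinct and then uses the transitivity of $\mathrm{LIO}_\act$ on consecutive identical-label edges, works in this more general setting; so the only non-proper moves $e_i' = (\sf_i, \sf_i, e_i'')$ that can survive are those with $e_i'' = e_2$ and $\sf_i = \f$, or $e_i'' = e_4$ and $\sf_i = \s$.

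For each such exceptional position I would insert a detour through the neighboring state. If $e_i' = (\f, \f, e_2)$ at $q_2$, then $(u_{i-1}, u_i) \in \IO_{e_2}^{f,f}$ and by \Cref{lem-io-split} there exists $w \in L_A$ with $u_{i-1}(a)u_{i-1}(b) = w = u_i(a) u_i(b)$; hence $(u_{i-1}, (w, 1)) \in \IO_{e_2}^{f,s}$ and $((w,1), u_i) \in \IO_{e_2}^{s,f}$, and the move can be replaced by the proper pair $(\f, \s, e_2), (\s, \f, e_2)$ routed through the configuration $((w,1), q_1)$. If instead $e_i' = (\s, \s, e_4)$ at $q_3$, then $(u_{i-1}(b), u_i(b)) \in \IO_{\acc_+^{d,\eps}}^{s,s}$; by minimality the case $u_{i-1} = u_i$ is excluded, so the defining relation of an acceptor forces $u_{i-1}(b), u_i(b) \in L_A^+$, and the coordinate $a$ is fixed because the action labelling $e_4$ does not mention it. Consequently $(u_{i-1}, (u_{i-1}(a), 1)) \in \IO_{e_4}^{s,f}$ and $((u_{i-1}(a), 1), u_i) \in \IO_{e_4}^{f,s}$, so the move can be replaced by the proper pair $(\s, \f, e_4), (\f, \s, e_4)$ routed through $((u_{i-1}(a), 1), q_4)$.

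Performing these two kinds of replacements at every exceptional index produces a proper generic path supporting a realizing computation between $(w, (\wacc_+^{d+1,\eps})_s)$ and $(w', (\wacc_+^{d+1,\eps})_s)$, which establishes the lemma. The main obstacle I anticipate is a bookkeeping one, namely verifying that the analog of \Cref{lem-pos-check2-circuit-minimal} indeed applies to an arbitrary minimal realizing computation (as opposed to only a circuit one): the argument given there uses only distinctness of configurations and the $\IO$-relations of the actions on individual edges, neither of which depends on the circuit hypothesis, so this extension should go through directly.
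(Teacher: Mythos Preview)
Your proposal is correct and follows essentially the same approach as the paper: take a minimal-length realizing computation, argue that the only surviving non-proper generic edges are $(\hat s,\hat s,e_4)$ and $(\hat f,\hat f,e_2)$, and replace each by a proper detour. Your execution is in fact slightly more streamlined than the paper's on two counts. First, the paper decomposes $\gamma$ into maximal circuit subcomputations and applies \Cref{lem-pos-check2-circuit-minimal} to each piece, whereas you correctly observe that the proof of that lemma uses only distinctness of configurations and transitivity of $\IO$, neither of which requires the circuit hypothesis, so it applies directly to the whole minimal computation. Second, your detours stay local (through $q_1$ for $e_2$, through $q_4$ for $e_4$), while the paper routes all the way out to $(\wacc_+^{d+1,\eps})_s$ via $e_1,e_6$ or $e_5$; both work, and doing all replacements simultaneously (as you do) avoids the paper's one-at-a-time iteration. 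The only place to be careful is your appeal to the analogue of condition~(3) of \Cref{lem-pos-check2-circuit-minimal}: ruling out $(\hat s,\hat s,e_2)$ requires knowing that both neighbouring edges $e_{i-1}'',e_{i+1}''$ exist and lie in $\{e_1^{\pm1}\}$, which in turn needs $1<i<m$; this holds because $s_{i-1}=s_i=q_1$ differs from $s_0=s_m=(\wacc_+^{d+1,\eps})_s$, but it would be worth saying explicitly.
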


\begin{proof}

If $w=w'$, then a trivial computation realizes $(w,w')\in\IO_{\wacc_+^{d+1,\eps}}^{s,s}$.  Hence, the statement holds as the trivial generic path is vacuously proper.

Now, suppose $w\neq w'$ and let $\gamma=\bigl((u_0,s_0),e_1',(u_1,s_1),e_2',\dots,e_m',(u_m,s_m)\bigr)$ be a computation realizing $(w,w')\in\IO_{\wacc_+^{d+1,\eps}}^{s,s}$ with computation path $\rho$ of minimal length.  As in previous settings, let $w_i=u_i(a)$, $v_i=u_i(b)$, and $e_i'=(\sf_i,\sf_i',e_i'')$ for all $i$.

Let $j_0,\dots,j_t\in\overline{0,m}$ be the ascending sequence given by the condition $s_i=(\wacc_+^{d+1,\eps})_s$ if and only if $i=j_k$ for some $k\in\overline{0,t}$.  So, $j_0=0$ and $j_t=m$.

For all $i\in\overline{t}$, define the generic path $\rho_i=e_{j_{i-1}+1}',\dots,e_{j_i}'$.  Then, $\rho_i$ supports a subcomputation $\gamma_i$ of $\gamma$ which realizes $(w_{j_{i-1}},w_{j_i})\in\IO_{\wacc_+^{d+1,\eps}}^{s,s}$.  Note that, by the definition of the indices $j_0,\dots,j_r$, $\gamma_i$ is a circuit computation.

Suppose $\rho$ is not a proper generic path.  

For any index $\ell\in\overline{m}$ such that $\sf_\ell=\sf_\ell'$, there exists an index $k\in\overline{t}$ such that $j_{k-1}<\ell\leq j_k$.  Then, as $\gamma_k$ must be a circuit computation with minimal length computation path, \Cref{lem-pos-check2-circuit-minimal} implies that either $e_\ell'=(\s,\s,e_4)$ or $e_\ell'=(\f,\f,e_2)$.

Let $\ell$ be the minimal index for which $\sf_\ell=\sf_\ell'$.  Then, let $\rho_{k,1}'$ and $\rho_{k,2}'$ be the generic paths such that $\rho_k=\rho_{k,1}',e_\ell',\rho_{k,2}'$.

First, suppose $e_\ell'=(\s,\s,e_4)$.  Then, $s_{\ell-1}=s_\ell=q_3$ and $w_{\ell-1}=w_\ell$, so that the minimality of $\gamma_k$ yields $v_{\ell-1}\neq v_\ell$.  The $\IO$-relation defining acceptors then implies that $v_{\ell-1},v_\ell\in L_A^+$.  Let $u_\ell'=(w_{\ell-1},1)=(w_\ell,1)$ and $s_\ell'=(\wacc_+^{d+1,\eps})_s$.

Consider the generic paths $\rho_{k,1}''=(\s,\f,e_4),(\s,\f,e_5)$ and $\rho_{k,2}''=(\f,\s,e_5),(\f,\s,e_4)$.  Then, each are proper, with $\rho_{k,1}''$ supporting a computation between $(u_{\ell-1},s_{\ell-1})$ and $(u_\ell',s_\ell')$ and $\rho_{k,2}''$ supporting a computation between $(u_\ell',s_\ell')$ and $(u_\ell,s_\ell)$.

Conversely, suppose $e_\ell'=(\f,\f,e_2)$.  Then, by \Cref{lem-io-split}, there exists a word $w_\ell''\in L_A$ such that $w_\ell''=w_{\ell-1}v_{\ell-1}=w_\ell v_\ell$.  Set $u_\ell'=(w'',1)$ and $s_\ell'=(\wacc_+^{d+1,\eps})_s$.  In this case, define the proper generic paths $\rho_{k,1}''=(\f,\s,e_2),(\f,\s,e_1),(\f,\s,e_6)$ and $\rho_{k,2}''=(\s,\f,e_6),(\s,\f,e_1),(\s,\f,e_2)$.  Then, as above, $\rho_{k,1}''$ supports a computation between $(u_{\ell-1},s_{\ell-1})$ and $(u_\ell',s_\ell')$, while $\rho_{k,2}''$ supports a computation between $(u_\ell',s_\ell')$ and $(u_\ell,s_\ell)$.

In either case, for the generic paths $\rho_{k,1}=\rho_{k,1}',\rho_{k,1}''$ and $\rho_{k,2}=\rho_{k,2}'',\rho_{k,2}'$, it follows that $\rho_{k,1}$ supports a circuit computation $\gamma_{k,1}$ between $(u_{j_{k-1}},s_{j_{k-1}})$ and $(u_\ell',s_\ell')$, while $\rho_{k,2}$ supports a circuit computation $\gamma_{k,2}$ between $(u_\ell',s_\ell')$ and $(u_{j_k},s_{j_k})$.  

As a result, letting $\rho'$ be the generic path obtained from $\rho$ by replacing the generic edge $e_\ell'$ with the generic subpath $\rho_{k,1}'',\rho_{k,2}''$, it follows that $\rho'$ supports a computation $\gamma'$ that realizes $(w,w')\in\IO_{\wacc_+^{d+1,\eps}}^{s,s}$.  Moreover, any generic edge of $\rho'$ of the form $(\sf,\sf,e)$ must either be $(\s,\s,e_4)$ or $(\f,\f,e_2)$, while the number of such generic edges comprising $\rho'$ is one less than the number comprising $\rho$.

Thus, the statement follows by iterating the above process.

\end{proof}

\begin{lemma}\label{lem-pos-check2}
The proper operation $\wacc_+^{d+1,\eps}$ is a weak acceptor of the set of positive words $L_A^+$ and satisfies $\WTMSP_{\wacc_{+}^{d+1, \varepsilon}} \preceq (n^{\frac{d+2}{d+1}+\varepsilon}, n)$.
\end{lemma}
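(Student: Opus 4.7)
The plan is to leverage the new circuit computations $\gamma_{w,p}$ from \Cref{lem-pos-check2-circuit-proper}, which append an entire positive word $p$ in a single circuit rather than a single letter as in \Cref{lem-pos-check1}. As before, the trivial action on $e_6$ collapses $\IO_{\wacc_+^{d+1,\eps}}^{s,s}$, $\IO_{\wacc_+^{d+1,\eps}}^{s,f}$, and $\IO_{\wacc_+^{d+1,\eps}}^{f,f}$, so it suffices to characterize $\IO^{s,s}$ and to bound the estimated weak-time-space complexity with respect to $\F^{d+1}$; \Cref{lem-wtmsp} will then upgrade this to the actual bound and \Cref{prop-acc-ch-wacc} will provide the acceptor $\accept_+^{d+1,\eps}$ used at the next stage.

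For the forward direction, suppose $(w,1)\in\IO_{\wacc_+^{d+1,\eps}}^{s,s}$. By \Cref{lem-pos-check2-minimal-to-proper} some realizing computation has a proper computation path, and decomposing at the visits to $(\wacc_+^{d+1,\eps})_s$ yields proper circuit subcomputations $\gamma_1,\dots,\gamma_t$ with endpoint values $w=w_0,w_1,\dots,w_t=1$. By \Cref{lem-pos-check2-circuit-proper}, each $\gamma_j$ realizes either $(z_j,z_jp_j)$ or $(z_jp_j,z_j)$ with $z_j\in L_A^{R+}$ and $p_j\in L_A^+$. Proceeding backward from $w_t=1\in L_A^+$: in the first case $w_{j-1}=z_j\in L_A^{R+}$ and $w_j=w_{j-1}p_j$, so \Cref{lem-lplus-decomp} upgrades $w_{j-1}\in L_A^{R+}$ to $w_{j-1}\in L_A^+$ given $w_j,p_j\in L_A^+$; in the second case $w_{j-1}=w_jp_j$ is a concatenation of two positive words and is therefore itself in $L_A^+$. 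By induction $w=w_0\in L_A^+$.

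For the reverse direction, fix $w\in L_A^+$ with $|w|=n$ and set $k=\lceil n^{1/(d+1)}\rceil$. Write $w\equiv p_1\cdots p_k$ with each $p_j\in L_A^+$ of length at most $\lceil n/k\rceil$, and let $w_j=p_1\cdots p_j\in L_A^+\subseteq L_A^{R+}$. For each $j\in\overline{k}$, \Cref{lem-pos-check2-circuit-proper} provides a proper circuit computation $\gamma_{w_{j-1},p_j}$ realizing $(w_{j-1},w_j)\in\IO^{s,s}$ with estimated time at most $2|w_{j-1}|^{1+\eps}+2|p_j|^{\frac{d+1}{d}+\eps}+3$ and estimated space at most $|w_{j-1}|+|p_j|\leq n$. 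Concatenating $\gamma_{w_0,p_1},\dots,\gamma_{w_{k-1},p_k}$ yields a computation realizing $(1,w)\in\IO^{s,s}$; inverting it and appending the generic edge $(\s,\f,e_6)$ gives one that realizes $(w,1)\in\IO^{s,f}$.

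The total estimated time is bounded by
$$\sum_{j=1}^{k}\Bigl[2|w_{j-1}|^{1+\eps}+2|p_j|^{\frac{d+1}{d}+\eps}+3\Bigr]\leq 2n^{1+\eps}k+2k\bigl(n/k+1\bigr)^{\frac{d+1}{d}+\eps}+3k,$$
and the estimated space throughout stays $O(n)$. With the choice $k\approx n^{1/(d+1)}$, the first dominant term is $O(n^{1+\eps+1/(d+1)})=O(n^{\frac{d+2}{d+1}+\eps})$, and the second is $O(k\cdot(n/k)^{\frac{d+1}{d}+\eps})=O(n^{\frac{d+2}{d+1}+\frac{d\eps}{d+1}})$, which is absorbed into $n^{\frac{d+2}{d+1}+\eps}$ since $\frac{d}{d+1}<1$. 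This gives $\WTMSP_{\wacc_+^{d+1,\eps}}^{\F^{d+1}}\preceq(n^{\frac{d+2}{d+1}+\eps},n)$; \Cref{lem-wtmsp}, applied with $\TMSP_\inst\preceq\F^{d+1}_\inst$ (verified above) and polynomial $\phi_\inst,\psi_\inst$, then yields $\WTMSP_{\wacc_+^{d+1,\eps}}\preceq(n^{\frac{d+2}{d+1}+\eps},n)$. The main obstacle is routine but delicate: choosing the split exponent $1/(d+1)$ to balance the two cost terms, and verifying that the $L_A^+$ membership propagates cleanly backward through the proper circuit decomposition via \Cref{lem-lplus-decomp} without needing any finer $L_A^{R+}$ analysis.
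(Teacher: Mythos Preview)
Your proposal is correct and takes essentially the same approach as the paper. The reverse direction is identical up to parameterization (you count blocks $k\approx n^{1/(d+1)}$, the paper counts block size $t\approx n^{d/(d+1)}$), and the balancing computation matches. For the forward direction you run a direct backward induction on the proper-circuit decomposition, whereas the paper instead takes $t$ minimal and derives a contradiction if $t\geq 2$; both arguments rest on the same two inputs (\Cref{lem-pos-check2-circuit-proper} and \Cref{lem-lplus-decomp}), and your version is in fact slightly more streamlined.
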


\begin{proof}

As in \Cref{lem-pos-check1}, the trivial action labelling the edge $e_6$ immediately implies that the $\IO$-relation does not depend on state, i.e $\IO_{\wacc_+^{d+1,\eps}}^{s,s}=\IO_{\wacc_+^{d+1,\eps}}^{s,f}=\IO_{\wacc_+^{d+1,\eps}}^{f,f}$.

Suppose $(1,w)\in\IO_{\wacc_+^{d+1}}^{s,s}$ for some $w\neq1$.  By \Cref{lem-pos-check2-minimal-to-proper}, there exists a computation $\gamma$ realizing $(1,w)\in\IO_{\wacc_+^{d+1,\eps}}^{s,s}$ whose computation path $\rho$ is proper.

As in the proof of \Cref{lem-pos-check2-minimal-to-proper}, let $\gamma=\bigl((u_0,s_0),e_1',(u_1,s_1),e_2',\dots,e_m',(u_m,s_m)\bigr)$ be a computation with $w_i=u_i(a)$, $v_i=u_i(b)$, and $e_i'=(\sf_i,\sf_i',e_i'')$.  Moreover, let $j_0,\dots,j_t\in\overline{0,m}$ be the ascending sequence of indices satisfying $s_i=(\wacc_+^{d+1,\eps})_s$ if and only if $i=j_k$ for some $k\in\overline{0,t}$.  In particular, $j_0=0$ and $j_t=m$.

Without loss of generality, suppose the value of $t$ is minimal with respect to all computations realizing $(1,w)\in\IO_{\wacc_+^{d+1,\eps}}^{s,s}$ whose computation paths are proper.  That is, $\gamma$ is comprised of the least number of proper circuit computations. Note that as $w\neq1$, it is immediate that $t\geq1$.  

For all $i\in\overline{t}$, let $\rho_i=e_{j_{i-1}+1}',\dots,e_{j_i}'$.  Then, $\rho_i$ supports a proper circuit computation $\gamma_i$ realizing $(w_{j_{i-1}},w_{j_i})\in\IO_{\wacc_+^{d+1,\eps}}^{s,s}$.  In particular, $\gamma_1$ is a proper circuit computation realizing $(1,w_{j_1})\in\IO_{\wacc_+^{d+1,\eps}}^{s,s}$.  

If $w_{j_1}=1$, then the proper generic path $\rho_2,\dots,\rho_t$ supports a computation $\gamma'$ that realizes $(1,w)\in\IO_{\wacc_+^{d+1,\eps}}^{s,s}$.  But then $\gamma'$ is comprised of the proper circuit computations $\gamma_2,\dots,\gamma_t$, contradicting the minimality of the number of proper circuit computations comprising $\gamma$.

So, $w_{j_1}\neq1$.  By \Cref{lem-pos-check2-circuit-proper}, there then exist $w'\in L_A^{R+}$ and $p\in L_A^+$ such that either $(w',w'p)=(1,w_{j_1})$ or $(w'p,w')=(1,w_{j_1})$.  If the latter is true, then $w'=p^{-1}$ for $p\in L_A^+$.  But this can only happen if $p=1$, in which case $w_{j_1}=w'=1$.  Hence, $(w',w'p)=(1,w_{j_1})$, so that $w_{j_1}=p\in L_A^+$.

Now, suppose $t\geq2$.  Then, $\gamma_2$ is a proper circuit computation realizing $(w_{j_1},w_{j_2})\in\IO_{\wacc_+^{d+1,\eps}}^{s,s}$, and hence \Cref{lem-pos-check2-circuit-proper} implies that there exists $w''\in L_A^{R+}$ and $p'\in L_A^+$ such that either $(w'',w''p')=(w_{j_1},w_{j_2})$ or $(w''p',w'')=(w_{j_1},w_{j_2})$.  In the first case, $w''=w_{j_1}\in L_A^+$, so that $w_{j_2}=w''p'\in L_A^+$; in the latter, $w''\in L_A^{R+}$ and $w''p'=w_{j_1}\in L_A^+$, so that $w_{j_2}=w''\in L_A^+$ by \Cref{lem-lplus-decomp}.

\Cref{lem-pos-check2-circuit-proper} then implies the existence of a proper circuit computation $\gamma_{1,w_{j_2}}$ that realizes $(1,w_{j_2})\in\IO_{\wacc_+^{d+1,\eps}}^{s,s}$.  Letting $\rho_2'$ be the computation path of $\gamma_{1,w_{j_2}}$, it follows that the proper generic path $\rho_2',\rho_3,\dots,\rho_t$ supports a computation $\gamma'$ realizing $(1,w)\in\IO_{\wacc_+^{d+1,\eps}}^{s,s}$.  But then $\gamma'$ is comprised of the proper circuit computations $\gamma_{1,w_{j_2}},\gamma_3,\dots,\gamma_t$, again contradicting the minimality of the number of proper circuit computations comprising $\gamma$.

Hence, $t=1$, so that $\gamma$ is itself a proper circuit computation.  But then as with the arguments above for $\gamma_1$, it follows that $w\in L_A^+$.

Conversely, for all $w\in L_A^+$, \Cref{lem-pos-check2-circuit-proper} yields a computation $\gamma_{1,w}$ realizing $(1,w)\in\IO_{\wacc_+^{d+1,\eps}}^{s,s}$.

By the symmetry of the $\IO$-relation, $$\IO_{\wacc_+^{d+1,\eps}}^{s,f}\cap\{(w,1)\mid w\in L_A\}=\IO_{\wacc_+^{d+1,\eps}}^{s,s}\cap\{(w,1)\mid w\in L_A\}=\{(w,1)\mid w\in L_A^+\}$$
so that $\wacc_+^{d+1,\eps}$ is indeed a weak acceptor for $L_A^+$.

Thus, by the inductive hypothesis and the discussion in the introduction to this section, it suffices to prove that $\WTMSP_{\wacc_+^{d+1,\eps}}^{\F^{d+1}}\preceq(n^{\frac{d+2}{d+1}+\eps},n)$.

Now, let $n\in\N-\{0\}$ and $w=x_1\dots x_n\in L_A^+$ with $x_1,\dots,x_n\in A$.  For all $\ell,r\in\overline{n}$ such that $\ell<r$, let $w_{\ell,r}=x_{\ell+1}\dots x_r$.  Further, for all $r\in\overline{n}$, let $w_r=x_1\dots x_r$, with $w_0=1$.

Then, for all $\ell,r\in\overline{0,n}$ such that $\ell<r$, \Cref{lem-pos-check2-circuit-proper} yields a proper circuit computation $\gamma_{\ell,r}\defeq\gamma_{w_\ell,w_{\ell,r}}$ realizing $(w_\ell,w_r)\in\IO_{\wacc_+^{d+1,\eps}}^{s,s}$ and satisfying:

\begin{align*}
\tm_{\wacc_+^{d+1,\eps}}^{\F^{d+1}}(\gamma_{\ell,r})&\leq 2|w_\ell|^{1+\eps}+2|w_{\ell,r}|^{\frac{d+1}{d}+\eps}+3\leq2\ell^{1+\eps}+2(r-\ell)^{\frac{d+1}{d}+\eps}+3 \\
\sp_{\wacc_+^{d+1,\eps}}^{\F^{d+1}}(\gamma_{\ell,r})&\leq|w_\ell|+|w_{\ell,r}|=\ell+(r-\ell)=r\leq n
\end{align*}

Now, let $\a=\frac{d}{d+1}$ and $t=\lceil n^\a \rceil$.  Let $k\in\N$ be the greatest positive integer such that $kt\leq n$.  Note that, as $n^{1-\a}t\geq n^{1-\a}n^\a=n$, it must hold that $k\leq n^{1-\a}$.  Then, for every $m\in\overline{k}$, it follows that $\gamma_{(m-1)t,mt}$ is a proper computation realizing $(w_{(m-1)t},w_{mt})\in\IO_{\wacc_+^{d+1,\eps}}^{s,s}$ such that:
\begin{align*}
\tm_{\wacc_+^{d+1,\eps}}^{\F^{d+1}}(\gamma_{(m-1)t,mt})&\leq 2((m-1)t)^{1+\eps}+2t^{\frac{d+1}{d}+\eps}+3 \\
\sp_{\wacc_+^{d+1,\eps}}^{\F^{d+1}}(\gamma_{(m-1)t,mt})&\leq n
\end{align*}

Finally, there exists a computation $\gamma_{kt,n}$ realizing $(w_{kt},w)\in\IO_{\wacc_+^{d+1,\eps}}^{s,s}$ and satisfying:
\begin{align*}
\tm_{\wacc_+^{d+1,\eps}}^{\F^{d+1}}(\gamma_{kt,n})&\leq 2(kt)^{1+\eps}+2t^{\frac{d+1}{d}+\eps}+3 \\
\sp_{\wacc_+^{d+1,\eps}}^{\F^{d+1}}(\gamma_{kt,n})&\leq n
\end{align*}

Hence, the computation $\gamma_w'$ obtained by concatenating $\gamma_{0,t},\dots,\gamma_{(k-1)t,kt},\gamma_{kt,n}$ is a computation realizing $(1,w)\in\IO_{\wacc_+^{d+1,\eps}}^{s,s}$ and satisfying:
\begin{align*}
\tm_{\wacc_+^{d+1,\eps}}^{\F^{d+1}}(\gamma_w')&\leq \left(\sum_{m=1}^k \tm_{\wacc_+^{d+1,\eps}}^{\F^{d+1}}(\gamma_{(m-1)t,mt})\right)+\tm_{\wacc_+^{d+1,\eps}}^{\F^{d+1}}(\gamma_{kt,n}) \\
&\leq\left(\sum_{m=1}^k 2((m-1)t)^{1+\eps}+2t^{\frac{d+1}{d}+\eps}+3\right)+2(kt)^{1+\eps}+2t^{\frac{d+1}{d}+\eps}+3 \\
&\leq 2k(kt)^{1+\eps}+2kt^{\frac{d+1}{d}+\eps}+3(k+1) \\
\sp_{\wacc_+^{d+1,\eps}}^{\F^{d+1}}(\gamma_w')&\leq\max_{m\in\overline{k}}\left(\sp_{\wacc_+^{d+1,\eps}}^{\F^{d+1}}(\gamma_{(m-1)t,mt}),\sp_{\wacc_+^{d+1,\eps}}^{\F^{d+1}}(\gamma_{kt,n})\right)\leq n
\end{align*}

As $k\leq n^{1-\a}$ and $kt\leq n$, $k(kt)^{1+\eps}\leq n^{1-\a}n^{1+\eps}$.

Further, as $t\leq n^\a+1\leq 2n^\a$ and $\frac{d+1}{d}+\eps\leq 3$, it follows that $$2kt^{\frac{d+1}{d}+\eps}\leq 2n^{1-\a}(2n^\a)^{\frac{d+1}{d}+\eps}\leq2n^{1-\a}2^3n^{\a\frac{d+1}{d}+\a\eps}\leq16n^{1-\a}n^{1+\a\eps}$$

Finally, note that $k\leq n^{1-\a}\leq n^{1-\a}n^{1+\eps}$, so that $3(k+1)\leq 3n^{1-\a}n^{1+\eps}+3$.

Hence, as $\a<1$, it follows that:
$$\tm_{\wacc_+^{d+1,\eps}}^{\F^{d+1}}(\gamma_w')\leq 21n^{1-\a}n^{1+\eps}+3=21n^{\frac{d+2}{d+1}+\eps}+3$$
As a result, letting $\rho_w$ be the computation path supporting $\gamma_w'$, the generic path $\rho_w^{-1},(\s,\f,e_6)$ supports a computation $\gamma_w$ realizing $(w,1)\in\IO_{\wacc_+^{d+1,\eps}}^{s,f}$ and satisfying:
\begin{align*}
\tm_{\wacc_+^{d+1,\eps}}^{\F^{d+1}}(\gamma_w)&\leq 21|w|^{\frac{d+2}{d+1}+\eps}+4 \\
\sp_{\wacc_+^{d+1,\eps}}^{\F^{d+1}}(\gamma_w)&\leq |w|
\end{align*}

Thus, as a trivial computation $\gamma_1$ realizes $(1,1)\in\IO_{\wacc_+^{d+1,\eps}}^{s,f}$, $\WTMSP_{\wacc_+^{d+1,\eps}}^{\F^{d+1}}\preceq(n^{\frac{d+2}{d+1}+\eps},n)$.

\end{proof}

\begin{corollary}\label{cor-acceptor-eps}
	For any alphabet $A$ and any $\varepsilon>0$ , there exists an acceptor $\accept^\varepsilon_{A+}$ of $L^+_A$ such that $\TMSP_{\accept_{A+}^\varepsilon} \preceq (n^{1+\varepsilon}, n)$.
\end{corollary}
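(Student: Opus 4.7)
The plan is to combine the iterative family of weak acceptors constructed in \Cref{lem-pos-check2} with the conversion from weak acceptors to acceptors given by \Cref{prop-acc-ch-wacc}, choosing the iteration depth large enough to absorb the sub-optimal exponent into the $\varepsilon$ slack.

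Given $\varepsilon > 0$, I would first choose a positive integer $d$ with $\frac{1}{d+1} < \varepsilon/2$, i.e.\ any $d$ with $d+1 > 2/\varepsilon$. With this choice, the exponent $\frac{d+2}{d+1} + \varepsilon/2 = 1 + \frac{1}{d+1} + \varepsilon/2$ is strictly less than $1 + \varepsilon$, so the pair $(n^{\frac{d+2}{d+1}+\varepsilon/2}, n)$ satisfies
\[
\bigl(n^{\frac{d+2}{d+1}+\varepsilon/2}, n\bigr) \preceq \bigl(n^{1+\varepsilon}, n\bigr),
\]
since $n \mapsto n^{1+\varepsilon}$ dominates $n \mapsto n^{\frac{d+2}{d+1}+\varepsilon/2}$.

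Next, I invoke \Cref{lem-pos-check2} with this value of $d$ and with $\varepsilon/2$ in place of $\varepsilon$. This produces a weak acceptor $\waccept_+^{d+1,\varepsilon/2}(a)$ of $L_A^+$ (with $\A(a) = A$) whose weak-time-space complexity satisfies
\[
\WTMSP_{\waccept_+^{d+1,\varepsilon/2}} \preceq \bigl(n^{\frac{d+2}{d+1}+\varepsilon/2}, n\bigr) \preceq (n^{1+\varepsilon}, n).
\]

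Finally, I apply \Cref{prop-acc-ch-wacc} with the growth functions $f(n) = n^{1+\varepsilon}$ and $g(n) = n$, both of which dominate $n$ as required by that proposition. The implication $(2) \Rightarrow (1)$ of \Cref{prop-acc-ch-wacc} turns the weak acceptor $\waccept_+^{d+1,\varepsilon/2}$ into an acceptor of $L_A^+$, which I denote by $\accept_{A+}^\varepsilon$, satisfying $\TMSP_{\accept_{A+}^\varepsilon} \preceq (n^{1+\varepsilon}, n)$. There is no real obstacle here: the combinatorial work has already been done in the iterative construction of $\waccept_+^{d+1,\varepsilon}$ and in the acceptor/weak-acceptor equivalence, so the corollary is essentially a bookkeeping step that exploits the freedom to choose $d$ as large as needed.
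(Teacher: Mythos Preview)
Your proposal is correct and follows essentially the same approach as the paper: choose the iteration depth so that $\frac{1}{d}$ (or $\frac{1}{d+1}$ in your indexing) is below $\varepsilon/2$, use $\varepsilon/2$ as the parameter in the iterative weak-acceptor construction of \Cref{lem-pos-check2}, and then convert to an acceptor via \Cref{prop-acc-ch-wacc}. The only cosmetic difference is that the paper phrases the bound as $n^{1+\frac{1}{d}+\frac{\varepsilon}{2}}$ for $\wacc_+^{d,\varepsilon/2}$ and takes $d\geq 2/\varepsilon$, whereas you index one step higher; the content is identical.
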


\begin{proof}

By induction, for all $d\in\N-\{0\}$, \Cref{lem-pos-check2} yields a weak acceptor $\wacc_+^{d,\eps/2}$ of $L_A^+$ with weak-time-space complexity asymptotically bounded by $(n^{1+\frac{1}{d}+\frac{\eps}{2}},n)$.  So, letting $d\geq2/\eps$, it follows that $\wacc_+^{d,\eps/2}$ satisfies $\WTMSP_{\wacc_+^{d,\eps/2}}\preceq(n^{1+\eps},n)$.

But then \Cref{prop-acc-ch-wacc} produces a corresponding acceptor $\accept_{A+}^\eps$ of $L_A^+$ satisfying the statement.
\end{proof}

\section{Simulating Turing machines} \label{sec-turing}

\medskip

\subsection{Turing Machines} \

\medskip

As discussed in the introduction, the goal of \Cref{main-theorem} (and indeed of this manuscript) is to produce an $S$-machine (through means of an $S$-graph) which in some sense `simulates' a given multi-tape, non-deterministic Turing machine.  To this end, this section introduces the framework which will be used to discuss Turing machines.  The definition of Turing machine presented here is similar to that used in \cite{SBR} and \cite{O19}, with \Cref{lem-TM-def} a useful providing a useful tool for simplifying the considerations.

\begin{definition}[Turing machine]\label{def-turing}

A multi-tape Turing machine with $k$ tapes and $k$ heads is a six-tuple $M=\gen{A,Y,Q,\Theta,\vec{s}_1,\vec{s}_0}$, where $A$ is the input alphabet, $Y=\cup_{i=1}^kY_i$ is the tape alphabet (with $A\subseteq Y_1$), $Q=\cup_{i=1}^k Q_i$ is the set of states of the heads of the machine, $\Theta$ is a set of transitions (or commands), $\vec{s}_1$ is the $k$-vector of start states, and $\vec{s}_0$ is the $k$-vector of accept states.

A \emph{configuration of tape $i$} in $M$ is a word of the form $U=uqv$ where $u$ and $v$ are words over $Y_i$ and $q\in Q_i$.  A \emph{configuration of $M$} is then a word of the form $C=\a U_1\omega \a U_2\omega \dots \a U_k\omega$, where $\a$ and $\omega$ are fixed symbols and each $U_i=u_iq_iv_i$ is a configuration of tape $i$.  The \emph{size} of $C$ is $|C|=\sum_i(|u_i|+|v_i|)$.

In essence, a configuration of $M$ determines the status of the machine at a given instance.  One can think of this as requiring the leftmost and rightmost squares of each tape to always be marked with $\a$ and $\omega$, respectively, while the head of the $i$-th tape is placed between the squares containing the last letter of $u_i$ (or $\a$ if $u_i$ is empty) and the first letter of $v_i$ (or $\omega$ if $v_i$ is empty).  So, $u_i$ and $v_i$ are the words written to the left and the right of the head of the $i$-th tape, respectively, while $q_i$ is the state of the $i$-th head at that moment.

A transition of $M$ is determined by the states of the heads and the $2k$ letters observed by the heads.  As a result of the transition, some of the $2k$ letters can be changed to other letters in the appropriate alphabet, new squares can be added between $\a$ and $\omega$, the head can be moved one square to the left or right, and the state can be changed.

In particular, a transition is of the form $$\theta=[a_1q_1b_1\to a_1'q_1'b_1',\dots,a_kq_kb_k\to a_k'q_k'b_k']$$
where:

\begin{itemize}

\item $q_i,q_i'\in Q_i$,

\item each of $a_i,a_i'$ is empty, a letter in $Y_i$, or $\a$, and

\item each of $b_i,b_i'$ is empty, a letter in $Y_i$, or $\omega$.

\end{itemize}  
Then, a configuration $C= \a U_1 \omega \dots \a U_k \omega$ is said to be \emph{$\theta$-applicable} if and only if $U_i=u_iq_iv_i$ for all $i$ and such that:

\begin{itemize}

\item $u_i$ ends with $a_i$ if $a_i\in Y_i$,

\item $u_i$ is empty if $a_i=\a$, 

\item $v_i$ starts with $b_i$ if $b_i\in Y_i$, and

\item $v_i$ is empty if $b_i=\omega$.

\end{itemize}
In this case, the \emph{result of the application of $\theta$ to $C$} is the configuration $$C\cdot\theta=\a U_1' \omega \dots \a U_k' \omega$$ such that $U_i'=u_i'q_i'v_i'$ for all $i$, where $u_i'$ and $v_i'$ are obtained from $u_i$ and $v_i$ by `replacing' $a_i$ and $b_i$ with $a_i'$ and $b_i'$ in the natural way.

Notice that for any command $\theta$, one can construct the \emph{inverse transition} $$\theta^{-1}=[a_1'q_1'b_1'\to a_1q_1b_1, \dots, a_k'q_k'b_k' \to a_kq_kb_k]$$
This terminology is justified by the following observation:  Any configuration $C$ is $\theta$-applicable if and only if $C\cdot\theta$ is $\theta^{-1}$-applicable, with $(C\cdot\theta)\cdot\theta^{-1}=C$.

The Turing machine $M$ is then called \emph{symmetric} if it is closed under inverses, i.e $\theta\in\Theta$ if and only if $\theta^{-1}\in\Theta$.

A \emph{computation} of $M$ is a sequence $\mathcal{C}:C_0\to C_1\to C_2\to \dots \to C_n$ such that for every $i\in\overline{n}$, there exists $\theta_i\in\Theta$ such that $C_i=C_{i-1}\cdot\theta_i$.  In this case, the \emph{time} and \emph{space} of $\mathcal{C}$ are defined to be $\tm(\mathcal{C})=n$ and $\sp(\mathcal{C})=\max|C_i|$, respectively.

An \emph{input configuration} of $M$ is a configuration of the form $\a U_1 \omega\dots \a U_k \omega$ with $U_i=u_iq_i^sv_i$ such that $(q_1^s,\dots,q_k^s)=\vec{s}_1$, $v_i$ is empty for all $i\in\overline{k}$, $u_j$ is empty for $j\neq1$, and $u_1$ is a word over $A$.  Note that the input configuration is uniquely determined by the word $u_1$; accordingly, such a configuration is denoted $I(u_1)$.

An \emph{accept configuration} of $M$ is a configuration of the form $\a U_1 \omega \dots \a U_k \omega$ where $U_i=u_iq_i^av_i$ such that $(q_1^a,\dots,q_k^a)=\vec{s}_0$.

The \emph{language} of $M$, denoted $L_M$, is then the set of words $w$ over $A$ for which there exists an \emph{accepting computation} of $w$, i.e a computation $\mathcal{C}:C_0\to C_1\to\dots C_n$ such that $C_0=I(w)$ and $C_n$ is an accept configuration.

For $w\in L_M$, the \emph{time} and \emph{space} of $w$, denoted $\tm_M(w)$ and $\sp_M(w)$, are the minimal time and space of an accepting computation of $w$.  The \emph{time} and \emph{space} functions of $M$, denoted $\TM_M,\SP_M:\N\to\N$, are then defined as their analogues for $S$-machines and acceptors, i.e
\begin{align*}
\TM_M(n)&=\max\{\tm_M(w)\mid w\in L_M, \ |w|\leq n\} \\
\SP_M(n)&=\max\{\sp_M(w)\mid w\in L_M, \ |w|\leq n\}
\end{align*}
As in previous settings, these functions are growth functions, and so are susceptible to being analyzed with respect to the preorders $\preceq$ and $\preceq_1$, and so up to the corresponding equivalences.

\end{definition}

Note that the definition of Turing machine presented is, in general, of a non-deterministic nature:  While a transition is `deterministic' in the sense that it applies to a configuration in a fixed way, a configuration may be $\theta$-applicable for several distinct $\theta\in\Theta$.  As such, the time and space functions merely measure an `optimal path' for accepting a given word.

The following statement provides a simpler setting in which Turing machines can be studied.  The proof in a more general setting can be found in \cite{SBR}, while the statement can be compared with Lemma 5.1 of \cite{O19}.

\begin{lemma}[Lemma 3.1 in \cite{SBR}]\label{lem-TM-def}

For every Turing machine $M$ with language $L_M$, there exists a Turing machine $M'$ with the following properties:

\setdefaultleftmargin{22pt}{}{}{}{}{}
\begin{enumerate}[label=({\alph*})]

\item $L_{M'}=L_M$.

\item $M'$ is symmetric.

\item $\TM_{M'}\sim_1 \SP_{M'}\sim_1 \TM_M$.

%\item $\TMSP_{M'}\sim_1(\TM_M,\TM_M)$.

\item For any accepting computation $\mathcal{C}:C_0\to C_1\to\dots\to C_n$, $C_n$ is the empty accept configuration, i.e $C_n=\a q_1^a \omega\dots \a q_k^a \omega$.

\item Every transition of $M'$ or its inverse is of one of the following two forms for some $i$:
\begin{subequations}\label{eqn}
\begin{align*}
&[q_1\omega \to q_1'\omega, \dots, q_{i-1}\omega \to q_{i-1}'\omega, \ aq_i\omega\to q_i'\omega, \ q_{i+1}\omega\to q_{i+1}'\omega, \dots] \tag{1} \\
&[q_1\omega \to q_1'\omega, \dots, q_{i-1}\omega\to q_{i-1}'\omega, \ \a q_i\omega \to \a q_i'\omega, \ q_{i+1}\omega\to q_{i+1}'\omega, \dots] \tag{2}
\end{align*}
where $a\in Y_i$ and $q_j,q_j'\in Q_j$.

\end{subequations}

\end{enumerate}

\end{lemma}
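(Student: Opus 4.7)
The plan is to reduce $M$ through a chain of standard simulations to the required normal form, each step preserving the accepted language and incurring at most constant overhead in time.

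First, I would convert $M$ into an equivalent machine $M_0$ whose heads operate only at the right ends of tapes, via the classical two-stack simulation: each tape of $M$ is replaced by two tapes of $M_0$ holding the prefix and the reversed suffix relative to the original head, with both heads pinned at the right ends. Each step of $M$ becomes $O(1)$ steps of $M_0$, so $\TM_{M_0}\sim\TM_M$. Next, I would decompose each multi-tape transition of $M_0$ into a sequence of single-tape write/erase/pure-state-change transitions of shapes (1) and (2), chaining through fresh intermediate state-tuples: writing a letter at the right end of tape $i$ is an inverse of a shape-(1) transition, and a pure state change on an empty tape is shape (2). This yields a machine $M_1$ satisfying condition (e), again with only constant time overhead.

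Second, I would close the transition set of $M_1$ under inversion to obtain $M_2$ satisfying (b); shapes (1) and (2) are each closed under inversion, so (e) is preserved. The principal subtlety is verifying $L_{M_2}=L_{M_1}$. The standard device is to tag each state with a \emph{forward/backward} marker arranged so that only forward transitions can move between start-tagged states and accept-tagged states; equivalently, one introduces an auxiliary clock tape that is incremented by one letter on every productive forward step and can only reach its accept threshold along forward transitions. Either device ensures that every accepting computation in $M_2$ projects to an accepting forward computation in $M_1$, giving (a).

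Third, to secure (d), I would append a deterministic clearing phase after the original accept configuration, using shape-(1) transitions to erase each tape right-to-left down to $\alpha q_j^a\omega$. This uses at most $\sp(\mathcal{C})\leq O(\TM_{M_2})$ additional steps. The clock tape introduced in the symmetrization step additionally forces $\SP_{M'}(n)\succeq_1\TM_{M'}(n)$, while $\SP_{M'}\preceq_1\TM_{M'}$ holds automatically because shapes (1) and (2) change the total tape content by at most one per step, giving $\sp(\mathcal{C})\leq|w|+\tm(\mathcal{C})$. Combined with $\TM_{M'}\sim\TM_M$ inherited from the earlier reductions, this yields (c): $\TM_{M'}\sim_1\SP_{M'}\sim_1\TM_M$.

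The main obstacle is the symmetrization step: one must show that enlarging $\Theta$ by its formal inverses does not create spurious accepting computations. All other reductions are routine two-stack/chain-of-states simulations with bounded overhead, and all of the complexity equivalences in (c) follow at once from the fact that in the shape-(1)/(2) normal form the total tape content acts as a clock changing by at most one per step, so space and time are linearly related along any computation.
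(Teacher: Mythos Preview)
The paper does not give its own proof of this lemma; it is quoted as Lemma~3.1 of \cite{SBR} and the reader is referred there (and to Lemma~5.1 of \cite{O19}) for the argument. Your outline follows the same broad pipeline as \cite{SBR}: pin heads to the right end via a two-stack simulation, decompose each multi-tape transition into a chain of single-tape moves of shapes (1) and (2), symmetrize, and append a cleanup phase. The first, second, and fourth of these are routine and your treatment is fine.

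You correctly identify the symmetrization step as the crux, but the devices you propose do not close the gap. Neither forward/backward state tags nor a single-letter clock tape prevents spurious acceptance after closing $\Theta$ under inverses. With a clock incremented on forward steps and decremented on backward steps, the symmetric machine can apply $\theta_1$ forward and then $\theta_2^{-1}$ backward for some $\theta_2\neq\theta_1$: the clock returns to its prior value while the remaining tapes reach a configuration not forward-reachable, and iterating this can produce an accepting path for $w\notin L_M$. The mechanism in \cite{SBR} is a \emph{history tape} over an alphabet in bijection with $\Theta$: each forward transition $\theta$ appends the letter $\theta$, so after symmetrization $\theta^{-1}$ is applicable only when the history tape ends in $\theta$, and it pops that letter. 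This makes the machine reverse-deterministic at the level of the history, and any accepting computation of the symmetric machine carries, at the moment it enters the cleanup phase, a positive word over $\Theta$ on the history tape which is itself the transition sequence of an accepting forward computation on $w$. The same history tape supplies the lower bound in (c): at the phase transition its length equals the number of simulated forward steps, forcing $\TM_M\preceq_1\SP_{M'}$; your clock-based argument for this bound fails for the same back-and-forth reason.
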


\medskip

\subsection{Turing rewriting systems} \

\medskip

A new framework for Turing machines is introduced in this section.  This machinery is based on that of $S$-graphs and, as such, provides a bridge between the two concepts.  Crucially, \Cref{lem-TM-def} will quickly imply that this framework will suffice for the considerations of this manuscript (see \Cref{lem-tm-system}).

\begin{definition}[Turing rewriting system]\label{def-turing-system}

A \emph{Turing system hardware} $\H$ is a tuple $(\I,\A)$, where $\I$ is a finite \emph{index set} and $\A$ is an $\I$-tuple of (not necessarily disjoint) finite sets $(A_i)_{i\in \I}$.  The set $A_i$ is called an \emph{alphabet} of $\H$.  Let $L_i$ be the set of all (finite) words over $A_i$, i.e so that $L_i$ is the free monoid $A_i^*$ over $A_i$.  Then the \emph{Turing language} of $\H$ is $L_\H=\bigtimes_{i\in\I} L_i$.  As in previous settings, with an abuse of terminology, the elements of $L_\H$ are called \emph{$L_\H$-words}.

An \emph{exact fragment} over $\H$ is an expression $Q_i$ of the form $[u_i\to v_i]$ for some $u_i,v_i\in L_i$.  In this case, $Q_i$ is said to be \emph{applicable} to a word $w_i\in L_i$ if and only if $w_i=u_i$, while $v_i$ is called the \emph{result of the application} of $Q_i$ to $u_i$.  Moreover, the \emph{size} of $Q_i$ is taken to be $|Q_i|=\max(|u_i|,|v_i|)$.

Conversely, a \emph{(right) matching fragment} over $\H$ is an expression $Q_i$ of the form $[*u_i\to*v_i]$, where $u_i,v_i\in L_i$ and $*$ is a designated symbol.  In this case, $Q_i$ is \emph{applicable} to $w_i\in L_i$ if and only if there exists $w_i'\in L_i$ such that $w_i=w_i'u_i$, in which case the \emph{result of the application} of $Q_i$ to $w_i$ is $Q(w_i)=w_i'v_i$.  The \emph{size} of $Q_i$ is again taken to be $|Q_i|=\max(|u_i|,|v_i|)$.

An exact or matching fragment defined with respect to $i\in\I$ is called a \emph{fragment of tape $i$}.  Note that for any fragment $Q_i$ of tape $i$ and any word $w_i\in L_i$ to which $Q_i$ is applicable, $\bigl||Q_i(w_i)|-|w_i|\bigr|\leq|Q_i|$.

A \emph{Turing action} over $\H$ is an $\I$-tuple $Q=(Q_i)_{i\in\I}$ such that $Q_i$ is a fragment of tape $i$ called the \emph{$Q$-fragment of tape $i$}.  In this case, $Q$ is said to be \emph{applicable} to the $L_\H$-word $w=(w_i)_{i\in\I}$ if and only if $Q_i$ is applicable to $w_i$ for all $i\in \I$, while the \emph{result of the application of $Q$ to $w$} is the $L_\H$-word $Q(w)=(Q_i(w_i))_{i\in\I}$.  Further, the \emph{size} of $Q$ is defined to be $|Q|=\max_{i\in\I}|Q_i|$.  Hence, if $Q$ is applicable to $w$, then $\bigl| |Q(w)|-|w| \bigr|\leq|Q| \cdot |\I|$.

A \emph{Turing rewriting system} $\T$ over $\H$ is then a finite directed graph $(S_\T,E_\T)$ (where edges are permitted to be parallel or loops) equipped with a labelling $\lab$ of the edges with Turing actions over $\H$.  The set of Turing actions labelling the edges is denoted $\Q_\T$.

A \emph{configuration} of $\T$ is a pair $(w, s) \in L_\H \times S_\T$.   An edge $e = (s_1, s_2)\in E_\T$ with $Q=\lab(e)$ is said to be \emph{applicable} to a configuration $(w, s)$ if $s_1 = s$ and if $Q$ is applicable to $w$.  In this case, the \emph{result of the application} of $e$ to $(w, s)$ is defined to be the configuration $e(w,s)=(Q(w), s_2)$.

A \emph{computation} of $\T$ is a sequence 
$$\mathcal{C} = \bigl((w_0,s_0),e_1,(w_1,s_1),e_2,\dots,e_n,(w_n,s_n)\bigr)$$ 
such that for all $i$, $(w_i,s_i)$ is a configuration of $\T$, $e_i\in E_\T$ is applicable to $(w_{i-1},s_{i-1})$, and $e_i(w_{i-1},s_{i-1})=(w_i,s_i)$.  In this case, $\mathcal{C}$ is said to be a computation \emph{between} the configurations $(w_0,s_0)$ and $(w_n,s_n)$.  As in previous settings, the \emph{time} and \emph{space} of $\mathcal{C}$ are $n$ and $\max_{i\in\overline{0,n}}|w_i|$, respectively.  Further, letting $\lab(e_i)=Q^i\in\Q_\T$ for all $i\in\overline{n}$, the \emph{history} of $\mathcal{C}$ is the sequence $(Q^1,\dots,Q^n)$.

As in the definition of $S$-machines and $S$-graphs, it is often assumed that $\T$ has two (distinct) distinguished vertices $s_s,s_f\in S_\T$ and that $\I$ has a distinguished index $i^*\in\I$.  To match terminology, $s_s$ and $s_f$ are called the \emph{starting state} and \emph{finishing state} of $\T$, respectively, while $i^*$ is called the \emph{input tape}.  In this setting, though, the input alphabet $A$ of $\T$ is taken to be some subset $A\subseteq A_{i^*}$.

For any $w\in A^*$, define the $L_\H$-word $w^*=(w^*_i)_{i\in\I}$ by setting $w^*_i=w$ if $i=i^*$ and $w^*_i=1$ otherwise (where $1$ represents the empty word in the free monoid).  Then, a computation $\mathcal{C}_w$ between $(w^*,s_s)$ and $(1^*,s_f)$ is said to \emph{recognize} $w$.  Accordingly, the \emph{language recognized by $\T$} is the subset $L_\T\subseteq A^*$ consisting of all words over $A$ recognized by $\T$.

Given $w\in L_\T$, the \emph{time} and \emph{space} of $w$ is the minimal time and space, respectively, of a computation recognizing $w$.  The time and space complexities of $\T$ are then defined in the analogous manner to how they were defined for $S$-machines.

\end{definition}

Note that, as defined above, the application of every given edge of a Turing rewriting system $\T$ is deterministic.  However, a general Turing rewriting system is non-deterministic as distinct edges may be applicable to one given configuration.

Next, an analogue of the `symmetry' of $S$-machines, $S$-graphs, and Turing machines is introduced for Turing rewriting systems.

\begin{definition}[Inverse actions and symmetry] Let $\H=(\I,\A)$ be a Turing system hardware.  For $i\in\I$, if $Q_i$ is a fragment of tape $i$, then the \emph{inverse} fragment $Q_i^{-1}$ of tape $i$ is defined by `reversing arrows', i.e:

\begin{itemize}

\item If $Q_i$ is the exact fragment $[u_i\to v_i]$, then $Q_i^{-1}$ is the exact fragment $[v_i \to u_i]$

\item If $Q_i$ is the matching fragment $[*u_i \to * v_i]$, then $Q_i^{-1}$ is the matching fragment $[* v_i \to * u_i]$

\end{itemize}

Then, given a Turing action $Q=(Q_i)_{i\in\I}$, the \emph{inverse Turing action} $Q^{-1}$ is defined to be $(Q_i^{-1})_{i\in\I}$.  Note that $|Q_i^{-1}|=|Q_i|$ for all $i$, and hence $|Q^{-1}|=|Q|$.  

It is clear from construction that $(Q^{-1})^{-1}=Q$.  Moreover, $Q$ is applicable to $w\in L_\H$ if and only if $Q^{-1}$ is applicable to $Q(w)$, with $Q^{-1}(Q(w))=w$.

Finally, a Turing rewriting system $\T=(S_\T,E_\T)$ over $\H$ is said to be \emph{symmetric} if for every $e=(s_1,s_2)\in E_\T$, there exists an \emph{inverse edge} $e^{-1}=(s_2,s_1)\in E_\T$ with $\lab(e^{-1})=Q^{-1}$.

As above, given a symmetric Turing rewriting system $\T$, an edge $e\in E_\T$ is applicable to a configuration $(w,s)$ if and only if $e^{-1}$ is applicable to $e(w,s)$, in which case $e^{-1}(e(w,s))=(w,s)$.

\end{definition}

\begin{lemma}\label{lem-tm-system}

For every Turing machine $M$, there exists a symmetric Turing rewriting system $\T_M$ such that $L_{\T_M}=L_M$ and $\TM_{\T_M}\sim_1\SP_{\T_M}\sim_1 \TM_M$.
% $\TMSP_{\T_M}\preceq_1(\TM_M,\TM_M)$.

\end{lemma}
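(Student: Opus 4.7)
The plan is to invoke \Cref{lem-TM-def} first, reducing the problem to constructing $\T_M$ for a Turing machine $M'$ that is already symmetric, has $L_{M'}=L_M$, satisfies $\TM_{M'}\sim_1 \SP_{M'}\sim_1 \TM_M$, has accepting computations ending at the empty accept configuration, and whose every transition (or inverse) has one of the restricted forms (1) or (2). The crucial structural consequence of this rigidity is that every transition operates only at the right end of each tape, so in every reachable configuration each head sits between the rightmost symbol of its tape and the marker $\omega$. Equivalently, the piece $v_i$ of every reachable configuration is empty, and such a configuration is uniquely determined by the tuple $(u_1,\ldots,u_k)$ of left pieces together with the state vector $(q_1,\ldots,q_k)$.

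Having fixed $M'$, I would define $\T_M$ as follows. The hardware is $\H=(\overline{k},\A)$ with $A_i=Y_i$, the vertex set is $S_{\T_M}=Q_1\times\cdots\times Q_k$, the starting and finishing states are $s_s=\vec{s}_1$ and $s_f=\vec{s}_0$, and the input tape is $i^*=1$ with input alphabet $A\subseteq A_1$. For each transition $\theta$ of $M'$ I add a single edge from its source state vector $\vec{q}$ to its target $\vec{q}'$, labelled by a Turing action $Q^\theta=(Q_i^\theta)_{i\in\overline{k}}$ built according to the clause on each tape. If $\theta$ has form (1) with the clause $aq_i\omega\to q_i'\omega$, set $Q_i^\theta=[*a\to *]$; if $\theta$ has form (2) with the clause $\a q_i\omega\to\a q_i'\omega$, set $Q_i^\theta=[1\to 1]$ (the exact fragment demanding tape $i$ empty). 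For every tape $j\neq i$, the clause $q_j\omega\to q_j'\omega$ places no constraint on the tape content but only a state change, which is already captured by the edge endpoints; I encode the tape action by the trivial matching fragment $Q_j^\theta=[*\to *]$.

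Next, I would verify the claimed properties via the evident bijection between the configurations of $M'$ with all $v_i$ empty and the configurations of $\T_M$: the configuration $\a u_1 q_1 \omega\cdots\a u_k q_k \omega$ of $M'$ corresponds to $((u_1,\ldots,u_k),(q_1,\ldots,q_k))$ in $\T_M$. A direct check against the applicability conditions shows that $\theta$ applies to a configuration of $M'$ if and only if $e_\theta$ applies to its image in $\T_M$, and that the results correspond. Because the fragment-level inverses satisfy $[*a\to *]^{-1}=[*\to *a]$, $[1\to 1]^{-1}=[1\to 1]$, and $[*\to *]^{-1}=[*\to *]$, the symmetry of $M'$ implies the symmetry of $\T_M$. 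Moreover, $I(w)\leftrightarrow(w^*,s_s)$ and the empty accept configuration $\leftrightarrow(1^*,s_f)$, so $L_{\T_M}=L_{M'}=L_M$. Since the bijection preserves the number of rewriting steps and sends $|C|=\sum|u_i|$ to $|w|=\sum|u_i|$, it preserves time and space exactly, giving $\TM_{\T_M}=\TM_{M'}\sim_1\TM_M$ and $\SP_{\T_M}=\SP_{M'}\sim_1\TM_M$.

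Because \Cref{lem-TM-def} provides nearly all the substantive work, the argument is largely bookkeeping; the only delicate point I anticipate is ensuring the matching fragment mechanism faithfully encodes the restricted transitions. In particular, the choice of $[1\to 1]$ rather than $[*\to *]$ for form-(2) clauses is essential, since a trivial matching fragment would spuriously apply to nonempty words and destroy the emptiness check, while the choice of the matching fragment $[*a\to *]$ for form-(1) clauses (rather than the exact fragment $[u_ia\to u_i]$ for each possible $u_i$) is what keeps the edge set of $\T_M$ finite and in bijection with the transitions of $M'$.
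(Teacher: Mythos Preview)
Your proposal is correct and follows essentially the same approach as the paper: reduce via \Cref{lem-TM-def}, take $S_{\T_M}=Q_1\times\cdots\times Q_k$, and translate each restricted transition into a single edge whose Turing action uses $[*a\to *1]$ for the form-(1) tape, $[1\to 1]$ (exact) for the form-(2) tape, and the trivial matching fragment $[*1\to *1]$ on all other tapes, then argue via the obvious configuration bijection. The only minor presentational point is that \Cref{lem-TM-def}(e) says each transition \emph{or its inverse} has form (1) or (2), so you should state explicitly (as the paper does) that for a $\theta$ whose inverse has form (1) you take $e_\theta=(e_{\theta^{-1}})^{-1}$; your symmetry paragraph makes clear you understand this, but the construction as written only covers $\theta$ itself of form (1) or (2).
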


\begin{proof}

By \Cref{lem-TM-def}, it may be assumed without loss of generality that $M$ satisfies properties (a)--(e) detailed in that setting.

Let $M=\gen{A,Y,Q,\Theta,\vec{s}_1,\vec{s}_0}$ with $Y=\bigcup_{i=1}^kY_i$ and $Q=\bigcup_{i=1}^kQ_i$.  Then, let $\H_M=(\I_M,\A_M)$ be the Turing system hardware given by $\I_M=\overline{k}$ and $\A_M(i)=Y_i$ for all $i\in\I_M$.

Now, let $S_M=\bigtimes_{i\in\I_M}Q_i$ and define the labelled edges $E_M=\{e_\theta\}_{\theta\in\Theta}$ in correspondence with the transitions of $M$ as follows:

\begin{itemize}

\item If $\theta$ is of form (1) in \Cref{lem-TM-def}, then let $e_\theta$ be the directed edge $((q_j),(q_j'))$ and labelled by the Turing action $Q_\theta=(Q_{j,\theta})$, where $Q_{i,\theta}=[*a\to*1]$ and $Q_{j,\theta}=[*1\to*1]$ for all $j\neq i$.

\item If $\theta^{-1}$ is of form (1) in \Cref{lem-TM-def}, then let $e_\theta$ be the directed edge corresponding to $e_{\theta^{-1}}^{-1}$.

\item If $\theta$ is of form (2) in \Cref{lem-TM-def}, then let $e_\theta$ be the directed edge $((q_j),(q_j'))$ and labelled by the Turing action $Q_\theta=(Q_{j,\theta})$, where $Q_{i,\theta}=[1\to1]$ and $Q_{j,\theta}=[*1\to*1]$ for all $j\neq i$.

\end{itemize}

Finally, let $\T_M=(S_M,E_M)$ be the Turing rewriting system over $\H_M$ with input tape $1$, input alphabet $A$, starting state $\vec{s}_1$, and finishing state $\vec{s}_0$.

Note that since $M$ is a symmetric Turing machine, it follows from construction that $\T_M$ is symmetric.  Define the map $\eta$ between the configurations of $M$ and the configurations of $\T_M$ by $$\eta(\a u_1 q_1 v_1 \omega \a u_2 q_2 v_2 \omega \dots \a u_k q_k v_k \omega)=((u_iv_i),(q_i))$$
By construction, it follows that for any configuration $C$ of $M$ and any $\theta\in\Theta$, $\theta$ is applicable to $C$ if and only if $e_\theta$ is applicable to $\eta(C)$, with $\eta(C\cdot\theta)=e_\theta(\eta(C))$.

Hence, $\eta$ establishes a correspondence between the computations of $M$ and those of $\T_M$, and thus the statement follows.

\end{proof}

The following statement provides a partial converse of \Cref{lem-tm-system}, which together with \Cref{lem-tm-system} implies that the framework of Turing rewriting systems is essentially equivalent to that of Turing machines.  However, this equivalence is superfluous to the purposes of this manuscript, and so its proof is omitted.

\begin{lemma}

For every symmetric Turing rewriting system $\T$, there exists a Turing machine $M_\T$ satisfying the following properties:

\begin{enumerate}[label=({\alph*})]

\item $L_{M_\T}=L_\T$.

\item $M_\T$ is symmetric.

\item $\TM_{M_\T}\sim\TM_\T$, $\SP_{M_\T}\sim\SP_\T$.

\item Any accepting computation of $M_\T$ ends with the accept configuration.

\item Every transition of $M_\T$ or its inverse is of one of the forms outlined in \Cref{lem-TM-def}(e).

\end{enumerate}

\end{lemma}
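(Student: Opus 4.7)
The plan is to simulate each edge of $\T$ by a bounded-length block of Turing machine transitions in the normal form of \Cref{lem-TM-def}(e). Fix an enumeration $\I=\overline{k}$ and, after relabeling, assume $i^*=1$; put $Y_i=\A(i)$. For the state alphabets include in $Q_i$ a copy $q_{s,i}$ of every $s\in S_\T$, together with fresh intermediate states — one family per edge $e\in E_\T$ tracking progress through its simulation, and one family for a final emptiness-check chain attached to $\vec q_{s_f}=(q_{s_f,i})_i$. Set $\vec s_1=\vec q_{s_s}$ and let $\vec s_0$ be the terminal state vector of the emptiness-check chain, so that $\vec s_0\neq \vec q_{s_f}$.

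Given an edge $e=(s_1,s_2)$ with label $(Q_i)_{i\in\overline{k}}$, I would process tapes $i=1,\dots,k$ in order, with coordinates $j<i$ carrying $q_{s_2,j}$ and $j>i$ carrying $q_{s_1,j}$ throughout the simulation of tape $i$. For a matching fragment $Q_i=[*u\to*v]$, insert $|u|$ form-(1) transitions popping the letters of $u$ one at a time from the right end of tape $i$ (each transition checks one specific letter), followed by $|v|$ form-(1)$^{-1}$ transitions pushing the letters of $v$. For an exact fragment $Q_i=[u\to v]$, additionally insert one form-(2) transition verifying that tape $i$ is empty between the pop and push phases. The final emptiness chain consists of $k$ form-(2) transitions asserting that each tape is empty and advancing to $\vec s_0$. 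Closing $\Theta$ under inverses yields a symmetric machine built entirely from transitions of form (1), form (2), and their inverses, establishing (b) and (e).

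Call an $M_\T$-configuration \emph{canonical} if its state vector is $\vec q_s$ for some $s\in S_\T$; such configurations are in bijection with $\T$-configurations via $(w,s)\leftrightarrow \a w_1 q_{s,1}\omega\cdots\a w_k q_{s,k}\omega$. Because the intermediate-state families for distinct edges (and for the emptiness chain) are pairwise disjoint, any $M_\T$-computation between two canonical configurations decomposes uniquely into complete blocks, each realizing either an edge of $\T$ or its reverse traversal; as the length and space overhead of each block are bounded by the constant $C=k\cdot\max_{Q\in\Q_\T}|Q|$, this yields $\TM_{M_\T}\sim\TM_\T$ and $\SP_{M_\T}\sim\SP_\T$, giving (c). An accepting $M_\T$-computation must terminate at $\vec s_0$, which by construction is reachable only from the empty canonical configuration at $\vec q_{s_f}$; this forces the final configuration to be the empty accept configuration, yielding (d), and $L_{M_\T}=L_\T$ follows from the bijection, giving (a).

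The principal obstacle is verifying that the block decomposition is truly unique: one must rule out computations between canonical configurations that splice halfway through simulating one edge into the (possibly inverse) simulation of a different edge. This is handled by indexing every intermediate state with both the generating edge $e$ and the step-index within its block, so that any transition out of an intermediate state can only advance within its own block or invert to its immediate predecessor; an induction on computation length then shows that the first canonical configuration reached after the start must mark the completion of exactly one full block, and the argument iterates to produce the desired bijection of computations.
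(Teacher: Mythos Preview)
The paper explicitly omits this proof, remarking that the statement ``is superfluous to the purposes of this manuscript.'' Your construction is a correct and natural way to establish it: the edge-by-edge simulation with fresh per-edge, per-step intermediate states, together with the terminal emptiness chain, produces exactly the normal form required by (e) while keeping the time and space overhead bounded by a constant depending only on $\T$, and the disjointness of intermediate-state families is precisely what rules out the ``splicing'' obstruction you identify.

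One small imprecision is worth flagging: in the block decomposition, a sub-computation between two consecutive canonical configurations does indeed stay inside a single edge's linear block (by disjointness of the intermediate-state families), but it need not realize a full traversal. It may advance partway into the block and then return to the \emph{same} canonical endpoint, in which case each forward step is cancelled by its inverse and the net effect on the tapes is the identity. Your sentence ``must mark the completion of exactly one full block'' therefore overstates slightly; once you allow these null blocks and simply delete them from the induced $\T$-computation, your arguments for (a), (c), and (d) go through as written.
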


Now, as the goal of \Cref{main-theorem} is to construct an $S$-machine that emulates a particular Turing machine in a certain sense, it follows from \Cref{lem-tm-system} and \Cref{prop-machine-acc} that it suffices to construct an acceptor $S$-graph that emulates a particular Turing rewriting system in that sense.  However, it is evident from \Cref{def-turing-system} that there are several fundamental differences between $S$-graphs and Turing rewriting systems.

Perhaps the most obvious distinction is that the Turing language of a hardware is constructed from words over the hardware's alphabets, while the language of an $S$-machine is constructed from (freely reduced) group words formed from the hardware's alphabets.  In essence, this amounts to the difference between working with elements of semigroups instead of groups.  This should elucidate the importance of the acceptors $\accept_{A+}^\eps$ defined in \Cref{sec-positivity-check}.

Perhaps even more crucially, though, is that while an action of an $S$-graph can evidently simulate an exact fragment, there is no direct analogue of a matching fragment.  For example, while a guard fragment of an $S$-graph is useful for checking that a particular word is written on a given tape, it is not clear how it can be checked that the word written on that tape simply starts with a particular letter, a feat easily prescribed by a matching fragment.  However, as will be outlined in the coming sections, this issue will again be overcome with the use of the positive acceptors $\accept_{A+}^\eps$.

\section{Turing Tapes}\label{sec-turing-tapes}

Throughout this section, a Turing rewriting system $\T$ is fixed.  Further, letting $\H_\T=(\I_\T,\A_\T)$ be the Turing system hardware over which $\T$ is constructed, an index $i\in\I_\T$ is fixed.  For ease of notation, $\A_\T(i)$ is simply denoted $A_i$.

As a preliminary step in the main construction of this section, a few simple proper operations are first defined.

\medskip

\subsection{Some Auxiliary Operations} \

\medskip

The proper operation $\swap^A(a,b)$ is in \Cref{fig-SWAP} below for a given alphabet $A$.  It can be deduced from the graph that the operation's hardware $(\I_\swap,\A_\swap)$ is given by $\I_\swap=\I_\swap^\ext=\{a,b\}$ with $\A_\swap(a)=\A_\swap(b)=A$.
\begin{figure}[hbt]
	\centering
	\begin{tikzpicture} 
[point/.style={inner sep = 4pt, circle,draw,fill=white},
mpoint/.style={inner sep = 1.7pt, circle,draw,fill=black},
FIT/.style args = {#1}{rounded rectangle, draw,  fit=#1, rotate fit=45, yscale=0.5},
FITR/.style args = {#1}{rounded rectangle, draw,  fit=#1, rotate fit=-45, yscale=0.5},
FIT1/.style args = {#1}{rounded rectangle, draw,  fit=#1, rotate fit=45, scale=2},
vecArrow/.style={
		thick, decoration={markings,mark=at position
		   1 with {\arrow[thick]{open triangle 60}}},
		   double distance=1.4pt, shorten >= 5.5pt,
		   preaction = {decorate},
		   postaction = {draw,line width=0.4pt, white,shorten >= 4.5pt}
	},
myptr/.style={decoration={markings,mark=at position 1 with %
    {\arrow[scale=3,>=stealth]{>}}},postaction={decorate}}
]

\begin{scope} [xscale=0.75, yscale=0.3]      
	\node [anchor=west] at (-1.5, 4.5) {Operation $\swap^A(\mathbf{a}, \mathbf{b})$};
\end{scope}

\begin{scope} [yshift = 0.2cm, xscale=0.85, yscale=0.4]      
	\node (start) at (0,0) [point] {$s$};
	\node (n2) at (3, 0) [point] {};
	\node (finish) at (6, 0) [point] {$f$};
	
	\draw [->] (start) to node[midway, above]{
		$\begin{gathered}
		[\mathbf{b} = 1]
		\end{gathered}$
	}  (n2);

	\draw [->] (n2) edge[loop, out=-45, in = -135, looseness=30] node[pos=0.5, below]{
		$\begin{gathered}		
	    	\splt^A(\mathbf{a},\mathbf{b})
		\end{gathered}$
	}  (n2);

	\draw [->] (n2) to node[midway, above]{
		$\begin{gathered}
		 [\mathbf{a} = 1]
		\end{gathered}$
	}  (finish);
\end{scope}	

%\begin{scope}
%\begin{scope} [yshift=3cm, xscale=0.75, yscale=0.3]      
%	\node [anchor=west] at (-1.5, 4.5) {Operation $\splt3^A (\mathbf{a}, \mathbf{b}, \mathbf{c})$};
%\end{scope}
%
%\begin{scope} [yshift = 3.2cm, xscale=0.85, yscale=0.4]      
%	\node (start) at (0,0) [point] {$s$};
%	\node (n2) at (3, 0) [point] {};
%	\node (finish) at (6, 0) [point] {$f$};
%	
%	\draw [->] (start) to node[midway, above]{
%		$\begin{gathered}
%		 \splt^A (\mathbf{a}, \mathbf{b})
%		\end{gathered}$
%	}  (n2);
%
%	\draw [->] (n2) to node[midway, above]{
%		$\begin{gathered}
%		 \splt^A (\mathbf{b}, \mathbf{c})
%		\end{gathered}$
%	}  (finish);
%\end{scope}	
%\end{scope}

\end{tikzpicture}	
	\caption{The operation $\swap^A(a,b)$}
	\label{fig-SWAP}
\end{figure}

The following statement follows quickly from \Cref{lem-io-split} and \Cref{prop-complexity-estimate}.  As the operation is a simple embellishment of the proper operation $\splt^A$ discussed in \Cref{lem-io-split}, the statement's proof is omitted.

\begin{lemma}\label{lem-io-swap}

The $\IO$-relation for the operation $\swap^A(a,b)$ is given by:
\begin{align*}
\IO_\swap^{s,s}&=\IO_\swap^{f,f}=\Id \\
\IO_\swap^{s,f}&=\{(w_1,w_2)\mid w_1(a)=w_2(b), \ w_1(b)=w_2(a)=1\}
%\IO_{\splt3}^{s,s}&=\IO_{\splt3}^{s,f}=\IO_{\splt3}^{f,f}=\{(w_1,w_2)\mid w_1(a)w_1(b)w_1(c)=w_2(a)w_2(b)w_2(c)\}
\end{align*}
Moreover, $\TMSP_\swap\preceq(n,n)$.

\end{lemma}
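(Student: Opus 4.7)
The proof plan closely mirrors that of \Cref{lem-ndr-checkstar}: I will invoke \Cref{prop-io-classes} to pin down $\IO_\swap$ from above via explicit closed subsets, then exhibit concrete computations realizing the claimed pairs, and finally extract the complexity bound via \Cref{prop-complexity-estimate}. Throughout, label the three edges of $\swap$ as $e_1$ (from $s$ to the middle state $q$, guard $[\mathbf{b}=1]$), $e_2$ (the loop at $q$ labelled $\splt^A(\mathbf{a},\mathbf{b})$), and $e_3$ (from $q$ to $f$, guard $[\mathbf{a}=1]$). Note that $\swap$ has degree $1$ with the single instance $\splt^A$, and all of $a,b$ are external, so $\ND_\swap^1=F(A)\times F(A)$.

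For each $w\in F(A)$ set $C_w^s=\{(w,1)\}$, $C_w^q=S_w=\{(w_1,w_2)\mid w_1w_2=w\}$, and $C_w^f=\{(1,w)\}$. Closure of $C_w$ under $\IO_{e_1}$ and $\IO_{e_3}$ follows from the guard fragments (any pair in $\IO_{e_i}$ linking $s$ with $q$, or $q$ with $f$, is the identity on the tape content and forces the pinned coordinate to $1$, hence the image must lie in $S_w$ with the appropriate coordinate being $w$). Closure under $\IO_{e_2}$ is exactly \Cref{lem-io-split}, since $\IO_\splt^{\sf_1,\sf_2}\subseteq\bigcup_{v\in F(A)}S_v^2$ and $C_w^q=S_w$ is itself an $S_v$. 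To satisfy the covering hypothesis of \Cref{prop-io-classes}, supplement the family with trivial singletons $\{((v_1,v_2),s)\}$ for $(v_1,v_2)$ with $v_2\neq 1$ and $\{((v_1,v_2),f)\}$ for $v_1\neq 1$; these are closed because the only edges adjacent to $s$ or $f$ are $e_1^{\pm1}$ and $e_3^{\pm1}$, whose guards rule out those configurations. Together the family covers $\ND_\swap^1\times\SF_\swap$, so \Cref{prop-io-classes} yields $\IO_\swap\subseteq\bigcup(C_{w,\IO})^2\cup\Id$, which already forces the three displayed identities for $\IO_\swap^{s,s}$, $\IO_\swap^{f,f}$, and the inclusion $\subseteq$ in $\IO_\swap^{s,f}$.

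For the reverse inclusion, fix $w\in F(A)$ and consider the generic path $\rho_w=(\s,\f,e_1),(\s,\f,e_2),(\s,\f,e_3)$. At $e_1$, the guard $[\mathbf{b}=1]$ accepts $(w,1)$, transporting the configuration to $((w,1),q)$. By \Cref{lem-io-split}, since $(w,1),(1,w)\in S_w$, the pair $((w,1),(1,w))$ lies in $\IO_\splt^{s,f}$, so the middle generic edge supports a computation taking $((w,1),q)$ to $((1,w),q)$. Finally, the guard $[\mathbf{a}=1]$ on $e_3$ accepts $(1,w)$, producing $((1,w),f)$. Thus $\rho_w$ supports a computation $\eps_w$ realizing $((w,1),(1,w))\in\IO_\swap^{s,f}$, proving equality.

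For the complexity bound, let $\F$ be the estimating set for $\swap$ given by $\F_\splt=(n,n)$, which is valid by \Cref{lem-io-split}. Taking $\phi_\splt(n)=\psi_\splt(n)=n$, the hypotheses of \Cref{prop-complexity-estimate}(3) are satisfied, so it suffices to show $\TMSP_\swap^\F\preceq(n,n)$. For any essential quadruple in $\IO_\swap$, \Cref{lem-tmsp-basics} reduces the task to computations realizing pairs of the form $((w,1),(1,w))$; the above $\eps_w$ has estimated time $1+|w|+1$ and estimated space $\max(|w|,|w|)$, which are linear in $n=\max(|w|,|w|)$. The only step that needs any real care is the bookkeeping in the second paragraph to ensure the family of subsets genuinely covers $\ND_\swap^1\times\SF_\swap$ and is closed under every $\IO_e$; everything else is an immediate application of previously established lemmas.
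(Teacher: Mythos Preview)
Your proposal is correct and follows essentially the approach the paper intends: the paper actually omits the proof of this lemma, noting only that it ``follows quickly from \Cref{lem-io-split} and \Cref{prop-complexity-estimate}'' since $\swap^A$ is a simple embellishment of $\splt^A$. Your argument via \Cref{prop-io-classes} with the sets $C_w^q=S_w$ and the trivial singletons at $s$ and $f$, followed by the explicit three-edge realizing computation and the estimating set $\F_\splt=(n,n)$, is exactly the routine fill-in the paper has in mind.
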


Next, for a given alphabet $A$, the proper operation $\rea_A^\eps(a,b)$ is constructed below.  Its utility is in rearranging the contents of two tapes, though while keeping the order the letters appear consistent.  Its hardware can be seen to be $\I_{\rea}=\I_{\rea}^\ext=\{a,b\}$ with $\A_{\rea}(a)=\A_{\rea}(b)=A$.

\begin{figure}[hbt]
	\centering
	\begin{tikzpicture} 
[point/.style={inner sep = 4pt, circle,draw,fill=white},
mpoint/.style={inner sep = 1.7pt, circle,draw,fill=black},
FIT/.style args = {#1}{rounded rectangle, draw,  fit=#1, rotate fit=45, yscale=0.5},
FITR/.style args = {#1}{rounded rectangle, draw,  fit=#1, rotate fit=-45, yscale=0.5},
FIT1/.style args = {#1}{rounded rectangle, draw,  fit=#1, rotate fit=45, scale=2},
vecArrow/.style={
		thick, decoration={markings,mark=at position
		   1 with {\arrow[thick]{open triangle 60}}},
		   double distance=1.4pt, shorten >= 5.5pt,
		   preaction = {decorate},
		   postaction = {draw,line width=0.4pt, white,shorten >= 4.5pt}
	},
myptr/.style={decoration={markings,mark=at position 1 with %
    {\arrow[scale=3,>=stealth]{>}}},postaction={decorate}}
]

\begin{scope}[yshift = -1.75cm]
\begin{scope} [xscale=0.75, yscale=0.3]      
	\node [anchor=west] at (-1.5, 3) {Operation $\rea_A^\varepsilon (\mathbf{a}, \mathbf{b})$};
\end{scope}

\begin{scope} [yshift =-0.2cm, xscale=0.85, yscale=0.4]      
	\node (start) at (0,0) [point] {$s$};
	\node (n1) at (4, 0) [point] {};
	\node (n2) at (8, 0) [point] {};
	\node (n3) at (8, -6) [point] {};
	\node (n4) at (4, -6) [point] {};
	\node (finish) at (0, -6) [point] {$f$};
	
	\draw [->] (start) to node[midway, above]{
		$\begin{gathered}
		 \check^\varepsilon_{A+} (\mathbf{a})
		\end{gathered}$
	} node[midway, below] {$e_{a,s}$} (n1);

	\draw [->] (n1) to node[midway, above]{
		$\begin{gathered}
		 \check^\varepsilon_{A+} (\mathbf{b})
		\end{gathered}$
	} node[midway, below] {$e_{b,s}$} (n2);

%	\draw [->] (n2) to node[midway, above]{
%		$\begin{gathered}
%		 \check^\varepsilon_{A+} (\mathbf{c})
%		\end{gathered}$
%	} node[midway, below] {$e_{c,s}$} (n3);

	\draw [->] (n2) to node[midway, right]{
		$\begin{gathered}
		 \splt^A (\mathbf{a}, \mathbf{b})
		\end{gathered}$
	}  node[midway, left] {$e$} (n3);

%	\draw [<-] (n4) to node[midway, below]{
%		$\begin{gathered}
%		 \check^\varepsilon_{A+} (\mathbf{c})
%		\end{gathered}$
%	} node[midway, above] {$e_{c,f}$} (n5);

	\draw [<-] (n3) to node[midway, below]{
		$\begin{gathered}
		 \check^\varepsilon_{A+} (\mathbf{b})
		\end{gathered}$
	} node[midway, above] {$e_{b,f}$} (n4);

	\draw [<-] (n4) to node[midway, below]{
		$\begin{gathered}
		 \check^\varepsilon_{A+} (\mathbf{a})
		\end{gathered}$
	} node[midway, above] {$e_{a,f}$} (finish);
\end{scope}	
\end{scope}

\end{tikzpicture}	
	\caption{The proper operation $\rea_A^\varepsilon(a, b, c)$.}
	\label{fig-REA2}       
\end{figure} 

\begin{lemma}\label{lem-io-rea}
	The $\IO$-relation of $\rea_A^\eps$ is given by:
	\begin{align*}
	\IO_{\rea}^{s,f}&=\{(w_1,w_2)\mid w_1(a)w_1(b)=w_2(a)w_2(b), \ w_j(a),w_j(b)\in L_A^+ \text{ for } j=1,2\} \\
	\IO_{\rea}^{s,s}&=\IO_{\rea}^{f,f}=\IO_{\rea}^{s,f}\cup\Id
	\end{align*}
Moreover, $\TMSP_{\rea}\preceq(n^{1+\eps},n)$.
\end{lemma}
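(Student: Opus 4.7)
\emph{Plan of the proof.} I will follow the template established in Lemmas \ref{lem-ndr-div2}, \ref{lem-ndr-checkstar}, and \ref{lem-pos-check2}: first derive the upper bound on $\IO_\rea$ using \Cref{prop-io-classes}, then exhibit an explicit family of realizing computations, and finally promote the estimated complexity to the actual complexity via \Cref{prop-complexity-estimate}(3).

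For the upper bound on the $\IO$-relation, I will parametrize a family of subsets of $\GL_\rea \times S_\rea$ by the ``concatenated word'' $u \in L_A^+$ that records $w(a)w(b)$. Labelling the intermediate states in Figure~\ref{fig-REA2} by $n_1, n_2, n_3, n_4$ along a forward computation from $s$ to $f$ (so that one traverses $e_{a,s}$, then $e_{b,s}$, then $e$, then $e_{b,f}^{-1}$, then $e_{a,f}^{-1}$), I set
\begin{align*}
C_u^s = C_u^f &= \{(w_a, w_b) : w_a, w_b \in L_A^+,\ w_a w_b = u\},\\
C_u^{n_1} = C_u^{n_4} &= \{(w_a, w_b) : w_b \in L_A^+,\ w_a w_b = u \text{ in } F(A)\},\\
C_u^{n_2} = C_u^{n_3} &= \{(w_a, w_b) \in F(A)^2 : w_a w_b = u \text{ in } F(A)\}.
\end{align*}
Closure of each $C_u$ under $\IO_e$ for every $e \in E_\rea$ is routine: \Cref{cor-acceptor-eps} together with \Cref{prop-acc-ch-wacc} forces the tape scrutinized by $\check_{A+}^\eps$ to lie in $L_A^+$ while leaving it unchanged, and \Cref{lem-io-split} shows that $\splt$ preserves $w(a)w(b)$ in $F(A)$. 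To cover the remaining configurations of $\ND_\rea^1 \times \SF_\rea$ with $(w_a, w_b) \notin (L_A^+)^2$, I will add a second family of trivial classes $D^s_{(w_a, w_b)}$ and $D^f_{(w_a, w_b)}$, each consisting of the offending non-positive configuration together with whichever single adjacent state (if any) can be reached through a single applicable checker before the computation inevitably stalls. These classes never simultaneously contain an $s$-configuration and an $f$-configuration, so they contribute only the identity to $\IO^{s,s}$ and $\IO^{f,f}$ and nothing to $\IO^{s,f}$. \Cref{prop-io-classes} will then yield the stated upper bound on each $\IO_\rea^{s_1, s_2}$.

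For the reverse inclusion, given any pair $(w_1, w_2)$ with both entries in $L_A^+$ and $w_1(a)w_1(b) = w_2(a)w_2(b)$, I will build a canonical realizing computation $\eps_{w_1, w_2} \in \IO_\rea^{s, f}$ supported by the generic path
\[
(\s, \f, e_{a,s}),\ (\s, \f, e_{b,s}),\ (\s, \f, e),\ (\f, \s, e_{b,f}),\ (\f, \s, e_{a,f});
\]
the two opening checkers verify positivity of the starting configuration, the middle $\splt$ step rearranges $(w_1(a), w_1(b))$ into $(w_2(a), w_2(b))$ via \Cref{lem-io-split}, and the two closing checkers verify positivity of the finishing configuration. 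The non-identity cases of $\IO^{s,s}$ and $\IO^{f,f}$ are then produced by concatenating $\eps_{w_1, w_2}$ with the inverse of $\eps_{w_2, w_2}$ (or its $f$-side analogue), exploiting that $\IO_\rea$ is an equivalence relation by \Cref{lem-io-star}.

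For the complexity bound, let $\F$ be the estimating set for $\rea_A^\eps$ defined by $\F_{\check_{A+}^\eps} = (n^{1+\eps}, n)$ and $\F_\splt = (n, n)$; \Cref{cor-acceptor-eps}, \Cref{prop-acc-ch-wacc}, and \Cref{lem-io-split} confirm that $\TMSP_\inst \preceq \F_\inst$ for every $\inst \in \INST_\rea$. The estimated time of $\eps_{w_1, w_2}$ is the sum of four contributions of order $n^{1+\eps}$ (from the checkers) and one of order $n$ (from $\splt$), where $n = \max(|w_1|, |w_2|) = |w_1(a)w_1(b)|$; the estimated space is bounded by a linear function of $n$. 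Taking $\phi_\inst(c) = c^{1+\eps}$ and $\psi_\inst(c) = c$ verifies the growth hypotheses of \Cref{prop-complexity-estimate}(3), and \Cref{lem-tmsp-basics} restricts attention to essential quadruples, so the bound $\TMSP_\rea^\F \preceq (n^{1+\eps}, n)$ will promote to $\TMSP_\rea \preceq (n^{1+\eps}, n)$. The only subtlety I anticipate is the combinatorial bookkeeping of the trivial $D$-classes to ensure they are genuinely closed under every $\IO_e$; this rests on the fact that a checker's $\IO$-relation admits no non-identity transitions from configurations outside its accepted language, so the $D$-classes cannot ``leak'' into the positive classes $C_u$.
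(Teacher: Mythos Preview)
Your proof is correct. The approach you take—building explicit invariant classes $C_u$ indexed by the concatenated word and invoking \Cref{prop-io-classes}—is the standard template established in Lemmas~\ref{lem-ndr-div2} and~\ref{lem-ndr-checkstar}, and it goes through here without difficulty. The paper, however, argues this particular lemma differently: rather than constructing invariant sets, it takes a minimal-length realizing computation $\gamma$ for a nontrivial pair, observes that immutability of the checker tapes forces the $\splt$ edge to appear (giving $w_1(a)w_1(b)=w_2(a)w_2(b)$ via \Cref{lem-io-split}), and then uses minimality to pin down the first two and last two generic edges of the path, which in turn forces $w_j(a),w_j(b)\in L_A^+$ through the checkers.

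The tradeoff is that the paper's argument is shorter and avoids the auxiliary $D$-classes entirely (the minimal-path hypothesis does the work of excluding non-positive configurations), while your approach is more uniform with the surrounding lemmas and makes the invariant $w(a)w(b)=u$ completely explicit at every state. Both routes lead to the same explicit realizing computation $\gamma_{w_1,w_2}^{s_1,s_2}$ and the same estimated-complexity calculation, so the $\TMSP$ bound is obtained identically in either case.
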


\begin{proof} 

Let $(w_1,s_1,w_2,s_2)\in\IO_{\rea}$ such that $w_1\neq w_2$, so that $s_1,s_2\in\SF_{\rea}$.  Further, let $$\gamma=\bigl((u_0,t_0),e_1',(u_1,t_1),e_2',\dots,e_m',(u_m,t_m)\bigr)$$ be a computation realizing $(w_1,w_2)\in\IO_{\rea}^{s_1,s_2}$ with computation path of minimal length.  

For all $i\in\overline{m}$, set $e_i'=(\sf_i,\sf_i',e_i'')$.  Then, as the $\IO$-relation defining checkers implies that its external tape is immutable, there exists $j\in\overline{0,m}$ such that $e_j''=e$.  Moreover, by \Cref{lem-io-swap} it follows that $w_1(a)w_1(b)=w_2(a)w_2(b)$.

Further, the immutability of the external tape of a checker and the minimality hypothesis on the computation path combine to imply that $\sf_i\neq\sf_i'$ if $e_i''\neq e$.  So, since $e$ separates the graph of $\rea_A^\eps$, it follows that:
\begin{align*}
e_1',e_2'&=(\s,\f,e_{a,s_1}),(\s,\f,e_{b,s_1}) \\
e_{m-1}',e_m'&=(\f,\s,e_{b,s_2}),(\f,\s,e_{a,s_2})
\end{align*}
But then the $\IO$-relation of checkers implies that $w_j(a),w_j(b)\in L_A^+$.  As a result, letting $$S=\{(w_1,w_2)\mid w_1(a)w_1(b)=w_2(a)w_2(b), \ w_j(a)i,w_j(b)\in L_A^+ \text{ for } j=1,2\}$$ then $\IO_{\rea}^{s,f}\subseteq S$ and $\IO_{\rea}^{s,s},\IO_{\rea}^{f,f}\subseteq \Id\cup S$.

Now, let $(w_1,w_2)\in S$ and $s_1,s_2\in\SF_{\rea}$.  Define $\sf_1,\sf_2\in\{\s,\f\}$ by setting $\sf_j=\s$ if and only if $s_j=\rea_s$.  Then, the generic path:
\begin{align*}
\rho=(\s,\f, &e_{a,s_1}),(\s,\f,e_{b,s_1}),(\sf_1,\sf_2,e),(\f,\s,e_{b,s_2}),(\f,\s,e_{a,s_2})
\end{align*}
supports a computation $\gamma_{w_1,w_2}^{s_1,s_2}$ realizing $(w_1,w_2)\in\IO_{\rea}^{s_1,s_2}$.

Let $\F$ be the estimating set given by $\F_{\check_{A+}^\eps}=(n^{1+\eps},n)$ and $\F_{\splt3}=(n,n)$.  Then, the estimated complexity of the computation $\gamma_{w_1,w_2}^{s_1,s_2}$ above is bounded by:
\begin{align*}
\tm_{\rea}^\F(\gamma_{w_1,w_2}^{s_1,s_2})&=3|w_1|^{1+\eps}+\max(|w_1|,|w_2|)+3|w_2|^{1+\eps}\leq 7\max(|w_1|,|w_2|)^{1+\eps} \\
\sp_{\rea}^\F(\gamma_{w_1,w_2}^{s_1,s_2})&=\max(|w_1|,|w_2|)
\end{align*}
Thus, the statement follows from \Cref{lem-io-swap}, \Cref{cor-acceptor-eps}, and \Cref{prop-complexity-estimate}.

\end{proof}

\medskip

\subsection{Turing tape objects} \

\medskip

The main topic of this section is the construction of a particular type of object called a \emph{Turing tape}.  As with the introduction of counters (see \Cref{fig-COUNTER_STAR}) and acceptors of positive words (see \Cref{fig-POSCHECK2}), the construction follows three steps:

\begin{enumerate}

\item A general class of objects is defined (see \Cref{def-counter} and \Cref{def-acceptor})

\item A particular example is constructed (see \Cref{lem-counter-is-counter} and \Cref{lem-pos-check1})

\item An iterative step is performed to produce a sequence of such objects which have successively `better' qualities (see \Cref{lem-countern} and \Cref{lem-pos-check2}).

\end{enumerate}

\begin{definition}\label{def-ttape}

For all $d\in\N-\{0\}$, set $\I_d=\{a_1,\dots,a_d\}$.  Then, let $V^{d}$ be the set of $\I_d$-tuples of words in $L_{A_i}^+$ (recall $A_i$ is the alphabet $\A_\T(i)$ for the fixed $i\in\I_\T$).

For all $w\in L_{A_i}^+$, let $V^{d}_w$ be the subset of $V^{d}$ consisting of all $v\in V^{d}$ satisfying $w=v(a_1)\dots v(a_d)$.  Conversely, for any $v\in V^{d}$, let $w_d(v)\in A_i^*$ be the (unique) word over $A_i$ satisfying $v\in V^{d}_{w_d(v)}$.  Note that $V^{d}=\bigsqcup\limits_{w\in A_i^*} V^{d}_w$ and that $|v|=|w_d(v)|$ for all $v\in V^{d}$.

Now, suppose $\Obj=\Obj[\I_d]$ is an object comprised of an operation $\set=\set(b)$ and, for each $Q=(Q_j)_{j\in\I_\T}\in\Q_\T$, an operation $Q=Q()$.  Further, suppose that $\A_\set(b)=A_i$ and $\A_\Obj^\val(a_j)=A_i$ for all $a_j\in\I_d$.  Finally, suppose that $\VAL_\Obj=V^{d}$ and that the $\IO$-relations are given by:
	\begin{align*}
		\IO_\set^{s,s}&=\Id \\
		\IO_\set^{s,f}&= \ \bigr\{(u_1,u_2)\mid u_1(\I_d)=1, \ u_2(b)=1, \ w_d(u_2(\I_d))=u_1(b)\bigl\} \\
		\IO_\set^{f,f}&=\Id \ \cup \ \bigr\{(u_1,u_2)\mid u_1(b)=u_2(b)=1, \ w_d(u_1(\I_d))=w_d(u_2(\I_d))\bigl\} \\
		\IO_Q^{s,s}&=\IO_Q^{f,f}= \ \bigr\{(v_1,v_2)\mid w_d(v_1)=w_d(v_2)\bigl\} \\
		\IO_Q^{s,f}&=\bigl\{(v_1,v_2) \mid Q_i \text{ is applicable to } w_d(v_1) \text{ with } Q_i(w_d(v_1))=w_d(v_2)\bigr\}
	\end{align*}
Then, $\Obj$ is called an \emph{$i$-Turing tape of $\T$ of rank $d$}.

\end{definition}

Now, for any fixed $\eps>0$, consider the object $\TTape_{\T,i}^{1,\eps}\defeq\TTape_{\T,i}^{1,\eps}[\I_1]$ defined in \Cref{fig-TTAPE1} below.  Note that the operations $Q()$ are in correspondence with the elements of $\Q_\T$; the two graphs presented in the figure bearing the name $Q()$ define the operation in the case that the corresponding $Q$-fragment of tape $i$ is exact or (right) matching, respectively.

\begin{figure}[hbt]
	\centering
	\begin{tikzpicture} 
[point/.style={inner sep = 4pt, circle,draw,fill=white},
mpoint/.style={inner sep = 1.7pt, circle,draw,fill=black},
FIT/.style args = {#1}{rounded rectangle, draw,  fit=#1, rotate fit=45, yscale=0.5},
FITR/.style args = {#1}{rounded rectangle, draw,  fit=#1, rotate fit=-45, yscale=0.5},
FIT1/.style args = {#1}{rounded rectangle, draw,  fit=#1, rotate fit=45, scale=2},
vecArrow/.style={
		thick, decoration={markings,mark=at position
		   1 with {\arrow[thick]{open triangle 60}}},
		   double distance=1.4pt, shorten >= 5.5pt,
		   preaction = {decorate},
		   postaction = {draw,line width=0.4pt, white,shorten >= 4.5pt}
	},
myptr/.style={decoration={markings,mark=at position 1 with %
    {\arrow[scale=3,>=stealth]{>}}},postaction={decorate}}
]

\begin{scope}[yshift = -3.5cm]
\begin{scope} [xscale=0.75, yscale=0.3]      
	\node [anchor=west] at (-1.5, 4) {Object $\TTape_{\T,i}^{1, \varepsilon}[\I_1]$};
	\node at (0, 2.3) {$\set(\mathbf{b})$};
	\node (start) at (0,0) [point] {$s$};
	\node (n1) at (5,0) [point] {$q$};
	\node (finish) at (10, 0) [point] {$f$};
	
	\draw [->] (start) to node[midway, above]{
		$\begin{gathered}
		 \check_{A_i+}^\varepsilon (\mathbf{b})
		\end{gathered}$
	}  node[midway, below] {$e_1$} (n1);
	\draw [->] (n1) to node[midway, above]{
		$\begin{gathered}
		\swap(\mathbf{b}, \mathbf{a_1})
		\end{gathered}$
	}  node[midway, below] {$e_2$} (finish);
\end{scope}

\begin{scope} [xscale=0.75, yscale=0.3, yshift= -6cm]      
	\node [anchor=west] at (-0.7, 2.5) {$Q()$, for $Q=(Q_j)_{j\in\I_\T}$ with $Q_i = [u\rightarrow v]$};
	\node (start) at (0,0) [point] {$s$};
	\node (finish) at (5, 0) [point] {$f$};
	
	\draw [->] (start) to node[midway, below]{
		$\begin{gathered}
		 [\mathbf{a_1} = u] \\[-1ex]
		 [\mathbf{a_1} \rightarrow \mathbf{a_1} u^{-1} v]
		\end{gathered}$
	}  node[midway, above] {$e$} (finish);
\end{scope}

%\begin{scope} [xscale=0.75, yscale=0.3, yshift= -14.5cm]      
%	\node [anchor=west] at (-0.7, 2.5) {$Q()$, for $Q=(Q_j)_{j\in\I_\T}$ with $Q_i = [u * \rightarrow v *]$};
%	\node (start) at (0,0) [point] {$s$};
%	\node (n1) at (4,0) [point] {$q_1$};
%	\node (n2) at (8,0) [point] {$q_2$};
%	\node (finish) at (12, 0) [point] {$f$};
%	
%	\draw [->] (start) to node[midway, below]{
%		$\begin{gathered}
%		 [\mathbf{1} \rightarrow u^{-1} \mathbf{1}]
%		\end{gathered}$
%	} node[midway, above] {$e_{1,\ell}$} (n1);
%
%	\draw [->] (n1) to node[midway, below]{
%		$\begin{gathered}
%		 \check_{A_i+}^\varepsilon(\mathbf{1})
%		\end{gathered}$
%	}  node[midway, above] {$e_{2,\ell}$} (n2);
%
%
%	\draw [->] (n2) to node[midway, below]{
%		$\begin{gathered}
%		 [\mathbf{1} \rightarrow v \mathbf{1}]
%		\end{gathered}$
%	} node[midway, above] {$e_{3,\ell}$} (finish);
%\end{scope}	

\begin{scope} [xscale=0.75, yscale=0.3, yshift= -15cm]      
	\node [anchor=west] at (-0.7, 3) {$Q()$, for $Q=(Q_j)_{j\in\I_\T}$ with $Q_i = [* u \rightarrow * v]$};
	\node (start) at (0,0) [point] {$s$};
	\node (n1) at (4,0) [point] {$q_1$};
	\node (n2) at (8,0) [point] {$q_2$};
	\node (finish) at (12,0) [point] {$f$};
	
	\draw [->] (start) to node[midway, below]{
		$\begin{gathered}
		 [\mathbf{a_1} \rightarrow  \mathbf{a_1}u^{-1}]
		\end{gathered}$
	} node[midway, above] {$e_{1,r}$} (n1);

	\draw [->] (n1) to node[midway, below]{
		$\begin{gathered}
		 \check_{A_i+}^\varepsilon(\mathbf{a_1})
		\end{gathered}$
	}  node[midway, above] {$e_{2,r}$} (n2);

	\draw [->] (n2) to node[midway, below]{
		$\begin{gathered}
		 [\mathbf{a_1} \rightarrow  \mathbf{a_1}v]
		\end{gathered}$
	} node[midway, above] {$e_{3,r}$} (finish);
\end{scope}

%\begin{scope} [xscale=0.75, yscale=0.3, yshift= -30cm]      
%	\node [anchor=west] at (-0.7, 2.5) {$Q()$, for $Q=(Q_j)_{j\in\I_\T}$ with $Q_i$ empty};
%	\node (start) at (0,0) [point] {$s$};
%	\node (finish) at (4, 0) [point] {$f$};
%	
%	\draw [->] (start) to node[midway, below]{
%		$\begin{gathered}
%		\end{gathered}$
%	}  node[midway, below] {$e_0$} (finish);
%\end{scope}	

\end{scope}

\end{tikzpicture}	
	\caption{An object $\TTape_{\T,i}^{1, \varepsilon}[a]$ which is an $i$-Turing tape of $\T$.}
	\label{fig-TTAPE1}       
\end{figure} 

From the established naming conventions, it is implicit that $\I_{\TTape_{\T,i}^{1,\eps}}^\val=\{a_1\}$, $\I_\set^\ext=\{b\}$, and $\I_Q^\ext=\emptyset$ for all $Q\in \Q_\T$.  Moreover, the convention of minimal hardware implies that $\A_{\TTape_{\T,i}^{1,\eps}}(a_1)=\A_\set(b)=A_i$ and that $\I_\op^\int=\emptyset$ for all $\op\in\TTape_{\T,i}^{1,\eps}$.

\begin{lemma}\label{lem-ttape1}
The object $\TTape^1=\TTape_{\T,i}^{1, \varepsilon}$ is an $i$-Turing tape of $\T$ of rank $1$. Moreover, $\TMSP_{\TTape^1} \preceq (n^{1+\varepsilon}, n)$. 
\end{lemma}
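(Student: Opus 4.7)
The plan has two parts: first, verify that $\TTape^1$ is an $i$-Turing tape of $\T$ of rank 1 by identifying the values and computing the $\IO$-relations of its operations, and second, establish the time-space bound.

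First I would check that $\TTape^1$ is a well-formed object. For each of $\set(b)$ and the various $Q()$, condition (O) is immediate: the set of internal tapes (of the degree-1 operations themselves, before expansion) is empty, and conditions (5)--(6) of \Cref{def-s1-graph} are trivial because $a_1$ is the value tape and appears only in positive actions built from instances that mention it as a value. The compatibility of alphabets across operations is immediate since every relevant action mentioning $a_1$ uses alphabet $A_i$.

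Next I would pin down the value set and the $\IO$-relations using \Cref{prop-io-classes2} and \Cref{prop-io-classes}. Take the candidate $V=V^1\cong L_{A_i}^+$. For each operation I construct a family of subsets of $\GL_\op \times S_\op$, closed under $\IO_e$ for every $e$, whose union on $\SF_\op$ equals $\widehat V_\op \times \SF_\op$. For $\set(b)$, the natural family is indexed by $w\in L_{A_i}^+$: one set $C_w$ containing $((1,w),s)$, $((1,w),q)$, $((w,1),f)$ (and a companion collecting the identity-like components), using \Cref{lem-pos-check1}/\Cref{cor-acceptor-eps} for the edge $e_1$ and \Cref{lem-io-swap} for $e_2$. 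For $Q()$ with exact $Q_i=[u\to v]$, the sets $\{(u)\}^s \cup \{(v)\}^f$ (for each admissible pair) are trivially closed. For $Q()$ with matching $Q_i=[*u\to *v]$, the family is indexed by $w\in L_{A_i}^+$, with the value at $q_1,q_2$ being $wu^{-1}$ (interpreted in $F(A_i)$) and $wv$ at $f$; closure at $e_{2,r}$ uses that the checker forces $wu^{-1}\in L_{A_i}^+$. By \Cref{prop-io-classes2}, $\VAL_{\TTape^1}\subseteq V^1$ since $1_{\TTape^1}^\val=1\in V^1$. The reverse inclusion is witnessed, for every $w\in L_{A_i}^+$, by the obvious computation of $\set$ (apply $e_1$ then $e_2$) carrying $((1,w),s)$ to $((w,1),f)$, so $V^1\subseteq\VAL_{\TTape^1}$.

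The same closed families, restricted to $\SF_\op$, give, via \Cref{prop-io-classes}, the containments $\IO_\op\subseteq\bigcup(C_{j,\IO})^2$ matching the formulas of \Cref{def-ttape}. The reverse containments are exhibited by explicit realizing computations: for $\set$ the five-edge generic path through $e_1,e_2$; for exact $Q()$ the single edge $e$, where the guard $[\mathbf{a_1}=u]$ and the transformation $[\mathbf{a_1}\to\mathbf{a_1} u^{-1}v]$ freely reduce to replace $u$ with $v$; and for matching $Q()$ the concatenation $e_{1,r},e_{2,r},e_{3,r}$, with $\check_{A_i+}^\varepsilon$ confirming that $\mathbf{a_1} u^{-1}$ is still in $L_{A_i}^+$ (so that the reduction of $\mathbf{a_1} u^{-1}$ in the group equals its truncation in the monoid). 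This confirms $\TTape^1$ is an $i$-Turing tape of $\T$ of rank $1$.

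For the complexity bound, I apply \Cref{prop-complexity-estimate}(3) with estimating set $\F$ given by $\F_\swap=(n,n)$ and $\F_{\check_{A_i+}^\varepsilon}=(n^{1+\varepsilon},n)$, which are asymptotic upper bounds on the respective instances by \Cref{lem-io-swap} and \Cref{cor-acceptor-eps}; the multiplicative hypotheses on $\F_\inst$ are satisfied trivially. The realizing computations above give estimated time at most $n^{1+\varepsilon}+n+O(1)$ and estimated space at most $n+O(1)$ in every case (for $\set$, one checker call plus one swap call; for exact $Q()$, a single action of degree $0$ whose size is bounded by a constant plus $n$; for matching $Q()$, two constant-sized degree-$0$ actions plus one checker call). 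Hence $\TMSP_{\TTape^1}^\F\preceq(n^{1+\varepsilon},n)$, and \Cref{prop-complexity-estimate}(3) yields $\TMSP_{\TTape^1}\preceq(n^{1+\varepsilon},n)$.

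The main obstacle is bookkeeping for $\set$: one must rule out spurious reachabilities in $\IO_\set^{s,s}$ and $\IO_\set^{f,f}$ coming from traversing $e_2$ in the middle of a computation, and the cleanest route is to combine the closed-set argument with the fact that $\swap$ (via \Cref{lem-io-swap}) forces the $a_1$-tape to be empty at $s$ whenever the $b$-tape is non-empty, which keeps the value tape inside $V^1$ throughout and matches the required $\IO$-relation exactly.
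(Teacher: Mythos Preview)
Your proposal is correct and follows essentially the same approach as the paper: use \Cref{prop-io-classes2} and \Cref{prop-io-classes} with closed families to pin down $\VAL_{\TTape^1}=V^1$ and the $\IO$-relations, exhibit explicit realizing computations for the reverse inclusions, and then apply \Cref{prop-complexity-estimate}(3) with the estimating set $\F_{\swap}=(n,n)$, $\F_{\check_{A_i+}^\varepsilon}=(n^{1+\varepsilon},n)$. One small bookkeeping slip: in the matching case $Q_i=[*u\to *v]$, if you index by the $s$-value $w$, the value at $f$ is $wu^{-1}v$, not $wv$ (the paper avoids this by indexing instead by the $q_1,q_2$-value $w\in L_{A_i}^+$, giving $s$-value $wu$ and $f$-value $wv$).
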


\begin{proof}

Note that $V^{1}=L_{A_i}^+$, and so can be viewed as a subset of $L_1=L_{\TTape^1}^\val$.  In the setting of \Cref{prop-io-classes2}, it then follows that $\widehat{V}_Q^{1}=V^{1}$ for all $Q\in \Q_\T$, while $\widehat{V}_\set^{1}$ is the subset of $L_\set$ given by all $u\in\GL_\set$ such that $u(a_1)\in L_{A_i}^+$.

For simplicity, for any $u\in L_\set$, denote $u=(u(a_1),u(b))$, i.e so that the $b$-component is listed second.  Then, for all $v\in L_{A_i}^+$ and $w\in L_{A_i}$, define the subset $C_{v,w,\set}\subseteq\GL_\set\times S_\set$ as follows:

\begin{itemize}

\item $C_{v,w,\set}^s=\{(v,w)\}$

\item If $w\notin L_{A_i}^+$, then $C_{v,w,\set}^q=C_{v,w,\set}^f=\emptyset$

\item If $w\in L_{A_i}^+$ and $v\neq1$, then $C_{v,w,\set}^q=\{(v,w)\}$ and $C_{v,w,\set}^f=\emptyset$

\item If $w\in L_{A_i}^+$ and $v=1$, then $C_{v,w,\set}^q=\{(v,w)\}$ and $C_{v,w,\set}^f=\{(w,1)\}$

\end{itemize}

Then, for all $w\in L_{A_i}-\{1\}$, define the subset $C_{w,\set}\subseteq\GL_\set\times S_\set$ by:
\begin{align*}
C_{w,\set}^f&=\{(v,w)\mid v\in L_{A_i}^+\} \\
C_{w,\set}^s&=C_{w,\set}^q=\emptyset
\end{align*}

Using the definition of checkers and \Cref{lem-io-swap}, it follows that each of $C_{v,w,\set}$ and $C_{w,\set}$ is closed under $\IO_e$ for each $e\in E_\set$.  Moreover, $\bigl(\bigcup_{v,w}C_{v,w,\set}^{\SF_\set}\bigr)\cup\bigl(\bigcup_w C_{w,\set}^{\SF_\set}\bigr)=\widehat{V}^{1}_\set\times\SF_\set$.

Now, let $Q\in\Q_\T$.  First, 
suppose the $Q$-fragment of tape $i$ is the exact fragment $[u\to v]$.  Then, for all $w\in L_{A_i}^+$, define $C_{w,s,Q}\subseteq\GL_Q\times S_Q$ by $C_{w,s,Q}^s=\{w\}$, $C_{w,s,Q}^f=\{v\}$ if $w=u$, and $C_{w,s,Q}^f=\emptyset$ otherwise.  Moreover, for all $w\in L_{A_i}^+-\{v\}$, define the subset $C_{w,f,Q}\subseteq\GL_Q\times S_Q$ by $C_{w,f,Q}^f=\{w\}$ and $C_{w,f,Q}^s=\emptyset$.

Then, $\bigl(\bigcup_w C_{w,s,Q}^{\SF_Q}\bigr)\cup\bigl(\bigcup_wC_{w,f,Q}^{\SF_Q}\bigr)=\widehat{V}^{1}_Q\times\SF_Q$.  Further, by the definition of the action of degree 0, each $C_{w,s,Q}$ and $C_{w,f,Q}$ is closed under $\IO_e$ for any edge $e\in E_Q$.

Conversely, suppose the $Q$-fragment of tape $i$ is the right matching fragment $[* u \to * v]$.  Then, for all $w\in L_{A_i}^+$, define the subset $C_{w,Q}\subseteq\GL_Q\times S_Q$ by:
\begin{align*}
C_{w,Q}^s&=\{wu\} \\
C_{w,Q}^{q_1}&=C_{w,Q}^{q_2}=\{w\} \\
C_{w,Q}^f&=\{wv\}
\end{align*}

Further, for $w\in L_{A_i}$ such that $wu^{-1}\notin L_{A_i}^+$, define $C_{w,s,Q}\subseteq\GL_Q\times S_Q$ by $C_{w,s,Q}^s=\{w\}$, $C_{w,s,Q}^{q_1}=\{wu^{-1}\}$, and $C_{w,s,Q}^{q_2}=C_{w,s,Q}^f=\emptyset$.

Lastly, for $w\in L_{A_i}$ such that $wv^{-1}\notin L_{A_i}^+$, define the subset $C_{w,f,Q}\subseteq\GL_Q\times S_Q$ by setting $C_{w,f,Q}^f=\{w\}$, $C_{w,f,Q}^{q_2}=\{wv^{-1}\}$, and $C_{w,f,Q}^s=C_{w,f,Q}^{q_1}=\emptyset$.

It again follows that each subset is closed under $\IO_e$ for each $e\in E_Q$ and that
\begin{equation}\label{eq-values}
\left(\bigcup_w C_{w,Q}^{\SF_Q}\right)\cup\left(\bigcup_w C_{w,s,Q}^{\SF_Q}\right)\cup\left(\bigcup_w C_{w,f,Q}^{\SF_Q}\right)=\widehat{V}^{1}_Q\times \SF_Q
\end{equation}

Hence, \Cref{prop-io-classes2} implies that $\VAL_{\TTape^1}\subseteq V^{1}$.

On the other hand, for all $v\in V^{1}$, the generic path $(\s,\f,e_1),(\s,\f,e_2)$ in the graph of $\set$ supports an input-output computation $\gamma_v$ between $((1,v),\set_s)$ and $((v,1),\set_f)$.  This implies the existence of an edge between the vertices corresponding to the initial value and $v$ in $\G_{\TTape^1}^\val$, so that $v\in\VAL_{\TTape^1}$.

Thus, $V^{1}$ is indeed the set of values of $\TTape_{\T,i}^{1,\eps}$.

The sets constructed above then combine with \Cref{prop-io-classes} to imply that the $\IO$-relations satisfy the following containments:
\begin{align*}
\IO_\set^{s,s}&,\IO_\set^{f,f},\IO_Q^{s,s},\IO_Q^{f,f}\subseteq\Id \\
\IO_\set^{s,f}&\subseteq\{\bigl((1,v),(v,1)\bigr)\mid v\in L_{A_i}^+\} \\
\IO_Q^{s,f}&\subseteq\{(u,v)\} && \text{ if $Q_i=[u \to v]$} \\
\IO_Q^{s,f}&\subseteq\{(wu,wv)\mid w\in L_{A_i}^+\} && \text{ if $Q_i=[* u \to * v]$}
\end{align*}
Note that the final two containments can be described by the single containment $$\IO_Q^{s,f}\subseteq\{(v,Q_i(v))\mid Q_i \text{ is applicable to }v\}$$
Further, note that for any $v_1,v_2\in V^{1}$, $w_1(v_1)=w_1(v_2)$ if and only if $v_1=v_2$.

Hence, to show that $\TTape_{\T,i}^{1,\eps}$ is an $i$-Turing tape of $\T$ of rank 1, it suffices to show that the containments above are equalities.

The reflexivity of $\IO$-relations implies $\IO_\set^{s,s}=\IO_\set^{f,f}=\IO_Q^{s,s}=\IO_Q^{f,f}=\Id$.  What's more, for any $v\in V^{1}$, the computation $\gamma_v$ of $\set$ defined above realizes $((1,v),(v,1))\in\IO_\set^{s,f}$, so that $\IO_\set^{s,f}=\{((1,v),(v,1))\mid v\in L_{A_i}^+\}$.

Meanwhile, the two cases of $Q$ are given as follows:

\begin{itemize}

\item If $Q_i$ is the exact fragment $[u\to v]$, then the generic path $(\s,\f,e)$ supports a computation $\gamma_{u,v}^Q$ realizing $(u,v)\in\IO_Q^{s,f}$

\item  If $Q_i$ is the right matching fragment $[* u \to * v ]$, then for all $w\in L_{A_i}^+$ the generic path $(\s,\f,e_{1,r}),(\s,\f,e_{2,r}),(\s,\f,e_{3,r})$ supports a computation $\gamma_{w}^Q$ realizing $(wu,wv)\in\IO_Q^{s,f}$

\end{itemize}

Thus, $\TTape_{\T,i}^{1,\eps}$ is an $i$-Turing tape of $\T$ of rank 1.

Now, let $\F$ be the estimating set for $\TTape_{\T,i}^{1,\eps}$ given by $\F_{\check}=(n^{1+\eps},n)$ and $\F_\swap=(n,n)$.  
Note the following inequalities:

\begin{itemize}

\item For all $v\in V^{1}$, 
\begin{align*}
\tm_\set^\F(\gamma_v)&=|v|^{1+\eps}+|v|\leq2|v|^{1+\eps} \\
\sp_\set^\F(\gamma_v)&=|v|
\end{align*} 

\item If $Q_i$ is the exact fragment $[u\to v]$, then 
\begin{align*}
\tm_Q^\F(\gamma_{u,v}^Q)&=1 \\
\sp_Q^\F(\gamma_{u,v}^Q)&=\max(|u|,|v|)
\end{align*}

\item If $Q_i$ is the right matching fragment $[* u \to * v]$, then for all $w\in L_{A_i}^+$,
\begin{align*}
\tm_Q^\F(\gamma_{w}^Q)&=1+|w|^{1+\eps}+1\leq|w|^{1+\eps}+2 \\
\sp_Q^\F(\gamma_{w}^Q)&=\max(|wu|,|w|,|wv|)
\end{align*}
But as $u,v\in L_{A_i}^+$, $|wu|=|w|+|u|$ and $|wv|=|w|+|v|$.  Hence, for $M=\max(|wu|,|wv|)$,
\begin{align*}
\tm_Q^\F(\gamma_{w}^Q)&\leq M^{1+\eps}+2 \\
\sp_Q^\F(\gamma_{w}^Q)&=M
\end{align*}

\end{itemize}

As trivial computations realize all other pairs defining the $\IO$-relations, it then follows that $\TMSP_{\TTape^1}^\F\preceq(n^{1+\eps},n)$.  As \Cref{lem-io-swap}, \Cref{cor-acceptor-eps}, and \Cref{prop-acc-ch-wacc} imply that $\TMSP_{\check}\preceq\F_{\check}$ and $\TMSP_\swap\preceq\F_\swap$, \Cref{prop-complexity-estimate} then yields $\TMSP_{\TTape^1}\preceq(n^{1+\eps},n)$.

\end{proof}

\medskip

\subsection{Improved Turing tapes}\label{sec-improved-ttape} \

\medskip

The goal of this section is to define a sequence of Turing tapes of increasing rank which will prove to get `more efficient' in a particular sense.  However, it should be noted upfront that the setting for this improvement is not the typical time-space complexity, but rather given in terms of the complexity of its `sequential computations' (see \Cref{sec-seq-comp}).

Now, to this end, it is assumed that the object $\TTape^d=\TTape_{\T,i}^{d,\eps}$ is an $i$-Turing tape of $\T$ of rank $d$ satisfying $\TMSP_{\TTape^d}\preceq(n^{1+\eps},n)$.  As such, note that \Cref{lem-ttape1} implies that $\TTape_{\T,i}^{1,\eps}$ provides a base for this iterative construction.

First, the auxiliary object $\IRT^{d}\defeq\IRT_{\T,i}^{d,\eps}[\I_{d+1}]$, whose name indicates that it is an `intermediate rearranging tape', is constructed in \Cref{fig-rea} below.  There, the set $\I_d'$ is taken to be the subset $\{a_2,\dots,a_{d+1}\}$ of $\I_{d+1}$.  Hence, the set is in bijection with $\I_d$, as exploited in its role in the instance $\TT^d$.

\begin{figure}[hbt]
	\centering
	\begin{tikzpicture} 
[point/.style={inner sep = 4pt, circle,draw,fill=white},
mpoint/.style={inner sep = 1.7pt, circle,draw,fill=black},
FIT/.style args = {#1}{rounded rectangle, draw,  fit=#1, rotate fit=45, yscale=0.5},
FITR/.style args = {#1}{rounded rectangle, draw,  fit=#1, rotate fit=-45, yscale=0.5},
FIT1/.style args = {#1}{rounded rectangle, draw,  fit=#1, rotate fit=45, scale=2},
vecArrow/.style={
		thick, decoration={markings,mark=at position
		   1 with {\arrow[thick]{open triangle 60}}},
		   double distance=1.4pt, shorten >= 5.5pt,
		   preaction = {decorate},
		   postaction = {draw,line width=0.4pt, white,shorten >= 4.5pt}
	},
myptr/.style={decoration={markings,mark=at position 1 with %
    {\arrow[scale=3,>=stealth]{>}}},postaction={decorate}}
]

\begin{scope} [xshift=-0cm, xscale=0.75, yscale=0.3, yshift=2cm]      
	\node [anchor=west] at (-1.5, 5) {
		\begin{tabular}{l} 
		Object $\IRT_{\T,i}^{d,\eps}[\I_{d+1}]$; \\
		\ \ \ $\TT^d$ is $\TTape^d[\I_d']$
		\end{tabular}};
\end{scope}	

\begin{scope} [xscale=0.75, yscale=0.3]      
	\node [anchor=west] at (-0.7, 2.5) {$\rea()$};
	\node (start) at (0,0) [point] {$s$};
	\node (n1) at (5, 0) [point] {$q_1$};
	\node (n2) at (10, 0) [point] {$q_2$};
	\node (n3) at (10, -7) [point] {$q_3$};
	\node (n4) at (5, -7) [point] {$q_4$};
	\node (finish) at (0, -7) [point] {$f$};
	
	\draw [->] (start) to node[midway, right]{ $e_0$
	}  (finish);

	\draw [->] (start) to node[midway, above]{
		$\begin{gathered}
		[\mathbf{c} = 1]
		\end{gathered}$
	} node [midway, below]{$e_1$} (n1);

	\draw [->] (n2) to node[midway, above]{
		$\TT^d.\set(\mathbf{c})$
	}   node [midway, below]{$e_2$} (n1);

	\draw [->] (n2) to node[midway, right]{
		$\begin{gathered}
		 \rea_{A_i}^\varepsilon (\mathbf{a_1}, \mathbf{c})
		\end{gathered}$
	}   node [midway, left]{$e_3$} (n3);

	\draw [->] (n3) to node[midway, below]{
		$\TT^d.\set(\mathbf{c})$
	}   node [midway, above]{$e_4$}  (n4);
	
	\draw [->] (n4) to node[midway, below]{
		$\begin{gathered}
		[\mathbf{c} = 1]
		\end{gathered}$
	}  node [midway, above]{$e_5$}  (finish);
\end{scope}

\begin{scope} [xscale=0.75, yscale=0.3, yshift=-14cm]      
	\node [anchor=west] at (-0.7, 2.5) {$\set_0(\mathbf{b})$};
	\node (start) at (0,0) [point] {$s$};
	\node (n1) at (5, 0) [point] {$q$};
	\node (finish) at (10, 0) [point] {$f$};
	
	\draw [->] (start) to node[midway, below]{
		$\check_{A_i+}^\eps(\mathbf{b})$
	}   node [midway, above]{$e_1$} (n1);

	\draw [->] (n1) to node[midway, below]{
		$\swap(\mathbf{b},\mathbf{a_1})$
	}   node [midway, above]{$e_2$} (finish);
\end{scope}

\begin{scope} [xscale=0.75, yscale=0.3, yshift=-21cm]      
	\node [anchor=west] at (-0.7, 2.5) {$Q_0()$ for $Q\in\Q_\T$};
	\node (start) at (0,0) [point] {$s$};
	\node (finish) at (5, 0) [point] {$f$};
	
	\draw [->] (start) to node[midway, below]{
		$\TT_d'.Q()$
	}   node [midway, above]{$e$} (finish);
\end{scope}

%\begin{scope} [xscale=0.75, yscale=0.3, yshift=-29cm]      
%	\node [anchor=west] at (-0.7, 2.5) {$QR()$ for $Q\in\Q_\T$};
%	\node (start) at (0,0) [point] {$s$};
%	\node (finish) at (4, 0) [point] {$f$};
%	
%	
%	\draw [->] (start) to node[midway, below]{
%		$\TTR.Q()$
%	}   node [midway, above]{$e$} (finish);
%\end{scope}

\end{tikzpicture}	
	\caption{The auxiliary object $\IRT_{\T,i}^{d,\eps}[\I_{d+1}]$}
	\label{fig-rea}       
\end{figure} 

The object consists of $|\Q_\T|+2$ operations: $|\Q_\T|+1$ operations in correspondence with those of $\TTape^d[\I_d']$ which function analogously and the operation $\rea=\rea()$ which is meant to rearrange the contents of $\I_{d+1}$ without affecting their `order'.

It can be deduced from the makeup of the graphs that $\I_{\set_0}^\int=\I_{Q_0}^\int=\emptyset$, that $\I_\rea^\int=\{c\}$, and that $\A_\op(j)=A_i$ for all $\op\in\IRT^d$ and all $j\in\I_\op$.  In the operation $\rea$, it follows immediately from the trivial action labelling $e_0$ and the guard fragments labelling $e_1$ and $e_5$ that condition (O) is satisfied.

For any $\op\in\IRT^d$ and $u\in\GL_\op$, $u(\I_d')$ is a copy of a value of $\TTape^d$, i.e of a word in $V^{d}$.  As such, $u(\I_d')$ is often identified with this value and the map $w_d$ is extended naturally, so that:
\begin{align*}
w_d(u(\I_d'))&=u(a_2)u(a_3)\dots u(a_{d+1})
\end{align*}
Note that $w_d$ will be used exclusively in this context as opposed to being applied on the `canonical' copy of $\I_d$ in $\I_{d+1}$.

Conversely, for any $v\in V^{d}$, define $v_d$ to be the copy of $v$ over $\I_d'$.  Hence, $v(a_j)=v_d(a_{j+1})$ for all $j\in\overline{d}$.  

Note that for any $v\in\VAL_{\TT^d}$, $v\in V^{d+1}$ if and only if $v(a_1)\in L_A^+$.

\begin{lemma}\label{lem-TTd}

The object $\IRT^d=\IRT_{\T,i}^{d,\eps}$ satisfies $\VAL_{\IRT^d}=V^{d+1}$ with $\IO$-relations given by:
\begin{align*}
\IO_\rea^{s,s}&=\IO_\rea^{s,f}=\IO_\rea^{f,f}=\{(v_1,v_2)\mid w_{d+1}(v_1)=w_{d+1}(v_2)\} \\
\IO_{\set_0}^{s,s}&=\IO_{\set_0}^{f,f}=\Id \\
\IO_{\set_0}^{s,f}&=\{(u_1,u_2)\mid u_1(a_1)=u_2(b)=1, \ u_1(b)=u_2(a_1), \ u_1(\I_d')=u_2(\I_d')\} \\
\IO_{Q_0}^{s,s}&=\IO_{Q_0}^{f,f}=\Id\cup\{(u_1,u_2)\mid u_1(a_1)=u_2(a_1), \ w_d(u_1(\I_d'))=w_d(u_2(\I_d'))\} \\
\IO_{Q_0}^{s,f}&=\{(u_1,u_2)\mid u_1(a_1)=u_2(a_1), \ Q_i(w_d(u_1(\I_d')))=w_d(u_2(\I_d'))\} 
\end{align*}
Moreover, $\TMSP_{\IRT^d}\preceq(n^{1+\eps},n)$.

\end{lemma}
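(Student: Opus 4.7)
The plan is to follow the template used throughout the paper for similar Turing-tape-like constructions (cf.\ the proofs of \Cref{lem-countern} and \Cref{lem-ttape1}), analyzing each of the three operations $\rea$, $\set_0$, and $Q_0$ in turn, then assembling the result via \Cref{prop-io-classes}, \Cref{prop-io-classes2}, and \Cref{prop-complexity-estimate}. Throughout, fix the estimating set $\F$ for $\IRT^d$ given by $\F_\inst = (n^{1+\eps}, n)$ for each instance $\inst \in \INST_{\IRT^d}$; by \Cref{lem-io-swap}, \Cref{lem-io-rea}, \Cref{cor-acceptor-eps}, \Cref{prop-acc-ch-wacc}, and the inductive hypothesis on $\TTape^d$, we have $\TMSP_\inst \preceq \F_\inst$ for every such instance. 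The hypotheses of \Cref{prop-complexity-estimate}(3) are immediate with $\phi_\inst(n) = \psi_\inst(n) = n^2$ (say), so it suffices to bound the estimated complexity of computations.

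Analyzing $\set_0$ and $Q_0$ is straightforward. The graph of $\set_0(b)$ is structurally identical to that of $\set$ in $\TTape_{\T,i}^{1,\eps}$ with the value tapes $\I_d'$ appearing only as passive carriers; the argument of \Cref{lem-ttape1} (setting up closure families indexed by the pair $(v(\I_d'), v(a_1))$) transfers verbatim and yields the claimed $\IO$-relation as well as the bounds $\tm_{\set_0}^\F \leq 2 n^{1+\eps} + O(n)$ and $\sp_{\set_0}^\F \leq n$ for its natural computations. The operation $Q_0()$ is simply a single edge labelled by $\TT^d.Q()$, so its $\IO$-relation is the lift of $\IO_{\TT^d.Q}$ under the renaming $\I_d \to \I_d'$, giving exactly the stated form by the inductive hypothesis; its estimated complexity inherits the $(n^{1+\eps}, n)$ bound.

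The core of the argument is the analysis of $\rea()$. For the natural forward direction, I would use the generic path
\[
\rho = (\s,\f,e_1),\ (\f,\s,e_2),\ (\s,\f,e_3),\ (\s,\f,e_4),\ (\s,\f,e_5)
\]
and trace through: starting from $(v_1, \rea_s)$ with $v_1 \in V^{d+1}$ and $c = 1$, the generic edge $(\f,\s,e_2)$ (an inverse application of $\TT^d.\set$) transfers the content of $\TT^d$'s value tapes back into $c$, yielding $c = w_d(v_1(\I_d'))$ with $\I_d'$ emptied; then $e_3$ applies $\rea_{A_i}^\eps(a_1, c)$ to redistribute between $a_1$ and $c$ while preserving the concatenation $v_1(a_1) \cdot w_d(v_1(\I_d')) = w_{d+1}(v_1)$; then $(\s,\f,e_4)$ refills the $\TT^d$ tapes and empties $c$; and $e_5$ checks $c = 1$. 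This exhibits, for any $v_1, v_2 \in V^{d+1}$ with $w_{d+1}(v_1) = w_{d+1}(v_2)$, a computation $\gamma_{v_1, v_2}$ realizing $(v_1, v_2) \in \IO_\rea^{s,f}$. For the reverse containment, I would apply \Cref{prop-io-classes} with the family of subsets indexed by $w \in L_{A_i}^+$,
\[
C_w = \bigl\{(u, s) \in \GL_\rea \times S_\rea \mid u(a_1) \cdot w_d(u(\I_d')) \cdot u(c) = w\bigr\},
\]
(with the product understood as concatenation in $L_{A_i}^+$), which is easily verified to be closed under $\IO_e$ for every $e \in E_\rea$ using the $\IO$-relations of $\TT^d.\set$ and $\rea_{A_i}^\eps$. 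The hardest verification here, and the expected main obstacle, is the closure under $\IO_{e_3}$, which requires that $\rea_{A_i}^\eps$ preserves the concatenation $u(a_1) u(c)$ (given by \Cref{lem-io-rea}) but also that the intermediate configurations keep both $u(a_1)$ and $u(c)$ in $L_{A_i}^+$; this is what forces the checker guards built into $\rea_{A_i}^\eps$ to be essential.

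Finally, I would derive $\VAL_{\IRT^d} = V^{d+1}$ by applying \Cref{prop-io-classes2} with $V = V^{d+1}$: the families constructed for each operation (appropriately refined so that their union over $\SF$ covers $\widehat{V}_\op$) show $\VAL_{\IRT^d} \subseteq V^{d+1}$, while iterated applications of $\set_0$ exhibit edges in $\G_{\IRT^d}^\val$ joining the initial value $(1, 1, \ldots, 1)$ to any element of $V^{d+1}$, giving the reverse inclusion. The complexity estimate then follows by observing that each of $\gamma_{v_1, v_2}$ (for $\rea$), the natural $\set_0$-computation, and the natural $Q_0$-computation has estimated time at most $c \cdot \max(|v_1|, |v_2|)^{1+\eps} + c$ and estimated space at most $c \cdot \max(|v_1|, |v_2|) + c$ for some constant $c$ depending only on $\T$ and $d$, and then invoking \Cref{prop-complexity-estimate}(3) to conclude $\TMSP_{\IRT^d} \preceq (n^{1+\eps}, n)$.
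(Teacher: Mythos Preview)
Your overall strategy --- handle each operation via \Cref{prop-io-classes} and \Cref{prop-io-classes2}, then bound complexity via \Cref{prop-complexity-estimate}(3) with $\F_\inst = (n^{1+\eps}, n)$ --- matches the paper's, and your treatment of $\set_0$ and $Q_0$ is essentially right. There are, however, two genuine gaps in your handling of $\rea$ and of the inclusion $\VAL_{\IRT^d}\supseteq V^{d+1}$.

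The state-independent family $C_w = \{(u,s) : u(a_1)\cdot w_d(u(\I_d'))\cdot u(c) = w\}$ is \emph{not} closed under $\IO_{e_3}$. The edge $e_3$ carries $\rea_{A_i}^\eps(a_1,c)$, which by \Cref{lem-io-rea} preserves $u(a_1)\,u(c)$ but leaves $\I_d'$ untouched; and nothing in $\GL_\rea$ forces $u(\I_d')=1$ at $q_2$ or $q_3$ --- any element of $\VAL_{\TT^d}=V^d$ is permitted there. If $w_d(u(\I_d'))=x\neq 1$, then moving a letter from the end of $u(a_1)$ to the front of $u(c)$ changes the middle-insertion product $u(a_1)\cdot x\cdot u(c)$ (e.g.\ $(ab)\,x\,(cd)\ne a\,x\,(bcd)$), so your invariant is destroyed. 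The paper fixes this by making the family state-dependent: at $q_2,q_3$ it requires $u(\I_d')=1$ (so the invariant collapses to $u(a_1)\,u(c)=w$), while at $s,q_1,q_4,f$ it requires $u(c)=1$ (so the invariant is $w_{d+1}(u(\I_{d+1}))=w$). Closure across $e_2$ and $e_4$ is then exactly the statement that $\TT^d.\set$ exchanges these two forms.

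Separately, iterated applications of $\set_0$ alone do \emph{not} reach all of $V^{d+1}$ from the initial value: $\set_0$ never mentions $\I_d'$, so its input--output edges in $\G_{\IRT^d}^\val$ only connect $(1,v)\leftrightarrow(w,v)$ for fixed $v\in V^d$. Starting from the initial value you can only reach $(w,1,\dots,1)$. The paper reaches an arbitrary $v\in V^{d+1}$ in two steps: first $\set_0$ to $(w_{d+1}(v),1,\dots,1)$, then one application of $\rea$ (via the forward computation you described) to redistribute into $v$.
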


\begin{proof}

For simplicity, let $\GL_\rea^+$ be the subset of $\GL_\rea$ consisting of all words $u$ such that $u(j)\in L_{A_i}^+$ for all $j\in\I_\rea$.  Then, for any $w\in L_{A_i}^+$, define the subset $C_w\subseteq \GL_\rea^+\times S_\rea$ as follows:
\begin{align*}
C_w^s=C_w^{q_1}=C_w^{q_4}=C_w^f&=\{u\mid u(c)=1, \ w_{d+1}(u(\I_{d+1}))=w\} \\
C_w^{q_2}=C_w^{q_3}&=\{u\mid u(\I_d')=1, \ u(a_1)u(c)=w\}
\end{align*}

Note that $\bigcup_w C_w^{\SF_\rea}=(V^{d+1}\times1_\rea^\int)\times\SF_\rea=\widehat{V}^{d+1}_\rea\times\SF_\rea$, where $\widehat{V}^{d+1}_\rea$ is defined as in the setting of \Cref{prop-io-classes2}.  Further, by \Cref{lem-io-rea} and the definition of the $\set$ operation, each subset $C_w$ is closed under $\IO_e$ for all $e\in E_\rea$.

Next, for all $w\in L_{A_i}^+$ and all $v\in V^{d}$, define $C_{w,v}\subseteq\GL_{\set_0}\times S_{\set_0}$ by:
\begin{align*}
C_{w,v}^s=C_{w,v}^{q}&=\{u\mid u(\I_d')=v_d, \ u(a_1)=1, \ u(b)=w\} \\
C_{w,v}^f&=\{u\mid u(\I_d')=v_d, \ u(a_1)=w, \ u(b)=1\}
\end{align*}
By the definition of checkers and \Cref{lem-io-swap}, each $C_{w,v}$ is closed under $\IO_e$ for all $e\in E_{\set_0}$.

Further, for any $w\in L_{A_i}$ with $w\neq1$ and any $v\in V^{d+1}$, define $C_{w,v,f}\subseteq\GL_{\set_0}\times S_{\set_0}$ by:
\begin{align*}
C_{w,v,f}^s&=C_{w,v,f}^q=\emptyset \\
C_{w,v,f}^f&=\{u\mid u(\I_{d+1})=v, \ u(b)=w\}
\end{align*}
Conversely, for $w\in L_{A_i}-L_{A_i}^+$ and $v\in V^{d+1}$, define $C_{w,v,s}\subseteq\GL_{\set_0}\times S_{\set_0}$ by:
\begin{align*}
C_{w,v,s}^s&=\{u\mid u(\I_{d+1})=v, \ u(b)=w\} \\
C_{w,v,s}^q&=C_{w,v,s}^f=\emptyset
\end{align*}

Finally, for any $w\in L_{A_i}^+$ and $v\in V^{d+1}$ such that $v(a_1)\neq1$, define $C_{w,v,s}\subseteq\GL_{\set_0}\times S_{\set_0}$ by:
\begin{align*}
C_{w,v,s}^s&=C_{w,v,s}^q=\{u\mid u(\I_{d+1})=v, \ u(b)=w\} \\
C_{w,v,s}^f&=\emptyset
\end{align*}

Again, the definition of checkers and \Cref{lem-io-swap} imply that each of these subsets is closed under $\IO_e$ for all $e\in E_{\set_0}$.  Moreover, note that
$$\left(\bigcup C_{w,v}^{\SF_{\set_0}}\right)\cup\left(\bigcup C_{w,v,s}^{\SF_{\set_0}} \right)\cup\left(\bigcup C_{w,v,f}^{\SF_{\set_0}}\right)=\widehat{V}^{d+1}_{\set_0}\times\SF_{\set_0}$$

Now, fix $Q=(Q_i)_{i\in\I_\T}\in\Q_\T$.  Then, for all $w\in L_{A_i}^+$ such that $Q_i$ is applicable to $w$ and any $w'\in L_A^+$, define the subset $C_{w,w'}\subseteq\GL_{Q_0}\times S_{Q_0}$ by:
\begin{align*}
C_{w,w'}^s&=\{u\mid w_d(u(\I_d'))=w, \ u(a_1)=w'\} \\
C_{w,w'}^f&=\{u\mid w_d(u(\I_d'))=Q_i(w), \ u(a_1)=w'\}
\end{align*}

Conversely, for any $w\in L_{A_i}^+$ such that $Q_i$ is not applicable to $w$ and any $w'\in L_{A_i}^+$, define the subset $C_{w,w',s}\subseteq\GL_{Q_0}\times S_{Q_0}$ by:
\begin{align*}
C_{w,w',s}^s&=\{u\mid w_d(u(\I_d'))=w, \ u(a_1)=w' \} \\
C_{w,w',s}^f&=\emptyset
\end{align*}
Similarly, for any $w\in L_{A_i}^+$ such that $Q_i^{-1}$ is not applicable to $w$ and any $w'\in L_{A_i}^+$, define the subset $C_{w,w',f}\subseteq\GL_{Q_0}\times S_{Q_0}$ by:
\begin{align*}
C_{w,w',f}^s&=\emptyset \\
C_{w,w',f}^f&=\{u\mid w_d(u(\I_d'))=w, \ u(a_1)=w' \}
\end{align*}
By $\IO$-relations defining a Turing tape, it follows that each of these sets are closed under $\IO_e$ for the corresponding edge $e$.  Further,
\begin{align*}
\left(\bigcup C_{w,w'}^{\SF_{Q_0}} \right) \cup \left(\bigcup C_{w,w',s}^{\SF_{Q_0}} \right) \cup \left(\bigcup C_{w,w',f}^{\SF_{Q_0}} \right) &= V^{d+1}=\widehat{V}^{d+1}_{Q_0}
\end{align*}

Hence, as $V^{d+1}$ contains the initial value, it follows from \Cref{prop-io-classes2} that $\VAL_{\IRT^d}\subseteq V^{d+1}$.

For the reverse inclusion, note the following:
\begin{itemize}

\item Let $w\in L_{A_i}^+$ and $v\in V^{d}$.  Then, by the definition of checkers and \Cref{lem-io-swap}, the generic path $(\s,\f,e_1),(\s,\f,e_2)$ formed in $\G_{\set_0}$ supports a computation $\gamma_{w,v}^{\set_0}$ of $\set_0$ between $(u_1,(\set_0)_s)$ and $(u_2,(\set_0)_f)$ such that $u_j(\I_d')=v$, $u_1(b)=u_2(a_1)=w$, and $u_1(a_1)=u_2(b)=1$.

\item Let $w\in L_{A_i}^+$ and $v_1,v_2\in V^{d+1}_w$.  Then, let $u_1,u_2\in\GL_\rea$ such that $u_j(\I_{d+1})=v_j$ and $u_j(c)=1$.  Then, by the definition of the set operation and \Cref{lem-io-rea}, the generic path $e_1',\dots,e_5'$ given by setting $e_j'=(\s,\f,e_j)$ supports a computation $\gamma_{v_1,v_2}^\rea$ between $(u_1,\rea_s)$ and $(u_2,\rea_f)$.  

\end{itemize}

Now, let $v\in V^{d+1}$.  Then, setting $w=w_{d+1}(v)$, the computation $\gamma_{w,1}^{\set_0}$ implies the existence of an edge in the graph $\G_{\IRT^d}^\val$ between the initial value and the word $v_w\in V^{d+1}$ given by $v_w(\I_d')=1$ and $v_w(a_1)=w$.  Further, as $v_w,v\in V^{d+1}_w$, the computation $\gamma_{v_w,v}^\rea$ implies the existence of an edge in $\G_{\IRT^d}^\val$ between $v_w$ and $v$.  Hence, $v\in\VAL_{\IRT^d}$, so that $\VAL_{\IRT^d}=V^{d+1}$.

The sets constructed above then combine with \Cref{prop-io-classes} to imply that the $\IO$-relations are contained in those in the statement.  It is easy to see that the definition of Turing tape implies that the containments involving $Q_0$ are equalities.  Further, trivial computations imply that $\IO_{\set_0}^{s,s}=\IO_{\set_0}^{f,f}=\Id$, while the trivial action labelling $e_0$ implies that $\IO_\rea^{s,s}=\IO_\rea^{s,f}=\IO_\rea^{f,f}$.  Finally, the computation $\gamma_{w,v}^{\set_0}$ implies the reverse containment for $\IO_{\set_0}^{s,f}$, while the computation $\gamma_{v_1,v_2}^\rea$ implies the reverse containment for $\IO_\rea^{s,f}$.  Thus, the $\IO$-relation of $\IRT^d$ is as in the statement.

Now, let $\F$ be the estimating set for $\IRT^d$ given by $\F_\inst=(n^{1+\eps},n)$ for every $\inst\in\INST_{\IRT^d}$.  By the inductive hypothesis, \Cref{cor-acceptor-eps}, \Cref{lem-io-swap}, \Cref{lem-io-rea}, and \Cref{prop-complexity-estimate}(3), it suffices to show that $\TMSP_{\IRT^d}^\F\preceq(n^{1+\eps},n)$.

For any $(u_1,s_1,u_2,s_2)\in\IO_{Q_0}$, it follows immediately from the inductive hypothesis that there exists a single generic edge supporting a computation that realizes $(u_1,u_2)\in\IO_{Q_0}^{s_1,s_2}$ and satisfies the desired complexity bounds.

For any $(u_1,u_2)\in\IO_{\set_0}^{s,f}$, let $w=u_1(b)=u_2(a_1)$ and set $v\in V^{d}$ such that $u_j(\I_d')=v$.  Then, $\gamma_{w,v}^{\set_0}$ realizes $(u_1,u_2)\in\IO_{\set_0}^{s,f}$ and satisfies:
\begin{align*}
\tm_{\set_0}^\F(\gamma_{w,v}^{\set_0})&=2|w|^{1+\eps}\leq2|u_1|^{1+\eps} \\
\sp_{\set_0}^\F(\gamma_{w,v}^{\set_0})&=|u_1|=|u_2|
\end{align*}

Further, for $(v_1,v_2)\in\IO_\rea^{s,f}$, the computation $\gamma_{v_1,v_2}^\rea$ realizes $(v_1,v_2)\in\IO_\rea^{s,f}$ and satisfies:
\begin{align*}
\tm_\rea^\F(\gamma_{v_1,v_2}^\rea)&=1+|v_1|^{1+\eps}+\max(|v_1|,|v_2|)^{1+\eps}+|v_2|^{1+\eps}+1 \\
&= 3|v_1|^{1+\eps}+2 \\
\sp_\rea^\F(\gamma_{v_1,v_2}^\rea)&=|v_1|=|v_2|
\end{align*}
Hence, by the existence of $e_0$, for any $(v_1,s_1,v_2,s_2)\in\IO_\rea$, there exists a computation $\gamma$ realizing $(v_1,v_2)\in\IO_\rea^{s_1,s_2}$ and satisfying $\tm_\rea^\F(\gamma)\leq3|v_1|^{1+\eps}+3$ and $\sp_\rea^\F(\gamma)=|v_1|=|v_2|$.

Thus, as any remaining $\IO$-relation is reflexive and so can be realized by an empty computation, $\TMSP_{\IRT^d}^\F\preceq(n^{1+\eps},n)$.

\end{proof}

With the object $\IRT^d$ at hand, the iterative step is now carried out in \Cref{fig-TTAPE2} below, with the definition of the object $\TTape^{d+1}\defeq\TTape_{\T,i}^{d+1,\eps}[\I_{d+1}]$.  Note that the two graphs labelled $Q()$ represent the operation corresponding to the Turing action $Q=(Q_j)_{j\in\I_\T}\in\Q_\T$, with the two cases corresponding to the possible type of fragment for $Q_i$.

As in previous settings, the notation $\I_d'=1$ represents the set of guard fragments $[a_j=1]$ for $a_j\in\I_d'$.  The notation $\I_{d+1}=1$ functions similarly.

\begin{figure}[hbt]
	\centering
	\begin{tikzpicture} 
[point/.style={inner sep = 4pt, circle,draw,fill=white},
mpoint/.style={inner sep = 1.7pt, circle,draw,fill=black},
FIT/.style args = {#1}{rounded rectangle, draw,  fit=#1, rotate fit=45, yscale=0.5},
FITR/.style args = {#1}{rounded rectangle, draw,  fit=#1, rotate fit=-45, yscale=0.5},
FIT1/.style args = {#1}{rounded rectangle, draw,  fit=#1, rotate fit=45, scale=2},
vecArrow/.style={
		thick, decoration={markings,mark=at position
		   1 with {\arrow[thick]{open triangle 60}}},
		   double distance=1.4pt, shorten >= 5.5pt,
		   preaction = {decorate},
		   postaction = {draw,line width=0.4pt, white,shorten >= 4.5pt}
	},
myptr/.style={decoration={markings,mark=at position 1 with %
    {\arrow[scale=3,>=stealth]{>}}},postaction={decorate}}
]

\begin{scope} [xshift=-0cm, xscale=0.75, yscale=0.3, yshift=1cm]      
	\node [anchor=west] at (-1.5, 5) {
		\begin{tabular}{l} 
		Object $\TTape_{\T,i}^{d+1, \varepsilon}[\I_{d+1}]$; \\
		\ \ \ $\IRT^d$ is $\IRT_{\T,i}^{d,\eps}[\I_{d+1}]$
		\end{tabular}};
\end{scope}	

%\begin{scope} [xscale=0.75, yscale=0.3]      
%	\node at (0, 2.3) {$\rea_d()$};
%	\node (start) at (0,0) [point] {$s$};
%	\node (n1) at (3, 0) [point] {};
%	\node (n2) at (7, 0) [point] {};
%	\node (n3) at (11, 0) [point] {};
%	\node (n4) at (11, -5) [point] {};
%	\node (n5) at (7, -5) [point] {};
%	\node (n6) at (3, -5) [point] {};
%	\node (finish) at (0, -5) [point] {$f$};
%	
%	\draw [->] (start) to node[midway, right]{ $e_1$
%	}  (finish);
%
%	\draw [->] (start) to node[midway, above]{
%		$\begin{gathered}
%		[\mathbf{a_l} = 1] \\[-1ex]
%		[\mathbf{a_r} = 1]
%		\end{gathered}$
%	} node [midway, below]{$e_2$} (n1);
%
%	\draw [->] (n2) to node[midway, above]{
%		TTL.$\set(\mathbf{a_l})$
%	}   node [midway, below]{$e_3$} (n1);
%
%	\draw [->] (n3) to node[midway, above]{
%		TTR.$\set(\mathbf{a_r})$
%	}  node [midway, below]{$e_4$} (n2);
%
%	\draw [->] (n3) to node[midway, right]{
%		$\begin{gathered}
%		 \rea3_{A_i}^\varepsilon (\mathbf{a_l}, \mathbf{a}, \mathbf{a_r})
%		\end{gathered}$
%	}   node [midway, left]{$e_5$} (n4);
%
%	\draw [->] (n4) to node[midway, below]{
%		TTR.$\set(\mathbf{a_r})$
%	}   node [midway, above]{$e_6$} (n5);
%
%	\draw [->] (n5) to node[midway, below]{
%		TTL.$\set(\mathbf{a_l})$
%	}   node [midway, above]{$e_7$}  (n6);
%
%	\draw [->] (n6) to node[midway, below]{
%		$\begin{gathered}
%		[\mathbf{a_l} = 1] \\[-1ex]
%		[\mathbf{a_r} = 1]
%		\end{gathered}$
%	}  node [midway, above]{$e_8$}  (finish);
%\end{scope}
\begin{scope} [xscale=0.75, yscale=0.3]      
	\node at (0, 2.3) {$\set(\mathbf{b})$};
	\node (start) at (0,0) [point] {$s$};
	\node (n1) at (3.5,0) [point] {$q_1$};
	\node (n2) at (8,0) [point] {$q_2$};
	\node (n3) at (12,0) [point] {$q_3$};
	\node (finish) at (15.5, 0) [point] {$f$};
	
	\draw [->] (start) to node[midway, below]{
		$\begin{gathered}
		\I_{d+1}=1
		\end{gathered}$
	} node[midway,above]{$e_1$} (n1);
	
	\draw [->] (n1) to node[midway, below]{
		$\begin{gathered}
		\IRT^d.\set_0(\mathbf{b})
		\end{gathered}$
	} node[midway,above]{$e_2$} (n2);
	
	\draw [->] (n2) to node[midway, below]{
	 $\IRT^d.\rea()$
	} node[midway,above]{$e_3$} (n3);
	
	\draw [->] (n3) to node[midway, below]{
	$[\mathbf{b}=1]$
	} node[midway,above]{$e_4$} (finish);

\end{scope}

\begin{scope} [xscale=0.75, yscale=0.3, yshift= -8cm]      
	\node [anchor=west] at (-0.7, 2.5) {$Q()$ for $Q=(Q_j)_{j\in\I_\T}\in\Q_\T$ such that $Q_i$ is exact};
	\node (start) at (0,-1) [point] {$s$};
	\node (n1) at (3.5,-1) [point] {$q_1$};
	\node (n2) at (6.5,-1) [point] {$q_2$};
	\node (n3) at (10.5,-1) [point] {$q_3$};
	\node (n4) at (13.5,-1) [point] {$q_4$};
	\node (finish) at (17, -1) [point] {$f$};
	
	\draw [->] (start) to node[midway, below]{
		$\begin{gathered}
		 \IRT^d.\rea()
		\end{gathered}$
	} node[midway,above]{$e_1$} (n1);
	
	\draw [->] (n1) to node[midway, below]{
		$\begin{gathered}
		[\mathbf{a_1} = 1]
		\end{gathered}$
	} node[midway,above]{$e_2$} (n2);

	\draw [->] (n2) to node[midway, below]{
		$\IRT^d.Q_0()$
	} node[midway,above]{$e_3$} (n3);

	\draw [->] (n3) to node[midway, below]{
		$\begin{gathered}
		[\mathbf{a_1} = 1]
		\end{gathered}$
	} node[midway,above]{$e_4$} (n4);
	
	\draw [->] (n4) to node[midway, below]{
		$\begin{gathered}
		 \IRT^d.\rea()
		\end{gathered}$
	} node[midway,above]{$e_5$} (finish);	
\end{scope}	

\begin{scope} [xscale=0.75, yscale=0.3, yshift= -18cm]      
	\node [anchor=west] at (-0.7, 2.5) {$Q()$ for $Q=(Q_j)_{j\in\I_\T}\in\Q_\T$ such that $Q_i$ is (right) matching};
	\node (start) at (0,0) [point] {$s$};
	\node (n1) at (3.5,0) [point] {$q_1$};
	\node (n2) at (7.5,0) [point] {$q_2$};
	\node (finish) at (11, 0) [point] {$f$};
	
	\draw [->] (start) to node[midway, below]{
		$\begin{gathered}
		 \IRT^d.\rea()
		\end{gathered}$
	} node[midway,above]{$e_1$} (n1);

	\draw [->] (n1) to node[midway, below]{
		$\IRT^d.Q_0()$
	} node[midway,above]{$e_2$} (n2);
	
	\draw [->] (n2) to node[midway, below]{
		$\begin{gathered}
		 \IRT^d.\rea()
		\end{gathered}$
	} node[midway,above]{$e_3$} (finish);	
\end{scope}

%\begin{scope} [xscale=0.75, yscale=0.3, yshift= -27cm]      
%	\node [anchor=west] at (-0.7, 2.5) {$Q()$ for $Q=(Q_j)_{j\in\I_\T}\in\Q_\T$ such that $Q_i$ is right matching};
%	\node (start) at (0,-2) [point] {$s$};
%	\node (n1) at (3,-2) [point] {$q_1$};
%	\node (n2) at (6,-2) [point] {$q_2$};
%	\node (n3) at (9,-2) [point] {$q_3$};
%	\node (n4) at (12,-2) [point] {$q_4$};
%	\node (finish) at (15, -2) [point] {$f$};
%	
%	\draw [->] (start) to node[midway, above]{
%		$\begin{gathered}
%		 \TT^d.\rea()
%		\end{gathered}$
%	} node[midway,below]{$e_s$} (n1);
%	
%	\draw [->] (n1) to node[midway, above]{
%		$\begin{gathered}
%		[\mathbf{2^d}=1] \\[-1ex]
%		\I_{d,r}=1
%		\end{gathered}$
%	} node[midway,below]{$e_1$} (n2);
%
%	\draw [->] (n2) to node[midway, above]{
%		$\TT^d.QL()$
%	} node[midway,below]{$e_2$} (n3);
%
%	\draw [->] (n3) to node[midway, above]{
%		$\begin{gathered}
%		[\mathbf{2^d}=1] \\[-1ex]
%		\I_{d,r}=1
%		\end{gathered}$
%	}  node[midway,below]{$e_3$}  (n4);
%
%	\draw [->] (n1) edge[out=-90, in = -90, looseness=2] node[pos=0.5, below]{
%		$\begin{gathered}		
%	    		\TT^d.QR()
%		\end{gathered}$
%	} node[midway,above]{$e_r$} (n4);
%
%	\draw [->] (n4) to node[midway, above]{
%		$\begin{gathered}
%		 \TT^d.\rea()
%		\end{gathered}$
%	} node[midway,below]{$e_f$} (finish);	
%\end{scope}	

\end{tikzpicture}	
	\caption{An $i$-Turing tape of rank $d+1$}
	\label{fig-TTAPE2}       
\end{figure} 

The following statement completes the iterative construction:

\begin{lemma}\label{lem-ttape2}

The object $\TTape^{d+1}$ is an $i$-Turing tape of $\T$ of rank $d+1$ and satisfies the bound $\TMSP_{\TTape^{d+1}}\preceq(n^{1+\eps},n)$.

\end{lemma}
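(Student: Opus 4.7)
My plan is to follow the pattern established in the proofs of Lemma \ref{lem-ttape1} and Lemma \ref{lem-TTd}: characterize $\VAL_{\TTape^{d+1}}$ via \Cref{prop-io-classes2}, then characterize each $\IO$-relation via \Cref{prop-io-classes} combined with explicit realizing computations, and finally derive the complexity bound via an estimating set together with \Cref{prop-complexity-estimate}. The inductive hypothesis on $\IRT^d$ supplied by \Cref{lem-TTd} (namely, its $\IO$-relations and the bound $\TMSP_{\IRT^d}\preceq(n^{1+\eps},n)$) will do all of the heavy lifting, so that arguments about $\TTape^{d+1}$ become essentially bookkeeping at the level of edges of its defining graphs.

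I would first show $\VAL_{\TTape^{d+1}} = V^{d+1}$. For the $\set$ operation and for each $Q\in\Q_\T$, I would partition $\widehat{V}^{d+1}_\op\times\SF_\op$ into families of subsets closed under every $\IO_e$; the closure checks reduce, via \Cref{lem-TTd}, to inspecting the transitions along each edge. For instance, in $\set$, a configuration at $q_2$ is characterized by the pair $(w_{d+1}(u(\I_{d+1})),u(b))$, the transition $e_2$ replaces $u(b)$ with $1$ and sets $u(a_1)=u(b)$, and then $e_3$ preserves $w_{d+1}(u(\I_{d+1}))$; this lets me define subsets indexed by the relevant invariant words. The usual application of \Cref{prop-io-classes2} then yields $\VAL_{\TTape^{d+1}}\subseteq V^{d+1}$, and the reverse inclusion comes from exhibiting, for each $v\in V^{d+1}$, a computation of $\set$ between $((1,v),\set_s)$ and $((v,1),\set_f)$: run $e_1$ to enter $q_1$, apply $\set_0$ with $b=v(a_1)\dots v(a_{d+1})$ to deposit the whole word on $a_1$, apply $\rea$ to redistribute it into the tuple $v$, then exit via $e_4$. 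This gives an edge in $\G^\val$ from the initial value to $v$ for every $v\in V^{d+1}$.

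Next, I would extract the $\IO$-relations. The same closed families together with \Cref{prop-io-classes} force $\IO_\set^{s,s}=\IO_\set^{f,f}=\Id$ and $\IO_Q^{s,s}=\IO_Q^{f,f}=\Id$, while $\IO_\set^{s,f}$ and $\IO_Q^{s,f}$ are contained in the sets prescribed in \Cref{def-ttape}. For the reverse inclusions I would exhibit, for each admissible $(u_1,u_2)$, an explicit computation: the $\set$-computation described above realizes $\IO_\set^{s,f}$; in the exact-fragment case of $Q()$, the generic path supporting $e_1,e_2,e_3,e_4,e_5$ uses $\rea$ to shove the whole word onto $a_1$, the guard $[\mathbf{a_1}=1]$ on $e_2$ forces the word into $\I_d'$, $\IRT^d.Q_0()$ then updates this via the exact rule on tape $i$, and finally another $\rea$ splits the result arbitrarily subject to the correct product; the matching-fragment case is similar but shorter, because the guards on the side tapes are absent and $\IRT^d.Q_0()$ acts directly. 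The key identities to check are that the prescribed invariants (word products, immutability of $a_1$ outside $Q_0$) are preserved by these paths, which is immediate from \Cref{lem-TTd}.

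Finally, for the complexity, I would define the estimating set $\F$ for $\TTape^{d+1}$ by $\F_\inst=(n^{1+\eps},n)$ for every $\inst\in\INST_{\TTape^{d+1}}$. The inductive hypothesis combined with \Cref{lem-TTd} gives $\TMSP_\inst\preceq\F_\inst$ on all instances, and the growth functions $n^{1+\eps}$ and $n$ satisfy the scaling condition $f(cn)\leq c^{1+\eps}f(n)$, $g(cn)=cg(n)$ of \Cref{prop-complexity-estimate}(3). For each realizing computation $\eps$ constructed above, the estimated time is a sum of at most five terms of the form $M^{1+\eps}$ and at most two terms of the form $M$ (where $M$ is bounded by the maximum input size), and the estimated space is bounded by that maximum. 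Hence $\TMSP_{\TTape^{d+1}}^\F\preceq(n^{1+\eps},n)$, and \Cref{prop-complexity-estimate}(3) converts this into $\TMSP_{\TTape^{d+1}}\preceq(n^{1+\eps},n)$. The only nontrivial point—the one I expect to be the most error-prone rather than conceptually hard—is keeping the bookkeeping of the auxiliary subsets consistent through the two graph cases for $Q()$, since the exact and matching cases have slightly different guard behavior on $a_1$ and hence different families of closed subsets.
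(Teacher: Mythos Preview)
Your overall strategy matches the paper's, but you have one concrete error that would derail the argument as stated. You claim that the closed families force $\IO_\set^{f,f}=\Id$ and $\IO_Q^{s,s}=\IO_Q^{f,f}=\Id$. For rank $d+1\geq 2$ this is false: by \Cref{def-ttape} these relations must equal $\{(v_1,v_2)\mid w_{d+1}(v_1)=w_{d+1}(v_2)\}$ (with the $b=1$ restriction for $\set$), which is strictly larger than $\Id$ since many distinct $(d+1)$-tuples share the same product. And indeed, any closed family you build will reflect this: the edges $e_1$ and $e_5$ (resp.\ $e_3$) in the $Q$-graphs carry $\IRT^d.\rea()$, whose $\IO^{s,s}$ and $\IO^{f,f}$ already identify all of $V^{d+1}_w$ by \Cref{lem-TTd}, so your families at $s$ and $f$ cannot be singletons. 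Consequently you are also missing the realizing computations for the non-trivial pairs in $\IO_Q^{s,s}$, $\IO_Q^{f,f}$, and $\IO_\set^{f,f}$; the paper handles the first two via the single generic edges $(\s,\s,e_1)$ and $(\f,\f,e_5)$ (or $e_3$), and the third by concatenating $(\gamma^{\set}_{v_1})^{-1}$ with $\gamma^{\set}_{v_2}$. These must then be folded into your $\TMSP^\F$ estimate, though each contributes at most a constant multiple of $n^{1+\eps}$ in time and $n$ in space, so the final bound is unaffected.

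A minor slip: in the exact-fragment case you write that $\rea$ ``shoves the whole word onto $a_1$'' before the guard $[\mathbf{a_1}=1]$; it is the reverse---$\rea$ must empty $a_1$ into $\I_d'$ so that the guard passes and $\IRT^d.Q_0()$ can act on the $\I_d'$ coordinates. Also, the containment $\VAL_{\TTape^{d+1}}\subseteq V^{d+1}$ does not need \Cref{prop-io-classes2}: it is immediate from $\GL_\op\subseteq L_\op^\ext\times\VAL_{\IRT^d}\times\cdots$ together with $\VAL_{\IRT^d}=V^{d+1}$ from \Cref{lem-TTd}, as the paper observes.
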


\begin{proof}

As it was proved that $\VAL_{\IRT^d}=V^{d+1}$ in \Cref{lem-TTd}, the definition of graph language immediately implies that $\VAL_{\TTape^{d+1}}\subseteq V^{d+1}$.

Conversely, let $v\in V^{d+1}$.  Then, let $u_1,u_2,u_3\in\GL_\set$ be given by:
\begin{itemize}
\item $u_1(\I_{d+1})=1$, $u_1(b)=w_{d+1}(v)$ 

\item $u_2(a_1)=w_{d+1}(v)$, $u_2(\I_d')=1$, $u_2(b)=1$ 

\item $u_3(\I_{d+1})=v$, $u_3(b)=1$
\end{itemize}  
Then, by the $\IO$-relations outlined in \Cref{lem-TTd}, there exists a computation $\gamma_v^\set$ of $\set$ given by:
\begin{align*}
\gamma_v^\set=\bigl((u_1,\set_s),(\s,&\f,e_1),(u_1,q_1),(\s,\f,e_2),(u_2,q_2), \\
&(\s,\f,e_3),(u_3,q_3),(\s,\f,e_4),(u_3,\set_f)\bigr)
\end{align*}

Consequently, there exists an edge in $\G_{\TTape^{d+1}}^\val$ between the initial value and $v$, and as a result $v\in\VAL_{\TTape^{d+1}}$.

Hence, $\VAL_{\TTape^{d+1}}=V^{d+1}$.

Now, for any $w\in L_{A_i}^+$, define the subset $C_{w,\set}\subseteq\GL_\set\times S_\set$ by:
\begin{align*}
C_{w,\set}^s=C_{w,\set}^{q_1}&=\{u\mid u(\I_{d+1})=1, \ u(b)=w\} \\
C_{w,\set}^{q_2}&=\{u\mid u(a_1)=w, \ u(\I_d')=1, \ u(b)=1\} \\
C_{w,\set}^{q_3}=C_{w,\set}^f&=\{u\mid w_{d+1}(u(\I_{d+1}))=w, \ u(b)=1\}
\end{align*}

Meanwhile, for any $w\in L_{A_i}$ and $v\in V^{d+1}-\{1\}$, define $C_{w,v,\set}\subseteq\GL_\set\times S_\set$ by:
\begin{align*}
C_{w,v,\set}^s&=\{u\mid u(\I_{d+1})=v, \ u(b)=w\} \\
C_{w,v,\set}^{q_1}&=C_{w,v,\set}^{q_2}=C_{w,v,\set}^{q_3}=C_{w,v,\set}^f=\emptyset
\end{align*}
Further, for any $w\in L_{A_i}-L_{A_i}^+$, define $C_{w,s,\set}\subseteq\GL_\set\times S_\set$ by:
\begin{align*}
C_{w,s,\set}^s&=C_{w,s,\set}^{q_1}=\{u\mid u(\I_{d+1})=1, \ u(b)=w\} \\
C_{w,s,\set}^{q_2}&=C_{w,s,\set}^{q_3}=C_{w,s,\set}^f=\emptyset
\end{align*}
Conversely, for any $v\in V^{d+1}$ and $w\in L_{A_i}-\{1\}$, define $C_{w,v,f,\set}\subseteq\GL_\set\times S_\set$ by:
\begin{align*}
C_{w,v,f,\set}^s&=C_{w,v,f,\set}^{q_j}=\emptyset \\
C_{w,v,f,\set}^f&=\{u\mid u(\I_{d+1})=v, \ u(b)=w\}
\end{align*}

By \Cref{lem-TTd}, each of the sets above is closed under $\IO_e$ for all $e\in E_\set$.  So, since $\ND_\set^1\times\SF_\op$ is contained in their union, it follows from \Cref{prop-io-classes} that the $\IO$-relations are contained in those as in the definition (see \Cref{def-ttape}).  The existence of the computation $\gamma_v$ for any $v\in V^{d+1}$ then implies the reverse inclusion for $\IO_\set^{s,f}$, while empty computations and transitivity imply the same for $\IO_\set^{s,s}$ and $\IO_\set^{f,f}$.  Hence, the $\IO$-relation of the operation $\set$ is as in the definition of $i$-Turing tape.

Now, let $Q=(Q_j)_{j\in\I_\T}$.

\medskip

\noindent\textbf{Case 1.} $Q_i$ is the exact fragment $[w_1\to w_2]$.

Then, define the subset $C_{ex}\subseteq\GL_Q\times S_Q$ by:
\begin{align*}
C_{ex}^s=C_{ex}^{q_1}&=V^{d+1}_{w_1} \\
C_{ex}^{q_2}&=\{v\in V^{d+1}_{w_1}\mid w_d(v(\I_d'))=w_1\} \\
C_{ex}^{q_3}&=\{v\in V^{d+1}_{w_2}\mid w_d(v(\I_d'))=w_2\} \\
C_{ex}^{q_4}=C_{ex}^f&=V^{d+1}_{w_2}
\end{align*}
Note that for any $v\in C_{ex}^{q_2}\cup C_{ex}^{q_3}$, it necessarily holds that $v(a_1)=1$.  

For any $w\in L_{A_i}^+$ such that $w\neq w_1$, define $C_{w,s,ex}\subseteq\GL_Q\times S_Q$ by:
\begin{align*}
C_{w,s,ex}^s=C_{w,s,ex}^{q_1}&=V^{d+1}_w \\
C_{w,s,ex}^{q_2}&=\{v\in V^{d+1}_w\mid w_d(v(\I_d'))=w\} \\
C_{w,s,ex}^{q_3}=C_{w,s,ex}^{q_4}&=C_{w,s,ex}^f=\emptyset
\end{align*}

Further, for any $w\in L_{A_i}^+$ such that $w\neq w_2$, define $C_{w,f,ex}\subseteq\GL_Q\times S_Q$ by:
\begin{align*}
C_{w,f,ex}^s=C_{w,f,ex}^{q_1}&=C_{w,f,ex}^{q_2}=\emptyset \\
C_{w,f,ex}^{q_3}&=\{v\in V^{d+1}_w\mid w_d(v(\I_d'))=w\} \\
C_{w,f,ex}^{q_4}=C_{w,f,ex}^f&=V^{d+1}_w
\end{align*}

Using \Cref{lem-TTd}, it is straightforward to check that each of these sets is closed under $\IO_e$ for each $e\in E_Q$.  So, \Cref{prop-io-classes} yields the following containments:
\begin{align*}
\IO_Q^{s,s},\IO_Q^{f,f}&\subseteq\{(v_1,v_2)\mid w_{d+1}(v_1)=w_{d+1}(v_2)\} \\
\IO_Q^{s,f}&\subseteq\{(v_1,v_2)\mid w_{d+1}(v_1)=w_1, \ w_{d+1}(v_2)=w_2\}
\end{align*}

Conversely, for any $v_1,v_2\in V^{d+1}$ such that $w_{d+1}(v_1)=w_{d+1}(v_2)$, it follows from \Cref{lem-TTd} that the single generic edge $(\s,\s,e_1)$ supports a computation $\gamma_{v_1,v_2,s}^{ex}$ between $(v_1,Q_s)$ and $(v_2,Q_s)$.  Similarly, $(\f,\f,e_5)$ supports a computation $\gamma_{v_1,v_2,f}^{ex}$ between $(v_1,Q_f)$ and $(v_2,Q_f)$.

Meanwhile, for any $v_1,v_2\in V^{d+1}$ such that $w_{d+1}(v_j)=w_d(v_j(\I_d'))=w_j$, by \Cref{lem-TTd} there exists a computation $\gamma_{v_1,v_2}^{ex}$ given by
\begin{align*}
\gamma_{v_1,v_2}^{ex}=\bigl((v_1,Q_s),e_1',(v_1,q_1),e_2',(v_1,q_2),e_3',(v_2,q_3),e_4',(v_2,q_4),e_5',(v_2,Q_f)\bigr)
\end{align*}
where $e_j'=(\s,\f,e_j)$ for all $j\in\overline{5}$.  

Hence, by transitivity, all containments given above are equalities, and so the definition of exact fragment implies that the $\IO$-relation of the operation is equivalent to that in the definition of $i$-Turing tape.

\medskip

\noindent \textbf{Case 2.} $Q_i$ is the (right) matching fragment $[*w_1 \to *w_2]$.  

For any $w\in L_{A_i}^+$ such that $ww_1^{-1}\in L_{A_i}^+$, letting $w'=ww_1^{-1}w_2$, define $C_{w,m}\subseteq\GL_Q\times S_Q$ by:
\begin{align*}
C_{w,m}^s&=C_{w,m}^{q_1}=V^{d+1}_w \\
C_{w,m}^{q_2}&=C_{w,m}^f=V^{d+1}_{w'}
\end{align*}

Conversely, for any $w\in L_{A_i}^+$ such that $ww_1^{-1}\in L_{A_i}-L_{A_i}^+$, define $C_{w,s,m}\subseteq\GL_Q\times S_Q$ by:
\begin{align*}
C_{w,s,m}^s&=C_{w,s,m}^{q_1}=V^{d+1}_w \\
C_{w,s,m}^{q_2}&=C_{w,s,m}^f=\emptyset
\end{align*}

Similarly, for $w\in L_{A_i}^+$ such that $ww_2^{-1}\notin L_{A_i}^+$, define $C_{w,f,m}\subseteq\GL_Q\times S_Q$ by:
\begin{align*}
C_{w,f,m}^s&=C_{w,f,m}^{q_1}=\emptyset \\
C_{w,f,m}^{q_2}&=C_{w,f,m}^f=V^{d+1}_w
\end{align*}

Note that for any $v\in V^{d+1}_w$, $w_d(v(\I_d'))$ is a suffix of $w$.  So, if $Q_i$ is applicable to $w_d(v(\I_d'))$, then it is also applicable to $w$.

As a result, by \Cref{lem-TTd}, each of the sets defined above is closed under $\IO_e$ for each $e\in E_Q$.  So, \Cref{prop-io-classes} implies the following containments:
\begin{align*}
\IO_Q^{s,s},\IO_Q^{f,f}&\subseteq\{(v_1,v_2)\mid w_{d+1}(v_1)=w_{d+1}(v_2)\} \\
\IO_Q^{s,f}&\subseteq\{(v_1,v_2)\mid Q_i(w_{d+1}(v_1))=w_{d+1}(v_2)\}
\end{align*}

As in the previous case, for any $v_1,v_2\in V^{d+1}$ such that $w_{d+1}(v_1)=w_{d+1}(v_2)$, the makeup of $\IO_\rea$ implies the existence of computations $\gamma_{v_1,v_2,s}^m$ and $\gamma_{v_1,v_2,f}^m$ supported by single generic edges, yielding the reverse inclusion for $\IO_Q^{s,s}$ and $\IO_Q^{f,f}$.  Meanwhile, for any $v_1,v_2\in V^{d+1}$ such that $w_{d+1}(v_j)=w_d(v_j(\I_d'))=ww_j$ for some $w\in L_{A_i}^+$, \Cref{lem-TTd} implies the existence of a computation $\gamma_{v_1,v_2}^m$ given by
\begin{align*}
\gamma_{v_1,v_2}^m=\bigl((v_1,Q_s),e_1',(v_1,q_1),e_2',(v_2,q_2),e_3',(v_2,Q_f)\bigr)
\end{align*}
where $e_j'=(\s,\f,e_j)$ for all $j\in\overline{3}$.

Hence, again all containments above are equalities by transitivity, i.e the $\IO$-relation of the operation is equivalent to that in the definition of $i$-Turing tape.

\medskip

Thus, $\TTape^{d+1}$ is an $i$-Turing tape of rank $d+1$.

Now, let $\F$ be the estimating set for $\TTape^{d+1}$ given by $\F_{\IRT^d}=(n^{1+\eps},n)$.  By \Cref{lem-TTd} and \Cref{prop-complexity-estimate}(3), it suffices to show $\TMSP_{\TTape^{d+1}}^\F\preceq(n^{1+\eps},n)$.

For $Q=(Q_j)_{j\in\I_\T}$, note the following:

\begin{itemize}

\item If $Q_i$ is exact, then for any $(v_1,v_2)\in\IO_Q^{s,f}$ such that $v_j(a_1)=1$ for $j=1,2$:
\begin{align*}
\tm_Q^\F(\gamma_{v_1,v_2}^{ex})&=|v_1|^{1+\eps}+1+\max(|v_1|,|v_2|)^{1+\eps}+1+|v_2|^{1+\eps} \\
&\leq 3\max(|v_1|,|v_2|)^{1+\eps}+2 \\
\sp_Q^\F(\gamma_{v_1,v_2}^{ex})&=\max(|v_1|,|v_2|)
\end{align*}
Further, for any $v_1,v_2\in V^{d+1}$ such that $w_{d+1}(v_1)=w_{d+1}(v_2)$, 
\begin{align*}
\tm_Q^\F(\gamma_{v_1,v_2,s}^{ex})=\tm_Q^\F(\gamma_{v_1,v_2,f}^{ex})&=\max(|v_1|,|v_2|)^{1+\eps}=|v_j|^{1+\eps} \\
\sp_Q^\F(\gamma_{v_1,v_2,s}^{ex})=\sp_Q^\F(\gamma_{v_1,v_2,f}^{ex})&=|v_j|
\end{align*}

\item If $Q_i$ is (right) matching, then for any $(v_1,v_2)\in\IO_Q^{s,f}$ such that $v_j(a_1)=1$ for $j=1,2$:
\begin{align*}
\tm_Q^\F(\gamma_{v_1,v_2}^m)&=|v_1|^{1+\eps}+\max(|v_1|,|v_2|)^{1+\eps}+|v_2|^{1+\eps} \\
&\leq3\max(|v_1|,|v_2|)^{1+\eps} \\
\sp_Q^\F(\gamma_{v_1,v_2}^m)&=\max(|v_1|,|v_2|)
\end{align*}
Further, for any $v_1,v_2\in V^{d+1}$ such that $w_{d+1}(v_1)=w_{d+1}(v_2)$, 
\begin{align*}
\tm_Q^\F(\gamma_{v_1,v_2,s}^m)=\tm_Q^\F(\gamma_{v_1,v_2,f}^m)&=\max(|v_1|,|v_2|)^{1+\eps}=|v_j|^{1+\eps} \\
\sp_Q^\F(\gamma_{v_1,v_2,s}^m)=\sp_Q^\F(\gamma_{v_1,v_2,f}^m)&=|v_j|
\end{align*}

\end{itemize}

Hence, by transitivity, for any $(v_1,s_1,v_2,s_2)\in\IO_Q$, there exists a computation $\gamma$ realizing $(v_1,v_2)\in\IO_Q^{s_1,s_2}$ and satisfying $\tm_Q^\F(\gamma)\leq5M^{1+\eps}+2$ and $\sp_Q^\F(\gamma)\leq M$ for $M=\max(|v_1|,|v_2|)$.

For the operation $\set$, note that any $(u_1,u_2)\in\IO_\set^{s,f}$ is realized by $\gamma_v^\set$ for $v=u_2(\I_{d+1})$.  In this case, $|u_1|=|v|=|u_2|$, $\tm_\set^\F(\gamma_v^\set)=2|v|^{1+\eps}+2$, and $\sp_\set^\F(\gamma_v^\set)=|v|$.

Further, for any $(u_1,u_2)\in\IO_\set^{f,f}$ such that $u_1\neq u_2$, $u_1(b)=1=u_2(b)$ and $w_{d+1}(v_1)=w_{d+1}(v_2)$ for $v_j=u_j(\I_{d+1})$.  But then setting $u\in\GL_\set$ such that $u(b)=w_{d+1}(v_j)$ and $u(\I_{d+1})=1$, the computations $\gamma_{v_j}^\set$ realize $(u,u_j)\in\IO_\set^{s,f}$.  Hence, letting $\gamma_{v_1,v_2}^\set$ be the concatenation of the inverse computation $(\gamma_{v_1}^\set)^{-1}$ with $\gamma_{v_2}^\set$, it follows that $\gamma_{v_1,v_2}^\set$ realizes $(u_1,u_2)\in\IO_\set^{f,f}$ and satisfies 
\begin{align*}
\tm_\set^\F(\gamma_{v_1,v_2}^\set)&=\sum\tm_\set^\F(\gamma_{v_j}^\set)=4|v_j|^{1+\eps}+4 \\
\sp_\set^\F(\gamma_{v_1,v_2}^\set)&=\max\bigl(\sp_\set^\F(\gamma_{v_j}^\set)\bigr)=|v_j|
\end{align*}
with $|u_1|=|v_1|=|v_2|=|u_2|$.

Thus, as all other relations of $\set$ are realized by an empty computation, it follows that $\TMSP_{\TTape^{d+1}}^\F\preceq(n^{1+\eps},n)$.

\end{proof}

\medskip

\subsection{Sequential Computations}\label{sec-seq-comp} \

\medskip

While the iterative construction of the previous section produced Turing tapes of any rank, \Cref{lem-ttape2} gives the same upper bound on the complexity regardless of rank.  As such, there is no evident improvement achieved from the iterative step, and so its purpose is as yet unclear.  To address this, a new measure of computational efficiency must be introduced.  This method is specific to Turing tapes and, crucially, is tailored to gauge how well the object simulates the Turing rewriting system to which it corresponds.

\begin{definition}[Sequential computations]\label{def-seq-comp}

Let $Q^1,\dots,Q^k\in\Q_\T$ and denote $Q^j=(Q^j_\ell)_{\ell\in\I_\T}$ for all $j\in\overline{k}$.  Then, an \emph{$i$-computation of $\T$ with history $(Q^1,\dots,Q^k)$} is a tuple $\mathcal{C}=(w_0,w_1,\dots,w_k)$ of words over $A_i$ such that $Q^j_i(w_{j-1})=w_j$ for all $j\in\overline{k}$.  Note that any computation of $\T$ determines an $i$-computation of $\T$, while any tuple of $\ell$-computations $(\mathcal{C}_\ell)_{\ell\in\I_\T}$ of $\T$ with a fixed history uniquely determine a computation of $\T$ with that history.

Now, suppose $\TTape_d$ is an $i$-Turing tape of $\T$ of some rank $d$.   Then, for any $i$-computation $\mathcal{C}=(w_0,w_1,\dots,w_k)$ of $\T$ with history $(Q^1,\dots,Q^k)$, a \emph{sequential computation of $\TTape_d$ approximating $\mathcal{C}$} is a sequence $\gamma_\mathcal{C}=(\gamma_1,\dots,\gamma_k)$ such that for all $j\in\overline{k}$, $\gamma_j$ is an input-output computation of $\EXP(Q^j)$ that $\EXP$-realizes $(v_{j-1},v_j)\in\IO_{Q^j}^{s,f}$, where $v_0,v_1,\dots,v_k\in V^{d}$ satisfy $w_d(v_\ell)=w_\ell$ for all $\ell\in\overline{0,k}$.  In this case, $\gamma_\mathcal{C}$ is said to \emph{start} and to \emph{end} with $v_0$ and $v_k$, respectively.  Moreover, the \emph{time} and \emph{space} of $\gamma_\mathcal{C}$ are defined to be $\tm(\gamma_\mathcal{C})=\sum_{j=1}^n\tm_{Q^j}(\gamma_j)$ and $\sp(\gamma_\mathcal{C})=\max(\sp_{Q^j}(\gamma_j))$, respectively.

If $f$ and $g$ are a pair of growth functions, then $\TTape_d$ is said to have \emph{sequential complexity at most $(f,g)$}, denoted $\SEQ(\TTape_d)\leq(f,g)$, if for any $i$-computation $\mathcal{C}=(w_0,w_1,\dots,w_k)$ of $\T$, there exists a sequential computation $\gamma_\mathcal{C}$ of $\TTape_d$ approximating $\mathcal{C}$ such that $\tm(\gamma_\mathcal{C})\leq f(n)$ and $\sp(\gamma_\mathcal{C})\leq g(n)$ for $n=\max(|w_0|,|w_k|,k)$.  As in previous settings, if $f',g'$ are another pair of growth functions satisfying $(f',g')\sim(f,g)$, then $(f',g')$ is called an \emph{asymptotic upper bound on the sequential complexity} of $\TTape_d$, denoted $\SEQ(\TTape_d)\preceq(f',g')$.

\end{definition}

Note that the definition of sequential complexity presented above is given exclusively in terms of computations of the expansion of a Turing tape.  As in the case with standard complexity, estimating sets can be introduced, yielding a concept of estimated sequential complexity which can be related to sequential complexity with an analogue of \Cref{prop-complexity-estimate}.  However, given the level of care needed for the proof of the following crucial statement, this concept is not elaborated upon in this manuscript.

\begin{proposition}\label{prop-ttape-seq-compl}
For all $d\in\N-\{0\}$, $\SEQ(\TTape_{\T,i}^{d,\eps})\preceq(n^{\frac{d+1}{d}+\varepsilon},n)$ for any $\eps>0$.
\end{proposition}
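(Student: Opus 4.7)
The plan is to proceed by induction on $d$.

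For the base case $d=1$, consider an $i$-computation $\mathcal{C} = (w_0,\dots,w_k)$ with history $(Q^1,\dots,Q^k)$ and set $n = \max(|w_0|, |w_k|, k)$. Each $Q^j_i$ changes the word length by at most the constant $c_\T = \max_{Q \in \Q_\T}|Q|$, so all intermediate $|w_j|$ are $O(n)$. Since $V^{1} = L_{A_i}^+$, the values $v_j \in V^{1}$ are forced to be $v_j = w_j$. \Cref{lem-ttape1} provides, for each $j$, an $\EXP$-realizing computation of $\EXP(Q^j)$ of time $\leq Cn^{1+\eps}$ and space $\leq Cn$; summing over $k \leq n$ steps gives total time $\leq Cn^{2+\eps}$, matching $(d+1)/d+\eps=2+\eps$ for $d=1$.

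For the inductive step from $d$ to $d+1$, the crucial observation is that each $\T$-transition alters the word on tape $i$ only within the last $c_\T$ positions, and exact fragments $[u \to v]$ additionally force $|w|=|u|$ to be constant. So a block of $k_0$ consecutive transitions modifies only the last $O(k_0)$ letters of the tape-$i$ word. I would fix $k_0 = \lceil n^{d/(d+1)} \rceil$ and partition $\overline{k}$ into $m \leq n/k_0 + 1$ consecutive blocks of size $\leq k_0$, then choose values $v_0, \dots, v_k \in V^{d+1}$ so that within each block all the $v_j(a_1)$ coincide (the frozen ``prefix'') while $v_j(\I_d')$ stores the $O(k_0)$-length ``tail'' distributed across the rank-$d$ subtape $\TT^d[\I_d']$; at block boundaries, $v_j$ would be refreshed by transferring letters between $a_1$ and $\I_d'$.

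For $j$ in the interior of a block, I would build $\gamma_j$ inside $\G_{Q^j}$ (see \Cref{fig-TTAPE2}) by routing the two flanking $\IRT^d.\rea()$-edges through the trivial path $e_0$ of $\rea$ (cost $0$) and realizing the central $\IRT^d.Q_0()$-edge by a one-step input-output computation of $\TT^d.Q^j()$ sending $v_{j-1}(\I_d') \to v_j(\I_d')$. Concatenating these single steps across a whole block assembles a rank-$d$ sequential computation of length $\leq k_0$ on words of size $\leq Ck_0$, of aggregate cost $\leq (Ck_0)^{(d+1)/d + \eps'}$ by the inductive hypothesis. For $j$ at a block boundary, the corresponding $\IRT^d.\rea()$-edge in $\gamma_j$ absorbs the genuine rearrangement, costing $\leq Cn^{1+\eps'}$ by \Cref{lem-TTd}. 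Summing,
\[
\tm(\gamma_\mathcal{C}) \lesssim \frac{n}{k_0}(Ck_0)^{(d+1)/d + \eps'} + \frac{n}{k_0}\cdot n^{1+\eps'} \lesssim n \cdot k_0^{1/d + \eps'} + \frac{n^{2+\eps'}}{k_0},
\]
and $k_0 = n^{d/(d+1)}$ balances both summands at $n^{(d+2)/(d+1) + O(\eps')}$. Choosing $\eps'$ sufficiently small (in terms of $\eps$ and $d$) yields the desired bound; the space stays $O(n)$ since the rank-$d$ portion uses space $\leq Ck_0 \leq n$ and $\TMSP_{\IRT^d}\preceq(\cdot,n)$ bounds the boundary rearrangements linearly.

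The main obstacle will be verifying that each $\gamma_j$ is a bona fide single input-output computation of $\EXP(Q^j)$ realizing $(v_{j-1},v_j) \in \IO_{Q^j}^{s,f}$: interior $\gamma_j$ must have trivial rearrangements, boundary $\gamma_j$ must accommodate the nontrivial shift within the permissible topology of $\G_{Q^j}$, and the rank-$d$ single steps across each block must fit together into a legitimate rank-$d$ sequential computation so that the inductive hypothesis actually applies. The two shapes of $\G_{Q^j}$ in \Cref{fig-TTAPE2} (exact versus right-matching) need separate treatment, though the exact case is substantially simplified by the constant-length constraint on words matched by exact fragments.
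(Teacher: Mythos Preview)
Your proposal is correct and follows essentially the same strategy as the paper: induction on $d$, block decomposition of the history at scale $k_0 \approx n^{d/(d+1)}$, freezing a prefix on $a_1$ while applying the rank-$d$ inductive hypothesis to the $O(k_0)$-length tail in $\I_d'$, trivial routing of interior $\rea$-edges through $e_0$, and a genuine rearrangement at each block boundary costing $O(n^{1+\eps})$ via $\TMSP_{\IRT^d}$.

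Two small remarks. First, you cannot ``choose $\eps'$ sufficiently small'': the construction of $\TTape^{d+1,\eps}$ uses $\TTape^{d,\eps}$ with the \emph{same} $\eps$, so $\eps'=\eps$ is forced. Fortunately this already works, since $n\cdot k_0^{1/d+\eps} = n^{(d+2)/(d+1)+d\eps/(d+1)}$ and $d/(d+1)<1$, so the exponent's $\eps$-contribution does not grow along the induction. Second, the paper makes your remark about exact fragments precise via an explicit two-case split on the size of $w_{\ell_p},w_{r_p}$: when either exceeds $4K_\T n^{d/(d+1)}$ no exact fragment can apply anywhere in the block (each $|w_j|>K_\T$), so the nonempty-prefix strategy and the $[a_1=1]$ guard never conflict; when both are small the prefix is taken empty and the whole block already has parameter $O(n^{d/(d+1)})$.
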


\begin{proof} 

For any $d\in\N$, set $\a_d=\frac{d}{d+1}$ and $\b_d=1/\a_d$.

Fix an $i$-computation $\mathcal{C}=(w_0,w_1,\dots,w_k)$ of $\T$ with history $(Q^1,\dots,Q^k)$.  Without loss of generality, suppose $k\geq1$, i.e the $i$-computation is nonempty.  The proof proceeds by induction on $d$, showing that $\SEQ(\TTape^d)\preceq(n^{\b_d+\eps},n)$ for $\TTape^d=\TTape_{\T,i}^{d,\eps}$.

For the base case, let $d=1$.  By \Cref{lem-ttape1}, there exists a constant $C_1\in\N$ such that $\TMSP_{\TTape^1}\leq(C_1n^{1+\eps}+C_1,C_1n+C_1)$.  So, for all $j\in\overline{k}$, there exists a computation $\gamma_j$ of $\EXP(Q^j)$ that $\EXP$-realizes $(w_{j-1},w_j)\in\IO_{Q^j}^{s,f}$ and satisfies:
\begin{align*}
\tm_{Q^j}(\gamma_j)&\leq C_1\max(|w_{j-1}|,|w_j|)^{1+\eps}+C_1 \\
\sp_{Q^j}(\gamma_j)&\leq C_1\max(|w_{j-1}|,|w_j|)+C_1
\end{align*}

Further, letting $K_\T=\max_{Q\in\Q_\T}|Q|$, then $\bigl||w_j|-|w_{j-1}|\bigr|\leq K_\T$ for all $j\in\overline{k}$.  As a result, $|w_j|\leq|w_0|+k\cdot K_\T$ for all $j$.  In particular, for $n=\max(|w_0|,|w_k|,k)$, $|w_j|\leq(1+K_\T)n$ for all $j\in\overline{k}$, and so
\begin{align*}
\tm_{Q^j}(\gamma_j)&\leq C_1(1+K_\T)^{1+\eps}n^{1+\eps}+C_1 \\
\sp_{Q^j}(\gamma_j)&\leq C_1(1+K_\T)n+C_1
\end{align*}
But then $\gamma_\mathcal{C}=(\gamma_1,\dots,\gamma_k)$ is a sequential computation of $\TTape_{\T,i}^{1,\eps}$ approximating $\mathcal{C}$ and satisfying
\begin{align*}
\tm(\gamma_\mathcal{C})&=\sum\tm_{Q^j}(\gamma_j)\leq k\bigl(C_1(1+K_\T)^{1+\eps}n^{1+\eps}+C_1\bigr) \\
&\leq C_1(1+K_\T)^{1+\eps}n^{2+\eps}+C_1n\leq 2C_1(1+K_\T)^{1+\eps}n^{2+\eps} \\
&=2C_1(1+K_\T)^{1+\eps}n^{\b_1+\eps} \\
\sp(\gamma_\mathcal{C})&\leq C_1(1+K_\T)n+C_1
\end{align*}

Hence, as $C_1$ and $K_\T$ are constants that do not depend on the $i$-computation $\mathcal{C}$, the asymptotic bound $\SEQ(\TTape_{\T,i}^{1,\eps})\preceq(n^{2+\eps},n)=(n^{\b_1+\eps},n)$ follows.

Now, suppose $\SEQ(\TTape_{\T,i}^{d,\eps})\preceq(n^{\b_d+\eps},n)$.  So, there exists a constant $C_d\in\N$ such that $\SEQ(\TTape_{\T,i}^{d,\eps})\leq(C_dn^{\b_d+\eps}+C_d,C_dn+C_d)$.  Further, by \Cref{lem-TTd}, it can be assumed that $C_d$ satisfies $\TMSP_{\IRT^d}\leq(C_dn^{1+\eps}+C_d,C_dn+C_d)$.

As above, let $n=\max(|w_0|,|w_k|,k)$.  Then, fix $t\leq2n^{1-\a_d}$ and $\ell_j,r_j\in\overline{k}$ for all $j\in\overline{t}$ such that:

\begin{itemize}

\item $\ell_1=0$, $r_t=k$

\item $\ell_{j+1}=r_j$ for $j\in\overline{t-1}$

\item $1\leq r_j-\ell_j\leq 2n^{\a_d}$ for all $j\in\overline{t}$.

\end{itemize}
For example, letting $m=\lceil n^{\a_d}\rceil$, one can set $t=\lceil k/m \rceil$ and then take $r_j=jm$ for $j\in\overline{t-1}$ and $r_t=k$.  In this case, note that $n^{1-\a_d}m\geq n\geq k$, so that $n^{1-\a_d}+1\geq t$.

Then, for all $p\in\overline{t}$, let $\mathcal{C}_p=(w_{\ell_p},w_{\ell_p+1},\dots,w_{r_p})$ be the $i$-computation of $\T$ with history $(Q^{\ell_p+1},\dots,Q^{r_p})$.  For each $p$, set $m_p=\max(|w_{\ell_p}|,|w_{r_p}|,r_p-\ell_p)$.  Note that by the definition of the indices, $m_p\leq\max(|w_{\ell_p}|,|w_{r_p}|,2n^{\a_d})$.

Fix $p\in\overline{t}$ and $v_p\in V^{d+1}$ such that $w_{d+1}(v_p)=w_{\ell_p}$.  In the next two cases, a sequential computation $\gamma_{\mathcal{C}_p}^{v_p}$ of $\TTape^{d+1}$ is constructed which approximates $\mathcal{C}_p$, starts with $v_p$, and satisfies certain sequential complexity bounds.

\medskip

\noindent
\textbf{Case 1.}  $|w_{\ell_p}|,|w_{r_p}|\leq 4K_\T n^{\a_d}$.

Then, $m_p\leq(4K_\T+2)n^{\a_d}$.

By the inductive hypothesis, there then exists a sequential computation $\gamma_{\mathcal{C}_p}^d=(\gamma_{\ell_p+1}^d,\dots,\gamma_{r_p}^d)$ of $\TTape^d$ approximating $\mathcal{C}_p$ and satisfying 
\begin{align*}
\tm(\gamma_{\mathcal{C}_p}^d)&\leq C_dm_p^{\b_d+\eps}+C_d \\
\sp(\gamma_{\mathcal{C}_p}^d)&\leq C_dm_p+C_d
\end{align*}

So, there exist $v_{\ell_p}^d,\dots,v_{r_p}^d\in V^{d}$ with $w_d(v_j^d)=w_j$ for all $j$ such that $\gamma_j^d$ is an input-output computation of the operation $\EXP(Q^j)$ of $\EXP(\TTape^d)$ which $\EXP$-realizes $(v_{j-1}^d,v_j^d)\in\IO_{Q^j}^{s,f}$.

Let $v_j^{d+1}\in V^{d+1}$ such that $v_j^{d+1}(a_1)=1$ and $v_j^{d+1}(\I_d')=(v_j^d)_d$.  By \Cref{lem-op-e}(1), there exists a computation $\eta_j$ of the operation $\EXP(Q^j_0)$ of $\EXP(\IRT^d)$ that $\EXP$-realizes $(v_{j-1}^{d+1},v_j^{d+1})\in\IO_{\IRT^d}^{s,f}$ and has the same complexity as $\gamma_j^d$, i.e it satisfies $\tm_{Q^j_0}(\eta_j)=\tm_{Q^j}(\gamma_j^d)$ and $\sp_{Q^j_0}(\eta_j)=\sp_{Q^j}(\gamma_j^d)$.

Fix $j\in\overline{\ell_p+1,r_p}$ and let $u_{j-1},u_j\in L_{Q^j}^\exp$ such that $u_\ell(\I_{d+1})=v_\ell^{d+1}$ and $u_\ell(\I_{Q^j}^\inv)=1_{Q^j}^\inv$.

If $Q^j_i$ is matching, then as above, \Cref{lem-op-e}(1) then yields a computation $\chi_j$ of the operation $\EXP(Q^j)$ of $\EXP(\TTape^{d+1})$ between $(u_{j-1},q_1)$ and $(u_j,q_2)$ and satisfying $\tm_{Q^j}(\chi_j)=\tm_{Q^j_0}(\eta_j)$, $\sp_{Q^j}(\chi_j)=\sp_{Q^j_0}(\eta_j)$.  Then, $\chi_j$ can be augmented by the edges $\pi_{e_1}(e_0)$ and $\pi_{e_3}(e_0)$, where $e_0\in E_{\rea}^\exp$ is the unlabelled edge of $\rea$, yielding an input-output computation $\gamma_j^{d+1}$ of $\EXP(Q^j)$.

Conversely, if $Q^j_i$ is exact, then again \Cref{lem-op-e}(1) yields a computation $\chi_j$ of the operation $\EXP(Q^j)$ between $(u_{j-1},q_2)$ and $(u_j,q_3)$ and with the same complexity as $\eta_j$.  Since $u_j(a_1)=1$, $e_4$ is applicable to $(u_j,q_3)$; similarly, $e_2^{-1}$ is applicable to $(u_{j-1},q_2)$.  Augmenting $\chi_j$ with these two applications, the resulting computation can then be augmented by the image of the unlabelled edge of $\EXP(\rea)$ as above, again producing an input-output computation $\gamma_j^{d+1}$ of $\EXP(Q^j)$.

In both cases, $\gamma_j^{d+1}$ $\EXP$-realizes $(v_{j-1}^{d+1},v_j^{d+1})\in\IO_{Q^j}^{s,f}$ and satisfies:
\begin{align*}
\tm_{Q^j}(\gamma_j^{d+1})&\leq\tm_{Q^j}(\chi_j)+4=\tm_{Q^j_0}(\eta_j)+4 \\
\sp_{Q^j}(\gamma_j^{d+1})&=\sp_{Q^j}(\chi_j)=\sp_{Q^j_0}(\eta_j)
\end{align*}

Next, note that $w_{d+1}(v_{\ell_p}^{d+1})=w_d(v_{\ell_p}^d)=w_{\ell_p}=w_{d+1}(v_p)$.  As a result, \Cref{lem-TTd} implies the existence of a computation $\phi_p$ of $\EXP(\rea)$ that $\EXP$-realizes $(v_p,v_{\ell_p}^{d+1})\in\IO_{\rea}^{s,s}$ and satisfies $\tm_\rea(\phi_p)\leq C_d|v_p|^{1+\eps}+C_d$ and $\sp_\rea(\phi_p)\leq C_d|v_p|+C_d$.  Using \Cref{lem-op-e}(1) again then yields a computation $\phi_p'$ of $\EXP(Q^{\ell_p+1})$ that $\EXP$-realizes $(v_p,v_{\ell_p})\in\IO_{Q^{\ell_p+1}}^{s,s}$ and has the same complexity bounds as $\phi_p$.

Then, $\phi_p'$ can be concatenated with $\gamma_{\ell_p+1}^{d+1}$ to produce a computation $\psi_p$ of the operation $\EXP(Q^{\ell_p+1})$ of $\EXP(\TTape^{d+1})$ that $\EXP$-realizes $(v_p,v_{\ell_p+1})\in\IO_{Q^{\ell_p+1}}^{s,f}$.

As a result, $\gamma_{\mathcal{C}_p}^{v_p}=(\psi_p,\gamma_{\ell_p+2}^{d+1},\dots,\gamma_{r_p}^{d+1})$ is a sequential computation of $\TTape^{d+1}$ approximating $\mathcal{C}_p$ starting with $v_p$ and satisfying:
\begin{align*}
\tm(\gamma_{\mathcal{C}_p}^{v_p})&=\tm_{Q^{\ell_p+1}}(\phi_p')+\sum_j\tm_{Q^j}(\gamma_j^{d+1})\leq C_d|v_p|^{1+\eps}+C_d+\sum_j(\tm_{Q^j}(\gamma_j^d)+4) \\
&=C_d|w_{\ell_p}|^{1+\eps}+\tm(\gamma_{\mathcal{C}_p}^d)+4(r_p-\ell_p)+C_d \\
&\leq C_dm_p^{1+\eps}+C_dm_p^{\b_d+\eps}+8n^{\a_d}+2C_d\leq2C_dm_p^{\b_d+\eps}+8n^{\a_d}+2C_d \\
&\leq2C_d(4K_\T+2)^{\b_d+\eps}n^{\a_d(\b_d+\eps)}+8n^{\a_d}+2C_d \\
&\leq 2C_d(4K_\T+2)^{\b_d+\eps}n^{1+\eps}+8n^{\a_d}+2C_d \\
\medskip
\sp(\gamma_{\mathcal{C}_p}^{v_p})&=\max(\sp_{Q^{\ell_p+1}}(\phi_p'),\sp_{Q^j}(\gamma_j^{d+1}))\leq\max(C_d|v_p|+C_d,\sp_{Q^j}(\gamma_j^d)) \\
&\leq\max(C_d|v_p|+C_d,\sp(\gamma_{\mathcal{C}_p}^d))\leq C_dm_p+C_d \\
&\leq C_d(4K_\T+2)n^{\a_d}+C_d
\end{align*}

\medskip

\noindent
\textbf{Case 2.}  $\max(|w_{\ell_p}|,|w_{r_p}|)>4K_\T n^{a_d}$.

By the definition of $K_\T$, for any $j\in\overline{\ell_p+1,r_p}$, $\bigl| |w_j|-|w_{j-1}| \bigr|\leq K_\T$.  So, for all $j\in\overline{\ell_p,r_p}$, $|w_j|\geq\max(|w_{\ell_p}|,|w_{r_p}|)-K_\T(r_p-\ell_p)>2K_\T n^{\a_d}$.  In particular, $|w_j|>K_\T$ for all $j$, and so no exact fragment of a Turing action in $\Q_\T$ is applicable to $w_j$.  Hence, $Q^j_i$ is matching for all $j\in\overline{\ell_p+1,r_p}$.  
%Set $Q^j_i=[*a_j\to *b_j]$ for all $j$.

Suppose $|w_{\ell_p}|>4K_\T n^{\a_d}$ and set $w_{\ell_p}=ww_{\ell_p}'$ such that $|w_{\ell_p}'|=4K_\T n^{\a_d}$.  

Suppose that $w_j=ww_j'$ with $|w_j'|\geq(4n^{\a_d}-2(j-\ell_p))K_\T$ for some $j\in\overline{\ell_p,r_p-1}$.  Then, letting $Q^{j+1}_i=[*y_j\to*z_j]$, $y_j$ is a suffix of $w_j$ and satisfies $|y_j|\leq K_\T$.  As $j-\ell_p\leq r_p-\ell_p\leq2n^{\a_d}$, it follows that $|w_j'|\geq K_\T\geq|y_j|$.  So, $y_j$ is also a suffix of $w_j'$, meaning that $Q^{j+1}_i$ is applicable to $w_j'$.  As a result, $w_{j+1}=ww_{j+1}'$ for $w_{j+1}'=w_j'y_j^{-1}z_j$ with
$$|w_{j+1}'|\geq|w_j'|-|y_j|\geq|w_j'|-K_\T\geq(4n^{\a_d}-2((j+1)-\ell_p))K_\T$$
Hence, $w_j=ww_j'$, $Q^j_i$ is applicable to $w_{j-1}'$, and $Q^j_i(w_{j-1})=wQ^j_i(w_{j-1}')=w_j$ for all $j$.

Conversely, if $|w_{\ell_p}|\leq4K_\T n^{\a_d}$, then necessarily $|w_{r_p}|>4K_\T n^{\a_d}$.  But then setting $w_{r_p}=ww_{r_p}'$ such that $|w_{r_p}'|=4K_\T n^{\a_d}$, analogous arguments apply to the inverse actions to produce $w_j'$ for all $j$.

So, there exists an $i$-computation $\mathcal{C}_p'=(w_{\ell_p}',w_{\ell_p+1}',\dots,w_{r_p}')$ of $\T$ with history $(Q^{\ell_p+1},\dots,Q^{r_p})$.  Letting $m_p'=\max(|w_{\ell_p}'|,|w_{r_p}'|,r_p-\ell_p)$, there then exists a sequential computation $\rho_{\mathcal{C}_p'}^d=(\rho_{\ell_p+1}^d,\dots,\rho_{r_p}^d)$ approximating $\mathcal{C}_p'$ and satisfying
\begin{align*}
\tm(\rho_{\mathcal{C}_p'}^d)&\leq C_d(m_p')^{\b_d+\eps}+C_d \\
\sp(\rho_{\mathcal{C}_p'}^d)&\leq C_dm_p'+C_d
\end{align*}
Note that in either case, one of $|w_{\ell_p}'|$ or $|w_{r_p}'|$ is equal to $4K_\T n^{\a_d}$, so that the other is at most $4K_\T n^{\a_d}+K_\T(r_p-\ell_p)\leq6K_\T n^{\a_d}$.  This yields $m_p'\leq(6K_\T+2)n^{\a_d}$.

So, there exist $v_{\ell_p}',\dots,v_{r_p}'\in V^{d}$ with $w_d(v_j')=w_j'$ for all $j$ such that $\rho_j^d$ is an input-output computation of the operation $\EXP(Q^j)$ of $\EXP(\TTape^d)$ which $\EXP$-realizes $(v_{j-1}',v_j')\in\IO_{Q^j}^{s,f}$.  

Let $v_j^{d+1}\in V^{d+1}$ such that $v_j^{d+1}(a)=w$ and $v_j^{d+1}(\I_d')=(v_j')_d$.  By \Cref{lem-op-e}(1), there exists a computation $\eta_j$ of the operation $\EXP(Q_0^j)$ of $\EXP(\IRT^d)$ that $\EXP$-realizes $(v_{j-1}^{d+1},v_j^{d+1})\in\IO_{\IRT^d}^{s,f}$ and has complexity given by $\tm_{Q_0^j}(\eta_j)=\tm_{Q^j}(\rho_j^d)$ and $\sp_{Q_0^j}(\eta_j)=\sp_{Q^j}(\rho_j^d)+|w|$.

As in Case 1, for all $j\in\overline{\ell_p+1,r_p}$, there then exists a computation $\gamma_j^{d+1}$ of the computation $\EXP(Q^j)$ of $\EXP(\TTape^{d+1})$ which $\EXP$-realizes $(v_{j-1}^{d+1},v_j^{d+1})\in\IO_{Q^j}^{s,f}$ and satisfies:
\begin{align*}
\tm_{Q^j}(\gamma_j^{d+1})&\leq\tm_{Q_0^j}(\eta_j)+4 \\
\sp_{Q^j}(\gamma_j^{d+1})&=\sp_{Q_0^j}(\eta_j)
\end{align*}
Further, note that $w_{d+1}(v_j^{d+1})=ww_j'=w_j$ for all $j$.  In particular, as $w_d(v_p)=w_{\ell_p}$, there again exists a computation $\phi_p'$ of the operation $\EXP(Q^{\ell_p+1})$ of $\EXP(\TTape^{d+1})$ that $\EXP$-realizes $(v_p,v_{\ell_p})\in\IO_{Q^{\ell_p+1}}^{s,s}$ and satisfies 
\begin{align*}
\tm_{Q^{\ell_p+1}}(\phi_p')&\leq C_d|v_p|^{1+\eps}+C_d \\
\sp_{Q^{\ell_p+1}}(\phi_p')&\leq C_d|v_p|+C_d
\end{align*}

As a result, letting $\psi_p$ be the computation of $\EXP(Q^{\ell_p+1})$ obtained by concatenating $\phi_p'$ and $\gamma_{\ell_p+1}^{d+1}$, then again $\gamma_{\mathcal{C}_p}^{v_p}=(\psi_p,\gamma_{\ell_p+2}^{d+1},\dots,\gamma_{r_p}^{d+1})$ is a sequential computation of $\TTape^{d+1}$ approximating $\mathcal{C}_p$ starting with $v_p$.  Further, the sequential computation satisfies:

\begin{align*}
\tm(\gamma_{\mathcal{C}_p}^{v_p})&=\tm_{Q^{\ell_p+1}}(\phi_p')+\sum_j\tm_{Q^j}(\gamma_j^{d+1})\leq C_d|v_p|^{1+\eps}+C_d+\sum_j(\tm_{Q^j}(\rho_j^d)+4) \\
&=C_d|w_{\ell_p}|^{1+\eps}+\tm(\rho_{\mathcal{C}_p'}^d)+4(r_p-\ell_p)+C_d \\
&\leq C_d|w_{\ell_p}|^{1+\eps}+C_d(m_p')^{\b_d+\eps}+8n^{\a_d}+2C_d \\
&\leq C_d|w_{\ell_p}|^{1+\eps}+C_d(6K_\T+2)^{\b_d+\eps}n^{\a_d(\b_d+\eps)}+8n^{\a_d}+2C_d \\
&\leq C_d|w_{\ell_p}|^{1+\eps}+C_d(6K_\T+2)^{\b_d+\eps}n^{1+\eps}+8n^{\a_d}+2C_d \\
\medskip
\sp(\gamma_{\mathcal{C}_p}^{v_p})&=\max(\sp_{Q^{\ell_p+1}}(\phi_p'),\sp_{Q^j}(\gamma_j^{d+1}))\leq\max(C_d|v_p|+C_d,\sp_{Q^j}(\rho_j^d)+|w|) \\
&\leq C_d\max(|v_p|,m_p')+C_d+|w| \\
&\leq C_d\max(|w_{\ell_p}|,(6K_\T+2)n^{\a_d})+C_d+|w| \\
&\leq C_d(3|w_{\ell_p}|+2n^{\a_d})+C_d+|w_{\ell_p}| \\
&\leq(3C_d+1)|w_{\ell_p}|+2C_dn^{\a_d}+C_d
\end{align*}

\medskip

In either case, letting $L_d=3C_d(6K_\T+2)^{\b_d+\eps}+8$, $\gamma_{\mathcal{C}_p}^{v_p}$ is a sequential computation of $\TTape^{d+1}$ approximating $\mathcal{C}_p$, starting with $v_p$, and satisfying:
\begin{align*}
\tm(\gamma_{\mathcal{C}_p}^{v_p})&\leq L_d|w_{\ell_p}|^{1+\eps}+L_dn^{1+\eps}+L_d \\
\sp(\gamma_{\mathcal{C}_p}^{v_p})&\leq L_d|w_{\ell_p}|+L_dn+L_d
\end{align*}

Now, fix $v_{1,s}\in V^{d+1}$ such that $w_{d+1}(v_1^s)=w_0$.  As $\ell_1=0$, the computation $\gamma_{\mathcal{C}_1}^{v_{1,s}}$ exists as above.  Then, let $v_{2,s}\in V^{d+1}$ be the value with which $\gamma_{\mathcal{C}_1}$ ends.  By construction, it then follows that $w_{d+1}(v_{2,s})=w_{r_1}=w_{\ell_2}$, so that $\gamma_{\mathcal{C}_2}^{v_{2,s}}$ exists.

Continuing this way, there exist $v_{1,s},\dots,v_{t,s},v_{t+1,s}\in V^{d+1}$ such that for all $p\in\overline{t}$, $\gamma_{\mathcal{C}_p}^{v_{p,s}}$ is a sequential computation as above which starts and ends with $v_{p,s}$ and $v_{p+1,s}$, respectively.

Hence, $\gamma_{\mathcal{C}_1}^{v_{1,s}},\dots,\gamma_{\mathcal{C}_t}^{v_{t,s}}$ can be concatenated to produce a sequential computation $\gamma_{\mathcal{C}}$ of $\TTape^{d+1}$ approximating $\mathcal{C}$ and satisfying:
\begin{align*}
\tm(\gamma_{\mathcal{C}})&=\sum_p\tm(\gamma_{\mathcal{C}_p}^{v_{p,s}})\leq L_d\sum_p|w_{\ell_p}|^{1+\eps}+L_dtn^{1+\eps}+L_d \\
&\leq L_dt\max_p|w_{\ell_p}|^{1+\eps} +L_dtn^{1+\eps}+L_d \\
\medskip
\sp(\gamma_{\mathcal{C}})&=\max_p\sp(\gamma_{\mathcal{C}_p}^{v_{p,s}})\leq L_d\max_p|w_{\ell_p}|+L_dn+L_d
\end{align*}

But since $\bigl| |w_j|-|w_{j-1}| \bigr|\leq K_\T$ for all $j\in\overline{k}$, 
$$|w_j|\leq|w_0|+K_\T k\leq n+K_\T n=(K_\T+1)n$$
In particular, $\max_p|w_{\ell_p}|\leq(K_\T+1)n$.

Thus, as $t\leq2n^{1-\a_d}$, letting $C_{d+1}=4L_d(K_\T+2)^{1+\eps}$ yields:
\begin{align*}
\tm(\gamma_\mathcal{C})&\leq L_dt((K_\T+1)^{1+\eps}+1)n^{1+\eps}+L_d \\
&\leq2L_dn^{1-\a_d}\cdot2(K_\T+1)^{1+\eps}n^{1+\eps}+L_d \\
&\leq4L_d(K_\T+1)^{1+\eps}n^{2-\a_d+\eps}+L_d \\
&\leq C_{d+1}n^{1+\b_{d+1}+\eps}+C_{d+1} \\
\medskip
\sp(\gamma_{\mathcal{C}})&\leq L_d(K_\T+2)n+L_d\leq C_{d+1}n+C_{d+1}
\end{align*}

\end{proof}

\begin{corollary}\label{cor-ttape-eps}
	Let $\T$ be a Turing rewriting system over the Turing system hardware $(\I_\T,\A_\T)$.  Then for any $i\in\I_\T$ and any $\eps>0$, there exists an $i$-Turing tape $\TTape_{\T,i}^{\varepsilon}=\TTape_{\T,i}^\eps[\I_{i,\eps}]$ of $\T$ of some rank satisfying both $\TMSP_{\TTape_{\T,i}^\eps}\preceq(n^{1+\eps},n)$ and $\SEQ(\TTape_{\T,i}^\eps)\preceq(n^{1+\varepsilon},n)$.
\end{corollary}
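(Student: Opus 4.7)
The plan is to derive this corollary directly from the iterative construction of Turing tapes together with Proposition~\ref{prop-ttape-seq-compl}, mimicking the way Corollary~\ref{cor-counter-eps} was deduced from the sequence $\mathrm{Counter}_d$ and the way Corollary~\ref{cor-acceptor-eps} was deduced from the sequence $\waccept_+^{d,\eps}$. The key observation is that the Turing tapes $\TTape_{\T,i}^{d,\eps'}$ constructed in Sections~\ref{sec-turing-tapes} and~\ref{sec-improved-ttape} exist for every $d \in \N - \{0\}$, and the two relevant efficiency estimates scale differently in $d$: Lemmas~\ref{lem-ttape1} and~\ref{lem-ttape2} together give by induction $\TMSP_{\TTape_{\T,i}^{d,\eps'}} \preceq (n^{1+\eps'}, n)$ \emph{uniformly in $d$}, while Proposition~\ref{prop-ttape-seq-compl} gives $\SEQ(\TTape_{\T,i}^{d,\eps'}) \preceq (n^{\frac{d+1}{d}+\eps'}, n)$, which improves as $d$ grows.

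The proof proceeds in two short steps. First, fix the auxiliary parameter $\eps' = \eps/2$ and choose $d \in \N$ with $d \geq 2/\eps$, so that $\tfrac{1}{d} \leq \eps/2$. Then set $\TTape_{\T,i}^\eps \defeq \TTape_{\T,i}^{d,\eps/2}$ and take $\I_{i,\eps} = \I_d$ to be its value tape index set as provided by the iterative construction. By definition, this object is an $i$-Turing tape of $\T$ of rank $d$, so the existence clause is immediate.

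Second, check the two asymptotic bounds. For the time-space complexity, Lemmas~\ref{lem-ttape1} and~\ref{lem-ttape2} give
\[
\TMSP_{\TTape_{\T,i}^\eps} = \TMSP_{\TTape_{\T,i}^{d,\eps/2}} \preceq (n^{1+\eps/2}, n) \preceq (n^{1+\eps}, n),
\]
where the last $\preceq$ holds since $n^{1+\eps/2} \preceq n^{1+\eps}$. For the sequential complexity, Proposition~\ref{prop-ttape-seq-compl} gives
\[
\SEQ(\TTape_{\T,i}^\eps) = \SEQ(\TTape_{\T,i}^{d,\eps/2}) \preceq (n^{1+\frac{1}{d}+\eps/2}, n) \preceq (n^{1+\eps}, n),
\]
where the final $\preceq$ uses $\tfrac{1}{d} + \tfrac{\eps}{2} \leq \eps$ by the choice of $d$.

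There is no real obstacle here; the corollary is a packaging statement that combines the uniform time-space bound from Lemma~\ref{lem-ttape2} with the rank-dependent sequential bound from Proposition~\ref{prop-ttape-seq-compl}, exactly as in the analogous corollaries earlier in the paper. The only minor care needed is to absorb the two independent error terms (the fixed $\eps/2$ from the construction and the $1/d$ from the sequential estimate) into a single $\eps$, which is precisely what halving $\eps$ at the outset accomplishes.
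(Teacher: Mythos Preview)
Your proof is correct and follows essentially the same approach as the paper: choose $d$ with $d\eps>2$ (equivalently $d\geq 2/\eps$), set $\TTape_{\T,i}^\eps=\TTape_{\T,i}^{d,\eps/2}$, and invoke the inductive $\TMSP$ bound from Lemmas~\ref{lem-ttape1}--\ref{lem-ttape2} together with Proposition~\ref{prop-ttape-seq-compl}. The paper's version is terser, citing only Proposition~\ref{prop-ttape-seq-compl} and leaving the $\TMSP$ bound implicit, but the argument is the same.
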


\begin{proof}

For fixed $\eps>0$, let $d\in\N$ such that $d\eps>2$.  Then, by \Cref{prop-ttape-seq-compl}, the $i$-Turing tape $\TTape_{\T,i}^{d,\eps/2}$ of $\T$ of rank $d$ satisfies the statement.

\end{proof}

%%%%%%%%%%%%%%%%%%%%%%%%%%%%%%%%%%%%%%%%%%%%%%%%%%%%%%%%%%%%%%%%

\section{Proof of \Cref{main-theorem}}\label{sec-final-proof}

By \Cref{prop-machine-acc} and \Cref{lem-tm-system}, it suffices to show that for any $\eps>0$ and any Turing rewriting system $\T$ such that $\TM_\T\succeq n$, there exists an acceptor $\acc^\eps$ of $L_\T$ which satisfies $\TMSP_{\acc^\eps}\preceq(\TM_\T^{1+\eps},\TM_\T)$.  Moreover, \Cref{prop-acc-ch-wacc} then implies that it suffices to find a weak acceptor $\wacc^\eps$ of $L_\T$ satisfying $\WTMSP_{\wacc^\eps}\preceq_1(\TM_\T^{1+\eps},\TM_\T)$.

To this end, fix a Turing rewriting system $\T$ over a Turing system hardware $\H_\T=(\I_\T,\A_\T)$.  Let $k=|\I_\T|$ and fix an enumeration $\I_\T=\{i_1,\dots,i_k\}$ such that $i_1$ is the input tape of $\T$.  As adding an index to $\I_\T$ with an empty corresponding alphabet will not affect the language or complexity of $\T$, it may be assumed without loss of generality that $k\geq2$.  Further, let $(S_\T,E_\T)$ be the underlying directed graph defining $\T$.

Then, define the proper operation $\wacc^\eps=\wacc^\eps(a)$ as follows:

\medskip

\begin{enumerate}

\item For all $j\in\overline{k}$, there exists an instance $\TT_j\in\INST_{\wacc^\eps}$ constructed from the $i$-Turing tape $\TTape_{\T,i_j}^\eps$ defined in \Cref{cor-ttape-eps} and given by renaming the value tapes to disjoint index sets $\I_{j,\eps}$, i.e so that $\I_{j,\eps}\cap\I_{\ell,\eps}=\emptyset$ for $j\neq \ell$.

\medskip

\item $\I_{\wacc^\eps}^\int=\left(\bigsqcup_{j=1}^k\I_{j,\eps}\right)\sqcup\{b\}$, where $\A_{\wacc^\eps}(b)=\A_\T(i_1)$.

\medskip

\item For every $q\in S_\T$, there exists a corresponding state $p_q$ of $\wacc^\eps$.

\medskip

\item For every $e\in E_\T$, there exist $k-1$ corresponding states $s_e(1),\dots,s_e(k-1)$.  Moreover, setting $s_e(0)=p_{t(e)}$, $s_e(k)=p_{h(e)}$, and $\lab(e)=Q\in\Q_{\wacc^\eps}$, then there exist corresponding edges $e(1),\dots,e(k)$ where $e(j)=(s_e(j-1),s_e(j),\TT_{j}.Q())$ (see \Cref{fig-QEDGE} for the case where $e$ is not a loop).

\medskip

\item In addition to the $|S_\T|+(k-1)|E_\T|$ states introduced in (3), there are four more states:  The starting and finishing states comprising $\SF_{\wacc^\eps}$ and two more named $q$ and $q'$.

\medskip

\item Let $\act_0$ be the $0$-action over $\H_{\wacc^\eps}$ consisting of the guard fragments $[c=1]$ for all $c\in\I_{\wacc^\eps}^\int$.  Further, let $s_\T,f_\T\in S_\T$ be the starting and finishing states of $\T$.  Then, in addition to the $k|E_\T|$ edges introduced in (4), there are six more edges:
\begin{itemize}

\item $e_s=((\wacc^\eps)_s,q,\act_0)$

\item $e_\swap=(q,q',\swap^A(a,b))$

\item $e_\set=(q',p_{s_\T},\TT_{1}(b))$

\item $e_f=(p_{f_\T},(\wacc^\eps)_f,\act_0)$

\item The inverse edges $e_s^{-1}$ and $e_f^{-1}$

\end{itemize}

\end{enumerate}

\begin{figure}[hbt]
	\centering
	\begin{tikzpicture} 
[point/.style={inner sep = 4pt, circle,draw,fill=white},
mpoint/.style={inner sep = 1.7pt, circle,draw,fill=black},
FIT/.style args = {#1}{rounded rectangle, draw,  fit=#1, rotate fit=45, yscale=0.5},
FITR/.style args = {#1}{rounded rectangle, draw,  fit=#1, rotate fit=-45, yscale=0.5},
FIT1/.style args = {#1}{rounded rectangle, draw,  fit=#1, rotate fit=45, scale=2},
vecArrow/.style={
		thick, decoration={markings,mark=at position
		   1 with {\arrow[thick]{open triangle 60}}},
		   double distance=1.4pt, shorten >= 5.5pt,
		   preaction = {decorate},
		   postaction = {draw,line width=0.4pt, white,shorten >= 4.5pt}
	},
myptr/.style={decoration={markings,mark=at position 1 with %
    {\arrow[scale=3,>=stealth]{>}}},postaction={decorate}}
]

\begin{scope} [yshift = 0.2cm, xscale=0.85, yscale=0.4]    
	\node [anchor=west] at (-1, 3) {In $\G_\T$:};  
	\node (start) at (0,0) [point] {$q_1$};
	\node (finish) at (16, 0) [point] {$q_2$};
	
	\draw [->] (start) to node[midway, above]{
		$\begin{gathered}
		Q
		\end{gathered}$
	} node[midway, below]{$e$} (finish);
\end{scope}

\begin{scope} [yshift = -2.75cm, xscale = 0.85, yscale=0.4]
	\node [anchor=west] at (-1,3) {In $\G_\T'$:};
	\node at (8,4.5) {\Large$\Downarrow$};
	\node[fill,circle,minimum size=1.2cm] (start) at (0,0) [point] {$p_{q_1}$};
	\node[fill,circle,minimum size=1.2cm] (s1) at (4,0) [point] {$s_e(1)$};
	\node[fill,circle,minimum size=1.2cm] (s2) at (8,0) [point] {$s_e(2)$};
	\node[fill,circle,minimum size=1.2cm] (s3) at (12,0) [point] {\tiny $s_e(k-1)$};
	\node (q1) at (9.5,0) {};
	\node (q2) at (10.4,0) {};
	\node at (10,0) {$\dots$};
	\node[fill,circle,minimum size=1.2cm] (finish) at (16,0) [point] {$p_{q_2}$};
	
	\draw [->] (start) to node[midway, above]{
		$\begin{gathered}
		\TT_1.Q()
		\end{gathered}$
	} node[midway, below]{$e(1)$}(s1);
	
	\draw [->] (s1) to node[midway, above]{
		$\begin{gathered}
		\TT_2.Q()
		\end{gathered}$
	} node[midway, below]{$e(2)$}(s2);
	
	\draw [->] (s3) to node[midway, above]{
		$\begin{gathered}
		\TT_k.Q()
		\end{gathered}$
	} node[midway, below]{$e(k)$}(finish);
	
	\draw[->] (s2) to (q1);
	
	\draw[->] (q2) to (s3);
	
\end{scope}

%\begin{scope}
%\begin{scope} [yshift=3cm, xscale=0.75, yscale=0.3]      
%	\node [anchor=west] at (-1.5, 4.5) {Operation $\splt3^A (\mathbf{a}, \mathbf{b}, \mathbf{c})$};
%\end{scope}
%
%\begin{scope} [yshift = 3.2cm, xscale=0.85, yscale=0.4]      
%	\node (start) at (0,0) [point] {$s$};
%	\node (n2) at (3, 0) [point] {};
%	\node (finish) at (6, 0) [point] {$f$};
%	
%	\draw [->] (start) to node[midway, above]{
%		$\begin{gathered}
%		 \splt^A (\mathbf{a}, \mathbf{b})
%		\end{gathered}$
%	}  (n2);
%
%	\draw [->] (n2) to node[midway, above]{
%		$\begin{gathered}
%		 \splt^A (\mathbf{b}, \mathbf{c})
%		\end{gathered}$
%	}  (finish);
%\end{scope}	
%\end{scope}

\end{tikzpicture}	
	\caption{The $k$ edges of $\G_e'$ in $\wacc^\eps$ corresponding to a single edge of $\G_\T$.}
	\label{fig-QEDGE}
\end{figure}

Note that the naming conventions necessitate $\I_{\wacc^\eps}^\ext=\{a\}$ and $\I_{\wacc^\eps}^\val=\emptyset$.  Further, the convention of minimal hardware implies that $\A_{\wacc^\eps}(c)=\A_\T(i_j)$ for all $c\in\I_{j,\eps}$.

The disjointness of the index sets $\I_{j,\eps}$ described in (1) guarantees that $\G_{\wacc^\eps}$ satisfies condition (5) of \Cref{def-s1-graph}.  Similarly, condition (4) of \Cref{def-s1-graph} is ensured by the existence of the inverse edges $e_s^{-1}$ and $e_f^{-1}$ in (6).  The rest of the conditions are similarly verifiable, and hence $\G_{\wacc^\eps}$ is indeed an $S^*$-graph.

What's more, the definition of the action $\act_0$ labelling $e_s$ and $e_f^{-1}$ immediately implies that condition (O) is satisfied.  Thus, $\wacc^\eps$ is indeed a proper operation.

The subgraph of $\G_{\wacc^\eps}$ which consists of the states and edges constructed in (3) and (4) is denoted $\G_\T'$.  The states of $\G_\T'$ arising from (3) are called \emph{regular}, while those arising from (4) are called \emph{extra}.  

Further, for any $e\in E_\T$, the subgraph of $\G_\T'$ constructed from $e$ as in \Cref{fig-QEDGE} is denoted $\G_e'$.  As such, $\G_\T'$ can be viewed as a copy of the graph of $\G_\T$ where each single edge $e$ is replaced by a subgraph consisting of $k$ edges.  

With this terminology in hand, \Cref{fig-wacc-eps} then depicts the general layout of the proper operation $\wacc^\eps$, with the subgraph $\G_\T'$ `left blank' as its makeup depends on the makeup of the fixed Turing rewriting system $\T$.

\begin{figure}[hbt]
	\centering
	\begin{tikzpicture} 
[point/.style={inner sep = 4pt, circle,draw,fill=white},
mpoint/.style={inner sep = 1.7pt, circle,draw,fill=black},
FIT/.style args = {#1}{rounded rectangle, draw,  fit=#1, rotate fit=45, yscale=0.5},
FITR/.style args = {#1}{rounded rectangle, draw,  fit=#1, rotate fit=-45, yscale=0.5},
FIT1/.style args = {#1}{rounded rectangle, draw,  fit=#1, rotate fit=45, scale=2},
vecArrow/.style={
		thick, decoration={markings,mark=at position
		   1 with {\arrow[thick]{open triangle 60}}},
		   double distance=1.4pt, shorten >= 5.5pt,
		   preaction = {decorate},
		   postaction = {draw,line width=0.4pt, white,shorten >= 4.5pt}
	},
myptr/.style={decoration={markings,mark=at position 1 with %
    {\arrow[scale=3,>=stealth]{>}}},postaction={decorate}}
]

\begin{scope} [yshift = 0.2cm, xscale=0.85, yscale=0.4]    
	\node [anchor=west] at (-1, 4.5) {
	\begin{tabular}{l} 
		Object $\wacc^\eps(\textbf{a})$; \\
		\ \ \ $\TT_j$ is $\TTape_{\T,i_j}^\eps[\I_{j,\eps}]$ for $j\in\overline{k}$
		\end{tabular}};  
	\node (start) at (0,-1) [point] {$s$};
	\node (q1) at (2.75,-1) [point] {$q$};
	\node (q2) at (6,-1) [point] {$q'$};
	\node (ps) at (9.25,-1) [point] {$p_{s_\T}$};
	\node (pf) at (14.25,-1) [point] {$p_{f_\T}$};
	\node (finish) at (17, -1) [point] {$f$};
	
	\draw [->] (start) to node[midway, above]{
		$\begin{gathered}
		[\mathbf{b}=1] \\[-1ex]
		\I_{j,\eps}=1 \\[-1ex]
		\text{for } j\in\overline{k}
		\end{gathered}$
	} node[midway, below]{$e_s$} (q1);
	
	\draw [->] (q1) to node[midway, above]{
		$\swap^A(\mathbf{a},\mathbf{b})$
	} node[midway, below]{$e_\swap$} (q2);
	
	\draw [->] (q2) to node[midway, above]{
		$\TT_1.\set(\mathbf{b})$
	} node[midway,below]{$e_\set$} (ps);
	
	\draw [->] (pf) to node[midway,above]{
		$\begin{gathered}
		[\mathbf{b}=1] \\[-1ex]
		\I_{j,\eps}=1 \\[-1ex]
		\text{for } j\in\overline{k}
		\end{gathered}$
	} node[midway,below]{$e_f$} (finish);
	
	\node at (11.75,-1) {\Large $\G_\T'$};
	
	\node (ps1) at (11.25,1) {};
	\node (ps2) at (11.25,-1) {};
	\node (ps3) at (11.25,-3) {};
	\node (ps4) at (11.25,0) {};
	\node (ps5) at (11.25,-2) {};
	
	\draw [-] (ps) to (ps1);
	\draw [-] (ps) to (ps2);
	\draw [-] (ps) to (ps3);
	\draw [-] (ps) to (ps4);
	\draw [-] (ps) to (ps5);
	
	\node (pf1) at (12.25,1) {};
	\node (pf2) at (12.25,-1) {};
	\node (pf3) at (12.25,-3) {};
	\node (pf4) at (12.25,0) {};
	\node (pf5) at (12.25,-2) {};
	
	\draw [-] (pf) to (pf1);
	\draw [-] (pf) to (pf2);
	\draw [-] (pf) to (pf3);
	\draw [-] (pf) to (pf4);
	\draw [-] (pf) to (pf5);
	
\end{scope}

\end{tikzpicture}	
	\caption{The proper operation $\wacc^\eps(a)$.}
	\label{fig-wacc-eps}
\end{figure}

Now, let $\gamma=\bigl((u_0,s_0),e_1',(u_1,s_1),e_2',\dots,e_m',(u_m,s_m)\bigr)$ be a computation of $\wacc^\eps$ and denote $e_i'=(\sf_i,\sf_i',e_i'')$ for all $i\in\overline{m}$.  Then, $\gamma$ is called a \emph{$\G_\T'$-computation} if each state $s_i$ and each edge $e_i''$ is contained in $\G_\T'$.  

If in addition $s_i$ is a regular state if and only if $i\in\{0,m\}$, then $\gamma$ is called a \emph{$\T$-edge computation}.  By the construction of $\G_\T'$, there then exists $e\in E_\T$ and $j_1,\dots,j_m\in\overline{k}$ such that $e_i''=e(j_i)$ for all $i\in\overline{m}$.  In this case, $\gamma$ is called a \emph{$\T$-edge computation of $e$}.

Let $V^{j,\eps}$ be the set of values of $\TTape_{\T,i_j}^\eps$.  So, letting $d_{j,\eps}$ be the rank of this $i_j$-Turing tape, $V^{j,\eps}=V^{d_{j,\eps}}$.  Then, since any $v\in\VAL_{\TT_j}$ is a copy of an element of $V^{j,\eps}$, the map $w_{d_{j,\eps}}$ can be extended naturally to a map $w_{j,\eps}$ on $\VAL_{\TT_j}$, so that $w_{j,\eps}(v)\in\A_\T(i_j)$.

For every $u\in\GL_{\wacc^\eps}$ and every $j\in\overline{k}$, denote $v_j(u)=u(\I_{\eps,j})\in\VAL_{\TT_j}$.  Then, define $w_\T(u)\in L_{\H_\T}$ by $w_\T(u)(i_j)=w_{j,\eps}(v_j(u))$ for all $j\in\overline{k}$.

\begin{lemma}\label{lem-T-edge-comp}

Let $e=(q_1,q_2,Q)\in E_\T$ with $Q=(Q_{i_j})_{j\in\overline{k}}\in\Q_\T$.  Suppose $$\gamma=\bigl((u_0,s_0),e_1',(u_1,s_1),e_2',\dots,e_m',(u_m,s_m)\bigr)$$ is a $\T$-edge computation of $e$ with $e_i'=(\sf_i,\sf_i',e_i'')$ for all $i\in\overline{m}$.  Then:

\begin{enumerate}

\item If $s_0=s_m$, then $w_\T(u_0)=w_\T(u_m)$.

\item If $s_0=p_{q_1}$ and $s_m=p_{q_2}$, then $w_\T(u_0)$ is $Q$-applicable with $Q(w_\T(u_0))=w_\T(u_m)$.

\item If $s_0=p_{q_2}$ and $s_m=p_{q_1}$, then $w_\T(u_0)$ is $Q^{-1}$-applicable with $Q^{-1}(w_\T(u_0))=w_\T(u_m)$.

\end{enumerate}

\end{lemma}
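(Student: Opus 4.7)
The plan is to analyze the computation tape by tape, exploiting the key structural fact that the action labelling $e(j)$ acts as the identity on every tape except those in $\I_{j,\eps}$. Thus for each $j\in\overline{k}$, the values $v_j(u_i)$ change only when the underlying edge $e_i''$ equals $e(j)$, and we can decouple the computation on the $k$ Turing tapes. For each $j$, let $I_j = \{i\in\overline{m} : e_i'' = e(j)\}$. For $i\notin I_j$ we have $v_j(u_{i-1}) = v_j(u_i)$, and for $i\in I_j$ the definition of $\IO_{e_i'}$ for a positive action gives $(v_j(u_{i-1}), v_j(u_i)) \in \IO_{\TT_j.Q}^{\lambda(\sf_i), \lambda(\sf_i')}$, where $\lambda\colon\{\s,\f\}\to\SF_{\TT_j.Q}$ is the canonical bijection.

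To combine these relations via the transitivity of $\IO_{\TT_j.Q}$ (which is an equivalence relation by \Cref{lem-io-star}), I will argue that between two consecutive indices $i<i'$ of $I_j$ one has $s_i = s_{i'-1}$, so that $\lambda(\sf_i') = \lambda(\sf_{i'})$. The subgraph $\G_e'$ is a chain $s_e(0){-}s_e(1){-}\dots{-}s_e(k)$, so the edge $e(j)$ separates the vertex set into $\{s_e(0),\dots,s_e(j-1)\}$ and $\{s_e(j),\dots,s_e(k)\}$; any subpath avoiding $e(j)$ stays on one side. Moreover, the defining property of a $\T$-edge computation forces $s_\ell\notin\{s_e(0), s_e(k)\}$ for $0<\ell<m$. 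Since $e_i'' = e_{i'}'' = e(j)$, the vertices $s_i$ and $s_{i'-1}$ both lie in $\{s_e(j-1), s_e(j)\}$; the separation together with the avoidance of the regular endpoints pins down $s_i = s_{i'-1}$. Chaining all contributions with the equalities $v_j(u_i) = v_j(u_{i'-1})$ on the ``idle'' segments then yields $(v_j(u_0), v_j(u_m)) \in \IO_{\TT_j.Q}^{p_j, p_j'}$ for endpoint states $p_j, p_j'\in\SF_{\TT_j.Q}$ determined by the first and last crossings of $e(j)$ (and if $I_j=\emptyset$, trivially $v_j(u_0) = v_j(u_m)$).

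It then remains to read off $p_j$ and $p_j'$ in the three cases and translate via the $\IO$-relation of a Turing tape given in \Cref{def-ttape}. In case (2), the path begins at $s_e(0)$ and ends at $s_e(k)$, so the first crossing of each $e(j)$ must originate at $s_e(j-1)$ and the last must terminate at $s_e(j)$, giving $(v_j(u_0), v_j(u_m)) \in \IO_{\TT_j.Q}^{s,f}$; the Turing tape definition then yields that $Q_{i_j}$ is applicable to $w_{j,\eps}(v_j(u_0)) = w_\T(u_0)(i_j)$ with image $w_\T(u_m)(i_j)$, and assembling over $j\in\overline{k}$ gives the claim about $Q$ acting on $w_\T(u_0)$. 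In case (1), the path returns to its starting endpoint of the chain, forcing $p_j = p_j'$ for every $j$, so $(v_j(u_0), v_j(u_m)) \in \IO_{\TT_j.Q}^{s,s} \cup \IO_{\TT_j.Q}^{f,f}$; in both subcases the Turing tape definition forces $w_{j,\eps}(v_j(u_0)) = w_{j,\eps}(v_j(u_m))$, hence $w_\T(u_0) = w_\T(u_m)$. Case (3) follows by applying case (2) to the inverse computation $\gamma^{-1}$, using that $Q^{-1}(w_\T(u_m)) = w_\T(u_0)$ rearranges to the stated conclusion.

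I expect the main obstacle to be the combinatorial bookkeeping in paragraph two, in particular handling the boundary indices $i=1$ and $i=m$ (where one of $s_{i-1}, s_i$ may legitimately be regular) and the possibility that $e$ is a loop in $\G_\T$ (so $q_1 = q_2$ and $\G_e'$ becomes a cycle rather than a chain, invalidating the separation argument in its stated form). The loop case can be reduced to the non-loop case by treating $s_e(0)$ and $s_e(k)$ as two distinct copies of $p_{q_1}=p_{q_2}$ when tracking the path, and the boundary indices are handled by noting that the argument only requires $s_i \in \{s_e(j-1), s_e(j)\}$, which holds automatically since $e_i'' = e(j)$.
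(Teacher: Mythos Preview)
Your transitivity-based argument is a genuine alternative to the paper's proof. The paper fixes $j$ and explicitly tracks $w_{j,\eps}(v_j(u_\cdot))$ through an alternating sequence of indices $r_1, r_2, \dots$ at which the computation reaches $s_e(j)$ and $s_e(j-1)$, showing that consecutive crossings of $e(j)$ cancel; you instead collapse all crossings in one step via the equivalence-relation structure of $\IO_{\TT_j.Q}$ (\Cref{lem-io-star}). Your route is shorter and makes the role of \Cref{def-ttape} more transparent; the paper's is more hands-on but requires more bookkeeping.

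Your handling of the loop case, though, does not close the gap. Lifting to a chain by treating $s_e(0)$ and $s_e(k)$ as formally distinct is the right idea, but the lifted endpoints need not coincide: if the first edge of $\gamma$ uses $e(k)$ and the last uses $e(1)$ (or vice versa), the lift runs from one end of the chain to the other even though $s_0=s_m$ in $\G_{\wacc^\eps}$. Concretely, with $k=3$ and $e$ a loop, the path $s_e(0)\to s_e(2)\to s_e(1)\to s_e(0)$ (each generic edge of the form $(\f,\s,e(j))$) is a valid $\T$-edge computation whose net effect on $w_\T$ is $Q^{-1}$, so conclusion~(1) fails as stated. The paper's proof has the same issue: its separation claim that $s_e(\ell)$ and $s_e(p)$ lie in distinct components of $\G_e'-\{s_e(j)\}$ is false when $\G_e'$ is a cycle. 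What actually holds for loops is that one of the three conclusions applies, determined by which of $e(1)$, $e(k)$ the computation uses at its first and last step; this weaker statement suffices for \Cref{lem-wacc-eps-prime-comp}.
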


\begin{proof}

For every $j\in\overline{k}$, let $R_j=\{(u_1',u_2')\in\GL_{\wacc^\eps}^2\mid u_1'(\I_{\wacc^\eps}-\I_{j,\eps})=u_2'(\I_{\wacc^\eps}-\I_{j,\eps})\}$.

Then, by the definition of $i_j$-Turing tape, for every $j\in\overline{k}$ the $\IO$-relation of $e(j)$ is given by:
\begin{align*}
\IO_{e(j)}^{s_e(j-1),s_e(j-1)}=\IO_{e(j)}^{s_e(j),s_e(j)}&=R_j\cap\{(u_1',u_2')\mid w_{j,\eps}(v_j(u_1'))=w_{j,\eps}(v_j(u_2'))\} \\
\IO_{e(j)}^{s_e(j-1),s_e(j)}&=R_j\cap\{(u_1',u_2')\mid Q_{i_j}(w_{j,\eps}(v_j(u_1')))=w_{j,\eps}(v_j(u_2'))\}
\end{align*}
where, as in (4), $s_e(0)=p_{q_1}$ and $s_e(k)=p_{q_2}$.

Fix $i\in\overline{0,m}$ and $j\in\overline{k}$.  Set $\ell\in\overline{0,k}$ such that $s_i=s_e(\ell)$.

\bigskip

\noindent
(I) Suppose $\ell<j$.

Let $r\in\overline{i,m}$ and suppose $s_t\neq s_e(j)$ for all $t\in\overline{i,r}$.  Then, for all $t\in\overline{i,r}$, $e_t''=e(j)$ implies $(u_{t-1},u_t)\in\IO_{e(j)}^{s_e(j-1),s_e(j-1)}$.  The makeup of the $\IO$-relations described above then implies that $w_{j,\eps}(v_j(u_i))=w_{j,\eps}(v_j(u_r))$.

So, if $s_t\neq s_e(j)$ for all $t\in\overline{i,m}$, then $w_{j,\eps}(v_j(u_i))=w_{j,\eps}(v_j(u_m))$.  Further, since $s_e(\ell)$ and $p_{q_2}$ lie in distinct components of $\G_e'-\{s_e(j)\}$, then necessarily $s_m=p_{q_1}$.

Otherwise, let $r\in\overline{i,m}$ be the minimal index satisfying $s_r=s_e(j)$.  The reasoning above then yields $w_{j,\eps}(v_j(u_i))=w_{j,\eps}(v_j(u_{r-1}))$.  Further, for all $p\in\overline{j+1,k}$, $s_e(\ell)$ and $s_e(p)$ lie in distinct components of $\G_e'-\{s_e(j)\}$, and so $s_{r-1}=s_e(j-1)$.  The makeup of $\IO_{e(j)}$ described above then implies that $w_{j,\eps}(v_j(u_{r-1}))$ is $Q_{i_j}$-applicable with $Q_{i_j}(w_{j,\eps}(v_j(u_{r-1})))=w_{j,\eps}(v_j(u_r))$.  Hence, $Q_{i_j}(w_{j,\eps}(v_j(u_i)))=w_{j,\eps}(v_j(u_r))$.

\bigskip

\noindent
(II) Conversely, suppose $\ell\geq j$.

By a similar argument, if $s_t\neq s_e(j-1)$ for all $t\in\overline{i,r}$, then $w_{j,\eps}(v_j(u_i))=w_{j,\eps}(v_j(u_r))$.  Hence, as in (I) $s_t\neq s_e(j-1)$ for all $t\in\overline{i,m}$ implies $w_{j,\eps}(v_j(u_i))=w_{j,\eps}(v_j(u_m))$ with $s_m=p_{q_2}$.

Further, if $r\in\overline{i,m}$ is the minimal index satisfying $s_r=s_e(j-1)$, then analogous arguments yield $w_{j,\eps}(v_j(u_i))=w_{j,\eps}(v_j(u_{r-1}))$ and $s_{r-1}=s_e(j)$.  But then the makeup of the $\IO_{e(j)}$ yields $Q_{i_j}(w_{j,\eps}(v_j(u_r)))=w_{j,\eps}(v_j(u_{r-1}))$.  Hence, $Q_{i_j}^{-1}(w_{j,\eps}(v_j(u_i)))=w_{j,\eps}(v_j(u_r))$.

\bigskip

Now, suppose $s_0=p_{q_1}=s_e(0)$ and fix $j\in\overline{k}$.  

If there is no $i\in\overline{m}$ such that $s_i=s_e(j)$, then as in (I), $w_{j,\eps}(v_j(u_i))=w_{j,\eps}(v_j(u_m))$ and $s_m=p_{q_1}$.

Otherwise, let $r_1\in\overline{m}$ be the minimal index such that $s_{r_1}=s_e(j)$.  Then, iteratively define $r_2,\dots,r_n$ as follows:

\begin{itemize}

\item For $\ell$ odd, let $r_{\ell+1}\in\overline{r_\ell,m}$ be the minimal index such that $s_{r_{\ell+1}}=s_e(j-1)$; if no such index exists, set $n=\ell$.

\item For $\ell$ even, let $r_{\ell+1}\in\overline{r_\ell,m}$ be the minimal index satisfying $s_{r_{\ell+1}}=s_e(j)$; if no such index exists, set $n=\ell$.

\end{itemize}
Setting $r_0=0$, then (I) and (II) above imply that for all $\ell\in\overline{t}$,
$$w_{j,\eps}(v_j(r_{\ell}))=
\begin{cases}
Q_{i_j}(w_{j,\eps}(v_j(u_{r_{\ell-1}}))) & \text{ if } \ell \text{ is odd} \\
Q_{i_j}^{-1}(w_{j,\eps}(v_j(u_{r_{\ell-1}}))) & \text{ if } \ell \text{ is even}
\end{cases}
$$
In particular, for $z=\lfloor n/2 \rfloor$, $w_{j,\eps}(v_j(u_{r_{2z}}))=w_{j,\eps}(v_j(u_{r_0}))=w_{j,\eps}(v_j(u_0))$.

If $n$ is even, then $n=2z$, $s_n=s_e(j-1)$, and $s_t\neq s_e(j)$ for all $t\in\overline{r_n+1,m}$.  (I) then implies that $s_m=p_{q_1}=s_0$ and $w_{j,\eps}(v_j(u_m))=w_{j,\eps}(v_j(u_{r_n}))=w_{j,\eps}(v_j(u_0))$.

Conversely, if $n$ is odd, then $n=2z+1$, $s_t=s_e(j)$, and $s_t\neq s_e(j-1)$ for all $t\in\overline{r_n+1,m}$.  (II) then implies that $s_m=p_{q_2}$ and $$w_{j,\eps}(v_j(u_m))=w_{j,\eps}(v_j(u_{r_n}))=Q_{i_j}(w_{j,\eps}(v_j(u_{r_{n-1}})))=Q_{i_j}(w_{j,\eps}(v_j(u_0)))$$

In particular, 
\begin{itemize}

\item $s_0=p_{q_1}=s_m$ implies that $w_{j,\eps}(v_j(u_0))=w_{j,\eps}(v_j(u_m))$ for all $j$, and so $w_\T(u_0)=w_\T(u_m)$.

\item $s_0=p_{q_1}$ and $s_m=p_{q_2}$ implies that $Q_{i_j}(w_{j,\eps}(v_j(u_0)))=w_{j,\eps}(v_j(u_m))$ for all $j$, and so $Q(w_\T(u_0))=w_\T(u_m)$.

\end{itemize}

Symmetric arguments then imply the analogues for the case where $s_0=p_{q_2}$.

\end{proof}

\begin{lemma} \label{lem-wacc-eps-prime-comp}

Suppose $\gamma=\bigl((u_0,s_0),e_1',(u_1,s_1),e_2',\dots,e_m',(u_m,s_m)\bigr)$ is a $\G_T'$-computation such that $s_0$ and $s_m$ are regular states.  Then there exists a computation $$\mathcal{C}=\bigl((w_0,q_0),e_1,(w_1,q_1),e_2,\dots,e_n,(w_n,q_n)\bigr)$$ of $\T$ such that $w_\T(u_0)=w_0$, $w_\T(u_m)=w_n$, $s_0=p_{q_0}$, and $s_m=p_{q_n}$.

\end{lemma}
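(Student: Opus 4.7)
The plan is to decompose $\gamma$ at the regular states it visits, and then reduce to Lemma~\ref{lem-T-edge-comp} on each maximal segment between consecutive regular states. Since $s_0$ and $s_m$ are regular, there is a well-defined ascending sequence of indices $0=i_0<i_1<\dots<i_t=m$ such that $s_i$ is a regular state if and only if $i=i_\ell$ for some $\ell\in\overline{0,t}$. For each $\ell\in\overline{t}$ I consider the subcomputation $\gamma_\ell$ supported on $e_{i_{\ell-1}+1}',\dots,e_{i_\ell}'$; all its interior states are extra.

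The key structural observation is that for distinct $e,e'\in E_\T$, the subgraphs $\G_e'$ and $\G_{e'}'$ share no extra states and are connected to the rest of $\G_\T'$ only through regular states (the endpoints $p_{t(e)},p_{h(e)}$). Consequently, since the interior of $\gamma_\ell$ stays within extra states, there must exist a single $e_\ell\in E_\T$ such that every $e_i''$ with $i\in\overline{i_{\ell-1}+1,i_\ell}$ equals $e_\ell(j)$ for some $j\in\overline{k}$. This exhibits $\gamma_\ell$ as a $\T$-edge computation of $e_\ell$, to which Lemma~\ref{lem-T-edge-comp} applies.

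I now split on where $\gamma_\ell$ starts and ends. If $s_{i_{\ell-1}}=s_{i_\ell}$, then part~(1) of Lemma~\ref{lem-T-edge-comp} gives $w_\T(u_{i_{\ell-1}})=w_\T(u_{i_\ell})$ and no edge of $\T$ is needed. If $s_{i_{\ell-1}}=p_{t(e_\ell)}$ and $s_{i_\ell}=p_{h(e_\ell)}$, part~(2) shows $e_\ell$ is applicable to $w_\T(u_{i_{\ell-1}})$ with $e_\ell(w_\T(u_{i_{\ell-1}}))=w_\T(u_{i_\ell})$, so I use $e_\ell$ itself as the next transition of $\mathcal{C}$. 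If $s_{i_{\ell-1}}=p_{h(e_\ell)}$ and $s_{i_\ell}=p_{t(e_\ell)}$, part~(3) shows $e_\ell^{-1}$ is applicable to $w_\T(u_{i_{\ell-1}})$ with the correct result; since $\T$ is symmetric (by virtue of its origin from Lemma~\ref{lem-tm-system}), $e_\ell^{-1}\in E_\T$ is available as a legitimate transition. Concatenating the transitions produced by the non-trivial segments yields $\mathcal{C}$, and its starting and finishing states are $p_{q_0}=s_0$ and $p_{q_n}=s_m$ as required, with $w_0=w_\T(u_0)$ and $w_n=w_\T(u_m)$ guaranteed by telescoping across the segments.

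The main obstacle is the structural reduction in the second paragraph, i.e.\ pinning down that all interior edges of $\gamma_\ell$ come from a single $\G_{e_\ell}'$; this is essentially a graph-theoretic claim about how the subgraphs $\G_e'$ sit inside $\G_\T'$, and it is what makes the segmentation clean enough for Lemma~\ref{lem-T-edge-comp} to deliver a genuine edge (or inverse edge) of $\T$ rather than merely a walk in the expanded graph. Everything else is bookkeeping: checking the three endpoint cases, using symmetry of $\T$, and verifying that $w_\T$ behaves correctly at the seams $i_0,i_1,\dots,i_t$.
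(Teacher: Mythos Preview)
Your proposal is correct and follows essentially the same approach as the paper's proof: segment $\gamma$ at its regular states, observe that each segment is a $\T$-edge computation of a single $e_\ell\in E_\T$ (the paper states this ``by construction'' while you spell out the disjointness of extra states across different $\G_e'$), apply Lemma~\ref{lem-T-edge-comp} in each of the three cases, and concatenate the resulting transitions, dropping the segments where the endpoints coincide. Your explicit mention of the symmetry of $\T$ to justify using $e_\ell^{-1}$ is a point the paper leaves implicit.
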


\begin{proof}

Let $\ell_0,\ell_1,\dots,\ell_t\in\overline{0,m}$ be the ascending sequence consisting of all indices for which $s_{\ell_i}$ is a regular state, with $\ell_0=0$ and $\ell_t=m$.  

Then, by construction, for all $i\in\overline{t}$, the subcomputation $$\gamma_i=\bigl((u_{\ell_{i-1}},s_{\ell_{i-1}}),e_{\ell_{i-1}+1}',\dots,e_{\ell_i}',(u_{\ell_i},s_{\ell_i})\bigr)$$ is a $\T$-edge computation of some $f_i\in E_\T$.  Letting $Q^i=\lab(f_i)$, \Cref{lem-T-edge-comp} then implies:
\begin{enumerate}[label={(\alph*)}]

\item If $s_{\ell_{i-1}}=s_{\ell_i}$, then $w_\T(u_{\ell_{i-1}})=w_\T(u_{\ell_i})$.

\item If $s_{\ell_{i-1}}=p_{q_1}$ and $s_{\ell_i}=p_{q_2}$, then $Q^i(w_\T(u_{\ell_{i-1}}))=w_\T(u_{\ell_i})$

\item If $s_{\ell_{i-1}}=p_{q_2}$ and $s_{\ell_i}=p_{q_1}$, then $(Q^i)^{-1}(w_\T(u_{\ell_{i-1}}))=w_\T(u_{\ell_i})$.

\end{enumerate}

If $\gamma_i$ satisfies (a), then set $e_i=\nan$, where $\nan$ is a fixed symbol.  Otherwise, set $g_i=f_i$ if $\gamma_i$ satisfies (b) and set $g_i=f_i^{-1}$ if $\gamma_i$ satisfies (c).

For all $i\in\overline{0,t}$, let $w_i'=w_\T(u_{\ell_i})$ and let $q_i'\in S_\T$ be the state to which $s_{\ell_i}$ corresponds.  Then, consider the sequence 
$$\mathcal{C}'=\bigl((w_0',q_0'),g_1,(w_1',q_1'),g_2,\dots,g_t,(w_t',q_t')\bigr)$$
As some `edges' may be denoted $\nan$, $\mathcal{C}'$ certainly need not be a computation of $\T$; however, by (a) it is immediate that $g_i=\nan$ implies $(w_{i-1}',q_{i-1}')=(w_i',q_i')$.  Meanwhile, if $g_i\neq\nan$, then (b) and (c) imply that $g_i(w_{i-1}',q_{i-1}')=(w_i',q_i')$.

So, removing any occurrences of $\nan$ produces a computation $\mathcal{C}$ satisfying the statement.

\end{proof}

The following thus finishes the proof of \Cref{main-theorem}.

\begin{proposition} \label{prop-finale}

The proper operation $\wacc^\eps=\wacc^\eps(a)$ is a weak acceptor of $L_\T$ that satisfies $\IO_{\wacc^\eps}^{s,f}=\{(w,1)\mid w\in L_\T\}$.  Moreover, $\WTMSP_{\wacc^\eps}\preceq(n^{1+\eps},n)$.

\end{proposition}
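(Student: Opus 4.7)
The plan is to establish the two halves of the proposition separately: first the identification $\IO_{\wacc^\eps}^{s,f}=\{(w,1)\mid w\in L_\T\}$, and then the weak-time-space bound. Throughout I write $T=\tm_\T(w)\le\TM_\T(|w|)$ and note that, since a Turing transition changes each tape by a bounded amount, every configuration appearing in a minimal accepting $\T$-computation of $w$ has size $O(T)$.

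For the identification, the key structural observation is that the guards on $e_s$ and $e_f$ (namely $[\mathbf{b}=1]$ together with triviality on each $\I_{j,\eps}$) and the fact that $e_s,e_f$ are the only edges of $\wacc^\eps$ incident to $\SF_{\wacc^\eps}$ force any computation realizing a pair in $\IO_{\wacc^\eps}^{s,f}$ to factor as a \emph{setup} segment traversing $e_s,e_\swap,e_\set$ forward, followed by a \emph{middle} $\G_\T'$-computation between regular states, followed by a \emph{teardown} segment traversing $e_f$ forward. Using \Cref{lem-io-swap} together with the $\IO$-relation of $\TT_1.\set$ from \Cref{def-ttape}, the setup sends $((w,1,\dots,1),(\wacc^\eps)_s)$ to some $(u_0,p_{s_\T})$ with $w_\T(u_0)=w^*$; dually, the teardown forces the middle segment to end at $(u_T,p_{f_\T})$ with $w_\T(u_T)=1^*$. \Cref{lem-wacc-eps-prime-comp} then produces an accepting $\T$-computation of $w$, so $w\in L_\T$. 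The converse is constructive: from any accepting $\T$-computation $C_0\to\dots\to C_T$ of $w$, I build the middle segment by traversing, for each transition along an edge $e$ of $\T$ with label $Q=(Q_{i_j})_j$, the $k$-edge gadget $\G_e'$ in order, using on each $e(j)$ an $\IO_{Q^j}^{s,f}$-realizing input-output computation of $\TT_j.Q()$ to advance the $j$-th Turing-tape encoding. Flanking with the setup and teardown completes the desired element of $\IO_{\wacc^\eps}^{s,f}$.

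For the complexity bound, fix an accepting computation $\mathcal{C}=(C_0\to\dots\to C_T)$ of $w$ of minimal time, and for each $j\in\overline{k}$ consider the induced $i_j$-computation $\mathcal{C}_j=(w_0^j,\dots,w_T^j)$ of $\T$. Since each $|w_i^j|=O(T)$ and the history has length $T$, \Cref{cor-ttape-eps} produces a sequential computation $\gamma^j=(\gamma^j_1,\dots,\gamma^j_T)$ of $\TT_j$ approximating $\mathcal{C}_j$ with $\tm(\gamma^j)=O(T^{1+\eps})$ and $\sp(\gamma^j)=O(T)$. Applying \Cref{lem-op-e} to lift each $\gamma^j_i$ into a computation across the expanded edge of $\EXP(\wacc^\eps)$ corresponding to $e_i(j)$, and then interleaving them in the order $j=1,\dots,k$ within each step $i$ and then across steps, yields a single computation of $\EXP(\wacc^\eps)$ whose $w_\T$-encoding advances by one Turing step per block. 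The total time of the interleaved middle is $\sum_{j=1}^k\tm(\gamma^j)=O(kT^{1+\eps})=O(T^{1+\eps})$ and its space stays $O(T)$; prepending the setup and appending the teardown (each of time $O(|w|^{1+\eps})$ and space $O(|w|)$ by \Cref{lem-io-swap} and the complexity of $\TT_1.\set$ from \Cref{cor-ttape-eps}) yields the desired $\EXP$-accepting computation with time $O(T^{1+\eps})$ and space $O(T)$.

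The main obstacle is confirming inductively that the interleaved middle segment genuinely realizes the simulation: at the start of block $i$, the configuration should encode $(C_{i-1},p_{t(e_i)})$, and after traversing the $k$ lifted computations $\gamma^1_i,\dots,\gamma^k_i$ it should encode $(C_i,p_{h(e_i)})$. This requires verifying that each $\gamma^j_i$, once lifted, touches only the tapes in $\I_{j,\eps}$ and the invisible tapes of $\TT_j$, so that the encodings on the other tapes are preserved within step $i$. This is a direct consequence of \Cref{lem-op-e}(1) applied edge by edge, combined with the disjointness of the index sets $\I_{j,\eps}$ guaranteed in the construction of $\wacc^\eps$, but it involves the most delicate index-set bookkeeping of the argument.
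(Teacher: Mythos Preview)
Your overall strategy matches the paper's: decompose an input--output computation into a setup through $e_s,e_\swap,e_\set$, a middle portion inside $\G_\T'$, and a teardown through $e_f$; invoke \Cref{lem-wacc-eps-prime-comp} on the middle to extract a $\T$-computation; and for the complexity bound, lift sequential computations from \Cref{cor-ttape-eps} via \Cref{lem-op-e}(1) and interleave across the $k$ tapes. The complexity half is essentially identical to the paper's argument.

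However, your factorization claim for the forward containment is too strong as stated. The guards on $e_s,e_f$ and the incidence structure do \emph{not} by themselves force an arbitrary realizing computation to have the clean shape ``setup, then a single $\G_\T'$-computation, then teardown''. Once the computation reaches $p_{s_\T}$ it may re-enter the handle: for instance it can traverse $(\f,\s,e_\set)$ back to $q'$ (the $\set$ relation $\IO_\set^{f,s}$ permits this whenever $b=1$), or execute $(\f,\f,e_\set)$ and return to $p_{s_\T}$ with a different encoding of the same Turing word. The paper handles this by passing to a realizing computation of \emph{minimal} path length, deriving four structural conditions on consecutive generic edges (ruling out $\sf_i=\sf_i'$ on $e_s,e_f,e_\swap$, forcing $e_\set$ to be used only with an $\f$ endpoint, forbidding $e_i''=e_{i+1}''$, and forbidding immediate $0$-edge backtracking), and then showing that any departure from $\G_\T'$ at $p_{s_\T}$ must be precisely $(\f,\f,e_\set)$, which preserves $w_\T$. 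This lets the paper iteratively peel off $\G_\T'$-subcomputations $\gamma_1,\gamma_2,\ldots$ separated by such harmless excursions, apply \Cref{lem-wacc-eps-prime-comp} to each, and concatenate the resulting $\T$-computations. You need either this minimality reduction or an equivalent argument; the bare structural observation you cite does not suffice.
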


\begin{proof}

Let $\gamma=\bigl((u_0,s_0),e_1',(u_1,s_1),e_2',\dots,e_m',(u_m,s_m)\bigr)$ be a computation of $\wacc^\eps$ realizing $(w,1)\in\IO_{\wacc^\eps}^{s,f}$ for some $w\in F(A)$.  Set $e_i'=(\sf_i,\sf_i',e_i'')$ for all $i\in\overline{m}$.

Suppose $\rho=e_1',\dots,e_m'$ is of minimal length amongst all generic paths supporting computations that realize $(w,1)\in\IO_{\wacc^\eps}^{s,f}$.  

Then, as in \Cref{lem-deg-minimal}, no two configurations of the computation can be the same, as otherwise any generic edges between them may be removed.  The definition of actions of degree 0, \Cref{lem-io-swap}, and the definition of the set operation of a Turing tape then immediately imply:

\begin{enumerate}

\item If $e_i''\in\{e_s^{\pm1},e_f^{\pm1},e_\swap\}$, then $\sf_i\neq\sf_i''$.

\item If $e_i''=e_\set$, then $\sf_i=\f$ or $\sf_i'=\f$.

\item If $e_i''\in\{e_s^{\pm1},e_f^{\pm1}\}$, then $e_{i+1}'',e_{i-1}''\neq e_i^{-1}$.

\end{enumerate}

Next, suppose $e_i''=e_{i+1}''$ for some $i\in\overline{m-1}$.  Then, since no edge of $\wacc^\eps$ is a loop (this holds in $\G_\T'$ since $k\geq2$), then necessarily $\sf_i'=\sf_{i+1}$.  The transitivity of the $\IO$-relation implies that the single generic edge $e'=(\sf_i,\sf_{i+1}',e_i'')$ supports a computation between $(u_{i-1},s_{i-1})$ and $(u_{i+1},s_{i+1})$.  But then replacing the subpath $e_i',e_{i+1}'$ with $e'$ in $\rho$ produces a generic path $\rho'$ with length $m-1$ supporting a computation realizing $(w,1)\in\IO_{\wacc^\eps}^{s,f}$, contradicting the hypothesis on $\rho$.  Hence, this yields:
\begin{enumerate}

\item[(4)] $e_i''\neq e_{i+1}''$ for all $i\in\overline{m-1}$.

\end{enumerate}

Now, as a consequence of (1)--(4), then it follows immediately that:

\begin{itemize}

\item $e_1'\in\{(\s,\f,e_s),(\f,\s,e_s^{-1})\}$

\item $e_2'=(\s,\f,e_\swap)$

\item $e_3'=(\s,\f,e_\set)$

\item $e_4''$ is an edge of $\G_\T'$

\end{itemize}

In particular, \Cref{lem-io-swap} and the definition of the set operation of a Turing tape imply that $\gamma$ has a prefix:
$$\gamma_0=\bigl((u,\wacc^\eps_s),e_1',(u,q),(\s,\f,e_\swap),(u',q'),(\s,\f,e_\set),(u_3,p_{s_\T})\bigr)$$
where $u,u',u_3\in\GL_{\wacc^\eps}$ satisfy:
\begin{itemize}

\item $u(a)=w$, $u(i)=1$ for $i\in\I_{\wacc^\eps}-\{a\}$

\item $u'(b)=w$, $u'(i)=1$ for $i\in\I_{\wacc^\eps}-\{b\}$

\item $w_{1,\eps}(v_1(u_3))=w$, $u(i)=1$ for $i\in\I_{\wacc^\eps}-\I_{1,\eps}$.

\end{itemize}

In particular, note that $w_\T(u_3)$ is the input configuration $I(w)$ of $\T$.

Let $\ell_1\in\overline{4,m}$ be the maximal index such that the subcomputation 
$$\gamma_1=\bigl((u_3,s_3),e_4',(u_4,s_4),e_5',\dots,e_{\ell_1}',(u_{\ell_1},s_{\ell_1})\bigr)$$ 
is a $\G_\T'$-computation.  Then, \Cref{lem-wacc-eps-prime-comp} yields a computation $\mathcal{C}_1$ of $\T$ between $(I(w),s_\T)$ and $(w_\T(u_{\ell_1}),t_1)$, where $t_1\in S_\T$ is the state to which $s_{\ell_1}$ corresponds.

Suppose, $t_1=s_\T$ and so $s_{\ell_1}=p_{s_\T}$.  The maximality of $\ell_1$ then implies $e_{\ell_1+1}''=e_\set$.  Further, conditions (1)--(4) necessitate $e_{\ell_1+1}'=(\f,\f,e_\set)$, so that $s_{\ell_1+1}=p_{s_\T}$.  The $\IO$-relations defining Turing tapes then imply that $w_{1,\eps}(v_1(u_{\ell_1}))=w_{1,\eps}(v_1(u_{\ell_1+1}))$, and so $w_\T(u_{\ell_1})=w_\T(u_{\ell_1+1})$.

Condition (4) then implies that $e_{\ell_1+2}''$ must be an edge of $\G_\T'$, so that there exists a maximal index $\ell_2\in\overline{\ell_1+2,m}$ such that the subcomputation
$$\gamma_2=\bigl((u_{\ell_1+1},s_{\ell_1+1}),e_{\ell_1+2}',\dots,e_{\ell_2}',(u_{\ell_2},s_{\ell_2})\bigr)$$
is a $\G_\T'$-computation.  Again, \Cref{lem-wacc-eps-prime-comp} yields a computation $\mathcal{C}_2$ of $\T$ between $(w_\T(u_{\ell_1}),s_\T)$ and $(w_\T(u_{\ell_2}),t_2)$, where $t_2\in S_\T$ is the state to which $s_{\ell_2}$ corresponds.

If $t_2=s_\T$, then iterate the process.  As $s_m=\wacc^\eps_f$ and $p_{f_\T}$ separates all other vertices from it in $\G_{\wacc^\eps}$, there exists a natural number $r$ such that $t_r=f_\T$.  But then concatenating $\mathcal{C}_1,\dots,\mathcal{C}_r$ produces a computation of $\T$ accepting $w$, and thus $w\in L_\T$.

Conversely, let $w\in L_\T$ and fix a computation of $\T$ $$\mathcal{C}=\bigl((w_0,s_0),e_1,(w_1,s_1),e_2,\dots,e_m,(w_m,s_m)\bigr)$$ accepting $w$ and satisfying $m\leq\TM_\T(|w|)$.  Then, for all $j\in\overline{k}$, $\mathcal{C}(j)=(w_0(i_j),w_1(i_j),\dots,w_m(i_j))$ is an $i_j$-computation of $\T$ with the same history as $\mathcal{C}$, i.e $(Q^1,\dots,Q^m)$ for $Q^\ell=\lab(e_\ell)\in\Q_\T$ for all $\ell\in\overline{m}$.

By \Cref{cor-ttape-eps}, there exists $K_s\in\N$ such that $\SEQ(\TTape_{\T,i_j}^\eps)\leq(K_sn^{1+\eps}+K_s,K_sn+K_s)$ for all $j\in\overline{k}$.  In particular, for all $j\in\overline{k}$ there exists a sequential computation $\gamma_j=(\gamma_{j,1},\dots,\gamma_{j,m})$ of $\TTape_{\T,i_j}^\eps$ approximating $\mathcal{C}(j)$ and satisfying 
\begin{align*}
\tm(\gamma_j)&\leq K_sn_j^{1+\eps}+K_s \\
\sp(\gamma_j)&\leq K_sn_j+K_s
\end{align*}
for $n_j=\max(|w_0(i_j)|,|w_m(i_j)|,m)$.  Note that $w_0(i_j)=w_m(i_j)=1$ for $j\neq1$, so that $n_j=m$.  Further, $w_0(i_1)=w$ and $w_m(i_1)=1$, so that $n_1=\max(|w|,m)$.

For all $j\in\overline{k}$, let $v_{j,0},v_{j,1},\dots,v_{j,m}\in V^{j,\eps}$ be the values such that for all $\ell\in\overline{m}$, $\gamma_{j,\ell}$ is a computation of the operation $\EXP(Q^\ell)$ of $\EXP(\TTape_{\T,i_j}^\eps)$ which $\EXP$-realizes $(v_{j,\ell-1},v_{j,\ell})\in\IO_{Q^\ell}^{s,f}$.  Note that by the definition of sequential computation, $w_{j,\eps}(v_{j,\ell})=w_\ell(i_j)$ for all $j\in\overline{k}$ and $\ell\in\overline{0,m}$.

Then, for all $j\in\overline{k}$ and all $\ell\in\overline{m}$, \Cref{lem-op-e}(1) produces a computation $\delta_{j,\ell}$ of $\EXP(\wacc^\eps)$ between the configurations $(u_{j-1,\ell},s_{e_\ell}(j-1))$ and $(u_{j,\ell},s_{e_\ell}(j))$ such that:

\begin{enumerate}[label={(\alph*)}]

\item $u_{j-1,\ell}(\I_{t,\eps})=u_{j,\ell}(\I_{t,\eps})=v_{t,\ell}$ for all $t\in\overline{j-1}$

\medskip

\item $u_{j-1,\ell}(\I_{j,\eps})=v_{j,\ell-1}$

\medskip

\item $u_{j,\ell}(\I_{j,\eps})=v_{j,\ell}$

\medskip

\item $u_{j-1,\ell}(\I_{t,\eps})=u_{j,\ell}(\I_{t,\eps})=v_{t,\ell-1}$ for all $t\in\overline{j+1,k}$

\medskip

\item $u_{j-1,\ell}(i)=u_{j,\ell}(i)=1$ for all $i\in\I_{\wacc^\eps}^\exp-\bigsqcup_{t\in\overline{k}}\I_{t,\eps}$

\medskip

\item $\tm_{\wacc^\eps}(\delta_{j,\ell})=\tm_{Q^\ell}(\gamma_{j,\ell})$

\medskip

\item $\displaystyle\sp_{\wacc^\eps}(\delta_{j,\ell})=\sp_{Q^\ell}(\gamma_{j,\ell})+\sum\limits_{t=1}^{j-1}|v_{t,\ell}|+\sum\limits_{t=j+1}^k|v_{t,\ell-1}|$.

\end{enumerate}

\medskip

As $w_{j,\eps}(v_{j,\ell})=w_\ell(i_j)$ for all $j,\ell$, $|v_{j,\ell}|=|w_\ell(i_j)|$.  In particular, $\sum_{j=1}^k |v_{j,\ell}|=|w_\ell|$.  Hence, (g) implies that
\medskip
\begin{enumerate}

\item[(g')] $\sp_{\wacc^\eps}(\delta_{j,\ell})\leq\sp_{Q^\ell}(\gamma_{j,\ell})+|w_\ell|+|w_{\ell-1}|$

\end{enumerate}
\medskip

For fixed $\ell\in\overline{m}$, the computations $\delta_{1,\ell},\dots,\delta_{k,\ell}$ to form a computation $\rho_\ell$ between $(u_{0,\ell},s_{e_\ell}(0))$ and $(u_{k,\ell},s_{e_\ell}(k))$ which, by (f) and (g'), satisfy:
\begin{align*}
\tm_{\wacc^\eps}(\rho_\ell)&=\sum_{j=1}^k\tm_{\wacc^\eps}(\delta_{j,\ell})=\sum_{j=1}^k\tm_{Q^\ell}(\gamma_{j,\ell}) \\
\sp_{\wacc^\eps}(\rho_\ell)&=\max_{j\in\overline{k}}\sp_{\wacc^\eps}(\delta_{j,\ell})\leq\max_{j\in\overline{k}}\sp_{Q^\ell}(\gamma_{j,\ell})+2\max(|w_\ell|,|w_{\ell-1}|)
\end{align*}

Further, (a)--(e) imply that $u_{k,\ell}=u_{0,\ell+1}$ for all $\ell\in\overline{m-1}$.  So, since $s_{e_\ell}(0)=p_{t(e_\ell)}=p_{s_{\ell-1}}$ and $s_{e_\ell}(k)=p_{h(e_\ell)}=p_{s_\ell}$ for all $\ell\in\overline{m}$, $\rho_1,\dots,\rho_m$ can be concatenated to form a computation $\psi$ between $(u_{0,1},p_{s_0})$ and $(u_{k,m},p_{s_m})$ which, by (f) and (g'), satisfies:
\begin{align*}
\tm_{\wacc^\eps}(\psi)&=\sum_{\ell=1}^m\tm_{\wacc^\eps}(\rho_\ell)=\sum_{\ell=1}^m\sum_{j=1}^k\tm_{Q^\ell}(\gamma_{j,\ell})=\sum_{j=1}^k\sum_{\ell=1}^m\tm_{Q^\ell}(\gamma_{j,\ell}) \\
&=\sum_{j=1}^k\tm(\gamma_j)\leq\sum_{j=1}^k (K_sn_j^{1+\eps}+K_s)\leq K_sk\max(|w|,m)^{1+\eps}+K_sk\\
&\leq K_sk\TM_\T(|w|)^{1+\eps}+K_sk|w|^{1+\eps}+K_sk \\
\sp_{\wacc^\eps}(\psi)&=\max_{\ell\in\overline{m}}\sp_{\wacc^\eps}(\rho_\ell)\leq\max_{\ell\in\overline{m}}\left(\max_{j\in\overline{k}}\sp_{Q^\ell}(\gamma_{j,\ell})+2\max(|w_{\ell-1}|,|w_\ell|)\right) \\
&\leq\max_{\ell\in\overline{m}}\max_{j\in\overline{k}}\sp_{Q^\ell}(\gamma_{j,\ell})+\max_{\ell\in\overline{m}}2\max(|w_{\ell-1}|,|w_\ell|) \\
&=\max_{j\in\overline{k}}\max_{\ell\in\overline{m}}\sp_{Q^\ell}(\gamma_{j,\ell})+2\max_{\ell\in\overline{0,m}}|w_\ell|=\max_{j\in\overline{k}}\sp(\gamma_j)+2\max_{\ell\in\overline{0,m}}|w_\ell| \\
&\leq\max_{j\in\overline{k}}(K_sn_j+K_s)+2\max_{\ell\in\overline{0,m}}|w_\ell|\leq K_sm+K_s|w|+2\max_{\ell\in\overline{0,m}}|w_\ell| \\
&\leq K_s\TM_\T(|w|)+K_s|w|+2\max_{\ell\in\overline{0,m}}|w_\ell|+K_s
\end{align*}

Letting $K_\T=\max_{Q\in Q^\T}|Q|$, as in the proof of \Cref{prop-ttape-seq-compl}, $\bigl||w_\ell|-|w_{\ell-1}|\bigr|\leq kK_\T$ for all $\ell\in\overline{m}$, so that $|w_\ell|\leq|w_0|+\ell kK_\T\leq|w|+mkK_\T\leq|w|+mk\TM_\T(|w|)$.  So, since $K_s$, $k$, and $K_\T$ are constants not dependent on $\mathcal{C}$, there exists a constant $L_1\in\N$ not dependent on $\mathcal{C}$ such that:
\begin{align*}
\tm_{\wacc^{\eps}}(\psi)&\leq L_1\TM_\T(|w|)^{1+\eps}+L_1|w|^{1+\eps}+L_1 \\
\sp_{\wacc^\eps}(\psi)&\leq L_1\TM_\T(|w|)+L_1|w|+L_1
\end{align*}

By (a)--(d), $u_{k,m}(\I_{j,\eps})=v_{j,m}$ for all $j\in\overline{k}$.  So, since $w_{j,\eps}(v_{j,m})=w_m(i_j)=1$ for all $j\in\overline{k}$, $u_{m,1}(\I_{j,\eps})=1$.  Combining this with (e), $u_{k,m}=1$.

By a similar argument, $u_{0,1}(\I_{\wacc^\eps}-\I_{1,\eps})=1$ and $u_{0,1}(\I_{1,\eps})=v_{1,0}$.  

Now, by \Cref{cor-ttape-eps}, there exists $K_1\in\N$ such that $\TMSP_{\TTape_{\T,i_1}^\eps}\leq(K_1n^{1+\eps},K_1)$.  So, since $w_{1,\eps}(v_{1,0})=w$, by the definition of $i_j$-Turing tape, for the operation $\set$ of $\TTape_{\T,i_1}^\eps$, there exists $(u',u'')\in\IO_\set^{s,f}$ such that $u'(b)=w$, $u'(\I_{i_1,\eps})=1$, $u''(b)=1$, and $u''(\I_{i_1,\eps})=v_{1,0}$.  So, there exists a computation $\chi$ of $\EXP(\set)$ that $\EXP$-realizes $(u',u'')\in\IO_\set^{s,f}$ and satisfies $\tm_\set(\chi)\leq K_1|w|^{1+\eps}+K_1$, $\sp_\set(\chi)\leq K_1|w|+K_1$.

So, letting $u_b\in\GL_{\wacc^\eps}$ be such that $u_b(b)=w$ and $u_b(i)=1$ for all $i\in\I_{\wacc^\eps}^\exp-\{b\}$, \Cref{lem-op-e}(1) then yields a computation $\chi'$ of $\EXP(\wacc^\eps)$ between the configurations $(u_b,q')$ and $(u_{1,0},p_{s_\T})$ and with the same complexity bounds as $\chi$.

As $s_0=s_\T$ and $s_m=f_\T$, $\chi'$ and $\psi$ can be concatenated to form a computation $\phi$ of $\EXP(\wacc^\eps)$ between $(u_b,q')$ and $(1,p_{f_\T})$ which satisfies:
\begin{align*}
\tm_{\wacc^\eps}(\phi)&=\tm_{\wacc^\eps}(\psi)+\tm_{\wacc^\eps}(\chi') \\
&\leq L_2\TM_\T(|w|)^{1+\eps}+L_2|w|^{1+\eps}+L_2 \\
\sp_{\wacc^\eps}(\phi)&=\max(\sp_{\wacc^\eps}(\psi),\sp_{\wacc^\eps}(\chi'))\\
&\leq L_2\TM_\T(|w|)+L_2|w|+L_2
\end{align*}
for $L_2=L_1+K_1$.

Next, consider the operation $\swap=\swap^A(a,b)$.  By \Cref{lem-io-swap}, there exists a relation $(t_a,t_b)\in\IO_\swap^{s,f}$ where $t_a(a)=t_b(b)=w$ and $t_a(b)=t_b(a)=1$.  Further, there exists a constant $K_2\in\N$ such that $\TMSP_\swap\leq(K_2n+K_2,K_2n+K_2)$.  So, there exists a computation $\tau$ of $\EXP(\swap)$ that $\EXP$-realizes $(t_a,t_b)\in\IO_\swap^{s,f}$ and satisfies $\tm_\swap(\tau),\sp_\swap(\tau)\leq K_2|w|+K_2$.

Then, letting $u_w\in\GL_{\wacc^\eps}$ be given by $u_w(a)=w$ and $u_w(i)=1$ for all $i\in\I_{\wacc^\eps}^\exp-\{a\}$, \Cref{lem-op-e}(1) again yields a computation $\tau'$ between $(u_w,q)$ and $(u_b,q')$ with the same complexity bounds as $\tau$.

As above, $\tau'$ can then be combined with $\phi$ to produce a computation $\zeta$ of $\EXP(\wacc^\eps)$ between $(u_w,q)$ and $(1,p_{f_\T})$ and satisfying:
\begin{align*}
\tm_{\wacc^\eps}(\zeta)&=\tm_{\wacc^\eps}(\phi)+\tm_{\wacc^\eps}(\tau') \\
&\leq L_3\TM_\T(|w|)^{1+\eps}+L_3|w|^{1+\eps}+L_3 \\
\sp_{\wacc^\eps}(\zeta)&=\max(\sp_{\wacc^\eps}(\phi),\sp_{\wacc^\eps}(\tau'))\\
&\leq L_3\TM_\T(|w|)+L_3|w|+L_3
\end{align*}
for $L_3=L_2+K_2$.

Finally, noting that $u_w$ and $1$ satisfy all the guard fragments comprising the action $\act_0$, $\kappa$ can be extended by $e_s$ and $e_f$ to produce a computation $\gamma_w$ between $(u_w,\wacc^\eps_s)$ and $(u_w,\wacc^\eps_f)$ and satisfying $\tm_{\wacc^\eps}(\gamma_w)=\tm_{\wacc^\eps}(\zeta)+2$ and $\sp_{\wacc^\eps}(\gamma_w)=\sp_{\wacc^\eps}(\zeta)$.  But then $\gamma_w$ $\EXP$-realizes $(w,1)\in\IO_{\wacc^\eps}^{s,f}$, so that the statement follows.

\end{proof}

\bibliographystyle{plain}
\bibliography{biblio}

\begin{thebibliography}{10}

\bibitem{BORS}
J.~Birget, A.~Olshanskii, Eliyahu Rips, and Mark Sapir.
\newblock Isoperimetric functions of groups and computational complexity of the
  word problem.
\newblock {\em Annals of Mathematics}, 156, 12 1998.

\bibitem{GdlH}
{\'E}tienne Ghys and Pierre de~la Harpe.
\newblock Sur les groupes hyperboliques d'apres mikhael gromov.
\newblock 1990.

\bibitem{gromov1987hyperbolic}
Mikhael Gromov.
\newblock {\em Hyperbolic groups}.
\newblock Springer, 1987.

\bibitem{Higman}
G.~Higman.
\newblock Subgroups of finitely presented groups.
\newblock {\em Proceedings of the Royal Society of London. Series A,
  Mathematical and Physical Sciences}, 262(1311):455--475, 1961.

\bibitem{HNN}
Graham Higman, B.~H. Neumann, and Hanna Neuman.
\newblock Embedding theorems for groups.
\newblock {\em Journal of the London Mathematical Society}, s1-24(4):247--254,
  1949.

\bibitem{O18}
Alexander~Yu. Olshanskii.
\newblock Polynomially-bounded dehn functions of groups.
\newblock {\em Journal of Combinatorial Algebra}, 2017.

\bibitem{O19}
Alexander~Yu. Olshanskii.
\newblock Subgroups of groups finitely presented in burnside varieties.
\newblock {\em arXiv: Group Theory}, 2019.

\bibitem{OS02}
Alexander~Yu. Olshanskii and M.V.Sapir.
\newblock The conjugacy problem and higman embeddings.
\newblock 2002.

\bibitem{OS01}
Alexander~Yu. Olshanskii and Mark~V. Sapir.
\newblock Non-amenable finitely presented torsion-by-cyclic groups.
\newblock {\em Publications Math{\'e}matiques de l'Institut des Hautes
  {\'E}tudes Scientifiques}, 96:43--169, 2001.

\bibitem{OS20}
Alexander~Yu. Olshanskii and Mark~V. Sapir.
\newblock Conjugacy problem in groups with quadratic dehn function.
\newblock {\em Bulletin of Mathematical Sciences}, 2020.

\bibitem{O91}
A.yu Ol'shanskii.
\newblock Hyperbolicity of groups with subquadratic isoperimetric inequality.
\newblock {\em International Journal of Algebra and Computation}, 01, 01 2012.

\bibitem{palfy2003}
P\'{e}ter~P. P\'{a}lfy.
\newblock {\em Groups and lattices}, volume~2 of {\em London Mathematical
  Society Lecture Note Series}, pages 428--454.
\newblock Cambridge University Press, 2003.

\bibitem{Sapirnote}
Mark Sapir.
\newblock The isoperimetric spectrum of finitely presented groups, 2018.

\bibitem{SBR}
Mark~V. Sapir, Jean-Camille Birget, and Eliyahu Rips.
\newblock Isoperimetric and isodiametric functions of groups.
\newblock {\em Annals of Mathematics}, 156(2):345--466, 2002.

\bibitem{W}
Francis Wagner.
\newblock Torsion subgroups of groups with quadratic dehn function.
\newblock {\em arXiv: Group Theory}, 2020.

\end{thebibliography}

\end{document}